%% file: cartiermodules.tex
\documentclass[hidelinks,a4paper]{article}
\input{preamble}

\title{The $6\frac{1}{2}$-functor formalism of Cartier modules}

\cftpagenumbersoff{part} %Dont print part page numbers

\cftsetindents{section}{0em}{2em}
\cftsetindents{subsection}{0em}{2em}

\begin{document}

\maketitle 

\begin{abstract}
    Let $p$ be a prime. We construct a coherent formalism of geometric operations on Cartier modules with 
    coefficients in ind-coherent sheaves on noetherian derived $\mathbb F_p$-schemes. 
    For such a scheme $X$, we consider the stable presentable category $\Cart(X)$ of ind-coherent sheaves $M$ equipped with a structural morphism $F_{*,X}M \to M$, 
    where $F_X$ denotes the Frobenius endomorphism.

    We construct pushforward functors for arbitrary morphisms and exceptional pullback functors for separated morphisms almost of finite type, 
    together with coherent composition and base-change equivalences. 
    We single out conditions under which these functors admit further adjoints.

    On derived schemes separated and almost of finite type over $\mathbb F_p$, 
    we construct compatible closed symmetric monoidal structures given by shriek tensor products,
    and establish which functors are symmetric monoidal.

    The constructions arise from a general procedure for forming functor formalisms of Cartier objects, 
    using their description as lax limits and the universal properties of span $2$-categories developed by Cnossen--Lenz--Linskens \cite{cnossen2025universalityspan2categoriesconstruction}.
\end{abstract}

\newpage
\tableofcontents

\newpage
\input{introduction}

\section{Recollection on \texorpdfstring{$2$}{2}-categories} \label{sec:2cat}
\input{2-categories/cat2}
\input{2-categories/gray}
\input{2-categories/laxlim}
\input{2-categories/cartier-laxlim}

\section{The \texorpdfstring{$2$}{2}-functor formalism of Cartier modules} \label{sec:2ff}
\input{2FF/def-2FF}
\input{2FF/beck-chevalley}
\input{2FF/consequences}

\input{2FF/construction-of-2FF}

\input{2FF/frobenius-structures}

\section{\texorpdfstring{$2$}{2}-categories of lax symmetric monoidal functors} \label{sec:sym-mon}
\input{symmetric-monoidal/sym-mon-2cat}
\input{symmetric-monoidal/pointwise}

\input{symmetric-monoidal/marked-maps}
\input{symmetric-monoidal/oplax-lax-sym}
\input{symmetric-monoidal/calg}

\section{The \texorpdfstring{$3$}{3}-functor formalism of Cartier modules} \label{sec:3ff}
\input{3FF/def-3FF}
\input{3FF/construction-of-3FF}
\input{3FF/frobenius-structures}

\section{Cartier modules on derived schemes} \label{sec:derived-schemes}
\input{derived-schemes/animated-Fp-algebras}
\input{derived-schemes/derived-schemes}
\input{derived-schemes/cff-derived-schemes}

\input{derived-schemes/cff-cartier-modules-derived-schemes}
\input{derived-schemes/3ff-cart}
\input{derived-schemes/consequences}

\appendix

\input{appendix/mapping-objects} \label{sec:appendix-mapping-objects}

\newpage
\printindex[notation]
\newpage
{
    \footnotesize
    \bibliography{bibliography}
} 
\end{document}

%% file: preamble.tex
\usepackage[utf8]{inputenc}
\usepackage[english]{babel}

\newcommand{\tightunderline}[1]{%
  \underline{\smash{#1}}
}

\usepackage{tocloft}
\usepackage[margin=1in,marginpar=0.75in]{geometry}

\usepackage[colorinlistoftodos,textsize=scriptsize,obeyFinal]{todonotes}
\usepackage{xpatch}
\usepackage[explicit]{titlesec}

\usepackage[nottoc,notindex]{tocbibind}
\usepackage{imakeidx}

\makeindex[
    name=notation,
    title=Index of Notation,
    columns=2,
    intoc
]

\usepackage{amsmath, amsthm, amssymb, amsfonts}
\usepackage{mathtools}
\usepackage[a]{esvect}
\usepackage{bbm}

\usepackage{tikz}
\usepackage{tikz-cd}
\usepackage{graphicx}
\usetikzlibrary{nfold}
\usetikzlibrary{positioning}
\usetikzlibrary{arrows.meta,calc}
\usepackage{quiver}

\usepackage{verbatim}

\usepackage{enumitem}

\usepackage{thmtools}
\usepackage[dvipsnames, svgnames]{xcolor}
\definecolor{vert}{RGB}{15,120,5}
\definecolor{gris}{RGB}{128,128,128}
\definecolor{bleu}{RGB}{0,50,150}
\usepackage[hypertexnames=false]{hyperref}
\hypersetup{
    colorlinks=true,
    linkcolor={bleu},
    citecolor= {vert},
    urlcolor={blue!80!black},
    pdfborder={0 0 0}
}
\usepackage[capitalize,noabbrev,nameinlink]{cleveref}

\newcommand{\uses}[1]{}
\newcommand{\proofof}[1]{}

\theoremstyle{definition}
\newtheorem{defn}{Definition}[section]
\newtheorem{setting}[defn]{Setting}

\theoremstyle{plain}
\newtheorem{lem}[defn]{Lemma}
\newtheorem{notation}[defn]{Notation}
\newtheorem{constr}[defn]{Construction}
\newtheorem{variant}[defn]{Variant}
\newtheorem{cor}[defn]{Corollary}
\newtheorem{thm}[defn]{Theorem}
\newtheorem{prop}[defn]{Proposition}
\newtheorem*{prop*}{Proposition}
\newtheorem{exmpl}[defn]{Example}
\newtheorem{introthm}{Theorem}

\newtheorem{introprop}[introthm]{Proposition}

\theoremstyle{remark}
\newtheorem{rmk}[defn]{Remark}
\newtheorem*{rmk*}{Remark}

\crefname{lem}{Lemma}{Lemmas}
\crefname{prop}{Proposition}{Propositions}
\crefname{thm}{Theorem}{Theorems}
\crefname{introthm}{Theorem}{Theorems}
\crefname{defn}{Definition}{Definitions}
\crefname{cor}{Corollary}{Corollaries}
\crefname{section}{Section}{Sections}

\newcommand{\Fp}{\mathbb{F}_p}
\newcommand{\N}{\mathbb{N}}

\newcommand{\BN}{\vv{\mathrm{B}}\N}
\newcommand{\BNflat}{\BN^\flat}

\newcommand{\LEq}[2]{{\operatorname{LEq}({#1}, {#2})}}

\newcommand{\Fun}[2]{\operatorname{Fun}(#1, #2)}
\newcommand{\FunSegal}[2]{\operatorname{Fun}^{\operatorname{Segal}}(#1,#2)}
\newcommand{\FunGr}[2]{\operatorname{Fun}^{\operatorname{gr}}(#1,#2)}
\newcommand{\FunOpgr}[2]{\operatorname{Fun}^{\operatorname{opgr}}(#1,#2)}

\newcommand{\ulFun}{\tightunderline{\operatorname{Fun}}}

\newcommand{\TwoFunDecorated}[5]{\ulFun^{#1}_{#2}(#4,#5)_{#3}}

\newcommand{\TwoFun}[2]{\TwoFunDecorated{}{}{}{#1}{#2}}

\newcommand{\FunLax}[2]{\TwoFunDecorated{\operatorname{lax}}{}{}{#1}{#2}}
\newcommand{\FunOpLax}[2]{\TwoFunDecorated{\operatorname{oplax}}{}{}{#1}{#2}}
\newcommand{\FunEps}[2]{\TwoFunDecorated{\epsilon}{}{}{#1}{#2}}
\newcommand{\FunNEps}[2]{\TwoFunDecorated{\mathrm{op}(\epsilon)}{}{}{#1}{#2}}
\newcommand{\FunPartLax}[3]{\TwoFunDecorated{\operatorname{lax}, #1}{}{}{#2}{#3}}
\newcommand{\FunPartOpLax}[3]{\TwoFunDecorated{\operatorname{oplax}, #1}{}{}{#2}{#3}}
\newcommand{\FunPartEps}[3]{\TwoFunDecorated{\epsilon, #1}{}{}{#2}{#3}}

\newcommand{\TwoFunBadj}[4]{\TwoFunDecorated{}{}{(#3,#4)\operatorname{-badj}}{#1}{#2}}
\newcommand{\TwoFunBC}[2]{\TwoFunDecorated{}{}{(I,P)\operatorname{-BC}}{#1}{#2}}
\newcommand{\TwoFunVarBC}[4]{\TwoFunDecorated{}{}{(#3,#4)\operatorname{-BC}}{#1}{#2}}

\newcommand{\FunOpLaxIPBC}[2]{\TwoFunDecorated{\operatorname{oplax}}{}{(I,P)\operatorname{-BC}}{#1}{#2}}
\newcommand{\FunOpLaxBadj}[2]{\TwoFunDecorated{\operatorname{oplax}}{}{(I,P)\operatorname{-badj}}{#1}{#2}}
\newcommand{\FunEpsBadj}[2]{\TwoFunDecorated{\epsilon}{}{(I,P)\operatorname{-badj}}{#1}{#2}}
\newcommand{\FunEpsBC}[2]{\TwoFunDecorated{\epsilon}{}{(I,P)\operatorname{-BC}}{#1}{#2}}

\newcommand{\FunTensor}[2]{\TwoFunDecorated{}{\otimes}{}{#1}{#2}}
\newcommand{\FunLaxTensor}[2]{\TwoFunDecorated{}{\operatorname{lax-\hspace{-0.17em}}\otimes}{}{#1}{#2}}
\newcommand{\FunOplaxTensor}[2]{\TwoFunDecorated{}{\operatorname{oplax-\hspace{-0.17em}}\otimes}{}{#1}{#2}}

\newcommand{\FunLaxTensorIPBC}[2]{\TwoFunDecorated{}{\operatorname{lax-\hspace{-0.17em}}\otimes}{(I,P)\operatorname{-BC}}{#1}{#2}}
\newcommand{\FunLaxTensorBadj}[2]{\TwoFunDecorated{}{\operatorname{lax-\hspace{-0.17em}}\otimes}{(I,P)\operatorname{-badj}}{#1}{#2}}
\newcommand{\FunOplaxTensorBadj}[2]{\TwoFunDecorated{}{\operatorname{oplax-\hspace{-0.17em}}\otimes}{(I,P)\operatorname{-badj}}{#1}{#2}}

\newcommand{\FunOplaxLaxTensor}[2]{\TwoFunDecorated{\operatorname{oplax}}{\operatorname{lax-\hspace{-0.17em}}\otimes}{}{#1}{#2}}
\newcommand{\FunEpsLaxTensor}[2]{\TwoFunDecorated{\epsilon}{\operatorname{lax-\hspace{-0.17em}}\otimes}{}{#1}{#2}}
\newcommand{\FunOplaxOplaxTensor}[2]{\TwoFunDecorated{\operatorname{oplax}}{\operatorname{oplax-\hspace{-0.17em}}\otimes}{}{#1}{#2}}
\newcommand{\FunEpsOplaxTensor}[2]{\TwoFunDecorated{\epsilon}{\operatorname{oplax-\hspace{-0.17em}}\otimes}{}{#1}{#2}}
\newcommand{\FunEpsDeltaTensor}[2]{\TwoFunDecorated{\epsilon}{\operatorname{\delta-\hspace{-0.17em}}\otimes}{}{#1}{#2}}
\newcommand{\FunPartOplaxLaxTensor}[3]{\TwoFunDecorated{\operatorname{oplax}, #1}{\operatorname{lax-\hspace{-0.17em}}\otimes}{}{#2}{#3}}

\newcommand{\FunPartEpsDeltaTensor}[3]{\TwoFunDecorated{\epsilon, #1}{\operatorname{\delta-\hspace{-0.17em}}\otimes}{}{#2}{#3}}

\newcommand{\FunOplaxIPBCLaxTensor}[2]{\TwoFunDecorated{\operatorname{oplax}}{\operatorname{lax-\hspace{-0.17em}}\otimes}{(I,P)\operatorname{-BC}}{#1}{#2}}
\newcommand{\FunOplaxBadjOplaxTensor}[2]{\TwoFunDecorated{\operatorname{oplax}}{\operatorname{oplax-\hspace{-0.17em}}\otimes}{(I,P)\operatorname{-badj}}{#1}{#2}}
\newcommand{\FunEpsDeltaTensorIPBC}[2]{\TwoFunDecorated{\epsilon}{\operatorname{\delta-\hspace{-0.17em}}\otimes}{(I,P)\operatorname{-BC}}{#1}{#2}}
\newcommand{\FunEpsDeltaTensorBadj}[2]{\TwoFunDecorated{\epsilon}{\operatorname{\delta-\hspace{-0.17em}}\otimes}{(I,P)\operatorname{-badj}}{#1}{#2}}

\newcommand{\ptw}{{\otimes\operatorname{\hspace{-0.13em}-pt}}}

\newcommand{\oplaxvarset}{\{\,\mathrm{lax},\,\mathrm{oplax},\,\emptyset\,\}}

\newcommand{\gr}{\mathrm{gr}}

\newcommand{\Cat}[1]{\mathcal{#1}}
\newcommand{\TwoCat}[1]{\tightunderline{\Cat{#1}}}

\newcommand{\op}[1]{#1^{\operatorname{op}}}
\newcommand{\co}[1]{#1^{\operatorname{co}}}
\newcommand{\coop}[1]{#1^{\operatorname{coop}}}
\newcommand{\id}[1]{\operatorname{id}_{#1}}

\newcommand{\An}{\mathrm{An}}
\newcommand{\AnL}{\widehat{\An}}
\newcommand{\CatInf}{\operatorname{Cat}_{1}}
\newcommand{\TwoCatInf}{\tightunderline{\operatorname{Cat}}_{1}}
\newcommand{\CatInfTwo}{\operatorname{Cat}_{2}}
\newcommand{\TwoCatInfTwo}{\tightunderline{\operatorname{Cat}}_{2}}

\newcommand{\laxlim}{\operatorname{laxlim}}
\newcommand{\oplaxlim}{\operatorname{oplaxlim}}
\newcommand{\diag}{\operatorname{const}}

\newcommand{\Span}[5]{\tightunderline{\operatorname{Span}}(#1, #2, #3)_{(#4,#5)}}
\newcommand{\SpanStd}{\Span{\Cat C}{E}{\all}{I}{P}}
\newcommand{\all}{\operatorname{all}}

\newcommand{\QCoh}{\operatorname{QCoh}}
\newcommand{\IndCoh}{\operatorname{IndCoh}}
\newcommand{\Cart}{\operatorname{Cart}}

\newcommand{\Polyp}{\operatorname{Poly}_{\Fp}}
\newcommand{\PSig}[1]{\mathcal{P}_{\Sigma}(#1)}

\newcommand{\Ringp}{\operatorname{CAlg}_{\Fp}}

\newcommand{\AnRingp}{\operatorname{CAlg}_{\Fp}^{\operatorname{an}}}

\newcommand{\AffDSchp}{\operatorname{AffSch}_{\Fp}^{\operatorname{an}}}
\newcommand{\AffSchp}{\operatorname{AffSch}_{\Fp}}

\newcommand{\DSchp}{\operatorname{Sch}_{\Fp}^{\operatorname{an}}}
\newcommand{\DSchpnoeth}{\operatorname{Sch}_{\Fp}^{\operatorname{an,noeth}}}
\newcommand{\DSchsaft}{\operatorname{Sch}_{\Fp}^{\operatorname{an,saft}}}
\newcommand{\Schp}{\operatorname{Sch}_{\Fp}}

\newcommand{\StkZar}{\operatorname{Stk}_{\zar,\Fp}}
\newcommand{\PreStkp}{\operatorname{PreStk}_{\Fp}}

\newcommand{\Spec}[1]{\mathrm{Spec}(#1)}

\newcommand{\classical}[1]{{#1}^{\operatorname{cl}}}

\newcommand{\zar}{\operatorname{zar}}

\newcommand{\Shv}[3]{\operatorname{Shv}_{#1}(#2, #3)}

\newcommand{\cartsymb}{\arrow[dr, phantom,"\scalebox{1.5}{\color{black}$\lrcorner$}", near start, color=black]}
\newcommand{\cocartsymb}{\arrow[dr, phantom,"\scalebox{1.5}{\color{black}$\ulcorner$}", near end, color=black]}
\newcommand{\commsymb}{\arrow[dr, phantom,"\scalebox{1.5}{\color{black}$\circlearrowleft$}", color=black]}
\newcommand{\eqsymb}{\arrow[dr, phantom,"\scalebox{1.5}{\color{black}$=$}", color=black]}

\newcommand{\laxarrowur}[1]{\ar[ur, Rightarrow, shorten <= 10pt, shorten >= 10pt, "{#1}"]}
\newcommand{\laxarrowdl}[1]{\ar[dl, Rightarrow, shorten <= 10pt, shorten >= 10pt, "{#1}"]}

\newcommand{\Map}[3]{\operatorname{Map}_{#1}(#2, #3)}
\newcommand{\MapMarked}[4]{\operatorname{Map}^{#2}_{#1}(#3, #4)}
\newcommand{\Hom}[3]{\operatorname{Hom}_{#1}(#2, #3)}

\newcommand{\colim}{\operatorname{colim}}
\renewcommand{\lim}{\operatorname{lim}}

\newcommand{\Tw}[1]{\operatorname{Tw}(#1)}

\newcommand{\ev}[1]{\operatorname{ev}_{#1}}
\newcommand{\pr}{\operatorname{pr}}

\DeclareFontFamily{U}{min}{}
\DeclareFontShape{U}{min}{m}{n}{<-> udmj30}{}
\newcommand\yo{y}

\newcommand{\iHomL}[2]{\tightunderline{\operatorname{iHom}}^L(#1, #2)}
\newcommand{\iHomR}[2]{\tightunderline{\operatorname{iHom}}^R(#1, #2)}

\newcommand{\iHomRCat}[3]{\tightunderline{\operatorname{iHom}}_{#1}^R(#2, #3)}

\newcommand{\CAlg}[1]{\operatorname{CAlg}(#1)}
\newcommand{\CAlgLax}[1]{\operatorname{CAlg}_{\operatorname{lax}}(#1)}
\newcommand{\CAlgOpLax}[1]{\operatorname{CAlg}_{\operatorname{oplax}}(#1)}
\newcommand{\CAlgDelta}[1]{\operatorname{CAlg}_{\delta}(#1)}
\newcommand{\TwoCAlg}[1]{\tightunderline{\operatorname{CAlg}}(#1)}
\newcommand{\TwoCAlgLax}[1]{\tightunderline{\operatorname{CAlg}}_{\operatorname{lax}}(#1)}
\newcommand{\TwoCAlgOpLax}[1]{\tightunderline{\operatorname{CAlg}}_{\operatorname{oplax}}(#1)}
\newcommand{\TwoCAlgDelta}[1]{\tightunderline{\operatorname{CAlg}}_{\delta}(#1)}
\newcommand{\SpanFin}{\operatorname{Span}(\operatorname{Fin})}

\newcommand{\Fin}{\operatorname{Fin}}

\newcommand{\TwoCocartlax}[1]{\tightunderline{\operatorname{Cocart}}^{\operatorname{lax}}(#1)}

\newcommand{\TwoCocart}[1]{\tightunderline{\operatorname{Cocart}}(#1)}

\newcommand{\TwoCartOplax}[1]{\tightunderline{\operatorname{Cart}}^{\operatorname{oplax}}(#1)}

\newcommand{\shriektensor}{%
  \mathchoice
    {\mathbin{\overset{!}{\otimes}}}% displaystyle
    {\mathbin{\otimes^!}}% textstyle
    {\mathbin{\otimes^!}}% scriptstyle
    {\mathbin{\otimes^!}}% scriptscriptstyle
}

\newcommand{\nset}{\langle n \rangle}
\newcommand{\mset}{\langle m \rangle}

\newcommand\noloc{%
  \nobreak
  \mspace{6mu plus 1mu}
  {:}
  \nonscript\mkern-\thinmuskip
  \mathpunct{}
  \mspace{2mu}
}

\newcommand{\loccit}{\textit{loc.~cit.~}}

\let\OLDthebibliography\thebibliography
\renewcommand\thebibliography[1]{
  \OLDthebibliography{#1}
  \setlength{\parskip}{0pt}
  \setlength{\itemsep}{0pt plus 0.3ex}
}

\newcommand{\defemph}[1]{\textcolor{purple}{\emph{#1}}\index[notation]{#1}}
\newcommand{\defemphindex}[2]{\textcolor{purple}{\emph{#1}}\index[notation]{#2}}
\newcommand{\defemphnoindex}[1]{\textcolor{purple}{\emph{#1}}}
\newcommand{\defnot}[1]{\textcolor{purple}{#1}}

\date{\today}
\author{Klaus Mattis\footnote{\href{www.klaus-mattis.com}{www.klaus-mattis.com}}\qquad and \qquad Timo Wei{\ss}\footnote{\href{mailto:timo.weiss@uni-mainz.de}{timo.weiss@uni-mainz.de}}}

%% file: introduction.tex
\section{Introduction}
A distinguishing feature of algebraic geometry in positive characteristic $p > 0$ is the existence of the Frobenius endomorphism.
It gives rise to additional structures on sheaves on an $\Fp$-scheme, which has led to many deep structural results.

An example of such a structure is Cartier modules. 
Classically, a Cartier module on an $\Fp$-scheme $X$ is given by a (quasi-coherent) $\Cat O_X$-module $M$ together with an $\Cat O_X$-linear morphism $F_* M \to M$, where $F$ denotes the absolute Frobenius of $X$.
They first appeared in the work of Anderson \cite{Anderson} and were more thoroughly studied by Blickle and Böckle \cite{BBCartmod}, \cite{blickle2013cartiercrystals}, where they also proved strong finiteness results. 
Moreover, Cartier modules are related via their dual notion of Frobenius modules to Riemann--Hilbert correspondences in characteristic $p$, cf.\ \cite{emertonkisin}, \cite{bhatt2019riemann}.

In our previous articles \cite{us24cart}, \cite{us25frob} we established a higher categorical generalization of Cartier modules by defining them as certain pullbacks of categories. 
In particular, we are not bound to (quasi-coherent) $\Cat O_X$-modules nor the Frobenius endomorphism to study categorical properties of Cartier modules. 
In the present article, we will mainly be interested in Cartier modules in the stable $(\infty,1)$-category of ind-coherent $\Cat O_X$-modules over a derived $\Fp$-scheme $X$;
in particular we will study its functoriality in the base scheme $X$.

To do so, we will make use of our previous work \cite[Theorem 5.1]{us24cart} which shows that the derived $(\infty,1)$-category of Cartier modules can equivalently be described as an $(\infty,1)$-category of Cartier modules itself. 
Thus, it satisfies the universal property of a limit.
This perspective is particularly useful to define functors into the derived $(\infty,1)$-category of Cartier modules. 
In fact, we will construct functors like the exceptional pullback $f^!$ via this approach.

\subsection*{Functor formalisms of Cartier modules}

Six-functor formalisms were originally developed by Grothendieck \cite{SGA4-1} in the setting of étale cohomology. 
They describe the fundamental operations $f^*, f_*, f_!, f^!, \otimes$ and $\tightunderline{\mathrm{Hom}}$, and how they interact with each other, 
for example via adjunctions, base change or the projection formulas.
%In particular, they also capture Poincaré duality statements. \todo{Not so sure about that}
Since then, six-functor formalisms have been introduced in many other mathematical settings, for example for 
perverse sheaves \cite{deligne1983faisceaux} and $D$-modules \cite{kashiwara2013sheaves}, but also in motivic homotopy theory \cite{ayoub2007six} and condensed mathematics \cite{scholze2025six}.
For the present article, we use the six-functor formalism for ind-coherent sheaves from \cite{gaitsgory2012indcoherentsheaves,gaitsgory2019study}.

Although a six-functor formalism captures many functors and compatibilities at once, it can be encoded by a single lax symmetric monoidal functor 
$\Span{\Cat C}{\all}{E}{P}{I}^\otimes \to \TwoCatInf^\times$ out of a certain $(\infty, 2)$-category of spans, satisfying suitable additional adjointness conditions. 
In recent years, many authors, for example Liu--Zheng \cite{liu2015gluingrestrictednervesinftycategories}, \cite{liu2024enhancedoperationsbasechange}, Scholze \cite{scholze2025six}, Mann \cite{mann2022padic}, and Cnossen--Lenz--Linskens \cite{cnossen2025universalityspan2categoriesconstruction}, 
worked on the question of how to construct such a functor.
Rather than constructing each of the six functors and proving all of their compatibilities separately, 
Cnossen--Lenz--Linskens give a comparably small amount of data that is sufficient to construct a six-functor formalism. 
We use their approach to lift the existing six-functor formalism of ind-coherent sheaves on derived noetherian $\Fp$-schemes to a six-functor formalism of Cartier modules.
Our construction of the multiplicative enhancement requires us to restrict to derived schemes which are separated and almost of finite type over $\Fp$ to define the tensor product.

We write $\defnot{\DSchpnoeth}$ for the $(\infty,1)$-category of noetherian derived $\Fp$-schemes.
For $X \in \DSchpnoeth$ we write $\defnot{F_X} \colon X \to X$ for the absolute Frobenius,
and $\defnot{\Cart(X)} \coloneqq \Cart(\IndCoh(X), F_{*,X})$ for the $(\infty,1)$-category of \defemph{Cartier modules} in ind-coherent sheaves,
as defined in \cite[Definition 2.4]{us24cart}.
Our first result is the following:
\begin{introthm}[{\cref{lem:cartier-modules:main-thm-2ff,lem:consequences:composition,lem:derived-schemes-consequences:BC,lem:derived-schemes-consequences:adjoints-shriek}}] \label{intro:main-theorem-2FF}
    We exhibit the following functoriality of the stable presentable $(\infty,1)$-category $\Cart(X)$:
    \begin{enumerate}[label=(\arabic*)]
        \item For every morphism $f \colon X \to Y$ of derived noetherian $\Fp$-schemes there exists a \defemphindex{pushforward functor}{pushforward}
            $$\defnot{f_*} \colon \Cart(X) \to \Cart(Y).$$
        \item For every separated and almost of finite type morphism $g \colon X \to Y$ of derived noetherian $\Fp$-schemes there exists an \defemphindex{exceptional pullback functor}{pullback!exceptional}
            $$\defnot{g^!} \colon \Cart(Y) \to \Cart(X).$$
    \end{enumerate}
    These constructions satisfy the following:
    \begin{enumerate}[label=(\alph*)]
        \item \label{intro:unit-shriek-star} Let $X$ be a derived noetherian $\Fp$-scheme. Then $(\id{X})_* \cong \id{\Cart(X)} \cong (\id{X})^!$.
        \item \label{intro:star-composition} If $f \colon X \to Y$ and $g \colon Y \to Z$ are morphisms of derived noetherian $\Fp$-schemes, then there exists an equivalence 
            $g_* f_* \cong (gf)_*$ of functors $\Cart(X) \to \Cart(Z)$. 
        \item \label{intro:shriek-composition} If $f \colon X \to Y$ and $g \colon Y \to Z$ are separated and almost of finite type morphisms of derived noetherian $\Fp$-schemes, then there exists an equivalence 
            $f^! g^! \cong (gf)^!$ of functors $\Cart(Z) \to \Cart(X)$.
    \end{enumerate}
    The above composition and identity equivalences satisfy all higher associativity and unit coherences.
    \begin{enumerate}[resume,label=(\alph*)]
        \item \label{intro:basechange} Let
            \begin{center}
                \begin{tikzcd}
                    Y' \ar[r, "g'"] \ar[d, "f'"'] \cartsymb &X' \ar[d, "f"] \\
                    Y \ar[r, "g"'] & X,
                \end{tikzcd}
            \end{center}
            be a cartesian diagram of derived noetherian $\Fp$-schemes, where $f$ (and thus $f'$) is separated and almost of finite type.
            Then there is a canonical base change equivalence of functors
            \begin{equation*}
                f^! g_* \cong g'_* f'^! \colon \Cart(Y) \to \Cart(X'). 
            \end{equation*}
        \item \label{intro:adjoints} Let $f \colon X \to Y$ be a morphism of derived noetherian $\Fp$-schemes. Then the following adjunctions exist under the indicated hypotheses on $f$.
            Here, $\defnot{f^\times}$, $\defnot{f_\flat}$ and $\defnot{f^*}$ denote the corresponding adjoints.
            \begin{center}
                \begin{tabular}{l|l|l}
                    \textbf{\emph{Hypothesis on $f$}}&\textbf{\emph{Left Adjoint}} &\textbf{\emph{Right Adjoint}} \\\hline
                    \emph{arbitrary}                            &$f_*$&$\defnot{f^\times}$   \\
                    \emph{proper}                               &$f_*$&$f^!$        \\
                    \emph{separated and almost of finite type}  &$f^!$&$\defnot{f_\flat}$    \\
                    \emph{open immersion}                       &$f^!$&$f_*$        \\
                    \emph{eventually coconnective}              &$\defnot{f^*}$&$f_*$
                \end{tabular}
            \end{center}
            In particular, $f^! \cong f^\times$ for $f$ proper, and $f^! \cong f^*$ for $f$ an open immersion.
    \end{enumerate}
    The data of pushforward and exceptional pullback, with their composition, base change and open/proper adjunction compatibilities
    is packaged as a $2$-functor of $(\infty,2)$-categories 
    $$\defnot{\Cart(-)} \colon \Span{\DSchpnoeth}{E}{\all}{I}{P} \to \TwoCatInf,$$
    where $\defnot{I}$ denotes the class of open immersions, $\defnot{P}$ the class of proper morphisms, and $\defnot{E}$ the class 
    of separated and almost of finite type morphisms, cf.\ \cref{def:derived-schemes:nagata-subcategories}.
    In other words, we have defined a \textbf{\defemph{$4\frac{1}{2}$-functor formalism}} with respect to the class of eventually coconnective morphisms
    in the notation of \cref{def:2ff:defn}.
\end{introthm}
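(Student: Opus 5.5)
The plan is to obtain everything from a single $2$-functor $\Cart(-)\colon \Span{\DSchpnoeth}{E}{\all}{I}{P} \to \TwoCatInf$, built by lifting the existing ind-coherent formalism $\IndCoh(-)\colon \Span{\DSchpnoeth}{E}{\all}{I}{P}\to\TwoCatInf$ of Gaitsgory--Rozenblyum. The starting observation is that the absolute Frobenius is a natural endomorphism of the identity of $\DSchpnoeth$. It commutes with every morphism, and it is finite, hence proper and separated almost of finite type. It therefore gives a natural endotransformation of the span $2$-category, and applying $\IndCoh(-)$ yields a functor $\Span{\DSchpnoeth}{E}{\all}{I}{P}\to \Fun{\BN}{\TwoCatInf}$, sending $X$ to $(\IndCoh(X),F_{*,X})$. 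Here one has to be careful about which of $F_*$ and $F^!$ is encoded, and in which direction the $2$-cells go.

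By \cite[Theorem 5.1]{us24cart} and the lax-limit description of Cartier objects from \cref{sec:2cat}, $\Cart(\Cat C,F)$ is a lax limit over $\BN$. Postcomposing with $\laxlim_{\BN}$, a $2$-functor $\Fun{\BN}{\TwoCatInf}^{\operatorname{lax}}\to\TwoCatInf$ (or its oplax variant), produces the desired $\Cart(-)$. The subtle point is functoriality: $g^!$ commutes with Frobenius only up to a non-invertible comparison $F_*g^!\to g^!F_*$, namely the base change/adjunction map. So the diagram lands only in lax natural transformations, and I would invoke the general procedure of \cref{sec:2ff}. Concretely, I would verify the Beck--Chevalley and adjointability hypotheses required by the Cnossen--Lenz--Linskens universal property \cite{cnossen2025universalityspan2categoriesconstruction}, so that a functor on $\DSchpnoeth$ with $(I,P)$-biadjoint and base-change-satisfying values in $\Fun{\BN}{\TwoCatInf}$ extends uniquely to spans.

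The main obstacle is checking that the lax-limit construction preserves the $I$- and $P$-adjunctions and the base change conditions. I would argue that a lax transformation whose components are left adjoints, with mates invertible where needed, induces an adjunction on lax limits. Proper base change for $\IndCoh$ applied to Frobenius squares, which commute strictly on schemes, should supply the required invertibility.

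With $\Cart(-)$ in hand, parts \ref{intro:unit-shriek-star}--\ref{intro:basechange} and all coherences are formal consequences of $2$-functoriality on spans: $f_*$ is the image of the backward span, $g^!$ the image of the forward span, and composition of spans gives both the composition laws and the base change equivalence. For \ref{intro:adjoints}, the proper and open rows come from the $(I,P)$-adjointability built into the span $2$-category. The remaining adjoints $f^\times$ and $f_\flat$ follow from the adjoint functor theorem, since $\Cart(X)$ is stable presentable and $f_*$, $f^!$ preserve colimits: colimits in $\Cart$ are computed in $\IndCoh$, and on $\IndCoh$ the functor $f_*$ is continuous and $f^!$ is continuous for almost finite type $f$.

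For eventually coconnective $f$, the plan is to take the left adjoint $f^{*}\dashv f_*$ on $\IndCoh$ and lift it. This uses the Frobenius compatibility map $f^*F_{*}\to F_{*}f^*$, and in the oplax setting a left adjoint to $f_*$ on $\Cart$ is obtained from levelwise left adjoints via mates. The identifications $f^!\cong f^\times$ for proper $f$ and $f^!\cong f^*$ for open $f$ then follow from uniqueness of adjoints.
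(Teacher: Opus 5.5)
\textbf{Verdict: there is a genuine gap.} The variance is not a free choice, and the key invertibility along open immersions is attributed to the wrong source.

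Where you end up is viable. View $X\mapsto(\IndCoh(X),F_{*,X})$ as a functor $\DSchpnoeth\to\FunLax{\BN}{\TwoCatInf}$, extend it by the Cnossen--Lenz--Linskens universal property, and postcompose with $\laxlim_{\BN}$. After Gray currying, this is exactly the paper's pointwise lax limit in $\FunPartOpLax{(I,\all)}{\Span{\DSchpnoeth}{E}{\all}{I}{P}}{\TwoCatInf}$.

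But ``or its oplax variant'' is not an option. $\laxlim_{\BN}$ is only functorial for lax transformations, and in $\FunLax{\BN}{\TwoCatInf}$ every left adjoint is strong. Consequently the $P$-side costs nothing: $p^!$ simply carries the non-invertible mate $F_{*}p^!\to p^!F_{*}$. By contrast, (BC1) for $j\in I$ forces the Beck--Chevalley map $j^*F_{*,X}\to F_{*,U}j^*$ of the Frobenius square $\square^F_j$ to be an equivalence.

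That equivalence does not come from ``proper base change applied to Frobenius squares, which commute strictly''. Base change needs cartesian squares, Frobenius squares are typically not cartesian (already for $\mathbb{P}^1\to\Spec{\Fp}$), and no proper map is involved for $j$. The missing idea is that $\square^F_j$ \emph{is} cartesian for \'etale, in particular open, $j$:
\begin{itemize}
\item classically, the relative Frobenius of an \'etale map is an isomorphism (Blickle--B\"ockle);
\item since both vertical edges are flat, cartesianness of the derived square can be checked on classical truncations;
\item only then does Gaitsgory's base change for $\IndCoh$ along open immersions give the needed equivalence.
\end{itemize}
Relatedly, $F_X$ need not be finite for noetherian $X$ (take $X=\Spec{\Fp(t_1,t_2,\dots)}$), so $F_X\notin E$ in general. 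You only ever need $F_{*}$, i.e.\ $F_X$ as an arbitrary morphism, so this claim can simply be dropped.

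Your treatment of $f^*$ for eventually coconnective $f$ also fails as stated. The mate $f^*F_{*,Y}\to F_{*,X}f^*$ points the wrong way to give $f^*M$ a structure map $F_{*,X}f^*M\to f^*M$. It is invertible only in special cases such as \'etale $f$. For the smooth map $\mathbb{A}^1\to\Spec{\Fp}$ it is the non-surjective map $\Fp[t]\to F_*\Fp[t]$, $g(t)\mapsto g(t^p)$. Hence there is no adjunction $f^*\dashv f_*$ in $\FunLax{\BN}{\TwoCatInf}$, and in the oplax variant $\laxlim_{\BN}$ is not functorial.

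Instead, use the adjoint functor theorem exactly as you did for $f^\times$ and $f_\flat$:
\begin{itemize}
\item $U$ is conservative and preserves limits and colimits (the latter because $F_{*,X}$ preserves colimits);
\item $Uf_*\simeq f_*U$;
\item $f_*$ on $\IndCoh$ is a right adjoint;
\item hence $f_*$ on $\Cart$ preserves limits and is accessible, so it has a left adjoint $f^*$, which is not claimed to commute with $U$.
\end{itemize}

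Two smaller points. First, $\IndCoh$ on $\Span{\DSchpnoeth}{E}{\all}{I}{P}$ is not available off the shelf, since Gaitsgory--Rozenblyum work almost of finite type over a field. You must check that $(I,P)$ is a suitable decomposition, which needs compactifications with noetherian $\overline{X}$, and verify (BC1)--(BC5) from Gaitsgory's results. Second, you have the spans reversed: $f_*$ is the image of $X\xleftarrow{\id{X}}X\xrightarrow{f}Y$, and $g^!$ is the image of the backward span.
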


\begin{rmk*}
    In the above theorem, using the functor $\Cart(-)$, we define 
    $$f_* \coloneqq \Cart(X \xleftarrow{\id{X}} X \xrightarrow{f} Y) \qquad \text{and} \qquad f^! \coloneqq \Cart(Y \xleftarrow{f} X \xrightarrow{\id{X}} X).$$
\end{rmk*}

\begin{rmk*}
    Let us explain where the name \defemph{$4\frac{1}{2}$-functor formalism} comes from:
    We have the functors $f_*$ and $f^\times$ for all morphisms $f$, and for all 
    separated almost of finite type morphisms $g$, we have $g^!$ and $g_\flat$.
    This gives $4$ functors.
    For morphisms that are \defemph{eventually coconnective} 
    there exists an additional special adjoint $f^* \dashv f_*$, contributing another $\frac{1}{2}$ functor.
    An example of eventually coconnective morphisms are flat morphisms, so in particular open immersions, étale morphisms 
    and smooth morphisms, cf.\ \cref{lem:derived-schemes:flat-eventually-coconnective}.

    We explain further below why in our setting only the ``wrong'' adjoints exist.
\end{rmk*}

These operations are closely related to the familiar underlying functoriality of $\IndCoh$. 
In fact, we construct the $4\frac{1}{2}$-functor formalism $\Cart(-)$ as a lift of the corresponding $4\frac{1}{2}$-functor formalism of ind-coherent sheaves. 
In particular, we get explicit descriptions of the functors $f_*$ and $g^!$ for Cartier modules:

\begin{introprop}[{\Cref{lem:cartier-modules:main-thm-2ff}}]
    Let $X$ be a derived noetherian $\Fp$-scheme.
    Then $\Cart(X)$ fits into a pullback diagram of $(\infty,1)$-categories
    \begin{center}
        \begin{tikzcd}
            \Cart(X) \ar[r, "U"] \ar[d, "\kappa"] \cartsymb & \IndCoh(X) \ar[d, "{(F_{*,X}, \id{})}"] \\
            \Fun{\Delta^1}{\IndCoh(X)} \ar[r, "{(\ev{0},\ev{1})}"'] & \IndCoh(X) \times \IndCoh(X) \rlap{.}
        \end{tikzcd}
    \end{center}
    In particular, pointwise, the functor formalism of \cref{intro:main-theorem-2FF} recovers the $(\infty,1)$-category 
    of Cartier modules defined above.
    An object $M \in \Cart(X)$ is thus given by the tuple $(UM, \kappa(M) \colon F_{*,X} UM \to UM)$ 
    with $UM$ (resp.\ $\kappa(M)$) an ind-coherent sheaf (resp.\ a morphism of ind-coherent sheaves).
    Moreover, if $f \colon X \to Y$ is an arbitrary morphism of derived noetherian $\Fp$-schemes and $M \in \Cart(X)$ is a Cartier module, 
    then the pushforward $f_* M$ of $M$ is given by $U(f_* M) \cong f_* UM$ and
    $$\kappa(f_* M) \colon F_{*,Y} f_* UM \cong f_* F_{*,X} UM \xrightarrow{f_*\kappa(M)} f_* UM.$$
    For $g \colon X \to Y$ a separated and almost of finite type morphism of derived noetherian $\Fp$-schemes and $N \in \Cart(Y)$ a Cartier module, 
    the exceptional pullback $g^! N$ of $N$ is given by $U(g^! N) \cong g^! UN$ and 
    $$\kappa(g^! N) \colon F_{*,X} g^! UN \to g^! F_{*,Y} UN \xrightarrow{g^!\kappa(N)} g^! UN.$$ 
    Here, the first arrow in the structure morphism is a composition of the equivalence $F_{*,X} j^* \cong j^* F_{*,\overline{X}}$ for $j \colon X \to \overline{X}$ an open immersion 
    and the canonical map $F_{*,\overline{X}} p^\times \to p^\times F_{*,Y}$ defined via adjunction for $p \colon \overline{X} \to Y$ a proper map,
    coming from a compactification $g = pj$ as in \cref{intro:comapctification}.
\end{introprop}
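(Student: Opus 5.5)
The plan is to read off all three claims from the construction of $\Cart(-)$ as a lax limit, as set up in the recollection on Cartier objects as lax limits and in the construction of the $2$-functor formalism.

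\textbf{Pointwise description.} Recall from \cite[Definition 2.4]{us24cart} that $\Cart(X)=\Cart(\IndCoh(X),F_{*,X})$ is defined as the pullback of $\Fun{\Delta^1}{\IndCoh(X)}\to\IndCoh(X)\times\IndCoh(X)$ along $(F_{*,X},\id{})$. So the first claim amounts to checking that evaluating the $2$-functor $\Cart(-)$ at an object $X$ returns this category. The functor is built as a composite
\[
\Span{\DSchpnoeth}{E}{\all}{I}{P}\to\Fun{\BN}{\TwoCatInf}\xrightarrow{\laxlim}\TwoCatInf .
\]
Here the first arrow sends $X$ to $\IndCoh(X)$ with its Frobenius endomorphism $F_{*,X}$, and $\laxlim$ is the (op)lax limit over $\BN$. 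Two steps then give the claim:
\begin{enumerate}[label=(\roman*)]
\item On objects, the lax limit of an endofunctor $T$ of $\Cat C$ over $\BN$ is the lax equalizer of $T$ and $\id{}$, i.e.\ exactly the displayed pullback. This is the comparison between Cartier objects and lax limits established in the recollection section.
\item Evaluating the composite at $X$ gives the pair $(\IndCoh(X),F_{*,X})$. This holds because the Frobenius structure is constructed objectwise from the absolute Frobenius.
\end{enumerate}

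\textbf{Description on morphisms.} A $1$-morphism in the span category is sent to a morphism in $\Fun{\BN}{\TwoCatInf}$. It is strict where the Frobenius commutes with the functor, and only lax or oplax otherwise. Functoriality of lax limits then says the induced functor on lax limits acts by $f$ on underlying objects. The new structure map is the old one precomposed with the structure $2$-cell of $f$.

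For $f_*$ this $2$-cell is the equivalence $F_{*,Y}f_*\cong f_*F_{*,X}$, which comes from naturality of absolute Frobenius ($fF_X=F_Yf$). This yields the stated formula for $\kappa(f_*M)$.

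For $g^!$, use that the $2$-functor formalism of $\IndCoh$ extended to span categories is produced by the universal property of \cite{cnossen2025universalityspan2categoriesconstruction}. Under that construction, $g^!$ is given on a compactification $g=pj$ by $j^*p^\times$. The lax structure cell is then the composite of two cells:
\begin{enumerate}[label=(\alph*)]
\item the equivalence $F_{*,X}j^*\cong j^*F_{*,\overline X}$, which is flat base change for the Frobenius square, or simply locality of Frobenius;
\item the mate $F_{*,\overline X}p^\times\to p^\times F_{*,Y}$ of $p_*F_{*,\overline X}\cong F_{*,Y}p_*$ under $p_*\dashv p^\times$.
\end{enumerate}
To see this, trace the Frobenius structure through the Beck--Chevalley and mate construction in the construction of the $2$-functor formalism.

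\textbf{Main obstacle.} The hard step is this last one: identifying the lax structure of $g^!$ canonically with the stated composite of mates, and checking that it does not depend on the chosen compactification. I would first reduce to the cases $g=j$ and $g=p$ separately, using the composition equivalences of \cref{intro:main-theorem-2FF}\ref{intro:shriek-composition}. In each case I would then unwind the definition: the $2$-functor out of spans restricted to $I$ (resp.\ $P$) is obtained from $(-)_*$ by passing to left (resp.\ right) adjoints. Passing to adjoints turns the strict Frobenius compatibility of $(-)_*$ into exactly these mates.
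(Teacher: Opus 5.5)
Your proposal is correct and follows essentially the paper's route. The pointwise claim is \cref{lem:BN:Cart-as-fully-lax-lim} together with the fact that lax limits in the partially oplax functor $2$-category are computed pointwise. The formulas for $f_*$ and $g^!$ are read off from the oplax naturality cells of the extended $F_*$, exactly as in \cref{lem:constr-cart:CartD-compatibility-U-and-kappa}. Your composite through $\FunLax{\BN}{\TwoCatInf}$ (with lax transformations, and the lax rather than oplax limit) is just the curried form of the paper's $\laxlim(\Psi)$.

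Your identification of the $g^!$-cell via the adjunctions $j^!\dashv j_*$ and $p_*\dashv p^!$ spells out what the paper leaves implicit in the Cnossen--Lenz--Linskens extension; there the $j$-cell is the inverse of the left Beck--Chevalley mate for $\square^F_j$. Independence of the compactification is automatic, since the cell on $g^!$ is intrinsic and compatible with composition.
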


\begin{rmk*}
    We make no claim whether $U$ commutes with any of the adjoints $f_\flat$, $f^*$ and $f^\times$, whenever they exist,
    except in trivial cases like $f$ proper, as then $f^\times = f^!$ 
    or $f$ an open immersion, as then $f^* = f^!$.
    We believe that one can extend the arguments in this paper to show that $U$ commutes with $f^*$ whenever
    $f$ is an étale morphism.
\end{rmk*}

We obtain the following expected formula for computing $f^!$ via compactifications. In particular,
note that the description of $f^!$ is \emph{independent} of the chosen compactification.
\begin{introprop}[{\cref{lem:derived-schemes-consequences:compactification,lem:derived-schemes:suitable-decomposition}}] \label{intro:comapctification}
    Let $f \colon X \to Y$ be a separated and almost of finite type morphism of derived noetherian $\Fp$-schemes.
    For any \defemph{compactification} $X \xrightarrow{j} \overline{X} \xrightarrow{p} Y$ of $f$
    (i.e.\, $\overline{X}$ is a derived noetherian scheme, $j$ is an open immersion and $p$ is proper),
    we have an equivalence $f^! \cong j^* p^\times$,
    where $p^\times$ is right adjoint to $p_*$, and $j^*$ is left adjoint to $j_*$.
    Moreover, such a compactification always exists.
\end{introprop}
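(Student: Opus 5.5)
The claim has two parts: the formula $f^! \cong j^* p^\times$ for any compactification, and the existence of a compactification. I would prove them separately.

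\emph{Existence.} I would reduce to the classical statement. By Nagata's compactification theorem (in the form of Conrad/Lütkebohmert, or Deligne), applied to the classical truncation, the separated finite type morphism $\classical{X} \to \classical{Y}$ factors as an open immersion into a proper $\classical{Y}$-scheme $Z$. To lift this to derived schemes, the plan is:
\begin{itemize}
\item take $\overline{X}$ to be the pushout/gluing of $X$ and $Z \times_{\classical{Y}} Y$ along the open part, or, more carefully, build a derived thickening of $Z$ Postnikov-level by level;
\item use that open immersions into $Z$ determine étale-local data, so that the derived structure on $X$ extends;
\item arrange that the result is almost of finite type and noetherian.
\end{itemize}
This is presumably the content of \cref{lem:derived-schemes:suitable-decomposition}, and the "suitable decomposition" notation there suggests it is already available in the paper. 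I would invoke it directly rather than reprove it.

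\emph{The formula.} By construction, the $4\frac{1}{2}$-functor formalism $\Cart(-)$ on $\Span{\DSchpnoeth}{E}{\all}{I}{P}$ satisfies $f^! \cong j^! p^!$ by the composition equivalence (part \ref{intro:shriek-composition} of \cref{intro:main-theorem-2FF}). The remaining steps are:
\begin{itemize}
\item For $p$ proper, part \ref{intro:adjoints} of \cref{intro:main-theorem-2FF} gives the adjunction $p_* \dashv p^!$, so $p^! \cong p^\times$ by uniqueness of adjoints.
\item For $j$ an open immersion, the same part gives $j^! \dashv j_*$, so $j^! \cong j^*$.
\item Composing yields $f^! \cong j^* p^\times$.
\end{itemize}
Independence of the compactification is then automatic: the left-hand side $f^!$ does not refer to the compactification at all.

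\emph{Main obstacle.} Since the adjunction statements are established earlier in the paper, I expect the real work to lie in the existence part if one has to reprove it. The delicate points are extending the derived structure across the boundary of the classical compactification while keeping the result noetherian and almost of finite type. Otherwise the proposition is a short formal consequence of earlier results.
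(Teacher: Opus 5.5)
Your argument for the formula is the paper's: $f^! \cong j^! p^!$ by composition, and $j^! \cong j^*$, $p^! \cong p^\times$ because the corresponding spans are already adjoint in $\Span{\DSchpnoeth}{E}{\all}{I}{P}$ (\cref{lem:consequences:adjoints-of-upper-shriek}); existence is, as you guessed, exactly part (b) of \cref{lem:derived-schemes:suitable-decomposition}. Note, however, that the paper does not prove that part by lifting classical Nagata, and your gluing sketch would not work as written (there is no map $Y \to \classical{Y}$ to form $Z \times_{\classical{Y}} Y$, and the paper remarks that the standard lifting arguments do not guarantee a noetherian $\overline{X}$); instead it cites Scholze's derived compactification theorem, whose proper map $p$ is almost of finite presentation, so $\overline{X}$ is noetherian because $Y$ is.
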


Moreover, we have the following open base change result:
\begin{introprop}[Open base change, {\cref{lem:derived-schemes-consequences:open-basechange}}]
    Let $\square$ be a cartesian diagram of noetherian derived $\Fp$-schemes of the form
    \begin{center}
        \begin{tikzcd}
            Y' \ar[r, "g'"] \ar[d, "f'"'] \cartsymb &X' \ar[d, "f"] \\
            Y \ar[r, "g"'] & X,
        \end{tikzcd}
    \end{center}
    and suppose that $f$ is an open immersion and $g$ is proper.
    Then $\Cart_*(\square)$ satisfies the double Beck--Chevalley condition.
    In particular, there is a canonical equivalence $$f'^*g^\times \cong g'^\times f^* \colon \Cart(X) \to \Cart(Y').$$
\end{introprop}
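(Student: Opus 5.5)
The open base change statement follows from the $4\frac{1}{2}$-functor formalism $\Cart(-)$ of \cref{intro:main-theorem-2FF}. The guiding fact is that for open immersions $f^*\cong f^!$, and for proper morphisms $g^\times\cong g^!$.

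First, we set up the square. The functors $\Cart_*(\square)$ form the square of pushforwards $f_*, g_*, f'_*, g'_*$ together with the composition equivalence $f_*g'_*\cong g_*f'_*$. By \cref{intro:main-theorem-2FF}\ref{intro:adjoints}, $f_*$ and $f'_*$ admit left adjoints $f^*\cong f^!$ and $f'^*\cong f'^!$. Likewise $g_*$ and $g'_*$ admit right adjoints $g^\times\cong g^!$ and $g'^\times\cong g'^!$, since proper morphisms are stable under base change. The double Beck--Chevalley condition asks for two things:
\begin{itemize}
\item the mate $f'^*g^\times\to g'^\times f^*$, obtained by passing to both adjoints, is an equivalence;
\item equivalently, the mates $f^*g_*\to g'_*f'^*$ and $g^\times f_*\to f'_*g'^\times$ are equivalences.
\end{itemize}

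Second, we identify one mate with the formalism's base change equivalence. Since $f$ is separated and almost of finite type, \cref{intro:main-theorem-2FF}\ref{intro:basechange} provides $f^!g_*\cong g'_*f'^!$. We must check that, under $f^!\cong f^*$, this equivalence is the Beck--Chevalley mate of $f_*g'_*\cong g_*f'_*$ for the adjunction $f^*\dashv f_*$. This holds because the $2$-functor on $\Span{\DSchpnoeth}{E}{\all}{I}{P}$ encodes the open adjunctions through the $2$-cells of the span category. The base change equivalence is built from the same unit and counit, so it agrees with the mate. The analogous statement for proper $g$ uses the counit of $g_*\dashv g^!$ encoded in the span $2$-category, and gives $g^!f_*\cong f'_*g'^!$ via base change along $g$.

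Third, we pass to the double mate. An equivalence of the horizontal mate implies an equivalence of the total mate, by the general $2$-categorical lemma from the Beck--Chevalley section: a square with all four adjoints is double Beck--Chevalley once one mate is invertible. Taking right adjoints of both sides of $f^*g_*\cong g'_*f'^*$ then yields $g^\times f_*\cong f'_*g'^\times$. Rewriting everything in terms of $!$-functors gives $f'^*g^\times\cong f'^!g^!\cong (gf')^!=(fg')^!\cong g'^!f^!\cong g'^\times f^*$ by \cref{intro:main-theorem-2FF}\ref{intro:shriek-composition}.

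\textbf{Main obstacle.} The hard step is the compatibility check in the second paragraph. We must show that the abstractly produced equivalence $f^!g_*\cong g'_*f'^!$ really is the mate; otherwise the equivalence is merely abstract and not canonical. To handle this, I would reduce to the universal property of spans in Cnossen--Lenz--Linskens, under which $(I,P)$-Beck--Chevalley squares are exactly those sent to adjointable squares. This is already part of the definition of a $2$-functor formalism in \cref{def:2ff:defn}.
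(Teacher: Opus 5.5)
\textbf{Gap in the third step.} There is no lemma saying that a square with all four adjoints is double Beck--Chevalley once one mate is invertible, and the claim is false. When all adjoints exist, passing to right adjoints turns the left mate $f^*g_*\to g'_*f'^*$ into the right mate $f'_*g'^\times\to g^\times f_*$ of the transposed square. So your two ``horizontal'' mates are a single condition, the left Beck--Chevalley condition, and this is only the \emph{prerequisite} for the double condition. The double transformation $f'^*g^\times\to g'^\times f^*$ is built from the inverse of that mate, the unit of $g'_*\dashv g'^\times$ and the counit of $g_*\dashv g^\times$. Its invertibility is a genuinely extra requirement, which is why it is a separate axiom (BC5) in \cref{def:cff:suitable-decomposition-all-BC}. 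For a counterexample, take $f=g=\mathrm{const}\colon\Cat C\to\Fun{\Delta^1}{\Cat C}$ and $f'=g'=\id{\Cat C}$. The left mate is the counit $\ev{1}\circ\mathrm{const}\to\id{\Cat C}$, an equivalence, but the double transformation is the structure map $\ev{0}\to\ev{1}$, which is not an equivalence. The remark in the Beck--Chevalley subsection only identifies two formulas for $\mathrm{Ex}^{L,R}$; it says nothing about invertibility.

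\textbf{What your final chain proves, and what is missing.} The chain $f'^*g^\times\cong f'^!g^!\cong(gf')^!\cong(fg')^!\cong g'^!f^!\cong g'^\times f^*$ does give \emph{an} equivalence of functors. You never show it is $\mathrm{Ex}^{L,R}(\Cart_*(\square))$, so it does not establish the double Beck--Chevalley condition, which is the actual assertion. The missing input is that (BC5) holds for $\Cat D_*$ of \emph{any} $2$-functor formalism on $\Span{\Cat C}{E}{\all}{I}{P}$. This is \cref{lem:consequences:contravariant-exchange}, equivalently (BC5) in \cref{lem:constr-2cat:D_*-allBC} via Cnossen--Lenz--Linskens. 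It is proved by computing $\mathrm{Ex}^{L,R}$ inside the span $2$-category, where $f'^*g^\times$ and $g'^\times f^*$ are both the span $X\xleftarrow{gf'=fg'}Y'\xrightarrow{\id{}}Y'$, with a unit/counit computation analogous to the one in \cref{lem:consequences:BC-for-I}. Since $2$-functors preserve adjunctions and mates, applying $\Cart$ from \cref{lem:cartier-modules:main-thm-2ff} finishes the proof; this is exactly the paper's argument. Also note that \cref{def:2ff:defn} only concerns the existence of adjoints; the Beck--Chevalley properties come from \cref{lem:constr-2cat:D_*-allBC}, not from that definition.
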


As a special case of the base change result \cref{intro:main-theorem-2FF} \ref{intro:basechange},
we obtain the following base change result that holds for classical schemes.
\begin{introprop}[{\cref{lem:consequences:Tor-unital-basechange}}]
    Let
    \begin{center}
        \begin{tikzcd}
            &X' \ar[d, "f"] \\
            Y \ar[r, "g"'] & X,
        \end{tikzcd}
    \end{center}
    be a span of classical noetherian $\Fp$-schemes, where $f$ is separated and of finite type.
    Assume moreover that $f$ and $g$ are \emph{Tor-independent}, i.e.\ 
    for all compatible affine charts $\Spec{A} \subseteq X$, $\Spec{B} \subseteq Y$ and $\Spec{C} \subseteq X'$ we have
    $$\mathrm{Tor}^i_{A}(B, C) = 0$$
    for all $i > 0$.
    Then the derived pullback $X' \times_X Y$ is a classical scheme (in particular, it is the classical fiber product), and there is an equivalence of functors 
    \begin{equation*}
        f^! g_* \cong g'_* f'^! \colon \Cart(Y) \to \Cart(X'),
    \end{equation*}
    where $f' \colon X' \times_X Y \to Y$ and $g' \colon X' \times_X Y \to X'$ are the projections.
\end{introprop}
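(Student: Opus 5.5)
The plan is to reduce everything to the base change equivalence of \cref{intro:main-theorem-2FF}~\ref{intro:basechange}. That reduction only needs one genuinely new input: under Tor-independence, the derived fiber product $X' \times_X Y$ in $\DSchpnoeth$ agrees with the classical fiber product.

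First, I show that the derived fiber product $Z \coloneqq X' \times_X Y$ is classical. Being classical is Zariski local, and the derived fiber product is computed affine-locally, so we may take compatible affine charts $\Spec{A} \subseteq X$, $\Spec{B} \subseteq Y$, $\Spec{C} \subseteq X'$. On such a chart $Z$ is $\Spec{B \otimes^{\mathbb L}_A C}$, where $B \otimes^{\mathbb L}_A C$ is the pushout in $\AnRingp$. Its homotopy groups are $\pi_i(B \otimes^{\mathbb L}_A C) \cong \mathrm{Tor}_i^A(B,C)$, via the standard spectral sequence, or directly because the underlying module of the animated pushout is the derived tensor product. By hypothesis these vanish for $i>0$. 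Hence $B \otimes^{\mathbb L}_A C \simeq B \otimes_A C$ is discrete, and these affine pieces glue to the classical fiber product $X' \times_X^{\mathrm{cl}} Y$.

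Next, I check the hypotheses of \cref{intro:main-theorem-2FF}. The classical noetherian schemes $X, X', Y$ are objects of $\DSchpnoeth$. A morphism of finite type between classical noetherian schemes is almost of finite type in the derived sense, and separatedness only depends on the underlying classical morphism. So $f$ lies in the class $E$ of \cref{def:derived-schemes:nagata-subcategories}. The square formed by $f, g$ and the projections $f', g'$ out of $Z$ is cartesian in $\DSchpnoeth$, since $Z$ is by construction the derived fiber product. Moreover $f'$ is separated and almost of finite type, being a base change of $f$.

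Finally, I apply \cref{intro:main-theorem-2FF}~\ref{intro:basechange} to this cartesian square, which gives $f^! g_* \cong g'_* f'^!$ as functors $\Cart(Y) \to \Cart(X')$. By the first step, $f'$ and $g'$ are just the classical projections, so this is the stated equivalence.

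The only step needing care is the first: identifying $\pi_*$ of the animated pushout with Tor, and making sure the chart-wise condition in the hypothesis covers an affine open cover of the derived fiber product. Choosing affine opens of $X$, then affine opens of $X'$ and $Y$ mapping into them, handles the latter. Everything else is formal bookkeeping.
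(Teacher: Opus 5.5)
Your proposal is correct and follows the paper's argument. The paper also identifies the derived fiber product with the classical one via Tor-vanishing and then applies the base change equivalence of \cref{lem:derived-schemes-consequences:BC}; it cites a reference for the Tor step, whereas you compute $\pi_*$ of the animated pushout directly. The one step you leave implicit, that $X' \times_X Y$ is noetherian so that the square lives in $\DSchpnoeth$, is supplied by \cref{lem:derived-schemes:noetherian-fiber-product-aft}, or by Hilbert's basis theorem applied to your affine charts.
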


So far, we have mainly talked about two of the functors of a six-functor formalism, namely $f_*$ and $f^!$ (and their adjoints). 
Now we explain how to incorporate also the tensor product. 
For this, we have to change our setting a bit. 
Indeed, when constructing a symmetric monoidal structure on the $(\infty,1)$-category $\Cat D(X)$ out of the lax symmetric monoidal structure of a $2$-functor $\Cat D \colon \SpanStd^\otimes \to \TwoCatInf^\times$, 
we need that all diagonals lie in $E$. 
In fact, for each object $X \in \Cat C$ the symmetric monoidal structure on $\Cat D(X)$ is encoded by the functor 
$$\Cat D(X) \times \Cat D(X) \to \Cat D(X \times X) \xrightarrow{\Delta_X^!} \Cat D(X), $$
where the first arrow is given by the lax symmetric monoidal structure of $\Cat D$ and $\Delta_X \colon X \to X \times X$ denotes the diagonal.
Moreover, this also needs that the derived scheme $X \times X$ is again noetherian, which is false in general.
That is why we restrict ourselves in the following to $\defnot{\DSchsaft} \subseteq \DSchpnoeth$, by which we denote the full subcategory spanned by the \defemphnoindex{derived schemes 
that are separated and almost of finite type over $\Fp$}. 
\begin{introthm}[{\cref{lem:cartier-modules:main-thm-3ff,lem:consequences:shriek-symmetric-monoidal,lem:consequences:shriek-star-projection-formula}}] \label{intro:main-theorem-3FF}
    For every $X \in \DSchsaft$ the $(\infty,1)$-category $\Cart(X)$ admits 
    a symmetric monoidal structure, called the \defemph{shriek tensor product}, with tensor functor denoted by $\defnot{- \shriektensor -}$.
    Moreover, we have the following:
    \begin{enumerate}[label=(\alph*)]
        \item \label{intro:shriek-tensor-presentably-monoidal} The shriek tensor product $- \shriektensor -$ preserves colimits in each variable, i.e.\ $\Cart(X)$ is a \defemphnoindex{presentably symmetric monoidal} category.
            In particular, the symmetric monoidal category $\Cart(X)$ is \defemphnoindex{closed}, i.e.\ it admits an \defemph{internal hom}. 
        \item \label{intro:shriek-sym-mon} Let $f \colon X \to Y$ be a morphism in $\DSchsaft$. Then the exceptional pullback 
            $f^! \colon \Cart(Y) \to \Cart(X)$ admits a symmetric monoidal structure.
        \item \label{intro:shriek-projection-formula} Let $f \colon X \to Y$ be a morphism in $\DSchsaft$.
            Then the functor $f_* \colon \Cart(X) \to \Cart(Y)$ satisfies the projection formula: For two Cartier modules $M \in \Cart(X)$, $N \in \Cart(Y)$, there is an equivalence 
            $$f_*(M \shriektensor f^! N) \cong f_*(M) \shriektensor N.$$
    \end{enumerate}
    This data is packaged as a lax symmetric monoidal functor of $(\infty,2)$-categories 
    $$\defnot{\Cart^\otimes(-)} \colon \Span{\DSchsaft}{\all}{\all}{I}{P}^\otimes \to \TwoCatInf^\times,$$
    where $I$ denotes the class of open immersions, $P$ the class of proper morphisms.
    In other words, we have defined a \textbf{\defemph{$6\frac{1}{2}$-functor formalism}} with respect to the class of eventually coconnective morphisms
    in the notation of \cref{def:3ff:defn}.

    Forgetting the lax symmetric monoidal structure, the underlying functor is equivalent to the (restriction to $\DSchsaft$ of the) functor of \cref{intro:main-theorem-2FF}.
\end{introthm}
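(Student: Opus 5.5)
We construct $\Cart^\otimes(-)$ by lifting the known $6$-functor formalism of ind-coherent sheaves, \[\IndCoh^\otimes \colon \Span{\DSchsaft}{\all}{\all}{I}{P}^\otimes \to \TwoCatInf^\times\] from \cite{gaitsgory2019study}, to Cartier objects. The key input is the lax-limit description of Cartier modules. The Frobenius $F_X$ is natural in $X$, and $F_{*}$ is compatible with $(-)^!$ via compactifications. This lets us view $\IndCoh^\otimes$ together with the Frobenius as a lax symmetric monoidal functor
\[\Span{\DSchsaft}{\all}{\all}{I}{P}^\otimes \to \TwoFunDecorated{\operatorname{oplax}}{}{}{\BN}{\TwoCatInf}^\times.\]
Concretely, each $\IndCoh(X)$ carries the endofunctor $F_{*,X}$. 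For $f_*$ the transformation $F_{*,Y}f_*\simeq f_*F_{*,X}$ is invertible. For $f^!$ it is the lax transformation $F_{*,X}f^!\to f^!F_{*,Y}$ described in the introduction. Cartier modules are then obtained by postcomposing with the lax-limit $2$-functor $\laxlim_{\BN}$. By the results on lax symmetric monoidal structures in \cref{sec:sym-mon}, this $2$-functor is lax symmetric monoidal for the cartesian structure, since lax limits commute with products.

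The steps, in order, are as follows.
\begin{enumerate}[label=(\arabic*)]
\item Use the universal property of span $2$-categories from \cite{cnossen2025universalityspan2categoriesconstruction}, in its multiplicative form as set up in \cref{sec:3ff}. It reduces the construction to a lax symmetric monoidal functor out of $\op{\DSchsaft}$ (for $(-)^!$, or the $*$-side) into the $2$-category of $\BN$-oplax objects. We also need left adjointability along $P$ and right adjointability along $I$, with the Beck--Chevalley conditions.
\item Verify these conditions for the Frobenius-enhanced $\IndCoh$, which is the content of the Frobenius-structure subsections. The base change maps for Cartier modules have underlying maps the $\IndCoh$ base change equivalences, and $U$ is conservative. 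So it suffices to check that the relevant lax squares in the $\BN$-direction are compatible with the Beck--Chevalley transformations. The needed squares are those in which $F_*$ commutes with proper pushforward and with open restriction.
\item Apply $\laxlim_{\BN}$ to obtain $\Cart^\otimes(-)$. Then read off the consequences.
\begin{itemize}
\item Part \ref{intro:shriek-tensor-presentably-monoidal}: $M\shriektensor N=\Delta_X^!(M\boxtimes N)$. It preserves colimits in each variable because $\boxtimes$ and $\Delta^!$ do on $\IndCoh$, and $U$ creates colimits. Closedness then follows from presentability.
\item Part \ref{intro:shriek-sym-mon}: $f^!$ is symmetric monoidal because in the span $2$-category the backward maps are strong monoidal images.
\item Part \ref{intro:shriek-projection-formula}: the projection formula follows formally from the lax monoidal $2$-functor applied to the appropriate span composition. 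The induced map is an equivalence because its image under $U$ is the $\IndCoh$ projection formula.
\end{itemize}
\item The compatibility with \cref{intro:main-theorem-2FF} holds by construction: forgetting the monoidal structure commutes with the lax-limit construction, and the underlying data agrees.
\end{enumerate}

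The main obstacle is the first step. It requires producing the Frobenius as a coherent lax symmetric monoidal oplax-natural endotransformation compatible with exterior products. That is, we need coherent identifications $F_{*,X\times Y}(M\boxtimes N)\simeq F_{*,X}M\boxtimes F_{*,Y}N$. This is subtle because the absolute Frobenius of $X\times Y$ is the product of the Frobenii, and Frobenius is not relative over $\Fp$ in any problematic way, since $F_{\Fp}=\id{}$. To make this coherent, I would first obtain the Frobenius as a symmetric monoidal natural endotransformation of the identity on $\DSchsaft$, using that the product preserves absolute Frobenius. I would then transport it through the functoriality of $\IndCoh^\otimes$ using the pointwise-monoidal $2$-categories of \cref{sec:sym-mon}.
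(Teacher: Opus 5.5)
Your outline is correct and is in substance the paper's construction, but you package the key step differently. You use the span universal property only on objects (the object part of \cref{lem:tensor-product:lax-sym-mon-UP-span}), with target the pointwise $2$-category $\FunLax{\BN}{\TwoCatInf^\times}^{\ptw}$. You then postcompose with $\laxlim_{\BN}$, which is lax symmetric monoidal by \cref{lem:lax-lim:partial-lax-lim-lax-sym-mon}. The paper instead uses the oplax-transformation version of that theorem, with target $\TwoCatInf^\times$, to extend $F_*$ to a partially oplax endomorphism of $\IndCoh^\otimes$. It then forms the lax limit inside $\FunPartOplaxLaxTensor{(I,\all)}{\Span{\DSchsaft}{\all}{\all}{I}{P}^\otimes}{\TwoCatInf^\times}$, which requires \cref{lem:tensor-product:lax-sym-mon-lax-lim-existence}. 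By currying the two inputs are the same datum, and the proof of \cref{lem:tensor-product:U-is-sym-mon} records that the paper's $\Cart^\otimes$ is exactly $\laxlim_{\BN}$ composed with it.

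Your route avoids the existence theorem for lax limits in functor $2$-categories, and with it the $2$-functoriality machinery of \cref{sec:sym-oplax-lax}. Two things also become formal: compatibility with the underlying $2$-functor formalism, and $U$ being strong and symmetric monoidal (it is the monoidal counit of $\mathrm{const}\dashv\laxlim_{\BN}$ whiskered with your functor). The price is that you must check adjointability inside $\FunLax{\BN}{\TwoCatInf}$ directly, which the paper's oplax version of the span theorem packages for you. In return, the paper's version gives $\Cart^\otimes$ a universal property as a lax limit.

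Three corrections are needed for the argument to run as written.
\begin{enumerate}
\item With the paper's conventions the target must be $\FunLax{\BN}{\TwoCatInf^\times}^{\ptw}$, as your own description of the cell $F_{*,X}f^!\to f^!F_{*,Y}$ indicates. There the strong morphism $p_*$ gets its right adjoint $p^!$ automatically, with that mate as lax structure. In $\FunOpLax{\BN}{\TwoCatInf}$ such a right adjoint would force $F_{*,X}p^!\simeq p^!F_{*,Y}$, which is false.
\item The input must be the covariant $*$-side functor $(\DSchsaft)^\times\to\FunLax{\BN}{\TwoCatInf^\times}^{\ptw}$, $X\mapsto(\IndCoh(X),F_{*,X})$, obtained by whiskering the symmetric monoidal Frobenius with $\IndCoh_*^\otimes$. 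The required conditions are left adjoints along $I$ and right adjoints along $P$. Your step (1) speaks of $\op{\DSchsaft}$ with left adjointability along $P$ and right along $I$; that is the $!$-side, where the Frobenius cells are not invertible and nothing you describe produces them coherently.
\item The one non-formal condition is that $j_*$ has a left adjoint in $\FunLax{\BN}{\TwoCatInf}$ for an open immersion $j\colon U\to X$. Equivalently, the mate $j^*F_{*,X}\to F_{*,U}j^*$ must be invertible. You name this but do not prove it. The paper shows that the Frobenius square along an \'etale map is cartesian (the classical statement together with \cref{lem:derived-scheme:cartesian-classical-flat}), and then applies $\IndCoh$ base change for eventually coconnective maps (\cref{lem:applications-constr-CartD:i^*-commutes-with-F_*}).
\end{enumerate}
All remaining Beck--Chevalley conditions, including those for $\boxtimes$, then reduce to $\IndCoh$. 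This is because evaluation at the object of $\BN$ is a $2$-functor that preserves mates and detects invertible $2$-morphisms.
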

Again, the symmetric monoidal structure is closely related to the familiar underlying shriek tensor product of $\IndCoh$.

\begin{introprop}[{\cref{lem:derived-schemes-consequences:symm-mon-structure,lem:cartier-modules:main-thm-3ff}}]
    Let $X$ be a derived noetherian $\Fp$-scheme that is separated and almost of finite type over $\Fp$.
    The forgetful functor $U_X \colon \Cart(X) \to \IndCoh(X)$ is symmetric monoidal for the shriek tensor product on both sides.
    We can describe the unit and tensor functor of the shriek tensor product:
    \begin{enumerate}[label=(\arabic*)]
        \item The tensor unit $\mathbf{1}_{\Cart(X)}$ is given by 
            $$U(\mathbf{1}_{\Cart(X)}) = \omega_X$$
            and
            $$\kappa(\mathbf{1}_{\Cart(X)}) \colon F_{*,X} \omega_X \cong F_{*,X} F_X^! \omega_X \xrightarrow{\mathrm{counit}} \omega_X.$$
            Here, $\omega_X$ denotes the tensor unit of $\IndCoh(X)$ for the shriek tensor product.
        \item For two Cartier modules $M, N \in \Cart(X)$, their shriek tensor product $M \shriektensor N$ is given by 
            $$U(M \shriektensor N) \cong U(M) \shriektensor U(N)$$ 
            and
            \begin{equation*}
                \kappa(M \shriektensor N) \colon F_{*,X} (UM \shriektensor UN) \to F_{*,X} UM \shriektensor F_{*,X} UN \xrightarrow{\kappa(M) \shriektensor \kappa(N)} UM \shriektensor UN, 
            \end{equation*}
            where the first arrow is given by the oplax symmetric monoidal structure of $F_{*,X}$.
    \end{enumerate}
\end{introprop}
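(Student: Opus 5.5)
The plan is to deduce everything from the lax symmetric monoidal lift $\Cart^\otimes(-)$ of \cref{intro:main-theorem-3FF}. That lift comes from applying the general procedure for Cartier objects in symmetric-monoidal $2$-functor formalisms to the $6$-functor formalism $\IndCoh^\otimes$, with Frobenius structure $F_{*}$, which is oplax symmetric monoidal for the shriek tensor product. The key structural input is that the construction is a lax limit over $\BN$ formed inside lax symmetric monoidal functors. Consequently the forgetful transformation $U \colon \Cart^\otimes(-) \to \IndCoh^\otimes(-)$ is itself a symmetric monoidal transformation of lax symmetric monoidal functors out of $\Span{\DSchsaft}{\all}{\all}{I}{P}^\otimes$.

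The first step is to show that $U_X$ is symmetric monoidal. Evaluating $U$ at the pair $(X,X)$ and at the span $X\times X \xleftarrow{\Delta} X \xrightarrow{\id{}} X$ gives two compatibilities:
\begin{itemize}
\item $U$ is compatible with the lax structure maps $\Cart(X)\times\Cart(X)\to\Cart(X\times X)$;
\item $U$ is compatible with $\Delta_X^!$, since pointwise $U$ commutes with exceptional pullback by the preceding proposition.
\end{itemize}
The shriek tensor product is defined as the composite of these two, so this gives $U(M\shriektensor N)\cong UM\shriektensor UN$ coherently. The unit is handled the same way, using the structure map from the terminal object together with $p^!$ for $p\colon X\to\Spec{\Fp}$.

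The second step is to identify the Cartier structure maps. On the lax limit $\Cart(X)=\Cart(\IndCoh(X),F_{*,X})$, the symmetric monoidal structure is the one induced on Cartier objects for an oplax monoidal endofunctor. Concretely, $\kappa(M\shriektensor N)$ is $\kappa(M)\shriektensor\kappa(N)$ precomposed with the oplax structure map $F_{*,X}(UM\shriektensor UN)\to F_{*,X}UM\shriektensor F_{*,X}UN$. I would verify this by unwinding the pointwise description of the lax-limit monoidal structure from the section on pointwise symmetric monoidal structures.

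For the unit, the Cartier unit must be $\omega_X$ with its structure map $F_{*,X}\omega_X\to\omega_X$, which is the oplax unit map of $F_{*,X}$. I then need to identify this map with the counit of $F_{*,X}\dashv F_X^!$ composed with $\omega_X\cong F_X^!\omega_X$. This follows because $F_{*,X}$ is the left adjoint of the symmetric monoidal functor $F_X^!$ ($F_X$ is proper, hence in $E$), so its oplax structure is the mate of the monoidal structure of $F_X^!$. Evaluating that mate at the unit yields exactly the counit composed with $F_X^!\omega_X\cong\omega_X$.

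The main obstacle is the compatibility of the two descriptions of the monoidal structure. One description is abstract, via $\Delta^!$ and the lax structure of the $2$-functor. The other is the concrete one inherited from Cartier objects over the oplax monoidal $F_*$. Reconciling them requires knowing that the oplax structure of $F_*$ on the shriek tensor product is itself produced from the $6$-functor formalism. That is, it should be the Beck--Chevalley type mate built from $F\times F$, $\Delta$, and the base-change equivalences. I would handle this by tracking the Frobenius structure as a $2$-morphism in the span category, as in the Frobenius structures subsection for $3$-functor formalisms.
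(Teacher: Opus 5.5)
\textbf{Overall.} Your outline matches the paper's proof in structure. $U$ is symmetric monoidal because it is the lax-limit projection. Part (2) is read off from the compatibility of $\kappa$ (a $2$-morphism of partially oplax transformations between lax symmetric monoidal functors) with $\Delta_X^!$ and with $\boxtimes$. Part (1) reduces, via the nullary compatibility of $\kappa_X$, to identifying the oplax unit map $e_X\colon F_{*,X}\omega_X\to\omega_X$.

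Two of your steps are asserted rather than proved. First, the ``consequently'' in your first paragraph is \cref{lem:tensor-product:U-is-sym-mon}. There the paper writes $U$ as $\widetilde\Psi$ whiskered with $\ev{*}c\colon\laxlim_{\BN}\to\ev{*}$, and checks that $\ev{*}c$ is symmetric monoidal because $\laxlim_{\BN}$ and $\ev{*}$ both preserve finite products. Second, your last paragraph puts the difficulty in the wrong place. The oplax structure on $F_{*,X}$ in the statement is \emph{defined} by applying $\TwoCAlg{-}$ to $F_*$ (\cref{lem:tensor-product:F_*X-is-oplax-sym-mon}). So its binary part is, by construction, the oplax naturality cell at $\Delta_X$ followed by the $\boxtimes$-compatibility, and (2) needs no reconciliation.

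\textbf{The missing step is in (1).} The sentence ``$F_{*,X}$ is left adjoint to the symmetric monoidal $F_X^!$, so its oplax structure is the mate'' does not follow. The oplax structure at hand is the one produced by the formalism, and its agreement with the doctrinal mate is exactly what (1) asserts. The paper proves only the nullary part, which is all you need:
\begin{enumerate}
\item Since $F_{\Spec{\Fp}}=\id{\Spec{\Fp}}$, the nullary compatibility of $F_*$ at the terminal object is the identity.
\item Hence $e_X$ is the oplax naturality cell $F_{*,X}a_X^!\to a_X^!$ of $F_*$ at the backward span of $a_X\colon X\to\Spec{\Fp}$, evaluated at $\omega_{\Fp}$, using $\omega_X\cong a_X^!\omega_{\Fp}$.
\item That cell is the image of the span $2$-morphism from $\Spec{\Fp}\xleftarrow{a_X}X\xrightarrow{F_X}X$ to $\Spec{\Fp}\xleftarrow{a_X}X\xrightarrow{\id{X}}X$ with apex $X$, $i=\id{X}$ and $p=F_X$. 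This is a legitimate $2$-morphism because $a_XF_X=a_X$ and $F_X$ is proper.
\item Its image is the mate of $a_X^!\cong F_X^!a_X^!$, namely the composite $F_{*,X}a_X^!\cong F_{*,X}F_X^!a_X^!\to a_X^!$ of this equivalence with the counit of $F_{*,X}\dashv F_X^!$.
\item Evaluating at $\omega_{\Fp}$ gives exactly the formula in (1).
\end{enumerate}
Your idea of tracking the Frobenius as a $2$-morphism in the span category is the right tool; this is the place where it has to be applied.
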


\subsection*{A short overview of the construction}

We give a brief outline of the construction of the underlying $3$-functor formalism of $\Cart^\otimes(-)$ which encodes the functors $f_*$, $g^!$ and $\shriektensor$. 
The extension to a $6\frac{1}{2}$-functor formalism then only requires showing that certain adjoints exist.
Let $X$ be a derived noetherian $\Fp$-scheme separated and almost of finite type over $\Fp$ and denote by $F_X \colon X \to X$ its absolute Frobenius.
Recall that the $(\infty,1)$-category of Cartier modules of ind-coherent sheaves is defined as the pullback in the diagram 
\begin{center}
    \begin{tikzcd}
        \Cart(X) \ar[r, "U"] \ar[d, "\kappa"'] \cartsymb & \IndCoh(X) \ar[d, "{(F_{*,X}, \id{})}"] \\
        \Fun{\Delta^1}{\IndCoh(X)} \ar[r, "{(\ev{0},\ev{1})}"'] & \IndCoh(X) \times \IndCoh(X) \rlap{.}
    \end{tikzcd}
\end{center}
Ideally, we would like to adapt this idea and just define $\Cart^\otimes(-)$ as the pullback of 
\begin{center}
    \begin{tikzcd}[column sep=huge]
        & \IndCoh^\otimes(-) \ar[d, "{(F_*, \id{})}"] \\
        \Fun{\Delta^1}{\IndCoh^\otimes(-)} \ar[r, "{(\ev{0},\ev{1})}"'] & \IndCoh^\otimes(-) \times \IndCoh^\otimes(-)
    \end{tikzcd}
\end{center}
in the $(\infty,1)$-category of $3$-functor formalisms.
But this would require $F_* \colon \IndCoh^\otimes(-) \to \IndCoh^\otimes(-)$ to be a natural transformation, 
in particular, it needs to commute with the functors $f_*$ and $f^!$. 
Unfortunately, for $f^!$ this is wrong in general.
Indeed, for $[p \colon X \to Y] \in P$, there is a canonical map $F_{*,X} p^! \to p^! F_{*,Y}$ but it is not an equivalence in general.
In other words, $F_*$ is an \emph{oplax natural transformation} which is strong over forward spans (encoding the $f_*$-functoriality) and over backward spans lying in $I$ (encoding the $j^*$-functoriality).
The strongness over some class of maps is captured by the notion of a \emph{partially oplax natural transformation}.
This fact naturally forces us to work with $(\infty,2)$-categories as $(\infty,1)$-categories serve to only encode strong natural transformations.

Having said this we would like to form the above pullback in the $(\infty,1)$-category of $3$-functor formalisms with partially oplax natural transformations between them. 
But it is unclear if pullbacks exist in this category. 
That is why we give an equivalent definition of Cartier modules in terms of a lax limit in \cref{lem:BN:Cart-as-fully-lax-lim}. 
In fact, we show in \Cref{lem:tensor-product:lax-sym-mon-lax-lim-existence}, generalizing \cite[Proposition 5.4.2]{abellán2026freefibrationslaxcolimits}, that lax limits exist in an $(\infty,2)$-categorical enhancement of this $(\infty,1)$-category and that they are even computed pointwise.

More concretely, $\Cart(X)$ is equivalent to the lax limit of the diagram $\BN \to \TwoCatInf$ given by the functor $F_{*,X} \colon \IndCoh(X) \to \IndCoh(X)$. 
To extend this to $3$-functor formalisms we have to extend the known natural transformation $F_* \colon \IndCoh_*(-) \to \IndCoh_*(-)$ (encoding the commutation with $f_*$) 
to a partially oplax natural transformation $F_* \colon \IndCoh^\otimes(-) \to \IndCoh^\otimes(-)$ between $3$-functor formalisms. 
To that end, we use the main theorem of Cnossen--Lenz--Linskens \cite[Theorem 6.13]{cnossen2025universalityspan2categoriesconstruction} about the construction of six-functor formalisms. 
In fact, Cnossen--Lenz--Linskens work with six-functor formalisms being defined via $f^*$ and $f_!$, whereas the six-functor formalisms we consider are defined via $f_*$ and $f^!$, 
so we first have to establish the somewhat dual result of \cite[Theorem 6.13]{cnossen2025universalityspan2categoriesconstruction}. 

Once we have the partially oplax natural transformation $F_* \colon \IndCoh^\otimes(-) \to \IndCoh^\otimes(-)$, it defines a $\BN$-diagram in the 
$(\infty,2)$-category of $3$-functor formalisms with partially oplax natural transformations. 
We define the $3$-functor formalism $\Cart^\otimes(-)$ to be the lax limit of this diagram. 
As this lax limit is computed pointwise by \Cref{lem:tensor-product:lax-sym-mon-lax-lim-existence} and \cite[Proposition 5.4.2]{abellán2026freefibrationslaxcolimits}, 
we see that for each $X\in \DSchsaft$, $\Cart(X)$ is indeed given by the $(\infty,1)$-category of Cartier modules on $X$.

\subsection*{Why do we use ind-coherent sheaves?}

Our choice of coefficients is motivated by Grothendieck duality.
For a proper morphism $f$, the exceptional pullback $f^!$ should
be right adjoint to the pushforward $f_*$.
In the derived category of quasi-coherent sheaves, this right
adjoint need not preserve colimits.
Our construction, however, requires exceptional pullback to
preserve colimits, so that it also admits a right adjoint.
Ind-coherent sheaves provide a setting in which these requirements
are compatible; see
\cite{gaitsgory2012indcoherentsheaves,gaitsgory2019study}.

The distinction is particularly relevant on singular schemes.
The compact objects of $\QCoh(X)$ are the perfect complexes,
whereas $\IndCoh(X)$ is obtained by adjoining filtered colimits
to the stable $(\infty,1)$-category of bounded coherent complexes.
Thus, bounded coherent complexes are compact in $\IndCoh(X)$,
even when they are not perfect.
Proper pushforward preserves bounded coherent complexes, and
this allows its right adjoint on ind-coherent sheaves to preserve
colimits.
Together with restriction to open subschemes, this gives the
exceptional pullback functors used in our formalism.

Nevertheless, for classical noetherian schemes, passing to
ind-coherent coefficients does not change the bounded derived
theory of Cartier modules:

\begin{prop*}[{\cref{lem:derived-schemes:bdd-above-cartier-modules}}]
    Let $X$ be a classical noetherian $\Fp$-scheme and denote by
    $F \colon X \to X$ its absolute Frobenius.
    Then there are equivalences of stable $(\infty,1)$-categories
    $$\Cart(\IndCoh(X),F_*)^b \cong \Cart(\QCoh(X),F_*)^b \cong D^b\bigl(\Cart(\QCoh(X)^\heartsuit,F_*)\bigr).$$
    Here, $\QCoh(X)^\heartsuit$ denotes the abelian category
    of quasi-coherent $\Cat O_X$-modules, $\QCoh(X)$ denotes
    the derived $(\infty,1)$-category of quasi-coherent complexes, and
    $(-)^b$ denotes the bounded part of the respective
    standard t-structures.
\end{prop*}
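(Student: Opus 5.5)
We prove the two equivalences separately. Throughout we use the description of $\Cart(\Cat D, F_*)$ as the pullback $\Cat D \times_{\Cat D \times \Cat D} \Fun{\Delta^1}{\Cat D}$ from the Proposition above. We also use \cite[Theorem 5.1]{us24cart}, which identifies the derived $(\infty,1)$-category of an abelian category of Cartier modules with Cartier modules in the derived category. Here ``bounded'' means that the underlying object $UM$ is bounded; this makes sense because $F_*$ is t-exact on both sides. Indeed, $F$ is affine and a homeomorphism, so $F_*$ is t-exact on $\QCoh$, and hence also on $\IndCoh$ for the standard t-structure.

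\textbf{First equivalence.} Recall from \cite{gaitsgory2012indcoherentsheaves} the functor $\Psi_X \colon \IndCoh(X) \to \QCoh(X)$. It is t-exact, and it restricts to an equivalence $\IndCoh(X)^{+} \simeq \QCoh(X)^{+}$ on bounded-below parts. Consequently it restricts to an equivalence on bounded parts. Moreover, $\Psi$ commutes with proper pushforward, and in particular with $F_*$, since the absolute Frobenius of a noetherian scheme is finite when $X$ is F-finite.

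I would not rely on F-finiteness. Instead I would use that $\Psi$ intertwines the pushforward $F_*^{\IndCoh}$ with $F_*^{\QCoh}$ for every morphism; this holds because the $\IndCoh$ pushforward is defined so as to be compatible with $\Psi$ on bounded-below objects. This gives an equivalence of pairs $(\IndCoh(X)^b, F_*) \simeq (\QCoh(X)^b, F_*)$. Since forming Cartier modules is functorial in such pairs (pullbacks are functorial), and both bounded parts are exactly the preimages under $U$ of the bounded parts, we obtain $\Cart(\IndCoh(X),F_*)^b \simeq \Cart(\QCoh(X),F_*)^b$.

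\textbf{Second equivalence.} We apply \cite[Theorem 5.1]{us24cart}. It gives $D(\Cart(\QCoh(X)^\heartsuit, F_*)) \simeq \Cart(\QCoh(X), F_*)$, or at least the bounded-below version $D^+$ when the theorem requires F-finiteness. The hypotheses to check are:
\begin{itemize}
\item $\QCoh(X)$ is the derived $\infty$-category of $\QCoh(X)^\heartsuit$ on bounded-below parts, which holds for $X$ noetherian (and separated, or quasi-compact semi-separated);
\item $F_*$ is t-exact, so it is the derived functor of its restriction to the heart.
\end{itemize}
Next we show that the equivalence respects the t-structures. On the left, the t-structure is the standard one on the derived category of the abelian category. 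On the right, the induced t-structure is detected by $U$. Both forgetful functors to $\QCoh$ are compatible with $U$, so the equivalence restricts to bounded parts, giving $D^b(\Cart(\QCoh(X)^\heartsuit,F_*)) \simeq \Cart(\QCoh(X),F_*)^b$.

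\textbf{Main obstacle.} The hardest step is the last one: verifying the hypotheses of \cite[Theorem 5.1]{us24cart} in this unbounded, possibly non-F-finite setting, and checking that the resulting equivalence is t-exact so that it restricts correctly. If the cited theorem only covers bounded-below objects, we restrict throughout to $D^+$ and then pass to bounded objects. If $F_*$ on $\IndCoh$ is not obviously compatible with $\Psi$ without finiteness, we argue as follows. The pushforward $F_*^{\IndCoh}$ is defined as the unique continuous extension that agrees with $F_*^{\QCoh}$ on $\IndCoh^{+}$ via $\Psi$. Hence compatibility with $\Psi$ holds on bounded objects for any morphism, and this suffices.
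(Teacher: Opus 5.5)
Your proposal is correct and follows essentially the same route as the paper. You identify the bounded part of Cartier modules with Cartier modules in the bounded part, using that $U$ detects the induced t-structure of \cite[Proposition 3.3]{us24cart}. You then transport along the t-exact equivalence $\Psi\colon \IndCoh(X)^+ \simeq \QCoh(X)^+$, which intertwines $F_*$ by Gaitsgory's compatibility $\Psi f_* \simeq f_* \Psi$ for arbitrary $f$. Finally you conclude with \cite[Theorem 5.1]{us24cart} together with the comparison of $\QCoh(X)^+$ with $\Cat D^+(\QCoh(X)^\heartsuit)$ for classical noetherian $X$.

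The paper runs this chain on $(-)^+$ parts without any F-finiteness or separatedness assumption, so neither of your hedges is needed. Passing to bounded objects at the end, as you do, is harmless because every equivalence in the chain is t-exact.
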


The comparison uses the canonical functor
$\IndCoh(X) \to \QCoh(X)$, which is an equivalence on bounded
objects and is compatible with Frobenius pushforward.
The final equivalence also uses
\cite[Theorem 5.1]{us24cart}, identifying the derived $(\infty,1)$-category
of Cartier modules with Cartier objects in the derived
coefficient category.
We therefore recover the familiar bounded derived $(\infty,1)$-category
of quasi-coherent Cartier modules while working in an
unbounded coefficient category suited to our geometric operations.

\subsection*{Why do we use derived schemes?}

The derived tensor product retains homological information that the ordinary tensor product can lose. 
For ring maps $A \to B$ and $A \to C$, the ordinary tensor product $B \otimes_A C$ records only $\operatorname{Tor}_0^A(B,C)$, 
whereas the derived tensor product $B \otimes_A^{\mathbf L} C$ also records the higher groups $\operatorname{Tor}_i^A(B,C)$. 
These groups measure the failure of the tensor product to behave as it would under flatness assumptions. 
Derived fiber products incorporate this additional information geometrically. 
When the higher Tor groups vanish, the derived fiber product agrees with the ordinary fiber product.

This distinction is essential for our functor formalism: a $3$-functor formalism encodes base-change equivalences for cartesian squares. 
For the Cartier-module operations considered here, base change with respect to ordinary fiber products of classical schemes is false in general. 
Consequently, these operations cannot assemble into a $3$-functor formalism on classical schemes with their ordinary fiber products. 
Passing to derived schemes replaces those fiber products by derived fiber products, for which our base-change theorem holds. 
For Tor-independent diagrams of classical schemes, it recovers the corresponding classical base-change equivalence.

\subsection*{Why has our functor formalism the ``wrong'' adjoints?}
In a usual six-functor formalism, one has adjunctions
$$f^* \dashv f_*, \qquad f_! \dashv f^!, \qquad - \otimes M \dashv \underline{\mathrm{Hom}}(M,-),$$
with the appropriate hypotheses on $f$. 
The pullback $f^*$ is symmetric monoidal, whereas the other geometric operations need not be; see \cite[Sections 2--3]{scholze2025six}. 
Our Cartier module formalism has a different orientation: 
In our situation $f^!$ is symmetric monoidal for the shriek tensor product, and the additional adjunctions constructed here are $f_* \dashv f^\times$ and $f^! \dashv f_\flat$.

A guiding explanation for this difference is the duality between Cartier modules and Frobenius modules. 
For noetherian $F$-finite schemes with affine diagonal and a unit dualizing complex, 
Baudin establishes a duality between bounded derived categories of coherent Cartier and Frobenius modules \cite[Theorem 4.2.7]{baudin2023duality}. 
Ren constructs an enhancement to stable $(\infty,1)$-categories and extends the duality to Ind- and Pro-completions \cite[Theorems 3.18 and 4.5, Corollary 8.1]{ren2026duality}. 
Duality reverses adjunctions, and Baudin's functorial compatibility statements for crystals exchange the star and shriek operations \cite[Theorem 5.1.13]{baudin2023duality}. 
This suggests viewing our formalism as one side of a dual picture, with the usual six-functor orientation expected on the Frobenius side. 
In future work, we will construct a six-functor formalism of Frobenius modules with coefficients in $\operatorname{Pro(Coh(X))}$, here $\operatorname{Coh}(X)$ denotes the stable $(\infty,1)$-category of
bounded coherent complexes, and lift the duality statements of 
Baudin and Ren to the level of six-functor formalisms, including their functorial and monoidal compatibilities.

\subsection*{Linear Outline} 
\paragraph{\cref{sec:2cat}: Recollection on $2$-categories.} 
    In this section, we first review the theory of $(\infty, 2)$-categories (which we from now on only call $2$-categories).
    We then discuss the Gray tensor product of $2$-categories, as well as its left and right mapping objects: the $2$-categories 
    of functors and op/lax natural transformations. We then define marked $2$-categories and partially op/lax limits.
    At the end of this section, we rewrite the category of generalized Cartier modules from \cite[Definition 2.4]{us24cart}
    as a lax limit along the universal walking endomorphism $\BN$.

\paragraph{\cref{sec:2ff}: The $2$-functor formalism of Cartier modules.}
    In this section, we first review the definition of $2$-functor formalisms, including the description of the span $2$-category we need for our purposes. 
    Next, we discuss the Beck--Chevalley conditions and explain how many compatibilities like adjunctions and base change results are encoded by a $2$-functor formalism.
    We then prove the dual version of \cite[Theorem 6.7]{cnossen2025universalityspan2categoriesconstruction} about the construction of $2$-functor formalisms.
    At the end of this section, we establish a general construction of functor formalisms of Cartier modules, which we apply in \cref{sec:derived-schemes} to prove \cref{intro:main-theorem-2FF}.

\paragraph{\cref{sec:sym-mon}: $2$-categories of lax symmetric monoidal functors.}
    We review symmetric monoidal $2$-categories and lax and oplax symmetric monoidal functors, including their interaction with adjunctions. 
    We then construct pointwise symmetric monoidal structures on functor $2$-categories. 
    Using marked mapping spaces, we define $2$-categories of monoidal functors with partially lax or oplax natural transformations, 
    establish the functoriality needed to preserve adjunctions, and study the existence and computation of lax limits. 
    Finally, we discuss commutative algebra objects and their relation to oplax symmetric monoidal functors. 
    These constructions provide the categorical input for the multiplicative formalism developed in \cref{sec:3ff}.
    Readers repulsed by the prospect of delicate $(\infty,2)$-categorical technicalities concerning oplax natural transformations 
    between lax symmetric monoidal functors are encouraged to skip this section on a first reading.

\paragraph{\cref{sec:3ff}: The $3$-functor formalism of Cartier modules.}
    In this section, we first define $3$-functor formalisms and describe how they encode symmetric monoidal structures and projection formulas. 
    We then prove how we can construct a $3$-functor formalism out of a lax symmetric monoidal functor encoding the $f_*$-functoriality, 
    which is another version of \cite[Theorem 6.13]{cnossen2025universalityspan2categoriesconstruction}.
    At the end of this section, we establish the multiplicative version of the main construction from \cref{sec:2ff}, which we apply in \cref{sec:derived-schemes} to prove \cref{intro:main-theorem-3FF}.

\paragraph{\cref{sec:derived-schemes}: Cartier modules on derived schemes.}
    In the last section, we first review the theory of animated $\Fp$-algebras and derived $\Fp$-schemes.
    We then discuss the formalism $\IndCoh$ of ind-coherent sheaves, developed by Gaitsgory \cite{gaitsgory2012indcoherentsheaves},
    as well as by Gaitsgory and Rozenblyum \cite{gaitsgory2019study}.
    In particular, we show that $\IndCoh$ admits a $3$-functor formalism.
    At the end, we use this result together with the main results of \cref{sec:2ff,sec:3ff}
    to show \cref{intro:main-theorem-2FF,intro:main-theorem-3FF}.

\paragraph{\cref{sec:appendix-mapping-objects}: Internal mapping objects.}
    In this short appendix, we review the behaviour of left and right mapping objects in an $\mathbb{E}_1$-monoidal category.

\subsection*{Notation}
We freely use the language of $(\infty, 1)$-categories, as outlined by Lurie in \cite{highertopoi,higheralgebra,sag,kerodon}.
Moreover, most of this paper is written in $(\infty, 2)$-categorical language. We will review the required notions in \cref{sec:2cat,sec:sym-mon}.
From now on, we will drop the $\infty$:
\begin{enumerate}
    \item The word \defemphnoindex{anima} means $(\infty, 0)$-category (or space, or $\infty$-groupoid).
    \item The word \defemphnoindex{category} means $(\infty, 1)$-category.
    \item The word \defemphnoindex{$2$-category} means $(\infty, 2)$-category.
\end{enumerate}
Generic categories will always be denoted by curly symbols like $\Cat C$ or $\Cat D$,
whereas $2$-categories will always be denoted with an underline like $\TwoCat{C}$ or $\TwoCat{D}$.
We fix three universes (ordered by size): \defemphnoindex{small}, \defemphnoindex{large} and \defemphnoindex{very large}.
We employ the following notation:

\vspace{10pt}

\begin{tabular}{l|l|l}
    Notation&Definition&Reference\\\hline
    $\An$ &The large category of small anima& \\
    $\AnL$ &The very large category of large anima& \\
    $\CatInf$ &The very large category of large categories \\
    $\TwoCatInf$ &The very large $2$-category of large categories&\cref{sec:recoll-2-cat} \\
    $\CatInfTwo$ &The very large category of large $2$-categories&\cref{sec:recoll-2-cat} \\
    $\TwoCatInfTwo$ &The very large $2$-category of large $2$-categories&\cref{sec:recoll-2-cat} \\
    $\Map{\Cat C}{-}{-}$ &Mapping \emph{anima} between two objects in a category $\Cat C$& \\
    $\Map{\TwoCat C}{-}{-}$ &Mapping \emph{category} between two objects in a $2$-category $\TwoCat C$& \\
    $\TwoFun{-}{-}$ &$2$-category of functors between $2$-categories &\cref{sec:recoll-2-cat}\\
    $\FunEps{-}{-}$ &$2$-category of functors between $2$-categories with&\cref{def:gray:fun-lax}\\&$\epsilon$-natural transformations for $\epsilon \in \{ \mathrm{lax}, \mathrm{oplax} \}$&\\
    $\FunPartEps{\dagger}{-}{-}$ &$2$-category of functors between $2$-categories with&\cref{def:lax-lim:partial-lax-functors}\\ &partially $\epsilon$-natural transformations for $\epsilon \in \{ \mathrm{lax}, \mathrm{oplax} \}$&\\
    $\FunPartEpsDeltaTensor{\dagger}{-}{-}$ &$2$-category of $\delta$-symmetric monoidal functors between &\cref{constr:2-functoriality-of-lax-tensor-oplax:1-functoriality} \\&symmetric monoidal $2$-categories with partially \\& $\epsilon$-natural transformations for $\epsilon, \delta \in \oplaxvarset$  \\
    $\TwoCAlgDelta{\TwoCatInfTwo}$ &$2$-category of symmetric monoidal $2$-categories with  &\cref{def:2sym:CAlgDelta} \\ &$\delta$-symmetric monoidal functors for $\delta \in \oplaxvarset$\\
    $\CAlgDelta{\TwoCatInfTwo}$ &Underlying category of $\TwoCAlgDelta{\TwoCatInfTwo}$ &\cref{def:2sym:CAlgDelta} \\
\end{tabular}

\vspace{10pt}

Since $\CatInf$ and $\CatInfTwo$ are locally large categories, we can talk about the Yoneda embeddings 
$$\CatInf \hookrightarrow \Fun{\op{\CatInf}}{\AnL} \qquad \text{and} \qquad \CatInfTwo \hookrightarrow \Fun{\op{\CatInfTwo}}{\AnL}.$$

In \cref{sec:derived-schemes} we use \emph{homological} indexing for t-structures, and write $\pi_n$ for the homotopy/homology objects.

\subsection*{Acknowledgments}
We thank our respective advisors Tom Bachmann and Manuel Blickle for many 
insightful discussions. We thank Sil Linskens for helpful conversations about 
partially lax morphisms between six functor formalisms, and Bastiaan Cnossen 
for explaining to us how to dualize the main result of \cite{cnossen2025universalityspan2categoriesconstruction}.
We moreover thank Julie Bannwart, Anton Engelmann, Dominik Kirstein, Jordan Levin, Marcus Nicolas and Luca Passolunghi 
for fruitful discussions about parts of this project. 
We thank Anton Engelmann and Dominik Kirstein for feedback on a draft of this article and for brewing amazing coffee.

The authors acknowledge support by the Deutsche Forschungsgemeinschaft 
(DFG, German Research Foundation) through the Collaborative Research 
Centre TRR 326 \textit{Geometry and Arithmetic of Uniformized 
Structures}, project number 444845124.

Part of this research was performed while the first author was visiting the Mathematical Sciences Research Institute (MSRI), 
now becoming the Simons Laufer Mathematical Sciences Institute (SLMath), 
which is supported by the National Science Foundation (Grant No. DMS-2424139).
The first author moreover acknowledges support from the DFG via Grant No. BA7141/2-1 714165, 
which funded his travel to MSRI.

\subsection*{AI disclosure}
We used AI models for proofreading, literature research, as well as mathematical discussions 
about parts of this document.

%% file: 2-categories/cat2.tex
In this section we review the theory of $2$-categories. Specifically,
in \cref{sec:recoll-2-cat} we recall the category of $2$-categories, and variants thereof.
We then discuss the involutions $\op{(-)}$, $\co{(-)}$ and $\coop{(-)}$.
Then, in \cref{sec:gray-tensor-product} we discuss the Gray tensor product of $2$-categories, as well as its left 
and right mapping objects, the $2$-categories of functors with op/lax natural transformations between them.
In \cref{sec:partially-lax-limits}, we introduce partially op/lax variants, and use them to define the notions 
of (partially) op/lax limits in $2$-categories.
Last, in \cref{sec:Cart-as-lax-limit}, we show that the category of generalized Cartier modules of \cite{us24cart} 
can be equivalently described as a lax limit over the walking endomorphism $\BN$.

\subsection{The category of \texorpdfstring{$2$}{2}-categories} \label{sec:recoll-2-cat}
We write $\defnot{\CatInfTwo}$ for the \defemphindex{category of $2$-categories}{$2$-category},
see e.g.\ \cite[Definition 8.2]{barwick2011unicity}. 
Because of \cite[Theorem 7.3]{barwick2011unicity},
it does not matter which exact model of $2$-categories we choose,
and in the following we will thus work as model-independent as possible.
Recall from \cite[Chapter 10, §2.2.2]{gaitsgory2019study} that there is an adjunction 
\begin{equation*}
    \defnot{\operatorname{incl}} \colon \CatInf \rightleftarrows \CatInfTwo \colon \defnot{\iota_1},
\end{equation*}
where the left adjoint is fully faithful,
and $\iota_1$ is given by sending a $2$-category to its largest sub-$1$-category.
We will thus view any category as a $2$-category, where all the $2$-morphisms are invertible.
Note that similarly, there is an adjunction
\begin{equation*}
    \defnot{\operatorname{incl}} \colon \AnL \rightleftarrows \CatInf \colon \defnot{\iota_0},
\end{equation*}
where the left adjoint is again fully faithful, and $\iota_0$ is given by sending a $1$-category 
to its largest subanima (or subgroupoid). We will also denote the composition 
$\iota_0 \iota_1 \colon \CatInfTwo \to \AnL$ by $\iota_0$.

Moreover, we write $\defnot{\TwoCatInf}$ for the \defemphindex{$2$-category of categories}{category}, see e.g.\ \cite[Chapter 10, Section 2.4]{gaitsgory2019study}, 
and $\defnot{\TwoCatInfTwo}$ for the \defemphindex{$2$-category of $2$-categories}{$2$-category}, see e.g.\ \cite[Chapter 10, Section 6.1]{gaitsgory2019study}.
In particular, $\iota_1 \TwoCatInf = \CatInf$ and $\iota_1 \TwoCatInfTwo = \CatInfTwo$, cf.\ \cite[Chapter 10, Proposition 2.4.2(b) and Section 6.1.1]{gaitsgory2019study}.

The category $\CatInfTwo$ is presentable, cf.\ \cite[Chapter 10, Lemma 2.1.5]{gaitsgory2019study}.
In particular, it has finite products and thus can be equipped with 
the cartesian symmetric monoidal structure, cf.\ \cite[Section 2.4.1]{higheralgebra}.
Note that $\CatInfTwo$ is cartesian closed (this follows from presentability and the 
fact that products commute with colimits in each variable, cf.\ \cite[Chapter 10, Theorem 2.5.3]{gaitsgory2019study}),
hence by \cref{lem:mapping-objects:ihom-existence} there exists an internal mapping object 
\begin{equation*}
    \defnot{\TwoFun{-}{-}} \colon \op{\CatInfTwo} \times \CatInfTwo \to \CatInfTwo,
\end{equation*}
called the \defemphindex{$2$-category of functors}{functors},
such that for every $2$-category $\TwoCat{C}$ there is an adjunction 
\begin{equation*}
    - \times \TwoCat{C} \dashv \TwoFun{\TwoCat C}{-}.
\end{equation*}

\begin{defn}
    Let $F \colon \TwoCat{M} \to \TwoCat{N}$ be a functor between $2$-categories.
    We say that $F$ is 
    \begin{itemize}
        \item a \defemphnoindex{locally full inclusion} (or that $\TwoCat{M}$ is a \defemphindex{$2$-full sub-$2$-category}{sub-$2$-category!$2$-full} of $\TwoCat{N}$)
            if the induced map of underlying anima $\iota_0 F \colon \iota_0 \TwoCat{M} \to \iota_0 \TwoCat{N}$ is a monomorphism and 
            the induced functors on mapping categories are fully faithful,
        \item a \defemphnoindex{full inclusion} (or that $\TwoCat M$ is a \defemphindex{full sub-$2$-category}{sub-$2$-category!full} of $\TwoCat N$)
            if $\iota_0 F$ is a monomorphism and the induced functors on mapping categories are equivalences, and
        \item a \defemphnoindex{wide inclusion} (or that $\TwoCat{M}$ is a \defemphindex{wide sub-$2$-category}{sub-$2$-category!wide} of $\TwoCat{N}$)
            if $\iota_0 F$ is an equivalence and the induced functors on mapping categories are subcategory inclusions
            in the sense of \cite[Section 1.2.11]{highertopoi}.
    \end{itemize}
\end{defn}

\begin{lem} \label{lem:two-cat:coop}
    There are autoequivalences $\defnot{\op{(-)}}, \defnot{\co{(-)}}, \defnot{\coop{(-)}} \colon \CatInfTwo \to \CatInfTwo$ 
    that send a 2-category $\TwoCat{M}$ to $\op{\TwoCat{M}}$ (the $2$-category 
    where we change the direction of all $1$-morphisms),
    to $\co{\TwoCat{M}}$ (the $2$-category where we change the direction of all $2$-morphisms),
    or to $\coop{\TwoCat{M}}$ (the $2$-category where we change the direction of all $1$- and $2$-morphisms).
    Moreover, we have canonical natural equivalences $$\co{(-)} \circ \op{(-)} \cong \coop{(-)} \cong \op{(-)} \circ \co{(-)}.$$
    In particular, if $\TwoCat{M}$ and $\TwoCat{N}$ are 2-categories,
    we get induced equivalences
    \begin{align*}
        &\TwoFun{\TwoCat{M}}{\TwoCat{N}} \xrightarrow[\op{(-)}]{\simeq} \op{\TwoFun{\op{\TwoCat{M}}}{\op{\TwoCat{N}}}}, \\
        &\TwoFun{\TwoCat{M}}{\TwoCat{N}} \xrightarrow[\co{(-)}]{\simeq} \co{\TwoFun{\co{\TwoCat{M}}}{\co{\TwoCat{N}}}}, \text{and } \\
        &\TwoFun{\TwoCat{M}}{\TwoCat{N}} \xrightarrow[\coop{(-)}]{\simeq} \coop{\TwoFun{\coop{\TwoCat{M}}}{\coop{\TwoCat{N}}}}
    \end{align*}
    which are natural in $\TwoCat{M}$ and $\TwoCat{N}$.
\end{lem}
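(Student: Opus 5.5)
We construct the three involutions model-independently, using Barwick--Schommer-Pries unicity \cite[Theorem 7.3]{barwick2011unicity}. Concretely, we model $\CatInfTwo$ as complete $2$-fold Segal spaces, i.e.\ as a full subcategory of $\Fun{\op{(\Delta\times\Delta)}}{\An}$. Precomposition with the reversal automorphism $\rho\colon\Delta\to\Delta$, $[n]\mapsto[n]$, $i\mapsto n-i$, can be applied in either factor. Applied in the first factor it gives $\op{(-)}$, in the second factor $\co{(-)}$, and in both factors $\coop{(-)}$. The Segal, completeness and constancy conditions are invariant under $\rho$, because $\rho$ preserves the spine inclusions and the relevant degeneracies up to relabelling. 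Hence each of these precomposition functors restricts to an autoequivalence of $\CatInfTwo$. Each is an involution, since $\rho^2=\id{}$.

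The identities $\co{(-)}\circ\op{(-)}\cong\coop{(-)}\cong\op{(-)}\circ\co{(-)}$ then hold on the nose, because precomposition with $\rho\times\id{}$ and with $\id{}\times\rho$ commutes, and the composite is precomposition with $\rho\times\rho$. That these functors do what the statement claims on $1$- and $2$-morphisms can be checked on the generating $2$-categories (the walking $1$-cell and the walking $2$-cell $\Theta$-objects). The resulting characterization is model-independent: an autoequivalence of $\CatInfTwo$ is determined by its action on the full subcategory of these generators, by density. By unicity, the automorphism group of $\CatInfTwo$ is $(\mathbb Z/2)^2$, so these three are exactly its nontrivial elements.

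For the functor $2$-categories, let $\sigma$ be any of the three involutions. Since $\sigma$ is an autoequivalence, it preserves finite products, so $\sigma(\TwoCat A\times\TwoCat B)\cong\sigma\TwoCat A\times\sigma\TwoCat B$ naturally. We then compute, naturally in $\TwoCat A$:
\[
\Map{\CatInfTwo}{\TwoCat A}{\sigma\TwoFun{\sigma\TwoCat M}{\sigma\TwoCat N}}\simeq\Map{}{\sigma\TwoCat A\times\sigma\TwoCat M}{\sigma\TwoCat N}\simeq\Map{}{\sigma(\TwoCat A\times\TwoCat M)}{\sigma\TwoCat N}\simeq\Map{}{\TwoCat A\times\TwoCat M}{\TwoCat N}.
\]
The first step uses $\sigma^2\simeq\id{}$ together with the adjunction $-\times\sigma\TwoCat M\dashv\TwoFun{\sigma\TwoCat M}{-}$. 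The last step uses that $\sigma$ is fully faithful. By the Yoneda lemma and the defining adjunction of $\TwoFun{\TwoCat M}{-}$, this yields the equivalence $\TwoFun{\TwoCat M}{\TwoCat N}\simeq\sigma\TwoFun{\sigma\TwoCat M}{\sigma\TwoCat N}$. It is natural in $\TwoCat M,\TwoCat N$ because every step is.

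The main obstacle is making the naturality of $\sigma(\TwoCat A\times\TwoCat B)\cong\sigma\TwoCat A\times\sigma\TwoCat B$ and of $\sigma^2\simeq\id{}$ coherent. In the Segal-space model this is immediate, since products and the involution are both computed levelwise and strictly. So we would work in that model throughout and transport the result to any other model via unicity.
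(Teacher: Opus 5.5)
Your proposal is correct. Its second half, the Yoneda computation giving $\TwoFun{\TwoCat M}{\TwoCat N}\simeq\sigma\TwoFun{\sigma\TwoCat M}{\sigma\TwoCat N}$, is the paper's argument: it uses $\sigma^2\simeq\mathrm{id}$, preservation of finite products and the defining adjunction in the same way. The difference is in the first half.

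\begin{itemize}
\item The paper does not construct the involutions at all. It cites \cite[Theorem 10.1]{barwick2011unicity}.
\item You build them explicitly in the complete $2$-fold Segal space model, by precomposing with the reversal $\rho$ of $\Delta$ in one or both factors.
\end{itemize}

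What your route buys is strictness. The relations $\co{(-)}\circ\op{(-)}=\coop{(-)}=\op{(-)}\circ\co{(-)}$ hold on the nose, as do $\sigma^2=\mathrm{id}$ and compatibility with levelwise products. The informal descriptions in the statement (``reverse all $1$-morphisms'', etc.) also become literal.

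What the citation buys is canonicity for free. The autoequivalences of $\CatInfTwo$ form the discrete group $(\mathbb Z/2)^2$. So any involutions with the right effect on cells are canonically identified with one another, including yours and those used in the later references.

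One inessential inaccuracy: the walking $1$- and $2$-cells alone do not span a dense subcategory of $\CatInfTwo$. The same failure already occurs for $[0],[1]$ in $\CatInf$, because maps of underlying graphs need not respect composition. Density holds for $\Theta_2$. The fact that an autoequivalence is determined by its effect on cells is the content of the Barwick--Schommer-Pries theorem, not a formal density argument. The lemma only asserts existence, which your explicit construction already provides, so you do not need that remark.
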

\begin{proof}
    The first claim is \cite[Theorem 10.1]{barwick2011unicity}.
    Note that since $\op{(-)}$, $\co{(-)}$ and $\coop{(-)}$ are autoequivalences (which are their own inverses),
    they in particular preserve finite products and thus are symmetric monoidal 
    for the cartesian symmetric monoidal structure. We show the above equivalence for $\op{(-)}$, the proof for $\co{(-)}$ and $\coop{(-)}$ is similar.
    For every $2$-category $\TwoCat{K}$, we have natural equivalences of anima
    \begin{align*}
        \operatorname{Map}\bigl(
            \TwoCat{K},
            \TwoFun{\TwoCat{M}}{\TwoCat{N}}
        \bigr)
        &\simeq
        \operatorname{Map}\bigl(
            \TwoCat{K} \times \TwoCat{M},
            \TwoCat{N}
        \bigr)\\
        &\simeq
        \operatorname{Map}\bigl(
            \op{(\TwoCat{K} \times \TwoCat{M})},
            \op{\TwoCat{N}}
        \bigr)\\
        &\simeq
        \operatorname{Map}\bigl(
            \op{\TwoCat{K}} \times \op{\TwoCat{M}},
            \op{\TwoCat{N}}
        \bigr)\\
        &\simeq
        \operatorname{Map}\bigl(
            \op{\TwoCat{K}},
            \TwoFun{\op{\TwoCat{M}}}{\op{\TwoCat{N}}}
        \bigr)\\
        &\simeq
        \operatorname{Map}\bigl(
            \TwoCat{K},
            \op{\TwoFun{\op{\TwoCat{M}}}{\op{\TwoCat{N}}}}
        \bigr).
    \end{align*}
    Here the second and last equivalences use that $\op{(-)}$ is its own inverse (and hence its own left and right adjoint), 
    and the third uses that $\op{(-)}$ is symmetric monoidal for the cartesian monoidal structure.
    All the equivalences above are natural in $\TwoCat{M}$ and
    $\TwoCat{N}$, whence we conclude by the Yoneda lemma.
\end{proof}

These equivalences also admit a $2$-categorical enhancement:

\begin{lem} \label{lem:two-cat:two-cat-coop}
    The equivalences $\op{(-)}, \co{(-)}$ and $\coop{(-)}$ from \Cref{lem:two-cat:coop} upgrade to self-inverse equivalences $\defnot{\op{(-)}} \colon \TwoCatInfTwo \xrightarrow{\simeq} \co{\TwoCatInfTwo}$, 
    $\defnot{\co{(-)}} \colon \TwoCatInfTwo \xrightarrow{\simeq} \TwoCatInfTwo$ and $\defnot{\coop{(-)}} \colon \TwoCatInfTwo \xrightarrow{\simeq} \co{\TwoCatInfTwo}$.
    The statements of \Cref{lem:two-cat:coop} also hold for the upgraded versions.
\end{lem}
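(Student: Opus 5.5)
We need to upgrade the three involutions of \cref{lem:two-cat:coop} to $2$-functors on $\TwoCatInfTwo$, and check the claimed variances. Recall that $\TwoCatInfTwo$ is the $2$-category obtained from the cartesian closed category $\CatInfTwo$ by self-enrichment: its mapping categories are $\iota_1\TwoFun{\TwoCat M}{\TwoCat N}$. Equivalently, a $2$-functor $\TwoCatInfTwo \to \TwoCatInfTwo$ can be produced from a functor $\Phi\colon\CatInfTwo\to\CatInfTwo$ together with a compatible action on internal homs, i.e.\ from a symmetric monoidal (for $\times$) functor. Concretely, an enhancement of a product-preserving functor is obtained by applying $\iota_1$ to the induced maps $\TwoFun{\TwoCat M}{\TwoCat N}\to \TwoFun{\Phi\TwoCat M}{\Phi\TwoCat N}$, which exist by adjunction from $\Phi(\mathrm{ev})\colon \Phi\TwoFun{\TwoCat M}{\TwoCat N}\times\Phi\TwoCat M\to\Phi\TwoCat N$.

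The plan is as follows. First, since each of $\op{(-)},\co{(-)},\coop{(-)}$ is a self-inverse autoequivalence of $\CatInfTwo$, it preserves finite products, hence is symmetric monoidal for the cartesian structure, and it therefore induces an enriched functor from $\CatInfTwo$ enriched over itself to $\CatInfTwo$ enriched over itself via the base change of enrichment along $\Phi$. Second, we compose with the equivalences of \cref{lem:two-cat:coop}: these identify $\TwoFun{\Phi\TwoCat M}{\Phi\TwoCat N}$ with $\Phi\TwoFun{\TwoCat M}{\TwoCat N}$ for $\Phi=\op{(-)}$ up to an outer $\op{}$, for $\Phi=\co{(-)}$ up to an outer $\co{}$, and for $\coop{(-)}$ up to an outer $\coop{}$. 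Applying $\iota_1$ to the outer involution on mapping $2$-categories gives the variance. For $\op{}$, the maps $\iota_1\TwoFun{\TwoCat M}{\TwoCat N}\to\iota_1\op{\TwoFun{\op{\TwoCat M}}{\op{\TwoCat N}}}$ reverse $2$-morphisms on the level of the $2$-category $\TwoCatInfTwo$, since $\iota_1(\op{\TwoCat X})=\op{(\iota_1\TwoCat X)}$ and the $1$-morphisms of $\TwoFun{}{}$ are the $2$-morphisms of $\TwoCatInfTwo$; this yields $\TwoCatInfTwo\to\co{\TwoCatInfTwo}$. For $\co{}$, reversing the $2$-morphisms of $\TwoFun{}{}$ only affects $3$-cells, which are invertible after $\iota_1$, so we get $\TwoCatInfTwo\to\TwoCatInfTwo$. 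The case of $\coop{}$ combines both effects and gives $\co{\TwoCatInfTwo}$ again. The naturality in $\TwoCat M,\TwoCat N$ from \cref{lem:two-cat:coop} is exactly what ensures compatibility with composition.

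Self-inverseness follows because the composite $\Phi\circ\Phi\simeq\mathrm{id}$ holds as symmetric monoidal functors: the identification is an equivalence of product-preserving functors, and product-preservation is a property. Since enrichment base change is functorial, this gives $\Phi\circ\Phi\simeq\mathrm{id}$ on the enriched level. The $2$-categorical versions of the statements of \cref{lem:two-cat:coop} (the relations $\co{}\circ\op{}\simeq\coop{}\simeq\op{}\circ\co{}$ and the equivalences on functor categories) follow by the same Yoneda argument, now run in $\TwoCatInfTwo$ with mapping categories instead of anima.

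The main obstacle is the rigorous passage from a cartesian-monoidal autoequivalence of $\CatInfTwo$ to a $2$-functor on the self-enriched $\TwoCatInfTwo$. This requires a functorial model of self-enrichment, for instance Gepner--Haugseng's enriched categories, or the construction in \cite[Chapter 10]{gaitsgory2019study}, together with a careful bookkeeping of the twist on mapping categories. I would invoke functoriality of enrichment base change along lax monoidal functors, which are here strong monoidal equivalences.
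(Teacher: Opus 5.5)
Your argument is correct in outline, but it takes a different route from the paper.

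\textbf{The paper's route.} The paper never constructs the enhancement itself. It specializes \cite[Observation 3.2.9, (3.1)]{AGH24}, where the three dualities are already available as equivalences between $2$-categories of $(i,j)$-fibrations over a base $\TwoCat B$, with the appropriate $\co$-twists. It takes $\TwoCat B=\Delta^0$ and uses that every $2$-category is an $(i,j)$-fibration over a point, so $\mathrm{FIB}_{(i,j)}(\Delta^0)\simeq\TwoCatInfTwo$. Involutivity is then inherited.

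\textbf{Your route.} You build the $2$-functor by transporting the self-enrichment of $\CatInfTwo$ along the cartesian-monoidal involution $\Phi$ and then base-changing along $\iota_1$. You read off the variance from $\iota_1\circ\op{(-)}\simeq\op{(-)}\circ\iota_1$ and $\iota_1\circ\co{(-)}\simeq\iota_1$. This makes the enhancement a formal consequence of $\Phi$ being a monoidal equivalence, and it explains conceptually why $\op{(-)}$ and $\coop{(-)}$ land in $\co{\TwoCatInfTwo}$ while $\co{(-)}$ does not.

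\textbf{What it costs.} You need three inputs the paper never uses:
\begin{enumerate}
\item an identification of $\TwoCatInfTwo$, as defined in \cite[Chapter 10]{gaitsgory2019study}, with $(\iota_1)_!$ of the self-enrichment;
\item functoriality of self-enrichment and of enrichment base change under monoidal equivalences;
\item the fact that base change along $\op{(-)}\colon\CatInf\to\CatInf$ realizes $\co{(-)}$ on $(\infty,2)$-categories.
\end{enumerate}

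\textbf{Two slips to fix in the write-up.}
\begin{enumerate}
\item The adjoint of $\Phi(\mathrm{ev})$ is a map $\Phi\TwoFun{\TwoCat M}{\TwoCat N}\to\TwoFun{\Phi\TwoCat M}{\Phi\TwoCat N}$, not $\TwoFun{\TwoCat M}{\TwoCat N}\to\TwoFun{\Phi\TwoCat M}{\Phi\TwoCat N}$. So what you get is an enriched functor out of $\Phi_!$ of the self-enrichment. It is exactly this twist on the source that produces the $\co$ after applying $\iota_1$.
\item Compatibility with composition, with all higher coherences, must come from that enriched functor. It does not come from the naturality in $\TwoCat M,\TwoCat N$ in \cref{lem:two-cat:coop}, which only controls pre- and postcomposition with fixed functors, not the composition law.
\end{enumerate}
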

\begin{proof}
    Apply \cite[Observation 3.2.9, equation (3.1)]{AGH24} with base $\TwoCat B = \Delta^0$.
    For every $i,j \in \{0,1\}$, the $2$-category $\mathrm{FIB}_{(i,j)}(\Delta^0)$ identifies canonically
    with $\TwoCatInfTwo$: every $2$-category defines an $(i,j)$-fibration over a point, and every functor preserves the relevant cartesian and cocartesian morphisms.
    The three equivalences in that equation therefore give the asserted $2$-categorical enhancements.
    Their involutivity follows from involutivity of the corresponding opposite operations.
\end{proof}

\begin{notation}
    Let $\TwoCat{M}$ be a $2$-category.
    For every $1$-morphism $f$ and every $2$-morphism $\eta$ of $\TwoCat{M}$, we write
    \[
        \defnot{\op{f}},\ \defnot{\co{f}},\ \defnot{\coop{f}}
        \qquad\text{and}\qquad
        \defnot{\op{\eta}},\ \defnot{\co{\eta}},\ \defnot{\coop{\eta}}
    \]
    for the corresponding $1$- (or $2$-)morphisms in
    $\op{\TwoCat{M}}$, $\co{\TwoCat{M}}$, and $\coop{\TwoCat{M}}$.
\end{notation}

\begin{lem} \label{lem:two-cat:op-adjunctions}
    Let $\TwoCat{M}$ be a $2$-category and let $f\colon X \rightleftarrows Y \colon g$ be a pair of $1$-morphisms in $\TwoCat{M}$.
    The following are equivalent:
    \begin{enumerate}[label=(\roman*)]
        \item\label{point:two-cat:adjunction} The morphism $f \colon X \to Y$ is left adjoint to $g \colon Y \to X$ in $\TwoCat{M}$.
        \item\label{point:two-cat:op-adjunction} The morphism $\op{f} \colon Y \to X$ is right adjoint to $\op{g} \colon X \to Y$ in $\op{\TwoCat{M}}$.
        \item\label{point:two-cat:co-adjunction} The morphism $\co{f} \colon X \to Y$ is right adjoint to $\co{g} \colon Y \to X$ in $\co{\TwoCat{M}}$.
        \item\label{point:two-cat:coop-adjunction} The morphism $\coop{f} \colon Y \to X$ is left adjoint to $\coop{g} \colon X \to Y$ in $\coop{\TwoCat{M}}$.
    \end{enumerate}
\end{lem}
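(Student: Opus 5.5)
The plan is to express an adjunction entirely through its data and the triangle identities, and then check that each involution turns these into the data of the stated adjunction. Recall that $f \dashv g$ in $\TwoCat{M}$ means there are $2$-morphisms $\eta \colon \id{X} \Rightarrow gf$ and $\epsilon \colon fg \Rightarrow \id{Y}$ such that both composites $(\epsilon f)\circ(f\eta)$ and $(g\epsilon)\circ(\eta g)$ are equivalent to identities. This characterization only involves the homotopy $2$-category $h_2\TwoCat{M}$, i.e.\ objects, $1$-morphisms, homotopy classes of $2$-morphisms, and horizontal and vertical composition. So it suffices to track how the involutions of \cref{lem:two-cat:coop} act on this structure.

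For \ref{point:two-cat:adjunction}$\Leftrightarrow$\ref{point:two-cat:op-adjunction}: in $\op{\TwoCat{M}}$ the source and target of $1$-morphisms are swapped, and horizontal composition is reversed, so $\op{(g f)} = \op{f}\op{g}$. The $2$-morphisms keep their direction.
\begin{itemize}
\item The unit $\eta$ becomes $\op{\eta} \colon \id{X} \Rightarrow \op{f}\op{g}$, which serves as the unit of $\op{g} \dashv \op{f}$.
\item The counit $\epsilon$ becomes $\op{\epsilon} \colon \op{g}\op{f} \Rightarrow \id{Y}$, which serves as the counit.
\item Each whiskering $f\eta$ becomes $\op{\eta}\,\op{f}$, and similarly for the others.
\end{itemize}
Under this translation the two triangle identities for $f \dashv g$ become exactly the two triangle identities for $\op{g} \dashv \op{f}$, with their roles exchanged.

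For \ref{point:two-cat:adjunction}$\Leftrightarrow$\ref{point:two-cat:co-adjunction}: in $\co{\TwoCat{M}}$ horizontal composition is unchanged, while $2$-morphisms are reversed and vertical composition is reversed.
\begin{itemize}
\item The counit $\epsilon$ becomes $\co{\epsilon} \colon \id{Y} \Rightarrow \co{f}\co{g}$, which is a unit for $\co{g} \dashv \co{f}$.
\item The unit $\eta$ becomes $\co{\eta} \colon \co{g}\co{f} \Rightarrow \id{X}$, which is the corresponding counit.
\end{itemize}
Since vertical composition is reversed, the triangle identities again correspond to each other.

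Case \ref{point:two-cat:coop-adjunction} follows by combining the two previous cases, using $\coop{(-)} \cong \co{(-)}\circ\op{(-)}$ from \cref{lem:two-cat:coop}. Applying the first swap and then the second returns the orientation to $\coop{f} \dashv \coop{g}$. Each equivalence is reversible because the involutions are self-inverse, which gives the reverse implications for free.

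The main obstacle is making rigorous, in a model-independent way, that the involutions of \cite[Theorem 10.1]{barwick2011unicity} act on the homotopy $2$-category as the naive reversal of composition described above. The first thing I would try is to argue that $h_2$ commutes with these involutions. They are the unique nontrivial autoequivalences of $\CatInfTwo$, and on $1$-truncated $2$-categories, i.e.\ strict bicategories, they are the classical ones. Granting this, the rest is the classical bicategorical computation.
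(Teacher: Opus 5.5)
Your proof is correct and follows essentially the same route as the paper. Both write down a unit and counit and observe that $\op{(-)}$ turns them into a unit $\id{X} \to \op{f}\op{g}$ and counit $\op{g}\op{f} \to \id{Y}$, while $\co{(-)}$ exchanges their roles; both then check that the triangle identities transfer and deduce the $\coop{(-)}$ case and the converse implications from the fact that the involutions are self-inverse. The paper treats the naive description of how these involutions act on composites and whiskerings as evident, so your worry about making that step rigorous goes beyond what the paper itself addresses.
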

\begin{proof}
    As the operations $\op{(-)}$, $\co{(-)}$ and $\coop{(-)}$ are self-inverse, 
    it suffices to show that \ref{point:two-cat:adjunction} implies \ref{point:two-cat:op-adjunction} and \ref{point:two-cat:co-adjunction} 
    (both implications together yield that \ref{point:two-cat:adjunction} also implies \ref{point:two-cat:coop-adjunction}). 
    So let $f \colon X \to Y$ be left adjoint to $g \colon Y \to X$ in $\TwoCat{M}$.
    Let $u \colon \id{X} \to gf$ and $c \colon fg \to \id{Y}$ be a unit, respectively counit, of the adjunction.
    In particular, $u$ and $c$ satisfy the triangle identities, i.e.\ the compositions
    $$f \xrightarrow{fu} fgf \xrightarrow{cf} f \qquad \text{and} \qquad g \xrightarrow{ug} gfg \xrightarrow{gc} g$$ are each equivalent to the identity.
    
    To prove \ref{point:two-cat:op-adjunction}, note that passage to the opposite category turns $u$ and $c$ into $2$-morphisms
    $$\op{u} \colon \id{X} \to \op{(gf)} = \op{f} \op{g} \qquad \text{and} \qquad \op{c} \colon \op{g} \op{f} = \op{(fg)} \to \id{Y}.$$
    Moreover, the triangle identities translate to 
    $$\op{f} \xrightarrow{\op{u}\op{f}} \op{f} \op{g} \op{f} \xrightarrow{\op{f} \op{c}} \op{f} \qquad \text{and} \qquad  \op{g} \xrightarrow{\op{g} \op{u}} \op{g} \op{f} \op{g} \xrightarrow{\op{c} \op{g}} \op{g}$$
    being equivalent to the identity.
    This implies that $\op{f}$ is right adjoint to $\op{g}$.

    Similarly, one can show that the induced $2$-morphisms
    $$\co{c} \colon \id{Y} \to \co{(fg)} = \co{f} \co{g} \qquad \text{and} \qquad \co{u} \colon \co{g} \co{f} = \co{(gf)} \to \id{X}$$
    satisfy the triangle identities.
    Hence, they serve as a unit, respectively counit, of an adjunction $\co{g} \dashv \co{f}$,
    which proves \ref{point:two-cat:co-adjunction}.
\end{proof}

%% file: 2-categories/gray.tex
\subsection{The Gray tensor product and op/lax natural transformations} \label{sec:gray-tensor-product}
The category $\CatInfTwo$ has another monoidal structure, called the 
\defemphindex{($2$-truncated) Gray tensor product}{Gray tensor product}, cf.\ \cite[Corollary 2.15]{GagnaGray},
we write 
\begin{equation*}
\defnot{- \otimes^{\gr} -} \colon \CatInfTwo \times \CatInfTwo \to \CatInfTwo 
\end{equation*}
for the tensor product functor. Its tensor unit is given by the terminal category $\Delta^0$, cf.\ \cite[Remark 2.3]{GagnaGray}.
Note however that this monoidal structure is not symmetric (and in fact not even $\mathbb{E}_2$).
Recall from the discussion after \cite[Example 2.5]{GagnaGray} that the Gray monoidal structure is left and right closed.
Thus, we can define the following:
\begin{defn} \label{def:gray:fun-lax}
    We write via \Cref{lem:mapping-objects:ihom-existence}
    \begin{itemize}
        \item $\textcolor{purple}{\FunLax{-}{-}} \colon \op{\CatInfTwo} \times \CatInfTwo \to \CatInfTwo$
            for the right mapping object of the Gray tensor product, called the \defemphindex{$2$-category of functors with lax natural transformations}{functors!with lax natural transformations} and
        \item $\textcolor{purple}{\FunOpLax{-}{-}} \colon \op{\CatInfTwo} \times \CatInfTwo \to \CatInfTwo$
            for the left mapping object of the Gray tensor product, called the \defemphindex{$2$-category of functors with oplax natural transformations}{functors!with oplax natural transformations}.
    \end{itemize}
\end{defn}

\begin{rmk}
    In the notation of \cite[Section 2.1]{GagnaGray},
    we have 
    \begin{equation*}
        \FunLax{-}{-} \cong \FunOpgr{-}{-} \qquad \text{and} \qquad \FunOpLax{-}{-} \cong \FunGr{-}{-}.
    \end{equation*}
\end{rmk}

\begin{rmk}
    Let $\TwoCat{C}$ be a $2$-category.
    Then, by definition we know that there are adjunctions
    \begin{equation*}
        - \otimes^{\gr} \TwoCat{C} \dashv \FunLax{\TwoCat{C}}{-} \qquad \text{and} \qquad \TwoCat{C} \otimes^{\gr} - \dashv \FunOpLax{\TwoCat{C}}{-}.
    \end{equation*}
\end{rmk}

\begin{rmk} \label{rmk:lax-nat-transf:explicit-description}
    Let $\TwoCat C, \TwoCat D \in \CatInfTwo$ be $2$-categories. 
    Following the computations of \cite[Example 2.5]{GagnaGray}\footnote{Note that in \loccit they actually describe morphisms of $\FunGr{-}{-}$ when they say they compute morphisms of $\FunOpgr{-}{-}$.} we see that objects of $\FunLax{\TwoCat{C}}{\TwoCat{D}}$ are given by functors $F \colon \TwoCat{C} \to \TwoCat{D}$ 
    and morphisms are given by lax natural transformations $t \colon F \to G$. 
    In particular, such a lax natural transformation supplies the data of a $1$-morphism $t_X \colon FX \to GX$ in $\TwoCat{D}$ for each object $X \in \TwoCat{C}$ and a diagram 
    \begin{center}
        \begin{tikzcd}
            FX \ar[r, "{F(f)}"] \ar[d, "t_X"'] & FY \ar[d, "t_Y"] \\
            GX \ar[r, "{G(f)}"'] \laxarrowur{t_f} & GY
        \end{tikzcd}
    \end{center}
    for each $1$-morphism $f \colon X \to Y$ in $\TwoCat{C}$. 
    For an oplax natural transformation, the direction of the $2$-morphism $t_f$ is reversed.
\end{rmk}

\begin{lem} \label{lem:lax-nat-transf:coop-lax-oplax}
    If $\TwoCat{M}$ and $\TwoCat{N}$ are $2$-categories, then $\coop{(-)} \colon \CatInfTwo \to \CatInfTwo$ induces equivalences
    \begin{equation*}
        \FunLax{\TwoCat{M}}{\TwoCat{N}} \xrightarrow{\simeq} \coop{\FunLax{\coop{\TwoCat{M}}}{\coop{\TwoCat{N}}}}
    \end{equation*}
    and 
    \begin{equation*}
        \FunOpLax{\TwoCat{M}}{\TwoCat{N}} \xrightarrow{\simeq} \coop{\FunOpLax{\coop{\TwoCat{M}}}{\coop{\TwoCat{N}}}} \, .
    \end{equation*}
\end{lem}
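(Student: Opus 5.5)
We argue exactly as in the proof of \cref{lem:two-cat:coop}, via the Yoneda lemma and the universal property of the right (resp.\ left) mapping object of the Gray tensor product. The essential input is how $\coop{(-)}$ interacts with $\otimes^{\gr}$. The lax Gray tensor product $\TwoCat{A} \otimes^{\gr} \TwoCat{B}$ has generating $2$-cells pointing in a fixed direction between $(f,\id{})(\id{},g)$ and $(\id{},g)(f,\id{})$. Reversing both $1$- and $2$-morphisms sends this direction to itself, without swapping the factors. We therefore expect a natural equivalence
\begin{equation*}
    \coop{(\TwoCat{A} \otimes^{\gr} \TwoCat{B})} \simeq \coop{\TwoCat{A}} \otimes^{\gr} \coop{\TwoCat{B}}.
\end{equation*}
By contrast, $\op{(-)}$ and $\co{(-)}$ alone each reverse the order of the factors; this is why the statement is made only for $\coop{(-)}$.

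The first step is to establish this equivalence, upgraded to the statement that $\coop{(-)}$ is a monoidal autoequivalence of $(\CatInfTwo, \otimes^{\gr})$. The plan is to cite the explicit presentation of the Gray tensor product in \cite{GagnaGray}, for instance its description as a colimit of Gray products of representables (cells, or the $\Theta_2$-type generators). This reduces the claim to generators, where $\coop{(-)}$ sends $\Delta^n\otimes^{\gr}\Delta^m$ to $\Delta^n\otimes^{\gr}\Delta^m$ after reversal. Alternatively, one could use that $\coop{(-)}$ is the composite of $\op{(-)}$ and $\co{(-)}$, each of which is known to be anti-monoidal, i.e.\ to intertwine $\otimes^{\gr}$ with its reverse; composing two order reversals gives a monoidal functor. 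This step is the main obstacle: it requires control over the model-independent Gray product, and we would try to find the anti-monoidality statements for $\op{(-)}$ and $\co{(-)}$ in \cite{GagnaGray} or a reference it cites.

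Granting this, for any $2$-category $\TwoCat{K}$ we compute, naturally in $\TwoCat{M}$ and $\TwoCat{N}$:
\begin{align*}
    \operatorname{Map}\bigl(\TwoCat{K}, \FunLax{\TwoCat{M}}{\TwoCat{N}}\bigr)
    &\simeq \operatorname{Map}\bigl(\TwoCat{K} \otimes^{\gr} \TwoCat{M}, \TwoCat{N}\bigr)
    \simeq \operatorname{Map}\bigl(\coop{(\TwoCat{K} \otimes^{\gr} \TwoCat{M})}, \coop{\TwoCat{N}}\bigr)\\
    &\simeq \operatorname{Map}\bigl(\coop{\TwoCat{K}} \otimes^{\gr} \coop{\TwoCat{M}}, \coop{\TwoCat{N}}\bigr)
    \simeq \operatorname{Map}\bigl(\coop{\TwoCat{K}}, \FunLax{\coop{\TwoCat{M}}}{\coop{\TwoCat{N}}}\bigr)\\
    &\simeq \operatorname{Map}\bigl(\TwoCat{K}, \coop{\FunLax{\coop{\TwoCat{M}}}{\coop{\TwoCat{N}}}}\bigr).
\end{align*}
The second and last steps use that $\coop{(-)}$ is a self-inverse autoequivalence (\cref{lem:two-cat:coop}), and the third uses the monoidality from the first step. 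The Yoneda lemma then yields the first equivalence. The oplax case is identical, using the left adjunction $\TwoCat{M} \otimes^{\gr} - \dashv \FunOpLax{\TwoCat{M}}{-}$ and the same monoidality of $\coop{(-)}$.

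As a sanity check, we will compare with \cref{rmk:lax-nat-transf:explicit-description}. A lax square with $2$-cell $t_f \colon G(f)t_X \Rightarrow t_Y F(f)$ becomes, after applying $\coop{(-)}$, a square in $\coop{\TwoCat{N}}$ whose $2$-cell again has the lax orientation for the reversed morphism $\coop{f}$. This confirms that lax transformations correspond to lax transformations, rather than to oplax ones.
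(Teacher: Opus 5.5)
Your proposal is essentially the paper's proof. The paper also reduces everything to the natural equivalence $\coop{(\TwoCat{A} \otimes^{\gr} \TwoCat{B})} \simeq \coop{\TwoCat{A}} \otimes^{\gr} \coop{\TwoCat{B}}$ and then runs the same Yoneda chain for both the lax and oplax cases; for the step you flag as the main obstacle, it cites \cite[Proposition 2.4.17]{masuda2026algebracategoricalspectra} (truncated to the $2$-categorical level, see also \cite[Proposition~2.22]{LoubatonRuit2025Squares}) rather than arguing via generators or via anti-monoidality of $\op{(-)}$ and $\co{(-)}$.
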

\begin{proof}
    By \cite[Proposition 2.4.17]{masuda2026algebracategoricalspectra} \footnote{This result is for the Gray tensor product on $(\infty,\infty)$-categories.
    The version we need follows by truncating this result to the $2$-categorical level. See also \cite[Proposition~2.22]{LoubatonRuit2025Squares}.},
    the functor $\coop{(-)}$ is monoidal with respect to the
    Gray tensor product. Thus, naturally in $\TwoCat{A}$ and $\TwoCat{B}$,
    there is an equivalence
    \begin{equation} \label{eq:coop-gray-monoidal}
        \coop{\bigl(\TwoCat{A} \otimes^{\gr} \TwoCat{B}\bigr)}
        \cong
        \coop{\TwoCat{A}} \otimes^{\gr} \coop{\TwoCat{B}}.
    \end{equation}
    For every $2$-category $\TwoCat{K}$, we have natural equivalences of anima
    \begin{align*}
        \operatorname{Map}\bigl(
            \TwoCat{K},
            \FunLax{\TwoCat{M}}{\TwoCat{N}}
        \bigr)
        &\cong
        \operatorname{Map}\bigl(
            \TwoCat{K} \otimes^{\gr} \TwoCat{M},
            \TwoCat{N}
        \bigr)\\
        &\cong
        \operatorname{Map}\bigl(
            \coop{(\TwoCat{K} \otimes^{\gr} \TwoCat{M})},
            \coop{\TwoCat{N}}
        \bigr)\\
        &\cong
        \operatorname{Map}\bigl(
            \coop{\TwoCat{K}} \otimes^{\gr} \coop{\TwoCat{M}},
            \coop{\TwoCat{N}}
        \bigr)\\
        &\cong
        \operatorname{Map}\bigl(
            \coop{\TwoCat{K}},
            \FunLax{\coop{\TwoCat{M}}}{\coop{\TwoCat{N}}}
        \bigr)\\
        &\cong
        \operatorname{Map}\bigl(
            \TwoCat{K},
            \coop{\FunLax{\coop{\TwoCat{M}}}{\coop{\TwoCat{N}}}}
        \bigr).
    \end{align*}
    Here, the second and last equivalences use that $\coop{(-)}$ is its own inverse (and hence its own left and right adjoint), 
    and the third uses \eqref{eq:coop-gray-monoidal}.
    Similarly,
    \begin{align*}
        \operatorname{Map}\bigl(
            \TwoCat{K},
            \FunOpLax{\TwoCat{M}}{\TwoCat{N}}
        \bigr)
        &\cong
        \operatorname{Map}\bigl(
            \TwoCat{M} \otimes^{\gr} \TwoCat{K},
            \TwoCat{N}
        \bigr)\\
        &\cong
        \operatorname{Map}\bigl(
            \coop{(\TwoCat{M} \otimes^{\gr} \TwoCat{K})},
            \coop{\TwoCat{N}}
        \bigr)\\
        &\cong
        \operatorname{Map}\bigl(
            \coop{\TwoCat{M}} \otimes^{\gr} \coop{\TwoCat{K}},
            \coop{\TwoCat{N}}
        \bigr)\\
        &\cong
        \operatorname{Map}\bigl(
            \coop{\TwoCat{K}},
            \FunOpLax{\coop{\TwoCat{M}}}{\coop{\TwoCat{N}}}
        \bigr)\\
        &\cong
        \operatorname{Map}\bigl(
            \TwoCat{K},
            \coop{\FunOpLax{\coop{\TwoCat{M}}}{\coop{\TwoCat{N}}}}
        \bigr).
    \end{align*}
    All the equivalences above are natural in $\TwoCat{M}$, $\TwoCat{N}$ and
    $\TwoCat{K}$, whence we conclude by the Yoneda lemma.
\end{proof}

\begin{lem} \label{lem:lax-arrows:commutes-with-limits}
    The bifunctors $\TwoFun{-}{-}$, $\FunLax{-}{-}$ and $\FunOpLax{-}{-}$
    preserve limits in both variables\footnote{Since these functors are contravariant in the first variable,
    this of course means that they send colimits in $\CatInfTwo$ to limits in the first variable.}.
\end{lem}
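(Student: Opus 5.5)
The argument is formal and rests only on the defining adjunctions of these internal mapping objects. For the second variable, fix a $2$-category $\TwoCat{C}$. By construction (via \cref{lem:mapping-objects:ihom-existence}), each of $\TwoFun{\TwoCat C}{-}$, $\FunLax{\TwoCat C}{-}$ and $\FunOpLax{\TwoCat C}{-}$ is a right adjoint. Their left adjoints are, respectively, $- \times \TwoCat{C}$, $- \otimes^{\gr} \TwoCat{C}$ and $\TwoCat{C} \otimes^{\gr} -$, as recorded in the remarks above. Right adjoints preserve all limits, which settles the covariant variable in all three cases at once.

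For the first variable, I would use the Yoneda lemma in $\CatInfTwo$. Let $\TwoCat{C} \cong \colim_i \TwoCat{C}_i$ be a colimit in $\CatInfTwo$, and fix $\TwoCat{D}$. For any test $2$-category $\TwoCat{K}$, we obtain natural equivalences of anima
\begin{align*}
    \operatorname{Map}\bigl(\TwoCat{K}, \FunLax{\TwoCat{C}}{\TwoCat{D}}\bigr)
    &\cong \operatorname{Map}\bigl(\TwoCat{K} \otimes^{\gr} \colim_i \TwoCat{C}_i, \TwoCat{D}\bigr)
    \cong \operatorname{Map}\bigl(\colim_i (\TwoCat{K} \otimes^{\gr} \TwoCat{C}_i), \TwoCat{D}\bigr)\\
    &\cong \lim_i \operatorname{Map}\bigl(\TwoCat{K} \otimes^{\gr} \TwoCat{C}_i, \TwoCat{D}\bigr)
    \cong \operatorname{Map}\bigl(\TwoCat{K}, \lim_i \FunLax{\TwoCat{C}_i}{\TwoCat{D}}\bigr).
\end{align*}
Since these are natural in $\TwoCat{K}$, Yoneda identifies $\FunLax{\colim_i \TwoCat C_i}{\TwoCat D}$ with $\lim_i \FunLax{\TwoCat C_i}{\TwoCat D}$. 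The same argument, with $\TwoCat{C}_i \otimes^{\gr} \TwoCat{K}$ in place of $\TwoCat{K} \otimes^{\gr} \TwoCat{C}_i$, handles $\FunOpLax{-}{-}$. With $\times$ in place of $\otimes^{\gr}$, it handles $\TwoFun{-}{-}$.

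The one non-formal input, and the step I expect to need care, is the second equivalence in the display. It requires $\TwoCat{K} \otimes^{\gr} -$ (and $- \otimes^{\gr} \TwoCat{K}$, resp.\ $\TwoCat{K} \times -$) to preserve colimits. This follows because the Gray monoidal structure is left and right closed, as recalled from the discussion after \cite[Example 2.5]{GagnaGray}. Thus $\TwoCat{K} \otimes^{\gr} -$ is left adjoint to $\FunOpLax{\TwoCat K}{-}$, and $- \otimes^{\gr} \TwoCat{K}$ is left adjoint to $\FunLax{\TwoCat K}{-}$. For the cartesian product, the corresponding input is cartesian closedness of $\CatInfTwo$, already noted in \cref{sec:recoll-2-cat}.

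Finally, I would check that the equivalence obtained is the canonical comparison map induced by functoriality, so that "preserves limits" holds in the usual sense. The identifications above are induced by the restriction maps along $\TwoCat{C}_i \to \TwoCat{C}$, which gives exactly this.
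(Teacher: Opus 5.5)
Your proposal is correct and follows essentially the same route as the paper. The paper simply cites \cref{lem:mapping-objects:ihom-preserves-limits-both-variables}, and that lemma's proof is exactly your argument: right adjoints preserve limits in the covariant variable, and in the first variable a Yoneda computation uses that the tensor product preserves colimits in the other slot, which holds because the monoidal structure is closed on both sides.
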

\begin{proof}
    This is a special case of \Cref{lem:mapping-objects:ihom-preserves-limits-both-variables}.
\end{proof}

\begin{lem} \label{lem:lax-nat-trans:fun-to-lax-fun}
    There are natural transformations 
    $$\TwoFun{-}{-} \to \FunLax{-}{-} \qquad \text{and} \qquad \TwoFun{-}{-} \to \FunOpLax{-}{-}.$$
    Pointwise, these functors are wide sub-$2$-category inclusions.
\end{lem}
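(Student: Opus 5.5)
We construct the transformations via the Yoneda lemma, starting from a comparison map between the Gray tensor product and the cartesian product. The Gray tensor product comes with a canonical natural transformation $\TwoCat{A} \otimes^{\gr} \TwoCat{B} \to \TwoCat{A} \times \TwoCat{B}$, natural in both variables. One way to obtain it is to note that the cartesian product is the localization of the Gray product inverting the structural lax $2$-cells; see \cite{GagnaGray}. Alternatively, it can be built from the two projections $\TwoCat{A} \otimes^{\gr} \TwoCat{B} \to \TwoCat{A} \otimes^{\gr} \Delta^0 \cong \TwoCat{A}$ and $\TwoCat{A} \otimes^{\gr} \TwoCat{B} \to \Delta^0 \otimes^{\gr} \TwoCat{B} \cong \TwoCat{B}$. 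These use functoriality of $\otimes^{\gr}$ in each variable and the fact that $\Delta^0$ is the unit.

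For $2$-categories $\TwoCat{C}, \TwoCat{D}, \TwoCat{K}$, precomposing with $\TwoCat{K} \otimes^{\gr} \TwoCat{C} \to \TwoCat{K} \times \TwoCat{C}$ gives a map of anima
\begin{equation*}
    \operatorname{Map}\bigl(\TwoCat{K}, \TwoFun{\TwoCat C}{\TwoCat D}\bigr) \simeq \operatorname{Map}\bigl(\TwoCat{K} \times \TwoCat{C}, \TwoCat{D}\bigr) \to \operatorname{Map}\bigl(\TwoCat{K} \otimes^{\gr} \TwoCat{C}, \TwoCat{D}\bigr) \simeq \operatorname{Map}\bigl(\TwoCat{K}, \FunLax{\TwoCat C}{\TwoCat D}\bigr).
\end{equation*}
This map is natural in $\TwoCat{K}$, $\TwoCat{C}$ and $\TwoCat{D}$, so the Yoneda lemma yields a natural transformation $\TwoFun{-}{-} \to \FunLax{-}{-}$. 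The oplax version is obtained in the same way from $\TwoCat{C} \otimes^{\gr} \TwoCat{K} \to \TwoCat{C} \times \TwoCat{K} \cong \TwoCat{K} \times \TwoCat{C}$. It can also be deduced from the lax case via \cref{lem:lax-nat-transf:coop-lax-oplax} together with \cref{lem:two-cat:coop}, since $\coop{(-)}$ exchanges the two sides of the Gray product.

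To show that each component is a wide inclusion, we test against the generators $\Delta^0$, $\Delta^1$ and the walking $2$-cell $C_2$, using that $\CatInfTwo$ is generated under colimits by these.

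\textbf{Objects.} With $\TwoCat{K} = \Delta^0$ we have $\Delta^0 \otimes^{\gr} \TwoCat{C} \simeq \TwoCat{C} \simeq \Delta^0 \times \TwoCat{C}$, and the comparison map is an equivalence. So the map on underlying anima $\iota_0$ is an equivalence.

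\textbf{Mapping categories.} It remains to show that on mapping categories the induced functor is a subcategory inclusion: a monomorphism on spaces of $1$-morphisms, and fully faithful on $2$-cells between them. For this we use that $\Delta^1 \otimes^{\gr} \TwoCat{C} \to \Delta^1 \times \TwoCat{C}$, and likewise $C_2 \otimes^{\gr} \TwoCat{C} \to C_2 \times \TwoCat{C}$, is a localization: an epimorphism in $\CatInfTwo$ obtained by inverting the lax $2$-cells. Mapping out of an epimorphism then gives a monomorphism of mapping anima, which is exactly what is needed. Concretely, by \cref{rmk:lax-nat-transf:explicit-description}, a strong transformation is a lax transformation whose $2$-cells $t_f$ are invertible, and this condition is a property, not extra structure. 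Modifications of strong transformations are the same as modifications of the underlying lax transformations, since the modification data is indexed by objects, which gives fully faithfulness on $2$-cells.

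\textbf{Main obstacle.} The hard step is justifying rigorously, model-independently, that the comparison maps $\Delta^1 \otimes^{\gr} \TwoCat{C} \to \Delta^1 \times \TwoCat{C}$ and $C_2 \otimes^{\gr} \TwoCat{C} \to C_2 \times \TwoCat{C}$ are epimorphisms given by localization at the lax cells. I would first try to reduce to the case where $\TwoCat{C}$ is a generator, using that both sides preserve colimits in $\TwoCat{C}$; this is where closedness and \cref{lem:lax-arrows:commutes-with-limits} enter. Then I would check the finitely many Gray cylinders by hand using the explicit computations of \cite[Example 2.5]{GagnaGray}.
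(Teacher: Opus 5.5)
Your construction of the transformations is the paper's. There they arise as the special case of \cref{lem:mapping-objects:ihom-map-of-lax-functor} for the lax monoidal identity $\CatInfTwo^{\times}\to\CatInfTwo^{\otimes^{\gr}}$. Its structure maps are exactly the comparisons $\TwoCat A\otimes^{\gr}\TwoCat B\to\TwoCat A\times\TwoCat B$ coming from semicartesianness, and unwinding that lemma gives precisely your precomposition map. One correction: your alternative derivation of the oplax case via $\coop{(-)}$ fails. By \cref{lem:lax-nat-transf:coop-lax-oplax}, $\coop{(-)}$ is monoidal for $\otimes^{\gr}$ \emph{without} swapping the factors and sends $\FunLax{-}{-}$ to $\FunLax{-}{-}$; it is $\op{(-)}$ or $\co{(-)}$ that would reverse the order. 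Your direct construction from $\TwoCat C\otimes^{\gr}\TwoCat K\to\TwoCat C\times\TwoCat K$ is fine.

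The genuine gap is the inclusion statement, which the paper does not prove but takes from \cite[Corollary 3.17]{haugseng2021laxtransformationsadjunctionsmonads}. Your reduction is correct as far as it goes. The unit case gives the equivalence on $\iota_0$. If $\TwoCat K\otimes^{\gr}\TwoCat C\to\TwoCat K\times\TwoCat C$ is an epimorphism in $\CatInfTwo$ for $\TwoCat K\in\{\Delta^1,C_2\}$, then $\TwoFun{\TwoCat C}{\TwoCat D}\to\FunLax{\TwoCat C}{\TwoCat D}$ is a monomorphism. Hence the functors on mapping categories are monomorphisms in $\CatInf$, i.e.\ subcategory inclusions.

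But that epimorphism property \emph{is} the content of the claim, and you never establish it. You cite it loosely in your first paragraph and then concede at the end that it is the open ``main obstacle''. The remarks that strongness is a property and that modifications agree restate the $1$-categorical intuition; they do not prove the $\infty$-categorical statement.

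Your plan for closing it is viable but not carried out. Reducing to generators $\TwoCat C$ is legitimate: epimorphisms are closed under colimits in the arrow category, since $\Map{}{-}{X}$ turns colimits into limits and monomorphisms are stable under limits. For $\Delta^1\otimes^{\gr}\Delta^1$, the product is obtained by pushing out the lax $2$-cell along the epimorphism $C_2\to\Delta^1$. The cases $\Delta^1\otimes^{\gr}C_2$, $C_2\otimes^{\gr}\Delta^1$ and $C_2\otimes^{\gr}C_2$, however, require explicit cell presentations of the $2$-truncated Gray products, and \cite[Example 2.5]{GagnaGray} does not supply these. Either carry out these computations or cite Haugseng as the paper does.

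Finally, the lemma only asks for subcategory inclusions on mapping categories. Your phrase ``fully faithful on $2$-cells'' asks for local fullness, which the epimorphism argument does not give. That would need $C_2\times\TwoCat C$ to be the pushout of $C_2\otimes^{\gr}\TwoCat C$ and $\partial C_2\times\TwoCat C$ over $\partial C_2\otimes^{\gr}\TwoCat C$. So either drop that claim or prove this stronger localization statement.
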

\begin{proof}
    This is a special case of \Cref{lem:mapping-objects:ihom-map-of-lax-functor},
    applied to the lax monoidal identity functor $\CatInfTwo^{\times} \xrightarrow{\id{}} \CatInfTwo^{\otimes^{\gr}}$
    (see \cite[Lemma 2.4.13]{masuda2026algebracategoricalspectra} for the construction of the lax monoidal structure,
    note that the Gray tensor product is semiCartesian (i.e.\ the unit is terminal) by \cite[Remark 2.3]{GagnaGray}).
    The last claim is \cite[Corollary 3.17]{haugseng2021laxtransformationsadjunctionsmonads}.
\end{proof}

\begin{notation}
    We will repeatedly state results for $\FunLax{-}{-}$, $\FunOpLax{-}{-}$ and $\TwoFun{-}{-}$. In order to avoid unnecessary 
    duplications, we will usually work with a variance $\epsilon \in \oplaxvarset$.
    We then write $\defnot{\FunEps{-}{-}}$ to mean $\FunLax{-}{-}$ if $\epsilon = \mathrm{lax}$,
    $\FunOpLax{-}{-}$ if $\epsilon = \mathrm{oplax}$ and $\TwoFun{-}{-}$ if $\epsilon = \emptyset$.

    Moreover, there will be formulas where a variance and its opposite variance appear.
    For this, we define the following function:
    $$\defnot{\mathrm{op}(-)} \colon \oplaxvarset \to \oplaxvarset; \qquad \mathrm{op}(\mathrm{lax}) = \mathrm{oplax}, \mathrm{op}(\mathrm{oplax}) = \mathrm{lax}, \mathrm{op}(\emptyset) = \emptyset.$$
    For example, if $\epsilon = \mathrm{lax}$, then $\FunNEps{-}{-} = \FunOpLax{-}{-}$.
\end{notation}

We have the following comparison functor:
\begin{prop} \label{lem:lax-arrow:lax-strong-comparison}
    Let $\TwoCat C, \TwoCat E$ and $\TwoCat N$ be $2$-categories, and let $\epsilon \in \oplaxvarset$.
    There exists a natural \defemph{currying} comparison functor
    $$\FunEps{\TwoCat E}{\TwoFun{\TwoCat C}{\TwoCat N}} \to \FunEps{\TwoCat C\times\TwoCat E}{\TwoCat N},$$
    natural in $\TwoCat C$, $\TwoCat E$ and $\TwoCat N$.
    This functor is an equivalence if $\epsilon = \emptyset$. The diagrams
    \begin{center}
        \begin{tikzcd}
            \TwoFun{\TwoCat E}{\TwoFun{\TwoCat C}{\TwoCat N}} \ar[d] \ar[r, "\simeq"] &\TwoFun{\TwoCat C\times\TwoCat E}{\TwoCat N} \ar[d]\\
            \FunLax{\TwoCat E}{\TwoFun{\TwoCat C}{\TwoCat N}} \ar[r] &\FunLax{\TwoCat C\times\TwoCat E}{\TwoCat N}
        \end{tikzcd}
    \end{center}
    and
    \begin{center}
        \begin{tikzcd}
            \TwoFun{\TwoCat E}{\TwoFun{\TwoCat C}{\TwoCat N}} \ar[d] \ar[r, "\simeq"] &\TwoFun{\TwoCat C\times\TwoCat E}{\TwoCat N} \ar[d]\\
            \FunOpLax{\TwoCat E}{\TwoFun{\TwoCat C}{\TwoCat N}} \ar[r] &\FunOpLax{\TwoCat C\times\TwoCat E}{\TwoCat N}
        \end{tikzcd}
    \end{center}
    commute, where the vertical maps are given by the natural transformations of \cref{lem:lax-nat-trans:fun-to-lax-fun}.
\end{prop}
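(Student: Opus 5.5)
We construct the comparison functor by the Yoneda lemma, after first producing a natural comparison map of $2$-categories
$$\TwoCat K \otimes^{\gr} (\TwoCat C \times \TwoCat E) \to \TwoCat C \times (\TwoCat K \otimes^{\gr} \TwoCat E)$$
in the lax case. In the oplax case we use instead
$$(\TwoCat C \times \TwoCat E) \otimes^{\gr} \TwoCat K \to \TwoCat C \times (\TwoCat E \otimes^{\gr} \TwoCat K).$$
Both maps should be natural in all three variables. To build the lax one, we use the lax monoidal structure on $\id{} \colon \CatInfTwo^\times \to \CatInfTwo^{\otimes^{\gr}}$ from the proof of \cref{lem:lax-nat-trans:fun-to-lax-fun}, i.e.\ the natural map $\TwoCat A \otimes^{\gr} \TwoCat B \to \TwoCat A \times \TwoCat B$. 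The first component of the comparison is
$$\TwoCat K \otimes^{\gr} (\TwoCat C \times \TwoCat E) \to \TwoCat K \otimes^{\gr} \TwoCat C \to \TwoCat K \times \TwoCat C \to \TwoCat C.$$
Since the Gray tensor unit $\Delta^0$ is terminal, this composite agrees with $\TwoCat K \otimes^{\gr} (\TwoCat C \times \TwoCat E) \to \Delta^0 \otimes^{\gr} (\TwoCat C \times \TwoCat E) \cong \TwoCat C \times \TwoCat E \to \TwoCat C$. The second component is $\TwoCat K \otimes^{\gr} \pr_{\TwoCat E}$. Together they give the desired map into the product.

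Given this comparison, the natural equivalences
$$\operatorname{Map}\bigl(\TwoCat K, \FunLax{\TwoCat E}{\TwoFun{\TwoCat C}{\TwoCat N}}\bigr) \simeq \operatorname{Map}\bigl(\TwoCat K \otimes^{\gr} \TwoCat E, \TwoFun{\TwoCat C}{\TwoCat N}\bigr) \simeq \operatorname{Map}\bigl(\TwoCat C \times (\TwoCat K \otimes^{\gr} \TwoCat E), \TwoCat N\bigr)$$
compose with precomposition by the comparison map to give a map to $\operatorname{Map}\bigl(\TwoCat K \otimes^{\gr}(\TwoCat C\times\TwoCat E), \TwoCat N\bigr) \simeq \operatorname{Map}\bigl(\TwoCat K, \FunLax{\TwoCat C\times\TwoCat E}{\TwoCat N}\bigr)$. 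This map is natural in $\TwoCat K$, so the Yoneda lemma yields the functor, naturally in $\TwoCat C$, $\TwoCat E$, $\TwoCat N$. The oplax case is identical, with left mapping objects. Alternatively, it follows from the lax case by conjugating with $\coop{(-)}$ using \cref{lem:lax-nat-transf:coop-lax-oplax} and \cref{lem:two-cat:coop}.

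For $\epsilon = \emptyset$, the same recipe with $\times$ in place of $\otimes^{\gr}$ uses the canonical associativity/symmetry equivalence $\TwoCat K \times (\TwoCat C \times \TwoCat E) \simeq \TwoCat C \times (\TwoCat K \times \TwoCat E)$. The resulting functor is then an equivalence.

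To show the two squares commute, we check it on the represented level, again by Yoneda. The vertical maps are induced by precomposition with $\TwoCat K \otimes^{\gr} \TwoCat E \to \TwoCat K \times \TwoCat E$ (and the analogous maps). Commutativity then reduces to commutativity of a square of $2$-categories: the comparison map, followed by $\TwoCat C \times (\TwoCat K \otimes^{\gr} \TwoCat E) \to \TwoCat C \times \TwoCat K \times \TwoCat E$, must equal $\TwoCat K \otimes^{\gr} (\TwoCat C \times \TwoCat E) \to \TwoCat K \times \TwoCat C \times \TwoCat E$ composed with the associativity/symmetry map. A map into a product is determined by its components, so it suffices to compare the three projections. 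Each comparison follows from the naturality of $\otimes^{\gr} \to \times$ together with the unit compatibility of the lax monoidal structure.

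The main obstacle is coherence: making the comparison map natural in all variables as a functor of $\infty$-categories, not just objectwise. To get this, I would define it as a composite of natural transformations built from the lax monoidal structure of $\id{}$, namely the map $\otimes^{\gr} \to \times$ and the unit/terminality identifications. Each piece is then manifestly a natural transformation of functors $\CatInfTwo^{3} \to \CatInfTwo$, and the components are combined using the universal property of the product.
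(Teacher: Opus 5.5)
Your proposal is correct and is essentially the paper's argument. The paper also passes to represented functors, uses the comparison maps $\lambda\colon \TwoCat X\otimes^{\gr}(\TwoCat C\times\TwoCat E)\to \TwoCat C\times(\TwoCat X\otimes^{\gr}\TwoCat E)$ and $\rho$ built from $!\otimes^{\gr}\pr_{\TwoCat C}$ and $\id{}\otimes^{\gr}\pr_{\TwoCat E}$, and checks the squares componentwise via naturality and unit compatibility of $\otimes^{\gr}\to\times$. The only difference is that the paper takes the lax comparison from \cite[Corollary 2.8.4]{AGH24} and identifies it with the map induced by $\lambda$, whereas you simply define it that way.

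One correction to your aside: conjugating by $\coop{(-)}$ cannot deduce the oplax case from the lax one. By \cref{lem:lax-nat-transf:coop-lax-oplax}, $\coop{(-)}$ preserves lax and oplax functor $2$-categories separately; you would need a duality that reverses the Gray tensor product, such as $\co{(-)}$. Your direct argument with left mapping objects already covers the oplax case, so this does not affect the proof.
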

\begin{proof}
    For $\epsilon = \mathrm{lax}$, the existence of the functor is
    \cite[Corollary 2.8.4]{AGH24}, and for
    $\epsilon = \mathrm{oplax}$ one can use the dual construction.
    For $\epsilon = \emptyset$, one uses that the currying adjunction
    holds on functor $2$-categories; cf.\
    \cref{lem:mapping-objects:tensor-hom-adjunction-internal}.
    To verify the compatibility diagrams, write
    $$q_{\TwoCat A, \TwoCat B}\colon \TwoCat A\otimes^\gr \TwoCat B \to \TwoCat A\times \TwoCat B$$
    for the canonical comparison. Under the tensor--hom adjunctions,
    precomposition with $q_{\TwoCat A, \TwoCat B}$ induces the inclusions of strong
    transformations into lax or oplax transformations.
    For every test $2$-category $\TwoCat X$, consider the diagrams
    \begin{center}
        \begin{tikzcd}[column sep=large, row sep=large]
            \TwoCat X\otimes^\gr(\TwoCat C\times\TwoCat E) \ar[r, "\lambda"] \ar[d, "q_{\TwoCat X,\TwoCat C\times\TwoCat E}"'] &\TwoCat C\times(\TwoCat X\otimes^\gr\TwoCat E) \ar[d, "\mathrm{id}_{\TwoCat C}\times q_{\TwoCat X,\TwoCat E}"] \\
            \TwoCat X\times(\TwoCat C\times\TwoCat E) \ar[r, "\simeq"'] &\TwoCat C\times(\TwoCat X\times\TwoCat E)
        \end{tikzcd}
    \end{center}
    and
    \begin{center}
        \begin{tikzcd}[column sep=large, row sep=large]
            (\TwoCat C\times\TwoCat E)\otimes^\gr\TwoCat X \ar[r, "\rho"] \ar[d, "q_{\TwoCat C\times\TwoCat E,\TwoCat X}"'] &\TwoCat C\times(\TwoCat E\otimes^\gr\TwoCat X) \ar[d, "\mathrm{id}_{\TwoCat C}\times q_{\TwoCat E,\TwoCat X}"] \\
            (\TwoCat C\times\TwoCat E)\times\TwoCat X \ar[r, "\simeq"'] & \TwoCat C\times(\TwoCat E\times\TwoCat X)\rlap{.}
        \end{tikzcd}
    \end{center}
    The bottom arrows are the canonical symmetry and associativity
    equivalences. The map $\lambda$ is induced by the pair
    $$\TwoCat X\otimes^\gr(\TwoCat C\times\TwoCat E) \longrightarrow \Delta^0\otimes^\gr\TwoCat C  \cong \TwoCat C, \qquad \mathrm{id}_{\TwoCat X}\otimes^\gr\mathrm{pr}_{\TwoCat E},$$
    and $\rho$ is induced by the pair
    $$(\TwoCat C\times\TwoCat E)\otimes^\gr\TwoCat X \longrightarrow \TwoCat C\otimes^\gr \Delta^0 \cong \TwoCat C, \qquad \mathrm{pr}_{\TwoCat E}\otimes^\gr\mathrm{id}_{\TwoCat X}.$$
    Both squares commute naturally: after projection to each cartesian
    factor, their commutativity follows from the naturality of $q$ and
    its compatibility with the unit.

    Applying
    $\Map{\CatInfTwo}{-}{\TwoCat N}$ and using the
    defining adjunctions, the top arrows induce respectively the
    lax currying comparison of \cite[Corollary 2.8.4]{AGH24} and its
    oplax analogue. The bottom arrows induce strong currying, and the
    vertical arrows induce the strong-to-lax and strong-to-oplax
    inclusions. Thus the resulting squares are the images of the two
    claimed compatibility diagrams under
    $\Map{\CatInfTwo}{\TwoCat X}{-}$.
    Since the constructions are natural in $\TwoCat X$, the Yoneda
    lemma proves the required commutativity.
\end{proof}

%% file: 2-categories/laxlim.tex
\subsection{Partially op/lax limits} \label{sec:partially-lax-limits}
In this section, we recall some results about partially op/lax limits.
\begin{defn} \label{def:lax-lim:marking}
    Let $\TwoCat{I}$ be a $2$-category.
    A \defemph{marking} $\dagger$ on $\TwoCat{I}$ 
    is a $2$-full sub-$2$-category $\dagger \subseteq \TwoCat{I}$ containing all equivalences.

    We write $\TwoCat{I}^\dagger$ for a $2$-category $\TwoCat{I}$
    equipped with a marking $\dagger$, and call it a \defemphindex{marked $2$-category}{marked!$2$-category}.
\end{defn}

\begin{defn} \label{def:lax-lim:partial-lax-functors}
    Let $\TwoCat{I}^\dagger$ be a marked $2$-category,
    and $\TwoCat{M}$ be a $2$-category.
    We define the $2$-category of \defemphindex{functors with partially lax natural transformations}{functors!with partially lax natural transformations} as
    the pullback in the following cartesian diagram
    \begin{center}
        \begin{tikzcd}
            \textcolor{purple}{\FunPartLax{\dagger}{\TwoCat{I}}{\TwoCat{M}}} \ar[r] \ar[d] \cartsymb & \FunLax{\TwoCat{I}}{\TwoCat{M}}\ar[d] \\
            \TwoFun{\dagger}{\TwoCat{M}} \ar[r] &\FunLax{\dagger}{\TwoCat{M}}\rlap{,}
        \end{tikzcd}
    \end{center}
    where the right vertical morphism is given by restriction,
    and the horizontal morphism is the inclusion from \cref{lem:lax-nat-trans:fun-to-lax-fun}.

    Similarly, we define the $2$-category of \defemphindex{functors with partially oplax natural transformations}{functors!with partially oplax natural transformations} as
    the pullback in the following cartesian diagram
    \begin{center}
        \begin{tikzcd}
            \textcolor{purple}{\FunPartOpLax{\dagger}{\TwoCat{I}}{\TwoCat{M}}} \ar[r] \ar[d] \cartsymb & \FunOpLax{\TwoCat{I}}{\TwoCat{M}}\ar[d] \\
            \TwoFun{\dagger}{\TwoCat{M}} \ar[r] &\FunOpLax{\dagger}{\TwoCat{M}}\rlap{,}
        \end{tikzcd}
    \end{center}
    where the right vertical morphism is given by restriction,
    and the horizontal morphism is the inclusion from \cref{lem:lax-nat-trans:fun-to-lax-fun}.
\end{defn}

\begin{rmk}
    Unravelling this definition we see that a partially op/lax natural transformation $t \colon F \to G$ is given by an op/lax natural transformation 
    whose $2$-morphisms $t_f$ are equivalences for $f \in \dagger$ (cf.\ \Cref{rmk:lax-nat-transf:explicit-description} for the notation).
\end{rmk}

\begin{defn} \label{def:lax-lim:partial-lax-lim}
    Let $\TwoCat{I}^\dagger$ be a marked $2$-category, and let $\TwoCat{M}$ be a $2$-category.
    \begin{enumerate}[label=(\alph*)]
        \item We say that a functor $F \colon \TwoCat{I} \to \TwoCat{M}$ \defemphindex{admits a partially lax limit}{lax limit!partially}
            if the \defemph{constant diagram functor}
            \begin{equation*}
                \defnot{\mathrm{const}} \colon \TwoCat{M} = \FunLax{\Delta^0}{\TwoCat{M}} \to \FunPartLax{\dagger}{\TwoCat{I}}{\TwoCat{M}}
            \end{equation*}
            has a partial right adjoint that is defined at $F$, i.e.\ there exists $L \in \TwoCat M$, and for every $X \in \TwoCat M$ an equivalence of mapping categories 
            $$\Map{\FunPartLax{\dagger}{\TwoCat I}{\TwoCat M}}{\mathrm{const}(X)}{F} \cong \Map{\TwoCat M}{X}{L},$$
            natural in $X$. 
            In this case we denote the \defemphindex{partially lax limit}{lax limit!partially} $L$ of $F$ by $\defnot{\laxlim^\dagger_{\TwoCat{I}} F}$.
        \item We say that $\TwoCat{M}$ \defemphnoindex{has all partially lax limits of shape $\TwoCat{I}^\dagger$} if each functor $F \colon \TwoCat{I} \to \TwoCat{M}$ admits a partially lax limit.
            In particular, we get an adjunction $\mathrm{const} \dashv \laxlim^\dagger_{\TwoCat{I}}$.
        \item Similarly, we say that a functor $F \colon \TwoCat{I} \to \TwoCat{M}$ \defemphindex{admits a partially oplax limit}{oplax limit!partially} 
            if the \defemph{constant diagram functor}
            \begin{equation*}
                \defnot{\mathrm{const}} \colon \TwoCat{M} = \FunOpLax{\Delta^0}{\TwoCat{M}} \to \FunPartOpLax{\dagger}{\TwoCat{I}}{\TwoCat{M}}
            \end{equation*}
            has a partial right adjoint that is defined at $F$, i.e.\ there exists $L \in \TwoCat M$, and for every $X \in \TwoCat M$ an equivalence of mapping categories 
            $$\Map{\FunPartOpLax{\dagger}{\TwoCat I}{\TwoCat M}}{\mathrm{const}(X)}{F} \cong \Map{\TwoCat M}{X}{L},$$
            natural in $X$. In this case we denote the \defemphindex{partially oplax limit}{oplax limit!partially} $L$ of $F$ by $\defnot{\oplaxlim^\dagger_{\TwoCat{I}} F}$.
        \item We say that $\TwoCat{M}$ \defemphnoindex{has all partially oplax limits of shape $\TwoCat{I}^\dagger$} if each functor $F \colon \TwoCat{I} \to \TwoCat{M}$ admits a partially oplax limit.
            In particular, we get an adjunction $\mathrm{const} \dashv \oplaxlim^\dagger_{\TwoCat{I}}$.
    \end{enumerate}
\end{defn}

\begin{exmpl}
    Let $\TwoCat{I}$ and $\TwoCat M$ be $2$-categories, and let $\defnot{\flat}$ be the marking on $\TwoCat{I}$
    where we mark only equivalences.
    Then there are canonical equivalences
    \begin{equation*}
        \FunPartLax{\flat}{\TwoCat{I}}{\TwoCat{M}} \cong \FunLax{\TwoCat{I}}{\TwoCat{M}}
        \qquad \text{and} \qquad
        \FunPartOpLax{\flat}{\TwoCat{I}}{\TwoCat{M}} \cong \FunOpLax{\TwoCat{I}}{\TwoCat{M}}.
    \end{equation*}
    Consequently, at least when $\TwoCat I$ is a $1$-category and $\TwoCat M = \TwoCatInf$, partially op/lax limits of shape $\TwoCat{I}^\flat$ recover 
    the notion of op/lax limits of shape $\TwoCat{I}$
    as discussed in \cite[Definition 2.9]{gepner2017lax}, cf.\ \cite[Remark 2.7]{gepner2024global}.
\end{exmpl}

The above example justifies the following definition of op/lax limits.

\begin{defn}
    Let $\TwoCat I$ and $\TwoCat M$ be $2$-categories. 
    We say that a functor $F \colon \TwoCat{I} \to \TwoCat{M}$ \defemphindex{admits a lax limit}{lax limit} if it admits a partially lax limit with respect to $\TwoCat{I}^\flat$.
    In this case we denote the lax limit of $F$ by $\defnot{\laxlim_{\TwoCat{I}} F}$.

    We say that $\TwoCat{M}$ \defemphnoindex{has all lax limits of shape $\TwoCat{I}$} if each functor $F \colon \TwoCat{I} \to \TwoCat{M}$ admits a lax limit.

    Similarly, we say that a functor $F \colon \TwoCat{I} \to \TwoCat{M}$ \defemphindex{admits an oplax limit}{oplax limit} if it admits a partially oplax limit with respect to $\TwoCat{I}^\flat$.
    In this case we denote the oplax limit of $F$ by $\defnot{\oplaxlim_{\TwoCat{I}} F}$.

    We say that $\TwoCat{M}$ \defemphnoindex{has all oplax limits of shape $\TwoCat{I}$} if each functor $F \colon \TwoCat{I} \to \TwoCat{M}$ admits an oplax limit.
\end{defn}

\begin{exmpl}
    Let $\TwoCat{I}$ and $\TwoCat M$ be $2$-categories, and let $\defnot{\sharp}$ the marking on $\TwoCat I$ 
    where we mark everything. Then there are canonical equivalences
    \begin{equation*}
        \FunPartLax{\sharp}{\TwoCat{I}}{\TwoCat{M}} \cong \TwoFun{\TwoCat{I}}{\TwoCat{M}} \cong \FunPartOpLax{\sharp}{\TwoCat{I}}{\TwoCat{M}}.
    \end{equation*}
    Consequently, if $\TwoCat{M}$ has all limits of shape $\TwoCat{I}$,
    then it has all partially lax and oplax limits of shape $\TwoCat{I}^\sharp$,
    and there are canonical equivalences 
    \begin{equation*}
        \lim_{\TwoCat{I}} \cong \laxlim_{\TwoCat{I}}^\sharp \cong \oplaxlim_{\TwoCat{I}}^\sharp.
    \end{equation*}
    Here, the limit over a $2$-category is defined similarly to the partially lax limit:
    As the partial right adjoint to the constant diagram functor $\TwoCat{M} \to \TwoFun{\TwoCat{I}}{\TwoCat{M}}$.
\end{exmpl}

The previous two examples show in particular 
that statements about partially op/lax limits
also specialize to the corresponding statements about op/lax limits and limits.

\begin{lem} [{\cite[Remark 2.7]{gepner2024global}}] \label{lem:lax-lim:partial-lax-lim-descr}
    Let $\Cat{I}^\dagger$ be a marked category,
    and let $F \colon \Cat{I} \to \TwoCatInf$ be a functor.
    Then the partially lax limit of 
    $F$ in the sense of \cite[Definition 3.3]{berman2020lax} is a partially lax limit of $F$ in the sense of \Cref{def:lax-lim:partial-lax-lim}.
    Moreover, the partially oplax limit of $F$
    in the sense of \cite[Definition 3.3]{berman2020lax} is a partially oplax limit in the sense of \Cref{def:lax-lim:partial-lax-lim}.
\end{lem}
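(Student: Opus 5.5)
The plan is to compare the two universal properties directly, using the explicit construction of Berman's partially lax limit as a category of sections. Write $L \coloneqq \laxlim^\dagger_{\Cat I} F$ in the sense of \cite[Definition 3.3]{berman2020lax}. Concretely, $L$ is the full subcategory of $\Fun{\Cat I}{\int F}$ (sections of the cocartesian unstraightening $p \colon \int F \to \Cat I$) on those sections sending morphisms in $\dagger$ to $p$-cocartesian edges. Berman's definition is a $1$-categorical statement: it identifies the anima $\Map{\CatInf}{X}{L}$ with the anima of partially lax transformations $\mathrm{const}(X) \Rightarrow F$. What \cref{def:lax-lim:partial-lax-lim} requires is an equivalence of mapping \emph{categories}
\[
\Map{\FunPartLax{\dagger}{\Cat I}{\TwoCatInf}}{\mathrm{const}(X)}{F} \cong \Map{\TwoCatInf}{X}{L} = \Fun{X}{L},
\]
natural in $X$.

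The first step is to reduce this categorical statement to a statement about anima. For this we test against $\Delta^1$, or more generally against arbitrary categories $K$. We want natural equivalences
\[
\Map{\CatInf}{K}{\Map{\FunPartLax{\dagger}{\Cat I}{\TwoCatInf}}{\mathrm{const}(X)}{F}} \simeq \Map{\CatInf}{K \times X}{L},
\]
natural in $K$. By Yoneda, these assemble to the desired equivalence of categories.

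The second step computes the left hand side. A $K$-family of partially lax transformations $\mathrm{const}(X) \Rightarrow F$ should correspond to a single partially lax transformation $\mathrm{const}(K \times X) \Rightarrow F$. This is because mapping categories in $\FunLax{\Cat I}{\TwoCatInf}$ out of a constant diagram are cotensored over $\CatInf$: maps $K \to \Map{}{\mathrm{const} X}{F}$ correspond to transformations $\mathrm{const}(X) \otimes K \Rightarrow F$, and $\TwoCatInf$ is tensored by products. The partiality condition, that components over $\dagger$ be invertible, is checked pointwise in $K$, so it is preserved under this correspondence. This uses the pullback definition \cref{def:lax-lim:partial-lax-functors} together with \cref{lem:lax-arrows:commutes-with-limits}.

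The third step applies Berman's universal property to $K \times X$, which gives $\Map{}{K\times X}{L}$. Naturality in $K$ and $X$ then holds because all identifications are natural. The oplax case is dual: replace cocartesian by cartesian unstraightening, or apply $\coop{(-)}$ via \cref{lem:lax-nat-transf:coop-lax-oplax}.

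The main obstacle is the second step, the cotensoring/enrichment claim. It requires comparing the Gray-tensor definition of $\FunLax{\Cat I}{\TwoCatInf}$ with the model used by Berman, which is defined via unstraightening. For this I would invoke the known comparison in \cite{gepner2024global} (their Remark 2.7) and in \cite{haugseng2021laxtransformationsadjunctionsmonads}, which identifies lax transformations with functors over $\Cat I$ between unstraightenings. With that comparison, $\Map{}{\mathrm{const}X}{F} \simeq \Fun{X}{L}$ can be read off directly at the level of categories, and this makes the first step essentially tautological.
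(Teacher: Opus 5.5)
The paper does not prove this lemma itself; it imports it from \cite[Remark~2.7]{gepner2024global}. Your proposal is therefore a reconstruction, and the part that actually proves the statement is your last paragraph. The key input is a straightening of lax transformations as an equivalence of $2$-categories, i.e.\ on mapping categories (e.g.\ the lax straightening of \cite{AGH24}). Under it, $\mathrm{const}(X)$ goes to $X \times \Cat I \to \Cat I$, so
\[
\Map{\FunLax{\Cat I}{\TwoCatInf}}{\mathrm{const}(X)}{F} \simeq \operatorname{Fun}_{\Cat I}\bigl(X \times \Cat I, \textstyle\int F\bigr) \simeq \Fun{X}{\operatorname{Fun}_{\Cat I}(\Cat I, \textstyle\int F)} .
\]
This straightening is natural in the base, and it matches strong transformations with functors preserving cocartesian edges. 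Restricting along $\dagger \subseteq \Cat I$ therefore shows that the pullback of \cref{def:lax-lim:partial-lax-functors} cuts out exactly $\Fun{X}{\Gamma^\dagger(F)}$. Here $\Gamma^\dagger(F)$ denotes the sections sending $\dagger$ to cocartesian edges; note these are functors over $\Cat I$, not a full subcategory of $\Fun{\Cat I}{\int F}$. Berman's identification of his Definition~3.3 with $\Gamma^\dagger(F)$ then finishes the argument, naturally in $X$.

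The scaffolding around this does not work as written.
\begin{itemize}
\item Berman's Definition~3.3 is a weighted-limit (end) formula, which is exactly how the paper uses it in the proof of \cref{lem:lax-lim:LEq-as-partially-lax-lim}. It is not a universal property against Gray-lax cones. So Step~3 (``apply Berman's universal property to $K \times X$'') is precisely the anima-level form of the lemma you are trying to prove.
\item The cotensoring in Step~2 is asserted, not derived. The results you cite there only handle the partiality condition, and say nothing about mapping categories of $\FunLax{\Cat I}{\TwoCatInf}$. Once you have the $2$-categorical straightening, Steps~1--3 are superfluous; with only an $(\infty,1)$-categorical one, Step~2 stays open.
\item Citing \cite[Remark~2.7]{gepner2024global} for the key comparison is circular, since that remark is the statement itself.
\item \cref{lem:lax-nat-transf:coop-lax-oplax} cannot produce the oplax case, because $\coop{(-)}$ preserves the variance. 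The exchange of lax and oplax comes from $\co{(-)}$, which fixes the $1$-category $\Cat I$, combined with $\co{\TwoCatInf} \simeq \TwoCatInf$ via $\op{(-)}$. Alternatively, rerun the argument with the cartesian unstraightening over $\op{\Cat I}$, as you also suggest.
\end{itemize}
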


We have the following existence result:
\begin{prop}[{\cite[Corollary 5.3.10]{abellán2026freefibrationslaxcolimits}}] \label{lem:lax-lim:partial-lax-lim-existence}
    Let $\TwoCat{I}^\dagger$ be a marked $2$-category.
    Then $\TwoCatInf$ has all partially lax and oplax limits of shape $\TwoCat{I}^\dagger$.
\end{prop}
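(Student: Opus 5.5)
The statement is cited from \cite[Corollary 5.3.10]{abellán2026freefibrationslaxcolimits}, so the plan is to reduce our definition of partially op/lax limit to the one used there. We then construct the limit explicitly as a category of sections of a fibration.

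\textbf{Step 1: reduction to one variance.} The equivalences of \cref{lem:two-cat:coop,lem:lax-nat-transf:coop-lax-oplax} exchange the two variances: $\co{(-)}$ swaps $\FunLax{-}{-}$ and $\FunOpLax{-}{-}$ up to the relevant involutions. The self-equivalence $\co{(-)}\colon\TwoCatInfTwo\to\TwoCatInfTwo$ of \cref{lem:two-cat:two-cat-coop} restricts to $\TwoCatInf\simeq\co{\TwoCatInf}$ via $\mathcal C\mapsto\op{\mathcal C}$. Transporting a partially oplax cone through these equivalences therefore turns partially oplax limits of $F$ into partially lax limits of a modified diagram $\op{(-)}\circ F$ (over $\co{\TwoCat I}$). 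Hence it suffices to treat the lax case.

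\textbf{Step 2: construction.} Given $F\colon\TwoCat I\to\TwoCatInf$, we straighten it to a $(0,1)$-fibration (a $2$-categorical cocartesian fibration) $p\colon\TwoCat E\to\TwoCat I$. We then define $L$ as the category of sections of $p$ that send marked morphisms of $\dagger$ to $p$-cocartesian morphisms. This can be phrased as a full subcategory of $\Map{\TwoFun{\TwoCat I}{\TwoCat E}}{}{}$ lying over $\id{\TwoCat I}$, or equivalently as the pullback of $\TwoFun{\TwoCat I}{\TwoCat E}\to\TwoFun{\TwoCat I}{\TwoCat I}$ along $\id{}$, cut down by the marking condition.

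\textbf{Step 3: universal property.} For $X\in\TwoCatInf$, a partially lax cone $\mathrm{const}(X)\to F$ corresponds under the Gray adjunction $-\otimes^{\gr}\TwoCat I\dashv\FunLax{\TwoCat I}{-}$ to a functor $X\otimes^{\gr}\TwoCat I\to\TwoCatInf$ of a prescribed shape. Unstraightening identifies it with a functor $X\times\TwoCat I\to\TwoCat E$ over $\TwoCat I$. Here laxness corresponds to arbitrary morphisms in $\TwoCat E$, and strongness on $\dagger$ corresponds to cocartesian ones. Currying (\cref{lem:lax-arrow:lax-strong-comparison}, strong case) gives $\Map{}{X}{L}$. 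We must check that this holds on mapping categories, not only mapping anima, and naturally in $X$. For this we run the same argument with $X$ replaced by $X\times\Delta^1$, or more generally $X\times K$, and apply the Yoneda lemma in $\TwoCatInfTwo$.

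\textbf{Main obstacle.} The hard step is the unstraightening identification in Step 3: lax natural transformations out of a constant functor must be matched with functors $X\times\TwoCat I\to\TwoCat E$ over $\TwoCat I$. This is a lax form of the $2$-categorical straightening equivalence. I would first try to invoke it directly from \cite{AGH24} via their free-fibration description of Gray tensors. Failing that, I would reduce to the case where $\TwoCat I$ is a $1$-category, using that $\FunLax{-}{-}$ sends colimits to limits (\cref{lem:lax-arrows:commutes-with-limits}) together with a presentation of $\TwoCat I$ as a colimit of $\Delta^n\times\Delta^m$-type cells. In that case \cref{lem:lax-lim:partial-lax-lim-descr} and \cite{berman2020lax} supply the answer.
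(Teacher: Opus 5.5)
The paper gives no argument here. The statement is imported verbatim from \cite[Corollary 5.3.10]{abellán2026freefibrationslaxcolimits}, and the companion \cite[Proposition 5.4.2]{abellán2026freefibrationslaxcolimits} is what the paper later uses to compute lax limits pointwise in functor $2$-categories. You instead sketch a proof of the cited result. You model $\laxlim^\dagger_{\TwoCat I}F$ as the category of sections of the cocartesian unstraightening $p\colon\TwoCat E\to\TwoCat I$ that send $\dagger$ to $p$-cocartesian morphisms.

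Your variance bookkeeping is right. A morphism of $\TwoCat E$ over $f\colon i\to j$ from $t_i(x)$ to $t_j(x)$ is a map $F(f)t_i(x)\to t_j(x)$, which is the paper's lax convention; this is why $\Cart(\Cat C,G)$ is a lax and not an oplax limit.

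Take the crux of Step~3 from the lax straightening of \cite{AGH24}, used as an equivalence of $2$-categories (the paper uses its cartesian/oplax form in the proof of \cref{lem:tensor-product:CAlg-of-Arr-lax-is-oplax-sym-mon-functors}). Then your argument goes through as a reduction of the cited result to \cite{AGH24}, and the statement on mapping categories comes for free. Your alternative of replacing $X$ by $X\times K$ would additionally need $\mathrm{const}(X\times K)$ to be the tensor of $\mathrm{const}(X)$ by $K$ in $\FunLax{\TwoCat I}{\TwoCatInf}$, which is not formal from the definitions.

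Two checks you skipped are automatic here. Since every $F(i)$ is a $1$-category, the local fibrations of $p$ are right fibrations, so the requirement in \cite{AGH24} that functors over $\TwoCat I$ preserve cartesian $2$-morphisms is vacuous. For the same reason the $2$-category of sections has groupoidal mapping categories, so $L$ really is an object of $\TwoCatInf$.

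Your route buys an explicit model that also works for $2$-categorical $\TwoCat I$. For instance, \cref{lem:BN:Cart-as-fully-lax-lim} becomes immediate, since sections over $\BN$ are pairs $(M, GM\to M)$, with no need for \cref{lem:BN:I-to-BN-partially-lax-lim-cofinal}. The citation buys brevity, together with the functoriality results from the same source that the paper needs anyway.

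Your fallback for the crux cannot work. Since $\mathrm{incl}\colon\CatInf\to\CatInfTwo$ is a left adjoint, a colimit in $\CatInfTwo$ of $1$-categories such as $\Delta^n\times\Delta^m$ is again a $1$-category. So no $\TwoCat I$ with a non-invertible $2$-morphism is reached this way. Any cell presentation must use genuinely $2$-categorical cells, such as the walking $2$-morphism, and \cref{lem:lax-lim:partial-lax-lim-descr} and \cite{berman2020lax} say nothing about those. Hence \cite{AGH24} is not optional in your approach.

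Step~1 also has misattributions, although the reduction itself is fine: it identifies the partially oplax limit of $F$ with the opposite of the partially lax limit of $\co{\TwoCat I}\to\TwoCatInf$, $i\mapsto\op{F(i)}$.
\begin{itemize}
\item \cref{lem:lax-nat-transf:coop-lax-oplax} is about $\coop{(-)}$, which preserves the variance. Swapping lax and oplax requires $\co{(\TwoCat A\otimes^{\gr}\TwoCat B)}\simeq\co{\TwoCat B}\otimes^{\gr}\co{\TwoCat A}$, which the paper does not prove.
\item The equivalence $\TwoCatInf\simeq\co{\TwoCatInf}$, $\Cat C\mapsto\op{\Cat C}$, is the restriction of $\op{(-)}$ from \cref{lem:two-cat:two-cat-coop}, not of $\co{(-)}$, which is trivial on $1$-categories.
\end{itemize}
Finally, in Step~3 a cone is a functor $\Delta^1\otimes^{\gr}\TwoCat I\to\TwoCatInf$ restricting to $\mathrm{const}(X)$ and $F$ on the two ends. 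It is not a functor out of $X\otimes^{\gr}\TwoCat I$.
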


%% file: 2-categories/cartier-laxlim.tex
\subsection{Generalized Cartier modules as lax limits} \label{sec:Cart-as-lax-limit}
We now specialize the discussion about (partially) op/lax limits to the case of lax equalizers. 
We then use this to rewrite the category of generalized Cartier modules from \cite{us24cart}
as a lax limit along the walking endomorphism $\BN$.

For this, we first need the following category:
\begin{defn} \label{def:lax-lim:parallel-arrow-cat}
    Write $\defnot{{(\rightrightarrows)}}$ for the \defemph{category of two parallel arrows},
    i.e.\ the category that fits in the following pushout diagram:
    \begin{center}
        \begin{tikzcd}
            \partial \Delta^1 \amalg \partial \Delta^1 \ar[r, "\nabla"] \ar[d] \cocartsymb & \partial \Delta^1 \ar[d, "{\defnot{0} \amalg \defnot{1}}"]\\
            \Delta^1 \amalg \Delta^1 \ar[r, "{\defnot{u} \amalg \defnot{d}}"']&\defnot{{(\rightrightarrows)}}\rlap{.}
        \end{tikzcd}
    \end{center}
    Informally, ${(\rightrightarrows)}$ looks like the following:
    \begin{center}
        \begin{tikzcd}
            0 \ar[rr, bend left, "u"] \ar[rr, bend right, "d"'] && 1\rlap{.}
        \end{tikzcd}
    \end{center}
\end{defn}

\begin{defn} \label{def:lax-lim:parallel-arrow-cat-marking}
    Write $u \subseteq {(\rightrightarrows)}$ for the marking on ${(\rightrightarrows)}$,
    given by the ``top''-morphism, i.e.\ we mark the identities and the blue arrow in the following diagram:
    \begin{center}
        \begin{tikzcd}
            0 \ar[rr, bend left, blue, "u"] \ar[rr, bend right, "d"'] && 1\rlap{.}
        \end{tikzcd}
    \end{center}
    In more formal language, the marking $u$ is the subcategory given by the image of $u \colon \Delta^1 \to {(\rightrightarrows)}$.
\end{defn}

\begin{notation}
    For a category $\Cat J$, we denote by $\Tw{\Cat J}$ the \defemph{twisted arrow category} of $\Cat J$, following \cite[Construction 5.2.1.1]{higheralgebra}.
    It comes equipped with source and target projections $s \colon \Tw{\Cat J} \to \Cat J$ 
    and $t \colon \Tw{\Cat J} \to \op{\Cat J}$, whose product is a right fibration by \cite[Proposition 5.2.1.3]{higheralgebra}.
\end{notation}

\begin{defn}
    Let $(\Cat I, \dagger)$ be a marked category, and $X \in \Cat I$ an object.
    We write $\defnot{\Cat I^\dagger_{/X}}$ for the \defemphindex{marked slice}{marked!slice},
    which is a marked category with underlying category $\Cat I_{/X}$, where a morphism 
    $f$ over $X$ is marked if it is marked in $\Cat I$.
\end{defn}

\begin{prop} \label{lem:lax-lim:LEq-as-partially-lax-lim}
    Let $F,G \colon \Cat C \to \Cat D$ be functors of categories. Then the \defemphindex{lax equalizer}{lax limit!equalizer} $\defnot{\LEq{F}{G}}$ of \cite[Definition II.1.4]{nikolaus2018topological} is equivalent to the partially lax limit of the functor $\Phi \colon {(\rightrightarrows)}^u \to \TwoCatInf$ given by the diagram
    \begin{center}
        \begin{tikzcd}
            \Cat C \ar[rr, bend left, blue, "G"] \ar[rr, bend right, "F"'] && \Cat D\rlap{.}
        \end{tikzcd}
    \end{center}
\end{prop}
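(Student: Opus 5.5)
The plan is to reduce to the fibrational model of partially lax limits and then compute directly. By \cref{lem:lax-lim:partial-lax-lim-descr}, it suffices to identify $\LEq{F}{G}$ with the partially lax limit of $\Phi$ in the sense of \cite[Definition 3.3]{berman2020lax}. Let $p \colon \Cat E \to {(\rightrightarrows)}$ be the cocartesian unstraightening of $\Phi$. In this model, the partially lax limit is the full subcategory
$$\operatorname{Fun}^{u\text{-cocart}}_{/{(\rightrightarrows)}}\bigl({(\rightrightarrows)}, \Cat E\bigr) \subseteq \operatorname{Fun}_{/{(\rightrightarrows)}}\bigl({(\rightrightarrows)}, \Cat E\bigr)$$
spanned by the sections of $p$ that send the marked arrow $u$ to a $p$-cocartesian edge. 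Informally, such a section consists of objects $c \in \Cat C$ and $e \in \Cat D$, an equivalence $Gc \simeq e$ coming from $u$, and a morphism $Fc \to e$ coming from $d$. This is exactly a pair $(c, Fc \to Gc)$, which is an object of $\LEq{F}{G} = \Cat C \times_{\Cat D \times \Cat D} \Fun{\Delta^1}{\Cat D}$.

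To make this precise, I will use the defining pushout of \cref{def:lax-lim:parallel-arrow-cat}. The functor $\Cat K \mapsto \operatorname{Fun}_{/{(\rightrightarrows)}}(\Cat K, \Cat E)$ on categories over ${(\rightrightarrows)}$ sends colimits to limits. Cocartesianness is a condition on individual edges, so this also holds for the subcategories of sections that are cocartesian on the $u$-copy of $\Delta^1$. The section category is therefore the pullback
$$\operatorname{Fun}^{\mathrm{cocart}}_{/\Delta^1}(\Delta^1, \Cat E_u) \times_{\Cat C \times \Cat D} \operatorname{Fun}_{/\Delta^1}(\Delta^1, \Cat E_d),$$
where $\Cat E_u$ and $\Cat E_d$ are the restrictions of $\Cat E$ along $u$ and $d$. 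Here we use that sections over $\partial\Delta^1$ are just $\Cat E_0 \times \Cat E_1 = \Cat C \times \Cat D$.

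Next, I will compute the two factors using standard facts about cocartesian fibrations over $\Delta^1$. Cocartesian sections of the unstraightening of $G \colon \Cat C \to \Cat D$ are equivalent to $\Cat C$, via evaluation at $0$, and the map to $\Cat C \times \Cat D$ is $c \mapsto (c, Gc)$. All sections of the unstraightening of $F$ are equivalent to the comma category $\Cat C \times_{\Cat D} \Fun{\Delta^1}{\Cat D}$ of triples $(c, e, Fc \to e)$, with the map to $\Cat C \times \Cat D$ given by $(c,e)$. Substituting these into the pullback gives
$$\Cat C \times_{\Cat C \times \Cat D} \bigl(\Cat C \times_{\Cat D} \Fun{\Delta^1}{\Cat D}\bigr) \simeq \Cat C \times_{\Cat D \times \Cat D} \Fun{\Delta^1}{\Cat D},$$
with $\Cat C \to \Cat D \times \Cat D$ equal to $(F,G)$. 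This is exactly the pullback defining $\LEq{F}{G}$ in \cite[Definition II.1.4]{nikolaus2018topological}, which finishes the argument.

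I expect the main obstacle to be keeping track of variance conventions. One must check that Berman's lax limit, rather than the oplax one, corresponds to \emph{cocartesian} unstraightening with sections of this form. One must also check that the non-marked edge gives a morphism $Fc \to e$ rather than $e \to Fc$, so that the result is $\LEq{F}{G}$ and not $\LEq{G}{F}$ or its opposite. I would first settle both points on the case of a single arrow $\Delta^1$, checking against \cite{gepner2017lax}, where the lax limit of $\Cat C \xrightarrow{F} \Cat D$ is known to be the comma category above. The gluing argument then carries over formally.
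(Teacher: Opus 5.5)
Your proof is correct, but it follows a different route from the paper. The paper never passes to fibrations. It uses the weighted-limit (end) form of the partially lax limit, writing $\laxlim^u_{(\rightrightarrows)}\Phi$ as a limit over $\op{\Tw{(\rightrightarrows)}}$ of the categories $\Fun{|(\rightrightarrows)^u_{/s}|}{\Phi(t)}$. It then computes the marked-localized slices $|(\rightrightarrows)^u_{/0}|\simeq\Delta^0$ and $|(\rightrightarrows)^u_{/1}|\simeq\Delta^1$ (the cospan $u\to\id{1}\leftarrow d$ with its marked leg inverted). Finally it reads off $\Cat C\times_{\Cat D\times\Cat D}\Fun{\Delta^1}{\Cat D}$, after checking that $\ev{d}$ and $\ev{u}$ become source and target.

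You instead work with sections of the cocartesian unstraightening. Section categories turn the defining pushout of $(\rightrightarrows)$ into a pullback, so everything reduces to two standard facts over $\Delta^1$: cocartesian sections are the fiber over $0$, and arbitrary sections form the comma category. This avoids twisted arrow categories and marked localizations, and it settles the variance question once, on a single arrow. Your bookkeeping there is right: the unmarked edge yields $Fc\to e$, consistent with the convention $\Cart(\Cat C,G)=\LEq{G}{\id{\Cat C}}$ used afterwards.

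Your method is also local in the indexing category. The same gluing along $\BN\simeq\Delta^1/\partial\Delta^1$ would give \cref{lem:BN:Cart-as-fully-lax-lim} directly, bypassing the cofinality argument of \cref{lem:BN:I-to-BN-partially-lax-lim-cofinal}. The paper's route, on the other hand, uses exactly the weighted formula it extracts from \cref{lem:lax-lim:partial-lax-lim-descr}. It also stays in the marked-slice language of that cofinality lemma.

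One step in your version needs an explicit reference. This is the identification of the partially lax limit with the full subcategory of sections sending marked edges to cocartesian ones, because this fibrational model is not the form the paper works with. Once that is cited, the rest of your argument is formal.
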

\begin{proof} 
    By \Cref{lem:lax-lim:partial-lax-lim-descr}, we have an equivalence $\laxlim^u_{{(\rightrightarrows)}} \Phi \cong \lim \Phi'$,
    where $\Phi'$ is the functor given as
    \begin{equation*}
        \op{\Tw{{(\rightrightarrows)}}} \xrightarrow{s \times t} \op{{(\rightrightarrows)}} \times {(\rightrightarrows)} \xrightarrow{|{(\rightrightarrows)}^u_{/-}| \times \Phi} \op{\TwoCatInf} \times \TwoCatInf \xrightarrow{\Fun{-}{-}} \TwoCatInf,
    \end{equation*}
    where $|{(\rightrightarrows)}^u_{/-}|$ denotes \defemphindex{marked geometric realization}{marked!geometric realization},
    i.e.\ the Dwyer--Kan localization of the marked slice $({(\rightrightarrows)}^u_{/-})$ at marked morphisms.
    Unwinding the definition, the category $\Tw{{(\rightrightarrows)}}$ can be interpreted as a diagram
    \begin{center}
        \begin{tikzcd}
            & u \ar[dl] \ar[dr] & \\
            \id{0} && \id{1} \\
            & d \ar[ul] \ar[ur]
        \end{tikzcd}
    \end{center}
    We compute $|{(\rightrightarrows)}^u_{/0}| \cong \Delta^0$ because $0 \in {(\rightrightarrows)}$ is the only object of ${(\rightrightarrows)}$ lying over $0$. Moreover, $|{(\rightrightarrows)}^u_{/1}|$ can be seen as a diagram
    \begin{center}
        \begin{tikzcd}
            u \ar[r, "\simeq"] & \id{1} & d \ar[l]\rlap{.}
        \end{tikzcd}
    \end{center}
    Putting everything together we get that $\laxlim^u_{{(\rightrightarrows)}} \Phi$ is given by the limit of the diagram
    \begin{center}
        \begin{tikzcd}
            & \Cat D = \Phi(1) & \\
            \Cat C = \Phi(0) \ar[ur, "G"] \ar[dr, "F"'] && \Fun{|{(\rightrightarrows)}^u_{/1}|}{\Cat D} \ar[ul, "\ev{u}"'] \ar[dl, "\ev{d}"]\rlap{.}\\
            & \Cat D = \Phi(1) &
        \end{tikzcd}
    \end{center}
    Using that $\Fun{|{(\rightrightarrows)}^u_{/1}|}{\Cat D} \cong \Fun{\Delta^1}{\Cat D}$ and that under this equivalence the maps $\ev{d}$ and $\ev{u}$ correspond to the source and target map, respectively, we see that $\laxlim^u_{{(\rightrightarrows)}} \Phi$ is equivalent to the limit of the diagram
    \begin{center}
        \begin{tikzcd}
            \Cat{C} \ar[r, "{(F, G)}"] & \Cat{D} \times \Cat{D} & \Fun{\Delta^1}{\Cat{D}} \ar[l, "{(s, t)}"']\rlap{,}
        \end{tikzcd}
    \end{center}
    which is by definition the lax equalizer $\LEq{F}{G}$.
\end{proof}

\begin{defn}
    Write $\defnot{\BN}$ for the $(1,1)$-category consisting of a single object $\defnot{*}$, with endomorphism monoid given by 
    $\Map{\BN}{*}{*} = (\N,+,0)$.
    We denote by $\defnot{e} \colon * \to *$ the morphism given by $1 \in \N$.
\end{defn}

This category has the following $1$-categorical universal property:
\begin{prop} \label{lem:BN:univ-property-1-cat}
    For any category $\Cat C$ there is an equivalence of anima, natural in $\Cat C$:
    $$\Map{\CatInf}{\BN}{\Cat C} \xrightarrow{\simeq} \Map{\CatInf}{\Delta^1}{\Cat C} \times_{\iota_0\Cat C \times \iota_0\Cat C} \iota_0\Cat C.$$
    Here, the maps defining the pullback on the right hand side are given by the source, the target, and the diagonal.
    Moreover, the map is induced by evaluating a functor $F \colon \BN \to \Cat C$ at $e$.
\end{prop}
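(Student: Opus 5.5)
The plan is to present $\BN$ as a pushout of categories and then use that $\Map{\CatInf}{-}{\Cat C}$ converts colimits into limits of anima. Concretely, I claim that there is a pushout square in $\CatInf$
\begin{center}
    \begin{tikzcd}
        \partial\Delta^1 \ar[r, "\nabla"] \ar[d] \cocartsymb & \Delta^0 \ar[d, "*"] \\
        \Delta^1 \ar[r, "e"'] & \BN\rlap{,}
    \end{tikzcd}
\end{center}
that is, $\BN$ is obtained from $\Delta^1$ by identifying source and target. Granting this, applying $\Map{\CatInf}{-}{\Cat C}$ gives a pullback of anima
$$\Map{\CatInf}{\BN}{\Cat C} \simeq \Map{\CatInf}{\Delta^1}{\Cat C} \times_{\Map{}{\partial\Delta^1}{\Cat C}} \Map{}{\Delta^0}{\Cat C}.$$
One then identifies $\Map{}{\partial\Delta^1}{\Cat C} \simeq \iota_0\Cat C\times\iota_0\Cat C$ and $\Map{}{\Delta^0}{\Cat C}\simeq\iota_0\Cat C$, using the adjunction $\operatorname{incl}\dashv\iota_0$ together with the fact that $\partial\Delta^1 = \Delta^0\amalg\Delta^0$ is an anima. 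Under these identifications, restriction along $\partial\Delta^1\to\Delta^1$ becomes (source, target), and restriction along $\nabla$ becomes the diagonal. The comparison map is restriction along $e\colon\Delta^1\to\BN$, i.e.\ evaluation at $e$. Naturality in $\Cat C$ is automatic, since everything is built from restriction maps.

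The main obstacle is verifying the pushout square, since pushouts in $\CatInf$ need not be computed naively. I would argue as follows.

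First, the pushout $P$ in $\CatInf$ of the span above can be computed by first taking the pushout in simplicial sets, which gives the simplicial circle $\Delta^1/\partial\Delta^1$ with one nondegenerate $1$-simplex. One then takes its fibrant replacement in the Joyal model structure. This is legitimate because the Joyal model structure is left proper and $\partial\Delta^1\to\Delta^1$ is a cofibration, so the strict pushout is a homotopy pushout.

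Second, I would show that this fibrant replacement is the nerve of the free category on one endomorphism, namely $\BN$. The first route I would try is to observe that $\Delta^1/\partial\Delta^1$ is the nerve-level presentation of the free simplicial category on the directed graph with one vertex and one loop. Its homotopy-coherent categorification $\mathfrak C[\Delta^1/\partial\Delta^1]$ is the simplicially enriched category with one object and discrete mapping space given by the free monoid on one generator, which is $\N$. Since $\mathfrak C$ preserves colimits, this can be computed directly from $\mathfrak C[\Delta^1]$ and $\mathfrak C[\Delta^0]$. The mapping space is discrete, so $\mathfrak C[\Delta^1/\partial\Delta^1]$ is already fibrant and its homotopy coherent nerve is $N(\BN)$, giving $P\simeq\BN$.

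As a fallback, I would work with the free-category adjunction between reflexive graphs and $\CatInf$. Here $\BN$ is the free category on the graph with one vertex and one loop, and the statement reduces to the universal property of free categories on graphs, namely that functors out of the free category on a graph are the same as graph maps. Either way, the key input is that the loop generates freely with no higher coherences, because the monoid $\N$ is free and discrete.
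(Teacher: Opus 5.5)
Your proposal is correct and is essentially the paper's argument. The paper notes that the map $\Delta^1/\partial\Delta^1 \to \BN$ selecting $e$ is a categorical equivalence, because $\BN$ is the path category of the graph $\Delta^1/\partial\Delta^1$ \cite[Theorem 00J4]{kerodon}, and then applies $\Map{}{-}{\Cat C}$. You additionally spell out the homotopy-pushout step (left properness of the Joyal model structure) that the paper leaves implicit, and you verify the categorical equivalence by computing $\mathfrak C[\Delta^1/\partial\Delta^1]$ directly instead of citing the path-category result, which works equally well.
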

\begin{proof}
    The canonical map $\Delta^1 /\partial \Delta^1 \to \BN$ selecting $e$ 
    is a categorical equivalence of simplicial sets, see \cite[\href{https://kerodon.net/tag/00J4}{Theorem 00J4}]{kerodon},
    as $\BN$ is the path category of the graph $\Delta^1 /\partial \Delta^1$.
    Hence, the claim follows by applying $\Map{}{-}{\Cat C}$ to this morphism.
    See also \cite[§3.3]{blumberg2013ktheoryendomorphismsnoncommutativemotives}.
\end{proof}

We can immediately improve this to a $2$-categorical universal property:
\begin{prop} \label{lem:BN:univ-property-2-cat}
    For any $2$-category $\TwoCat C$ there is an equivalence of anima, natural in $\TwoCat C$:
    $$\Map{\CatInfTwo}{\BN}{\TwoCat C} \xrightarrow{\simeq} \Map{\CatInfTwo}{\Delta^1}{\TwoCat C} \times_{\iota_0\TwoCat C \times \iota_0\TwoCat C} \iota_0\TwoCat C.$$
    Here, the maps defining the pullback on the right hand side are given by the source, the target, and the diagonal.
    Moreover, the map is induced by evaluating a functor $F \colon \BN \to \TwoCat C$ at $e$.
\end{prop}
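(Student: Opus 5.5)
The plan is to reduce to the $1$-categorical statement \cref{lem:BN:univ-property-1-cat} via the adjunction $\operatorname{incl} \colon \CatInf \rightleftarrows \CatInfTwo \colon \iota_1$. Both $\BN$ and $\Delta^1$ are $1$-categories, so for any $2$-category $\TwoCat C$ the adjunction gives natural equivalences
$$\Map{\CatInfTwo}{\BN}{\TwoCat C} \simeq \Map{\CatInf}{\BN}{\iota_1\TwoCat C}, \qquad \Map{\CatInfTwo}{\Delta^1}{\TwoCat C} \simeq \Map{\CatInf}{\Delta^1}{\iota_1\TwoCat C}.$$
Moreover $\iota_0\TwoCat C = \iota_0\iota_1\TwoCat C$ by the convention $\iota_0 = \iota_0\iota_1$. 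This is the same adjunction applied to $\Delta^0$ together with $\Map{\CatInf}{\Delta^0}{-} \simeq \iota_0$.

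Next I apply \cref{lem:BN:univ-property-1-cat} to the category $\iota_1\TwoCat C$. This yields an equivalence
$$\Map{\CatInf}{\BN}{\iota_1\TwoCat C} \simeq \Map{\CatInf}{\Delta^1}{\iota_1\TwoCat C}\times_{\iota_0\iota_1\TwoCat C\times\iota_0\iota_1\TwoCat C}\iota_0\iota_1\TwoCat C,$$
natural in $\iota_1\TwoCat C$ and hence natural in $\TwoCat C$. Composing with the identifications above gives the desired equivalence.

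It remains to check two compatibilities.
\begin{enumerate}[label=(\roman*)]
    \item The source, target and diagonal maps must correspond under the adjunction equivalences. This holds because these maps are induced by precomposition along $\partial\Delta^1 \to \Delta^1$ and $\partial\Delta^1\to\Delta^0$. Under the adjunction, precomposition along maps of $1$-categories corresponds to precomposition, by naturality of the adjunction equivalence in the first variable.
    \item The resulting map must be evaluation at $e$. This holds for the same reason: evaluation at $e$ is precomposition with $e \colon \Delta^1 \to \BN$, which is a map of $1$-categories.
\end{enumerate}

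The only potentially delicate point is the identification of the pullback diagrams, that is, making naturality in the source variable precise. I expect no real obstacle here, since all maps involved are precompositions along functors between $1$-categories and $\operatorname{incl}$ is fully faithful.
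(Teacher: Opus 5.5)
Your proposal is correct and follows the paper's own argument: apply \cref{lem:BN:univ-property-1-cat} to $\iota_1\TwoCat C$ and transport along the adjunction $\operatorname{incl} \dashv \iota_1$, using $\iota_0\TwoCat C \cong \iota_0\iota_1\TwoCat C$. Your checks that the source, target, diagonal and evaluation-at-$e$ maps are compatible with the adjunction equivalences spell out details the paper leaves implicit.
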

\begin{proof}
    This follows immediately from \cref{lem:BN:univ-property-1-cat},
    using the adjunction $\mathrm{incl} \colon \CatInf \rightleftarrows \CatInfTwo \noloc \iota_1$,
    and the fact that $\iota_0\TwoCat{C} \cong \iota_0\iota_1 \TwoCat{C}$.
\end{proof}

\begin{rmk}
    In other words, supplying a $2$-functor $F \colon \BN \to \TwoCat C$ 
    is the same as supplying an arrow $\Delta^1 \to \TwoCat C$, together with an identification 
    of its source and target; i.e.\ an endomorphism.
\end{rmk}

\begin{lem} \label{lem:BN:I-to-BN-partially-lax-lim-cofinal}
    The functor ${(\rightrightarrows)}^u \to \BNflat$ of marked categories 
    given by sending $u$ to $\id{*}$ and $d$ to $e$
    is partially-lax-limit-cofinal (cf.\ the dual notion of \cite[§1.2]{abellan2023cofinalfunctorsinftybicategories}).
\end{lem}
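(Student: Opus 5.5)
To prove the claim, I would use the criterion for (partially) lax-limit cofinality from \cite{abellan2023cofinalfunctorsinftybicategories}, in its dual form. Write $\phi \colon {(\rightrightarrows)}^u \to \BNflat$ for the functor of the statement. The criterion says that $\phi$ is partially-lax-limit-cofinal provided that for the unique object $* \in \BN$ the \emph{marked lax comma category} $* \downarrow \phi$ has weakly contractible marked localization. Here $* \downarrow \phi$ means the marked slice on the appropriate side, formed with the lax variance; the localization inverts the marked morphisms together with the morphisms coming from the comma structure. Since $\BN$ is a $1$-category and ${(\rightrightarrows)}$ is a $1$-category, the comma is an honest $1$-category, so this reduces to a computation of the homotopy type of an explicit category.

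The main step is to compute the comma category. Its objects are pairs $(i, n)$ with $i \in \{0,1\}$ and $n \in \N = \Map{\BN}{*}{\phi(i)}$. A morphism $(0,n) \to (1,m)$ is either $u$ or $d$, subject to the compatibility $m = n + \phi(\text{arrow})$. Thus $u$ induces $(0,n) \to (1,n)$ and $d$ induces $(0,n) \to (1,n+1)$. The marked morphisms are the identities and the maps induced by $u$; these are exactly the maps sent to equivalences, since $\phi(u) = \id{*}$. The resulting category is the infinite zigzag
$$(0,0) \to (1,0),\quad (0,0) \to (1,1) \leftarrow (0,1) \to (1,2) \leftarrow \cdots,$$
with the $u$-edges $(0,n) \to (1,n)$ marked.

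Next I would localize at the marked edges, or, if the precise form of the criterion demands it, just check weak contractibility of the nerve. In either case the nerve is a ray-shaped graph, an infinite tree, with no cycles. It is therefore weakly contractible, and inverting the marked edges preserves this. One then concludes by the criterion.

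The expected obstacle is bookkeeping rather than substance: getting the variance right. That means choosing the correct comma ($*/\phi$ versus $\phi/*$), the correct op-convention for partially lax versus oplax limits, and the correct marking convention in \cite{abellan2023cofinalfunctorsinftybicategories}. Some conventions produce a comma in which maps go $(1,m) \to (0,n)$; the result is then the same zigzag with reversed edges, which is still contractible, so the argument is robust.

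As a sanity check (or as an alternative route), I would verify the expected consequence directly. For $F \colon \BN \to \TwoCatInf$ given by an endofunctor $T$ of $\Cat C$, \cref{lem:lax-lim:partial-lax-lim-descr} computes the lax limit over $\BNflat$ via the twisted arrow category. The restricted lax limit is $\LEq{T}{\id{}}$ by \cref{lem:lax-lim:LEq-as-partially-lax-lim}, and the two are compatible. This confirms the comma-category computation above.
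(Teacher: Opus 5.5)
The gap is in the cofinality criterion you invoke. Weak contractibility of the localized comma category is the Quillen's Theorem~A test for \emph{ordinary} limits, i.e.\ when every morphism of the target is marked. Here the target is $\BNflat$. The dual of \cite[Theorem 3.31]{abellan2023cofinalfunctorsinftybicategories}, in the form the paper uses, asks for something stronger. The projection from the marked slice $(\rightrightarrows)^u_{/*} = (\rightrightarrows) \times_{\BN} \BN_{/*}$ to $\BNflat_{/*}$ must become an \emph{equivalence of categories} after Dwyer--Kan localization at the marked morphisms, and only at those (nothing ``coming from the comma structure'' is inverted). Since only equivalences are marked in $\BNflat_{/*}$, its localization is $\BN_{/*}$ itself. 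This is the poset $\cdots \to e^2 \to e^1 \to e^0$, which is weakly contractible but not a point, so contractibility of your zigzag proves nothing.

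A symptom is that your argument never uses the marking. It would equally show that $(\rightrightarrows)^\flat \to \BNflat$ is cofinal, since that slice is the same contractible zigzag with nothing marked. But the two functors cannot both be cofinal. Take $\Cat C = \Delta^1$ and $G = \id{}$. The partially lax limit over $(\rightrightarrows)^u$ is $\LEq{\id{}}{\id{}} \simeq \Delta^1$. The fully lax limit over $(\rightrightarrows)$, whose objects are tuples $(c, x, c \to x, Gc \to x)$, is $\Delta^2$.

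Your computation of the slice is nonetheless the right input, and only the last step has to be replaced. For the limit statement, use the slice over $*$, in which $d$ induces $(0,e^{n+1}) \to (1,e^n)$, rather than your coslice (which would be compared with $\BN_{*/}$). The argument then runs as follows.
\begin{itemize}
\item Inverting the marked edges $(0,e^n) \to (1,e^n)$, which are pairwise disjoint, identifies $(0,e^n)$ with $(1,e^n)$.
\item The slice has no composable pairs of non-identity morphisms, so the localization is the free category on the ray $\cdots \to e^2 \to e^1 \to e^0$.
\item The projection $(i,e^M) \mapsto e^M$ identifies this with $\BN_{/*}$.
\end{itemize}
This is exactly how the paper concludes.

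Your proposed sanity check via the twisted-arrow formula does not substitute for this step. It concerns a single family of diagrams, and the compatibility of the two lax limits that it asserts is precisely what has to be proved.
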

\begin{proof}
    We use (the dual version of) Quillen's Theorem A in the $(\infty,2)$-categorical context from \cite[Theorem 3.31]{abellan2023cofinalfunctorsinftybicategories}:
    In other words, we have to show that in the pullback square (where we define the top left object as the pullback)
    \begin{center}
        \begin{tikzcd}
            \defnot{{(\rightrightarrows)}^u_{/*}} \cartsymb \ar[d] \ar[r]& \BNflat_{/*} \ar[d]\\
            {(\rightrightarrows)}^u  \ar[r] &\BNflat
        \end{tikzcd}
    \end{center}
    the upper horizontal map induces an equivalence after marked geometric realization $|-|$
    (i.e.\ the Dwyer--Kan localization at all \emph{marked} morphisms).

    For this, let us first compute the marked $(1,1)$-category ${(\rightrightarrows)}^u_{/*}$. We will describe it by giving the set of objects,
    and the morphism sets between them.
    The set of objects is simply given by the product $\{0, 1\} \times \{e^N \vert N \in \N \}$,
    using that $\BN$ has a single object and no nontrivial automorphisms.
    We now compute the morphism set from $(i, e^M)$ to $(j, e^N)$, 
    which is given by $$\Map{{(\rightrightarrows)}_{/*}}{(i, e^M)}{(j, e^N)} \cong \Map{{(\rightrightarrows)}}{i}{j} \times_{\Map{\BN}{*}{*}} \Map{\BN_{/*}}{e^M}{e^N}.$$
    We have
    $$\Map{\BN_{/*}}{e^M}{e^N} = \begin{cases}
        * &\text{if } M \ge N \\
        \emptyset &\text{else}.
    \end{cases}$$
    If $M \ge N$, then this single morphism is sent to $e^{M - N} \in \Map{\BN}{*}{*}$ under the canonical projection.
    We consider four cases:

    \underline{\smash{$i = 0$ and $j = 0$}}: Here, $\Map{{(\rightrightarrows)}}{i}{j} = \Map{{(\rightrightarrows)}}{0}{0} = \{ \id{0} \}$.
    Now, $\id{0}$ is sent to $\id{*} = e^0$ in $\BN$. Whence, by the above, the fiber product contains 
    a single element if $N-M = 0$ (i.e.\ $N = M$), and no element otherwise.
    
    \underline{\smash{$i = 0$ and $j = 1$}}: Here, $\Map{{(\rightrightarrows)}}{i}{j} = \Map{{(\rightrightarrows)}}{0}{1} = \{ u, d \}$.
    Now, by definition $u$ is sent to $\id{*} = e^0$, and $d$ is sent to $e^1$.
    Whence, by the above, the fiber product contains 
    a single element if $M-N = 0$ (i.e.\ $N = M$) or if $M - N = 1$ (i.e. $M = N+1$), and no element otherwise.

    \underline{\smash{$i = 1$ and $j = 0$}}: Here, $\Map{{(\rightrightarrows)}}{i}{j} = \Map{{(\rightrightarrows)}}{1}{0} = \emptyset$,
    whence we get $\Map{{(\rightrightarrows)}_{/*}}{(i, e^M)}{(j, e^N)} \cong \emptyset$.

    \underline{\smash{$i = 1$ and $j = 1$}}: Here, $\Map{{(\rightrightarrows)}}{i}{j} = \Map{{(\rightrightarrows)}}{1}{1} = \{ \id{1} \}$.
    Now, $\id{1}$ is sent to $\id{*} = e^0$ in $\BN$. Whence, by the above, the fiber product contains 
    a single element if $M-N = 0$ (i.e.\ $N = M$), and no element otherwise. 

    In particular, we see that ${(\rightrightarrows)}_{/*}$ is a poset, with relation 
    $(i, e^M) \le (j, e^N)$ if and only if $i = j$ and $M = N$; or $i < j$ and ($M = N$ or $M = N+1$).

    We now investigate the marking. Since the categories on the right carry the flat markings,
    we see by inspection that a morphism $(i, e^M) \le (j, e^N)$ is marked if and only if $M = N$.

    We picture this in the below diagram, where blue arrows are marked. We omit identity morphisms (which are of course also marked).
    \begin{center}
        \begin{tikzcd}
            \vdots \ar[dr] &\vdots \\
            (0, e^3) \ar[blue,r] \ar[dr] &(1, e^3) \\
            (0, e^2) \ar[blue,r] \ar[dr] &(1, e^2) \\
            (0, e^1) \ar[blue,r] \ar[dr] &(1, e^1) \\
            (0, e^0) \ar[blue,r] &(1, e^0) 
        \end{tikzcd}
    \end{center}
    The top horizontal functor in the square at the beginning of the proof is given by sending $(i, e^M)$ to $e^M$.
    Using this description, we see that the marked geometric realization $|{(\rightrightarrows)}^u_{/*}|$ is equivalent to $\BN_{/*}$
    via this functor.
\end{proof}

\begin{lem} \label{lem:BN:Cart-as-fully-lax-lim}
    Let $\Cat C$ be a category and let $G \colon \Cat C \to \Cat C$ be an endofunctor.
    This data defines via \cref{lem:BN:univ-property-2-cat} a functor $\Phi \colon \BN \to \TwoCatInf$.
    Then $\laxlim \Phi \simeq \Cart(\Cat C, G) = \LEq{G}{\id{\Cat C}}$.
\end{lem}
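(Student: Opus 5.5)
The plan is to reduce the lax limit over $\BN$ to a partially lax limit over ${(\rightrightarrows)}^u$ using the cofinality result \cref{lem:BN:I-to-BN-partially-lax-lim-cofinal}, and then to identify that partially lax limit with the lax equalizer using \cref{lem:lax-lim:LEq-as-partially-lax-lim}.

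Write $\pi \colon {(\rightrightarrows)}^u \to \BNflat$ for the marked functor of \cref{lem:BN:I-to-BN-partially-lax-lim-cofinal}, which sends $u \mapsto \id{*}$ and $d \mapsto e$. First I will compute the composite $\Phi \circ \pi \colon {(\rightrightarrows)} \to \TwoCatInf$. By \cref{lem:BN:univ-property-2-cat}, the functor $\Phi$ sends $*$ to $\Cat C$ and $e$ to $G$. Hence $\Phi\pi$ sends both objects $0,1$ to $\Cat C$, the arrow $u$ to $\Phi(\id{*}) \simeq \id{\Cat C}$, and the arrow $d$ to $G$. Making this precise requires an identification of functors ${(\rightrightarrows)} \to \TwoCatInf$, not just an objectwise one. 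I will get it from the pushout description of ${(\rightrightarrows)}$ in \cref{def:lax-lim:parallel-arrow-cat}: a functor out of ${(\rightrightarrows)}$ is determined by two arrows with common source and common target, and here both arrows are images of arrows of $\BN$.

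Second, I invoke cofinality. Since $\pi$ is partially-lax-limit-cofinal, precomposition with $\pi$ induces an equivalence $\laxlim_{\BN} \Phi = \laxlim^\flat_{\BN}\Phi \simeq \laxlim^u_{{(\rightrightarrows)}} \Phi\pi$. Both sides exist by \cref{lem:lax-lim:partial-lax-lim-existence}. One thing to check here is the direction of cofinality. The paper cites the dual of the notion in \cite{abellan2023cofinalfunctorsinftybicategories}, so the criterion verified in \cref{lem:BN:I-to-BN-partially-lax-lim-cofinal}, namely slices ${}_{/*}$ with marked realization equivalent to $\BN_{/*}$ (which has a terminal object, hence contractible realization), should be exactly the one guaranteeing invariance of partially lax limits under restriction. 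I take this as the content of that lemma.

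Third, I apply \cref{lem:lax-lim:LEq-as-partially-lax-lim} with $F = G$ (the unmarked arrow $d$) and the marked arrow $u$ sent to $\id{\Cat C}$. This gives $\laxlim^u_{{(\rightrightarrows)}}\Phi\pi \simeq \LEq{G}{\id{\Cat C}}$, which is the pullback of $\Cat C \xrightarrow{(G,\id{})} \Cat C\times\Cat C \xleftarrow{(s,t)} \Fun{\Delta^1}{\Cat C}$. This is precisely the definition of $\Cart(\Cat C, G)$ from \cite[Definition 2.4]{us24cart}.

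I expect the main obstacle to be the second step, namely making sure that the cofinality statement applies with the correct variance (lax rather than oplax, slices over rather than under) and with the right markings. In particular, the flat marking on $\BN$ must pull back compatibly with the marking $u$, even though $u$ maps to an identity. If the cited Theorem A variant gives invariance only for the opposite variance, I will instead pass through $\co{(-)}$ or $\op{(-)}$ using \cref{lem:two-cat:two-cat-coop}, or use the explicit formula of \cref{lem:lax-lim:partial-lax-lim-descr} and compare the two twisted-arrow limits directly.
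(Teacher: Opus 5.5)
Your proposal is correct and is essentially the paper's own proof: restrict along the partially-lax-limit-cofinal functor ${(\rightrightarrows)}^u \to \BNflat$ of \cref{lem:BN:I-to-BN-partially-lax-lim-cofinal}, note that the composite sends $u \mapsto \id{\Cat C}$ and $d \mapsto G$, and conclude with \cref{lem:lax-lim:LEq-as-partially-lax-lim}. The variance and marking concerns you raise are absorbed by the cited cofinality lemma, and are not extra steps. Compatibility of the markings is automatic, since $u \mapsto \id{*}$ is marked in $\BNflat$.
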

\begin{proof}
    By \Cref{lem:BN:I-to-BN-partially-lax-lim-cofinal}, the functor $\xi \colon {(\rightrightarrows)}^u \to \BNflat$ is partially-lax-limit-cofinal, i.e.\ 
    $$\laxlim_{\BN}(\Phi) \simeq \laxlim_{{(\rightrightarrows)}}^u(\Phi \circ \xi).$$
    The functor $\Phi \circ \xi \colon {(\rightrightarrows)} \to \TwoCatInf$ maps $0$ and $1$ to $\Cat C$, the morphism $u$ to $\id{\Cat C}$ and the morphism $d$ to $G$.
    Thus, it follows from \Cref{lem:lax-lim:LEq-as-partially-lax-lim} that $\laxlim_{{(\rightrightarrows)}}^{u}(\Phi \circ \xi) \simeq \LEq{G}{\id{\Cat C}}$.
\end{proof}

%% file: 2FF/def-2FF.tex
We start this section by introducing in \cref{sec:2FF-def} one of the main actors of this article: a $2$-functor formalism,
and its variant, a $4\frac{1}{2}$-functor formalism. For this, we also introduce $2$-categories of spans,
following \cite{cnossen2025universalityspan2categoriesconstruction}.
In \cref{sec:2FF-BC} we discuss Beck--Chevalley conditions, that are needed to construct $2$-functor formalisms.
Then, in \cref{sec:2FF-consequences}, we deduce some consequences of the existence of a $2$-functor formalism,
such as base change and compactification results.
The heart of the section is in \cref{sec:constr-2FF}: We discuss how to construct a $2$-functor formalism 
from a surprisingly small amount of data, essentially dualizing the construction of \cite{cnossen2025universalityspan2categoriesconstruction}.
Last, in \cref{sec:Cart-2FF} we explain how to construct the $2$-functor formalism of Cartier modules,
given a $2$-functor formalism $\Cat D$, as well as Frobenius operations for every geometric object.

\subsection{Definition of \texorpdfstring{$2$}{2}-functor formalisms} \label{sec:2FF-def}
Recall the following definitions:
\begin{defn} [{\cite[Definition A.5.1]{mann2022padic}}]
    Let $\Cat C$ be a category and $E \subseteq \Cat C$ a wide subcategory.
    We say that $E$ is \defemph{stable under base change} (or that $(\Cat C, E)$ is a \defemph{geometric setup})
    if for any $[X \to Y] \in E$, and any $[Z \to Y] \in \Cat C$, the pullback $X \times_Y Z$ exists and
    the projection $X \times_Y Z \to Z$ is again in $E$.
\end{defn}

\begin{defn} \label{def:cff:suitable-decomposition}
    Let $(\Cat C, E)$ be a geometric setup.
    Let $I, P \subseteq E$ be two wide subcategories.
    We say that $(I, P)$ is a \defemph{suitable decomposition of $E$}
    if the following holds 
    \begin{enumerate}[label=\textbf{(\alph*)}]
        \item $I$ and $P$ are stable under base change in $\Cat C$.
        \item Every $f \in E$ is of the form $p \circ i$
            for some $p \in P$ and $i \in I$.
        \item Every morphism $f \in P \cap I$ is $n$-truncated for some $n$.
        \item $I$ and $P$ are \defemph{left cancellable}, i.e.\ if $f \colon Y \to X$ and $g \colon Z \to Y$ are composable 
            morphisms such that $f \in P$ (resp.\ $f \in I$),
            then $g \in P$ (resp.\ $g \in I$) if and only if $f \circ g \in P$
            (resp.\ $f \circ g \in I$).
    \end{enumerate}
\end{defn}

\begin{rmk} \label{rmk:why-suitable-decomposition-different}
    \cref{def:cff:suitable-decomposition} is essentially \cite[Definition A.5.9]{mann2022padic},
    except that Mann furthermore requires the existence of pullbacks in $\Cat C$.
    In our applications, for the $2$-functor formalism we will work in arbitrary derived noetherian $\Fp$-schemes.
    This category does not even admit finite products, therefore we cannot assume this for the $2$-functor formalism.
\end{rmk}

\begin{defn}
    A \defemph{Nagata setup} is a quadruple $(\Cat C, E, I, P)$
    such that $(\Cat C, E)$ is a geometric setup, and 
    $(I, P)$ is a suitable decomposition of $E$.
\end{defn}

\begin{rmk} \label{rmk:why-nagata-different}
    This definition is slightly weaker than the definition given in \cite[Definition 3.1.2]{Kuijper_2026}:
    Kuijper additionally requires that any morphism in $P$ and $I$ is truncated, not just the morphisms 
    in the intersection $P \cap I$. Moreover, she requires the existence of finite products.
    We will assume the existence of finite products later for the multiplicative theory in \cref{sec:3ff}.
    For the $2$-functor formalism we cannot assume this, see \cref{rmk:why-suitable-decomposition-different}.
\end{rmk}

\begin{setting}    
    For the rest of this section we will fix wide subcategories $I, P \subseteq E \subseteq \Cat C$
    that are all stable under base change in $\Cat C$.
\end{setting}

Recall that in \cite[Construction 4.12]{cnossen2025universalityspan2categoriesconstruction} 
the authors define a $2$-category $\defnot{\Span{\Cat C}{\all}{E}{P}{I}}$ of spans.
We will be mainly interested in the $2$-category $\SpanStd$, which is a dual variant of it.

\begin{defn}
    We define a $2$-category of spans by $\defnot{\SpanStd} \coloneqq \coop{\Span{\Cat C}{\all}{E}{P}{I}}$.
\end{defn}

Unravelling the $\coop{(-)}$ appearing in the definition we can describe the objects, $1$-morphisms, $2$-morphisms and compositions of $\SpanStd$ explicitly.

\begin{rmk}
    The objects of $\SpanStd$ are the same as the objects of $\Cat C$. 

    A $1$-morphism $X \to Y$ in $\SpanStd$ is given by a span 
    \begin{center}
        \begin{tikzcd}
            & Z \ar[dl, "f"'] \ar[dr, "g"] & \\
            X && Y\rlap{,}
        \end{tikzcd}
    \end{center}
    where $f \in E$ and $g$ is some morphism in $\Cat C$.

    A $2$-morphism in $\SpanStd$ from $X \xleftarrow{f} Z \xrightarrow{g} Y$ to $X \xleftarrow{f'} Z' \xrightarrow{g'} Y$ is given by a diagram of the form 
    \begin{center}
        \begin{tikzcd}
            & Z \ar[dl, "f"'] \ar[dr, "g"] & \\
            X & Z'' \ar[u, "i"'] \ar[d, "p"] \ar[r, "g''"] \ar[l, "f''"'] & Y \\
            & Z'\rlap{,} \ar[ul, "f'"] \ar[ur, "g'"']
        \end{tikzcd}
    \end{center}
    where $i \in I$, $p \in P$,  $f, f', f'' \in E$ and $g$, $g'$ and $g''$ are some morphisms in $\Cat C$.

    Moreover, composition of $1$-morphisms is defined via pullback, 
    and composition of $2$-morphisms is defined via composing the (vertical or horizontal, depending on which composition of $2$-morphisms is considered) inner spans in the above diagram.
\end{rmk}

The $2$-category of spans also comes equipped with two functors
\begin{equation*}
    \Cat C \to \SpanStd, \qquad [f \colon X \to Y] \mapsto [X \xleftarrow{\id{X}} X \xrightarrow{f} Y]
\end{equation*}
and
\begin{equation*}
    \op{E} \to \SpanStd, \qquad [f \colon X \to Y] \mapsto [Y \xleftarrow{f} X \xrightarrow{\id{X}} X].
\end{equation*}
They are the coop-duals of the corresponding functors for $\Span{\Cat C}{\all}{E}{P}{I}$.

\begin{defn}
    Let $\TwoCat{M}$ be a $2$-category.
    A \defemphindex{$2$-functor formalism (with values in $\TwoCat{M}$)}{$2$-functor formalism} is
    a functor $\Cat D \colon \Span{\Cat C}{E}{\all}{I}{P} \to \TwoCat{M}$.
    The \defemphnoindex{$2$-category of $2$-functor formalisms with values in $\TwoCat{M}$}
    is given by the usual functor-$2$-category $\TwoFun{\SpanStd}{\TwoCat{M}}$.
\end{defn}

\begin{defn}
    Let $\TwoCat{M}$ be a $2$-category and 
    $\Cat D \colon \Span{\Cat C}{E}{\all}{I}{P} \to \TwoCat{M}$
    be a $2$-functor formalism.
    We write $\defnot{\Cat D_*}$ for the composition
    \begin{equation*}
        \Cat C \to \Span{\Cat C}{E}{\all}{I}{P} \xrightarrow{\Cat D} \TwoCat{M}
    \end{equation*}
    and $\defnot{\Cat D^!}$ for the composition
    \begin{equation*}
        \op{E} \to \Span{\Cat C}{E}{\all}{I}{P} \xrightarrow{\Cat D} \TwoCat{M}.
    \end{equation*}
\end{defn}

\begin{defn} \label{def:2ff:defn}
    Let $\Cat D \colon \Span{\Cat C}{E}{\all}{I}{P} \to \TwoCatInf$ be a $2$-functor formalism.
    We say that $\Cat D$ is a \defemphindex{$4\frac{1}{2}$-functor formalism
    with respect to a collection of morphisms $T \subseteq \Cat C$}{$4\frac{1}{2}$-functor formalism} if 
    \begin{enumerate}
        \item[\textbf{(FF1)}] \label{axiom:FF1} for every object $X \in \Cat C$ the category $\Cat D(X)$ is presentable stable.
        \item[\textbf{(FF2)}] \label{axiom:FF2} for every morphism $f$ in $\Cat C$ the functor $\Cat D_*(f)$ admits a right adjoint $\defnot{f^\times}$.
        \item[\textbf{(FF3)}] \label{axiom:FF3} for every morphism $f$ in $E$ the functor $\Cat D^!(f)$ admits a right adjoint $\defnot{f_\flat}$.
        \item[\textbf{(FF4)}] \label{axiom:FF4} for every morphism $f$ in $T$ the functor $\Cat D_*(f)$ admits a left adjoint $\defnot{f^*}$. 
    \end{enumerate}
\end{defn}

%% file: 2FF/beck-chevalley.tex
\subsection{Beck--Chevalley conditions} \label{sec:2FF-BC}
In this subsection we discuss various Beck--Chevalley conditions in a $2$-category,
following \cite[Appendix F]{cnossen2024phd}.
\begin{defn} [Left Beck--Chevalley condition, {\cite[Definition F.2]{cnossen2024phd}}] \label{def:BC:left-BC}
    Let $\TwoCat{M}$ be a $2$-category. Assume that there is a diagram $(\square)$
    \begin{center}
        \begin{tikzcd}
            Y' \ar[r, "g'"] \ar[d, "f'"'] & X' \ar[d, "f"] \\
            Y \ar[r, "g"'] \laxarrowur{\eta} & X
        \end{tikzcd}
    \end{center}
    in $\TwoCat{M}$.
    Suppose that $f$ and $f'$ have left adjoint morphisms 
    $f_L$ and $f'_L$, respectively.
    The \defemph{mate} of $\eta$ is given by
    \begin{equation*}
        \defnot{\mathrm{Ex}^{\mathrm{L}}(\square)} \colon f_L g \xrightarrow{u'} f_L g f' f'_L \xrightarrow{\eta} f_L f g' f'_L \xrightarrow{c} g' f'_L,
    \end{equation*}
    where $u'$ and $c$ denote the unit, resp.\ counit, of the corresponding adjunction.
    We say that $\square$ \defemphindex{satisfies the left Beck--Chevalley condition}{Beck--Chevalley condition!left}
    if the mate $\mathrm{Ex}^{\mathrm{L}}(\square)$ of $\eta$ is an equivalence.
\end{defn}

\begin{defn} [Right Beck--Chevalley condition, {\cite[Definition F.2]{cnossen2024phd}}]
    Let $\TwoCat{M}$ be a $2$-category. Assume that there is a diagram $(\square)$
    \begin{center}
        \begin{tikzcd}
            Y' \ar[r, "g'"] \ar[d, "f'"'] & X' \ar[d, "f"] \laxarrowdl{\varepsilon}\\
            Y \ar[r, "g"'] & X
        \end{tikzcd}
    \end{center}
    in $\TwoCat{M}$.
    Suppose that $f$ and $f'$ have right adjoint morphisms 
    $f_R$ and $f'_R$, respectively.
    The \defemph{mate} of $\varepsilon$ is given by
    \begin{equation*}
        \defnot{\mathrm{Ex}^{\mathrm{R}}(\square)} \colon g' f'_R \xrightarrow{u} f_R f g' f'_R \xrightarrow{\varepsilon} f_R g f' f'_R \xrightarrow{c'} f_R g,
    \end{equation*}
    where $u$ and $c'$ denote the unit, resp.\ counit, of the corresponding adjunction.
    We say that $\square$ \defemphindex{satisfies the right Beck--Chevalley condition}{Beck--Chevalley condition!right}
    if the mate
    $\mathrm{Ex}^{\mathrm{R}}(\square)$ of $\varepsilon$ is an equivalence.
\end{defn}

\begin{defn} [Double Beck--Chevalley condition, {\cite[Definition F.12]{cnossen2024phd}}]
    Let $\TwoCat{M}$ be a $2$-category. Assume that there is a diagram $(\square)$
    \begin{center}
        \begin{tikzcd}
            Y' \ar[r, "g'"] \ar[d, "f'"'] & X' \ar[d, "f"] \\
            Y \ar[r, "g"'] \laxarrowur{\eta} & X
        \end{tikzcd}
    \end{center}
    in $\TwoCat{M}$.
    Suppose that $f$ and $f'$ have left adjoint morphisms 
    $f_L$ and $f'_L$, respectively,
    and that $g$ and $g'$ have right adjoint morphisms 
    $g_R$ and $g'_R$, respectively.
    Suppose moreover that $\square$ satisfies the left Beck--Chevalley condition.
    We define the \defemphnoindex{double Beck--Chevalley transformation} as 
    \begin{equation*}
        \defnot{\mathrm{Ex}^{\mathrm{L,R}}(\square)} \colon f'_L g_R \xrightarrow{u'} g'_R g' f'_L g_R \xrightarrow{(\mathrm{Ex}^{\mathrm{L}}(\square))^{-1}} g'_R f_L g g_R \xrightarrow{c} g'_R f_L,
    \end{equation*}  
    where $u'$ and $c$ denote the unit, resp.\ counit, of the corresponding adjunction.
    We say that $\square$ \defemphindex{satisfies the double Beck--Chevalley condition}{Beck--Chevalley condition!double} if the double Beck--Chevalley transformation $\mathrm{Ex}^{L,R}(\square)$ is an equivalence.
\end{defn}

\begin{rmk}
    Let $\TwoCat{M}$ be a $2$-category. Assume that there is a diagram $(\square)$
    \begin{center}
        \begin{tikzcd}
            Y' \ar[r, "g'"] \ar[d, "f'"'] & X' \ar[d, "f"] \\
            Y \ar[r, "g"'] \laxarrowur{\eta} & X
        \end{tikzcd}
    \end{center}
    in $\TwoCat{M}$.
    Suppose that $f$ and $f'$ have left adjoint morphisms $f_L$ and $f'_L$, respectively,
    and that $g$ and $g'$ have right adjoint morphisms $g_R$ and $g'_R$, respectively.
    Moreover, assume that $\square$ satisfies the left Beck--Chevalley condition,
    and that the transposed square $\square^t$ 
    \begin{center}
        \begin{tikzcd}
            Y' \ar[r, "f'"] \ar[d, "g'"'] & Y \ar[d, "g"] \laxarrowdl{\eta} \\
            X' \ar[r, "f"'] & X
        \end{tikzcd}
    \end{center}
    satisfies the right Beck--Chevalley condition.
    We can also define a second double Beck--Chevalley transformation via 
    \begin{equation*}
        \defnot{\mathrm{Ex}'^{\mathrm{L,R}}(\square)} \colon f'_L g_R \xrightarrow{u} f'_L g_R f f_L \xrightarrow{(\mathrm{Ex}^{\mathrm{R}}(\square^t))^{-1}} f'_L f' g'_R f_L \xrightarrow{c'} g'_R f_L.
    \end{equation*}
    Then $\mathrm{Ex}^{\mathrm{L,R}} \cong \mathrm{Ex}'^{\mathrm{L,R}}$, cf.\ \cite[Lemma F.11]{cnossen2024phd} (the proof there works in any $2$-category).
\end{rmk}

\begin{lem} \label{lem:Beck--Chevalley:op-BC}
    Let $\TwoCat{M}$ be a $2$-category. Assume that there is a diagram $(\square)$
    \begin{center}
        \begin{tikzcd}
            Y' \ar[r, "g'"] \ar[d, "f'"'] & X' \ar[d, "f"] \\
            Y \ar[r, "g"'] \laxarrowur{\eta} & X
        \end{tikzcd}
    \end{center}
    in $\TwoCat{M}$.
    The following are equivalent:
    \begin{enumerate}[label=(\roman*)]
        \item\label{point:Beck--Chevalley:BC} The square $\square$ in $\TwoCat{M}$ satisfies the left Beck--Chevalley condition.
        \item The square $\op{\square}$ in $\op{\TwoCat{M}}$ satisfies the right Beck--Chevalley condition.
        \item The square $\co{\square}$ in $\co{\TwoCat{M}}$ satisfies the right Beck--Chevalley condition.
        \item\label{point:Beck--Chevalley:coop-BC} The square $\coop{\square}$ in $\coop{\TwoCat{M}}$ satisfies the left Beck--Chevalley condition.
    \end{enumerate}
    In the case that $\eta$ maps in the opposite direction, the analogous statements where we exchange ``left'' and ``right'' are equivalent.

    Moreover, $\square$ satisfies the double Beck--Chevalley condition if and only if $\coop{\square}$ does.
\end{lem}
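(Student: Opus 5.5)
The plan is to transport all the data defining the mate along the involutions $\op{(-)}$, $\co{(-)}$ and $\coop{(-)}$, using \cref{lem:two-cat:op-adjunctions}. Since these operations are self-inverse, it suffices to prove \ref{point:Beck--Chevalley:BC} $\Leftrightarrow$ (ii) and \ref{point:Beck--Chevalley:BC} $\Leftrightarrow$ (iii). The equivalence \ref{point:Beck--Chevalley:BC} $\Leftrightarrow$ \ref{point:Beck--Chevalley:coop-BC} then follows by composing, since $\coop{(-)} \cong \co{(-)}\circ\op{(-)}$ and the right Beck--Chevalley condition in $\op{\TwoCat M}$ is carried by $\co{(-)}$ back to a left one. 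The statement with $\eta$ reversed is the same argument read backwards, or equivalently the already proven case applied to $\co{\square}$.

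For (ii), I would first record what $\op{\square}$ is. In $\op{\TwoCat M}$ the arrows $\op{f}\colon X\to X'$, $\op{f'}\colon Y\to Y'$, $\op{g}$ and $\op{g'}$ are reversed. The $2$-cell $\op{\eta}\colon \op{g'}\op{f} \to \op{f'}\op{g}$ keeps its direction, which after relabelling is exactly the shape of a right Beck--Chevalley square. By \cref{lem:two-cat:op-adjunctions}, the left adjoints $f_L \dashv f$ and $f'_L \dashv f'$ become right adjoints $\op{f} \dashv \op{f_L}$ and $\op{f'} \dashv \op{f'_L}$. Under this, $\op{u'}$ and $\op{c}$ serve as unit and counit respectively, as computed in the proof of that lemma.

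The key step is to write out $\mathrm{Ex}^{\mathrm{R}}(\op{\square})$ and check, term by term, that it equals $\op{(\mathrm{Ex}^{\mathrm{L}}(\square))}$. Passing to $\op{}$ only reverses the order of whiskering and leaves vertical composition of $2$-cells untouched, so the composite $u'$, then $\eta$, then $c$ becomes the composite of the correspondingly whiskered cells in the same order. Since $\op{(-)}$ is an equivalence on mapping categories, one $2$-cell is invertible exactly when the other is.

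The case $\co{(-)}$ is analogous, with a different bookkeeping. Now $2$-cells reverse, so $\co{\eta}$ points the other way and the square becomes a right Beck--Chevalley square. The adjunctions flip to $\co{f}\dashv\co{f_L}$ with unit $\co{c}$ and counit $\co{u'}$. The mate $\mathrm{Ex}^{\mathrm{R}}(\co{\square})$ is then the vertical composite of the reversed cells in reversed order, that is, $\co{(\mathrm{Ex}^{\mathrm{L}}(\square))}$ read as a $2$-cell in the opposite direction. Again, it is invertible if and only if $\mathrm{Ex}^{\mathrm{L}}(\square)$ is.

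For the final claim, apply $\coop{(-)}$ to $\square$. The left adjoints stay left adjoints and the right adjoints stay right adjoints, by \ref{point:two-cat:coop-adjunction} of \cref{lem:two-cat:op-adjunctions}. By the above, $\mathrm{Ex}^{\mathrm{L}}(\coop{\square}) = \coop{(\mathrm{Ex}^{\mathrm{L}}(\square))}$, and similarly every unit and counit goes to the corresponding unit or counit. Hence $\mathrm{Ex}^{\mathrm{L,R}}(\coop{\square})$ is $\coop{(\mathrm{Ex}^{\mathrm{L,R}}(\square))}$, up to the orientation conventions in the definition. I expect the main obstacle to be purely bookkeeping: making sure the relabelled square matches the exact shape and orientation in the definitions, in particular which of $f,g$ plays which role after $\op$ reverses the $1$-morphisms. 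This might force using the transposed square $\square^t$ and the identification $\mathrm{Ex}^{\mathrm{L,R}}\cong\mathrm{Ex}'^{\mathrm{L,R}}$ from the preceding remark.
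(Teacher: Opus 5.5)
Your proposal is correct and follows essentially the same route as the paper. Like the paper, you transport the mate along the involutions using the explicit units and counits from the proof of the adjunction lemma, identifying $\mathrm{Ex}^{\mathrm{R}}(\op{\square})\cong\op{(\mathrm{Ex}^{\mathrm{L}}(\square))}$ and $\mathrm{Ex}^{\mathrm{R}}(\co{\square})\cong\co{(\mathrm{Ex}^{\mathrm{L}}(\square))}$, combining these for $\coop{(-)}$, and deducing the double condition from $\coop{(\mathrm{Ex}^{\mathrm{L}}(\square)^{-1})}\cong\mathrm{Ex}^{\mathrm{L}}(\coop{\square})^{-1}$. One small slip is that $\op{\eta}$ goes from $\op{(gf')}=\op{f'}\op{g}$ to $\op{(fg')}=\op{g'}\op{f}$, not the other way; $\op{(-)}$ just rotates the square by $180^\circ$, so neither the transposed square nor $\mathrm{Ex}'^{\mathrm{L,R}}$ is needed.
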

\begin{proof}
    Using the descriptions of unit and counit from the proof of \Cref{lem:two-cat:op-adjunctions}, 
    it follows immediately that $\op{(\mathrm{Ex}^{\mathrm{L}}(\square))} \cong \mathrm{Ex}^{\mathrm{R}}(\op{\square})$ and $\co{(\mathrm{Ex}^{\mathrm{L}}(\square))} \cong \mathrm{Ex}^{\mathrm{R}}(\co{\square})$ 
    (and similarly for exchanged roles of $L$ and $R$).
    Combining these, we also get $\coop{(\mathrm{Ex}^{\mathrm{L}}(\square))} \cong \op{\mathrm{Ex}^{\mathrm{R}}(\co{\square})} \cong \mathrm{Ex}^{\mathrm{L}}(\coop{\square})$.
    This shows the equivalence of \ref{point:Beck--Chevalley:BC} through \ref{point:Beck--Chevalley:coop-BC}.
    The last statement about the double Beck--Chevalley condition follows analogously using that $\coop{(\mathrm{Ex}^{\mathrm{L}}(\square)^{-1})} = (\coop{\mathrm{Ex}^{\mathrm{L}}(\square)})^{-1} = \mathrm{Ex}^{\mathrm{L}}(\coop{\square})^{-1}$.
\end{proof}

%% file: 2FF/consequences.tex
\subsection{Consequences of the existence of a \texorpdfstring{$2$}{2}-functor formalism} \label{sec:2FF-consequences}
In this section, we collect some consequences of the existence of a \texorpdfstring{$2$}{2}-functor formalism.
The most important result will be a base change result for cartesian squares with one edge in $E$.

First, for now and the rest of the paper, we will fix the following notation:
\begin{notation}
    Suppose we are given a Nagata setup $(\Cat C, E, I, P)$, a 2-category $\TwoCat{M}$,
    and a $2$-functor formalism $\Cat D \colon \SpanStd \to \TwoCat{M}$.
    %We will adopt the following notation:
    %\begin{align*}
    %    \defnot{\Cat D_*} \colon \Cat C \to &\SpanStd \xrightarrow{\Cat D} \TwoCat{M}, \text{ and}\\
    %    \defnot{\Cat D^!} \colon \op{E} \to &\SpanStd \xrightarrow{\Cat D} \TwoCat{M}.
    %\end{align*}
    Suppose that $f \in \Cat C$ and $g \in E$ are two morphisms.
    We then write 
    \begin{itemize}
        \item $\defnot{f_*} \coloneqq \Cat D_*(f)$,
        \item $\defnot{f^{\times}}$ for a right adjoint of $f_*$ (if it exists),
        \item $\defnot{f^{*}}$ for a left adjoint of $f_*$ (if it exists),
        \item $\defnot{g^{!}} \coloneqq \Cat D^{!}(g)$, and
        \item $\defnot{g_\flat}$ for a right adjoint of $g^!$ (if it exists).
    \end{itemize}
\end{notation}

\begin{setting}
    We fix a Nagata setup $(\Cat C, E, I, P)$, a 2-category $\TwoCat{M}$,
    and a $2$-functor formalism $$\Cat D \colon \Span{\Cat C}{E}{\all}{I}{P} \to \TwoCat{M}.$$
\end{setting}

\begin{lem} \label{lem:consequences:adjoints-of-upper-shriek}
    Let $f \colon X \to Y$ be a morphism in $\Cat C$.
    \begin{itemize}
        \item If $f \in I$, then $f^!$ is left adjoint to $f_*$, i.e.\ $f^! \cong f^*$.
        \item If $f \in P$, then $f^!$ is right adjoint to $f_*$, i.e.\ $f^! \cong f^\times$.
    \end{itemize}
\end{lem}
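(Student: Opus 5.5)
The plan is to produce explicit unit and counit $2$-morphisms in $\SpanStd$ that satisfy the triangle identities there. A $2$-functor $\Cat D$ preserves adjunctions, since it preserves $2$-morphisms, horizontal composition and identities. Hence any adjunction between the spans $f_\circ \coloneqq [X \xleftarrow{\id{}} X \xrightarrow{f} Y]$ and $f^\circ \coloneqq [Y \xleftarrow{f} X \xrightarrow{\id{}} X]$ in $\SpanStd$ is sent to an adjunction between $f_*$ and $f^!$ in $\TwoCat M$. Uniqueness of adjoints then gives the identifications $f^! \cong f^*$ and $f^! \cong f^\times$ whenever these adjoints are defined. So the whole statement reduces to a computation inside the span $2$-category.

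First compute the two composites. The composite $f^\circ \circ f_\circ \colon X \to X$ is the span $X \xleftarrow{\pr_1} X\times_Y X \xrightarrow{\pr_2} X$. The composite $f_\circ \circ f^\circ \colon Y \to Y$ is $Y \xleftarrow{f} X \xrightarrow{f} Y$, since the relevant pullback is along an identity. The diagonal $\Delta \colon X \to X\times_Y X$ is in $I$ (resp.\ $P$) whenever $f$ is. This follows from left cancellability (d) of \cref{def:cff:suitable-decomposition}: $\pr_1 \in I$ (resp.\ $P$) by base change stability (a), and $\pr_1 \circ \Delta = \id{X}$ lies in every wide subcategory.

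\textbf{Case $f \in I$.} We want $f^\circ \dashv f_\circ$. For the unit $\id{X} \Rightarrow f_\circ f^\circ$ we need a $2$-morphism from $[X = X = X]$ to $[X \leftarrow X\times_Y X \to X]$. In the description of $2$-morphisms, take the middle object $Z'' = X$ with $i = \id{X} \in I$ and $p = \Delta \colon X \to X\times_Y X$. That is wrong in this case, since $\Delta$ lies in $I$, not $P$. So swap the roles: take $Z''=X$, $i = \Delta$ as the leg toward the source span, and $p = \id{}$ toward the target. This requires reading off the correct direction of $2$-morphisms in $\SpanStd=\coop{\Span{\Cat C}{\all}{E}{P}{I}}$ from the remark, so that $\Delta \in I$ gives a $2$-cell $f^\circ f_\circ \Rightarrow \id{X}$ or its reverse, as needed. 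Similarly, the span $Y \xleftarrow{f} X \xrightarrow{f} Y$ admits a $2$-cell relating it to $\id{Y}$ via $Z''=X$, using $f\in I$ on one leg and $\id{}$ on the other. For $f\in I$ the two cells obtained are $\id{X} \Rightarrow f_\circ f^\circ$ and $f^\circ f_\circ \Rightarrow \id{Y}$-type data exhibiting $f^\circ \dashv f_\circ$. For $f\in P$ the same diagrams with $P$-legs give the opposite orientation, hence $f_\circ \dashv f^\circ$. Because $\SpanStd$ is the $\coop$ of the Cnossen--Lenz--Linskens span category, it is cleaner not to redo this by hand. Instead I would cite their result that in $\Span{\Cat C}{\all}{E}{P}{I}$ the morphisms coming from $I$ and $P$ have the corresponding adjoints. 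Then I would transport that adjunction through $\coop{(-)}$ using \cref{lem:two-cat:op-adjunctions}, which swaps left and right adjoints and the direction of the relevant $1$-morphisms.

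The main obstacle is the triangle identities, which must hold as equivalences of $2$-morphisms. Whiskering the unit and counit requires pulling back the diagonal along projections in $X\times_Y X\times_Y X$ and identifying the composite inner span with the identity. Here I expect to rely on the cited result of Cnossen--Lenz--Linskens (or to check the pullback computations directly via the $n$-truncatedness and cancellation axioms). The final bookkeeping step is orientation: under $\coop$, the $I$-adjunction and the $P$-adjunction must land as stated. Namely, for $f\in I$ the map $\Cat D^!(f)$ should be left adjoint to $\Cat D_*(f)$, and for $f\in P$ it should be right adjoint. Using \cref{lem:two-cat:op-adjunctions}\ref{point:two-cat:coop-adjunction}, the $\coop$ reverses $1$-morphisms and keeps left adjoints left, which I would check against the displayed form of $2$-morphisms in $\SpanStd$.
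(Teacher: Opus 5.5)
Your reduction is the same as the paper's. You exhibit unit and counit in $\SpanStd$, where the only non-formal input is that $\Delta\colon X\to X\times_Y X$ lies in $I$ (resp.\ $P$) by base change and left cancellability, and you import the triangle identities. The paper takes them from Haugseng's computation in the full span $2$-category, noting that it never uses the missing pullbacks. Your alternative is to take the adjunction from Cnossen--Lenz--Linskens in $\Span{\Cat C}{\all}{E}{P}{I}$ and transport it along $\coop{(-)}$. This is legitimate and avoids leaving the relevant span category. Make sure the cited statement identifies the adjoint of $h(f)$ with the forward span, not merely asserts that one exists; otherwise you do not get $f^!\cong f^*$.

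What your write-up does not yet do is fix the orientation, which is the whole difference between the two bullets, and several of your statements about it are wrong.

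For $f\in I$ the adjunction is $f^\circ\dashv f_\circ$ with $f^\circ\colon Y\to X$. So the unit is $\id{Y}\Rightarrow f_\circ f^\circ=[Y\xleftarrow{f}X\xrightarrow{f}Y]$, and the counit is $f^\circ f_\circ=[X\xleftarrow{\pr_1}X\times_Y X\xrightarrow{\pr_2}X]\Rightarrow\id{X}$. Your first attempt sought a unit $\id{X}\Rightarrow[X\leftarrow X\times_Y X\to X]$. That belongs to the opposite adjunction, which is why $\Delta$ landed in the $P$-slot, and your summary line still swaps $X$ and $Y$.

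There is no ``or its reverse''. By the remark describing $\SpanStd$, a $2$-morphism from a span with apex $Z$ to one with apex $Z'$ is $Z\xleftarrow{i}Z''\xrightarrow{p}Z'$, with the $I$-leg pointing to the source and the $P$-leg to the target. Hence:
\begin{itemize}
\item for $f\in I$, the unit is $(i,p)=(f,\id{X})$ and the counit is $(\Delta,\id{X})$, giving $f^\circ\dashv f_\circ$;
\item for $f\in P$, the unit $\id{X}\Rightarrow f^\circ f_\circ$ is $(\id{X},\Delta)$ and the counit $f_\circ f^\circ\Rightarrow\id{Y}$ is $(\id{X},f)$, giving $f_\circ\dashv f^\circ$.
\end{itemize}

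Finally, $\coop{(-)}$ does \emph{not} swap left and right adjoints. By \cref{lem:two-cat:op-adjunctions}\ref{point:two-cat:coop-adjunction} it reverses $1$-morphisms but keeps left adjoints left. This is exactly why the Cnossen--Lenz--Linskens adjunction (forward span left adjoint to $h(j)$ for $j\in I$) becomes $j^!\dashv j_*$. The swap you first assert would put the adjoint on the wrong side. Also, the truncatedness axiom plays no role in the triangle identities.
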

\begin{proof}
    As functors between $2$-categories preserve adjunctions, 
    it suffices to show that the span $$Y \xleftarrow{f} X \xrightarrow{\id{X}} X$$
    is left, respectively right, adjoint to the span $$X \xleftarrow{\id{X}} X \xrightarrow{f} Y$$
    in the $2$-category $\SpanStd$ for $f \in I$, respectively $f \in P$.
    It was shown in the proof of \cite[Lemma 12.3]{Haugseng_2017} that this is the case in the $2$-category $\Span{\Cat C}{\all}{\all}{\all}{\all}$\footnote{For $\Span{\Cat C}{\all}{\all}{\all}{\all}$ to be well-defined one has to assume that $\Cat C$ admits all pullbacks, which we do not do here.
    However, the proof of \loccit is only giving the unit and counit of the adjunction and computing that they satisfy the triangle identities. 
    Thus, the computational argument for the triangle identities in the proof of \loccit remains valid in our situation.}.
    Thus, it is enough to show that unit and counit of the adjunctions lie in the sub-$2$-category $\SpanStd$.
    We only prove this for the case $f \in I$, the other case being similar.
    By the proof of \cite[Lemma 12.3]{Haugseng_2017} the unit of the adjunction
    \begin{equation*}
        [Y \xleftarrow{f} X \xrightarrow{\id{X}} X] \dashv [X \xleftarrow{\id{X}} X \xrightarrow{f} Y]
    \end{equation*}
    is given by the $2$-morphism
    \begin{center}
        \begin{tikzcd}
            & Y \ar[dl, "\id{Y}"'] \ar[dr, "\id{Y}"] & \\
            Y & X \ar[u, "f"'] \ar[l, "f"'] \ar[r, "f"] \ar[d, "\id{X}"] & Y \\
            & X\rlap{,} \ar[ul, "f"] \ar[ur, "f"'] &
        \end{tikzcd}
    \end{center}
    whereas the counit is given by the $2$-morphism
    \begin{center}
        \begin{tikzcd}
            & X \times_{Y} X \ar[dl, "\pr_1"'] \ar[dr, "\pr_2"] & \\
            X & X \ar[u, "\Delta"'] \ar[l, "\id{X}"'] \ar[r, "\id{X}"] \ar[d, "\id{X}"] & X \\
            & X\rlap{,} \ar[ul, "\id{X}"] \ar[ur, "\id{X}"'] &
        \end{tikzcd}
    \end{center}
    where $\Delta$ denotes the diagonal.
    It is clear that the unit is a $2$-morphism in $\SpanStd$ as we assumed $f \in I$.
    It is left to show that $\Delta \in I$.
    For this, note that $\pr_1 \in I$ as $I$ is stable under base change.
    Thus, $\Delta \in I$ follows from $\pr_1 \circ \Delta = \id{X}$ and the left cancellation property of $I$. 
\end{proof}

\begin{lem} \label{lem:consequences:composition}
    Let $f \colon X \to Y$ and $g \colon Y \to Z$ be composable morphisms in $\Cat C$.
    Then the following holds:
    \begin{enumerate} [label=\textbf{(\alph*)}, ref=\textbf{(\alph*)}]
        \item \label{point:consequences:composition-point-1} $g_* f_* \cong (gf)_*$.
        \item \label{point:consequences:composition-point-2} If $f_*$ and $g_*$ have left adjoints, then $f^* g^* \cong (gf)^*$.
        \item \label{point:consequences:composition-point-5} If $f_*$ and $g_*$ have right adjoints, then $f^\times g^\times \cong (gf)^\times$.
        \item \label{point:consequences:composition-point-3} For $f, g \in E$ we have $f^! g^! \cong (gf)^!$.
        \item \label{point:consequences:composition-point-4} For $f, g \in E$, if $f^!$ and $g^!$ have right adjoints, then $g_\flat f_\flat \cong (gf)_\flat$.
    \end{enumerate}
    Note that if $\Cat D$ is a $4\frac{1}{2}$-functor formalism with respect to a class of morphisms $T \subseteq \Cat C$,
    then the condition in \ref{point:consequences:composition-point-2} is satisfied if $f, g \in T$,
    the condition in \ref{point:consequences:composition-point-5} is satisfied for arbitrary $f$ and $g$,
    and the condition in \ref{point:consequences:composition-point-4} is satisfied whenever $f, g \in E$.
\end{lem}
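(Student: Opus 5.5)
The plan is to deduce everything from the functoriality of $\Cat D$, precomposed with the two canonical functors $\Cat C \to \SpanStd$ and $\op{E} \to \SpanStd$, and then to pass to adjoints.

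For \ref{point:consequences:composition-point-1}, $\Cat D_*$ is by definition the composite $\Cat C \to \SpanStd \xrightarrow{\Cat D} \TwoCat M$. This is a functor of $2$-categories, so it preserves composition of $1$-morphisms up to a coherent equivalence. Hence $\Cat D_*(gf) \cong \Cat D_*(g)\Cat D_*(f)$, which is exactly $(gf)_* \cong g_* f_*$. One check is needed: the functor $\Cat C \to \SpanStd$ itself respects composition. In $\SpanStd$, the composite of the spans $X \leftarrow X \to Y$ and $Y \leftarrow Y \to Z$ is formed by the pullback $X \times_Y Y \cong X$, so it is the span $X \leftarrow X \xrightarrow{gf} Z$. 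This is part of the construction of that functor as the coop-dual of the functor in \cite{cnossen2025universalityspan2categoriesconstruction}. Part \ref{point:consequences:composition-point-3} is identical, using $\Cat D^! \colon \op{E} \to \TwoCat M$. Composition in $\op{E}$ gives $\Cat D^!(gf) \cong \Cat D^!(f)\Cat D^!(g)$, that is, $(gf)^! \cong f^! g^!$.

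For the adjoint statements \ref{point:consequences:composition-point-2}, \ref{point:consequences:composition-point-5} and \ref{point:consequences:composition-point-4}, I will use the general fact that adjoints compose in any $2$-category. Suppose $a \dashv a_R$ and $b \dashv b_R$ are composable. Then $ba \dashv a_R b_R$, with unit $\id{} \to a_R a \to a_R b_R b a$ and the analogous counit. The triangle identities follow by whiskering the individual triangle identities. Right (or left) adjoints are unique up to equivalence, since they form a contractible space when they exist. Therefore:
\begin{itemize}
    \item for \ref{point:consequences:composition-point-5}, $(gf)^\times$ is a right adjoint of $(gf)_* \cong g_* f_*$, and so is $f^\times g^\times$, hence $f^\times g^\times \cong (gf)^\times$;
    \item for \ref{point:consequences:composition-point-2}, the same argument with left adjoints gives $f^* g^* \cong (gf)^*$;
    \item for \ref{point:consequences:composition-point-4}, the same argument applied to $(gf)^! \cong f^! g^!$ gives $(gf)_\flat \cong g_\flat f_\flat$.
\end{itemize}

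The closing remark about $4\frac12$-functor formalisms is just a reading of the axioms. \textbf{(FF4)} provides the left adjoints for $f, g \in T$. \textbf{(FF2)} provides the right adjoints $f^\times$ for all morphisms. \textbf{(FF3)} provides the right adjoints $f_\flat$ for morphisms in $E$.

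The only mildly delicate point is the first one: identifying the composite span with the span of the composite inside $\SpanStd$, and making sure the functors $\Cat C \to \SpanStd$ and $\op{E}\to\SpanStd$ are genuine functors. I would cite the construction in \cite{cnossen2025universalityspan2categoriesconstruction}, transported along $\coop{(-)}$ via \cref{lem:two-cat:two-cat-coop}. The statements about adjoints then become formal once one uses that adjoint $1$-morphisms are essentially unique.
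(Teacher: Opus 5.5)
Your proposal is correct and follows the paper's own proof. The paper also gets (a) and (d) from the composition law in $\SpanStd$ via the functors out of $\Cat C$ and $\op{E}$, and deduces (b), (c) and (e) from these by composing adjoints and using that adjoints are unique; the closing remark is, as you say, a direct reading of (FF2)--(FF4).
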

\begin{proof}
    For \ref{point:consequences:composition-point-1} we compute
    \begin{align*}
        g_*f_* &= \Cat D (Y \xleftarrow{\id{Y}} Y \xrightarrow{g} Z) \Cat D (X \xleftarrow{\id{X}} X \xrightarrow{f} Y) \\
        &\cong \Cat D (X \xleftarrow{\id{X}} X \xrightarrow{gf} Z ) \\
        &= (gf)_*,
    \end{align*}
    where we used the composition law in $\Span{\Cat C}{E}{\all}{I}{P}$ in the second isomorphism.

    For \ref{point:consequences:composition-point-2} note that $f^*g^*$ is left adjoint to $g_*f_*$ and $(gf)^*$ is left adjoint to $(gf)_*$. Thus, the claim follows from \ref{point:consequences:composition-point-1} and the uniqueness of left adjoints up to isomorphism.

    The other points \ref{point:consequences:composition-point-5}, \ref{point:consequences:composition-point-3} and \ref{point:consequences:composition-point-4} can be proven analogously to \ref{point:consequences:composition-point-1} and \ref{point:consequences:composition-point-2}.
\end{proof}

Using the above lemmas, we can give a simple description of the image of a $2$-morphism under $\Cat D$
in terms of units and counits of adjunctions.

\begin{lem}
    Let 
    \begin{center}
        \begin{tikzcd}
            & Z \ar[dl, "f"'] \ar[dr, "g"] & \\
            X & Z'' \ar[u, "i"'] \ar[d, "p"] \ar[r, "g''"] \ar[l, "f''"'] & Y \\
            & Z' \ar[ul, "f'"] \ar[ur, "g'"']
        \end{tikzcd}
    \end{center}
    be a $2$-morphism in $\SpanStd$ (in particular, $i \in I$, $p \in P$ and $f, f', f'' \in E$).
    The image of this $2$-morphism under $\Cat D$ is given by the composition
    \begin{equation*}
        g_* f^! \xrightarrow{u} g_* i_* i^! f^! \simeq (gi)_* (fi)^! \simeq (g'p)_* (f'p)^! \simeq g'_* p_* p^! f'^! \xrightarrow{c} g'_* f'^!, 
    \end{equation*}
    where $u$ denotes the unit of the adjunction $i^! \dashv i_*$, 
    $c$ denotes the counit of the adjunction $p_* \dashv p^!$ (for both see \cref{lem:consequences:adjoints-of-upper-shriek}), 
    and the equivalences follow from \cref{lem:consequences:composition} 
    and the commutativity of the diagram.
\end{lem}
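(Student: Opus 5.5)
The plan is to factor the given $2$-morphism in $\SpanStd$ as a vertical composite of three simpler $2$-morphisms. Then I use that $\Cat D$, being a functor of $2$-categories, preserves vertical and horizontal composition of $2$-morphisms and sends units and counits of adjunctions in $\SpanStd$ to units and counits.

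Concretely, write $\sigma$ for the $2$-morphism in the statement from $[X \xleftarrow{f} Z \xrightarrow{g} Y]$ to $[X \xleftarrow{f'} Z' \xrightarrow{g'} Y]$. I would decompose it as follows:
\begin{enumerate}[label=(\arabic*)]
    \item a $2$-morphism from $[X \xleftarrow{f} Z \xrightarrow{g} Y]$ to $[X \xleftarrow{fi} Z'' \xrightarrow{gi} Y]$, with inner span $Z \xleftarrow{i} Z'' \xrightarrow{\id{}} Z''$;
    \item the invertible $2$-morphism identifying this with $[X \xleftarrow{f'p} Z'' \xrightarrow{g'p} Y]$, given by the commutativity $fi = f'' = f'p$ and $gi = g'' = g'p$;
    \item a $2$-morphism to $[X \xleftarrow{f'} Z' \xrightarrow{g'} Y]$, with inner span $Z'' \xleftarrow{\id{}} Z'' \xrightarrow{p} Z'$.
\end{enumerate}
Since vertical composition of $2$-morphisms is given by composing the inner spans, the composite inner span is $Z \xleftarrow{i} Z'' \xrightarrow{p} Z'$, which recovers $\sigma$.

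Next I identify each piece as a whiskering of a unit or counit. The first piece is the horizontal composite (whiskering) of the $1$-morphisms $g_*$ and $f^!$ with the unit of the adjunction $[Z \xleftarrow{i} Z'' \xrightarrow{\id{}} Z''] \dashv [Z'' \xleftarrow{\id{}} Z'' \xrightarrow{i} Z]$. This unit was written down explicitly in the proof of \cref{lem:consequences:adjoints-of-upper-shriek}. Here $g_*$ and $f^!$ are viewed as spans, using the factorization $[X \xleftarrow{f} Z \xrightarrow{g} Y] \simeq [Z \xleftarrow{\id{}} Z \xrightarrow{g} Y] \circ [X \xleftarrow{f} Z \xrightarrow{\id{}} Z]$. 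Similarly, the third piece is $g'_*$ and $f'^!$ whiskered with the counit of $p_* \dashv p^!$.

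To verify these identifications, I compute the horizontal composite in $\SpanStd$ via pullbacks. The unit has top span $Z \leftarrow Z \to Z$ with inner part $Z''$. Whiskering it by spans whose backward legs are identities (for $g_*$), respectively whose forward legs are identities (for $f^!$), requires only trivial pullbacks. The result is visibly the diagram with inner span $(i,\id{})$. The counit involves $Z''\times_{Z'} Z''$ and the diagonal; after whiskering, one checks that the composite with the diagonal reduces to the inner span $(\id{},p)$. Applying $\Cat D$ then gives exactly the maps $u$ and $c$ in the claimed formula. The middle equivalences are the composition equivalences of \cref{lem:consequences:composition}, namely $g_*i_* \simeq (gi)_*$ and so on, together with the image of the invertible piece (2).

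I expect the main obstacle to be this whiskering computation, and in particular keeping track of the composition equivalences coherently: the identifications $g_* i_* i^! f^! \simeq (gi)_*(fi)^!$ must be precisely the images of the canonical composition $2$-cells of $\SpanStd$. I would handle this by working entirely inside $\SpanStd$ first, establishing the factorization $\sigma = (\text{counit whiskered}) \circ (\text{iso}) \circ (\text{unit whiskered})$ up to these canonical $2$-cells. Only after that would I apply $\Cat D$, so that functoriality transports the coherences automatically.
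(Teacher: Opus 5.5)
Your proposal is correct and is essentially the paper's proof: the paper also writes the given cell as a vertical composite of two cells glued along $fi = f'' = f'p$ and $gi = g'' = g'p$, namely $g_*, f^!$ whiskered with the unit cell (inner span $Z \xleftarrow{i} Z'' \xrightarrow{\id{}} Z''$) and $g'_*, f'^!$ whiskered with the counit cell (inner span $Z'' \xleftarrow{\id{}} Z'' \xrightarrow{p} Z'$), and then composes the inner spans. One small correction: the counit of $p_* \dashv p^!$ involves no fibre product or diagonal, since its source is simply $Z' \xleftarrow{p} Z'' \xrightarrow{p} Z'$ (the diagonal into $Z'' \times_{Z'} Z''$ occurs in the unit of that adjunction instead), so its whiskering also uses only trivial pullbacks, exactly as for the unit.
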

\begin{proof}
    The composition of functors described in the lemma is obtained by applying $\Cat D$
    to a certain $2$-morphism of $\SpanStd$. 
    We show that this $2$-morphism is the one given in the lemma. 
    Recall from the proof of \cref{lem:consequences:adjoints-of-upper-shriek} 
    that the unit of the adjunction $i^! \dashv i_*$ is obtained by applying $\Cat D$
    to the $2$-morphism
    \begin{center}
        \begin{tikzcd}
            & Z \ar[dl, "\id{Z}"'] \ar[dr, "\id{Z}"] & \\
            Z & Z'' \ar[u, "i"'] \ar[d, "\id{Z''}"] \ar[r, "i"] \ar[l, "i"'] & Z \\
            & Z''\rlap{.} \ar[ul, "i"] \ar[ur, "i"']
        \end{tikzcd}
    \end{center}
    Similarly, the counit of the adjunction $p_* \dashv p^!$ is obtained by applying $\Cat D$ to the $2$-morphism
    \begin{center}
        \begin{tikzcd}
            & Z'' \ar[dl, "p"'] \ar[dr, "p"] & \\
            Z' & Z'' \ar[u, "\id{Z''}"'] \ar[d, "p"] \ar[r, "p"] \ar[l, "p"'] & Z' \\
            & Z'\rlap{.} \ar[ul, "\id{Z'}"] \ar[ur, "\id{Z'}"']
        \end{tikzcd}
    \end{center}
    Using this, we compute that the composition of functors described in the lemma is obtained by the
    following composition of $2$-morphisms:
    \begin{center}
        \begin{tikzcd}
            & Z \ar[dl, "f"'] \ar[dr, "g"] & \\
            X & Z'' \ar[u, "i"'] \ar[d, "\id{Z''}"] \ar[r, "gi"] \ar[l, "fi"'] & Y \\
            & Z'' \ar[ul, "fi"] \ar[ur, "gi"']  \ar[dl, "f'p"'] \ar[dr, "g'p"] & \\
            X & Z'' \ar[u, "\id{Z''}"'] \ar[d, "p"] \ar[r, "g'p"] \ar[l, "f'p"'] & Y \\
            & Z'\rlap{.} \ar[ul, "f'"] \ar[ur, "g'"']
        \end{tikzcd}
    \end{center}
    Note that $fi=f'p=f''$ and $gi=g'p=g''$ by assumption.
    Hence, simplifying the above composition by computing the composition of the two vertical spans,
    we obtain the $2$-morphism we started with.
\end{proof}

\begin{defn}
    Let $f \colon X \to Y \in E$ be a morphism.
    A \defemphindex{compactification of $f$}{compactification} is a factorization of $f$ as
    \begin{equation*}
        X \xrightarrow{j} \overline{X} \xrightarrow{p} Y,
    \end{equation*}
    with $j \in I$ and $p \in P$. Any morphism $f \in E$ 
    has a compactification, since by assumption $(I,P)$
    is a suitable decomposition of $E$.
\end{defn}

\begin{lem} \label{lem:consequences:compactification}
    Let $f \colon X \to Y \in E$ be a morphism  
    with a compactification $f = pj$.
    Then there is a canonical equivalence 
    \begin{equation*}
        f^! \cong j^* p^\times,
    \end{equation*}
    where $j^*$ is left adjoint to $j_*$,
    and $p^\times$ is right adjoint to $p_*$.
\end{lem}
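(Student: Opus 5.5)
The plan is to combine the composition law for exceptional pullbacks with the identification of $j^!$ and $p^!$ as adjoints, both of which are already available.

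First, since $f = pj$ with $j \in I \subseteq E$ and $p \in P \subseteq E$, part \ref{point:consequences:composition-point-3} of \cref{lem:consequences:composition} gives an equivalence
\[
    f^! \cong j^! p^!.
\]
This equivalence comes from applying $\Cat D$ to the composition equivalence of spans
\[
    [Y \xleftarrow{p} \overline{X} \xrightarrow{\id{}} \overline{X}] \circ [\overline{X} \xleftarrow{j} X \xrightarrow{\id{}} X] \simeq [Y \xleftarrow{f} X \xrightarrow{\id{}} X]
\]
in $\SpanStd$. The pullback involved is along identities, so it is canonical.

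Second, \cref{lem:consequences:adjoints-of-upper-shriek} gives the two adjunctions we need:
\begin{itemize}
    \item Since $j \in I$, the functor $j^!$ is left adjoint to $j_*$, so $j^! \cong j^*$.
    \item Since $p \in P$, the functor $p^!$ is right adjoint to $p_*$, so $p^! \cong p^\times$.
\end{itemize}
In each case the identification is the essentially unique equivalence of adjoints given by uniqueness of adjoints in $\TwoCat M$. Substituting both into the first step yields $f^! \cong j^* p^\times$.

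The only point requiring care is the word ``canonical''. Adjoints are determined up to a contractible space of choices: the space of (left or right) adjoint data for a fixed morphism in a $2$-category is contractible. So once $j^*$ and $p^\times$ are specified as adjoints with their unit and counit, the comparison maps $j^! \to j^*$ and $p^! \to p^\times$ are uniquely determined. The composite equivalence is therefore well defined without further choices, and no further coherence check is required for this statement.

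Independence of the chosen compactification is not part of this lemma. It is implicit because everything is identified with $f^!$, which is defined without reference to any compactification.
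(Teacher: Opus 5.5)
Your proposal is correct and matches the paper's proof: you use the composition equivalence $f^! \cong j^! p^!$ from \cref{lem:consequences:composition}, and then the identifications $j^! \cong j^*$ and $p^! \cong p^\times$ from \cref{lem:consequences:adjoints-of-upper-shriek}. Your remark on canonicity via contractibility of adjunction data spells out what the paper leaves implicit. One small notational slip: you wrote the span composite in diagrammatic rather than functional order, which is harmless.
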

\begin{proof}
    From \cref{lem:consequences:composition},
    we know that $f^! \cong j^! p^!$.
    But now $j^!$ is a left adjoint to $j_*$ by \Cref{lem:consequences:adjoints-of-upper-shriek}, and hence $j^! \cong j^*$.
    Analogously, we also get $p^! \cong p^\times$ by \loccit.
\end{proof}

\begin{lem}[Base change for $I$] \label{lem:consequences:BC-for-I}
     Let $\square$ be a cartesian diagram in $\Cat C$ of the form
    \begin{center}
        \begin{tikzcd}
            Y' \ar[r, "g'"] \ar[d, "j'"'] \cartsymb &X' \ar[d, "j"] \\
            Y \ar[r, "g"'] & X,
        \end{tikzcd}
    \end{center}
    and suppose that $j \in I$.
    Then $\Cat D_*(\square)$ satisfies the left Beck--Chevalley condition.
\end{lem}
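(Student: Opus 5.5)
The plan is to realize the whole Beck--Chevalley situation inside the span $2$-category and then push it forward along $\Cat D$. Since $2$-functors preserve adjunctions, units and counits, and hence mates, it suffices to show that the mate of the square $\Cat C \to \SpanStd$ applied to $\square$ is invertible in $\SpanStd$. Concretely, $\Cat D_*(\square)$ is a commuting square $j_* g'_* \cong g_* j'_*$. By \cref{lem:consequences:adjoints-of-upper-shriek}, the left adjoints are $j^* \cong j^!$ and $j'^* \cong j'^!$, because $j, j' \in I$ ($I$ is stable under base change). In $\SpanStd$ these left adjoints are represented by the backward spans $[X \xleftarrow{j} X' \xrightarrow{\id{}} X']$ and $[Y \xleftarrow{j'} Y' \xrightarrow{\id{}} Y']$. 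The units and counits are the explicit $2$-morphisms recalled in that proof.

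The mate is $\mathrm{Ex}^{\mathrm L} \colon j^! g_* \to g'_* j'^!$. In $\SpanStd$ the source composite is $[Y \xleftarrow{\id{}} Y \xrightarrow{g} X]$ followed by $[X \xleftarrow{j} X' \xrightarrow{\id{}} X']$. Computing this composite by pullback gives $[Y \xleftarrow{j'} Y\times_X X' \xrightarrow{g'} X'] = [Y \xleftarrow{j'} Y' \xrightarrow{g'} X']$, which is also the target composite $g'_* j'^!$. So source and target agree as $1$-morphisms. It remains to check that the mate $2$-morphism is the identity of this span, or at least an invertible $2$-morphism.

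To see this, I would compose the three $2$-morphisms $u'$, $\eta$ (an equivalence, coming from commutativity of $\square$), and $c$ as diagrams of spans. The unit of $j'^! \dashv j'_*$ is given by the diagram with middle object $Y'$ mapping by $j'$ and $\id{}$. The counit of $j^! \dashv j_*$ is given by the diagonal $X' \to X'\times_X X'$. After whiskering and forming pullbacks, the composite vertical inner span becomes
\[
Y\times_X X' \longleftarrow W \longrightarrow Y'.
\]
Here $W$ is obtained from $Y'\times_X X'$ by pulling back the diagonal $\Delta \colon X' \to X'\times_X X'$. The key computation is that $W \simeq Y'$ and both legs are equivalences: the pullback of $\Delta$ along $Y' \times_X X' \cong Y' \times_{X'} (X'\times_X X') \to X'\times_X X'$ is $Y'$ itself. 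A $2$-morphism in $\SpanStd$ whose inner span consists of equivalences is invertible, so the mate is an equivalence in $\SpanStd$ and therefore after applying $\Cat D$.

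The main obstacle is doing this composition of $2$-morphisms honestly in the $(\infty,2)$-category, which has no explicit simplicial model here. I would handle it as the proof of \cref{lem:consequences:adjoints-of-upper-shriek} does: use that horizontal and vertical composition of $2$-morphisms in $\SpanStd$ is computed by pullbacks of inner spans. That reduces the verification to the pullback bookkeeping above. Alternatively, one can cite the corresponding statement in \cite{cnossen2025universalityspan2categoriesconstruction} for $\Span{\Cat C}{\all}{E}{P}{I}$, namely that backward maps in the left-adjoint class satisfy Beck--Chevalley with forward maps along pullbacks, and transport it via $\coop{(-)}$ using \cref{lem:Beck--Chevalley:op-BC}, which preserves the left Beck--Chevalley condition.
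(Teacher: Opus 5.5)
Your proposal is correct and follows essentially the same route as the paper: you reduce to $\SpanStd$ (since $2$-functors preserve mates), write the whiskered unit of $j'^! \dashv j'_*$ and the whiskered diagonal counit of $j^! \dashv j_*$ as explicit span diagrams, and compose them vertically. The composite inner span is $Y\times_X X' \xleftarrow{(j',g')} Y' \xrightarrow{\id{Y'}} Y'$, and its left leg is exactly the canonical map exhibiting $Y'$ as the pullback. You should name this map explicitly rather than only asserting that both legs are equivalences.
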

\begin{proof}
    First note that $j_*$ and $j'_*$ admit left adjoints $j^!$ and $j'^!$, respectively, by \cref{lem:consequences:adjoints-of-upper-shriek}. 
    We have to show that the map
    \begin{equation*}
        \mathrm{Ex}^{\mathrm{L}}(\Cat D_*(\square))\colon j^! g_* \xrightarrow{u} j^! g_* j'_* j'^! \cong j^! j_* g'_* j'^! \xrightarrow{c} g'_* j'^!
    \end{equation*}
    is an equivalence, where $u$, respectively $c$, denote the unit, respectively counit, of the corresponding adjunction.
    We work directly in the category $\SpanStd$, i.e.\ before applying the functor $\Cat D$.
    This is possible since $2$-functors preserve adjunctions and thus in particular the mate of a square.
    Hence, if this mate is an equivalence before applying $\Cat D$, the same is true afterwards.
    The lemma follows from a similar calculation as the one in the proof of \cite[Proposition 4.14]{cnossen2025universalityspan2categoriesconstruction}:
    Unwinding the definition of the unit and counit (which we gave in the proof of \cref{lem:consequences:adjoints-of-upper-shriek}), 
    and of the composition of $2$-morphisms, 
    we see that $\mathrm{Ex}^{\mathrm{L}}(\Cat D_*(\square))$ is given by the following composition of $2$-morphisms:
    \begin{center}
        \begin{tikzcd}
            && Y \times_{X} X' \ar[ddll, "\pr_1"'] \ar[ddrr, "\pr_2"] && \\
            &&&& \\
            Y && Y' \times_{X'} X' \times_{X} X' \ar[uu, "{(j'\pr_1, \pr_3)}"'] \ar[rr, "\pr_3"] \ar[dd, "\id{}"] \ar[ll, "j'\pr_1"'] && X' \\
            &&&& \\
            && Y' \times_{X'} X' \times_{X} X' \ar[uull, "j'\pr_1"] \ar[uurr, "\pr_3"'] \ar[ddll, "j'\pr_1"'] \ar[ddrr, "\pr_3"] && \\
            &&&& \\
            Y && Y' \ar[uu, "{(\id{Y'}, \Delta g')}"'] \ar[rr, "g'"] \ar[dd, "\id{Y'}"] \ar[ll, "j'"'] && X' \\
            &&&& \\
            && Y'\rlap{,} \ar[uull, "j'"] \ar[uurr, "g'"'] && 
        \end{tikzcd}
    \end{center}
    where $\Delta \colon X' \to X' \times_{X} X'$ denotes the diagonal.
    This composition is in turn given by the $2$-morphism
    \begin{center}
        \begin{tikzcd}
            && Y \times_{X} X' \ar[ddll, "\pr_1"'] \ar[ddrr, "\pr_2"] && \\
            &&&& \\
            Y && Y' \ar[uu, "{(j', g')}"'] \ar[rr, "g'"] \ar[dd, "\id{Y'}"] \ar[ll, "j'"'] && X' \\
            &&&& \\
            && Y'\rlap{,} \ar[uull, "j'"] \ar[uurr, "g'"'] && 
        \end{tikzcd}
    \end{center}
    where the morphism $(j',g')$ is exactly the morphism exhibiting $Y'$ as the pullback $Y \times_{X} X'$.
    In particular, it is an equivalence.
\end{proof}

\begin{lem}[Base change for $P$] \label{lem:consequences:BC-for-P}
     Let $\square$ be a cartesian diagram in $\Cat C$ of the form
    \begin{center}
        \begin{tikzcd}
            Y' \ar[r, "g'"] \ar[d, "p'"'] \cartsymb &X' \ar[d, "p"] \\
            Y \ar[r, "g"'] & X,
        \end{tikzcd}
    \end{center}
    and suppose that $p \in P$.
    Then $\Cat D_*(\square)$ satisfies the right Beck--Chevalley condition.
\end{lem}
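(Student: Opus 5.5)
The plan mirrors the proof of \cref{lem:consequences:BC-for-I}, dualized from the $I$-adjunction $j^! \dashv j_*$ to the $P$-adjunction $p_* \dashv p^!$. By \cref{lem:consequences:adjoints-of-upper-shriek}, $p_*$ and $p'_*$ admit right adjoints $p^!$ and $p'^!$, with units and counits given by the explicit $2$-morphisms in $\SpanStd$ recorded in that proof. For $p \in P$ the unit of $p_* \dashv p^!$ is the $2$-morphism whose inner span is built from the diagonal $\Delta \colon X' \to X' \times_X X'$, which lies in $P$ by base change and left cancellation. The counit is the $2$-morphism with $p$ as the $P$-leg. We must show that the mate
\begin{equation*}
    \mathrm{Ex}^{\mathrm{R}}(\Cat D_*(\square)) \colon g'_* p'^! \xrightarrow{u} p^! p_* g'_* p'^! \cong p^! g_* p'_* p'^! \xrightarrow{c} p^! g_*
\end{equation*}
is an equivalence. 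Since $2$-functors preserve adjunctions and hence mates, it suffices to check this before applying $\Cat D$, i.e.\ in $\SpanStd$ itself.

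The next step is to compute the mate in $\SpanStd$. The source $1$-morphism is $g'_* p'^! = [Y \xleftarrow{p'} Y' \xrightarrow{g'} X']$. The target $1$-morphism is $p^! g_* = [Y \xleftarrow{\pr_1} Y \times_X X' \xrightarrow{\pr_2} X']$. Composing the unit $2$-morphism, whiskered by $g'_* p'^!$, with the counit $2$-morphism, whiskered by $p^! g_*$, gives vertically stacked $2$-morphisms. Their middle objects are iterated fiber products such as $Y' \times_{Y} Y' \times_{X'} X'$, depending on the order of whiskering. Composing these inner spans, exactly as in the proof of \cref{lem:consequences:BC-for-I} with the roles of the $I$- and $P$-legs swapped, should collapse the whole composite to the single $2$-morphism
\begin{equation*}
    [Y \xleftarrow{p'} Y' \xrightarrow{g'} X'] \Rightarrow [Y \xleftarrow{\pr_1} Y \times_X X' \xrightarrow{\pr_2} X'].
\end{equation*}
Its inner span is $Y' \xleftarrow{\id{}} Y' \xrightarrow{(p',g')} Y \times_X X'$, so it has trivial $I$-leg and $P$-leg $(p',g')$. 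Since $\square$ is cartesian, $(p',g')$ is an equivalence, so this $2$-morphism is invertible, and after applying $\Cat D$ the mate is an equivalence.

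The main obstacle is the bookkeeping in this computation. One must track the diagonal-type maps arising from the unit and identify the composite $P$-leg with $(p',g')$. This includes checking that the intermediate maps, such as $(\Delta g', \id{})$-type maps into the iterated pullbacks, compose to the canonical comparison map. Alternatively, one could deduce the statement formally from \cref{lem:consequences:BC-for-I} via the $\coop{(-)}$ duality of \cref{lem:Beck--Chevalley:op-BC}. However, $\SpanStd$ is not literally self-dual in the way that would exchange $I$ and $P$, so I would carry out the direct computation, citing the analogous calculation in \cite[Proposition 4.14]{cnossen2025universalityspan2categoriesconstruction}.
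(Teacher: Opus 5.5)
Your proposal is correct and is exactly the argument the paper intends. Its proof only notes that $p_*$ and $p'_*$ have right adjoints and says the computation is dual to that of \cref{lem:consequences:BC-for-I}; that dual computation is the one you carry out, ending with the invertible $2$-morphism with $I$-leg $\id{Y'}$ and $P$-leg $(p',g')$.

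One correction to the bookkeeping: the intermediate composites are $Y' \times_{X'} X' \times_X X'$ and $Y' \times_Y Y \times_X X'$, both canonically equivalent to $Y' \times_X X'$; $Y' \times_Y Y' \times_{X'} X'$ does not occur. The $P$-leg $(\id{Y'}, \Delta g')$ of the whiskered unit and the projection $Y' \times_Y Y \times_X X' \to Y \times_X X'$ of the whiskered counit compose to $(p',g')$, which confirms the collapse you expected.
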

\begin{proof}
    First note that $p_*$ and $p'_*$ admit right adjoints by \cref{lem:consequences:adjoints-of-upper-shriek}.
    The proof is now dual to the one of the previous lemma.
\end{proof}

\begin{cor}[Base change] \label{lem:consequences:BC-for-E}
    Let $\square$ be a cartesian diagram in $\Cat C$ of the form
    \begin{center}
        \begin{tikzcd}
            Y' \ar[r, "g'"] \ar[d, "f'"'] \cartsymb &X' \ar[d, "f"] \\
            Y \ar[r, "g"'] & X,
        \end{tikzcd}
    \end{center}
    and suppose that $f \in E$.
    Then there is an equivalence
    \begin{equation*}
        f^! g_* \cong g'_* f'^!.
    \end{equation*}
\end{cor}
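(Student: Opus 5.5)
The plan is to use the composition law of the span $2$-category directly, not to assemble the equivalence from the Beck--Chevalley lemmas. By definition $g_* = \Cat D(Y \xleftarrow{\id{Y}} Y \xrightarrow{g} X)$ and $f^! = \Cat D(X \xleftarrow{f} X' \xrightarrow{\id{X'}} X')$. Their composite $f^! g_*$ is therefore $\Cat D$ applied to the composite span $Y \to X'$. In $\SpanStd$ this composite is computed by forming the pullback $Y \times_X X'$, which exists because $f \in E$ and $E$ is stable under base change. Concretely, the composite is the span
$$Y \xleftarrow{\pr_1} Y \times_X X' \xrightarrow{\pr_2} X',$$
whose left leg lies in $E$, again by stability under base change.

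On the other side, $g'_* f'^!$ is $\Cat D$ applied to the composite of $Y \xleftarrow{f'} Y' \xrightarrow{\id{Y'}} Y'$ with $Y' \xleftarrow{\id{Y'}} Y' \xrightarrow{g'} X'$. This composition only involves a pullback along an identity, so it gives $Y \xleftarrow{f'} Y' \xrightarrow{g'} X'$. Since $\square$ is cartesian, the map $(f', g') \colon Y' \to Y \times_X X'$ is an equivalence in $\Cat C$. It identifies this span with the one above, compatibly with both legs.

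An equivalence of middle objects of spans yields an invertible $2$-morphism in $\SpanStd$. One takes the inner span $Y' \xrightarrow{(f',g')} Y\times_X X' \xleftarrow{} \dots$, or more simply notes that the two spans are equivalent $1$-morphisms. We have to check that this is legitimate inside the sub-$2$-category: equivalences lie in both $I$ and $P$, since these are wide subcategories containing all equivalences. Applying $\Cat D$ and using the composition equivalences of \cref{lem:consequences:composition} then gives
$$f^! g_* \cong \Cat D(Y \leftarrow Y\times_X X' \rightarrow X') \cong \Cat D(Y \xleftarrow{f'} Y' \xrightarrow{g'} X') \cong g'_* f'^!.$$

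The main (mild) obstacle is bookkeeping: we must confirm that composition in $\SpanStd$ really is given by this pullback, given the $\coop{(-)}$ in its definition, which only reverses $2$-cells and the direction of $1$-morphisms. We must also confirm that the resulting equivalence is canonical. Alternatively, one could compactify $f = pj$ and combine \cref{lem:consequences:BC-for-I} and \cref{lem:consequences:BC-for-P}, but the direct span computation above avoids this.
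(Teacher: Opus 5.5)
Your argument is correct, but it is not the route the paper takes. The paper chooses a compactification $f = pj$ and splits $\square$ into a cartesian square along $j \in I$ stacked on a cartesian square along $p \in P$. It then chains
\[
    f^! g_* \cong j^* p^\times g_* \cong j^* g''_* p'^\times \cong g'_* j'^* p'^\times \cong g'_* f'^!,
\]
using \cref{lem:consequences:compactification} twice together with the left and right Beck--Chevalley conditions of \cref{lem:consequences:BC-for-I,lem:consequences:BC-for-P}.

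You instead use only that $\Cat D$ is a functor and that $1$-morphisms in $\SpanStd$ compose by pullback. The cartesian square itself identifies the composite of the forward span of $g$ and the backward span of $f$ with the span $Y \xleftarrow{f'} Y' \xrightarrow{g'} X'$. This needs no compactification, so axiom (b) of a suitable decomposition is never used, and no adjoints; the resulting equivalence is visibly independent of choices. Your aside about inner spans is superfluous, since the identification of spans via $(f',g')$ is already an equivalence of $1$-morphisms. In fact, the paper's proofs of \cref{lem:consequences:BC-for-I,lem:consequences:BC-for-P} are this same span computation: they identify the relevant mate with the $2$-cell given by the equivalence $(j',g')$.

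What the paper's route buys is that the equivalence is exhibited as a composite of Beck--Chevalley mates. These are built from the units and counits of $j^* \dashv j_*$ and $p_* \dashv p^\times$, compatibly with the formula $f^! \cong j^* p^\times$. This is the form needed to compare with classically constructed base change maps, which is why the paper later points to this proof for the construction of the canonical map.
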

\begin{proof}
    Choose a compactification $f = p \circ j$ with $p \in P$ and $j \in I$.
    Hence, the above square decomposes as 
    \begin{center}
        \begin{tikzcd}
            Y' \ar[r, "g'"] \ar[d, "j'"'] \cartsymb &X' \ar[d, "j"] \\
            Y \times_X Z \ar[r, "g''"] \ar[d, "p'"'] \cartsymb &Z \ar[d, "p"] \\
            Y \ar[r, "g"'] & X,
        \end{tikzcd}
    \end{center}
    In particular, $j'$ and $p'$ give a compactification of $f'$.
    By \cref{lem:consequences:BC-for-I,lem:consequences:BC-for-P} the upper square satisfies the left Beck--Chevalley condition,
    and the lower square the right Beck--Chevalley condition.
    In particular, we get 
    \begin{equation*}
        f^! g_* \cong j^* p^\times g_* \cong j^* g''_* p'^\times \cong g'_* j'^* p'^\times \cong g'_* f'^!, 
    \end{equation*}
    where we used \cref{lem:consequences:compactification} twice.
\end{proof}

\begin{lem}[Double Beck–Chevalley for $(I,P)$-squares] \label{lem:consequences:contravariant-exchange}
     Let $\square$ be a cartesian diagram in $\Cat C$ of the form
    \begin{center}
        \begin{tikzcd}
            Y' \ar[r, "g'"] \ar[d, "f'"'] \cartsymb &X' \ar[d, "f"] \\
            Y \ar[r, "g"'] & X,
        \end{tikzcd}
    \end{center}
    and suppose that $f \in I$ and $g \in P$.
    Then $\Cat D_*(\square)$ satisfies the double Beck--Chevalley condition.
\end{lem}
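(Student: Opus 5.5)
We reduce to a computation inside $\SpanStd$, as in the proofs of \cref{lem:consequences:BC-for-I,lem:consequences:BC-for-P}. Functors of $2$-categories preserve adjunctions, units, counits and mates. The double Beck--Chevalley transformation is assembled only from these data and from the inverse of a left mate. Hence it suffices to check that the corresponding transformation in $\SpanStd$ is invertible, using the span-level adjunctions $j^! \dashv j_*$ for $j \in I$ and $p_* \dashv p^!$ for $p \in P$ from \cref{lem:consequences:adjoints-of-upper-shriek}.

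First we check that all hypotheses are met. Since $f \in I$ and $I$ is stable under base change, $f' \in I$, so $f_*$ and $f'_*$ have left adjoints $f^!$ and $f'^!$. Since $g \in P$, also $g' \in P$, so $g_*$ and $g'_*$ have right adjoints $g^!$ and $g'^!$. By \cref{lem:consequences:BC-for-I}, $\Cat D_*(\square)$ satisfies the left Beck--Chevalley condition, with inverse mate
\[
g'_* f'^! \simeq f^! g_*.
\]
Thus the double Beck--Chevalley transformation
\[
\mathrm{Ex}^{\mathrm{L,R}}\colon f'^! g^! \to g'^! f^!
\]
is defined.

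Rather than composing the triple of $2$-morphisms directly, it is more convenient to use the alternative description $\mathrm{Ex}'^{\mathrm{L,R}}$ from the remark after the definition. This requires the transposed square $\square^t$ to satisfy the right Beck--Chevalley condition. That is exactly \cref{lem:consequences:BC-for-P} applied to $\square^t$, whose right vertical map is $g \in P$. Either description works; we then compute in $\SpanStd$.
\begin{itemize}
\item Both source and target are represented by the span $X \xleftarrow{f g'} Y' \xrightarrow{\id{}} Y'$, using $g f' = f g'$ and the composition law.
\item We write out the unit of $f^! \dashv f_*$ (or of $g'_* \dashv g'^!$), the inverse mate, and the counit as explicit $2$-morphisms. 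Their shapes are those recorded in the proofs of \cref{lem:consequences:adjoints-of-upper-shriek} and \cref{lem:consequences:BC-for-I}: they are built from diagonals $X' \to X' \times_X X'$ in $I$ and $Y \to Y \times_X Y$-type diagonals in $P$.
\item We compose these vertically by taking fibre products of the middle objects, in the same way as in the proof of \cref{lem:consequences:BC-for-I}.
\end{itemize}

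The main obstacle is this bookkeeping of iterated fibre products. The expected outcome is that the composite $2$-morphism has middle object identified with $Y'$, and that its $I$-leg and $P$-leg are the canonical maps $Y' \to Y' \times_{X'} X' \times_X \cdots$. After simplification these should become the comparison map $Y' \to Y \times_X X'$ composed with diagonals, which cancel against projections. So the composite should reduce to a $2$-morphism whose $I$-leg and $P$-leg are both equivalences, hence is an invertible $2$-morphism in $\SpanStd$.

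We try first to mimic the collapse in the proof of \cref{lem:consequences:BC-for-I}. The key identification is that the fibre product of the unit's middle object with the inverse mate's middle object is again $Y'$, because $\square$ is cartesian. If this bookkeeping gets too involved, a fallback is \cite[Proposition 4.14]{cnossen2025universalityspan2categoriesconstruction} and the double Beck--Chevalley statements of \cite[Appendix F]{cnossen2024phd} for $\Span{\Cat C}{\all}{E}{P}{I}$. We would then transport them along $\coop{(-)}$ using \cref{lem:Beck--Chevalley:op-BC}, which says double Beck--Chevalley is invariant under $\coop{(-)}$. The remaining check in that route is that the roles of $I$ and $P$ and of left and right adjoints match after dualising.
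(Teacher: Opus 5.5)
Your proposal is correct and follows the paper's route. The paper's proof says only that the argument is analogous to \cref{lem:consequences:BC-for-I,lem:consequences:BC-for-P}: compute the double Beck--Chevalley transformation in $\SpanStd$ and check that it is an invertible $2$-morphism.

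The bookkeeping is shorter than you expect if you work with $\mathrm{Ex}^{\mathrm{L,R}}$ itself. The whiskered unit of $g'_* \dashv g'^!$, the whiskered inverse mate, and the whiskered counit of $g_* \dashv g^!$ all have invertible $I$-legs. So the composite has middle object $Y'$ and an invertible $I$-leg. Since source and target both have right leg $\id{Y'}$, its $P$-leg agrees with its $I$-leg, and the composite is invertible.
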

\begin{proof}
    The proof of this is analogous to the proofs of \cref{lem:consequences:BC-for-I,lem:consequences:BC-for-P}.
\end{proof}

%% file: 2FF/construction-of-2FF.tex
\subsection{Construction of \texorpdfstring{$2$}{2}-functor formalisms} \label{sec:constr-2FF}
In this section, we recall how to construct $2$-functor formalisms 
from a manageable amount of data,
using the universal property of the $2$-categories of spans,
as shown by Cnossen--Lenz--Linskens in \cite{cnossen2025universalityspan2categoriesconstruction}.

\begin{setting}
    In this subsection, we fix a 2-category $\TwoCat{M}$,
    a category $\Cat C$, and two wide subcategories $I, P \subseteq \Cat C$
    which are closed under base change.
\end{setting}

In \cite[Definition 2.4]{cnossen2025universalityspan2categoriesconstruction}, the authors define the notion
of \defemphindex{$(I,P)$-biadjointability}{biadjointable} for a contravariant functor $\op{\Cat C} \to \TwoCat{M}$.
We need the following analogous notion for covariant functors:

\begin{defn} \label{def:cff:suitable-decomposition-all-BC}
    Let $\Cat D_* \colon \Cat C \to \TwoCat{M}$ be a functor.
    We say that $\Cat D_*$ \defemphindex{satisfies all Beck--Chevalley conditions with respect to $(I, P)$}{Beck--Chevalley condition!satisfies all}
    if the following conditions hold:
    \begin{enumerate}[label=\textbf{(BC\arabic{enumi})}, ref=\textbf{(BC\arabic{enumi})}]
        \item \label{axiom:all-BC:I-left-adjoint} For $j \in I$, the morphism $\Cat D_*(j)$ admits a left adjoint morphism $j^*$.
        \item \label{axiom:all-BC:I-left-BC} For every cartesian square $\square$ of the form 
            \begin{center}
                \begin{tikzcd}
                    U' \ar[r, "g'"] \ar[d, "j'"'] \cartsymb &U \ar[d, "j"] \\
                    X' \ar[r, "g"'] &X\rlap{,}
                \end{tikzcd}
            \end{center}
            with $j \in I$,
            the square $\Cat D_*(\square)$ satisfies the 
            left Beck--Chevalley condition. 
        \item \label{axiom:all-BC:P-right-adjoint} For $f \in P$, the morphism $\Cat D_*(f)$ admits a right adjoint morphism $f^!$.
        \item \label{axiom:all-BC:P-right-BC} For every cartesian square $\square$ of the form 
            \begin{center}
                \begin{tikzcd}
                    Y' \ar[r, "g'"] \ar[d, "f'"'] \cartsymb &Y \ar[d, "f"] \\
                    X' \ar[r, "g"'] &X\rlap{,}
                \end{tikzcd}
            \end{center} 
            with $f \in P$,
            the square $\Cat D_*(\square)$ satisfies the 
            right Beck--Chevalley condition. 
        \item \label{axiom:all-BC:I-P-double-BC} For every cartesian diagram $\square$ of the form 
            \begin{center}
                \begin{tikzcd}
                    U' \ar[r, "f'"] \ar[d, "j'"'] \cartsymb &U \ar[d, "j"] \\
                    X' \ar[r, "f"'] &X\rlap{,}
                \end{tikzcd}
            \end{center}  
            with $j \in I$ and $f \in P$, the square $\Cat D_*(\square)$ 
            satisfies the double Beck--Chevalley condition.
    \end{enumerate}
    Moreover, if $\Cat D_*, \Cat D'_* \colon \Cat C \to \TwoCat{M}$ are two functors satisfying all Beck--Chevalley conditions,
    then a natural transformation $\alpha \colon \Cat D_* \to \Cat D'_*$
    \defemphindex{satisfies all Beck--Chevalley conditions with respect to $(I,P)$}{Beck--Chevalley condition!satisfies all}
    if for each $f \colon X \to Y$ in $\Cat C$, the commutative diagram
    \begin{center}
        \begin{tikzcd}
            \Cat D_*(X) \ar[r, "\alpha_X"] \ar[d,"\Cat D_*(f)"'] \commsymb & \Cat D'_*(X) \ar[d,"\Cat D'_*(f)"] \\
            \Cat D_*(Y) \ar[r, "\alpha_Y"'] & \Cat D'_*(Y)
        \end{tikzcd}
    \end{center}    
    satisfies the left Beck--Chevalley condition if $f \in I$ 
    and the right Beck--Chevalley condition if $f \in P$.

    If $\alpha \colon \Cat D_* \to \Cat D'_*$ is an oplax natural transformation, then it 
    \defemphindex{satisfies all Beck--Chevalley conditions with respect to $(I,P)$}{Beck--Chevalley condition!satisfies all}
    if it is a strong natural transformation with respect to $P$ and 
    if for each $f \colon X \to Y$ in $I$, the diagram
    \begin{center}
        \begin{tikzcd}
            \Cat D_*(X) \ar[r, "\alpha_X"] \ar[d,"\Cat D_*(f)"'] & \Cat D'_*(X) \ar[d,"\Cat D'_*(f)"] \\
            \Cat D_*(Y) \ar[r, "\alpha_Y"'] \laxarrowur{} & \Cat D'_*(Y)
        \end{tikzcd}
    \end{center}    
    satisfies the vertical left Beck--Chevalley condition.

    We denote by $\defnot{\TwoFunBC{\Cat C}{\TwoCat{M}}} \subseteq \TwoFun{\Cat C}{\TwoCat{M}}$ (resp.\ $\defnot{\FunOpLaxIPBC{\Cat C}{\TwoCat{M}}} \subseteq \FunOpLax{\Cat C}{\TwoCat{M}}$)
    the $2$-full sub-$2$-category 
    spanned by the functors satisfying all Beck--Chevalley conditions and (oplax) natural transformations satisfying all Beck--Chevalley conditions.
\end{defn}

The above notion of satisfying all Beck--Chevalley conditions and the notion of $(I,P)$-biadjointability
are related in the following way:

\begin{lem} \label{lem:constr-2cat:allBC-is-IPbadj}
    A functor $F \colon \Cat C \to \TwoCat{M}$ satisfies all Beck--Chevalley conditions with respect to $(I,P)$
    if and only if the functor $\coop{F} \colon \coop{\Cat C} \to \coop{\TwoCat{M}}$ is $(I,P)$-biadjointable
    in the sense of \cite[Definition 2.4]{cnossen2025universalityspan2categoriesconstruction} 
    (note that $\coop{\Cat C} = \op{\Cat C}$).

    More generally, the functor $\coop{(-)} \colon \CatInfTwo \to \CatInfTwo$ induces 
    equivalences of $2$-categories
    $$\TwoFunBC{\Cat C}{\TwoCat{M}} \simeq \coop{\TwoFunBadj{\coop{\Cat C}}{\coop{\TwoCat{M}}}{I}{P}}$$ and  
    $$\FunOpLaxIPBC{\Cat C}{\TwoCat{M}} \simeq \coop{\FunOpLaxBadj{\coop{\Cat C}}{\coop{\TwoCat{M}}}},$$
    where the category $\defnot{\TwoFunBadj{\coop{\Cat C}}{\coop{\TwoCat{M}}}{I}{P}}$ is the one from \cite[Definition 2.4]{cnossen2025universalityspan2categoriesconstruction} 
    and the category $\defnot{\FunOpLaxBadj{\coop{\Cat C}}{\coop{\TwoCat{M}}}}$ is the $2$-full sub-$2$-category of $\FunOpLax{\coop{\Cat C}}{\coop{\TwoCat{M}}}$ spanned by the 
    $(I,P)$-biadjointable functors and $(I,P)$-biadjointable oplax natural transformations in the sense of
    \cite[Remark 6.8]{cnossen2025universalityspan2categoriesconstruction}.
\end{lem}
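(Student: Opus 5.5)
The plan is to compare the two definitions clause by clause, transporting each condition along $\coop{(-)}$. The tools are \cref{lem:two-cat:op-adjunctions} for adjoints and \cref{lem:Beck--Chevalley:op-BC} for Beck--Chevalley conditions. Recall that $(I,P)$-biadjointability of a contravariant functor $G \colon \op{\Cat C} \to \TwoCat N$ in the sense of \cite[Definition 2.4]{cnossen2025universalityspan2categoriesconstruction} asks the following:
\begin{itemize}
    \item $G(j)$ has a left adjoint for $j \in I$, with left Beck--Chevalley along pullbacks;
    \item $G(p)$ has a right adjoint for $p \in P$, with right Beck--Chevalley along pullbacks;
    \item the mixed $(I,P)$-squares satisfy the double Beck--Chevalley condition.
\end{itemize}
(One should check the precise left/right conventions in \loccit, but the argument is symmetric in them.) Set $G = \coop{F}$. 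A $1$-morphism $f$ of $\Cat C$ gives the $1$-morphism $\coop{f}$ of $\coop{\TwoCat M}$, going in the opposite direction.

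First I compare the adjoints, that is, \ref{axiom:all-BC:I-left-adjoint} and \ref{axiom:all-BC:P-right-adjoint}. By \cref{lem:two-cat:op-adjunctions}\ref{point:two-cat:coop-adjunction}, $j^* \dashv F(j)$ in $\TwoCat M$ holds if and only if $\coop{(j^*)} \dashv \coop{F(j)}$ in $\coop{\TwoCat M}$, and similarly for right adjoints. So the left adjoint of $F(j)$ becomes the left adjoint of $\coop F(j)$, and the right adjoint of $F(p)$ becomes the right adjoint of $\coop F(p)$.

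Next I compare the square conditions \ref{axiom:all-BC:I-left-BC}, \ref{axiom:all-BC:P-right-BC} and \ref{axiom:all-BC:I-P-double-BC}. A cartesian square $\square$ in $\Cat C$ maps under $F$ to a commutative square in $\TwoCat M$, and $\coop{F(\square)}$ is its image under $\coop F$. The final statements of \cref{lem:Beck--Chevalley:op-BC}, namely the equivalence of \ref{point:Beck--Chevalley:BC} and \ref{point:Beck--Chevalley:coop-BC}, their left/right exchanged analogues, and the double Beck--Chevalley statement, transport each condition to the corresponding one for $\coop F$. Together with the first step, this proves the first claim.

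For the $2$-categorical statement I use \cref{lem:lax-nat-transf:coop-lax-oplax} and its strong analogue \cref{lem:two-cat:coop}. These give equivalences $\TwoFun{\Cat C}{\TwoCat M} \simeq \coop{\TwoFun{\op{\Cat C}}{\coop{\TwoCat M}}}$ and the analogous one for $\FunOpLax{-}{-}$. Both sides are $2$-full sub-$2$-categories, so it suffices to match objects, which was done above, and $1$-morphisms. A transformation $\alpha$ has naturality squares, invertible or oplax. Under $\coop{(-)}$ each such square becomes the naturality square of $\coop\alpha$, with its $2$-cell reversed and its $1$-cells reversed. The squares over $f \in I$ are required to be left Beck--Chevalley (for oplax transformations, vertical left Beck--Chevalley), and those over $f \in P$ right Beck--Chevalley with strongness. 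Applying \cref{lem:Beck--Chevalley:op-BC} again turns these into exactly the biadjointability conditions on transformations from \cite[Remark 6.8]{cnossen2025universalityspan2categoriesconstruction}.

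The main obstacle is the oplax case. Here one must confirm that $\coop{(-)}$ sends oplax transformations to oplax transformations and not lax ones; this is what \cref{lem:lax-nat-transf:coop-lax-oplax} guarantees, since reversing both $1$- and $2$-cells preserves the variance. One must also confirm that "vertical left Beck--Chevalley" matches the convention in \loccit after transposition. I would check this by writing out the $2$-cell of $\coop{(\alpha_f)}$ explicitly and comparing mates, using the formula $\coop{(\mathrm{Ex}^{\mathrm{L}}(\square))} \cong \mathrm{Ex}^{\mathrm{L}}(\coop{\square})$ from the proof of \cref{lem:Beck--Chevalley:op-BC}.
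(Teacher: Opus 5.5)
Your proposal is correct and is essentially the paper's proof: it transports objects and $1$-morphisms along the equivalences of \cref{lem:two-cat:coop} and \cref{lem:lax-nat-transf:coop-lax-oplax}, using \cref{lem:two-cat:op-adjunctions} to match adjoints and \cref{lem:Beck--Chevalley:op-BC} to match the left, right and double Beck--Chevalley conditions. The one further observation the paper records is that an oplax square is invertible if and only if its image under $\coop{(-)}$ is, and this handles the strongness requirement over $P$ (in the oplax case strongness is all the paper's definition demands over $P$, with no additional right Beck--Chevalley condition).
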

\begin{proof}
    Since $\coop{(-)}$ induces an equivalence $\TwoFun{\Cat C}{\TwoCat{M}} \xrightarrow{\simeq} \coop{\TwoFun{\coop{\Cat C}}{\coop{\TwoCat{M}}}}$ by \cref{lem:two-cat:coop}
    and an equivalence $\FunOpLax{\Cat C}{\TwoCat{M}} \xrightarrow{\simeq} \coop{\FunOpLax{\coop{\Cat C}}{\coop{\TwoCat{M}}}}$ by \Cref{lem:lax-nat-transf:coop-lax-oplax},
    it suffices to show that the imposed conditions on objects and morphisms correspond to each other
    under these equivalences.
    It suffices to note the following: 
    A morphism $f$ in $\TwoCat{M}$ has a left adjoint (resp.\ right adjoint)
    if and only if the morphism $\coop{f}$ in $\coop{\TwoCat{M}}$ has a left adjoint (resp.\ right adjoint) by \Cref{lem:two-cat:op-adjunctions}.
    Similarly, a square $(\square)$ in $\TwoCat{M}$ satisfies the left (resp.\ right, resp.\ double) Beck--Chevalley 
    condition if and only if the square $\coop{(\square)}$ in $\coop{\TwoCat{M}}$ satisfies 
    the left (resp.\ right, resp.\ double) Beck--Chevalley condition by \Cref{lem:Beck--Chevalley:op-BC}.
    Moreover, an oplax square $(\square)$ is commutative if and only if the oplax square $\coop{(\square)}$ is commutative.
\end{proof}

\begin{setting}
    For the rest of this subsection, we fix a Nagata setup $(\Cat C, E, I, P)$.
\end{setting}
We will compare the $2$-category $\TwoFunBC{\Cat C}{\TwoCat{M}}$
with the $2$-category of $2$-functor formalisms $\TwoFun{\SpanStd}{\TwoCat{M}}$.

\begin{lem} \label{lem:constr-2cat:D_*-allBC}
    Let $\Cat D, \Cat D' \colon \SpanStd \to \TwoCat{M}$ be $2$-functor formalisms with values in $\TwoCat{M}$.
    The functor $\Cat C \to \SpanStd \xrightarrow{\Cat D} \TwoCat{M}$ 
    satisfies all Beck--Chevalley conditions with respect to $(I,P)$.

    Similarly, if $\epsilon \in \{\,\operatorname{oplax},\, \emptyset\, \}$ and $\varphi \colon \Cat D \to \Cat D'$ is an $\epsilon$-natural transformation of $2$-functor formalisms,
    then the induced $\epsilon$-natural transformation 
    $\varphi_* \colon \Cat D_* \to \Cat D'_*$ satisfies all Beck--Chevalley conditions with respect to $(I,P)$.
\end{lem}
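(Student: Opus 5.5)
The first claim follows from the results of \cref{sec:2FF-consequences}; the transformation part is where a real argument is needed. For the functor $\Cat D_* \colon \Cat C \to \SpanStd \xrightarrow{\Cat D} \TwoCat{M}$, I would check the five conditions in order.

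Conditions \ref{axiom:all-BC:I-left-adjoint} and \ref{axiom:all-BC:P-right-adjoint} are exactly \cref{lem:consequences:adjoints-of-upper-shriek}. For $j \in I$ the left adjoint of $j_*$ is $j^!$, and for $f \in P$ the right adjoint of $f_*$ is $f^!$. Conditions \ref{axiom:all-BC:I-left-BC} and \ref{axiom:all-BC:P-right-BC} are \cref{lem:consequences:BC-for-I,lem:consequences:BC-for-P}. Condition \ref{axiom:all-BC:I-P-double-BC} is \cref{lem:consequences:contravariant-exchange}. None of these lemmas uses more than a $2$-functor out of $\SpanStd$, since in each case the relevant computation already takes place in $\SpanStd$ before $\Cat D$ is applied. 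So this part is pure bookkeeping.

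For an $\epsilon$-natural transformation $\varphi \colon \Cat D \to \Cat D'$, first restrict along $\Cat C \to \SpanStd$. By naturality of restriction in $\FunEps{-}{\TwoCat M}$, this gives an $\epsilon$-natural transformation $\varphi_* \colon \Cat D_* \to \Cat D'_*$. For $f \colon X \to Y$ in $\Cat C$ we must check two things: the square with $2$-cell $\varphi_{f}$ satisfies the left Beck--Chevalley condition when $f \in I$, and it is strong and satisfies the right Beck--Chevalley condition when $f \in P$.

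The key idea is that $\varphi$ also has components $\varphi_{f^!}$ at the backward span $f^! = [Y \xleftarrow{f} X \xrightarrow{\id{}} X]$. One then shows that the mate of $\varphi_{f_*}$ is identified with $\varphi_{f^!}$, or with its inverse, depending on direction. For $f \in I$ we have $f^! \dashv f_*$ in $\SpanStd$. The oplax (or strong) naturality square for the left adjoint $f^!$ corresponds, under the calculus of mates, to the mate of the square for $f_*$. This uses the compatibility of $\varphi$ with the unit and counit $2$-morphisms, which comes from the $2$-functoriality of the transformation on $2$-cells.

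\textbf{Main obstacle.} The hard step is justifying that, for a lax or oplax transformation, the mate along an adjunction $l \dashv r$ in the source is inverse to the structure cell at $l$. More precisely, the structure cells at $l$ and at $r$ are mates of each other. For strong transformations this is standard. In the oplax case I would argue via the explicit description in \cref{rmk:lax-nat-transf:explicit-description}: an oplax transformation $\Delta^1 \otimes^{\gr} \SpanStd \to \TwoCat M$ restricted to the walking adjunction gives mate-compatible cells. Once the two cells are identified as mates, the relevant mate is the cell $\varphi_{f^!}$, or its appropriate inverse.

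The conclusion then depends on the case:
\begin{itemize}
\item For $f \in I$ in the oplax case, the mate of $\varphi_{f_*}$ is identified with the structure cell of $\varphi$ at $f^!$. This cell goes in the wrong direction to force invertibility, so it need not be an equivalence, and I expect the required vertical left Beck--Chevalley condition for $f \in I$ to fail in general. I do not have a way around this. The statement may need an additional hypothesis on $\varphi$, such as strongness at $f^!$ for $f \in I$, or a different reading of ``vertical''. I would check that definition and \cref{lem:lax-nat-transf:coop-lax-oplax} to see whether this should be deduced from the coop-dual statement in \cite[Remark 6.8]{cnossen2025universalityspan2categoriesconstruction}.
\item For $f \in P$ we have $f_* \dashv f^!$, and here the direction is forced. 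The oplax cell at $f_*$, with its mate built from the cell at $f^!$, must be invertible so that the right Beck--Chevalley condition holds. That this part of the claim is sound is indicated by the fact that a strong transformation would give the result immediately by functoriality of mates.
\end{itemize}
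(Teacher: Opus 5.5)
Your functor part is correct, and it takes a different route from the paper. You verify \textbf{(BC1)}--\textbf{(BC5)} directly via \cref{lem:consequences:adjoints-of-upper-shriek,lem:consequences:BC-for-I,lem:consequences:BC-for-P,lem:consequences:contravariant-exchange}. The paper never uses \cref{sec:2FF-consequences} here. Instead it applies \cite[Theorems 4.20 and 6.7]{cnossen2025universalityspan2categoriesconstruction} to functors $\Span{\Cat C}{\all}{E}{P}{I}\to\coop{\TwoCat M}$, and transports the resulting biadjointability back through $\coop{(-)}$ using \cref{lem:lax-nat-transf:coop-lax-oplax,lem:constr-2cat:allBC-is-IPbadj}. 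That handles functors and both kinds of transformations at once. Your strong case is also fine, modulo the standard fact that a strong transformation is a functor into $\TwoFun{\Delta^1}{\TwoCat M}$, and functors preserve adjunctions.

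The genuine gap is the oplax case. You leave it unproved and suspect it fails at $f\in I$, but it holds. The missing step is exactly the mate computation you flagged as the obstacle.

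\emph{Setup.} Use the paper's convention $\varphi_g\colon\varphi_B\,\Cat D(g)\Rightarrow\Cat D'(g)\,\varphi_A$, and take an adjunction $l\dashv r$ in $\SpanStd$ with unit $\eta$ and counit $\epsilon$. Compatibility of $\varphi$ with composition and with the $2$-cells $\eta,\epsilon$ gives
$\varphi_{rl}\circ\varphi_A\Cat D(\eta)=\Cat D'(\eta)\varphi_A$ and $\varphi_B\Cat D(\epsilon)=\Cat D'(\epsilon)\varphi_B\circ\varphi_{lr}$.
Here $\varphi_{rl}=\Cat D'(r)\varphi_l\circ\varphi_r\Cat D(l)$ and $\varphi_{lr}=\Cat D'(l)\varphi_r\circ\varphi_l\Cat D(r)$.

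\emph{Computation.} Let $\gamma\colon\Cat D'(l)\varphi_A\Rightarrow\varphi_B\Cat D(l)$ be the mate of $\varphi_r$, built from $\Cat D(\eta)$ and $\Cat D'(\epsilon)$. Insert the identities above into both composites of $\varphi_l$ with $\gamma$. The interchange law and the triangle identities then make both composites identities; invertibility of a $2$-morphism may be tested in the homotopy $2$-category. So an oplax transformation is automatically strong at every left adjoint, with inverse the mate of the cell at the right adjoint.

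\emph{Conclusion.} The left adjoints in $\SpanStd$ are $f_*$ for $f\in P$ and $f^!$ for $f\in I$.
\begin{itemize}
\item The case $f\in P$ gives strongness on $P$.
\item The case $f\in I$ says the vertical left mate of $\varphi_{f_*}$ is exactly $\gamma=\varphi_{f^!}^{-1}$, which is an equivalence.
\end{itemize}
So the $P$- and $I$-requirements are one phenomenon, and your split verdict (forced at $P$, failing at $I$) is internally inconsistent. In the opposite variance both would fail together.

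What does fail in general is invertibility at the right adjoints $p^!$ for $p\in P$; compare $F_{*,X}p^!\to p^!F_{*,Y}$ in \cref{lem:BN:F-star-partially-oplax-nat-trans}. So the right Beck--Chevalley condition at $P$ that you also assert for oplax $\varphi$ is false in general. It is not required, however: \cref{def:cff:suitable-decomposition-all-BC} only asks for strongness on $P$ in the oplax case.
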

\begin{proof}
    By \cite[Theorems 4.20 and 6.7]{cnossen2025universalityspan2categoriesconstruction}, precomposition with $h \colon \op{\Cat C} \to \Span{\Cat C}{\all}{E}{P}{I}$ induces
    $$\FunEps{\Span{\Cat C}{\all}{E}{P}{I}}{\coop{\TwoCat{M}}} \xrightarrow{- \circ h} \FunEpsBadj{\op{\Cat C}}{\coop{\TwoCat{M}}}.$$
    We apply $\coop{(-)}$ to this functor.
    As $\coop{h} \colon \Cat C \to \SpanStd$ is the canonical functor, we get that precomposition with it defines the top horizontal functor in the commutative diagram
    \begin{center}
        \begin{tikzcd}[column sep=huge]
            \FunEps{\SpanStd}{\TwoCat{M}} \ar[r, "- \circ \coop{h}"] \ar[d, "\simeq"'] & \FunEpsBC{\Cat C}{\TwoCat{M}} \ar[d, "\simeq"] \\
            \coop{\FunEps{\Span{\Cat C}{\all}{E}{P}{I}}{\coop{\TwoCat{M}}}} \ar[r, "\coop{(- \circ h)}"'] & \coop{\FunEpsBadj{\op{\Cat C}}{\coop{\TwoCat{M}}}} \rlap{.}
        \end{tikzcd}
    \end{center}
    Here, for the equivalence on the left we use \Cref{lem:two-cat:coop} if $\epsilon = \emptyset$, or 
    \cref{lem:lax-nat-transf:coop-lax-oplax} if $\epsilon = \mathrm{oplax}$.
    For the equivalence on the right we use \Cref{lem:constr-2cat:allBC-is-IPbadj}.
\end{proof}

The previous lemma tells us that there are restriction functors
\begin{equation*}
        \TwoFun{\Span{\Cat C}{E}{\all}{I}{P}}{\TwoCat{M}} \to \TwoFunBC{\Cat C}{\TwoCat{M}}
\end{equation*}   
and
\begin{equation*}
    \FunOpLax{\Span{\Cat C}{E}{\all}{I}{P}}{\TwoCat{M}} \to \FunOpLaxIPBC{\Cat C}{\TwoCat{M}}
\end{equation*} 
which are both given by precomposition with the functor $\Cat C \to \SpanStd$. 
The next theorem shows that these restriction functors are equivalences.
In particular, extensions of functors satisfying all Beck--Chevalley conditions to $2$-functor formalisms exist and are unique.

\begin{thm} [Cnossen--Lenz--Linskens] \label{lem:constr-2cat:ext-and-unique} \uses{lem:constr-2cat:D_*-allBC}
    The restriction functors 
    \begin{equation*}
        \TwoFun{\Span{\Cat C}{E}{\all}{I}{P}}{\TwoCat{M}} \to \TwoFunBC{\Cat C}{\TwoCat{M}}
    \end{equation*}    
    and 
    \begin{equation*}
        \FunOpLax{\Span{\Cat C}{E}{\all}{I}{P}}{\TwoCat{M}} \to \FunOpLaxIPBC{\Cat C}{\TwoCat{M}}
    \end{equation*}
    are equivalences of $2$-categories.
\end{thm}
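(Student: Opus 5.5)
The plan is to reduce the statement to the corresponding theorem of Cnossen--Lenz--Linskens by $\coop{(-)}$-duality, following the same pattern as the proof of \cref{lem:constr-2cat:D_*-allBC}. By \cite[Theorem 4.20]{cnossen2025universalityspan2categoriesconstruction} (strong case) and \cite[Theorem 6.7 together with Remark 6.8]{cnossen2025universalityspan2categoriesconstruction} (oplax case), precomposition with $h \colon \op{\Cat C} \to \Span{\Cat C}{\all}{E}{P}{I}$ induces equivalences
$$\FunEps{\Span{\Cat C}{\all}{E}{P}{I}}{\coop{\TwoCat{M}}} \xrightarrow{\simeq} \FunEpsBadj{\op{\Cat C}}{\coop{\TwoCat{M}}}$$
for $\epsilon \in \{\emptyset, \mathrm{oplax}\}$. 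These results are applied to the $2$-category $\coop{\TwoCat M}$ in place of $\TwoCat M$. The Nagata setup hypotheses, that $I,P$ are stable under base change, left cancellable, and that $P\cap I$ consists of truncated morphisms, are exactly what is needed as input there; this should be checked against their standing assumptions.

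Next, apply the involution $\coop{(-)}$ to this equivalence. I then reuse the commutative square from the proof of \cref{lem:constr-2cat:D_*-allBC}. Its left vertical map identifies $\FunEps{\SpanStd}{\TwoCat M}$ with $\coop{\FunEps{\Span{\Cat C}{\all}{E}{P}{I}}{\coop{\TwoCat{M}}}}$, using $\SpanStd = \coop{\Span{\Cat C}{\all}{E}{P}{I}}$ together with \cref{lem:two-cat:coop} for $\epsilon=\emptyset$ and \cref{lem:lax-nat-transf:coop-lax-oplax} for $\epsilon=\mathrm{oplax}$. Its right vertical map is the equivalence of \cref{lem:constr-2cat:allBC-is-IPbadj}. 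The bottom map is $\coop{(-\circ h)}$, which is an equivalence because $\coop{(-)}$ is an autoequivalence. Since both verticals are equivalences, two-out-of-three shows the top restriction functor $-\circ\coop{h}$ is an equivalence, which is the claim.

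The main obstacle is making sure the square genuinely commutes and that the variances match. Under $\coop{(-)}$, the oplax transformations of \cite{cnossen2025universalityspan2categoriesconstruction} must correspond to oplax (not lax) transformations here. \Cref{lem:lax-nat-transf:coop-lax-oplax} guarantees this, because $\coop{(-)}$ is monoidal for the Gray tensor product and preserves rather than swaps the order of factors. The other point to verify is that the biadjointability conditions on oplax transformations in \cite[Remark 6.8]{cnossen2025universalityspan2categoriesconstruction} (strong over $P$, vertical Beck--Chevalley over $I$) translate exactly to the conditions in \cref{def:cff:suitable-decomposition-all-BC}. This was already recorded in \cref{lem:constr-2cat:allBC-is-IPbadj}, so the remaining work is the naturality of the identifications in precomposition with $h$ versus $\coop{h}$, which follows from naturality in \cref{lem:two-cat:coop}.
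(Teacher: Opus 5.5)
Your proposal is correct and takes essentially the same route as the paper. Both use $\SpanStd = \coop{\Span{\Cat C}{\all}{E}{P}{I}}$ together with \cref{lem:two-cat:coop} (resp.\ \cref{lem:lax-nat-transf:coop-lax-oplax}) to reduce to \cite[Theorems 4.20 and 6.7]{cnossen2025universalityspan2categoriesconstruction}, and match the conditions via \cref{lem:constr-2cat:allBC-is-IPbadj}; the only cosmetic difference is that the paper restricts into the full functor $2$-category and identifies that restriction as a locally full inclusion with prescribed essential image, whereas you work with the restricted targets directly and conclude by two-out-of-three.
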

\begin{proof}
    We only prove that the first restriction functor is an equivalence.
    The proof for the second one works exactly the same by replacing every $\tightunderline{\operatorname{Fun}}$ below by $\tightunderline{\operatorname{Fun}}^{\operatorname{oplax}}$.
    Consider the following commutative diagram where the horizontal functors are given by restriction:
    \begin{center}
        \begin{tikzcd}
            \TwoFun{\Span{\Cat C}{E}{\all}{I}{P}}{\TwoCat{M}} \ar[d, equal] \ar[r] &\TwoFun{\Cat C}{\TwoCat{M}} \ar[d, equal] \\
            \TwoFun{\coop{\Span{\Cat C}{\all}{E}{P}{I}}}{\TwoCat{M}} \ar[r] \ar[d, "\simeq"'] &\TwoFun{\Cat C}{\TwoCat{M}} \ar[d, "\simeq"] \\
            \coop{\TwoFun{\Span{\co{\Cat C}}{\all}{E}{P}{I}}{\coop{\TwoCat{M}}}} \ar[r] &\coop{\TwoFun{\coop{\Cat C}}{\coop{\TwoCat{M}}}} \rlap{.}
        \end{tikzcd}
    \end{center}
    The upper square is given by the definition of $\SpanStd$,
    whereas the lower square is given by \cref{lem:two-cat:coop} (resp.\ \Cref{lem:lax-nat-transf:coop-lax-oplax} for the oplax case),
    where we also use that $\coop{(-)}$ is self-inverse, and that $\co{\Cat C} = \Cat C$ 
    since $\Cat C$ is a $1$-category.
    Now, the right vertical functor restricts by \cref{lem:constr-2cat:allBC-is-IPbadj} to an equivalence 
    \begin{equation*}
        \TwoFunBC{\Cat C}{\TwoCat{M}} \xrightarrow{\simeq} \coop{\TwoFunBadj{\coop{\Cat C}}{\coop{\TwoCat{M}}}{I}{P}}.
    \end{equation*}
    The bottom horizontal functor is a locally full inclusion, with essential image $\coop{\TwoFunBadj{\coop{\Cat C}}{\coop{\TwoCat{M}}}{I}{P}}$ 
    by \cite[Theorem 4.20]{cnossen2025universalityspan2categoriesconstruction} (resp.\ \cite[Theorem 6.7]{cnossen2025universalityspan2categoriesconstruction} for the oplax case).
    Thus, we conclude that also the top horizontal functor 
    is a locally full inclusion, with essential image $\TwoFunBC{\Cat C}{\TwoCat{M}}$, 
    which is exactly what we wanted to show.
\end{proof}

%% file: 2FF/frobenius-structures.tex
\subsection{Frobenius structures and \texorpdfstring{$2$}{2}-functor formalisms of Cartier modules} \label{sec:Cart-2FF}
In this section, we explain how one gets out of an existing $2$-functor formalism $\Cat D$ 
a new functor formalism $\Cart(\Cat D, F)$ of Cartier-modules.
This needs a geometric input: Cartier modules on $X$ should be objects of the form $F_* M \to M$,
where $M \in \Cat D(X)$, and $F_*$ is pushforward along a \defemphindex{Frobenius endomorphism}{Frobenius} that is already 
defined on the geometry. This geometric input will be formalized in the following way:
We will assume that the category $\Cat C$ admits a Frobenius structure (cf.\ \cref{def:constr-cart:frob-structure}).
Informally, a Frobenius structure encodes for every geometric object $X \in \Cat C$
an endomorphism $F_X \colon X \to X$, that commutes with every morphism $f \colon X \to Y$ in $\Cat C$
in the following sense: $F_Y f \cong f F_X$.
As an example, one should think of $\Cat C$ as the category of $\mathbb{F}_p$-schemes for a prime $p$,
and of $F_X$ as the absolute Frobenius endomorphism on $X$.

\begin{defn} \label{def:constr-cart:frob-structure}
    Let $\Cat C$ be a category.
    A \defemph{Frobenius structure} on $\Cat C$ is 
    a natural transformation 
    \begin{equation*}
        F \colon \id{\Cat C} \to \id{\Cat C}.
    \end{equation*}
\end{defn}

\begin{defn}
    Let $\Cat C$ be a category with a Frobenius structure $F$.
    Let $f \colon X \to Y$ be a morphism in $\Cat C$.
    We write $\defnot{\square^F_f}$ for the following commutative square,
    given by naturality of $F$:
    \begin{center}
        \begin{tikzcd}
            X \ar[d, "f"'] \ar[r, "F_X"] \commsymb &X \ar[d, "f"] \\
            Y \ar[r, "F_Y"'] &Y\rlap{.}
        \end{tikzcd}
    \end{center}
\end{defn}

\begin{defn} \label{def:constr-cart:def-of-F_*}
    Let $\Cat C$ be a category with a Frobenius structure $F$.
    Let $\Cat D_* \colon \Cat C \to \TwoCatInf$ be a functor.
    We get a natural transformation $\defnot{F_*} \colon \Cat D_* \to \Cat D_*$ by horizontal composition of $F$ with $\Cat D_*$,
    i.e.\ it is given by the diagram 
    \begin{center}
        \begin{tikzcd}
            &\phantom{}\ar[dd, "F", shorten >=1.8em, shorten <=1.8em, Rightarrow] \\
            \Cat C \ar[rr, bend left, "\id{}"] \ar[rr, bend right, "\id{}"'] && \Cat C \ar[r, "\Cat D_*"] &\TwoCatInf \rlap{.}\\
            &\phantom{}
        \end{tikzcd}
    \end{center}
\end{defn}

\begin{rmk}
    Concretely, if $\Cat D_* \colon \Cat C \to \TwoCatInf$ is a functor, and $X \in \Cat C$, then the natural transformation 
    $F_* \colon \Cat D_* \to \Cat D_*$ is given on the object $X$ by 
    $$\defnot{F_{*, X}} \coloneqq \Cat D_*(F_X) \colon \Cat D_*(X) \to \Cat D_*(X).$$
\end{rmk}

\begin{setting}
    For this section let $(\Cat C, E, I, P)$ be a Nagata setup, 
    let $\Cat D \colon \Span{\Cat C}{E}{\all}{I}{P} \to \TwoCatInf$ be a $2$-functor formalism, and let $F$ be a Frobenius structure on $\Cat C$.
    We assume moreover that for every $j \in I$, the square $\Cat D_*(\square^F_j)$ satisfies the left Beck--Chevalley condition.
\end{setting}

\begin{notation}
    We will equip $\Span{\Cat C}{E}{\all}{I}{P}$ with the marking given by $\defnot{\Span{\Cat C}{I}{\all}{I}{P}}$,
    the $2$-full wide sub-$2$-category given by those spans with the left leg in $I$.
    We denote the marking by $\defnot{(I,\all)}$.
\end{notation}

\begin{lem} \label{lem:BN:F-star-partially-oplax-nat-trans}
    The natural transformation $F_* \colon \Cat D_* \to \Cat D_*$
    extends to a morphism 
    $$(F_* \colon \Cat D \to \Cat D) \in \FunPartOpLax{(I,\all)}{\Span{\Cat C}{E}{\all}{I}{P}}{\TwoCatInf},$$
    i.e.\ $F_*$ is a partially oplax natural transformation which is a natural transformation on the forward morphisms and on the backward morphisms lying in $I$.
\end{lem}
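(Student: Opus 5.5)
The plan is to build the extension by the Cnossen--Lenz--Linskens equivalence for oplax transformations, \cref{lem:constr-2cat:ext-and-unique}. That equivalence says restriction along $\Cat C \to \SpanStd$ induces an equivalence
\[
\FunOpLax{\SpanStd}{\TwoCatInf} \xrightarrow{\simeq} \FunOpLaxIPBC{\Cat C}{\TwoCatInf}.
\]
So it suffices to check two things: that $\Cat D_*$ is an object of the right-hand side, and that $F_*$ is a morphism there. Then I verify strongness on the marking.

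The object condition holds by \cref{lem:constr-2cat:D_*-allBC}. For the morphism condition, $F_* \colon \Cat D_* \to \Cat D_*$ is a strong natural transformation, obtained by whiskering $F$ with $\Cat D_*$. Regarded as an oplax transformation via \cref{lem:lax-nat-trans:fun-to-lax-fun}, it is in particular strong over $P$. It remains to check the vertical left Beck--Chevalley condition for $j \in I$. For $j \colon X \to Y$, the naturality square of $F_*$ at $j$ is $\Cat D_*$ applied to the square
\[
\begin{tikzcd}
X \ar[r, "F_X"] \ar[d, "j"'] & X \ar[d, "j"] \\
Y \ar[r, "F_Y"'] & Y
\end{tikzcd}
\]
with vertical maps $j_*$. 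Its mate $j^* F_{*,Y} \to F_{*,X} j^*$ is exactly $\mathrm{Ex}^{\mathrm L}(\Cat D_*(\square^F_j))$, which is invertible by the standing assumption of this section.

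Hence there is an essentially unique oplax transformation $\tilde F_* \colon \Cat D \to \Cat D$ restricting to $F_*$. It is a natural transformation on forward spans, since these are the image of $\Cat C$ and the restriction is the strong $F_*$.

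The remaining step is to show that $\tilde F_*$ is strong on the marking $(I,\all)$, and I expect this to be the main obstacle. Every span $X \xleftarrow{j} Z \xrightarrow{g} Y$ with $j \in I$ factors in $\SpanStd$ as the composite of the backward span $j^!$ and the forward span $g_*$. The $2$-cells of an oplax transformation are compatible with composition, so it suffices to treat $j^!$ for $j \in I$.

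By \cref{lem:consequences:adjoints-of-upper-shriek}, $j^!$ is left adjoint to $j_*$ in $\SpanStd$. So the oplax cell of $\tilde F_*$ at $j^!$ should be the mate of the (invertible) cell at $j_*$, namely the Beck--Chevalley map $j^* F_{*,Y} \to F_{*,X} j^*$, which we showed is an equivalence. To justify this identification, I would use that an oplax transformation acts on adjunctions: its component at a left adjoint is determined, via unit and counit, by the mate of its component at the right adjoint. Alternatively, I would unwind the construction of \cite[Theorem 6.7 and Remark 6.8]{cnossen2025universalityspan2categoriesconstruction} through the $\coop{(-)}$-dictionary of \cref{lem:constr-2cat:allBC-is-IPbadj}, where biadjointability of an oplax transformation is precisely the condition making the induced cells on the adjoint legs invertible.

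This compatibility of oplax cells with mates is the part needing the most care. I would try to argue it directly with the triangle identities in $\FunOpLax{\SpanStd}{\TwoCatInf}$, evaluated on the unit and counit $2$-morphisms described in the proof of \cref{lem:consequences:adjoints-of-upper-shriek}.
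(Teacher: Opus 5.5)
Your proposal is correct and is essentially the paper's proof. It extends $F_*$ through \cref{lem:constr-2cat:ext-and-unique} after checking strongness over $P$ and the vertical left Beck--Chevalley condition over $I$ (from the standing hypothesis on $\Cat D_*(\square^F_j)$), then checks strongness on forward spans and on backward spans in $I$ using $j^!\cong j^*$. The paper asserts the step you flag without comment. Your triangle-identity check does go through and shows that the oplax cell at the left adjoint $j^!$ is the inverse of $\mathrm{Ex}^{\mathrm{L}}(\Cat D_*(\square^F_j))$, not that mate itself, since the two point in opposite directions. So strongness on backward spans in $I$ is in fact automatic once the oplax extension exists.
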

\begin{proof}
    By \Cref{lem:constr-2cat:ext-and-unique}, there is an equivalence of categories
    \begin{equation*}
        \FunOpLax{\Span{\Cat C}{E}{\all}{I}{P}}{\TwoCatInf} \xrightarrow{\simeq} \FunOpLaxIPBC{\Cat C}{\TwoCatInf} \, .
    \end{equation*}
    The natural transformation $F_* \colon \Cat D_* \to \Cat D_*$ satisfies all Beck--Chevalley conditions with respect to $(I,P)$ when seen as an oplax natural transformation:
    Indeed, as it is a natural transformation, it is in particular strong with respect to $P$, 
    and for $j \in I$ the square $\Cat D_*(\square^F_j)$ satisfies the left Beck--Chevalley condition by assumption. 
    Thus, $F_* \colon \Cat D_* \to \Cat D_*$ extends to an oplax natural transformation $F_* \colon \Cat D \to \Cat D$ via the above equivalence.
    It remains to show that this extension is actually a partially oplax natural transformation with respect to the $(I,\all)$-marking.
    For this, note that on forward morphisms the extension restricts to the strong natural transformation $F_* \colon \Cat D_* \to \Cat D_*$.
    Moreover, for $(j \colon Y \to X) \in I$ the oplax naturality square 
    \begin{center}
        \begin{tikzcd}
            \Cat D(X) \ar[r, "j^!"] \ar[d, "F_{*,X}"'] & \Cat D(Y) \ar[d, "F_{*,Y}"] \laxarrowdl{}\\
            \Cat D(X) \ar[r, "j^!"'] & \Cat D(Y)
        \end{tikzcd}
    \end{center}
    commutes by virtue of the assumption that $\Cat D_*(\square^F_j)$ satisfies the left Beck--Chevalley condition, noting that $j^! \cong j^*$ by \Cref{lem:consequences:adjoints-of-upper-shriek}.
\end{proof}

\begin{defn} \label{def:BN:Cart-Frob-pCFF}
    Consider the partially oplax natural transformation $F_* \colon \Cat D \to \Cat D$ from \cref{lem:BN:F-star-partially-oplax-nat-trans}.
    This data defines a functor $\defnot{\Psi} \colon \BN \to \FunPartOpLax{(I,\all)}{\Span{\Cat C}{E}{\all}{I}{P}}{\TwoCatInf}$ by \Cref{lem:BN:univ-property-2-cat}.
    We denote by $\defnot{\Cart(\Cat D, F_*)}$ the $2$-functor formalism $\laxlim(\Psi)$,
    which exists by \cite[Proposition 5.4.2 (i)]{abellán2026freefibrationslaxcolimits} and \Cref{lem:lax-lim:partial-lax-lim-existence}.
    The projection to the unique object of $\BN$ defines a morphism $$\defnot{U}\colon \Cart(\Cat D, F_*) \to \Cat D$$ in $\FunPartOpLax{(I,\all)}{\SpanStd}{\TwoCatInf}$. 
    Moreover, the lax limit comes equipped with a diagram 
    \begin{center}
        \begin{tikzcd}
            & \Cart(\Cat D, F_*) \ar[dl, "U"'] \ar[dr, "U"] & \rlap{} \\
            \Cat D \ar[rr, "F_*"'] \ar[urr, Rightarrow, shorten <= 35pt, shorten >= 50pt, "\defnot{\kappa}"] && \Cat D \rlap{,}
        \end{tikzcd}
    \end{center}
    i.e.\ $\kappa \colon F_* U \to U$ is a $2$-morphism in $\FunPartOpLax{(I,\all)}{\SpanStd}{\TwoCatInf}$.
\end{defn}

The name $\Cart(\Cat D, F_*)$ for this $2$-functor formalism is justified by the following lemmas.

\begin{lem} \label{lem:constr-cart:CartD-on-objects}
    Consider the $2$-functor formalism $\Cart(\Cat D, F_*)$ from \cref{def:BN:Cart-Frob-pCFF}.
    For each object $X$ of $\Cat C$, there is a pullback square
    \begin{center}
        \begin{tikzcd}
            \Cart(\Cat D, F_*)(X) \ar[r, "\defnot{U}"] \ar[d, "\defnot{\kappa}"'] \cartsymb & \Cat D(X) \ar[d, "{(F_{*,X}, \id{\Cat D(X)})}"] \\
            \Fun{\Delta^1}{\Cat D(X)} \ar[r, "{(s,t)}"'] & \Cat D(X) \times \Cat D(X)
        \end{tikzcd}
    \end{center}
    in $\TwoCatInf$.
    In particular, $\Cart(\Cat D, F_*)(X) = \Cart(\Cat D(X), F_{*,X})$.
\end{lem}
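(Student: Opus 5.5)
The plan is to reduce the statement to the fact that the lax limit $\Cart(\Cat D, F_*) = \laxlim(\Psi)$ is computed pointwise, and then to apply \cref{lem:BN:Cart-as-fully-lax-lim} objectwise.

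First, I invoke \cite[Proposition 5.4.2]{abellán2026freefibrationslaxcolimits}, the result already used in \cref{def:BN:Cart-Frob-pCFF} for existence. It says that lax limits in $\FunPartOpLax{(I,\all)}{\SpanStd}{\TwoCatInf}$ of diagrams indexed by a $2$-category exist and are computed pointwise. Concretely, for $X \in \SpanStd$, evaluation
\[
\ev{X} \colon \FunPartOpLax{(I,\all)}{\SpanStd}{\TwoCatInf} \to \TwoCatInf
\]
carries $\laxlim \Psi$ to $\laxlim(\ev{X} \circ \Psi)$. The limit cone is carried to the limit cone, so $U$ and $\kappa$ are carried to the projection and the structural $2$-cell.

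Before using this, I need to check that evaluation at $X$ really is a $2$-functor out of the partially oplax functor $2$-category. It is obtained by restricting along $\Delta^0 \xrightarrow{X} \SpanStd$, using the functoriality of $\FunPartOpLax{\dagger}{-}{-}$ in the marked source together with $\FunOpLax{\Delta^0}{\TwoCatInf} = \TwoCatInf$.

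Second, I identify the composite $\ev{X} \circ \Psi \colon \BN \to \TwoCatInf$. By \cref{lem:BN:univ-property-2-cat}, such a functor is determined by the image of $e$ as an endomorphism. Here that endomorphism is the component of $F_*$ at $X$, namely $\Cat D(F_X) = F_{*,X}$, via the construction in \cref{lem:BN:F-star-partially-oplax-nat-trans}; the extension restricts to $F_*$ on $\Cat C$. So $\ev{X}\circ\Psi$ is the functor $\Phi$ associated with $(\Cat D(X), F_{*,X})$.

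Third, I apply \cref{lem:BN:Cart-as-fully-lax-lim} to get $\laxlim \Phi \simeq \LEq{F_{*,X}}{\id{\Cat D(X)}}$. By the proof of \cref{lem:lax-lim:LEq-as-partially-lax-lim}, this is exactly the pullback of $(F_{*,X},\id{})$ along $(s,t)$. I then trace the maps: the cofinality equivalence of \cref{lem:BN:I-to-BN-partially-lax-lim-cofinal} sends $0 \mapsto *$, so the projection to $\Phi(0)$ in the lax-equalizer description is the projection $U$ to the object $*$. The $2$-cell $\kappa$ corresponds to the component $\Fun{|{(\rightrightarrows)}^u_{/1}|}{\Cat D} \cong \Fun{\Delta^1}{\Cat D}$.

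The main obstacle is making the pointwise computation precise, including the claim that $U$ and $\kappa$ are identified with the maps in the square, rather than merely obtaining an abstract equivalence of objects. I will take this from the explicit statement of \loccit\ that the evaluation functors jointly preserve and detect lax limits in the marked oplax functor category. The remaining verification is the bookkeeping of the cone maps through the cofinality equivalence, which I would spell out only briefly.
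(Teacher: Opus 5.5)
Your proposal is correct and follows essentially the same route as the paper. Both use the pointwise computation of lax limits in the partially oplax functor $2$-category, identify $\ev{X}\circ\Psi$ with the $\BN$-diagram determined by $F_{*,X}$ via \cref{lem:BN:univ-property-2-cat}, and conclude with \cref{lem:BN:Cart-as-fully-lax-lim}; your extra tracing of $U$ and $\kappa$ through the cofinality equivalence is more detail than the paper gives, but consistent with it.
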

\begin{proof}
    By \cite[Proposition 5.4.2 (i)]{abellán2026freefibrationslaxcolimits}, lax limits in $\FunPartOpLax{(I,\all)}{\Span{\Cat C}{E}{\all}{I}{P}}{\TwoCatInf}$
    are computed pointwise, i.e.\ for any object $X$ of $\Span{\Cat C}{E}{\all}{I}{P}$ we have
    $\laxlim(\Psi)(X) \simeq \laxlim(\ev{X} \circ \Psi)$. But $\ev{X} \circ \Psi$ is the functor
    $\BN \to \TwoCatInf$ which is defined by the datum of $F_{*,X} \colon \Cat D(X) \to \Cat D(X)$ via \Cref{lem:BN:univ-property-2-cat}.
    Hence, the claim follows from \Cref{lem:BN:Cart-as-fully-lax-lim}.
\end{proof}

\begin{lem} \label{lem:constr-cart:CartD-compatibility-U-and-kappa}
    Consider the $2$-functor formalism $\Cart(\Cat D, F_*)$ from \cref{def:BN:Cart-Frob-pCFF}.
    Let $f \colon X \to Y$ be a morphism in $\Span{\Cat C}{E}{\all}{I}{P}$.
    There are commutative diagrams
    \begin{center}
        \begin{tikzcd}[column sep=huge]
            \Cart(\Cat D(X), F_{*,X}) \ar[r, "{\Cart(\Cat D, F_*)(f)}"] \ar[d, "U"'] & \Cart(\Cat D(Y),F_{*,Y}) \ar[d, "U"] \\
            \Cat D(X) \ar[r, "{\Cat D(f)}"'] & \Cat D(Y)     
        \end{tikzcd}
    \end{center}
    and
    \begin{center}
        \begin{tikzcd}
            F_{*,Y}U \Cart(\Cat D,F_*)(f) \ar[rr, "{\kappa \Cart(\Cat D,F_*)(f)}"] \ar[d, "\simeq"'] && U \Cart(\Cat D,F_*)(f) \ar[d, "\simeq"] \\
            F_{*,Y} \Cat D(f)U \ar[r] & \Cat D(f)F_{*,X}U \ar[r, "\Cat D(f)\kappa"'] & \Cat D(f)U \rlap{,} 
        \end{tikzcd}
    \end{center}
    where the vertical equivalences in the second diagram are given by the commutativity of the first diagram and the bottom left morphism is given by the oplax naturality of $F_*$.
    In particular, $U \colon \Cart(\Cat D, F_*) \to \Cat D$ is a natural transformation.
\end{lem}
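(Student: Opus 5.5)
The plan is to read off both diagrams from the structure of a lax limit in $\FunPartOpLax{(I,\all)}{\SpanStd}{\TwoCatInf}$, using the pointwise computation from \cite[Proposition 5.4.2 (i)]{abellán2026freefibrationslaxcolimits} already used in \cref{lem:constr-cart:CartD-on-objects}.

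\textbf{Step 1: the lax cone.} By \cref{def:BN:Cart-Frob-pCFF}, $\Cart(\Cat D, F_*)$ comes with a lax cone over $\Psi$, consisting of the $1$-morphism $U$ and the $2$-morphism $\kappa \colon F_* U \to U$ in $\FunPartOpLax{(I,\all)}{\SpanStd}{\TwoCatInf}$. The point I would establish first is that $U$ is in fact a \emph{strong} natural transformation. Because lax limits are computed pointwise, the value of $\Cart(\Cat D,F_*)$ on a span $f \colon X \to Y$ is the functor
\[
\laxlim(\ev{X}\circ\Psi) \to \laxlim(\ev{Y}\circ\Psi)
\]
induced by the morphism of $\BN$-diagrams $\Psi(-)(f)$. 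The projection $U$ from the lax limit to the value at $*$ commutes strictly with such induced functors. Concretely, the oplax naturality $2$-cell of $U$ at $f$ is the identity: the transition functor is obtained by the universal property from the cone $\Cat D(f)\circ U$, and the composite with $U$ recovers $\Cat D(f) U$. This gives the first commutative square.

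\textbf{Step 2: the $\kappa$ compatibility.} Next I unwind what it means for $\kappa$ to be a $2$-morphism in a $2$-category of functors with partially oplax transformations. Such a $2$-morphism is a modification. For each $f$ it yields a compatibility between the components $\kappa_X$ and $\kappa_Y$ and the oplax naturality $2$-cells of the source $F_*U$ and the target $U$. The oplax cell of the composite $F_* U$ at $f$ is the pasting of the cell of $U$ (trivial, by Step 1) with the oplax cell $F_{*,Y}\Cat D(f) \to \Cat D(f) F_{*,X}$ of $F_*$ from \cref{lem:BN:F-star-partially-oplax-nat-trans}. The modification axiom then states precisely that $\kappa_Y \Cart(\Cat D,F_*)(f)$ equals the composite $\Cat D(f)\kappa_X$ precomposed with that cell. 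This is the second diagram.

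\textbf{Main obstacle.} The hard part is making Step 1 rigorous, that is, showing the naturality $2$-cells of $U$ are invertible rather than merely present. My approach would be to use that pointwise computation identifies the lax limit with a pullback, as in \cref{lem:constr-cart:CartD-on-objects}, functorially in $X \in \SpanStd$. I would apply \cite[Proposition 5.4.2]{abellán2026freefibrationslaxcolimits} to see that the limit projection is a strong transformation. Failing that, I would check invertibility on each $f$ via the explicit pullback description, where the transition functor is induced on pullbacks and $U$ is a pullback projection.
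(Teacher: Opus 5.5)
Your Step 2 is correct. It packages the paper's argument slightly differently: you read off the second diagram from the modification axiom for the $2$-morphism $\kappa$ at $f$, using that the oplax cell of $F_*U$ at $f$ is the pasting of $F_{*,Y}U_f$ with the oplax cell of $F_*$. The paper instead evaluates at $e\in\BN$ a single naturality square of the counit, whose laxity cells are $\kappa_X$, $\kappa_Y$ and the $F_*$-cell. Both versions need only the objectwise identification of \cref{lem:constr-cart:CartD-on-objects} for the components $\kappa_X$, together with invertibility of the cell $U_f$. That invertibility is exactly Step 1.

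The gap is Step 1, which you correctly flag as the crux but do not close. Part (i) of Proposition 5.4.2 only says that evaluation at \emph{objects} preserves lax limits. It says nothing about the transition functor $\Cart(\Cat D,F_*)(f)$ or the cell $U_f$. A priori $U$ is only a morphism in $\FunPartOpLax{(I,\all)}{\SpanStd}{\TwoCatInf}$, hence strong only over $(I,\all)$. So neither your identification of $\Cart(\Cat D,F_*)(f)$ with an induced functor on pointwise lax limits, nor strongness of $U$, follows from what you cite. The paper closes this with three ingredients:
\begin{enumerate}
\item[(a)] Part (ii) of the same proposition: restriction along the marked functor $\tilde f\colon(\Delta^1,\flat)\to(\SpanStd,(I,\all))$ picking out $f$ preserves lax limits. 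Hence the restricted cone is a lax limit cone in $\FunOpLax{\Delta^1}{\TwoCatInf}$.
\item[(b)] The exchange equivalence $\FunLax{\BN}{\FunOpLax{\Delta^1}{\TwoCatInf}}\simeq\FunOpLax{\Delta^1}{\FunLax{\BN}{\TwoCatInf}}$, compatible with $\laxlim_{\BN}$. It turns the restricted diagram into a \emph{lax} transformation $\widetilde\Psi\colon\Phi_0\to\Phi_1$ of $\BN$-diagrams; it is not strict, because the $F_*$-cell at $f$ need not be invertible. It also identifies $\Cart(\Cat D,F_*)(f)\simeq\laxlim_{\BN}(\widetilde\Psi)$.
\item[(c)] Naturality of the counit $c\colon\diag\circ\laxlim_{\BN}\to\id{}$ at the $1$-morphism $\widetilde\Psi$. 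Since $c$ is a strong transformation of $2$-functors, this naturality square is filled by an invertible $2$-cell. Evaluating it at $*$ gives precisely the invertible cell $U_f$.
\end{enumerate}
Item (c) is the precise form of your heuristic that ``the transition functor is obtained from the cone $\Cat D(f)\circ U$''. The relevant cone is $\widetilde\Psi\circ c_{\Phi_0}$: its component at $*$ is $\Cat D(f)U$, and its laxity at $e$ is $\kappa_X$ pasted with the $F_*$-cell. Evaluating the same square at $e$ then also yields the second diagram, so your Step 2 becomes an alternative rather than a necessity. Your fallback via the explicit pullback description does not work as stated. \Cref{lem:constr-cart:CartD-on-objects} identifies only the values on objects, not the transition functors, so computing $\Cart(\Cat D,F_*)(f)$ on pullbacks would itself first require (a) and (b).
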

\begin{proof}
    Let $\Psi \colon \BN \to \FunPartOpLax{(I,\all)}{\Span{\Cat C}{E}{\all}{I}{P}}{\TwoCatInf}$ denote the functor given by $F_*\colon \Cat D \to \Cat D$ via \Cref{lem:BN:univ-property-2-cat}, 
    so that $\Cart(\Cat D,F_*)=\laxlim(\Psi)$.
    Consider the marked functor $$\tilde{f} \colon (\Delta^1,\flat) \to (\Span{\Cat C}{E}{\all}{I}{P}, (I,\all))$$ which is given by choosing $f$.
    By \cite[Proposition 5.4.2 (ii)]{abellán2026freefibrationslaxcolimits}, the restriction
    \[-\circ \tilde{f} \colon \FunPartOpLax{(I,\all)}{\Span{\Cat C}{E}{\all}{I}{P}}{\TwoCatInf} \to \FunOpLax{\Delta^1}{\TwoCatInf}\]
    along this functor preserves lax limits.
    In particular, $\Cart(\Cat D, F_*)(f) = \laxlim(\Psi)(f) \cong \laxlim((-\circ \tilde{f}) \circ \Psi)$.
    Unwinding the definitions, we see that the functor $(-\circ\tilde{f}) \circ \Psi \colon \BN \to \FunPartOpLax{\flat}{\Delta^1}{\TwoCatInf}$ maps the 
    unique object of $\BN$ to $\Cat D(f)\colon \Cat D(X) \to \Cat D(Y)$ and the morphism $e$ to the diagram
    \begin{center}
        \begin{tikzcd}
            \Cat D(X) \ar[r, "\Cat D(f)"] \ar[d, "F_{*,X}"'] & \Cat D(Y) \ar[d, "F_{*,Y}"] \laxarrowdl{} \\
            \Cat D(X) \ar[r, "\Cat D(f)"'] & \Cat D(Y)
        \end{tikzcd}
    \end{center}
    given by the oplax naturality of $F_*$.
    We have a commutative diagram 
    \begin{center}
        \begin{tikzcd}[column sep=tiny]
            \FunLax{\BN}{\FunOpLax{\Delta^1}{\TwoCatInf}} \ar[rr, "\simeq"] \ar[dr, "\laxlim_{\BN}"'] && \FunOpLax{\Delta^1}{\FunLax{\BN}{\TwoCatInf}} \ar[dl, "\laxlim_{\BN} \circ -"] \\
            & \FunOpLax{\Delta^1}{\TwoCatInf} \rlap{.} &
        \end{tikzcd}
    \end{center}
    Indeed, the upper equivalence holds by e.g.\ \cite[Lemma 2.2.10]{AGH24}, and the commutation of the diagram 
    may be checked on left adjoints, where it is clear.
    In particular, the functor $(- \circ \widetilde{f}) \circ \Psi$ corresponds to a lax natural transformation $\widetilde{\Psi} \colon \Phi_0 \to \Phi_1$ under the top equivalence in the diagram. 
    Here, $\Phi_0$ maps the unique object of $\BN$ to $\Cat D(X)$ and the morphism $e$ to the functor $F_{*,X}$, and $\Phi_1$ similarly if we replace $X$ by $Y$.
    Moreover, $\laxlim((- \circ \widetilde{f}) \circ \Psi)$ is given by $\laxlim(\widetilde{\Psi}) \colon \laxlim_{\BN}(\Phi_0) \to \laxlim_{\BN}(\Phi_1)$. 
    By \Cref{lem:BN:Cart-as-fully-lax-lim}, $\laxlim_{\BN}(\Phi_0) \cong \Cart(\Cat D(X), F_{*,X})$ and $\laxlim_{\BN}(\Phi_1) \cong \Cart(\Cat D(Y), F_{*,Y})$.
    Consider the counit of the adjunction
    \[ \diag \colon \TwoCatInf \leftrightarrows \FunLax{\BN}{\TwoCatInf} \noloc \laxlim_{\BN}, \]
    i.e.\ the natural transformation $c \colon \diag \circ \laxlim_{\BN} \to \id{\FunLax{\BN}{\TwoCatInf}}$.
    Applying naturality to the lax natural transformation $\widetilde{\Psi} \colon \Phi_0 \to \Phi_1$ yields a commutative diagram
    \begin{center}
        \begin{tikzcd}[column sep=huge]
            \diag(\Cart(\Cat D(X), F_{*,X})) \ar[r, "\diag \laxlim_{\BN} \widetilde{\Psi}"] \ar[d, "c_{\Phi_0}"'] & \diag(\Cart(\Cat D(Y), F_{*,Y})) \ar[d, "c_{\Phi_1}"] \\
            \Phi_0 \ar[r, "\widetilde{\Psi}"'] & \Phi_1
        \end{tikzcd}
    \end{center}
    of morphisms in $\FunLax{\BN}{\TwoCatInf}$.
    Evaluating this at the unique object of $\BN$ gives the commutative diagram
    \begin{center}
        \begin{tikzcd}[column sep=huge]
            \Cart(\Cat D(X), F_{*,X}) \ar[r, "{\Cart(\Cat D, F_*)(f)}"] \ar[d, "U"'] & \Cart(\Cat D(Y),F_{*,Y}) \ar[d, "U"] \\
            \Cat D(X) \ar[r, "{\Cat D(f)}"'] & \Cat D(Y) \rlap{.}        
        \end{tikzcd}
    \end{center}
    Moreover, evaluating at the morphism $e$ yields the commutative diagram
    \begin{center}
        \begin{tikzcd}
            F_{*,Y}U \Cart(\Cat D,F_*)(f) \ar[rr, "{\kappa \Cart(\Cat D,F_*)(f)}"] \ar[d, "\simeq"'] && U \Cart(\Cat D,F_*)(f) \ar[d, "\simeq"] \\
            F_{*,Y} \Cat D(f)U \ar[r] & \Cat D(f)F_{*,X}U \ar[r, "\Cat D(f)\kappa"'] & \Cat D(f)U \rlap{.} 
        \end{tikzcd}
    \end{center}
    Indeed, the laxity $2$-morphism of $c_{\Phi_0}$ evaluated at $e$ is a diagram of the form 
    \begin{center}
        \begin{tikzcd}
            \Cart(\Cat D(X), F_{*,X}) \ar[r, "\id{}"] \ar[d, "U"'] & \Cart(\Cat D(X), F_{*,X}) \ar[d, "U"] \\
            \Cat D(X) \ar[r, "F_{*,X}"'] \laxarrowur{\kappa} & \Cat D(X) \rlap{,}
        \end{tikzcd}
    \end{center}
    where the $2$-morphism is given by the natural transformation corresponding to the functor $$\kappa \colon \Cart(\Cat D(X), F_{*,X}) \to \TwoFun{\Delta^1}{\Cat D(X)},$$ and similarly for $c_{\Phi_1}$.
    Furthermore, evaluating $\widetilde{\Psi}$ at $e$ recovers precisely the natural transformation $F_{*,Y} \Cat D(f) \to \Cat D(f) F_{*,X}$ given by the oplax naturality of $F_*$.
\end{proof}

\begin{prop} \label{lem:constr-cart:CartD-coherent}
    Consider the $2$-functor formalism $\Cart(\Cat D, F_*)$ from \Cref{def:BN:Cart-Frob-pCFF}.
    Assume that $\Cat D$ is a $4\frac{1}{2}$-functor formalism with respect to a collection of morphisms $T \subseteq \Cat C$.
    Then $\Cart(\Cat D, F_*)$ is a $4\frac{1}{2}$-functor formalism with respect to $T$.
\end{prop}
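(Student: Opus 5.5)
The plan is to verify the axioms \textbf{(FF1)}--\textbf{(FF4)} of \cref{def:2ff:defn} for $\Cart(\Cat D, F_*)$ objectwise and morphismwise. Objectwise, \cref{lem:constr-cart:CartD-on-objects} identifies the fibre at $X$ with $\Cart(\Cat D(X), F_{*,X}) = \LEq{F_{*,X}}{\id{}}$. Morphismwise, \cref{lem:constr-cart:CartD-compatibility-U-and-kappa} says that the forgetful functor $U$ intertwines $\Cart(\Cat D, F_*)(f)$ with $\Cat D(f)$. So everything reduces to two facts about lax equalizers $\LEq{G}{\id{\Cat A}}$ with $\Cat A$ presentable stable and $G$ colimit-preserving, followed by the adjoint functor theorem.

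First I establish the input on $F_{*,X}$. By \textbf{(FF2)} for $\Cat D$, the functor $F_{*,X} = \Cat D_*(F_X)$ is a left adjoint, hence colimit-preserving. \textbf{(FF1)} then follows: $\Cart(\Cat D(X),F_{*,X})$ is the pullback of $\Cat D(X) \xrightarrow{(F_{*,X},\id{})} \Cat D(X)\times\Cat D(X) \xleftarrow{(s,t)} \Fun{\Delta^1}{\Cat D(X)}$. All three corners are presentable stable and both maps are colimit-preserving exact functors. Pullbacks in $\mathrm{Pr}^L$ and in stable categories with exact functors are computed in $\CatInf$, so the pullback is presentable stable.

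Second, I show that $U$ creates colimits and limits in $\LEq{G}{\id{}}$. For colimits, let $(M_i,\kappa_i)$ be a diagram. Then $\colim M_i$ carries the structure map $G(\colim M_i)\simeq \colim G M_i \to \colim M_i$, and this uses that $G$ preserves colimits. For limits, no hypothesis on $G$ is needed: the structure on $\lim M_i$ is $G(\lim M_i)\to \lim G M_i \to \lim M_i$, built from the canonical comparison map. In both cases one checks the universal property by computing mapping anima in the lax equalizer as equalizers of mapping anima in $\Cat A$; this is the standard argument of \cite[II.1.5]{nikolaus2018topological}. In particular $U$ is conservative and preserves all small limits and colimits.

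Third, I deduce \textbf{(FF2)}--\textbf{(FF4)}. Fix a morphism $f$ of the span $2$-category and set $\Phi = \Cart(\Cat D,F_*)(f)$. By \cref{lem:constr-cart:CartD-compatibility-U-and-kappa} we have $U\Phi \simeq \Cat D(f) U$. Since $U$ preserves and reflects colimits (resp.\ limits), $\Phi$ preserves colimits (resp.\ limits) whenever $\Cat D(f)$ does.
\begin{itemize}
\item For \textbf{(FF2)}, take $f$ in $\Cat C$. Then $\Cat D(f) = f_*$ has a right adjoint, so it preserves colimits, and hence so does $\Phi$. The adjoint functor theorem between presentable categories gives $f^\times$.
\item For \textbf{(FF3)}, take $f \in E$. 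Then $\Cat D(f) = f^!$ has the right adjoint $f_\flat$, and the same argument applies.
\item For \textbf{(FF4)}, take $f \in T$. Then $f_*$ on $\Cat D$ has a left adjoint, so it preserves limits; it is also colimit-preserving, hence accessible. Therefore $\Phi$ preserves limits and is accessible, and the adjoint functor theorem gives a left adjoint $f^*$.
\end{itemize}

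I expect the main obstacle to be the second step, in particular the limit half. There one must carefully verify that the pointwise candidate is the limit inside the lax equalizer, using only the lax comparison map $G\lim\to\lim G$ and no assumption that $G$ preserves limits. I would do this through the explicit equalizer formula for mapping spaces in $\LEq{G}{\id{}}$.
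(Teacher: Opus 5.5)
Your proposal is correct and follows essentially the same route as the paper. Both arguments use that $F_{*,X}=\Cat D_*(F_X)$ preserves colimits, that $U$ is conservative and preserves limits and colimits, the compatibility $U\,\Cart(\Cat D,F_*)(f)\simeq \Cat D(f)\,U$, and the adjoint functor theorem to verify (FF1)--(FF4). The only difference is that you reprove the needed facts about lax equalizers, via the Nikolaus--Scholze mapping-space description and limits in $\mathrm{Pr}^L$, whereas the paper simply cites \cite[Corollary 2.7]{us24cart}.
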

\begin{proof}
    We will repeatedly use that for every $X \in \Cat C$ the functor $F_{*,X} = \Cat D_*(F_X)$ preserves colimits:
    Indeed, by assumption $\Cat D_*(F_X)$ is a left adjoint.
    Also, we use without mention the fact that $\Cart(\Cat D, F_*)(X) \cong \Cart(\Cat D(X), F_{*,X})$, cf.\ \cref{lem:constr-cart:CartD-on-objects}.
    We now show that $\Cart(\Cat D, F_*)$ satisfies the axioms of \cref{def:2ff:defn}:
    \begin{enumerate}[label=Proof of \textbf{(FF\arabic{enumi})}:]
        \item Let $X \in \Cat C$. By assumption $\Cat D(X)$ is presentable stable.
            Hence, since $\Cat D_*(F_X)$ preserves colimits, we see from \cite[Corollary 2.7 (e) and (f)]{us24cart} that $\Cart(\Cat D(X), F_{*,X})$ is stable presentable.
        \item Let $f$ be a morphism in $\Cat C$. By the adjoint functor theorem, we must show that $\Cart(\Cat D, F_*)_*(f)$ 
            preserves colimits. This follows from the commutative diagram of \cref{lem:constr-cart:CartD-compatibility-U-and-kappa},
            the assumption that $\Cat D_*(f)$ preserves colimits (as it is a left adjoint), and the fact that $U$ preserves 
            colimits and is conservative, cf.\ \cite[Corollary 2.7 (b) and (d)]{us24cart} (here we use again that $\Cat D_*(F_X)$ preserves colimits).
        \item Let $f$ be a morphism in $E$. By the adjoint functor theorem, we must show that $\Cart(\Cat D, F_*)^!(f)$ 
            preserves colimits. This follows from the commutative diagram of \cref{lem:constr-cart:CartD-compatibility-U-and-kappa},
            the assumption that $\Cat D^!(f)$ preserves colimits (as it is a left adjoint), and the fact that $U$ preserves 
            colimits and is conservative, cf.\ \cite[Corollary 2.7 (b) and (d)]{us24cart} (here we use again that $\Cat D_*(F_X)$ preserves colimits).
        \item Let $f$ be a morphism in $T$. By the adjoint functor theorem, we must show that $\Cart(\Cat D, F_*)_*(f)$ 
            preserves limits and is accessible. Since we already know that it preserves colimits, it is accessible.
            The claim about limits follows from the commutative diagram of \cref{lem:constr-cart:CartD-compatibility-U-and-kappa},
            the assumption that $\Cat D_*(f)$ preserves limits (as it is a right adjoint), and the fact that $U$ preserves 
            limits and is conservative, cf.\ \cite[Corollary 2.7 (b) and (c)]{us24cart}. \qedhere
    \end{enumerate}
\end{proof}

For easier reference we summarize our results of this section:

\begin{thm} \label{lem:constr-cart:main-thm}
    Let $(\Cat C, E, I, P)$ be a Nagata setup.
    Let $\Cat D \colon \SpanStd \to \TwoCatInf$ be a $2$-functor formalism and let $F \colon \id{\Cat C} \to \id{\Cat C}$ be a Frobenius structure on $\Cat C$.
    Assume that for every $j \in I$, the square $\Cat D_*(\square_j^F)$ satisfies the left Beck--Chevalley condition.
    Then there is a $2$-functor formalism $\Cart(\Cat D, F_*)$ satisfying the following:
    \begin{enumerate}[label=(\arabic*)]
        \item If $X \in \Cat C$ is an object, $\Cart(\Cat D, F_*)(X) \cong \Cart(\Cat D(X), F_{*,X})$ is the category of Cartier modules in the category $\Cat D(X)$ with respect to the functor $F_{*,X}$.
        \item There is a natural transformation $U \colon \Cart(\Cat D, F_*) \to \Cat D$ such that for each object $X \in \Cat C$ the functor $U_X$ is equivalent to the forgetful functor $\Cart(\Cat D(X), F_{*,X}) \to \Cat D(X)$. 
        In particular, for each morphism $f \colon X \to Y$ in $\SpanStd$ there is a commutative diagram 
        \begin{center}
            \begin{tikzcd}[column sep=huge]
                \Cart(\Cat D(X), F_{*,X}) \ar[r, "{\Cart(\Cat D, F_*)(f)}"] \ar[d, "U_X"'] & \Cart(\Cat D(Y), F_{*,Y}) \ar[d, "U_Y"] \\
                \Cat D(X) \ar[r, "{\Cat D(f)}"'] & \Cat D(Y) \rlap{.}
            \end{tikzcd}
        \end{center}
        \item  If $f \colon X \to Y$ is a morphism in $\SpanStd$, there is a commutative diagram 
        \begin{center}
            \begin{tikzcd}
                F_{*,Y}U \Cart(\Cat D,F_*)(f) \ar[rr, "{\kappa \Cart(\Cat D,F_*)(f)}"] \ar[d, "\simeq"'] && U \Cart(\Cat D,F_*)(f) \ar[d, "\simeq"] \\
                F_{*,Y}\Cat D(f)U \ar[r] & \Cat D(f)F_{*,X}U \ar[r, "{\Cat D(f)\kappa}"'] & \Cat D(f)U \rlap{,} 
            \end{tikzcd}
        \end{center}
        where the vertical equivalences are given by the previous property and the bottom left arrow is given by (partially) oplax naturality of $F_*$.
    \end{enumerate}
    Moreover, if $\Cat D$ is a $4\frac{1}{2}$-functor formalism with respect to a class of morphisms $T \subseteq \Cat C$, 
    then $\Cart(\Cat D, F_*)$ also is a $4\frac{1}{2}$-functor formalism with respect to $T$.
\end{thm}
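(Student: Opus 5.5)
This theorem only packages the results of this section, so the proof is an assembly of earlier statements. I take $\Cart(\Cat D, F_*)$ to be the $2$-functor formalism $\laxlim(\Psi)$ of \cref{def:BN:Cart-Frob-pCFF}. It is well defined because the standing assumption is exactly the one used there: for every $j \in I$, the square $\Cat D_*(\square^F_j)$ satisfies the left Beck--Chevalley condition. Via \cref{lem:constr-2cat:ext-and-unique}, this assumption lets \cref{lem:BN:F-star-partially-oplax-nat-trans} extend $F_*$ to a partially oplax endotransformation of $\Cat D$, marked by $(I,\all)$. By \cref{lem:BN:univ-property-2-cat}, this endotransformation gives the functor $\Psi \colon \BN \to \FunPartOpLax{(I,\all)}{\SpanStd}{\TwoCatInf}$. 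The lax limit of $\Psi$ exists by \cite[Proposition 5.4.2 (i)]{abellán2026freefibrationslaxcolimits} together with \cref{lem:lax-lim:partial-lax-lim-existence}.

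The numbered items then follow from earlier lemmas.
\begin{enumerate}[label=(\arabic*)]
    \item This is \cref{lem:constr-cart:CartD-on-objects}. Its proof computes the lax limit pointwise and reduces to \cref{lem:BN:Cart-as-fully-lax-lim}.
    \item The projection $U$ is a morphism of $\FunPartOpLax{(I,\all)}{\SpanStd}{\TwoCatInf}$, and the first commutative square of \cref{lem:constr-cart:CartD-compatibility-U-and-kappa} shows that its oplax structure maps are equivalences for every span $f$. Hence $U$ is a strong natural transformation. Pointwise, $U_X$ is the projection of the lax limit onto the unique object of $\BN$. Under the identification of \cref{lem:BN:Cart-as-fully-lax-lim}, with its cofinality reduction to the lax equalizer in \cref{lem:lax-lim:LEq-as-partially-lax-lim}, this projection is the forgetful functor from $\LEq{F_{*,X}}{\id{}}$.
    \item This is the second diagram of \cref{lem:constr-cart:CartD-compatibility-U-and-kappa}, taken verbatim.
\end{enumerate}
The final assertion, that $\Cart(\Cat D, F_*)$ is a $4\frac{1}{2}$-functor formalism with respect to $T$ whenever $\Cat D$ is, is \cref{lem:constr-cart:CartD-coherent}.

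I expect only one point to need care: checking that the identifications in (1) and (2) are compatible. Concretely, the pointwise equivalence $\laxlim(\Psi)(X) \simeq \Cart(\Cat D(X), F_{*,X})$ should carry the component at $X$ of the lax-limit projection $U$ to the forgetful functor. I would verify this by evaluating the counit of $\diag \dashv \laxlim_{\BN}$ at the object $*$, as in the proof of \cref{lem:constr-cart:CartD-compatibility-U-and-kappa}, and by noting that the cofinal functor ${(\rightrightarrows)}^u \to \BNflat$ sends $0$ to $*$. Under this functor, evaluation at $*$ becomes evaluation at $0$ in the lax equalizer, which by the proof of \cref{lem:lax-lim:LEq-as-partially-lax-lim} is the projection to $\Cat C = \Phi(0)$, that is, the forgetful functor. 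Everything else is citation.
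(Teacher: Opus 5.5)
Your proposal is correct and follows the paper's proof exactly. You extend $F_*$ via \cref{lem:BN:F-star-partially-oplax-nat-trans}, define $\Cart(\Cat D, F_*)$ as the lax limit of \cref{def:BN:Cart-Frob-pCFF}, and read off (1)--(3) and the $4\frac{1}{2}$-statement from \cref{lem:constr-cart:CartD-on-objects,lem:constr-cart:CartD-compatibility-U-and-kappa,lem:constr-cart:CartD-coherent}; your extra check that the pointwise identification carries the lax-limit projection to the forgetful functor (via evaluation at $0$ under the cofinal functor ${(\rightrightarrows)}^u \to \BNflat$) correctly spells out a point the paper leaves implicit.
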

\begin{proof}
    By \Cref{lem:BN:F-star-partially-oplax-nat-trans}, the natural transformation $F_* \colon \Cat D_* \to \Cat D_*$ extends to a partially oplax natural transformation $F_* \colon \Cat D \to \Cat D$.
    Now we define $\Cart(\Cat D, F_*)$ as in \Cref{def:BN:Cart-Frob-pCFF}.
    It has the described properties by \Cref{lem:constr-cart:CartD-on-objects,lem:constr-cart:CartD-compatibility-U-and-kappa}.
    The claim about $4\frac{1}{2}$-functor formalisms was shown in \Cref{lem:constr-cart:CartD-coherent}.
\end{proof}

%% file: symmetric-monoidal/sym-mon-2cat.tex
In this section, we lay the necessary groundwork for the constructions in \cref{sec:3ff}.
First, we review the theory of symmetric monoidal $2$-categories in \cref{sec:sym-sym}.
In \cref{sec:sym-pointwise}, we explain the pointwise symmetric monoidal structure on various $2$-categories of functors;
as well as their interaction with other $2$-categorical constructions. In particular, we show that the pointwise structure 
is the cotensor by categories in the $2$-category of symmetric monoidal $2$-categories.
Then, in \cref{sec:sym-marked} we define the category of marked $2$-categories as well as 
the category of marked symmetric monoidal $2$-categories. We use this, to talk about marked mapping anima 
between marked symmetric monoidal $2$-categories, an essential ingredient to \cref{sec:sym-oplax-lax}.
Here, in this subsection, we finally define the $2$-category of lax symmetric monoidal functors with partially oplax 
natural transformations between them. It is this $2$-category in which the Frobenius endofunctor 
of the $3$-functor formalism of ind-coherent sheaves will live. 
We go on by showing that this construction behaves well with respect to adjoints; 
a feat that will be accomplished by constructing a (incoherent) $2$-functoriality on those functor categories.
This result will then be used to show that the $2$-category in question admits all partially lax limits,
a fact crucial for the construction of the Cartier module $3$-functor formalism.
Last, in \Cref{sec:sym-calg}, we define the $2$-category of commutative algebras in a symmetric monoidal $2$-category,
and explore some basic properties.

\subsection{Symmetric monoidal \texorpdfstring{$2$}{2}-categories} \label{sec:sym-sym}
In this subsection we discuss symmetric monoidal $2$-categories, and the $2$-category of 
symmetric monoidal $2$-categories with lax (resp.\ oplax, resp.\ strong) symmetric monoidal functors.

\begin{notation}
    We employ the following notation:
    \begin{enumerate}[label=(\alph*)]
        \item We write $\defnot{\Fin_*}$ for the $(1,1)$-category of finite pointed sets.
        \item For $n \ge 0$, we write $\defnot{\nset} \in \Fin_*$ for the finite set $\{ *, 1, \dots, n \}$.
        \item A morphism $f \colon \nset \to \mset$ is called \defemph{inert} if $f^{-1}(i)$ is a singleton for every $* \neq i \in \mset$.
        \item We write $\defnot{\SpanFin}$ for the $(2,1)$-category of spans of finite sets.
        \item For $n \ge 0$, we write $\defnot{[n]} \in \SpanFin$ for the finite set $\{ 1, \dots, n \}$.
    \end{enumerate}
\end{notation}

\begin{defn}
    Recall the notions of \defemph{$2$-(co)cartesian fibrations}, \defemphnoindex{$2$-cocartesian $1$-morphisms} and \defemphnoindex{cartesian $2$-morphisms} from for example \cite[Definition 5.1]{cnossen2025universalityspan2categoriesconstruction} or \cite[Definition 3.2.5]{AGH24}
    (note that in the latter reference, $2$-cocartesian fibrations are called \defemphnoindex{$(0,1)$-fibrations} and $2$-cartesian fibrations are called \defemphnoindex{$(1,0)$-fibrations}).
    \begin{enumerate} [label=(\arabic*)]
        \item We denote by $\textcolor{purple}{\TwoCocartlax{\Fin_*}} \subseteq (\TwoCatInfTwo)_{/ \Fin_*}$ the full sub-$2$-category spanned by $2$-cocartesian fibrations. 
        \item We denote by $\textcolor{purple}{\TwoCocart{\Fin_*}} \subseteq \TwoCocartlax{\Fin_*}$ the wide $2$-full sub-$2$-category whose morphisms preserve $2$-cocartesian $1$-morphisms and cartesian $2$-morphisms. 
        \item We denote by $\textcolor{purple}{\TwoCartOplax{\Fin_*}} \subseteq (\TwoCatInfTwo)_{/ \op{\Fin_*}}$ the full sub-$2$-category spanned by $2$-cartesian fibrations. 
    \end{enumerate}
\end{defn}

\begin{rmk}
    Note that any functor over a $1$-category between $2$-(co)cartesian fibrations automatically preserves (co)cartesian $2$-morphisms.
    Indeed, the mapping categories of the base are anima, so the (co)cartesian morphisms of the induced functors on mapping categories are precisely the equivalences by \cite[Proposition 2.4.1.5]{highertopoi} and its dual.
    Thus the (co)cartesian $2$-morphisms are precisely the invertible $2$-morphisms, which every functor preserves.
\end{rmk}

\begin{defn}
    We say that a $2$-cocartesian fibration $\TwoCat{C} \to \Fin_*$ \defemphindex{satisfies the Segal condition}{Segal condition} if for every $\langle n \rangle \in \Fin_*$, the inert maps $\rho_i \colon \langle n \rangle \to \langle 1 \rangle$ sending $i$ to $1$ and everything else to $*$
    assemble into an equivalence $\TwoCat{C}_{\langle n \rangle} \cong \prod_{i=1}^{n} \TwoCat{C}_{\langle 1 \rangle}$.
    We define the Segal condition for $2$-cartesian fibrations similarly.
\end{defn}

\begin{defn} \label{def:2sym:CAlgDelta}
    We define the \defemphindex{$2$-category of symmetric monoidal $2$-categories, symmetric monoidal (resp.\ lax symmetric monoidal, resp.\ oplax symmetric monoidal) functors, 
    and symmetric monoidal transformations}{$2$-category!symmetric monoidal} as follows: 
    \begin{enumerate}[label=(\arabic*)]
        \item $\textcolor{purple}{\TwoCAlg{\TwoCatInfTwo}} \subseteq \TwoCocart{\Fin_*}$ is the $2$-full sub-$2$-category  
            spanned by $2$-cocartesian fibrations satisfying the Segal condition 
            and by morphisms which preserve $2$-cocartesian $1$-morphisms.
        \item $\textcolor{purple}{\TwoCAlgLax{\TwoCatInfTwo}} \subseteq \TwoCocartlax{\Fin_*}$ is the $2$-full sub-$2$-category 
            spanned by $2$-cocartesian fibrations that satisfy the Segal condition, 
            and by morphisms which preserve $2$-cocartesian $1$-morphisms lying over inert morphisms.
        \item $\textcolor{purple}{\TwoCAlgOpLax{\TwoCatInfTwo}} \subseteq \TwoCartOplax{\Fin_*}$ is the $2$-full sub-$2$-category 
            spanned by $2$-cartesian fibrations that satisfy the Segal condition, 
            and by morphisms which preserve $2$-cartesian $1$-morphisms lying over opposites of inert morphisms.
    \end{enumerate}
    We also write $\textcolor{purple}{\CAlg{\CatInfTwo}}$, $\textcolor{purple}{\CAlgLax{\TwoCatInfTwo}}$ and $\textcolor{purple}{\CAlgOpLax{\TwoCatInfTwo}}$ for the corresponding underlying categories.
\end{defn}

The preceding definition is justified by the following:

\begin{rmk}
    By \cite[Chapter 9, Sections 1.4.1, 1.4.2]{gaitsgory2019study}, a $2$-cocartesian fibration $\TwoCat{C} \to \Fin_*$ (resp.\ a $2$-cartesian fibration $\TwoCat{C} \to \op{\Fin_*}$) satisfying the Segal condition encodes precisely a commutative monoid in $\TwoCatInfTwo$, 
    which is in turn a symmetric monoidal $2$-category.
    Indeed, the tensor product on $\TwoCat{C}_{\langle 1 \rangle}$ can be defined by choosing $2$-cocartesian lifts over the map $\mu \colon \langle 2 \rangle \to \langle 1 \rangle$ mapping $1$ and $2$ to $1$ (using the Segal condition).

    Moreover, by \cite[Definition 5.12 and the beginning of \S5]{cnossen2025universalityspan2categoriesconstruction} (where it is explained that the definitions of symmetric monoidal categories and lax symmetric monoidal functors via $\SpanFin$ and via $\Fin_*$ agree), 
    a morphism of $2$-cocartesian fibrations which preserves $2$-cocartesian $1$-morphisms lying over inert morphisms is a lax symmetric monoidal functor $F$. 
    The corresponding maps $F(X) \otimes F(Y) \to F(X \otimes Y)$ are obtained by applying $F$ to a $2$-cocartesian lift of $\mu$ starting in $(X,Y)$ and factoring it through a $2$-cocartesian lift of $\mu$ starting in $(F(X), F(Y))$.
    Note that these maps are not equivalences in general as $\mu$ is not inert.
    Dually (also cf.\ \cite[Definitions 3.4.1, 3.4.3]{haugseng2024laxmonoidaladjunctionstwovariable} for the $1$-categorical analogue), a morphism of $2$-cartesian fibrations which preserves 
    $2$-cartesian $1$-morphisms over opposites of inert morphisms is an oplax symmetric monoidal functor. 
    In \loccit it is also explained that a morphism preserving all $2$-cocartesian $1$-morphisms is a symmetric monoidal functor.

    The $2$-morphisms in all of the above $2$-categories are given by symmetric monoidal natural transformations.
    This follows directly from applying naturality, for example to maps $F(X) \otimes F(Y) \to F(X \otimes Y)$, and similarly for $G$, where $F$ and $G$ are lax symmetric monoidal functors. 
\end{rmk}

\begin{rmk} \label{rmk:2sym:forgetf-functor}
    There is a canonical functor $(\TwoCatInfTwo)_{/ \Fin_*} \to (\TwoCatInfTwo)_{/ *} \cong \TwoCatInfTwo$ given by pulling back along the functor $* \to \Fin_*$ that maps $*$ to $\langle 1 \rangle$.
    In particular, it restricts to forgetful functors 
    $\CAlg{\TwoCatInfTwo} \to \CatInfTwo$ and $\CAlgLax{\TwoCatInfTwo} \to \CatInfTwo$. Dually, one gets a forgetful functor $\CAlgOpLax{\TwoCatInfTwo} \to \CatInfTwo$. 
    We call the image of a symmetric monoidal $2$-category or of a functor their \defemphnoindex{underlying $2$-category} or \defemphnoindex{underlying functor}, respectively.
\end{rmk}

\begin{lem} \label{lem:2-sym:oplax-lax-CAlg-coop-equivalence}
    The autoequivalence $\coop{(-)} \colon \CatInfTwo \to \CatInfTwo$ upgrades to an equivalence 
    $$\coop{(-)} \colon \CAlgLax{\TwoCatInfTwo} \xrightarrow{\simeq} \CAlgOpLax{\TwoCatInfTwo}$$
    with inverse also given by $\coop{(-)}$. 
\end{lem}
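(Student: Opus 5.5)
We plan to apply the $2$-categorical enhancement of $\coop{(-)}$ from \cref{lem:two-cat:two-cat-coop} to the slice $2$-category, and then read off how it acts on fibrations. First, $\coop{(-)}$ is an equivalence of $2$-categories $\TwoCatInfTwo \xrightarrow{\simeq} \co{\TwoCatInfTwo}$. It therefore induces an equivalence on underlying categories $\CatInfTwo \to \CatInfTwo$, and on slices an equivalence
$$(\CatInfTwo)_{/\Fin_*} \xrightarrow{\simeq} (\CatInfTwo)_{/\coop{\Fin_*}} = (\CatInfTwo)_{/\op{\Fin_*}},$$
using that $\Fin_*$ is a $1$-category, so $\coop{\Fin_*} = \op{\Fin_*}$. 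This functor sends $p \colon \TwoCat C \to \Fin_*$ to $\coop{p} \colon \coop{\TwoCat C} \to \op{\Fin_*}$. Since $\coop{(-)}$ is self-inverse, so is the induced functor on slices. It then remains to check that this equivalence restricts to the full subcategories and wide subcategories appearing in \cref{def:2sym:CAlgDelta}.

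On objects, we need that $p$ is a $2$-cocartesian fibration if and only if $\coop{p}$ is a $2$-cartesian fibration. We would take this from \cite[Observation 3.2.9]{AGH24}, the same source used in \cref{lem:two-cat:two-cat-coop}. There the equivalences relating $(i,j)$-fibrations under $\op{(-)}$, $\co{(-)}$ and $\coop{(-)}$ are recorded: $\op{(-)}$ swaps cartesian and cocartesian $1$-morphisms, and $\co{(-)}$ swaps the $2$-morphism variance. Composing the two sends $(0,1)$-fibrations to $(1,0)$-fibrations. Concretely, a $1$-morphism $e$ of $\TwoCat C$ is $2$-cocartesian over $p$ exactly when $\coop{e}$ is $2$-cartesian over $\coop{p}$. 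This holds because the defining condition, that precomposition with $e$ induces a cartesian square of mapping categories, turns under $\op{(-)}$ into the postcomposition condition. The mapping categories are simultaneously replaced by their opposites via $\co{(-)}$, and a pullback square of categories remains a pullback after applying $\op{(-)}$. The Segal condition is preserved because $\coop{(-)}$ preserves finite products, being an autoequivalence. It also preserves fibers, since fibers over $\nset$ are pullbacks and $\coop{\Delta^0} = \Delta^0$. Finally, inert maps in $\Fin_*$ correspond to opposites of inert maps in $\op{\Fin_*}$.

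On morphisms, a functor $G$ over $\Fin_*$ preserves $2$-cocartesian $1$-morphisms over inert maps if and only if $\coop{G}$ preserves $2$-cartesian $1$-morphisms over opposites of inert maps; this follows immediately from the previous step. Hence the equivalence on slices restricts to the claimed equivalence $\CAlgLax{\TwoCatInfTwo} \simeq \CAlgOpLax{\TwoCatInfTwo}$ of underlying categories, with inverse $\coop{(-)}$.

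The main obstacle will be the object-level step: making sure that the cited correspondence of fibration types under $\coop{(-)}$ really exchanges $2$-cocartesian and $2$-cartesian fibrations in exactly the convention of \cite[Definition 5.1]{cnossen2025universalityspan2categoriesconstruction}. The subtle point is whether the condition on $2$-morphisms (cartesian versus cocartesian, lax versus oplax) lands in the correct variance. Our first attempt would be to verify directly the mapping-category characterization of $2$-cocartesian arrows, checking how it transforms under $\op{(-)}$ and under $\co{(-)}$ separately.
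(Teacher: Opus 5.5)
Your proposal is correct and follows the paper's proof essentially verbatim. The paper slices the equivalence $\coop{(-)}\colon \TwoCatInfTwo \to \co{\TwoCatInfTwo}$ of \cref{lem:two-cat:two-cat-coop} over $\Fin_*$ and cites \cite[Observation 3.2.9]{AGH24} for the exchange of $2$-cocartesian and $2$-cartesian fibrations and of the relevant morphisms; it then checks only that the Segal condition is preserved, because $\coop{(-)}$ preserves finite products, and applies $\iota_1$ at the end, whereas you pass to underlying categories first. The variance issue you flag as the main obstacle is harmless: over the $1$-category $\Fin_*$ the mapping categories of the base are anima, so the conditions on $2$-morphisms reduce to invertibility, and only the $1$-morphism variance matters, which $\op{(-)}$ handles.
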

\begin{proof}
    Consider the equivalence $\coop{(-)} \colon \TwoCatInfTwo \xrightarrow{\simeq} \co{\TwoCatInfTwo}$ from \Cref{lem:two-cat:two-cat-coop}.
    Slicing over $\Fin_*$, this induces an equivalence 
    $$\coop{(-)} \colon (\TwoCatInfTwo)_{/\Fin_*} \xrightarrow{\simeq} \co{(\TwoCatInfTwo)}_{/\op{\Fin_*}},$$
    noting that $\coop{\Fin_*} = \op{\Fin_*}$, since $\Fin_*$ is a category.

    We show that it restricts to an equivalence
    $$\coop{(-)} \colon \TwoCAlgLax{\TwoCatInfTwo} \xrightarrow{\simeq} \co{\TwoCAlgOpLax{\TwoCatInfTwo}}.$$
    Then the claim follows by applying $\iota_1$ to this equivalence.
    By \cite[Observation 3.2.9]{AGH24} we only have to show that $\coop{(-)}$ preserves the property of satisfying the Segal condition.
    But this follows directly from the fact that $\coop{(-)}$ preserves finite products (as it is an equivalence).
\end{proof}

\begin{prop} \label{lem:2-sym:adjunction-in-CAlgLax}
    Let $F \colon \TwoCat C^\otimes \to \TwoCat D^\otimes \in \TwoCAlgLax{\TwoCatInfTwo}$ be a lax symmetric monoidal functor.
    Assume that $F$ is actually strong monoidal, and that the underlying functor $F \colon \TwoCat C \to \TwoCat D$ admits a right adjoint.
    Then $F$ admits a lax monoidal right adjoint $G \colon \TwoCat D^\otimes \to \TwoCat C^\otimes$
    (i.e.\ a right adjoint in the $2$-category $\TwoCAlgLax{\TwoCatInfTwo}$),
    such that the underlying functor $G \colon \TwoCat D \to \TwoCat C$ is right adjoint to $F$.
\end{prop}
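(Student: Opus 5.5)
We follow the classical doctrinal-adjunction argument (as in \cite[Corollary 7.3.2.7]{higheralgebra} for $1$-categories), transported to $2$-cocartesian fibrations over $\Fin_*$. View $F$ as a morphism $p_{\TwoCat C} \to p_{\TwoCat D}$ of $2$-cocartesian fibrations over $\Fin_*$ satisfying the Segal condition. Since $F$ is strong monoidal, it preserves all $2$-cocartesian $1$-morphisms. The plan is to first construct a relative right adjoint $G \colon \TwoCat D^\otimes \to \TwoCat C^\otimes$ over $\Fin_*$, that is, a right adjoint to $F$ in $(\TwoCatInfTwo)_{/\Fin_*}$. We then check that $G$ preserves $2$-cocartesian lifts of inert morphisms. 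Finally, since $\TwoCAlgLax{\TwoCatInfTwo}$ is a $2$-full sub-$2$-category of $(\TwoCatInfTwo)_{/\Fin_*}$, the unit and counit in the slice automatically lie in $\TwoCAlgLax{\TwoCatInfTwo}$, so the adjunction restricts.

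For the first step, restrict to the fibers. By the Segal condition, the fiber of $F$ over $\langle n \rangle$ is identified with $F^{\times n} \colon \TwoCat C^{\times n} \to \TwoCat D^{\times n}$. This has the right adjoint $G^{\times n}$, where $G$ is the given right adjoint of the underlying functor. We then invoke a relative adjoint functor criterion for $2$-cocartesian fibrations: a morphism of $2$-cocartesian fibrations over a $1$-category that preserves cocartesian $1$-morphisms, and is fiberwise a left adjoint, admits a right adjoint relative to the base. Such a criterion is available in the fibrational framework of \cite{AGH24}, e.g.\ via straightening. Under straightening, $F$ is a strong natural transformation of functors $\Fin_* \to \TwoCatInfTwo$, and its componentwise right adjoints assemble into a lax natural transformation, the mates. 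Unstraightening this lax transformation gives a functor over $\Fin_*$ between the cocartesian fibrations. That functor is right adjoint to $F$ in the slice, and it preserves only those cocartesian edges over morphisms $\alpha$ whose mate square $G_{\langle m\rangle}\alpha_! \Rightarrow \alpha_! G_{\langle n\rangle}$ is invertible.

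For inert $\alpha \colon \langle n \rangle \to \langle m \rangle$, the functor $\alpha_!$ is, under the Segal equivalences, a product projection. Hence $\alpha_! G^{\times n} \cong G^{\times m} \alpha_!$ via the canonical map, so the mate is an equivalence. Therefore $G$ preserves inert cocartesian $1$-morphisms, which makes it lax symmetric monoidal. By construction its underlying functor, the fiber over $\langle 1 \rangle$, is the given right adjoint. The lax structure maps $G(X)\otimes G(Y) \to G(X\otimes Y)$ are the mates of the active morphism $\mu$, and these need not be invertible.

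The main obstacle is the first step: a rigorous relative adjoint functor theorem for $2$-cocartesian fibrations. This requires mates to be coherently functorial in the $(\infty,2)$-setting, so that componentwise right adjoints of a strong transformation form a lax transformation. I would first try to cite the mate correspondence of \cite{haugseng2021laxtransformationsadjunctionsmonads}, which handles adjunctions in functor $2$-categories with lax transformations, together with the identification of lax transformations with functors over $\Fin_*$ between the unstraightened fibrations \cite{AGH24}.
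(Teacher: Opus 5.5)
Your proposal follows the paper's proof: the paper also gets $F \dashv G$ in $(\TwoCatInfTwo)_{/\Fin_*}$ from the fiberwise identification $F_{\langle n\rangle}\simeq F^{\times n}$ by citing the opposite version of \cite[Proposition 4.2.5]{AGH24}, whose cartesian-enrichment hypothesis is automatic because $\Fin_*$ is a $1$-category, then checks the inert mates, and finally restricts unit and counit to the $2$-full sub-$2$-category $\TwoCAlgLax{\TwoCatInfTwo}$. Two small corrections: the mate goes $\alpha_! G_{\langle n\rangle} \Rightarrow G_{\langle m\rangle}\alpha_!$, not the direction you wrote; and your ``via the canonical map'' needs the justification the paper gives, namely that coordinatewise the mate is $\tilde G\varepsilon\circ\eta\tilde G$, which is the identity by the triangle identity.
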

\begin{proof}
    Choose an adjunction $F \dashv \tilde{G}$.
    By the Segal condition, for every $n \ge 0$, the functor on the fiber
    $F_{\nset} \colon \TwoCat C^\otimes_{\nset} \to \TwoCat D^\otimes_{\nset}$
    identifies with
    $F^{\times n} \colon \TwoCat C^{\times n} \to \TwoCat D^{\times n}$.
    It therefore admits a right adjoint equivalent to $\tilde{G}^{\times n}$.

    Since $F$ is strong symmetric monoidal, it preserves $2$-cocartesian $1$-morphisms.
    Hence, the fiberwise criterion for adjunctions, namely the opposite
    version of \cite[Proposition 4.2.5]{AGH24}, gives an adjunction
    $$F \dashv G \qquad\text{in}\qquad (\TwoCatInfTwo)_{/\Fin_*}.$$
    Here, the projections to $\Fin_*$ are $2$-cocartesian fibrations, and the
    cartesian-enrichment condition in that criterion is automatic because
    $\Fin_*$ is an ordinary category.
    On each fiber, this adjunction identifies with
    $F^{\times n} \dashv \tilde{G}^{\times n}$.
    Write $\eta_{\nset}$ and $\varepsilon_{\nset}$ for its fiberwise unit
    and counit. Under the Segal equivalences, these identify with the products
    of the unit and counit of $F \dashv \tilde{G}$.

    We claim that $G$ preserves $2$-cocartesian $1$-morphisms lying over inert
    morphisms. Let $\alpha \colon \nset \to \mset$ be inert.
    Write $\alpha_! \colon \TwoCat C^\otimes_{\nset} \to \TwoCat C^\otimes_{\mset}$
    and similarly $\alpha_! \colon \TwoCat D^\otimes_{\nset} \to \TwoCat D^\otimes_{\mset}$
    for the induced cocartesian transport functors.
    Since $F$ preserves $2$-cocartesian $1$-morphisms, there is a natural equivalence
    $$\alpha_!F_{\nset} \simeq F_{\mset}\alpha_!.$$
    Its mate under the fiberwise adjunctions is the natural transformation
    $\kappa_\alpha \colon \alpha_!G_{\nset} \to G_{\mset}\alpha_!$
    given by the composite
    $$\alpha_!G_{\nset} \xrightarrow{\eta_{\mset}} G_{\mset}F_{\mset}\alpha_!G_{\nset} \simeq G_{\mset}\alpha_!F_{\nset}G_{\nset} \xrightarrow{G_{\mset}\alpha_!\varepsilon_{\nset}} G_{\mset}\alpha_!.$$

    Since $\alpha$ is inert, for each $1 \le j \le m$ there is a unique
    $i_j$ with $\alpha(i_j)=j$.
    Under the Segal equivalences, the transport functor $\alpha_!$
    is the coordinate projection
    $$(x_1,\ldots,x_n) \longmapsto (x_{i_1},\ldots,x_{i_m}).$$
    Thus, at an object $y=(y_1,\ldots,y_n)$, both the source and target of
    $\kappa_\alpha(y)$ identify with
    $(\tilde{G}y_{i_1},\ldots,\tilde{G}y_{i_m})$.
    In the $j$th coordinate, $\kappa_\alpha(y)$ identifies with
    $$\tilde{G}y_{i_j} \xrightarrow{\eta_{\tilde{G}y_{i_j}}} \tilde{G}F\tilde{G}y_{i_j} \xrightarrow{\tilde{G}\varepsilon_{y_{i_j}}} \tilde{G}y_{i_j},$$
    which is equivalent to the identity by the triangle identity.
    Consequently, $\kappa_\alpha$ is an equivalence.
    If $m=0$, the same conclusion follows because the target fibers
    are terminal.

    Now let $f \colon y \to y'$ be a cocartesian morphism over $\alpha$.
    Using $y' \simeq \alpha_!y$, the morphism $G(f)$ factors as
    $$G_{\nset}y \longrightarrow \alpha_!G_{\nset}y \xrightarrow{\kappa_\alpha(y)} G_{\mset}\alpha_!y \simeq G_{\mset}y',$$
    where the first morphism is cocartesian.
    Since $\kappa_\alpha(y)$ is an equivalence, $G(f)$ is cocartesian.
    This proves that $G$ is lax symmetric monoidal.

    Finally, the unit and counit of the adjunction over $\Fin_*$
    are monoidal natural transformations: they are transformations
    over $\Fin_*$ between functors preserving $2$-cocartesian $1$-morphisms over inert maps.
    Their triangle identities therefore also hold in
    $\TwoCAlgLax{\TwoCatInfTwo}$.
    Hence $F \dashv G$ in that $2$-category.
    Restriction to the fiber over $\langle 1\rangle$ recovers
    the adjunction $F \dashv \tilde{G}$.
\end{proof}

We end this subsection by supplying a source of symmetric monoidal $2$-categories:
\begin{exmpl} \label{exmpl:2-sym:cartesian-monoidal}
    Let $\TwoCat C$ be a $2$-category which admits finite products\footnote{In the $2$-categorical sense, i.e.\ $2$-categorical limits indexed by finite sets (or equivalently, $2$-categorical lax limits indexed over finite sets since there are no non-identity maps in the indexing categories)}.
    Then there exists by \cite[Example 2.2.5]{blans2025chainrule} a symmetric monoidal structure on $\TwoCat C$, denoted by $\defnot{\TwoCat C^\times}$,
    called the \defemphindex{cartesian symmetric monoidal structure}{$2$-category!cartesian symmetric monoidal},
    such that the tensor functor is given by the product functor $- \times - \colon \TwoCat C \times \TwoCat C \to \TwoCat C$.
\end{exmpl}

Specializing this, we obtain:
\begin{exmpl}
    When we consider $\TwoCatInf$ and $\TwoCatInfTwo$ as symmetric monoidal $2$-categories,
    we will always do this via the cartesian symmetric monoidal structure of \cref{exmpl:2-sym:cartesian-monoidal}.
\end{exmpl}

%% file: symmetric-monoidal/pointwise.tex
\subsection{Pointwise symmetric monoidal structures} \label{sec:sym-pointwise}
In this subsection we discuss the pointwise symmetric monoidal structure on 
$\TwoFun{\TwoCat C}{\TwoCat D^\otimes}$, $\FunLax{\TwoCat C}{\TwoCat D^\otimes}$ and $\FunOpLax{\TwoCat C}{\TwoCat D^\otimes}$,
its functoriality, and its interaction with other constructions.

\begin{prop} \label{lem:2-sym:pointwise-structure-functoriality}
    Let $\epsilon, \delta \in \oplaxvarset$.
    There exists a functor
    $$\textcolor{purple}{\FunEps{-}{-}^{\ptw}} \colon \op{\CatInfTwo} \times \CAlgDelta{\TwoCatInfTwo} \to \CAlgDelta{\TwoCatInfTwo},$$
    such that the diagram
    \begin{center}
        \begin{tikzcd}
            \op{\CatInfTwo} \times \CAlgDelta{\TwoCatInfTwo} \ar[d] \ar[rr, "{\FunEps{-}{-}^{\ptw}}"] &&\CAlgDelta{\TwoCatInfTwo} \ar[d] \\
            \op{\CatInfTwo} \times \CatInfTwo \ar[rr, "{\FunEps{-}{-}}"'] &&\CatInfTwo
        \end{tikzcd}
    \end{center}
    commutes.
    Here, the vertical functors are the forgetful functors of \cref{rmk:2sym:forgetf-functor}.
\end{prop}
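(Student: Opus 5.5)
The plan is to realise the pointwise structure on the fibration side, so that we work entirely with $2$-(co)cartesian fibrations over $\Fin_*$ (resp.\ $\op{\Fin_*}$) and never with explicit tensor products. I treat $\delta \in \{\mathrm{lax}, \emptyset\}$ first. The case $\delta = \mathrm{oplax}$ is then obtained by conjugating with the equivalence $\coop{(-)} \colon \CAlgLax{\TwoCatInfTwo} \simeq \CAlgOpLax{\TwoCatInfTwo}$ of \cref{lem:2-sym:oplax-lax-CAlg-coop-equivalence}. This conjugation also uses \cref{lem:lax-nat-transf:coop-lax-oplax} and \cref{lem:two-cat:coop}, which identify $\coop{\FunEps{\TwoCat C}{\TwoCat D}}$ with $\FunEps{\coop{\TwoCat C}}{\coop{\TwoCat D}}$ (resp.\ with the opposite variance $\mathrm{op}(\epsilon)$, depending on the bookkeeping). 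Composing with $\coop{(-)}$ on $\op{\CatInfTwo}$ as well, the composite functor then has the right underlying functor.

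For a $2$-category $\TwoCat C$ and a $2$-cocartesian fibration $p \colon \TwoCat D^\otimes \to \Fin_*$, define $\FunEps{\TwoCat C}{\TwoCat D}^{\ptw,\otimes}$ as the pullback
$$\FunEps{\TwoCat C}{\TwoCat D^\otimes} \times_{\FunEps{\TwoCat C}{\Fin_*}} \Fin_*,$$
taken along the constant-diagram functor $\Fin_* \to \FunEps{\TwoCat C}{\Fin_*}$. All ingredients are functorial in $(\TwoCat C, p)$, since $\FunEps{-}{-}$ is a bifunctor, so this gives a functor $\op{\CatInfTwo} \times (\CatInfTwo)_{/\Fin_*} \to (\CatInfTwo)_{/\Fin_*}$. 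Its fiber over $\langle 1 \rangle$ is $\FunEps{\TwoCat C}{\TwoCat D}$: indeed, since $\FunEps{\TwoCat C}{-}$ preserves limits (\cref{lem:lax-arrows:commutes-with-limits}), the fiber is $\FunEps{\TwoCat C}{\TwoCat D^\otimes_{\langle1\rangle}}$ once one checks that $\FunEps{\TwoCat C}{\{\langle 1\rangle\}} \to \FunEps{\TwoCat C}{\Fin_*}$ pulls back correctly. This gives the commutativity of the square in the statement. The same limit preservation gives the Segal condition, because $\FunEps{\TwoCat C}{\TwoCat D^{\times n}} \simeq \FunEps{\TwoCat C}{\TwoCat D}^{\times n}$.

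The remaining, and main, step is to show the following two things.
\begin{itemize}
\item The projection to $\Fin_*$ is again a $2$-cocartesian fibration.
\item Precomposition/postcomposition preserve (inert) cocartesian $1$-morphisms.
\end{itemize}
Then the functor restricts to $\CAlgDelta{\TwoCatInfTwo}$. My approach is to construct cocartesian lifts pointwise: given $\alpha \colon \nset \to \mset$ and $X \colon \TwoCat C \to \TwoCat D^\otimes_{\nset}$, postcompose with the straightened transport $\alpha_! \colon \TwoCat D^\otimes_{\nset} \to \TwoCat D^\otimes_{\mset}$. The natural transformation $X \to \alpha_! X$ comes from the cocartesian edges. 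Cocartesianness in the functor $2$-category should follow because mapping categories in $\FunEps{\TwoCat C}{-}$ over a $1$-category base are computed as ends/lax ends of mapping categories in the target. Concretely, I would instead argue via straightening: under \cite{AGH24}, $2$-cocartesian fibrations over $\Fin_*$ correspond to functors $\Fin_* \to \TwoCatInfTwo$. One would then like the construction to correspond to postcomposition with $\FunEps{\TwoCat C}{-} \colon \TwoCatInfTwo \to \TwoCatInfTwo$, which preserves products, so commutative monoids go to commutative monoids. Lax morphisms (natural transformations over $\Fin_*$ preserving inert cocartesian edges) correspond to lax natural transformations of such functors. The issue is that $\FunEps{\TwoCat C}{-}$ is only a functor on $\CatInfTwo$ and must be promoted to a $2$-functor on $\TwoCatInfTwo$ (via currying, \cref{lem:lax-arrow:lax-strong-comparison}). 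Identifying the pullback construction with the straightened one is the technical heart, and I expect it to be the main obstacle. The preservation of inert cocartesian edges by the functoriality maps is then immediate from the pointwise description of the lifts.
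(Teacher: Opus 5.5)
\textbf{There is a genuine gap: the step you call the technical heart is never proved.}

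Your construction is the same as the paper's: pull back $\FunEps{\TwoCat C}{\TwoCat D^\otimes}\to\FunEps{\TwoCat C}{\Fin_*}$ along the constant functor $\Fin_*\to\FunEps{\TwoCat C}{\Fin_*}$. Your computation of the fiber over $\langle 1\rangle$ is fine. So is the reduction of $\delta=\mathrm{oplax}$ to $\delta=\mathrm{lax}$ via $\coop{(-)}$; note that \cref{lem:lax-nat-transf:coop-lax-oplax} preserves the variance $\epsilon$, so no flip occurs.

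What is missing is a proof that the projection
$$\overline p\colon \Fin_*\times_{\FunEps{\TwoCat C}{\Fin_*}}\FunEps{\TwoCat C}{\TwoCat D^\otimes}\to\Fin_*$$
is a $2$-cocartesian fibration, together with an identification of its cocartesian $1$-morphisms. Without this, neither the restriction to $\CAlgDelta{\TwoCatInfTwo}$ on objects nor its restriction on morphisms follows. Your Segal argument also silently uses that cocartesian transport in $\overline p$ is postcomposition with $\alpha_!$.

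Your first sketch, pointwise lifts $X\to\alpha_!X$, is the right idea. But the claim that these lifts are cocartesian ``because mapping categories are ends/lax ends'' is exactly the nontrivial content. You would have to show that every $\epsilon$-natural transformation $X\to Z$ over $\beta\alpha$ factors essentially uniquely, on mapping categories, through $X\to\alpha_!X$. This includes its possibly non-invertible naturality $2$-cells and all higher coherences.

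Your second sketch, via straightening, does not close the gap either. It would need three inputs you do not supply:
\begin{enumerate}
\item a coherent $2$-functor $\FunEps{\TwoCat C}{-}\colon\TwoCatInfTwo\to\TwoCatInfTwo$, natural in $\TwoCat C$;
\item a lax straightening statement identifying morphisms of $\CAlgLax{\TwoCatInfTwo}$ with suitable lax transformations;
\item a comparison with the pullback model.
\end{enumerate}
The third input presupposes that the pullback is already a cocartesian fibration.

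\textbf{The missing ingredient} is a fibrational fact that makes straightening unnecessary. If $p\colon\TwoCat D\to\TwoCat B$ is a $2$-cocartesian fibration, then $p_*\colon\FunEps{\TwoCat C}{\TwoCat D}\to\FunEps{\TwoCat C}{\TwoCat B}$ is again a $2$-cocartesian fibration. Moreover, a $1$-morphism $t$ is $2$-cocartesian if and only if each component $t_X$ is $2$-cocartesian and all $\epsilon$-naturality squares of $t$ have cartesian $2$-cells. The paper takes this from Abell\'an et al.\ (free fibrations paper, Corollary 3.5.1 and Remark 3.5.2 for $\epsilon=\mathrm{lax}$, the co-dual for $\mathrm{oplax}$, Corollary 3.5.6 for $\epsilon=\emptyset$).

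With this fact the argument closes as follows:
\begin{enumerate}
\item $2$-cocartesian fibrations are stable under base change, so $\overline p$ is $2$-cocartesian. Its cocartesian edges are exactly those whose image in $\FunEps{\TwoCat C}{\TwoCat D^\otimes}$ is cocartesian.
\item The naturality cells of such edges lie over $2$-cells of the $1$-category $\Fin_*$, where cartesian means invertible.
\item Now let $F$ preserve $2$-cocartesian edges over inert maps. Then $\FunEps{\TwoCat C}{F}$ sends pointwise-cocartesian edges over inerts to pointwise-cocartesian ones. Being a functor, it also preserves invertibility of the naturality cells.
\end{enumerate}
You should replace both sketches by this statement, or by a proof of it.
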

\begin{proof}
    We prove the statement for $\delta = \mathrm{lax}$, the proof for $\delta = \mathrm{oplax}$ and $\delta = \emptyset$ is analogous.
    For every $2$-category $\TwoCat C$,
    consider the functor $\FunEps{\TwoCat C}{-} \colon \TwoCatInfTwo \to \TwoCatInfTwo$.
    Slicing over $\Fin_*$, we obtain a functor 
    $$\FunEps{\TwoCat C}{-} \colon (\TwoCatInfTwo)_{/\Fin_*} \to (\TwoCatInfTwo)_{/\FunEps{\TwoCat C}{\Fin_*}}.$$
    Consider the canonical diagonal functor $\Fin_* \to \FunEps{\TwoCat C}{\Fin_*}$. Pulling back along this functor gives 
    $$\Fin_* \times_{\FunEps{\TwoCat C}{\Fin_*}} \FunEps{\TwoCat C}{-} \colon (\TwoCatInfTwo)_{/\Fin_*} \to (\TwoCatInfTwo)_{/\Fin_*}.$$
    We claim that if we restrict this functor to $\CAlgLax{\TwoCatInfTwo}$, we land again in $\CAlgLax{\TwoCatInfTwo}$.
    Indeed, if $p \colon \TwoCat{D} \to \Fin_*$ is a $2$-cocartesian fibration then $p_* \colon \FunEps{\TwoCat{C}}{\TwoCat{D}} \to \FunEps{\TwoCat{C}}{\Fin_*}$ is also a $2$-cocartesian fibration by \cite[Corollary 3.5.1 and Remark 3.5.2]{abellán2026freefibrationslaxcolimits} for $\epsilon = \operatorname{lax}$, 
    the co-dual version of \loccit for $\epsilon = \operatorname{oplax}$ (using that $\co{\Fin_*} = \Fin_*$), and \cite[Corollary 3.5.6]{abellán2026freefibrationslaxcolimits} for $\epsilon = \emptyset$.
    As $2$-cocartesian fibrations are stable under base change, $\overline{p} \coloneqq \Fin_* \times_{\FunEps{\TwoCat{C}}{\Fin_*}} \FunEps{\TwoCat{C}}{p}$ is a $2$-cocartesian fibration as well.
    The fiber of $\overline{p}$ over $\langle n \rangle$ is equivalent to $\FunEps{\TwoCat{C}}{\TwoCat{D}_{\langle n \rangle}}$. 
    As $\FunEps{\TwoCat{C}}{-}$ preserves finite products by \Cref{lem:lax-arrows:commutes-with-limits}, we see that $\overline{p}$ satisfies the Segal condition if $p$ does so.
    It remains to show that if $F^\otimes \colon \TwoCat{D}^\otimes \to \TwoCat{E}^\otimes$ is a lax symmetric monoidal functor, so is $\Fin_* \times_{\FunEps{\TwoCat{C}}{\Fin_*}} \FunEps{\TwoCat{C}}{F}$. 
    Denote the unstraightened functor over $\Fin_*$ by $F \colon \TwoCat{D} \to \TwoCat{E}$. 
    By assumption, $F$ preserves $2$-cocartesian $1$-morphisms over inert morphisms. 
    Consider $\FunEps{\TwoCat{C}}{F} \colon \FunEps{\TwoCat{C}}{\TwoCat{D}} \to \FunEps{\TwoCat{C}}{\TwoCat{E}}$. 
    By \cite[Remark 3.5.2]{abellán2026freefibrationslaxcolimits}, a $1$-morphism $t \colon \Phi \to \Psi$ in $\FunEps{\TwoCat{C}}{\TwoCat{D}}$ (resp. $\FunEps{\TwoCat{C}}{\TwoCat{E}}$) is $2$-cocartesian if and only if 
    for each object $X \in \TwoCat{C}$ the $1$-morphism $t_X \colon \Phi(X) \to \Psi(X)$ is $2$-cocartesian in $\TwoCat{D}$ (resp. $\TwoCat{E}$) and 
    all $\epsilon$ naturality squares contain cartesian $2$-morphisms. 
    As $F$ preserves $2$-cocartesian $1$-morphisms over inert morphisms, we see that in this case for each object $X \in \TwoCat{C}$, the $1$-morphism $F(t_X) = (F t)_X \colon F\Phi(X) \to F\Psi(X)$ is $2$-cocartesian.
    Moreover, after pulling back to the $1$-category $\Fin_*$ the condition of being a cartesian $2$-morphism is equivalent to being an equivalence. 
    This property is preserved by every functor.
    Thus, the functor $\Fin_* \times_{\FunEps{\TwoCat{C}}{\Fin_*}} \FunEps{\TwoCat{C}}{F}$ again preserves $2$-cocartesian $1$-morphisms over inert morphisms.
    This finishes the proof of the restriction of the functor $\Fin_* \times_{\FunEps{\TwoCat{C}}{\Fin_*}} \FunEps{\TwoCat{C}}{-}$ to a functor $\CAlgLax{\TwoCatInfTwo} \to \CAlgLax{\TwoCatInfTwo}$.

    Hence, by restricting (and since the above construction was functorial in $\TwoCat C$), we obtain a functor 
    $$\FunEps{-}{-}^{\ptw} \colon \op{\CatInfTwo} \times \CAlgLax{\TwoCatInfTwo} \to \CAlgLax{\TwoCatInfTwo}.$$
    It is left to show that the underlying $2$-category is given by $\FunEps{-}{-}$.
    One can recover the underlying $2$-category of a symmetric monoidal $2$-category by further pullback along the functor $\Delta^0 \to \Fin_*$
    selecting $\langle1\rangle$.
    Hence, we get that the underlying category is 
    \begin{equation*}
        \Delta^0 \times_{\Fin_*} \Fin_* \times_{\FunEps{\TwoCat C}{\Fin_*}} \FunEps{\TwoCat C}{\TwoCat D^\otimes} \cong \FunEps{\TwoCat C}{\Delta^0 \times_{\Fin_*} \TwoCat D^\otimes} = \FunEps{\TwoCat C}{\TwoCat D},
    \end{equation*}
    where we used that $\Fun{\TwoCat C}{\Delta^0} = \Delta^0$, and that $\FunEps{-}{-}$
    preserves limits in the second variable, cf.\ \cref{lem:lax-arrows:commutes-with-limits}.
\end{proof}

\begin{rmk}
    We suppress the variance $\delta$ in the notation of $\FunEps{-}{-}^{\ptw}$.
    This is harmless, as one can show that these functors are compatible with the inclusions 
    $\CAlg{\TwoCatInfTwo} \hookrightarrow \CAlgLax{\TwoCatInfTwo}$ and $\CAlg{\TwoCatInfTwo} \hookrightarrow \CAlgOpLax{\TwoCatInfTwo}$.
    Most of the time we will only consider the functoriality of $\FunEps{-}{-}^{\ptw}$ in $\CAlgLax{\TwoCatInfTwo}$.
\end{rmk}

\begin{lem} \label{lem:pointwise:cartesian-pointwise}
    Let $\epsilon \in \oplaxvarset$, and let $\TwoCat C$ and $\TwoCat I$ be $2$-categories, and assume that $\TwoCat C$ admits finite products.
    Then the pointwise symmetric monoidal structure $\FunEps{\TwoCat I}{\TwoCat C^\times}^\ptw$ is the cartesian symmetric monoidal structure.
\end{lem}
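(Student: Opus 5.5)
To show that $\FunEps{\TwoCat I}{\TwoCat C^\times}^\ptw$ is the cartesian structure, I would use a recognition principle. A symmetric monoidal $2$-category (a $2$-cocartesian fibration $\TwoCat{E}^\otimes \to \Fin_*$ satisfying the Segal condition) is cartesian precisely when two things hold. First, its underlying $2$-category has finite products. Second, the tensor product and unit are computed by those products. More precisely, for every $X, Y$ the maps $X \otimes Y \to X$ and $X \otimes Y \to Y$, obtained from cocartesian transport along the two maps $\langle 2 \rangle \to \langle 1 \rangle$ that are "inert up to the unit", exhibit $X \otimes Y$ as a product. In addition, the unit must be terminal. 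This is the $2$-categorical analogue of \cite[Proposition 2.4.1.7]{higheralgebra}, and I expect it to be available in the framework of \cite[Example 2.2.5]{blans2025chainrule}. Equivalently, one can phrase it as: the cartesian structure is the unique symmetric monoidal structure whose unit is terminal and whose projection maps exhibit binary tensors as products. The first step is therefore to state and justify this recognition criterion, reducing it to the $1$-categorical statement on underlying mapping categories where needed. This is the step I expect to be the main obstacle, since the paper has not yet recorded such a uniqueness statement for $2$-categories.

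The second step checks the criterion for the pointwise structure. By \cref{lem:2-sym:pointwise-structure-functoriality}, the underlying $2$-category is $\FunEps{\TwoCat I}{\TwoCat C}$. The fiber over $\nset$ is $\FunEps{\TwoCat I}{\TwoCat C^\times_{\nset}}$, and by the Segal condition this is $\FunEps{\TwoCat I}{\TwoCat C^{\times n}} \simeq \FunEps{\TwoCat I}{\TwoCat C}^{\times n}$, using \cref{lem:lax-arrows:commutes-with-limits}. Cocartesian transport along $\mu\colon\langle 2\rangle\to\langle1\rangle$ is postcomposition with $-\times-\colon \TwoCat C \times \TwoCat C \to \TwoCat C$. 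This follows from the description of $2$-cocartesian morphisms in $\FunEps{\TwoCat I}{\TwoCat{C}^\times}$ as pointwise cocartesian ones, as recalled in the proof of \cref{lem:2-sym:pointwise-structure-functoriality} via \cite[Remark 3.5.2]{abellán2026freefibrationslaxcolimits}. In the same way, the unit is the constant functor at the terminal object.

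It then remains to see that postcomposition with the product functor computes products in $\FunEps{\TwoCat I}{\TwoCat C}$, and that the constant terminal functor is terminal. The product in $\TwoCat C$ is the right adjoint to the diagonal $\TwoCat C \to \TwoCat C\times\TwoCat C$. The $2$-functor $\FunEps{\TwoCat I}{-}$ preserves adjunctions and products, by \cref{lem:lax-arrows:commutes-with-limits} together with $2$-functoriality. Hence postcomposition with $\times$ is right adjoint to the diagonal of $\FunEps{\TwoCat I}{\TwoCat C}$, which is exactly the statement that it computes binary products. The same argument applied to $\TwoCat C \to \Delta^0$, together with $\FunEps{\TwoCat I}{\Delta^0}\simeq\Delta^0$, shows that the constant terminal functor is terminal. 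Finally, the projections arising from the monoidal structure agree with the counit projections, because both are obtained by postcomposing with the corresponding projections of $\TwoCat C$. The recognition principle then gives the claimed equivalence with the cartesian structure.
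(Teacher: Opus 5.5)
Your proposal follows essentially the same route as the paper: you identify the $n$-ary tensor operations, projections and unit of $\FunEps{\TwoCat I}{\TwoCat C^\times}^\ptw$ with pointwise finite products and the constant terminal functor in $\FunEps{\TwoCat I}{\TwoCat C}$, then conclude from the universal property (uniqueness) of cartesian structures; your adjunction argument for why pointwise products are products simply fills in the paper's unproved ``computed pointwise''. The $2$-categorical recognition principle you flag as the main obstacle is exactly what the paper's final sentence (``the coherence maps agree with the cartesian ones by the universal property of products'') invokes without further proof, so the paper supplies no ingredient that you are missing.
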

\begin{proof}
    Finite products in $\FunEps{\TwoCat I}{\TwoCat C}$ exist and are computed pointwise, with projections obtained by applying $\FunEps{\TwoCat I}{-}$ to the product projections in $\TwoCat C$.
    By the construction in \cref{lem:2-sym:pointwise-structure-functoriality}, these are precisely the $n$-ary tensor operations and projections of $\FunEps{\TwoCat I}{\TwoCat C^\times}^{\ptw}$.
    This includes $n=0$, so its tensor unit is terminal.
    The coherence maps agree with the cartesian ones by the universal property of products, proving the claim.
\end{proof}

We have the following interaction of $\coop{(-)}$ with the pointwise symmetric monoidal structure:
\begin{lem} \label{lem:2-sym:pointwise-coop-interchange-law}
    Let $\TwoCat C$ be a $2$-category, $\TwoCat D^\otimes$ be a symmetric monoidal $2$-category,
    and $\epsilon \in \oplaxvarset$.
    Then there is a natural equivalence 
    $$\coop{(\FunEps{\TwoCat C}{\TwoCat D^\otimes}^{\ptw})} \cong \FunEps{\coop{\TwoCat C}}{\coop{(\TwoCat{D}^\otimes)}}^{\ptw},$$
    such that on underlying $2$-categories it recovers the equivalence of \Cref{lem:lax-nat-transf:coop-lax-oplax}. More formally,
    the following diagram commutes:
    \begin{center}
        \begin{tikzcd}[column sep=huge]
            \op{\CatInfTwo} \times \CAlgLax{\TwoCatInfTwo} \ar[r, "\FunEps{-}{-}^\ptw"] \ar[d, "\coop{(-)} \times \coop{(-)}"'] & \CAlgLax{\TwoCatInfTwo} \ar[d, "\coop{(-)}"] \\
            \op{\CatInfTwo} \times \CAlgOpLax{\TwoCatInfTwo} \ar[r, "\FunEps{-}{-}^\ptw"'] & \CAlgOpLax{\TwoCatInfTwo} \rlap{.}
        \end{tikzcd}
    \end{center}
\end{lem}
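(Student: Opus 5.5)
The plan is to compare the two constructions step by step, using the explicit definition of $\FunEps{-}{-}^{\ptw}$ from the proof of \cref{lem:2-sym:pointwise-structure-functoriality}. Recall that this construction starts from a $2$-cocartesian fibration $p \colon \TwoCat D^\otimes \to \Fin_*$ and applies $\FunEps{\TwoCat C}{-}$. It then pulls back along the diagonal $\Fin_* \to \FunEps{\TwoCat C}{\Fin_*}$. On the oplax side the analogous construction pulls back along $\op{\Fin_*} \to \FunEps{\coop{\TwoCat C}}{\op{\Fin_*}}$.

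First I extend the natural equivalence of \cref{lem:lax-nat-transf:coop-lax-oplax} to the slice setting. By that lemma (and \cref{lem:two-cat:coop} for $\epsilon=\emptyset$) there is an equivalence
$\coop{\FunEps{\TwoCat C}{\TwoCat X}} \cong \FunEps{\coop{\TwoCat C}}{\coop{\TwoCat X}}$. It is natural in $\TwoCat C$ and in $\TwoCat X$, since it was produced by a Yoneda argument natural in all variables. Applying it to the morphism $p$ gives a commutative square identifying $\coop{(p_*)}$ with $(\coop{p})_* \colon \FunEps{\coop{\TwoCat C}}{\coop{\TwoCat D^\otimes}} \to \FunEps{\coop{\TwoCat C}}{\op{\Fin_*}}$. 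Here I use $\coop{\Fin_*}=\op{\Fin_*}$.

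Next I check that the diagonal functors correspond under this equivalence. The diagonal $\Fin_* \to \FunEps{\TwoCat C}{\Fin_*}$ is adjoint to the projection $\Fin_*\otimes^{\gr}\TwoCat C \to \Fin_*$ (or $\TwoCat C \otimes^{\gr} \Fin_*$ for oplax), equivalently it is precomposition along $\TwoCat C \to \Delta^0$. Naturality in $\TwoCat C$ applied to $\TwoCat C \to \Delta^0$ shows that $\coop{(-)}$ carries this diagonal to the corresponding diagonal for $\coop{\TwoCat C}$. In the Yoneda picture this comes from the fact that \eqref{eq:coop-gray-monoidal} is compatible with the unit. Since $\coop{(-)}$ is an equivalence of $\CatInfTwo$, it preserves pullbacks. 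Hence
$\coop{(\Fin_* \times_{\FunEps{\TwoCat C}{\Fin_*}} \FunEps{\TwoCat C}{\TwoCat D^\otimes})} \cong \op{\Fin_*}\times_{\FunEps{\coop{\TwoCat C}}{\op{\Fin_*}}}\FunEps{\coop{\TwoCat C}}{\coop{\TwoCat D^\otimes}}$ over $\op{\Fin_*}$. All these identifications are natural in $(\TwoCat C,\TwoCat D^\otimes)$, so they assemble into the commuting square of functors $\op{\CatInfTwo}\times\CAlgLax{\TwoCatInfTwo}\to \CAlgOpLax{\TwoCatInfTwo}$. Both composites are restrictions of functors between slices, and \cref{lem:2-sym:oplax-lax-CAlg-coop-equivalence} shows that $\coop{(-)}$ restricts to $\CAlgLax{} \to \CAlgOpLax{}$ as a full-subcategory statement. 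So no further checks on fibrations or morphisms are needed. Finally, pulling back along $\langle 1\rangle$ recovers \cref{lem:lax-nat-transf:coop-lax-oplax} on underlying $2$-categories, because that step just restricts the natural equivalence to fibers.

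The main obstacle is the compatibility with the diagonal. It requires that the Gray-monoidality equivalence of $\coop{(-)}$ is unital and natural enough to intertwine the projections $\TwoCat C \to \Delta^0$. To handle this, I would argue on mapping anima by Yoneda as in \cref{lem:lax-nat-transf:coop-lax-oplax}, with test object $\TwoCat K \otimes^{\gr}\TwoCat C \to \TwoCat K\otimes^{\gr}\Delta^0\cong\TwoCat K$. I would then use that the monoidal structure on $\coop{(-)}$ from \cite[Proposition 2.4.17]{masuda2026algebracategoricalspectra} includes the unit constraint.
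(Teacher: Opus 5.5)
Your proposal is correct and follows essentially the same route as the paper. Both arguments write the pointwise structure as the pullback $\Fin_* \times_{\FunEps{\TwoCat C}{\Fin_*}} \FunEps{\TwoCat C}{\TwoCat D^\otimes}$, use that $\coop{(-)}$ preserves pullbacks, identify the terms naturally in $\TwoCat C$ via \cref{lem:lax-nat-transf:coop-lax-oplax}, and restrict using \cref{lem:2-sym:oplax-lax-CAlg-coop-equivalence}. One thing you make explicit that the paper leaves implicit in its third equivalence is that the diagonal $\op{\Fin_*} \to \FunEps{\coop{\TwoCat C}}{\op{\Fin_*}}$ matches the $\coop{(-)}$ of the original diagonal; you correctly get this from naturality in $\TwoCat C$ applied to $\TwoCat C \to \Delta^0$ together with the unit constraint of the Gray-monoidal structure on $\coop{(-)}$.
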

\begin{proof}
    We use the description of $\FunEps{\TwoCat{C}}{-}^\ptw$ as pullback $\Fin_* \times_{\FunEps{\TwoCat C}{\Fin_*}} \FunEps{\TwoCat C}{-}$ from the proof of \Cref{lem:2-sym:pointwise-structure-functoriality}. 
    We get a chain of equivalences 
    \begin{align*}
        \coop{(\FunEps{\TwoCat C}{-}^\ptw)} &\cong \coop{(\Fin_* \times_{\FunEps{\TwoCat C}{\Fin_*}} \FunEps{\TwoCat{C}}{-})} \\
        &\cong \op{\Fin_*} \times_{\coop{(\FunEps{\TwoCat C}{\Fin_*})}} \coop{(\FunEps{\TwoCat{C}}{-})} \\
        &\cong \op{\Fin_*} \times_{\FunEps{\coop{\TwoCat C}}{\op{\Fin_*}}} \FunEps{\coop{\TwoCat{C}}}{\coop{(-)}} \\
        &\cong \FunEps{\coop{\TwoCat{C}}}{\coop{(-)}}^\ptw .
    \end{align*}
    Here, we used that $\coop{(-)} \colon \CatInfTwo \to \CatInfTwo$ preserves limits (as it is an equivalence by \Cref{lem:two-cat:coop}) in the second equivalence, 
    \Cref{lem:lax-nat-transf:coop-lax-oplax} in the third equivalence, and 
    the definition of $\FunEps{-}{-}^\ptw$ as a functor $\op{\CatInfTwo} \times \CAlgOpLax{\TwoCatInfTwo} \to \CAlgOpLax{\TwoCatInfTwo}$ in the last equivalence.
    Since all of the above is functorial in $\TwoCat{C}$ we get a commutative diagram
    \begin{center}
        \begin{tikzcd}[column sep=huge]
            \op{\CatInfTwo} \times \CAlgLax{\TwoCatInfTwo} \ar[r, "\FunEps{-}{-}^\ptw"] \ar[d, "\coop{(-)} \times \coop{(-)}"'] & \CAlgLax{\TwoCatInfTwo} \ar[d, "\coop{(-)}"] \\
            \op{\CatInfTwo} \times \CAlgOpLax{\TwoCatInfTwo} \ar[r, "\FunEps{-}{-}^\ptw"'] & \CAlgOpLax{\TwoCatInfTwo} \rlap{,}
        \end{tikzcd}
    \end{center}
    where we used \Cref{lem:2-sym:oplax-lax-CAlg-coop-equivalence} for the vertical functors.
\end{proof}

We have the following comparison functor, which lifts the comparison of \cref{lem:lax-arrow:lax-strong-comparison} to pointwise structures:
\begin{prop} \label{lem:2-sym:pointwise-comparison}
    Let $\TwoCat C$ and $\TwoCat E$ be $2$-categories, let $\TwoCat N^\otimes$ be a symmetric monoidal $2$-category,
    and let $\epsilon, \delta \in \oplaxvarset$.
    There exists a symmetric monoidal comparison functor
    $$\FunEps{\TwoCat E}{\TwoFun{\TwoCat C}{\TwoCat N^\otimes}^{\ptw}}^\ptw \to \FunEps{\TwoCat C\times\TwoCat E}{\TwoCat N^\otimes}^{\ptw},$$
    natural in $\TwoCat E, \TwoCat C \in \CatInfTwo$ and $\TwoCat N^\otimes \in \CAlgDelta{\TwoCatInfTwo}$.
    This functor is an equivalence if $\epsilon = \emptyset$.
    On underlying $2$-categories, it agrees with the comparison functor from \cref{lem:lax-arrow:lax-strong-comparison}.
    Moreover, the squares
    \begin{center}
        \begin{tikzcd}
            \TwoFun{\TwoCat E}{\TwoFun{\TwoCat C}{\TwoCat N^\otimes}^{\ptw}}^\ptw \ar[d] \ar[r, "\simeq"] &\TwoFun{\TwoCat C\times\TwoCat E}{\TwoCat N^\otimes}^{\ptw} \ar[d]\\
            \FunLax{\TwoCat E}{\TwoFun{\TwoCat C}{\TwoCat N^\otimes}^{\ptw}}^\ptw \ar[r] &\FunLax{\TwoCat C\times\TwoCat E}{\TwoCat N^\otimes}^{\ptw}
        \end{tikzcd}
    \end{center}
    and
    \begin{center}
        \begin{tikzcd}
            \TwoFun{\TwoCat E}{\TwoFun{\TwoCat C}{\TwoCat N^\otimes}^{\ptw}}^\ptw \ar[d] \ar[r, "\simeq"] &\TwoFun{\TwoCat C\times\TwoCat E}{\TwoCat N^\otimes}^{\ptw} \ar[d]\\
            \FunOpLax{\TwoCat E}{\TwoFun{\TwoCat C}{\TwoCat N^\otimes}^{\ptw}}^\ptw \ar[r] &\FunOpLax{\TwoCat C\times\TwoCat E}{\TwoCat N^\otimes}^{\ptw}
        \end{tikzcd}
    \end{center}
    commute.
\end{prop}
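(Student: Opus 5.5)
We plan to build the comparison functor directly from the pullback description of the pointwise structure in the proof of \cref{lem:2-sym:pointwise-structure-functoriality}, namely
$$\FunEps{\TwoCat C}{\TwoCat N^\otimes}^{\ptw} = \Fin_* \times_{\FunEps{\TwoCat C}{\Fin_*}} \FunEps{\TwoCat C}{\TwoCat N^\otimes},$$
and then to apply the unstructured comparison of \cref{lem:lax-arrow:lax-strong-comparison} fiberwise over $\Fin_*$.

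The first step is to unwind the source. Since $\FunEps{\TwoCat E}{-}$ preserves limits by \cref{lem:lax-arrows:commutes-with-limits}, the source $\FunEps{\TwoCat E}{\TwoFun{\TwoCat C}{\TwoCat N^\otimes}^{\ptw}}^{\ptw}$ becomes an iterated pullback. It is equivalent to
$$\Fin_* \times_{\FunEps{\TwoCat E}{\TwoFun{\TwoCat C}{\Fin_*}}} \FunEps{\TwoCat E}{\TwoFun{\TwoCat C}{\TwoCat N^\otimes}},$$
where the map from $\Fin_*$ is the composite of the diagonals $\Fin_* \to \TwoFun{\TwoCat C}{\Fin_*} \to \FunEps{\TwoCat E}{\TwoFun{\TwoCat C}{\Fin_*}}$. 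Similarly, the target is $\Fin_* \times_{\FunEps{\TwoCat C \times \TwoCat E}{\Fin_*}} \FunEps{\TwoCat C\times\TwoCat E}{\TwoCat N^\otimes}$.

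Next, the currying comparison $\FunEps{\TwoCat E}{\TwoFun{\TwoCat C}{-}} \to \FunEps{\TwoCat C\times \TwoCat E}{-}$ is natural in the last variable. It therefore gives a map of cospans:
\begin{itemize}
\item it applies to $\TwoCat N^\otimes \to \Fin_*$;
\item it is compatible with the diagonal from $\Fin_*$, because the comparison commutes with the strong-to-$\epsilon$ inclusions (the commuting squares in \cref{lem:lax-arrow:lax-strong-comparison}) and is an equivalence for $\epsilon=\emptyset$, where the diagonal is constant in both variables.
\end{itemize}
Taking pullbacks yields a functor over $\Fin_*$. Pulling back further along $\langle 1\rangle$ recovers the underlying comparison, again by \cref{lem:lax-arrows:commutes-with-limits}. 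For $\epsilon=\emptyset$, all three maps of the cospan are equivalences, so the induced functor is an equivalence. Naturality in $\TwoCat C$, $\TwoCat E$ and $\TwoCat N^\otimes$ follows from the naturality of each ingredient. The two commuting squares also follow by pulling back the squares of \cref{lem:lax-arrow:lax-strong-comparison} over $\Fin_*$.

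The main obstacle is checking that the functor lies in $\CAlgDelta{\TwoCatInfTwo}$, i.e.\ that it preserves cocartesian $1$-morphisms, which makes it strong symmetric monoidal. We would use the characterisation from \cite[Remark 3.5.2]{abellán2026freefibrationslaxcolimits}: a morphism in $\FunEps{\TwoCat C\times\TwoCat E}{\TwoCat N}$ is cocartesian iff its components are cocartesian and its $\epsilon$-naturality $2$-cells are cartesian. Over the $1$-category $\Fin_*$, the cartesian $2$-cells are exactly the invertible ones.

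A cocartesian edge in the source has:
\begin{itemize}
\item components in $\TwoFun{\TwoCat C}{\TwoCat N^\otimes}$ that are pointwise cocartesian with invertible naturality cells;
\item invertible $\epsilon$-cells over $\TwoCat E$.
\end{itemize}
Its image has components indexed by $(c,e)$ that are cocartesian. Its naturality cells along a morphism $(f,g)$ are composites of the $\TwoCat C$-cells and the $\TwoCat E$-cells, hence invertible. The $\delta$ variance plays no role, since only inert or all cocartesian edges are involved, and these are treated identically. The oplax-$\delta$ case reduces to the lax case by \cref{lem:2-sym:pointwise-coop-interchange-law}.
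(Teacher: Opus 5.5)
Your proposal is correct and follows essentially the paper's route: use the pullback description of the pointwise structure, apply the currying comparison of \cref{lem:lax-arrow:lax-strong-comparison} naturally in the target variable, and pull back along the constant functor from $\Fin_*$. The paper phrases the compatibility of the diagonals through the equivalence $\FunEps{\TwoCat E}{\TwoFun{\TwoCat C}{\Fin_*}} \simeq \FunEps{\TwoCat C\times\TwoCat E}{\Fin_*}$, valid since $\Fin_*$ is a $1$-category; you instead spell out the cocartesian-edge verification that the paper only calls ``similar to'' \cref{lem:2-sym:pointwise-structure-functoriality}. For $\delta=\mathrm{oplax}$ it is simpler to run the dual check directly over $\op{\Fin_*}$ than to reduce via \cref{lem:2-sym:pointwise-coop-interchange-law}, since that reduction also needs the currying comparison to be compatible with $\coop{(-)}$, which you did not verify.
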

\begin{proof}
    From \cref{lem:lax-arrow:lax-strong-comparison} we have a canonical comparison functor 
    $$\FunEps{\TwoCat E}{\TwoFun{\TwoCat C}{\TwoCat N}} \to \FunEps{\TwoCat C \times \TwoCat E}{\TwoCat N},$$
    natural in $\TwoCat E$, $\TwoCat C$ and $\TwoCat N$, which is an equivalence if $\epsilon = \emptyset$.

    Since $\Fin_*$ is a $1$-category, and since $\FunEps{-}{K} \cong \TwoFun{-}{K}$ for any $1$-category $K$,
    we obtain that the comparison
    $$\Cat F \coloneqq \FunEps{\TwoCat E}{\TwoFun{\TwoCat C}{\Fin_*}} \xrightarrow{\simeq} \FunEps{\TwoCat C \times \TwoCat E}{\Fin_*}$$
    is an equivalence.

    Now, consider the natural transformation $\FunEps{\TwoCat E}{\TwoFun{\TwoCat C}{-}} \to \FunEps{\TwoCat C \times \TwoCat E}{-} \colon \TwoCatInfTwo \to \TwoCatInfTwo$.
    Slicing over $\Fin_*$ and using the above equivalence, we obtain a natural transformation of functors $(\TwoCatInfTwo)_{/\Fin_*} \to (\TwoCatInfTwo)_{/\Cat F}$.

    Pulling back along the constant functor $\Fin_* \to \Cat F$ and restricting to $\CAlgLax{\TwoCatInfTwo}$ yields 
    the required comparison, the verification that this restricted functor lands again in $\CAlgLax{\TwoCatInfTwo}$ is similar 
    to the proof of \Cref{lem:2-sym:pointwise-structure-functoriality}.
    To get the commutative squares, one proceeds in the same way, using the commutative squares 
    of \cref{lem:lax-arrow:lax-strong-comparison}.
\end{proof}

We also have the following lift of \cref{lem:mapping-objects:tensor-hom-adjunction-internal} for $\FunEps{-}{-}$ to the level of pointwise symmetric monoidal structures:
\begin{lem} \label{lem:pointwise:internal-currying-pointwise-structure}
    Let $\epsilon, \delta \in \oplaxvarset$, let $\TwoCat C^\otimes \in \CAlgDelta{\TwoCatInfTwo}$ and let $\TwoCat K, \TwoCat I \in \CatInfTwo$.
    Then there is an equivalence in $\CAlgDelta{\TwoCatInfTwo}$
    $$\FunEps{\TwoCat K}{\FunEps{\TwoCat I}{\TwoCat C^\otimes}^\ptw}^\ptw \cong \FunEps{\TwoCat K \otimes^\epsilon \TwoCat I}{\TwoCat C^\otimes}^\ptw,$$
    natural in $\TwoCat C^\otimes$, $\TwoCat K$ and $\TwoCat I$.
    Here, $\otimes^\epsilon$ denotes the Gray tensor product if $\epsilon = \mathrm{lax}$, the reverse of the Gray tensor product if $\epsilon = \mathrm{oplax}$,
    and the cartesian product if $\epsilon = \emptyset$. In particular, $- \otimes^\epsilon \TwoCat I$ is left adjoint to $\FunEps{\TwoCat I}{-}$.
\end{lem}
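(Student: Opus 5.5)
We prove the equivalence by repeating, relative to $\Fin_*$, the construction of the pointwise structure in the proof of \cref{lem:2-sym:pointwise-structure-functoriality}. We treat $\delta = \mathrm{lax}$; the oplax case then follows by applying $\coop{(-)}$ via \cref{lem:2-sym:pointwise-coop-interchange-law}, and the strong case is analogous. Recall that $\FunEps{\TwoCat I}{\TwoCat C^\otimes}^\ptw = \Fin_* \times_{\FunEps{\TwoCat I}{\Fin_*}} \FunEps{\TwoCat I}{\TwoCat C^\otimes}$, with the pullback taken along the diagonal. The input is the unstructured currying equivalence $\FunEps{\TwoCat K}{\FunEps{\TwoCat I}{-}} \cong \FunEps{\TwoCat K \otimes^\epsilon \TwoCat I}{-}$ of \cref{lem:mapping-objects:tensor-hom-adjunction-internal}. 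It is natural in all three variables and comes from associativity of $\otimes^\epsilon$ together with the Yoneda lemma.

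\textbf{Step 1: unfold the left-hand side.} We write it as
$$\Fin_* \times_{\FunEps{\TwoCat K}{\Fin_*}} \FunEps{\TwoCat K}{\Fin_* \times_{\FunEps{\TwoCat I}{\Fin_*}} \FunEps{\TwoCat I}{\TwoCat C^\otimes}}.$$
Since $\FunEps{\TwoCat K}{-}$ preserves limits (\cref{lem:lax-arrows:commutes-with-limits}), this becomes the iterated pullback
$$\Fin_* \times_{\FunEps{\TwoCat K}{\Fin_*}} \FunEps{\TwoCat K}{\Fin_*} \times_{\FunEps{\TwoCat K}{\FunEps{\TwoCat I}{\Fin_*}}} \FunEps{\TwoCat K}{\FunEps{\TwoCat I}{\TwoCat C^\otimes}}.$$
The first two factors cancel, which leaves $\Fin_* \times_{\FunEps{\TwoCat K}{\FunEps{\TwoCat I}{\Fin_*}}} \FunEps{\TwoCat K}{\FunEps{\TwoCat I}{\TwoCat C^\otimes}}$. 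Here the map from $\Fin_*$ is the composite of the two diagonals, and this composite is the diagonal for the iterated functor category.

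\textbf{Step 2: apply currying.} We use the currying equivalence on both the total $2$-category and the base. Naturality of the currying equivalence in the target variable, applied to $p \colon \TwoCat C^\otimes \to \Fin_*$, gives a commuting square relating $\FunEps{\TwoCat K}{\FunEps{\TwoCat I}{p}}$ and $\FunEps{\TwoCat K \otimes^\epsilon \TwoCat I}{p}$. It remains to check that the diagonals $\Fin_* \to \FunEps{\TwoCat K}{\FunEps{\TwoCat I}{\Fin_*}}$ and $\Fin_* \to \FunEps{\TwoCat K \otimes^\epsilon \TwoCat I}{\Fin_*}$ correspond under currying. The diagonals are induced by the terminal maps $\TwoCat I \to \Delta^0$, $\TwoCat K \to \Delta^0$ and $\TwoCat K \otimes^\epsilon \TwoCat I \to \Delta^0$. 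The currying equivalence is natural in $\TwoCat K$ and $\TwoCat I$, and $\Delta^0 \otimes^\epsilon \Delta^0 \cong \Delta^0$ because $\Delta^0$ is the unit. Together these identify the two diagonals. Taking pullbacks then gives an equivalence of $2$-categories over $\Fin_*$, and this equivalence is natural in $\TwoCat C^\otimes$, $\TwoCat K$ and $\TwoCat I$.

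\textbf{Step 3: monoidal structure.} Both sides already lie in $\CAlgLax{\TwoCatInfTwo}$ by \cref{lem:2-sym:pointwise-structure-functoriality}. An equivalence over $\Fin_*$ between $2$-cocartesian fibrations preserves $2$-cocartesian morphisms automatically, so it is an equivalence in $\CAlg{\TwoCatInfTwo}$, and in particular in $\CAlgDelta{\TwoCatInfTwo}$. Naturality in $\TwoCat C^\otimes$ along lax monoidal functors follows because the whole construction is a composite of natural operations on $(\TwoCatInfTwo)_{/\Fin_*}$ restricted to $\CAlgLax{\TwoCatInfTwo}$.

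\textbf{Main obstacle.} The delicate point is Step 2: checking coherently that the currying equivalence, which is defined only through Yoneda, carries the composite diagonal to the diagonal. We would settle this by testing against $\TwoCat X$ via $\operatorname{Map}(\TwoCat X, -)$. Under this test both diagonals become precomposition with the projection $\TwoCat X \otimes^\epsilon \TwoCat K \otimes^\epsilon \TwoCat I \to \TwoCat X$. This projection is compatible with the associator by semicartesianness of the Gray tensor product (its unit is terminal).
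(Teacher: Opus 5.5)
Your proposal is correct and follows the paper's proof essentially step for step. Both write the pointwise structures as pullbacks along the constant-diagram maps, use that $\FunEps{\TwoCat K}{-}$ preserves limits to collapse the iterated pullback, apply the currying equivalence of \cref{lem:mapping-objects:tensor-hom-adjunction-internal}, and conclude because the resulting equivalence lies over $\Fin_*$. Your explicit check that the diagonals agree under currying fills in a point the paper only asserts. Your reduction of the oplax case via $\coop{(-)}$ works, but it also needs that $\coop{(-)}$ is compatible with $\otimes^\epsilon$; the paper instead treats the other variances as directly analogous.
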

\begin{proof}
    We prove the statement for $\delta = \mathrm{lax}$, the other cases are similar.
    By the pullback description of pointwise monoidal structures, preservation
    of limits by $\FunEps{\TwoCat K}{-}$, cf.\ \cref{lem:mapping-objects:ihom-preserves-limits-both-variables}, and \cref{lem:mapping-objects:tensor-hom-adjunction-internal}, we obtain
    \begin{align*}
        &\FunEps{\TwoCat K}{\FunEps{\TwoCat I}{\TwoCat C^\otimes}^{\ptw}}^{\ptw} \\
        &\cong \Fin_* \times_{\FunEps{\TwoCat K}{\Fin_*}} \FunEps{\TwoCat K}{\Fin_* \times_{\FunEps{\TwoCat I}{\Fin_*}} \FunEps{\TwoCat I}{\TwoCat C^\otimes}} \\
        &\cong \Fin_* \times_{\FunEps{\TwoCat K}{\FunEps{\TwoCat I}{\Fin_*}}} \FunEps{\TwoCat K}{\FunEps{\TwoCat I}{\TwoCat C^\otimes}} \\
        &\cong \Fin_* \times_{\FunEps{\TwoCat K \otimes^\epsilon \TwoCat I}{\Fin_*}} \FunEps{\TwoCat K \otimes^\epsilon \TwoCat I}{\TwoCat C^\otimes} \\
        &\cong \FunEps{\TwoCat K \otimes^\epsilon \TwoCat I}{\TwoCat C^\otimes}^{\ptw}.
    \end{align*}
    The maps from $\Fin_*$ are the constant-diagram maps. These equivalences are over $\Fin_*$ and hence give an equivalence of symmetric monoidal $2$-categories.
\end{proof}

\begin{lem} \label{lem:2-sym:category-indexed-cotensors}
    Let $\delta \in \oplaxvarset$. The $2$-category
    $\TwoCAlgDelta{\TwoCatInfTwo}$ admits cotensors by
    categories. They are given by the pointwise symmetric
    monoidal structure:
    $$(\TwoCat C^\otimes)^K \cong \TwoFun{K}{\TwoCat C^\otimes}^{\ptw}.$$
    More precisely, there is an equivalence of categories
    $$\Map{\TwoCAlgDelta{\TwoCatInfTwo}}{\TwoCat D^\otimes}{\TwoFun{K}{\TwoCat C^\otimes}^{\ptw}} \cong \Map{\TwoCatInf}{K}{\Map{\TwoCAlgDelta{\TwoCatInfTwo}}{\TwoCat D^\otimes}{\TwoCat C^\otimes}},$$
    natural in $K \in \CatInf$ and
    $\TwoCat C^\otimes,\TwoCat D^\otimes
    \in \TwoCAlgDelta{\TwoCatInfTwo}$.
    In particular, taking maximal subgroupoids gives
    $$\Map{\CAlgDelta{\TwoCatInfTwo}}{\TwoCat D^\otimes}{\TwoFun{K}{\TwoCat C^\otimes}^{\ptw}} \cong \Map{\CatInf}{K}{\Map{\TwoCAlgDelta{\TwoCatInfTwo}}{\TwoCat D^\otimes}{\TwoCat C^\otimes}}.$$
\end{lem}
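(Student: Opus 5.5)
The plan is to reduce the statement to the underlying $2$-categorical statement about mapping objects, together with the fact that the pointwise structure is built by pulling back along constant diagrams over $\Fin_*$. I treat $\delta = \mathrm{lax}$; the cases $\delta = \emptyset$ and $\delta = \mathrm{oplax}$ are analogous, and the oplax case can be deduced via $\coop{(-)}$ and \cref{lem:2-sym:oplax-lax-CAlg-coop-equivalence}. Since $K$ is a $1$-category, $\TwoFun{K}{-} \cong \FunEps{K}{-}$ for every $\epsilon$, so the Gray and cartesian versions of currying agree here. This is what makes the cotensor by categories strong rather than lax.

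\textbf{Step 1 (construct the comparison map).} Take the counit-type evaluation $K \times \TwoFun{K}{\TwoCat C^\otimes}^{\ptw} \to \TwoCat C^\otimes$, i.e.\ a functor $K \to \Map{\TwoCAlgLax{\TwoCatInfTwo}}{\TwoFun{K}{\TwoCat C^\otimes}^{\ptw}}{\TwoCat C^\otimes}$. Precomposition then gives a natural functor
$$\Map{\TwoCAlgLax{\TwoCatInfTwo}}{\TwoCat D^\otimes}{\TwoFun{K}{\TwoCat C^\otimes}^{\ptw}} \to \Map{\TwoCatInf}{K}{\Map{\TwoCAlgLax{\TwoCatInfTwo}}{\TwoCat D^\otimes}{\TwoCat C^\otimes}}.$$
Concretely, evaluation $\ev{k}$ is obtained from $\{k\} \to K$ via the functoriality of \cref{lem:2-sym:pointwise-structure-functoriality}, so it is lax symmetric monoidal. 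Naturality in $K$ comes from the functoriality in the first variable in that same proposition.

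\textbf{Step 2 (check it is an equivalence).} Both sides are full subcategories of mapping categories in $(\TwoCatInfTwo)_{/\Fin_*}$. Using $\TwoFun{K}{\TwoCat C^\otimes}^{\ptw} = \Fin_* \times_{\TwoFun{K}{\Fin_*}} \TwoFun{K}{\TwoCat C^\otimes}$, the mapping category on the left is identified with the category of functors $\TwoCat D^\otimes \to \TwoFun{K}{\TwoCat C^\otimes}$ lying over the constant map $\Fin_* \to \TwoFun{K}{\Fin_*}$. By the cartesian currying of \cref{lem:lax-arrow:lax-strong-comparison} (with $\epsilon=\emptyset$), these are the same as functors $K \times \TwoCat D^\otimes \to \TwoCat C^\otimes$ over the projection to $\Fin_*$. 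Again by currying, on mapping categories these are functors $K \to \Map{(\TwoCatInfTwo)_{/\Fin_*}}{\TwoCat D^\otimes}{\TwoCat C^\otimes}$, the latter being a mapping \emph{category} since slices inherit enrichment. It remains to match the full subcategory conditions. A map into the pullback preserves cocartesian lifts of inerts if and only if each component $\ev{k}$ does. This uses the pointwise characterisation of $2$-cocartesian morphisms in $\TwoFun{K}{-}$ from \cite[Corollary 3.5.6]{abellán2026freefibrationslaxcolimits}, already used in the proof of \cref{lem:2-sym:pointwise-structure-functoriality}. On the other side, a functor $K \to \Map{}{\TwoCat D^\otimes}{\TwoCat C^\otimes}$ lands in lax monoidal functors exactly when each object does. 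The $2$-morphisms on both sides are just transformations over $\Fin_*$, so the two full sub-$2$-categories correspond.

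\textbf{Step 3.} Applying $\iota_0$ gives the statement about maximal subgroupoids.

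The main obstacle is Step 2's identification of mapping categories in the slice with those over the constant diagram, including naturality; the enriched currying in slices has to be justified. I would first try to deduce it from \cref{lem:mapping-objects:tensor-hom-adjunction-internal} applied in $\CatInfTwo$ and then pass to fibers over $\Fin_*$, using that pullbacks commute with $\TwoFun{K}{-}$ by \cref{lem:lax-arrows:commutes-with-limits}.
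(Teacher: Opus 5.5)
Your proposal is correct and follows essentially the same route as the paper. You identify the slice mapping category into $\Fin_* \times_{\TwoFun{K}{\Fin_*}} \TwoFun{K}{\TwoCat C^\otimes}$ with $\Fun{K}{\Map{(\TwoCatInfTwo)_{/\Fin_*}}{\TwoCat D^\otimes}{\TwoCat C^\otimes}}$ via the cartesian exponential law, and then restrict using that cocartesian morphisms in the pointwise structure are detected pointwise. Your Step 1 is superfluous, since the evaluation family is just the image of the identity under Step 2.

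One caution: your opening claim that $\TwoFun{K}{-}\cong\FunEps{K}{-}$ for a $1$-category $K$ is false when the target has non-invertible $2$-morphisms. For $K=\Delta^1$, for instance, the two sides differ by commuting versus lax squares; the paper only uses this identification with the $1$-category in the \emph{target}. Your argument only ever uses cartesian currying, though, so nothing depends on it.
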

\begin{proof}
    We first treat $\delta=\mathrm{lax}$.
    Recall from the construction of the pointwise structure
    in \cref{lem:2-sym:pointwise-structure-functoriality}
    that its total $2$-category is
    $$\TwoFun{K}{\TwoCat C^\otimes}^{\ptw} \cong \Fin_* \times_{\TwoFun{K}{\Fin_*}} \TwoFun{K}{\TwoCat C^\otimes},$$
    where $\Fin_* \to \TwoFun{K}{\Fin_*}$ is the constant
    diagram functor.

    The cartesian exponential law, together with this
    pullback description, gives an equivalence of mapping
    categories in the slice:
    $$\Map{(\TwoCatInfTwo)_{/\Fin_*}}{\TwoCat D^\otimes}{\Fin_* \times_{\TwoFun{K}{\Fin_*}} \TwoFun{K}{\TwoCat C^\otimes}} \cong \Fun{K}{\Map{(\TwoCatInfTwo)_{/\Fin_*}}{\TwoCat D^\otimes}{\TwoCat C^\otimes}}.$$
    Explicitly, a functor over $\Fin_*$ into the pointwise
    structure corresponds to a $K$-indexed family of functors
    $\TwoCat D^\otimes \to \TwoCat C^\otimes$ over $\Fin_*$,
    and transformations correspond under the same currying
    equivalence.

    We check that this equivalence restricts to lax symmetric
    monoidal functors. Cocartesian transport in
    $\TwoFun{K}{\TwoCat C^\otimes}^{\ptw}$ is obtained by
    applying $\TwoFun{K}{-}$ to cocartesian transport in
    $\TwoCat C^\otimes$. Consequently, a $1$-morphism over
    a fixed arrow of $\Fin_*$ is $2$-cocartesian if and only
    if its evaluation at every object of $K$ is
    $2$-cocartesian. Indeed, after factoring through a
    cocartesian lift, this assertion reduces to the fact
    that equivalences in a functor $2$-category are detected
    pointwise.

    It follows that a functor
    $\TwoCat D^\otimes \to
    \TwoFun{K}{\TwoCat C^\otimes}^{\ptw}$
    preserves $2$-cocartesian $1$-morphisms over inert maps
    if and only if each functor in the corresponding
    $K$-indexed family does so.
    Since the mapping categories of
    $\TwoCAlgLax{\TwoCatInfTwo}$ are full subcategories of
    the mapping categories in the slice, the preceding
    equivalence therefore restricts to
    $$\Map{\TwoCAlgLax{\TwoCatInfTwo}}{\TwoCat D^\otimes}{\TwoFun{K}{\TwoCat C^\otimes}^{\ptw}} \cong \Fun{K}{\Map{\TwoCAlgLax{\TwoCatInfTwo}}{\TwoCat D^\otimes}{\TwoCat C^\otimes}}.$$
    This is the universal property of the cotensor by $K$.

    For $\delta=\emptyset$, the same argument applies,
    requiring preservation of all $2$-cocartesian
    $1$-morphisms instead of only those over inert maps.
    For $\delta=\mathrm{oplax}$, use the identical argument
    in the slice over $\op{\Fin_*}$, replacing cocartesian
    transport by cartesian transport.

    All constructions are natural in $K$ and in the
    symmetric monoidal $2$-categories. The final assertion
    follows by taking maximal subgroupoids.
\end{proof}

%% file: symmetric-monoidal/marked-maps.tex
\subsection{Marked mapping spaces} \label{sec:sym-marked}
We now discuss the categories of marked $2$-categories and marked symmetric monoidal $2$-categories.

\begin{defn}
    We define the \defemphindex{category of marked $2$-categories}{marked!$2$-category} as the full subcategory 
    $$\textcolor{purple}{\CatInfTwo^\dagger} \subseteq \Fun{\Delta^1}{\CatInfTwo}$$ spanned by wide 2-full sub-$2$-category inclusions.
\end{defn}

There is also a symmetric monoidal version of a marked $2$-category:
\begin{defn}
    A \defemphindex{marked symmetric monoidal $2$-category}{marked!symmetric monoidal $2$-category} $(\TwoCat C^\otimes, \dagger)$ 
    is a symmetric monoidal $2$-category $\TwoCat C^\otimes$, together with a marking $\dagger \subseteq \TwoCat C$ 
    on the underlying $2$-category.
\end{defn}

\begin{rmk}
    It may come as a shock to the reader that we do \emph{not} require a compatibility of the marking with the tensor product.
    This is intentional, as later we will care about symmetric monoidal oplax natural transformations between symmetric monoidal functors,
    whose underlying oplax transformation is strong over some marking. Since this is just a condition on the underlying non-multiplicative object,
    we require no compatibility of the marking with the symmetric monoidal structure.
\end{rmk}
Again, marked symmetric monoidal $2$-categories fit together into a category:
\begin{defn}
    Let $\delta \in \oplaxvarset$.
    We define the \defemphindex{category of marked symmetric monoidal $2$-categories with $\delta$-symmetric monoidal functors}{marked!symmetric monoidal $2$-category} as the top left category in the pullback diagram 
    \begin{center}
        \begin{tikzcd}
            \textcolor{purple}{\CAlgDelta{\TwoCatInfTwo}^\dagger} \ar[r] \ar[d] \cartsymb &\CAlgDelta{\TwoCatInfTwo} \ar[d] \\
            \CatInfTwo^\dagger \ar[r] &\CatInfTwo,
        \end{tikzcd}
    \end{center}
    where the bottom and right functors are the canonical forgetful functors.
\end{defn}

\begin{notation}
    Let $(\TwoCat C, \dagger)$ and $(\TwoCat D, \ddagger)$ be two marked $2$-categories.
    We write 
    $$\textcolor{purple}{\MapMarked{\CatInfTwo}{\dagger}{\TwoCat C}{\TwoCat D}} \coloneqq \Map{\CatInfTwo^\dagger}{(\TwoCat C, \dagger)}{(\TwoCat D, \ddagger)},$$
    suppressing the marking on the categories.

    Similarly, if $\delta \in \oplaxvarset$ and $(\TwoCat C^\otimes, \dagger)$ and $(\TwoCat D^\otimes, \ddagger)$ are marked symmetric monoidal $2$-categories 
    $$\textcolor{purple}{\MapMarked{\CAlgDelta{\TwoCatInfTwo}}{\dagger}{\TwoCat C^\otimes}{\TwoCat D^\otimes}} \coloneqq \Map{\CAlgDelta{\TwoCatInfTwo}^\dagger}{(\TwoCat C^\otimes, \dagger)}{(\TwoCat D^\otimes, \ddagger)}.$$
\end{notation}

\begin{rmk} \label{rmk:marked:mapping-space-pullback}
    Let $(\TwoCat C, \dagger)$ and $(\TwoCat D, \ddagger)$ be two marked $2$-categories.
    Then, by the formula for mapping anima in arrow categories (see e.g.\ \cite[\href{https://kerodon.net/tag/06BZ}{Theorem 06BZ}]{kerodon} with $F_+ = F_- = \id{}$), there is a pullback square of anima 
    \begin{center}
        \begin{tikzcd}
            \MapMarked{\CatInfTwo}{\dagger}{\TwoCat C}{\TwoCat D} \ar[r] \ar[d] \cartsymb & \Map{\CatInfTwo}{\TwoCat C}{\TwoCat D} \ar[d]\\
            \Map{\CatInfTwo}{\dagger}{\ddagger} \ar[r]& \Map{\CatInfTwo}{\dagger}{\TwoCat D}\rlap{.} 
        \end{tikzcd}
    \end{center}
\end{rmk}

\begin{rmk} \label{rmk:marked:symm-monoidal-mapping-space-pullback}
    Let $\delta \in \oplaxvarset$ and $(\TwoCat C^\otimes, \dagger)$ and $(\TwoCat D^\otimes, \ddagger)$ be two marked symmetric monoidal $2$-categories.
    Then, since mapping anima in pullbacks of categories are computed as the pullback of the respective mapping anima, 
    there is a cartesian diagram of anima 
    \begin{center}
        \begin{tikzcd}
            \MapMarked{\CAlgDelta{\TwoCatInfTwo}}{\dagger}{\TwoCat C^\otimes}{\TwoCat D^\otimes} \ar[d] \ar[r] \cartsymb &\Map{\CAlgDelta{\TwoCatInfTwo}}{\TwoCat C^\otimes}{\TwoCat D^\otimes} \ar[d] \\
            \MapMarked{\CatInfTwo}{\dagger}{\TwoCat C}{\TwoCat D} \ar[r] &\Map{\CatInfTwo}{\TwoCat C}{\TwoCat D} \rlap{,}
        \end{tikzcd}
    \end{center}
    natural in $\TwoCat C^\otimes, \TwoCat D^\otimes \in \CAlgDelta{\TwoCatInfTwo}^\dagger$.
\end{rmk}

\begin{prop} \label{lem:marking:fun-marking}
    Let $\epsilon \in \oplaxvarset$.
    The functor $$\FunEps{-}{-} \colon \op{\CatInfTwo} \times \CatInfTwo \to \CatInfTwo$$
    naturally upgrades to a functor $$\FunEps{-}{-} \colon \op{\CatInfTwo} \times \CatInfTwo \to \CatInfTwo^\dagger,$$
    such that the marking is given by the natural transformation $\TwoFun{-}{-} \to \FunEps{-}{-}$ from \cref{lem:lax-nat-trans:fun-to-lax-fun}.
\end{prop}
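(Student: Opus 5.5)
The plan is to build the upgrade directly from the natural transformation of \cref{lem:lax-nat-trans:fun-to-lax-fun}. By definition, $\CatInfTwo^\dagger$ is the full subcategory of $\Fun{\Delta^1}{\CatInfTwo}$ spanned by the wide $2$-full inclusions. A functor $\op{\CatInfTwo} \times \CatInfTwo \to \Fun{\Delta^1}{\CatInfTwo}$ is the same datum as a natural transformation between two functors $\op{\CatInfTwo} \times \CatInfTwo \to \CatInfTwo$, via the exponential adjunction $\Fun{\Cat A}{\Fun{\Delta^1}{\Cat B}} \simeq \Fun{\Delta^1}{\Fun{\Cat A}{\Cat B}}$. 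I will therefore take the natural transformation
\[
\TwoFun{-}{-} \to \FunEps{-}{-}
\]
from \cref{lem:lax-nat-trans:fun-to-lax-fun} when $\epsilon \in \{\mathrm{lax}, \mathrm{oplax}\}$, and the identity transformation when $\epsilon = \emptyset$. This yields a functor $\op{\CatInfTwo} \times \CatInfTwo \to \Fun{\Delta^1}{\CatInfTwo}$ whose composite with $\ev{1}$ is $\FunEps{-}{-}$.

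Because $\CatInfTwo^\dagger$ is a full subcategory, the functor factors through it as soon as every component is a wide $2$-full inclusion. This is a pointwise condition on objects, so no further coherence is needed.

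It remains to check this pointwise condition. For $\epsilon = \emptyset$ the component is the identity, which is trivially such an inclusion. For $\epsilon$ lax or oplax, \cref{lem:lax-nat-trans:fun-to-lax-fun} (via \cite[Corollary 3.17]{haugseng2021laxtransformationsadjunctionsmonads}) states that each component is a wide sub-$2$-category inclusion. That is, it is an equivalence on $\iota_0$, and on mapping categories it is a subcategory inclusion.

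I expect the main obstacle to be the $2$-fullness: I must check that these subcategory inclusions on mapping categories are in fact fully faithful. Concretely, a $2$-morphism (modification) between two strong natural transformations, computed in $\FunEps{\TwoCat C}{\TwoCat D}$, should be the same as one computed in $\TwoFun{\TwoCat C}{\TwoCat D}$. My first attempt would go through representing objects. Under the defining adjunctions, maps $\Delta^1 \to \FunEps{\TwoCat C}{\TwoCat D}$ correspond to maps out of $\Delta^1 \otimes^{\gr} \TwoCat C$, and $2$-morphisms correspond to maps out of $\Delta^{1}\times\Delta^1$-type $2$-cells (the walking $2$-cell $C_2$) tensored with $\TwoCat C$. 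The comparison $q \colon C_2 \otimes^\gr \TwoCat C \to C_2 \times \TwoCat C$ should then be checked to exhibit the target as a localization that inverts exactly the lax $2$-cells. Once this holds, the comparison is fully faithful on $2$-morphisms lying over strong $1$-morphisms.

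If that direct computation is awkward, I would instead cite that \cite[Corollary 3.17]{haugseng2021laxtransformationsadjunctionsmonads} identifies $\TwoFun{\TwoCat C}{\TwoCat D}$ as the locally full sub-$2$-category on strong transformations, which gives the $2$-fullness immediately. Naturality in both variables is then automatic from the construction.
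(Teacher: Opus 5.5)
Your proposal is correct and follows the paper's argument: you curry the natural transformation of \cref{lem:lax-nat-trans:fun-to-lax-fun} into a functor to $\Fun{\Delta^1}{\CatInfTwo}$, then check pointwise that each component lies in the full subcategory $\CatInfTwo^\dagger$. You are slightly more careful than the paper, which only records that the components are wide inclusions; you point out that local fullness is also required, and your fallback citation of \cite[Corollary 3.17]{haugseng2021laxtransformationsadjunctionsmonads} is the right way to supply it, so the localization sketch for $C_2 \otimes^{\gr} \TwoCat C \to C_2 \times \TwoCat C$ can be dropped.
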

\begin{proof}
    This follows from currying. The only thing left to show is that the natural transformation 
    is pointwise a wide sub-$2$-category inclusion. This was shown in \cref{lem:lax-nat-trans:fun-to-lax-fun}.
\end{proof}

\begin{rmk}
    If we view $\FunEps{\TwoCat C}{\TwoCat D}$ as a marked $2$-category, we will always do this via the marking from \cref{lem:marking:fun-marking}.
    We call this the \defemphindex{strong marking}{marking!strong}.
\end{rmk}

\begin{prop} \label{lem:marking:fun-symm-marking}
    Let $\epsilon, \delta \in \oplaxvarset$.
    The functor $$\FunEps{-}{-}^\ptw \colon \op{\CatInfTwo} \times \CAlgDelta{\TwoCatInfTwo} \to \CAlgDelta{\TwoCatInfTwo}$$
    naturally upgrades to a functor $$\FunEps{-}{-}^\ptw \colon \op{\CatInfTwo} \times \CAlgDelta{\TwoCatInfTwo} \to \CAlgDelta{\TwoCatInfTwo}^\dagger,$$
    such that the marking is given by the natural transformation $\TwoFun{-}{-} \to \FunEps{-}{-}$ from \cref{lem:lax-nat-trans:fun-to-lax-fun}.
\end{prop}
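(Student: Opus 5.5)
We mimic the proof of \cref{lem:marking:fun-marking}, lifting everything to the symmetric monoidal setting. By definition of $\CAlgDelta{\TwoCatInfTwo}^\dagger$ as a pullback, a functor $\op{\CatInfTwo} \times \CAlgDelta{\TwoCatInfTwo} \to \CAlgDelta{\TwoCatInfTwo}^\dagger$ amounts to three pieces of data: a functor to $\CAlgDelta{\TwoCatInfTwo}$, which we take to be $\FunEps{-}{-}^\ptw$ from \cref{lem:2-sym:pointwise-structure-functoriality}; a functor to $\CatInfTwo^\dagger$; and an identification of their images in $\CatInfTwo$. For the second we take the composite of the forgetful functor $\op{\CatInfTwo} \times \CAlgDelta{\TwoCatInfTwo} \to \op{\CatInfTwo} \times \CatInfTwo$ with the upgraded functor $\FunEps{-}{-} \colon \op{\CatInfTwo} \times \CatInfTwo \to \CatInfTwo^\dagger$ of \cref{lem:marking:fun-marking}. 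Its image in $\CatInfTwo$ (the target of the arrow) is $\FunEps{-}{-}$ applied to the underlying $2$-category. The required identification is then exactly the commutative square of \cref{lem:2-sym:pointwise-structure-functoriality}, which says that the underlying $2$-category of $\FunEps{\TwoCat C}{\TwoCat D^\otimes}^\ptw$ is $\FunEps{\TwoCat C}{\TwoCat D}$, naturally in both variables. The universal property of the pullback then yields the desired functor.

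It remains to check that the marking obtained this way is the one claimed. By construction, the marking on the underlying $2$-category $\FunEps{\TwoCat C}{\TwoCat D}$ is the wide $2$-full inclusion $\TwoFun{\TwoCat C}{\TwoCat D} \to \FunEps{\TwoCat C}{\TwoCat D}$ of \cref{lem:lax-nat-trans:fun-to-lax-fun}. This follows directly from \cref{lem:marking:fun-marking}, because the marked object attached to $(\TwoCat C, \TwoCat D^\otimes)$ is the one attached to $(\TwoCat C, \TwoCat D)$. No compatibility of the marking with the tensor product is needed, since a marked symmetric monoidal $2$-category imposes none.

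The main subtlety is making the identification of underlying $2$-categories coherently natural rather than merely pointwise. The natural equivalence produced in \cref{lem:2-sym:pointwise-structure-functoriality} arises from pulling back along $\Delta^0 \to \Fin_*$ and from the fact that $\FunEps{\TwoCat C}{-}$ preserves limits (\cref{lem:lax-arrows:commutes-with-limits}), and each step is functorial in both arguments. If more care is wanted, one can instead build the marking directly at the fibration level. Apply the natural transformation $\TwoFun{\TwoCat C}{-} \to \FunEps{\TwoCat C}{-}$ to $\TwoCat D^\otimes \to \Fin_*$ and pull back along the constant functor $\Fin_* \to \FunEps{\TwoCat C}{\Fin_*}$. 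This uses that $\TwoFun{\TwoCat C}{\Fin_*} \simeq \FunEps{\TwoCat C}{\Fin_*}$ because $\Fin_*$ is a $1$-category. Restricting to the fiber over $\langle 1 \rangle$ recovers the strong marking on underlying $2$-categories, naturally in $\TwoCat C$ and $\TwoCat D^\otimes$.
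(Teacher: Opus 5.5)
Your proposal is correct and follows the paper's argument. The paper's one-line proof likewise uses the pullback definition of $\CAlgDelta{\TwoCatInfTwo}^\dagger$: pair $\FunEps{-}{-}^\ptw$ with the marked functor of \cref{lem:marking:fun-marking}, glued along the commuting square of \cref{lem:2-sym:pointwise-structure-functoriality}. That square is already a natural equivalence of functors, so your worry about coherence is settled and the fibration-level alternative in your last paragraph is not needed.
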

\begin{proof}
    Follows immediately from \cref{lem:marking:fun-marking} and the construction of $\CAlgDelta{\TwoCatInfTwo}^\dagger$ as a pullback.
\end{proof}

\begin{rmk}
    If we view $\FunEps{\TwoCat C}{\TwoCat D^\otimes}^\ptw$ as a marked symmetric monoidal $2$-category, we will always do this via the marking from \cref{lem:marking:fun-symm-marking}.
    We again call this the \defemphindex{strong marking}{marking!strong}.
\end{rmk}

\begin{prop} \label{lem:marked:marked-mapping-space-UP}
    Let $\epsilon \in \oplaxvarset$.
    Let $(\TwoCat C, \dagger)$ be a marked $2$-category, and let $\TwoCat D$ and $\TwoCat E$ be two further $2$-categories.
    We have an equivalence 
    $$\Map{\CatInfTwo}{\TwoCat E}{\FunPartEps{\dagger}{\TwoCat C}{\TwoCat D}} \cong \MapMarked{\CatInfTwo}{\dagger}{\TwoCat C}{\FunNEps{\TwoCat E}{\TwoCat D}},$$
    natural in $(\TwoCat C, \dagger)$, $\TwoCat D$ and $\TwoCat E$. 
\end{prop}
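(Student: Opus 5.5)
Both sides will be written as pullbacks of mapping anima, and we then match the pieces using the Gray tensor--hom adjunction. Fix $\epsilon = \mathrm{lax}$; the oplax case is the mirror image, obtained by swapping the two Gray adjunctions, and for $\epsilon = \emptyset$ everything reduces to ordinary currying for the cartesian product.

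First we treat the left-hand side. By \cref{def:lax-lim:partial-lax-functors}, $\FunPartLax{\dagger}{\TwoCat C}{\TwoCat D}$ is the pullback of $\FunLax{\TwoCat C}{\TwoCat D} \to \FunLax{\dagger}{\TwoCat D} \leftarrow \TwoFun{\dagger}{\TwoCat D}$. Since $\Map{\CatInfTwo}{\TwoCat E}{-}$ preserves limits, $\Map{\CatInfTwo}{\TwoCat E}{\FunPartLax{\dagger}{\TwoCat C}{\TwoCat D}}$ is the pullback of
$$\Map{}{\TwoCat E}{\FunLax{\TwoCat C}{\TwoCat D}} \to \Map{}{\TwoCat E}{\FunLax{\dagger}{\TwoCat D}} \leftarrow \Map{}{\TwoCat E}{\TwoFun{\dagger}{\TwoCat D}}.$$
The defining adjunctions of the Gray tensor product give
$$\Map{}{\TwoCat E}{\FunLax{\TwoCat C}{\TwoCat D}} \cong \Map{}{\TwoCat E \otimes^{\gr} \TwoCat C}{\TwoCat D} \cong \Map{}{\TwoCat C}{\FunOpLax{\TwoCat E}{\TwoCat D}}.$$
The same holds with $\dagger$ in place of $\TwoCat C$. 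We also have $\Map{}{\TwoCat E}{\TwoFun{\dagger}{\TwoCat D}} \cong \Map{}{\dagger}{\TwoFun{\TwoCat E}{\TwoCat D}}$ by cartesian currying.

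Next we treat the right-hand side. By \cref{rmk:marked:mapping-space-pullback}, with the strong marking on $\FunOpLax{\TwoCat E}{\TwoCat D}$ from \cref{lem:marking:fun-marking}, the marked mapping anima is the pullback of
$$\Map{}{\TwoCat C}{\FunOpLax{\TwoCat E}{\TwoCat D}} \to \Map{}{\dagger}{\FunOpLax{\TwoCat E}{\TwoCat D}} \leftarrow \Map{}{\dagger}{\TwoFun{\TwoCat E}{\TwoCat D}}.$$
The three corners match the ones obtained for the left-hand side, so it remains to check that the two cospans agree, i.e.\ that the two legs correspond under these identifications.

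The restriction leg along $\dagger \hookrightarrow \TwoCat C$ corresponds because the Gray adjunction equivalences are natural in the first variable. The leg from strong to lax transformations is the step I expect to be the main obstacle. On the left, this leg is postcomposition with $\TwoFun{\dagger}{\TwoCat D} \to \FunLax{\dagger}{\TwoCat D}$. By the proof of \cref{lem:lax-arrow:lax-strong-comparison}, this inclusion is induced, under the tensor--hom adjunctions, by precomposition with the comparison $q \colon \TwoCat E \otimes^{\gr} \dagger \to \TwoCat E \times \dagger$. On the right, the map $\TwoFun{\TwoCat E}{\TwoCat D} \to \FunOpLax{\TwoCat E}{\TwoCat D}$ is induced by the same $q$ after currying in the other variable. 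The point is that one and the same map $q$ induces both inclusions: the lax one via the right adjoint and the oplax one via the left adjoint. Granting this, both legs are precomposition with $q$ on $\Map{}{\TwoCat E \otimes^{\gr} \dagger}{\TwoCat D}$, so the cospans agree.

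Since every identification used is natural in $(\TwoCat C, \dagger)$, $\TwoCat D$ and $\TwoCat E$, the induced equivalence of pullbacks is natural as well. This gives the claimed equivalence.
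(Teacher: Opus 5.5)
Your proposal is correct and follows the paper's proof. Both arguments write the two sides as pullbacks of mapping anima, using the definition of $\FunPartEps{\dagger}{\TwoCat C}{\TwoCat D}$ and \cref{rmk:marked:mapping-space-pullback}, and match the corners by double currying; your explicit check that the strong-to-lax leg agrees on both sides (since both inclusions come from precomposition with the same $q\colon \TwoCat E\otimes^{\gr}\dagger\to\TwoCat E\times\dagger$) spells out what the paper compresses into ``compatible with the restriction morphisms.''
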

\begin{proof}
    By \cref{rmk:marked:mapping-space-pullback} we have a pullback square 
    \begin{center}
        \begin{tikzcd}
            \MapMarked{\CatInfTwo}{\dagger}{\TwoCat C}{\FunNEps{\TwoCat E}{\TwoCat D}} \ar[r] \ar[d] \cartsymb & \Map{\CatInfTwo}{\TwoCat C}{\FunNEps{\TwoCat E}{\TwoCat D}} \ar[d]\\
            \Map{\CatInfTwo}{\dagger}{\TwoFun{\TwoCat E}{\TwoCat D}} \ar[r]& \Map{\CatInfTwo}{\dagger}{\FunNEps{\TwoCat E}{\TwoCat D}}\rlap{.} 
        \end{tikzcd}
    \end{center}
    On the other hand, by definition of $\FunPartEps{\dagger}{\TwoCat C}{\TwoCat D}$, we obtain a pullback square 
    \begin{center}
        \begin{tikzcd}
            \Map{\CatInfTwo}{\TwoCat E}{\FunPartEps{\dagger}{\TwoCat C}{\TwoCat D}} \ar[r] \ar[d] \cartsymb & \Map{\CatInfTwo}{\TwoCat E}{\FunEps{\TwoCat C}{\TwoCat D}} \ar[d]\\
            \Map{\CatInfTwo}{\TwoCat E}{\TwoFun{\dagger}{\TwoCat D}} \ar[r]& \Map{\CatInfTwo}{\TwoCat E}{\FunEps{\dagger}{\TwoCat D}}\rlap{.} 
        \end{tikzcd}
    \end{center}
    Now, double currying gives equivalences on the bottom and right objects of the two diagrams, compatible with the restriction morphisms.
    Hence, we obtain the required equivalence of the pullbacks.
    The required naturality follows since all the double currying equivalences were already natural.
\end{proof}

\begin{variant}
    Let $\epsilon \in \oplaxvarset$.
    One can show that the construction $\FunPartEps{\dagger}{\TwoCat C}{\TwoCat D}$
    is functorial, i.e.\ upgrades to a functor 
    $$\defnot{\FunPartEps{\dagger}{-}{-}} \colon \op{(\CatInfTwo^\dagger)} \times \CatInfTwo \to \CatInfTwo.$$
\end{variant}

\begin{variant} \label{var:pointwise:partially-lax-pointwise}
    Let $\epsilon, \delta \in \oplaxvarset$.
    Using a similar construction as in \cref{lem:2-sym:pointwise-structure-functoriality}, one can define a functor
    $$\textcolor{purple}{\FunPartEps{\dagger}{-}{-}^{\ptw}} \colon \op{(\CatInfTwo^\dagger)} \times \CAlgDelta{\TwoCatInfTwo} \to \CAlgDelta{\TwoCatInfTwo},$$
    such that the diagram
    \begin{center}
        \begin{tikzcd}
            \op{(\CatInfTwo^\dagger)} \times \CAlgDelta{\TwoCatInfTwo} \ar[d] \ar[rr, "{\FunPartEps{\dagger}{-}{-}^{\ptw}}"] &&\CAlgDelta{\TwoCatInfTwo} \ar[d] \\
            \op{(\CatInfTwo^\dagger)} \times \CatInfTwo \ar[rr, "{\FunPartEps{\dagger}{-}{-}}"] &&\CatInfTwo
        \end{tikzcd}
    \end{center}
    commutes.
\end{variant}

Using this functoriality, we obtain:
\begin{prop} \label{lem:lax-lim:partial-lax-lim-lax-sym-mon}
    Let $\TwoCat{I}^\dagger$ be a marked $2$-category, and let $\TwoCat{M}^\otimes$ be a symmetric monoidal $2$-category,
    so that the underlying $2$-category $\TwoCat{M}$ admits all partially lax limits of shape $\TwoCat{I}^\dagger$.
    Then the partially lax limit functor $\laxlim_{\TwoCat{I}}^\dagger \colon \FunPartLax{\dagger}{\TwoCat{I}}{\TwoCat{M}} \to \TwoCat{M}$
    upgrades to a lax symmetric monoidal functor 
    $$\laxlim_{\TwoCat{I}}^\dagger \colon \FunPartLax{\dagger}{\TwoCat{I}}{\TwoCat{M}^\otimes}^\ptw \to \TwoCat{M}^\otimes.$$
    Moreover, the adjunction $\mathrm{const} \dashv \laxlim_{\TwoCat{I}}^\dagger$ upgrades to an adjunction in $\TwoCAlgLax{\TwoCatInfTwo}$.
\end{prop}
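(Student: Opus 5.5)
The plan is to first make the constant diagram functor lax (in fact strong) symmetric monoidal, and then deduce the lax structure on $\laxlim^\dagger_{\TwoCat I}$ by passing to right adjoints via \cref{lem:2-sym:adjunction-in-CAlgLax}.

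\textbf{Step 1: the constant functor is strong monoidal.} By \cref{var:pointwise:partially-lax-pointwise}, restriction along the marked functor $\TwoCat I^\dagger \to (\Delta^0)^\sharp$ induces a morphism in $\CAlgLax{\TwoCatInfTwo}$
$$\mathrm{const} \colon \TwoCat M^\otimes \cong \FunPartLax{\sharp}{\Delta^0}{\TwoCat M^\otimes}^\ptw \to \FunPartLax{\dagger}{\TwoCat I}{\TwoCat M^\otimes}^\ptw.$$
Its underlying functor is the constant diagram functor. Since restriction is applied to the source of the functor $2$-category, the construction of \cref{lem:2-sym:pointwise-structure-functoriality} shows it is obtained by pulling back along $\Fin_*$ a functor over $\Fin_*$ that acts pointwise. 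Hence it preserves all $2$-cocartesian $1$-morphisms, not merely inert ones: cocartesian transport in the pointwise structure is computed objectwise. So $\mathrm{const}$ is strong symmetric monoidal.

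\textbf{Step 2: pass to the right adjoint.} By hypothesis, the underlying functor $\mathrm{const}$ has a right adjoint $\laxlim^\dagger_{\TwoCat I}$, by \cref{def:lax-lim:partial-lax-lim}. \Cref{lem:2-sym:adjunction-in-CAlgLax} then produces a lax symmetric monoidal right adjoint $G$ of $\mathrm{const}$ in $\TwoCAlgLax{\TwoCatInfTwo}$, whose underlying functor is right adjoint to $\mathrm{const}$. By uniqueness of adjoints, that underlying functor is $\laxlim^\dagger_{\TwoCat I}$. This gives both claims: the lax symmetric monoidal upgrade, and the adjunction $\mathrm{const} \dashv \laxlim^\dagger_{\TwoCat I}$ in $\TwoCAlgLax{\TwoCatInfTwo}$.

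\textbf{Main obstacle.} The hard part is Step 1, specifically making the strong-monoidality check rigorous in the partially lax setting. This means describing $\FunPartLax{\dagger}{\TwoCat I}{\TwoCat M^\otimes}^\ptw$ as the pullback $\Fin_* \times_{\FunPartLax{\dagger}{\TwoCat I}{\Fin_*}} \FunPartLax{\dagger}{\TwoCat I}{\TwoCat M^\otimes}$.

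Here $\FunPartLax{\dagger}{\TwoCat I}{\Fin_*} \cong \TwoFun{\TwoCat I}{\Fin_*}$, because $\Fin_*$ is a $1$-category. One must then identify the $2$-cocartesian $1$-morphisms as in the proof of \cref{lem:2-sym:pointwise-structure-functoriality}: they are the pointwise cocartesian ones whose laxity squares are invertible.

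I would deduce this from the fully lax case using the defining pullback in \cref{def:lax-lim:partial-lax-functors}. The point is that cocartesian lifts computed in $\FunLax{\TwoCat I}{\TwoCat M^\otimes}^\ptw$ of a strongly-$\dagger$ functor remain strong on $\dagger$. The reason is that pointwise cocartesian transport $\alpha_!$ is applied to invertible $2$-cells, and $\alpha_!$ preserves invertibility.

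Given this description, the constant diagram of a cocartesian morphism in $\TwoCat M^\otimes$ is visibly cocartesian, since all its naturality $2$-cells are identities. This completes Step 1.
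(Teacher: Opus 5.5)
Your proposal is correct and follows the paper's proof. Like you, the paper obtains $\mathrm{const}$ as a symmetric monoidal functor by restricting along $\TwoCat I^\dagger \to (\Delta^0)^\sharp$ via \cref{var:pointwise:partially-lax-pointwise}, then applies \cref{lem:2-sym:adjunction-in-CAlgLax}; the only difference is that the paper reads strong monoidality off that functoriality directly (its $\delta = \emptyset$ case), whereas you verify preservation of all $2$-cocartesian $1$-morphisms by hand.
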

\begin{proof}
    The constant diagram functor $\mathrm{const} \colon \TwoCat{M} \to \FunPartLax{\dagger}{\TwoCat{I}}{\TwoCat{M}}$ 
    is symmetric monoidal: Indeed, it is induced by the marked functor $\TwoCat I^\dagger \to (\Delta^0)^\sharp$,
    and hence, using the functoriality of \cref{var:pointwise:partially-lax-pointwise},
    upgrades to a symmetric monoidal functor $\mathrm{const} \colon \TwoCat{M}^\otimes \to \FunPartLax{\dagger}{\TwoCat{I}}{\TwoCat{M}^\otimes}^\ptw$.
    Thus, since $\laxlim_{\TwoCat{I}}^\dagger$ is right adjoint to $\mathrm{const}$ by definition,
    the result follows from \cref{lem:2-sym:adjunction-in-CAlgLax}.
\end{proof}

%% file: symmetric-monoidal/oplax-lax-sym.tex
\subsection{Op/Lax symmetric monoidal functors with (partially) op/lax natural transformations} \label{sec:sym-oplax-lax}
In this section, following \cite[Definition 6.10]{cnossen2025universalityspan2categoriesconstruction} we construct 
the $2$-category $\FunPartOplaxLaxTensor{\dagger}{\TwoCat C^\otimes}{\TwoCat D^\otimes}$ of lax symmetric monoidal functors between $\TwoCat C^\otimes$ and $\TwoCat D^\otimes$,
together with partially oplax symmetric monoidal natural transformations between them.
Unfortunately, this is not the mapping $2$-category of some $3$-category of symmetric monoidal $2$-categories with the correct notion of functors and natural transformations 
between them. Thus, following \loccit we will define it via a universal property.
This universal property will furthermore imply that $\FunPartOplaxLaxTensor{\dagger}{\TwoCat C^\otimes}{-}$ is a $1$-functor $\CAlgLax{\TwoCatInfTwo} \to \CatInfTwo$.
Later, we will need that this functor preserves adjunctions. Thus, it is natural to ask if it upgrades to a $2$-functor $\TwoCAlgLax{\TwoCatInfTwo} \to \TwoCatInfTwo$.
This is true, but the construction is tedious. Therefore, in this section, we will construct enough of the $2$-functoriality to be able to show that it preserves adjoints,
which will be the datum of a functor $\TwoCAlgLax{\TwoCatInfTwo} \to \underline{h} \, \TwoCatInfTwo$ into the homotopy $(2,2)$-category of $2$-categories, extending the 
$1$-functor given by the universal property. Moreover, we will do this construction for all possible variances, both in the 
symmetric monoidal functor direction, and the natural transformation direction.

For the rest of this section, we fix two variances $\epsilon, \delta \in \oplaxvarset$.

Recall that we can consider $\FunEps{-}{-}^\ptw$ as a functor to marked symmetric monoidal categories
with $\delta$-symmetric monoidal functors, cf.\ \cref{lem:marking:fun-symm-marking}.

\begin{lem} \label{constr:2-functoriality-of-lax-tensor-oplax:factor-Yoneda}
    The functor 
    $$\Phi \colon \op{(\CAlgDelta{\TwoCatInfTwo}^\dagger)} \times \CAlgDelta{\TwoCatInfTwo} \to \Fun{\op{\CatInfTwo}}{\AnL}$$ given by 
    currying the functor 
    $$\MapMarked{\CAlgDelta{\TwoCatInfTwo}}{\dagger}{-}{\FunNEps{-}{-}^\ptw} \colon \op{(\CAlgDelta{\TwoCatInfTwo}^\dagger)} \times \op{\CatInfTwo} \times \CAlgDelta{\TwoCatInfTwo} \to \AnL,$$
    i.e.\ 
    $$\Phi((\TwoCat C^\otimes,\dagger), \Cat D^\otimes) = (\TwoCat E \mapsto \MapMarked{\CAlgDelta{\TwoCatInfTwo}}{\dagger}{\TwoCat C^\otimes}{\FunNEps{\TwoCat E}{\TwoCat D^\otimes}^{\ptw}}),$$
    lands in the essential image of the Yoneda embedding.
\end{lem}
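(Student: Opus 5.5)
The plan is to show that for fixed $(\TwoCat C^\otimes,\dagger)$ and $\TwoCat D^\otimes$ the presheaf
$$\TwoCat E \longmapsto \MapMarked{\CAlgDelta{\TwoCatInfTwo}}{\dagger}{\TwoCat C^\otimes}{\FunNEps{\TwoCat E}{\TwoCat D^\otimes}^{\ptw}}$$
on $\CatInfTwo$ sends small colimits in $\CatInfTwo$ to limits in $\AnL$. Representability then follows from the presentability of $\CatInfTwo$ (recalled in \cref{sec:recoll-2-cat}) by the representability criterion for limit-preserving presheaves on presentable categories (\cite[Proposition 5.5.2.2]{highertopoi}). We will apply it with respect to the appropriate universe, since $\CatInfTwo$ consists of large $2$-categories. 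Naturality in $((\TwoCat C^\otimes,\dagger),\TwoCat D^\otimes)$ is automatic: it comes from the naturality of the constructions, together with the fact that the Yoneda embedding is fully faithful.

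\textbf{Step 1: the functor $\FunNEps{-}{\TwoCat D^\otimes}^{\ptw}$ sends colimits in $\CatInfTwo$ to limits in $\CAlgDelta{\TwoCatInfTwo}$.}
\begin{itemize}
\item We use the pullback description $\Fin_* \times_{\FunNEps{\TwoCat E}{\Fin_*}} \FunNEps{\TwoCat E}{\TwoCat D^\otimes}$ from the proof of \cref{lem:2-sym:pointwise-structure-functoriality}.
\item By \cref{lem:lax-arrows:commutes-with-limits}, both $\FunNEps{-}{\Fin_*}$ and $\FunNEps{-}{\TwoCat D^\otimes}$ turn a colimit $\TwoCat E=\colim \TwoCat E_i$ into a limit.
\item Pullbacks commute with limits. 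Hence the total $2$-category over $\Fin_*$ (respectively over $\op{\Fin_*}$ for $\delta=\mathrm{oplax}$) is the limit, computed in the slice $(\TwoCatInfTwo)_{/\Fin_*}$, of the $\FunNEps{\TwoCat E_i}{\TwoCat D^\otimes}^{\ptw}$.
\end{itemize}

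\textbf{Step 2: this slice limit is also a limit in $\CAlgDelta{\TwoCatInfTwo}$.} This is the main obstacle, because $\CAlgDelta{\TwoCatInfTwo}$ is not a full subcategory of the slice: its morphisms must preserve the relevant ($2$-co)cartesian $1$-morphisms.
\begin{itemize}
\item The transition maps are restriction functors. They preserve all ($2$-co)cartesian $1$-morphisms, which are detected pointwise by \cite[Remark 3.5.2]{abellán2026freefibrationslaxcolimits}.
\item I would check that a $1$-morphism in the limit is ($2$-co)cartesian if and only if each of its images is. This uses the same pointwise description, since objects of $\TwoCat E$ come from the $\TwoCat E_i$.
\item It follows that the limit satisfies the Segal condition, because fibers commute with limits.
\item It also follows that a functor from $\TwoCat C^\otimes$ into the limit preserves (inert) ($2$-co)cartesian morphisms if and only if each component does. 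So the mapping anima in $\CAlgDelta{\TwoCatInfTwo}$ into the limit are the limits of the mapping anima, as required.
\end{itemize}

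\textbf{Step 3: add the markings.} By \cref{rmk:marked:mapping-space-pullback} and \cref{rmk:marked:symm-monoidal-mapping-space-pullback}, the marked mapping anima are a pullback of three mapping anima:
$$\Map{\CAlgDelta{\TwoCatInfTwo}}{\TwoCat C^\otimes}{-},\qquad \Map{\CatInfTwo}{\dagger}{\TwoFun{\TwoCat E}{\TwoCat D}},\qquad \Map{\CatInfTwo}{\dagger}{\FunNEps{\TwoCat E}{\TwoCat D}}.$$
Each of these turns colimits in $\TwoCat E$ into limits, by Step 2 and by \cref{lem:lax-arrows:commutes-with-limits} for the strong marking $\TwoFun{\TwoCat E}{\TwoCat D}$. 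Since pullbacks commute with limits, the whole presheaf preserves limits. Applying the representability criterion then finishes the proof.
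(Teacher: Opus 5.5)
Your argument is essentially the paper's. The paper uses the same pullback decomposition of the marked mapping anima (\cref{rmk:marked:symm-monoidal-mapping-space-pullback}) and the same key input, namely that $\FunNEps{-}{\TwoCat D^\otimes}^{\ptw}$ turns colimits into limits in $\CAlgDelta{\TwoCatInfTwo}$; it cites this from \cite[Lemma 6.11]{cnossen2025universalityspan2categoriesconstruction} instead of sketching it as in your Steps 1--2. The only organizational difference is that the paper observes the two corners built from underlying (marked) mapping anima are already representable (by definition and \cref{lem:marked:marked-mapping-space-UP}), uses that the Yoneda embedding preserves limits, and applies the representability criterion only to the remaining corner, whereas you apply it once to the whole presheaf.

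One small point in your Step 2. The pointwise criterion for $2$-cocartesian morphisms also requires the naturality $2$-cells to be invertible, and ``objects of $\TwoCat E$ come from the $\TwoCat E_i$'' does not cover this. It follows instead because $\TwoFun{-}{\TwoCat D^\otimes}\subseteq\FunNEps{-}{\TwoCat D^\otimes}$ both convert colimits to limits compatibly, so a transformation on $\colim_i \TwoCat E_i$ is strong exactly when all of its restrictions are.
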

\begin{proof}
    By \cref{rmk:marked:symm-monoidal-mapping-space-pullback}, there is a cartesian square of functors $\op{\CatInfTwo} \to \AnL$.
    \begin{center}
        \begin{tikzcd}
            \MapMarked{\CAlgDelta{\TwoCatInfTwo}}{\dagger}{\TwoCat C^\otimes}{\FunNEps{-}{\Cat D^\otimes}^\ptw} \ar[d] \ar[r] \cartsymb &\Map{\CAlgDelta{\TwoCatInfTwo}}{\TwoCat C^\otimes}{\FunNEps{-}{\TwoCat D^\otimes}^\ptw} \ar[d] \\
            \MapMarked{\CatInfTwo}{\dagger}{\TwoCat C}{\FunNEps{-}{\TwoCat D}} \ar[r]&\Map{\CatInfTwo}{\TwoCat C}{\FunNEps{-}{\TwoCat D}}\rlap{.}
        \end{tikzcd}
    \end{center}
    Since the Yoneda embedding preserves limits \cite[Proposition 5.1.3.2]{highertopoi},
    it suffices to show that the three other corners of this diagram lie in the essential image.

    The bottom corners are representable: Indeed, 
    by definition we have 
    $$\Map{\CatInfTwo}{\TwoCat C}{\FunNEps{-}{\TwoCat D}} \cong \Map{\CatInfTwo}{-}{\FunEps{\TwoCat C}{\TwoCat D}},$$
    for the bottom right corner, and for the bottom left corner we have 
    $$\Map{\CatInfTwo^\dagger}{\TwoCat C}{\FunNEps{-}{\TwoCat D}} \cong \Map{\CatInfTwo}{-}{\FunPartEps{\dagger}{\TwoCat C}{\TwoCat D}}$$
    by \cref{lem:marked:marked-mapping-space-UP}.

    In order to show that the top right corner is representable, it suffices to show that it preserves limits,
    cf.\ \cite[Proposition 5.5.2.2]{highertopoi}.
    Since the mapping anima preserves limits in the second variable,
    it suffices to show that the functor
    $$\FunNEps{-}{\TwoCat D^\otimes}^\ptw \colon \op{\CatInfTwo} \to \CAlgDelta{\TwoCatInfTwo}$$
    preserves limits.
    This was shown in the proof of \cite[Lemma 6.11]{cnossen2025universalityspan2categoriesconstruction},
    its dual proof if $\delta = \mathrm{oplax}$, or its adaptation to strong monoidal functors 
    or strong natural transformations if $\delta = \emptyset$ or $\epsilon = \emptyset$, respectively.
\end{proof}

\begin{constr} \label{constr:2-functoriality-of-lax-tensor-oplax:1-functoriality}
    In view of \cref{constr:2-functoriality-of-lax-tensor-oplax:factor-Yoneda} and the full faithfulness of the Yoneda embedding,
    we see that the functor 
    $$\Phi \colon \op{(\CAlgDelta{\TwoCatInfTwo}^\dagger)} \times \CAlgDelta{\TwoCatInfTwo} \to \Fun{\op{\CatInfTwo}}{\AnL}$$
    factors uniquely over a functor 
    $$\textcolor{purple}{\FunPartEpsDeltaTensor{\dagger}{-}{-}} \colon \op{(\CAlgDelta{\TwoCatInfTwo}^\dagger)} \times \CAlgDelta{\TwoCatInfTwo} \to \CatInfTwo.$$
\end{constr}

By universal property and the Yoneda lemma, we obtain:
\begin{lem} \label{lem:2-functoriality-of-lax-tensor-oplax:1-functoriality-univ-prop}
    \uses{constr:2-functoriality-of-lax-tensor-oplax:1-functoriality}
    Let $(\TwoCat C^\otimes, \dagger)$ be a marked symmetric monoidal $2$-category, $\TwoCat D^\otimes$ be a symmetric monoidal $2$-category and let $\TwoCat E$ be a $2$-category.
    Then there is an equivalence, natural in $(\TwoCat C^\otimes, \dagger)$, $\TwoCat D^\otimes$ and $\TwoCat E$:
    $$\Map{\CatInfTwo}{\TwoCat E}{\FunPartEpsDeltaTensor{\dagger}{\TwoCat C^\otimes}{\TwoCat D^\otimes}} \cong \MapMarked{\CAlgDelta{\TwoCatInfTwo}}{\dagger}{\TwoCat C^\otimes}{\FunNEps{\TwoCat E}{\TwoCat D^\otimes}^\ptw}.$$
\end{lem}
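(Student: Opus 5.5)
The statement is essentially a repackaging of \cref{constr:2-functoriality-of-lax-tensor-oplax:1-functoriality}, so the plan is to unwind that construction through the Yoneda lemma.

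\emph{Step 1: use the factorization.} By construction, $\FunPartEpsDeltaTensor{\dagger}{-}{-}$ is the unique functor $\op{(\CAlgDelta{\TwoCatInfTwo}^\dagger)} \times \CAlgDelta{\TwoCatInfTwo} \to \CatInfTwo$ whose composite with the Yoneda embedding $\yo \colon \CatInfTwo \hookrightarrow \Fun{\op{\CatInfTwo}}{\AnL}$ is identified with $\Phi$. Existence and uniqueness come from \cref{constr:2-functoriality-of-lax-tensor-oplax:factor-Yoneda}, since $\Phi$ lands in the essential image of a fully faithful functor. The construction therefore provides a natural equivalence
$$\yo \circ \FunPartEpsDeltaTensor{\dagger}{-}{-} \simeq \Phi$$
of functors $\op{(\CAlgDelta{\TwoCatInfTwo}^\dagger)} \times \CAlgDelta{\TwoCatInfTwo} \to \Fun{\op{\CatInfTwo}}{\AnL}$.

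\emph{Step 2: uncurry.} Evaluating at $\TwoCat E \in \op{\CatInfTwo}$ turns this into an equivalence of functors in three variables $((\TwoCat C^\otimes,\dagger), \TwoCat D^\otimes, \TwoCat E)$. Its left side is $\Map{\CatInfTwo}{\TwoCat E}{\FunPartEpsDeltaTensor{\dagger}{\TwoCat C^\otimes}{\TwoCat D^\otimes}}$, by definition of $\yo$. Its right side is $\MapMarked{\CAlgDelta{\TwoCatInfTwo}}{\dagger}{\TwoCat C^\otimes}{\FunNEps{\TwoCat E}{\TwoCat D^\otimes}^\ptw}$, by definition of $\Phi$ as the curried functor.

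\emph{Step 3: naturality.} Naturality in all three variables is automatic. The identification in Step 1 is an equivalence in a functor category, and currying $\Fun{A \times B}{\Fun{C}{\AnL}} \simeq \Fun{A \times B \times C}{\AnL}$ is itself an equivalence of categories. The resulting equivalence is therefore natural jointly in $(\TwoCat C^\otimes,\dagger)$, $\TwoCat D^\otimes$ and $\TwoCat E$, not merely objectwise.

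\emph{Main obstacle.} There is no serious obstacle. The substantive input is the representability established in \cref{constr:2-functoriality-of-lax-tensor-oplax:factor-Yoneda}, which may be assumed. The only subtlety is the bookkeeping in Step 3: one must make sure the factorization through $\yo$ is taken as a factorization of functors, so that the equivalence is coherently natural rather than pointwise.
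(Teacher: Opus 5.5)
Your proposal is correct and follows the paper's approach: the paper gives no separate proof and simply states that the equivalence follows from \cref{constr:2-functoriality-of-lax-tensor-oplax:1-functoriality} by the universal property and the Yoneda lemma. Your argument spells that out, taking the factorization $\yo \circ \FunPartEpsDeltaTensor{\dagger}{-}{-} \simeq \Phi$ as an equivalence of functors and then evaluating at $\TwoCat E$, which gives the equivalence naturally in all three variables.
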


\begin{rmk} \label{lem:2-functoriality-of-lax-tensor-oplax:explicit-morphisms}
    Applying \cref{lem:2-functoriality-of-lax-tensor-oplax:1-functoriality-univ-prop} with $\TwoCat E = \Delta^0$,
    we see that objects of $\FunPartEpsDeltaTensor{\dagger}{\TwoCat C^\otimes}{\TwoCat D^\otimes}$
    are given by $\delta$-symmetric monoidal functors $\TwoCat C^\otimes \to \TwoCat D^\otimes$.

    Similarly, applying \loccit with $\TwoCat E = \Delta^1$ we see that morphisms of $\FunPartEpsDeltaTensor{\dagger}{\TwoCat C^\otimes}{\TwoCat D^\otimes}$ 
    are given by partially $\epsilon$-natural transformations $t \colon F \to G$ between $\delta$-symmetric monoidal functors $F, G \colon \TwoCat{C}^\otimes \to \TwoCat{D}^\otimes$ together 
    with a coherent collection of $2$-morphisms of the form (we write it for $\delta=\mathrm{lax}$ and $\epsilon=\mathrm{oplax}$)
    \begin{center}
        \begin{tikzcd}
            F(X) \otimes F(Y) \ar[r] \ar[d, "t_X \otimes t_Y"'] & F(X \otimes Y) \ar[d, "t_{X \otimes Y}"] \laxarrowdl{} \\
            G(X) \otimes G(Y) \ar[r] & G(X \otimes Y) \rlap{,}
        \end{tikzcd}
    \end{center}
    where the direction of the $2$-morphism depends on $\mathrm{op}(\epsilon)$ and the horizontal maps are given by the $\delta$-symmetric monoidal structures of $F$ and $G$, respectively.
    Indeed, by forgetting the $\delta$-symmetric monoidal structure and applying \Cref{lem:marked:marked-mapping-space-UP}, we see that the underlying morphisms are exactly partially $\epsilon$-natural transformations. 
    Moreover, by forgetting the marking and postcomposing with the symmetric monoidal functors $\ev{0}, \ev{1} \colon \FunNEps{\Delta^1}{\TwoCat{D}^\otimes}^\ptw \to \TwoCat{D}^\otimes$, 
    we see that $F$ and $G$ have to be $\delta$-symmetric monoidal. 
    The data of the above diagrams is induced by the $\delta$-symmetric monoidal structure of the corresponding functor $\TwoCat{C}^\otimes \to \FunNEps{\Delta^1}{\TwoCat{D}^\otimes}^\ptw$.

    In particular, this description shows that if $t \colon F \to G$ is a \emph{symmetric monoidal} partially $\epsilon$-natural transformation between $\delta$-symmetric monoidal functors, 
    then it can be regarded as a morphism of $\FunPartEpsDeltaTensor{\dagger}{\TwoCat{C}^\otimes}{\TwoCat{D}^\otimes}$.
\end{rmk}

There is a natural comparison functor that forgets about the lax symmetric monoidal structure of a functor:
\begin{lem} \label{lem:2-functoriality-of-lax-tensor-oplax:forgetful-map}
    There is a forgetful map
    $$\FunPartEpsDeltaTensor{\dagger}{\TwoCat C^\otimes}{\TwoCat D^\otimes} \to \FunPartEps{\dagger}{\TwoCat C}{\TwoCat D},$$
    natural in $(\TwoCat C^\otimes, \dagger)$ and $\TwoCat D^\otimes$.
\end{lem}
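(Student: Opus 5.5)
The forgetful map will be built through the Yoneda lemma, using the universal property of \cref{lem:2-functoriality-of-lax-tensor-oplax:1-functoriality-univ-prop} for the source and that of \cref{lem:marked:marked-mapping-space-UP} for the target. Both objects represent functors $\op{\CatInfTwo} \to \AnL$ in the variable $\TwoCat E$. It therefore suffices to produce a natural transformation
$$\MapMarked{\CAlgDelta{\TwoCatInfTwo}}{\dagger}{\TwoCat C^\otimes}{\FunNEps{\TwoCat E}{\TwoCat D^\otimes}^\ptw} \longrightarrow \MapMarked{\CatInfTwo}{\dagger}{\TwoCat C}{\FunNEps{\TwoCat E}{\TwoCat D}},$$
natural in $\TwoCat E$, $(\TwoCat C^\otimes,\dagger)$ and $\TwoCat D^\otimes$.

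This transformation is the bottom horizontal arrow of the cartesian square in \cref{rmk:marked:symm-monoidal-mapping-space-pullback}, more precisely the left vertical arrow of that square. That arrow is induced by the canonical forgetful functor $\CAlgDelta{\TwoCatInfTwo}^\dagger \to \CatInfTwo^\dagger$, which is the left vertical leg of the defining pullback of $\CAlgDelta{\TwoCatInfTwo}^\dagger$. We then identify the underlying marked $2$-category of $\FunNEps{\TwoCat E}{\TwoCat D^\otimes}^\ptw$ with $\FunNEps{\TwoCat E}{\TwoCat D}$ carrying its strong marking. This identification is natural in $\TwoCat E$ and $\TwoCat D^\otimes$ by \cref{lem:2-sym:pointwise-structure-functoriality} and \cref{lem:marking:fun-symm-marking}.

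Precomposing with the equivalence of \cref{lem:2-functoriality-of-lax-tensor-oplax:1-functoriality-univ-prop} and postcomposing with the inverse of the equivalence of \cref{lem:marked:marked-mapping-space-UP} gives a map of representable presheaves
$$\Map{\CatInfTwo}{-}{\FunPartEpsDeltaTensor{\dagger}{\TwoCat C^\otimes}{\TwoCat D^\otimes}} \to \Map{\CatInfTwo}{-}{\FunPartEps{\dagger}{\TwoCat C}{\TwoCat D}}.$$
By full faithfulness of the Yoneda embedding this comes from a unique functor, which is the forgetful map we want.

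For naturality in $(\TwoCat C^\otimes,\dagger)$ and $\TwoCat D^\otimes$, we run the whole construction on functor categories. The transformation above is a morphism
$$\Phi \Rightarrow \Phi'$$
of functors $\op{(\CAlgDelta{\TwoCatInfTwo}^\dagger)} \times \CAlgDelta{\TwoCatInfTwo} \to \Fun{\op{\CatInfTwo}}{\AnL}$. Here $\Phi$ is the functor of \cref{constr:2-functoriality-of-lax-tensor-oplax:factor-Yoneda}, and $\Phi'$ is the analogous unmonoidal functor, precomposed with the forgetful functor. Both $\Phi$ and $\Phi'$ factor through the essential image of the Yoneda embedding, so the transformation lifts uniquely to a transformation between the $\CatInfTwo$-valued functors.

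The main obstacle is the naturality of the identification of the underlying marked $2$-category of $\FunNEps{\TwoCat E}{\TwoCat D^\otimes}^\ptw$ with $\FunNEps{\TwoCat E}{\TwoCat D}$. It must be natural jointly in $\TwoCat E$, in $\TwoCat D^\otimes$ and in the marking. The unmarked part is the commuting square of \cref{lem:2-sym:pointwise-structure-functoriality}. The marked upgrade then follows because both markings are defined via the same transformation of \cref{lem:lax-nat-trans:fun-to-lax-fun}.
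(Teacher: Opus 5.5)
Your proposal is correct and is essentially the paper's proof: it identifies both sides via \cref{lem:2-functoriality-of-lax-tensor-oplax:1-functoriality-univ-prop} and \cref{lem:marked:marked-mapping-space-UP}, takes the map induced by the forgetful functor $\CAlgDelta{\TwoCatInfTwo}^\dagger \to \CatInfTwo^\dagger$, and concludes by Yoneda. Your extra discussion of naturality (lifting $\Phi \Rightarrow \Phi'$ through the Yoneda image, and the marked identification of the underlying $2$-category of the pointwise structure) only makes explicit what the paper leaves implicit. The arrow you need is the left vertical one in \cref{rmk:marked:symm-monoidal-mapping-space-pullback}, so drop the phrase ``bottom horizontal arrow''.
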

\begin{proof}
    Let $\TwoCat E$ be a $2$-category, $(\TwoCat C^\otimes, \dagger) \in \CAlgDelta{\TwoCatInfTwo}^\dagger$ and $\TwoCat D^\otimes \in \CAlgDelta{\TwoCatInfTwo}$. It suffices to provide this natural transformation on Yoneda.
    Hence, we will construct a map 
    $$\Map{\CatInfTwo}{\TwoCat E}{\FunPartEpsDeltaTensor{\dagger}{\TwoCat C^\otimes}{\TwoCat D^\otimes}} \to \Map{\CatInfTwo}{\TwoCat E}{\FunPartEps{\dagger}{\TwoCat C}{\TwoCat D}},$$
    natural in $\TwoCat E$, $(\TwoCat C^\otimes,\dagger)$ and $\TwoCat D^\otimes$.
    By \cref{lem:2-functoriality-of-lax-tensor-oplax:1-functoriality-univ-prop}, the left hand side is equivalent to 
    $$\MapMarked{\CAlgDelta{\TwoCatInfTwo}}{\dagger}{\TwoCat C^\otimes}{\FunNEps{\TwoCat E}{\TwoCat D^\otimes}^\ptw}.$$
    On the other hand, by \cref{lem:marked:marked-mapping-space-UP}, the right hand side is equivalent to 
    $$\MapMarked{\CatInfTwo}{\dagger}{\TwoCat C}{\FunNEps{\TwoCat E}{\TwoCat D}}.$$
    Hence, we can choose the map induced by the forgetful functor $\CAlgDelta{\TwoCatInfTwo}^\dagger \to \CatInfTwo^\dagger$.
\end{proof}

From now on, fix a marked symmetric monoidal $2$-category $(\TwoCat C^\otimes, \dagger)$.
Our goal is to show that the functor $\FunPartEpsDeltaTensor{\dagger}{\TwoCat C^\otimes}{-}$ preserves adjunctions.

\begin{lem} \label{lem:2-functoriality-of-lax-tensor-oplax:cotensor-comparison}
    Let $K$ be a category and let $\TwoCat N^\otimes \in \CAlgDelta{\TwoCatInfTwo}$.
    There is a canonical comparison
    $$\sigma_{K,\TwoCat N^\otimes}\colon \FunPartEpsDeltaTensor{\dagger}{\TwoCat C^\otimes}{\TwoFun{K}{\TwoCat N^\otimes}} \longrightarrow \TwoFun{K}{\FunPartEpsDeltaTensor{\dagger}{\TwoCat C^\otimes}{\TwoCat N^\otimes}},$$
    natural in $K$ and $\TwoCat N^\otimes$ (with $\delta$-symmetric monoidal functors, i.e.\ in $\CAlgDelta{\TwoCatInfTwo}$).
    These comparisons satisfy
    $$\sigma_{*,\TwoCat N^\otimes}\simeq\mathrm{id} \colon \FunPartEpsDeltaTensor{\dagger}{\TwoCat C^\otimes}{\TwoCat N^\otimes} \to \FunPartEpsDeltaTensor{\dagger}{\TwoCat C^\otimes}{\TwoCat N^\otimes}$$
    and
    $$\sigma_{K\times L,\TwoCat N^\otimes} \simeq \TwoFun{K}{\sigma_{L,\TwoCat N^\otimes}} \circ \sigma_{K,\TwoFun{L}{\TwoCat N^\otimes}}.$$
    These equivalences are compatible with associativity and units.
\end{lem}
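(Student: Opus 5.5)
We construct $\sigma_{K,\TwoCat N^\otimes}$ on functors of points. By \cref{lem:2-functoriality-of-lax-tensor-oplax:1-functoriality-univ-prop} and the Yoneda lemma, it suffices to give a map of anima natural in a test $2$-category $\TwoCat E$:
\[
\MapMarked{\CAlgDelta{\TwoCatInfTwo}}{\dagger}{\TwoCat C^\otimes}{\FunNEps{\TwoCat E}{\TwoFun{K}{\TwoCat N^\otimes}^\ptw}^\ptw}
\longrightarrow
\Map{\CatInfTwo}{\TwoCat E}{\TwoFun{K}{\FunPartEpsDeltaTensor{\dagger}{\TwoCat C^\otimes}{\TwoCat N^\otimes}}}.
\]
The right-hand side is $\Map{\CatInfTwo}{\TwoCat E \times K}{\FunPartEpsDeltaTensor{\dagger}{\TwoCat C^\otimes}{\TwoCat N^\otimes}}$, which by the universal property again equals
\[
\MapMarked{\CAlgDelta{\TwoCatInfTwo}}{\dagger}{\TwoCat C^\otimes}{\FunNEps{\TwoCat E\times K}{\TwoCat N^\otimes}^\ptw}.
\]
It therefore suffices to supply a natural marked $\delta$-symmetric monoidal comparison
\[
\FunNEps{\TwoCat E}{\TwoFun{K}{\TwoCat N^\otimes}^\ptw}^\ptw \to \FunNEps{K\times\TwoCat E}{\TwoCat N^\otimes}^\ptw
\]
and postcompose with it. \cref{lem:2-sym:pointwise-comparison} provides exactly this, natural in $\TwoCat E$, $K$ and $\TwoCat N^\otimes$, with $\mathrm{op}(\epsilon)$ in place of $\epsilon$.

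This comparison must also preserve the strong markings of \cref{lem:marking:fun-symm-marking}, so that it induces a map of marked mapping anima. For this we use the commutative squares in \cref{lem:2-sym:pointwise-comparison}: they identify the restriction of the comparison to strong transformations with the strong currying equivalence. Hence the comparison carries the marking $\TwoFun{\TwoCat E}{\TwoFun{K}{\TwoCat N^\otimes}^\ptw}^\ptw$ into $\TwoFun{K\times\TwoCat E}{\TwoCat N^\otimes}^\ptw$. Naturality of the resulting map of marked mapping anima in $\TwoCat E$, $K$ and $\TwoCat N^\otimes$ is inherited from the naturality of the comparison. Yoneda then produces $\sigma_{K,\TwoCat N^\otimes}$, natural in $K$ and in $\TwoCat N^\otimes \in \CAlgDelta{\TwoCatInfTwo}$.

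For $K=*$, both the pointwise comparison and the cartesian currying reduce to the unit isomorphisms $\TwoFun{*}{-}\cong\id{}$ and $\TwoCat E\times *\cong\TwoCat E$. Hence $\sigma_{*,\TwoCat N^\otimes}\simeq\mathrm{id}$. For the product formula, unwind both sides on $\TwoCat E$-points. Each becomes a composite of pointwise comparisons,
\[
\FunNEps{\TwoCat E}{\TwoFun{K\times L}{\TwoCat N^\otimes}^\ptw}^\ptw \to \FunNEps{K\times L\times\TwoCat E}{\TwoCat N^\otimes}^\ptw,
\]
one taken directly and one obtained by first currying out $K$ and then $L$, using $\TwoFun{K\times L}{-}\cong\TwoFun{K}{\TwoFun{L}{-}}$. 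Their agreement follows from the corresponding associativity of the underlying comparison of \cref{lem:lax-arrow:lax-strong-comparison}, i.e.\ of \cite[Corollary 2.8.4]{AGH24}. That associativity in turn comes from the compatibility of the maps $\lambda$ in its proof with associativity of $\times$ and $\otimes^\gr$. It lifts to pointwise structures by the same pullback-over-$\Fin_*$ argument as in \cref{lem:2-sym:pointwise-comparison}.

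\textbf{Main obstacle.} The hard step is the higher coherence, \emph{compatibility with associativity and units}. To handle it we would avoid checking cells by hand. Instead we would observe that $\TwoCat E \mapsto \FunNEps{\TwoCat E}{-}$ and the comparison form a lax action of $(\CatInf,\times)$ on the relevant functor category, built entirely from the cartesian closed structure and the natural transformation $q\colon\otimes^\gr\to\times$. We would then deduce that $\sigma$ is a lax module map, since all of its structure maps arise from that single natural transformation.
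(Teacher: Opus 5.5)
Your proposal is correct and follows essentially the same route as the paper. You build $\sigma_{K,\TwoCat N^\otimes}$ on $\TwoCat E$-points from \cref{lem:2-functoriality-of-lax-tensor-oplax:1-functoriality-univ-prop} by postcomposing with the marking-preserving comparison of \cref{lem:2-sym:pointwise-comparison}, and you read off the unit and product identities from how the currying maps $\lambda$ interact with the unit and associativity equivalences of $\times$; your ``lax module'' packaging of the higher coherences is a reformulation of the paper's appeal to the universal property of products for arbitrarily many parameter categories, at the same level of detail.
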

\begin{proof}
    We will construct this comparison again after applying Yoneda.
    Hence, we will construct for every $2$-category $\TwoCat E$ a morphism 
    $$\Map{\CatInfTwo}{\TwoCat E}{\FunPartEpsDeltaTensor{\dagger}{\TwoCat C^\otimes}{\TwoFun{K}{\TwoCat N^\otimes}^\ptw}} \to  \Map{\CatInfTwo}{\TwoCat E}{\TwoFun{K}{\FunPartEpsDeltaTensor{\dagger}{\TwoCat C^\otimes}{\TwoCat N^\otimes}}},$$
    natural in $\TwoCat E$, $K$ and $\TwoCat N^\otimes$.
    By \cref{lem:2-functoriality-of-lax-tensor-oplax:1-functoriality-univ-prop}, the left hand side is equivalent to 
    $$\MapMarked{\CAlgDelta{\TwoCatInfTwo}}{\dagger}{\TwoCat C^\otimes}{\FunNEps{\TwoCat E}{\TwoFun{K}{\TwoCat N^\otimes}^\ptw}^{\ptw}}.$$
    Similarly, by \loccit and the adjunction $- \times K \dashv \TwoFun{K}{-}$, the right hand side is equivalent to 
    $$\MapMarked{\CAlgDelta{\TwoCatInfTwo}}{\dagger}{\TwoCat C^\otimes}{\FunNEps{K\times\TwoCat E}{\TwoCat N^\otimes}^{\ptw}}.$$
    Recall that we constructed in \cref{lem:2-sym:pointwise-comparison} a comparison functor 
    $$\FunNEps{\TwoCat E}{\TwoFun{K}{\TwoCat N^\otimes}^{\ptw}}^\ptw \to \FunNEps{K\times\TwoCat E}{\TwoCat N^\otimes}^{\ptw},$$
    together with its compatibility with the strong marking.
    Since $\MapMarked{\CAlgDelta{\TwoCatInfTwo}}{\dagger}{-}{-}$ is functorial in $\CAlgDelta{\TwoCatInfTwo}^\dagger$ (since it is the mapping anima in that category), 
    we obtain a natural transformation 
    $$\MapMarked{\CAlgDelta{\TwoCatInfTwo}}{\dagger}{\TwoCat C^\otimes}{\FunNEps{\TwoCat E}{\TwoFun{K}{\TwoCat N^\otimes}^\ptw}^{\ptw}} \to \MapMarked{\CAlgDelta{\TwoCatInfTwo}}{\dagger}{\TwoCat C^\otimes}{\FunNEps{K\times\TwoCat E}{\TwoCat N^\otimes}^{\ptw}}.$$
    Thus, via the Yoneda lemma we obtain 
    $$\sigma_{K,\TwoCat N^\otimes}\colon \FunPartEpsDeltaTensor{\dagger}{\TwoCat C^\otimes}{\TwoFun{K}{\TwoCat N^\otimes}} \longrightarrow \TwoFun{K}{\FunPartEpsDeltaTensor{\dagger}{\TwoCat C^\otimes}{\TwoCat N^\otimes}},$$
    natural in $K$ and $\TwoCat N^\otimes$.

    For every test $2$-category $\TwoCat X$, the comparison of \cref{lem:2-sym:pointwise-comparison} is induced, under the tensor--hom adjunctions, by
    $$\lambda_{K,\TwoCat E,\TwoCat X}\colon\TwoCat X\otimes^{\mathrm{op}(\epsilon)}(K\times\TwoCat E)\xrightarrow{(!\otimes\mathrm{pr}_K,\,\mathrm{id}\otimes\mathrm{pr}_{\TwoCat E})}(*\otimes^{\mathrm{op}(\epsilon)}K)\times(\TwoCat X\otimes^{\mathrm{op}(\epsilon)}\TwoCat E)\simeq K\times(\TwoCat X\otimes^{\mathrm{op}(\epsilon)}\TwoCat E).$$
    Here $\otimes^{\mathrm{op}(\epsilon)}$ is the Gray tensor product for $\epsilon=\mathrm{oplax}$, its reverse for $\epsilon=\mathrm{lax}$, and the cartesian product for $\epsilon=\emptyset$.

    For $K=*$, the following diagram commutes, where the vertical maps are the canonical unit equivalences:
    $$\begin{tikzcd}[column sep=large] \TwoCat X\otimes^{\mathrm{op}(\epsilon)}(*\times\TwoCat E) \ar[r,"\lambda_{*,\TwoCat E,\TwoCat X}"] \ar[d,"\simeq"'] & *\times(\TwoCat X\otimes^{\mathrm{op}(\epsilon)}\TwoCat E) \ar[d,"\simeq"] \\ \TwoCat X\otimes^{\mathrm{op}(\epsilon)}\TwoCat E \ar[r,"\mathrm{id}"'] & \TwoCat X\otimes^{\mathrm{op}(\epsilon)}\TwoCat E. \end{tikzcd}$$
    Thus the comparison for the terminal parameter category is canonically the identity.

    For products of parameter categories, suppressing canonical associativity equivalences, we have the commutative diagram
    $$\begin{tikzcd}[column sep=large] \TwoCat X\otimes^{\mathrm{op}(\epsilon)}((K\times L)\times\TwoCat E) \ar[r,"\lambda_{K,L\times\TwoCat E,\TwoCat X}"] \ar[d,"\lambda_{K\times L,\TwoCat E,\TwoCat X}"'] & K\times(\TwoCat X\otimes^{\mathrm{op}(\epsilon)}(L\times\TwoCat E)) \ar[d,"\mathrm{id}_K\times\lambda_{L,\TwoCat E,\TwoCat X}"] \\ (K\times L)\times(\TwoCat X\otimes^{\mathrm{op}(\epsilon)}\TwoCat E) \ar[r,"\simeq"'] & K\times(L\times(\TwoCat X\otimes^{\mathrm{op}(\epsilon)}\TwoCat E)). \end{tikzcd}$$
    Both composites have the same projections to $K$, $L$, and $\TwoCat X\otimes^{\mathrm{op}(\epsilon)}\TwoCat E$.
    The universal property of products gives these identifications coherently for any number of parameter categories.
    The pointwise monoidal comparisons inherit these coherences by their pullback construction, and Yoneda transports them to the comparisons $\sigma_{K,\TwoCat N^\otimes}$.
\end{proof}

\begin{constr} \label{constr:2-functoriality-of-lax-tensor-oplax:mapping-categories-upgrade}
    We construct, for every pair of symmetric monoidal $2$-categories $\TwoCat M^\otimes$ and $\TwoCat N^\otimes$, a functor
    $$g \colon \Map{\TwoCAlgDelta{\TwoCatInfTwo}}{\TwoCat M^\otimes}{\TwoCat N^\otimes} \longrightarrow \Map{\TwoCatInfTwo}{\FunPartEpsDeltaTensor{\dagger}{\TwoCat C^\otimes}{\TwoCat M^\otimes}}{\FunPartEpsDeltaTensor{\dagger}{\TwoCat C^\otimes}{\TwoCat N^\otimes}}.$$
    By \cref{lem:2-sym:category-indexed-cotensors}, for every category $K$ we have a natural equivalence
    \begin{equation*} 
        \Map{\CAlgDelta{\TwoCatInfTwo}}{\TwoCat M^\otimes}{\TwoFun{K}{\TwoCat N^\otimes}^{\ptw}} \simeq \Map{\CatInf}{K}{\Map{\TwoCAlgDelta{\TwoCatInfTwo}}{\TwoCat M^\otimes}{\TwoCat N^\otimes}}.
    \end{equation*}
    We use this equivalence for $K=\Map{\TwoCAlgDelta{\TwoCatInfTwo}}{\TwoCat M^\otimes}{\TwoCat N^\otimes}$.
    Thus, the identity functor of $K$ corresponds to a $\delta$-symmetric monoidal functor
    $$\mathrm{ev}\colon \TwoCat M^\otimes \longrightarrow \TwoFun{K}{\TwoCat N^\otimes}^\ptw.$$
    Applying \cref{constr:2-functoriality-of-lax-tensor-oplax:1-functoriality} and then \cref{lem:2-functoriality-of-lax-tensor-oplax:cotensor-comparison} gives
    \begin{align*}
        \FunPartEpsDeltaTensor{\dagger}{\TwoCat C^\otimes}{\TwoCat M^\otimes} 
        &\xrightarrow{\FunPartEpsDeltaTensor{\dagger}{\TwoCat C^\otimes}{\mathrm{ev}}} \FunPartEpsDeltaTensor{\dagger}{\TwoCat C^\otimes}{\TwoFun{K}{\TwoCat N^\otimes}^\ptw} \\
        &\xrightarrow{\sigma_{K,\TwoCat N^\otimes}} \TwoFun{K}{\FunPartEpsDeltaTensor{\dagger}{\TwoCat C^\otimes}{\TwoCat N^\otimes}}.
    \end{align*}
    Equivalently, by currying twice, this is a functor
    $$g \colon K\longrightarrow \Map{\TwoCatInfTwo}{\FunPartEpsDeltaTensor{\dagger}{\TwoCat C^\otimes}{\TwoCat M^\otimes}}{\FunPartEpsDeltaTensor{\dagger}{\TwoCat C^\otimes}{\TwoCat N^\otimes}},$$
    which has the required signature.
\end{constr}

\begin{lem} \label{lem:2-functoriality-of-lax-tensor-oplax:compatibility-on-cores}
    For every pair of symmetric monoidal $2$-categories $\TwoCat M^\otimes$ and $\TwoCat N^\otimes$, the square
    \begin{center}
        \begin{tikzcd}
            \Map{\CAlgDelta{\TwoCatInfTwo}}{\TwoCat M^\otimes}{\TwoCat N^\otimes} \ar[d, hook] \ar[r, "f"] & \Map{\CatInfTwo}{\FunPartEpsDeltaTensor{\dagger}{\TwoCat C^\otimes}{\TwoCat M^\otimes}}{\FunPartEpsDeltaTensor{\dagger}{\TwoCat C^\otimes}{\TwoCat N^\otimes}} \ar[d, hook] \\
            \Map{\TwoCAlgDelta{\TwoCatInfTwo}}{\TwoCat M^\otimes}{\TwoCat N^\otimes} \ar[r, "g"'] & \Map{\TwoCatInfTwo}{\FunPartEpsDeltaTensor{\dagger}{\TwoCat C^\otimes}{\TwoCat M^\otimes}}{\FunPartEpsDeltaTensor{\dagger}{\TwoCat C^\otimes}{\TwoCat N^\otimes}}
        \end{tikzcd}
    \end{center}
    commutes.
    Here the vertical maps are the inclusions of cores, the top map is induced by \cref{constr:2-functoriality-of-lax-tensor-oplax:1-functoriality}, and the bottom map is the one constructed in \cref{constr:2-functoriality-of-lax-tensor-oplax:mapping-categories-upgrade}.
\end{lem}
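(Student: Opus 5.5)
To show the square commutes, I will compare the two composites on cores after evaluating at an arbitrary object of $\Map{\CAlgDelta{\TwoCatInfTwo}}{\TwoCat M^\otimes}{\TwoCat N^\otimes}$ functorially. Since the source of the square is an anima, the most efficient route is to test against points. Take a $\delta$-symmetric monoidal functor $\phi \colon \TwoCat M^\otimes \to \TwoCat N^\otimes$, viewed as a functor $\ast \to K$, where $K = \Map{\TwoCAlgDelta{\TwoCatInfTwo}}{\TwoCat M^\otimes}{\TwoCat N^\otimes}$. The image of $\phi$ under $g$ is obtained by restricting the composite $\sigma_{K,\TwoCat N^\otimes}\circ\FunPartEpsDeltaTensor{\dagger}{\TwoCat C^\otimes}{\mathrm{ev}}$ along $\phi\colon \ast \to K$. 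In other words, it is evaluation at $\phi$ composed with $\sigma_{K,\TwoCat N^\otimes}$. To make this argument natural rather than pointwise, I will work with the whole core: replace $\ast$ by the anima $K^{\simeq}$ and its inclusion $\iota\colon K^\simeq \to K$.

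The first key step is naturality of $\sigma$ in the parameter category. This is provided by \cref{lem:2-functoriality-of-lax-tensor-oplax:cotensor-comparison}: for $\iota \colon K^\simeq \to K$ we get $\TwoFun{\iota}{-}\circ\sigma_{K,\TwoCat N^\otimes} \simeq \sigma_{K^\simeq,\TwoCat N^\otimes}\circ \FunPartEpsDeltaTensor{\dagger}{\TwoCat C^\otimes}{\TwoFun{\iota}{\TwoCat N^\otimes}^\ptw}$. Under the cotensor equivalence of \cref{lem:2-sym:category-indexed-cotensors}, the composite $\TwoFun{\iota}{\TwoCat N^\otimes}^\ptw\circ \mathrm{ev}$ is the evaluation map $\mathrm{ev}'\colon \TwoCat M^\otimes \to \TwoFun{K^\simeq}{\TwoCat N^\otimes}^\ptw$ corresponding to $\iota$. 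Consequently, $g\circ \iota$ is the functor obtained from $\mathrm{ev}'$ by the same recipe.

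The second step treats the anima case. The parameter $K^\simeq$ is an anima, that is, a colimit of points in $\CatInf$. Since $\TwoFun{-}{-}$ sends colimits to limits, I reduce to the case $K = \ast$ using the identification $\sigma_{\ast,\TwoCat N^\otimes}\simeq \id{}$ from \cref{lem:2-functoriality-of-lax-tensor-oplax:cotensor-comparison}. Concretely, both $\sigma_{K^\simeq}$ and the "pointwise $f$" functor $K^\simeq \to \Map{\CatInfTwo}{\dots}{\dots}$ are determined by their restrictions to points of $K^\simeq$, by naturality in $K$. At a point $\phi$, the map $\mathrm{ev}'$ restricts to $\phi$ itself, and $\sigma_\ast = \id{}$. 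Hence the value at $\phi$ is $\FunPartEpsDeltaTensor{\dagger}{\TwoCat C^\otimes}{\phi}$, which is exactly $f(\phi)$. The compatibility of these identifications with the functoriality of $f$ in $\phi$ (that is, on paths in the anima) follows from the naturality of all the equivalences in $\TwoCat N^\otimes$ and $K$.

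I expect the main obstacle to be making the passage from "agrees at every point" to "agrees as maps of anima" coherent. This is not automatic, since agreement on objects alone does not determine a map of anima. My first attempt will avoid this by phrasing the whole comparison at the level of $K^\simeq$ via Yoneda. Maps $K^\simeq \to \Map{\CatInfTwo}{A}{B}$ correspond to functors $A \to \TwoFun{K^\simeq}{B}$ (equivalently $A\times K^\simeq\to B$), and the map $f$ corresponds to the functor induced by the universal family over $\Map{\CAlgDelta{\TwoCatInfTwo}}{\TwoCat M^\otimes}{\TwoCat N^\otimes}$. It then remains to identify $\sigma_{K^\simeq,\TwoCat N^\otimes}$ for an anima $K^\simeq$ with the canonical map coming from functoriality of $\FunPartEpsDeltaTensor{\dagger}{\TwoCat C^\otimes}{-}$ in families. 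I will check this identification through its construction via $\lambda$, which for anima parameters is compatible with the limit decomposition.
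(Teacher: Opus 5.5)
Your proposal is correct and follows essentially the paper's proof: you restrict $g$ along $K^{\simeq}\to K$ using naturality of $\sigma_{-,\TwoCat N^\otimes}$ in the parameter, and then identify $\sigma_{S,\TwoCat N^\otimes}$ for an anima $S$ with the canonical cotensor comparison that encodes $f$, using $\sigma_{*,\TwoCat N^\otimes}\simeq\mathrm{id}$. The paper does the restriction for an arbitrary family $a\colon S\to K$ and tests against all anima $S$ by Yoneda, whereas you use the universal family $S=K^{\simeq}$; the coherence issue you flag is settled exactly by your Step 2, since naturality of $\sigma_{-,\TwoCat N^\otimes}$ gives $\mathrm{ev}_s\circ\sigma_{S,\TwoCat N^\otimes}\simeq\FunPartEpsDeltaTensor{\dagger}{\TwoCat C^\otimes}{\mathrm{ev}_s}$ as a homotopy of maps out of the entire mapping anima $\Map{}{*}{S}\simeq S$ rather than merely pointwise, so there is no need to return to the construction of $\sigma$ via $\lambda$.
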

\begin{proof}
    Since the top left corner is an anima, using the adjunction $\mathrm{incl} \colon \AnL \rightleftarrows \CatInf \noloc \iota_0$ it suffices to show that the square 
    \begin{center}
        \begin{tikzcd}
            \Map{\CAlgDelta{\TwoCatInfTwo}}{\TwoCat M^\otimes}{\TwoCat N^\otimes} \ar[d, hook] \ar[r, "f"] & \Map{\CatInfTwo}{\FunPartEpsDeltaTensor{\dagger}{\TwoCat C^\otimes}{\TwoCat M^\otimes}}{\FunPartEpsDeltaTensor{\dagger}{\TwoCat C^\otimes}{\TwoCat N^\otimes}} \ar[d, hook] \\
            \iota_0 \Map{\TwoCAlgDelta{\TwoCatInfTwo}}{\TwoCat M^\otimes}{\TwoCat N^\otimes} \ar[r, "{\iota_0 g}"'] & \iota_0 \Map{\TwoCatInfTwo}{\FunPartEpsDeltaTensor{\dagger}{\TwoCat C^\otimes}{\TwoCat M^\otimes}}{\FunPartEpsDeltaTensor{\dagger}{\TwoCat C^\otimes}{\TwoCat N^\otimes}}
        \end{tikzcd}
    \end{center}
    commutes.
    Note that 
    $$\iota_0 \Map{\TwoCAlgDelta{\TwoCatInfTwo}}{\TwoCat M^\otimes}{\TwoCat N^\otimes} = \Map{\iota_1 \TwoCAlgDelta{\TwoCatInfTwo}}{\TwoCat M^\otimes}{\TwoCat N^\otimes} = \Map{\CAlgDelta{\TwoCatInfTwo}}{\TwoCat M^\otimes}{\TwoCat N^\otimes},$$
    and similarly 
    \begin{align*}
        &\iota_0 \Map{\TwoCatInfTwo}{\FunPartEpsDeltaTensor{\dagger}{\TwoCat C^\otimes}{\TwoCat M^\otimes}}{\FunPartEpsDeltaTensor{\dagger}{\TwoCat C^\otimes}{\TwoCat N^\otimes}} \\
        &= \Map{\iota_1\TwoCatInfTwo}{\FunPartEpsDeltaTensor{\dagger}{\TwoCat C^\otimes}{\TwoCat M^\otimes}}{\FunPartEpsDeltaTensor{\dagger}{\TwoCat C^\otimes}{\TwoCat N^\otimes}} \\
        &= \Map{\CatInfTwo}{\FunPartEpsDeltaTensor{\dagger}{\TwoCat C^\otimes}{\TwoCat M^\otimes}}{\FunPartEpsDeltaTensor{\dagger}{\TwoCat C^\otimes}{\TwoCat N^\otimes}}.
    \end{align*}
    Hence, the vertical maps are the identity.
    Therefore, it suffices to show that the horizontal maps $f$ and $\iota_0 g$ are homotopic.
    By Yoneda, we have to provide for every anima $S \in \AnL$ a homotopy $$\Map{\AnL}{S}{f} \simeq \Map{\AnL}{S}{\iota_0 g},$$ natural in $S$.
    Moreover, again by adjunction, we have $\Map{\AnL}{S}{\iota_0 g} \cong \Map{\CatInf}{S}{g}$.

    For every anima $S$, write $(\TwoCat N^\otimes)^S=\TwoFun{S}{\TwoCat N^\otimes}^{\ptw}$.
    By \cref{lem:2-sym:category-indexed-cotensors} and the adjunction $\mathrm{incl} \dashv \iota_0$, this is the cotensor of $\TwoCat N^\otimes$ by $S$ in $\CAlgDelta{\TwoCatInfTwo}$.
    Similarly, the cotensor of $\FunPartEpsDeltaTensor{\dagger}{\TwoCat C^\otimes}{\TwoCat N^\otimes}$ by $S$ in $\CatInfTwo$ is $\TwoFun{S}{\FunPartEpsDeltaTensor{\dagger}{\TwoCat C^\otimes}{\TwoCat N^\otimes}}$.

    We first show that $\sigma_{S,\TwoCat N^\otimes}$ agrees with the canonical cotensor comparison
    $$\kappa_{S,\TwoCat N^\otimes}\colon \FunPartEpsDeltaTensor{\dagger}{\TwoCat C^\otimes}{(\TwoCat N^\otimes)^S}\longrightarrow\TwoFun{S}{\FunPartEpsDeltaTensor{\dagger}{\TwoCat C^\otimes}{\TwoCat N^\otimes}}.$$
    By definition, $\kappa_{S,\TwoCat N^\otimes}$ corresponds under the cotensor adjunction to the composite
    \begin{equation} \label{eq:weird-composite}
        S\xrightarrow{\mathrm{ev}}\Map{\CAlgDelta{\TwoCatInfTwo}}{(\TwoCat N^\otimes)^S}{\TwoCat N^\otimes}\xrightarrow{\FunPartEpsDeltaTensor{\dagger}{\TwoCat C^\otimes}{-}}\Map{\CatInfTwo}{\FunPartEpsDeltaTensor{\dagger}{\TwoCat C^\otimes}{(\TwoCat N^\otimes)^S}}{\FunPartEpsDeltaTensor{\dagger}{\TwoCat C^\otimes}{\TwoCat N^\otimes}}.
    \end{equation}
    Here, the first map is the evaluation family of the cotensor.

    For every $s\colon *\to S$, naturality of $\sigma_{-,\TwoCat N^\otimes}$ gives
    $$\mathrm{ev}_s\circ\sigma_{S,\TwoCat N^\otimes}\cong\sigma_{*,\TwoCat N^\otimes}\circ \FunPartEpsDeltaTensor{\dagger}{\TwoCat C^\otimes}{\mathrm{ev}_s}\cong \FunPartEpsDeltaTensor{\dagger}{\TwoCat C^\otimes}{\mathrm{ev}_s},$$
    where the second equivalence follows from \cref{lem:2-functoriality-of-lax-tensor-oplax:cotensor-comparison}.
    This equivalence is natural in $s\in\Map{}{*}{S}\cong S$: it is obtained from the naturality of $\sigma$ on this entire mapping anima.
    Thus the double currying of $\sigma_{S,\TwoCat N^\otimes}$ is naturally equivalent to \eqref{eq:weird-composite}.
    Hence, $\sigma_{S,\TwoCat N^\otimes}\cong\kappa_{S,\TwoCat N^\otimes}$, naturally in $S$.

    Now the cotensor adjunctions identify the source and target of $\Map{\AnL}{S}{f}$ with
    $$\Map{\AnL}{S}{\Map{\CAlgDelta{\TwoCatInfTwo}}{\TwoCat M^\otimes}{\TwoCat N^\otimes}}\cong\Map{\CAlgDelta{\TwoCatInfTwo}}{\TwoCat M^\otimes}{(\TwoCat N^\otimes)^S}$$
    and
    \begin{align*}
        &\Map{\AnL}{S}{\Map{\CatInfTwo}{\FunPartEpsDeltaTensor{\dagger}{\TwoCat C^\otimes}{\TwoCat M^\otimes}}{\FunPartEpsDeltaTensor{\dagger}{\TwoCat C^\otimes}{\TwoCat N^\otimes}}}\\
        &\cong\Map{\CatInfTwo}{\FunPartEpsDeltaTensor{\dagger}{\TwoCat C^\otimes}{\TwoCat M^\otimes}}{\TwoFun{S}{\FunPartEpsDeltaTensor{\dagger}{\TwoCat C^\otimes}{\TwoCat N^\otimes}}},
    \end{align*}
    respectively.
    Under these identifications, $\Map{\AnL}{S}{f}$ is the map
    $$u\longmapsto\kappa_{S,\TwoCat N^\otimes}\circ \FunPartEpsDeltaTensor{\dagger}{\TwoCat C^\otimes}{u}.$$
    On the other hand, put $K=\Map{\TwoCAlgDelta{\TwoCatInfTwo}}{\TwoCat M^\otimes}{\TwoCat N^\otimes}$.
    A functor $a\colon S\to K$ corresponds under \cref{lem:2-sym:category-indexed-cotensors} to a morphism $u_a\colon\TwoCat M^\otimes\to(\TwoCat N^\otimes)^S$.
    Explicitly, $u_a$ is given by the composition 
    $$\TwoCat M^\otimes \xrightarrow{\mathrm{ev}} (\TwoCat N^\otimes)^K \xrightarrow{(\TwoCat N^\otimes)^a} (\TwoCat N^\otimes)^S,$$
    where the first map is the evaluation map of the cotensor adjunction.
    Hence, by the construction of $g$ and naturality of $\sigma_{-,\TwoCat N^\otimes}$ under this restriction, $g\circ a$ corresponds to
    $$\FunPartEpsDeltaTensor{\dagger}{\TwoCat C^\otimes}{\TwoCat M^\otimes}\xrightarrow{\FunPartEpsDeltaTensor{\dagger}{\TwoCat C^\otimes}{u_a}}\FunPartEpsDeltaTensor{\dagger}{\TwoCat C^\otimes}{(\TwoCat N^\otimes)^S}\xrightarrow{\sigma_{S,\TwoCat N^\otimes}}\TwoFun{S}{\FunPartEpsDeltaTensor{\dagger}{\TwoCat C^\otimes}{\TwoCat N^\otimes}}.$$
    Consequently, under the same identifications, $\Map{\CatInf}{S}{g}$ is the map
    $$u\longmapsto\sigma_{S,\TwoCat N^\otimes}\circ \FunPartEpsDeltaTensor{\dagger}{\TwoCat C^\otimes}{u}.$$
    The natural equivalence $\sigma_{S,\TwoCat N^\otimes}\cong\kappa_{S,\TwoCat N^\otimes}$ therefore gives
    $$\Map{\AnL}{S}{f}\cong\Map{\CatInf}{S}{g}\cong\Map{\AnL}{S}{\iota_0 g},$$
    naturally in $S$, which is what we needed to prove.
\end{proof}

\begin{prop} \label{prop:2-functoriality-of-lax-tensor-oplax:composition}
    Let $\TwoCat M^\otimes$, $\TwoCat N^\otimes$ and $\TwoCat P^\otimes$ be symmetric monoidal $2$-categories.
    There is a natural equivalence
    $$\FunPartEpsDeltaTensor{\dagger}{\TwoCat C^\otimes}{-} \circ \FunPartEpsDeltaTensor{\dagger}{\TwoCat C^\otimes}{-} \simeq \FunPartEpsDeltaTensor{\dagger}{\TwoCat C^\otimes}{- \circ -},$$
    of functors 
    $$\Map{\TwoCAlgDelta{\TwoCatInfTwo}}{\TwoCat M^\otimes}{\TwoCat N^\otimes} \times \Map{\TwoCAlgDelta{\TwoCatInfTwo}}{\TwoCat N^\otimes}{\TwoCat P^\otimes} \to \Map{\TwoCatInfTwo}{ \FunPartEpsDeltaTensor{\dagger}{\TwoCat C^\otimes}{\TwoCat M^\otimes}}{\FunPartEpsDeltaTensor{\dagger}{\TwoCat C^\otimes}{\TwoCat P^\otimes}}.$$
    Its components agree with the composition equivalences of \cref{constr:2-functoriality-of-lax-tensor-oplax:1-functoriality}.
    Together with the identity comparisons, these equivalences satisfy the associativity and unit coherences and are compatible with whiskering.
\end{prop}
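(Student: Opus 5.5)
We reduce the statement to a pointwise statement on categories parametrized by $K$, exactly as $g$ itself was built in \cref{constr:2-functoriality-of-lax-tensor-oplax:mapping-categories-upgrade}. Write $K_1 = \Map{\TwoCAlgDelta{\TwoCatInfTwo}}{\TwoCat M^\otimes}{\TwoCat N^\otimes}$, $K_2 = \Map{\TwoCAlgDelta{\TwoCatInfTwo}}{\TwoCat N^\otimes}{\TwoCat P^\otimes}$ and $K_3 = \Map{\TwoCAlgDelta{\TwoCatInfTwo}}{\TwoCat M^\otimes}{\TwoCat P^\otimes}$, and let $\mathrm{ev}_i$ be the universal evaluation families from \cref{lem:2-sym:category-indexed-cotensors}. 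The composition functor $c \colon K_1 \times K_2 \to K_3$ corresponds, under \cref{lem:2-sym:category-indexed-cotensors} for $K = K_1 \times K_2$, to the $\delta$-symmetric monoidal functor
$$\TwoCat M^\otimes \xrightarrow{\mathrm{ev}_1} \TwoFun{K_1}{\TwoCat N^\otimes}^\ptw \xrightarrow{\TwoFun{K_1}{\mathrm{ev}_2}^\ptw} \TwoFun{K_1}{\TwoFun{K_2}{\TwoCat P^\otimes}^\ptw}^\ptw \simeq \TwoFun{K_1\times K_2}{\TwoCat P^\otimes}^\ptw,$$
where the last equivalence is \cref{lem:pointwise:internal-currying-pointwise-structure} with $\epsilon = \emptyset$. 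Equivalently, this functor is $(\TwoCat P^\otimes)^{c} \circ \mathrm{ev}_3$. This identification is the multiplicative input; it follows formally from the universal property of cotensors and the Yoneda lemma.

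Next we compute both sides of the desired equivalence as functors out of $K_1 \times K_2$, after currying into $\TwoFun{K_1\times K_2}{-}$. The right hand side $g \circ c$ is, by definition and naturality of $\sigma$ in $K$ (\cref{lem:2-functoriality-of-lax-tensor-oplax:cotensor-comparison}), the composite $\sigma_{K_1\times K_2,\TwoCat P^\otimes}\circ \FunPartEpsDeltaTensor{\dagger}{\TwoCat C^\otimes}{(\TwoCat P^\otimes)^c\circ \mathrm{ev}_3}$. The left hand side, the pointwise composite of $g_{\TwoCat M,\TwoCat N}$ and $g_{\TwoCat N,\TwoCat P}$, curries to
$$\TwoFun{K_1}{\sigma_{K_2,\TwoCat P^\otimes}} \circ \TwoFun{K_1}{\FunPartEpsDeltaTensor{\dagger}{\TwoCat C^\otimes}{\mathrm{ev}_2}} \circ \sigma_{K_1,\TwoCat N^\otimes} \circ \FunPartEpsDeltaTensor{\dagger}{\TwoCat C^\otimes}{\mathrm{ev}_1}.$$
We then use naturality of $\sigma_{K_1,-}$ in its second variable, applied to $\mathrm{ev}_2$, to move $\FunPartEpsDeltaTensor{\dagger}{\TwoCat C^\otimes}{\mathrm{ev}_2}$ past $\sigma_{K_1}$. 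The product formula $\sigma_{K_1\times K_2} \simeq \TwoFun{K_1}{\sigma_{K_2}}\circ\sigma_{K_1,\TwoFun{K_2}{-}}$ from \cref{lem:2-functoriality-of-lax-tensor-oplax:cotensor-comparison} then combines the two $\sigma$'s. The $1$-functoriality of \cref{constr:2-functoriality-of-lax-tensor-oplax:1-functoriality} combines the two $\FunPartEpsDeltaTensor{\dagger}{\TwoCat C^\otimes}{-}$ factors, using the identification of the first paragraph. Together these give the equivalence.

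On cores, i.e.\ after restricting along objects $* \to K_1\times K_2$, the unit formula $\sigma_* \simeq \mathrm{id}$ reduces this equivalence to the composition equivalence of the $1$-functor. This uses the same argument as in \cref{lem:2-functoriality-of-lax-tensor-oplax:compatibility-on-cores}, and it gives the claimed agreement of components.

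For associativity, we run the same computation with three parameter categories $K_1\times K_2\times K_3$. Both bracketings then reduce to the coherence of $\sigma$ for triple products, which \cref{lem:2-functoriality-of-lax-tensor-oplax:cotensor-comparison} asserts is compatible with associativity, together with the associativity coherence of the $1$-functor. The unit coherences come from $\sigma_*\simeq \mathrm{id}$ and from the fact that the identity of $\TwoCat M^\otimes$ corresponds to the constant family. Compatibility with whiskering is the special case of the equivalence where one factor ranges over a point.

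\textbf{Main obstacle.} The hard step is keeping these higher coherences honest. Rather than checking them by hand, we aim to observe that all data is produced by a functor out of the simplicial diagram of parameter categories $(K_1\times\dots\times K_n)_n$, with $\sigma$ natural in that diagram. The coherences would then follow from the coherence clause already stated in \cref{lem:2-functoriality-of-lax-tensor-oplax:cotensor-comparison}.
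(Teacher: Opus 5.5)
Your proposal is correct and takes essentially the paper's route. You identify the composite family with $\TwoFun{K_1}{\mathrm{ev}_2}^\ptw\circ\mathrm{ev}_1$ followed by the currying equivalence, apply $\FunPartEpsDeltaTensor{\dagger}{\TwoCat C^\otimes}{-}$, combine naturality of $\sigma_{K_1,-}$ with the product formula for $\sigma$, and then specialise the parameter categories to get components, whiskering, units and associativity. The only cosmetic difference is that you work with the universal families from the start. The paper instead runs the computation for arbitrary families $a\colon K\to\ldots$ and $b\colon L\to\ldots$ and specialises afterwards, so that the whiskering and identity cases are literally instances of the same diagram.
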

\begin{proof}
    Consider categories $K$ and $L$, and families
    $$a\colon K\longrightarrow \Map{\TwoCAlgDelta{\TwoCatInfTwo}}{\TwoCat M^\otimes}{\TwoCat N^\otimes} \qquad \text{and} \qquad b\colon L\longrightarrow \Map{\TwoCAlgDelta{\TwoCatInfTwo}}{\TwoCat N^\otimes}{\TwoCat P^\otimes}.$$
    By \cref{lem:2-sym:category-indexed-cotensors}, these correspond to $\delta$-symmetric monoidal functors
    $$\widetilde a\colon \TwoCat M^\otimes\longrightarrow\TwoFun{K}{\TwoCat N^\otimes}^\ptw \qquad \text{and} \qquad \widetilde b\colon \TwoCat N^\otimes\longrightarrow \TwoFun{L}{\TwoCat P^\otimes}^\ptw.$$
    The composite family, parametrized by $K\times L$, corresponds to
    $$\TwoCat M^\otimes \xrightarrow{\widetilde a} \TwoFun{K}{\TwoCat N^\otimes}^\ptw \xrightarrow{\TwoFun{K}{\widetilde b}^\ptw} \TwoFun{K}{\TwoFun{L}{\TwoCat P^\otimes}^\ptw}^\ptw \simeq \TwoFun{K\times L}{\TwoCat P^\otimes}^\ptw.$$
    Here, the last equivalence is \Cref{lem:2-sym:pointwise-comparison}.
    Apply $\FunPartEpsDeltaTensor{\dagger}{\TwoCat C^\otimes}{-}$ to this composite.
    We obtain the following diagram:
    \begin{center}
        \begin{tikzcd}[column sep=huge]
            \FunPartEpsDeltaTensor{\dagger}{\TwoCat C^\otimes}{\TwoCat M^\otimes} \ar[d, "\tilde{a}"'] \\

            \FunPartEpsDeltaTensor{\dagger}{\TwoCat C^\otimes}{\TwoFun{K}{\TwoCat N^\otimes}^\ptw} \ar[d, "\tilde{b}"'] \ar[r, "\sigma_{K,\TwoCat N^\otimes}"] 
                &\TwoFun{K}{\FunPartEpsDeltaTensor{\dagger}{\TwoCat C^\otimes}{\TwoCat N^\otimes}} \ar[d, "\tilde{b}"]\\

            \FunPartEpsDeltaTensor{\dagger}{\TwoCat C^\otimes}{\TwoFun{K}{\TwoFun{L}{\TwoCat P^\otimes}^\ptw}^\ptw} \ar[dd, "\simeq"'] \ar[r, "{\sigma_{K, \TwoFun{L}{\TwoCat P^\otimes}^\ptw}}"]
                &\TwoFun{K}{\FunPartEpsDeltaTensor{\dagger}{\TwoCat C^\otimes}{\TwoFun{L}{\TwoCat P^\otimes}^\ptw}} \ar[d, "\sigma_{L,\TwoCat P^\otimes}"]\\
            
                &\TwoFun{K}{\TwoFun{L}{\FunPartEpsDeltaTensor{\dagger}{\TwoCat C^\otimes}{\TwoCat P^\otimes}}} \ar[d, "\simeq"]\\

            \FunPartEpsDeltaTensor{\dagger}{\TwoCat C^\otimes}{\TwoFun{K \times L}{\TwoCat P^\otimes}^\ptw} \ar[r, "\sigma_{K\times L,\TwoCat P^\otimes}"'] 
                &\TwoFun{K \times L}{\FunPartEpsDeltaTensor{\dagger}{\TwoCat C^\otimes}{\TwoCat P^\otimes}}
        \end{tikzcd}
    \end{center}
    Here, the top square commutes by naturality of $\sigma_{K,-}$, and the bottom square commutes 
    by \cref{lem:2-functoriality-of-lax-tensor-oplax:cotensor-comparison}.
    Currying again, this identifies the resulting $K\times L$-family with the pointwise composite of the families obtained separately from $a$ and $b$.
    
    Taking $a$ and $b$ to be the universal families (i.e.\ taking $K = \Map{\TwoCAlgDelta{\TwoCatInfTwo}}{\TwoCat M^\otimes}{\TwoCat N^\otimes}$ and $a$ the identity,
    and $L = \Map{\TwoCAlgDelta{\TwoCatInfTwo}}{\TwoCat N^\otimes}{\TwoCat P^\otimes}$ and $b$ the identity) gives the required map.
    Restricting to $K$ and $L$ being anima identifies its components with the composition equivalences of the original functor.

    Taking one family to be constant gives compatibility with left or right whiskering.
    Taking a family consisting of an identity gives the identity comparison.
    Finally, the same calculation with three parameter categories proves compatibility with associativity, using the coherent product compatibility of \cref{lem:2-functoriality-of-lax-tensor-oplax:cotensor-comparison}.
    The unit coherences follow from $\sigma_{*,\TwoCat N^\otimes}\simeq\mathrm{id}$.
\end{proof}

\begin{thm} \label{lem:2-functoriality-of-lax-tensor-oplax:main-thm}
    There exists a $2$-functor
    $$\FunPartEpsDeltaTensor{\dagger}{\TwoCat C^\otimes}{-}\colon \TwoCAlgDelta{\TwoCatInfTwo} \longrightarrow \underline h\,\TwoCatInfTwo$$
    into the homotopy $(2,2)$-category.
    Its underlying functor agrees with the composite of the functor from \cref{constr:2-functoriality-of-lax-tensor-oplax:1-functoriality} and the canonical projection to the underlying category of $\underline h\,\TwoCatInfTwo$.
\end{thm}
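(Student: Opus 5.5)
The plan is to assemble the $2$-functor into the homotopy $(2,2)$-category by hand from the data already built. The target $\underline h\,\TwoCatInfTwo$ is a strict $(2,2)$-category: its $2$-morphisms are equivalence classes of natural transformations. So a $2$-functor into it is determined by four pieces of data satisfying finitely many equations, with no higher coherences to check. The four pieces are: an assignment on objects, functors on mapping categories (passed to homotopy categories), composition comparisons, and unit comparisons. The associativity and unit axioms then only need to hold up to homotopy, i.e.\ as equalities of $2$-cells in $\underline h\,\TwoCatInfTwo$.

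First, on objects, send $\TwoCat M^\otimes$ to $\FunPartEpsDeltaTensor{\dagger}{\TwoCat C^\otimes}{\TwoCat M^\otimes}$ from \cref{constr:2-functoriality-of-lax-tensor-oplax:1-functoriality}. Second, on mapping categories, take the functor $g$ of \cref{constr:2-functoriality-of-lax-tensor-oplax:mapping-categories-upgrade} and postcompose with the projection
$$\Map{\TwoCatInfTwo}{\FunPartEpsDeltaTensor{\dagger}{\TwoCat C^\otimes}{\TwoCat M^\otimes}}{\FunPartEpsDeltaTensor{\dagger}{\TwoCat C^\otimes}{\TwoCat N^\otimes}} \to h\Map{\TwoCatInfTwo}{\FunPartEpsDeltaTensor{\dagger}{\TwoCat C^\otimes}{\TwoCat M^\otimes}}{\FunPartEpsDeltaTensor{\dagger}{\TwoCat C^\otimes}{\TwoCat N^\otimes}}.$$
This functor factors through the homotopy category of the source mapping category, and so defines the action on $1$- and $2$-morphisms of $h\TwoCAlgDelta{\TwoCatInfTwo}$. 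Third, the composition isomorphisms are the components of the natural equivalence in \cref{prop:2-functoriality-of-lax-tensor-oplax:composition}. Naturality of that equivalence in both variables is exactly the compatibility of composition with horizontal composition of $2$-cells (the interchange law in the homotopy $2$-category). The identity comparisons come from $\sigma_{*,\TwoCat N^\otimes}\simeq \mathrm{id}$. The associativity and unit coherences, and compatibility with whiskering, are the remaining assertions of \cref{prop:2-functoriality-of-lax-tensor-oplax:composition}; after passing to homotopy classes they become the pseudofunctor axioms. Rigidifying the resulting pseudofunctor into a $2$-functor is then standard strictification of bicategories, or one can simply read ``$2$-functor'' into a $(2,2)$-category as a pseudofunctor.

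The last claim is that on underlying $1$-categories we recover the functor of \cref{constr:2-functoriality-of-lax-tensor-oplax:1-functoriality}. On mapping spaces this is \cref{lem:2-functoriality-of-lax-tensor-oplax:compatibility-on-cores}: restricting $g$ to cores gives $f$. The same proposition says its composition equivalences restrict to those of the $1$-functor, so the underlying functors into $h\CatInfTwo$ agree as functors of homotopy categories, which is all the statement requires.

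I expect the main obstacle to be checking that the composition $2$-cells are natural with respect to arbitrary $2$-morphisms in both variables simultaneously, and not just for objects of the mapping categories. This needs the family version: one uses the universal families indexed by the full mapping categories $K$ and $L$ in \cref{prop:2-functoriality-of-lax-tensor-oplax:composition}, rather than only pointwise identifications. The cube relating $\sigma_{K\times L}$, $\sigma_K$ and $\sigma_L$ has to commute up to a homotopy that is itself natural. Since we only need equalities of homotopy classes, it suffices to evaluate at pairs of $1$-morphisms of $K$ and $L$ (i.e.\ $K=L=\Delta^1$), which is covered by the naturality of $\sigma$ in $K$.
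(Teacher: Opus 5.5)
Your proposal is correct and is essentially the paper's own proof: you assemble a pseudofunctor $\underline h\,\TwoCAlgDelta{\TwoCatInfTwo} \to \underline h\,\TwoCatInfTwo$ from the homotopy categories of the functors $g$ together with the composition and unit comparisons of \cref{prop:2-functoriality-of-lax-tensor-oplax:composition}. You then precompose with the projection from $\TwoCAlgDelta{\TwoCatInfTwo}$, a step you leave implicit, and get the underlying-functor comparison from \cref{lem:2-functoriality-of-lax-tensor-oplax:compatibility-on-cores}. Two small corrections: no strictification is needed, since a $2$-functor into a $(2,2)$-category is exactly a pseudofunctor out of the homotopy bicategory of the source; and the underlying category of $\underline h\,\TwoCatInfTwo$ is a $(2,1)$-category rather than $h\CatInfTwo$, although the inputs you cite already give agreement at that level.
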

\begin{proof}
    We first construct a pseudofunctor of homotopy $2$-categories
    $$\underline h\,\TwoCAlgDelta{\TwoCatInfTwo} \longrightarrow \underline h\,\TwoCatInfTwo.$$
    On objects, it sends $\TwoCat N^\otimes$ to
    $$\FunPartEpsDeltaTensor{\dagger}{\TwoCat C^\otimes}{\TwoCat N^\otimes}.$$
    On each hom-category, use the homotopy category of the functor from \cref{constr:2-functoriality-of-lax-tensor-oplax:mapping-categories-upgrade}.
    This preserves identity $2$-morphisms and vertical composition, since that construction is a functor of categories.

    The composition and identity comparisons of \cref{prop:2-functoriality-of-lax-tensor-oplax:composition} supply the invertible composition and unit constraints.
    Their naturality and coherence give the pseudofunctor axioms.
    Composing with the canonical projection
    $$\TwoCAlgDelta{\TwoCatInfTwo} \longrightarrow \underline h\,\TwoCAlgDelta{\TwoCatInfTwo}$$
    gives the asserted $2$-functor.
    Compatibility with the original functor follows from \cref{lem:2-functoriality-of-lax-tensor-oplax:compatibility-on-cores} and the identification of the composition and unit comparisons in \cref{prop:2-functoriality-of-lax-tensor-oplax:composition}.
\end{proof}

\begin{rmk}
    With more work one can show that there actually exists a $2$-functor 
    $$\FunPartEpsDeltaTensor{\dagger}{\TwoCat C^\otimes}{-} \colon \TwoCAlgDelta{\TwoCatInfTwo} \to \TwoCatInfTwo.$$
    Since we only need the functor in question to preserve adjoints, the above formulation suffices and we will not 
    concern ourselves with the construction of the full coherence datum.
\end{rmk}

\begin{cor} \label{lem:2-functoriality-of-lax-tensor-oplax:pres-of-adjoints}
    Let $L\colon \TwoCat M^\otimes \rightleftarrows \TwoCat N^\otimes \noloc R$
    be an adjunction in $\TwoCAlgDelta{\TwoCatInfTwo}$. Then
    $$\FunPartEpsDeltaTensor{\dagger}{\TwoCat C^\otimes}{L} \dashv \FunPartEpsDeltaTensor{\dagger}{\TwoCat C^\otimes}{R}.$$
\end{cor}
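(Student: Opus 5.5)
The plan is to deduce this formally from \cref{lem:2-functoriality-of-lax-tensor-oplax:main-thm}: $2$-functors preserve adjunctions, and an adjunction is witnessed entirely inside the homotopy $(2,2)$-category. Write $\Phi \coloneqq \FunPartEpsDeltaTensor{\dagger}{\TwoCat C^\otimes}{-}$.

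First, the adjunction $L \dashv R$ in $\TwoCAlgDelta{\TwoCatInfTwo}$ is given by a unit $u \colon \mathrm{id} \to RL$ and a counit $c \colon LR \to \mathrm{id}$ together with homotopies exhibiting the triangle identities. Passing to $\underline h\,\TwoCAlgDelta{\TwoCatInfTwo}$, the triangle identities become equalities of $2$-morphisms. So the data $(L,R,u,c)$ is an adjunction in the homotopy bicategory.

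Second, I apply the pseudofunctor $\underline h\,\TwoCAlgDelta{\TwoCatInfTwo} \to \underline h\,\TwoCatInfTwo$ constructed in the proof of \cref{lem:2-functoriality-of-lax-tensor-oplax:main-thm}. Pseudofunctors between bicategories preserve adjunctions. The new unit is $\Phi(u)$ composed with the composition constraint $\Phi(R)\Phi(L) \simeq \Phi(RL)$ of \cref{prop:2-functoriality-of-lax-tensor-oplax:composition}, and dually for the counit. The triangle identities follow from the pseudofunctor coherence axioms, namely associativity, unit constraints and compatibility with whiskering, which that proposition provides. This yields an adjunction $\Phi(L) \dashv \Phi(R)$ in $\underline h\,\TwoCatInfTwo$.

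Third, I lift this back to $\TwoCatInfTwo$. Being an adjunction is a property of a $1$-morphism that is detectable in the homotopy $2$-category: if the triangle identities hold up to homotopy, then the pair is an adjunction in the $(\infty,2)$-category, by the standard fact (e.g.\ Riehl--Verity) that homotopy-coherent adjunctions are determined by their $2$-categorical truncation. By \cref{lem:2-functoriality-of-lax-tensor-oplax:main-thm}, the $1$-morphisms $\Phi(L)$ and $\Phi(R)$ in $\underline h\,\TwoCatInfTwo$ are the images of the functors from \cref{constr:2-functoriality-of-lax-tensor-oplax:1-functoriality}. Hence these functors are adjoint in $\TwoCatInfTwo$.

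The only delicate step is the second one. Its unit and counit are built from the upgraded hom-functor $g$, not from the core-level functor $f$, so one must use \cref{lem:2-functoriality-of-lax-tensor-oplax:compatibility-on-cores} to know that $g(L)$ and $g(R)$ agree with $\Phi(L)$ and $\Phi(R)$. Everything else is formal bicategory theory.
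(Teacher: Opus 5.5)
Your proposal is correct and follows the paper's proof. The paper applies the $2$-functor of \cref{lem:2-functoriality-of-lax-tensor-oplax:main-thm} to get an adjunction in $\underline h\,\TwoCatInfTwo$, then uses that adjointness of a pair of $1$-morphisms is detected in the homotopy $(2,2)$-category (citing \cite[Definition 4.1.1]{AGH24}). Your remark that $g(L)$ and $g(R)$ agree with the core-level functors via \cref{lem:2-functoriality-of-lax-tensor-oplax:compatibility-on-cores} is exactly what the last sentence of that theorem records.
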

\begin{proof}
    Applying the $2$-functor constructed in \cref{lem:2-functoriality-of-lax-tensor-oplax:main-thm} gives an adjunction
    $$\FunPartEpsDeltaTensor{\dagger}{\TwoCat C^\otimes}{L} \dashv \FunPartEpsDeltaTensor{\dagger}{\TwoCat C^\otimes}{R}$$
    in $\underline h\,\TwoCatInfTwo$, since $2$-functors preserve adjunctions.
    A pair of $1$-morphisms in a $2$-category is adjoint if and only if its image in the homotopy $(2,2)$-category is adjoint; see \cite[Definition 4.1.1 and the following discussion]{AGH24}.
    Hence, the same pair is adjoint in $\TwoCatInfTwo$.
\end{proof}

\begin{notation} \label{def:2sym:lax-functor-2-category}
    We have the following variation of the above: If $\TwoCat C^\otimes$ is a symmetric monoidal $2$-category, 
    and $\epsilon, \delta \in \oplaxvarset$, we can equip $\TwoCat C^\otimes$ with the flat marking $\flat$.
    We then write
    $$\defnot{\FunEpsDeltaTensor{\TwoCat C^\otimes}{-}} \coloneqq \FunPartEpsDeltaTensor{\flat}{\TwoCat C^\otimes}{-}.$$
    In particular, we obtain the category $\defnot{\FunLaxTensor{\TwoCat C^\otimes}{-}}$ (for $\delta = \mathrm{lax}$ and $\epsilon = \emptyset$).
\end{notation}

\begin{rmk}
    If $\dagger = \flat$, $\delta = \emptyset$ and $\epsilon \in \{\,\mathrm{lax},\,\mathrm{oplax}\,\}$, we recover the categories defined in \cite[Definition 2.17]{Carmeli2015charactersandtransfers} and further studied in \cite[\S2.2]{Neuhauser2026rigidalgebras}.
    Similarly, if $\dagger = \flat$, $\delta = \mathrm{lax}$ and $\epsilon \in \{\,\mathrm{lax},\,\mathrm{oplax}\,\}$, we recover the categories discussed in \cite[Definition 6.10]{cnossen2025universalityspan2categoriesconstruction}.
\end{rmk}

\begin{lem} \label{lem:oplax-lax-tensor:marked-currying}
    Let $(\TwoCat C^\otimes,\dagger)$ be a marked symmetric monoidal
    $2$-category, let $\TwoCat D^\otimes$ be a symmetric monoidal
    $2$-category, and let $\TwoCat I$ be a $2$-category.
    Let $\TwoCat A$ be the full sub-$2$-category of
    $$\FunPartOplaxLaxTensor{\dagger}{\TwoCat C^\otimes}{\FunLax{\TwoCat I}{\TwoCat D^\otimes}^{\ptw}}$$
    spanned by those objects whose underlying functor sends
    marked morphisms of $\TwoCat C$ to strong natural transformations.
    Then there is a canonical equivalence
    $$\TwoCat A \simeq \FunLax{\TwoCat I}{\FunPartOplaxLaxTensor{\dagger}{\TwoCat C^\otimes}{\TwoCat D^\otimes}}.$$
\end{lem}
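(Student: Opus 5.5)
We compare both sides through their functors of points, using \cref{lem:2-functoriality-of-lax-tensor-oplax:1-functoriality-univ-prop} and the Yoneda lemma. Here $\epsilon=\mathrm{oplax}$, $\delta=\mathrm{lax}$, and $\mathrm{op}(\epsilon)=\mathrm{lax}$.

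\textbf{The right-hand side.} For a test $2$-category $\TwoCat E$, the adjunction $-\otimes^{\gr}\TwoCat I\dashv\FunLax{\TwoCat I}{-}$ gives
$$\Map{\CatInfTwo}{\TwoCat E}{\FunLax{\TwoCat I}{\FunPartOplaxLaxTensor{\dagger}{\TwoCat C^\otimes}{\TwoCat D^\otimes}}}\cong\Map{\CatInfTwo}{\TwoCat E\otimes^{\gr}\TwoCat I}{\FunPartOplaxLaxTensor{\dagger}{\TwoCat C^\otimes}{\TwoCat D^\otimes}}.$$
By \cref{lem:2-functoriality-of-lax-tensor-oplax:1-functoriality-univ-prop}, this is
$$\MapMarked{\CAlgLax{\TwoCatInfTwo}}{\dagger}{\TwoCat C^\otimes}{\FunLax{\TwoCat E\otimes^{\gr}\TwoCat I}{\TwoCat D^\otimes}^\ptw}.$$
\cref{lem:pointwise:internal-currying-pointwise-structure} (with $\epsilon=\mathrm{lax}$) then rewrites the target as
$$\FunLax{\TwoCat E}{\FunLax{\TwoCat I}{\TwoCat D^\otimes}^\ptw}^\ptw.$$
So the right-hand side represents lax symmetric monoidal functors $\TwoCat C^\otimes\to\FunLax{\TwoCat E}{\FunLax{\TwoCat I}{\TwoCat D^\otimes}^\ptw}^\ptw$ that send $\dagger$ into the strong marking.

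The strong marking here consists of morphisms of $\FunLax{\TwoCat E}{\FunLax{\TwoCat I}{\TwoCat D}}$ that are strong as lax transformations in the $\TwoCat E$-variable. However, the currying identifies $\TwoCat E\otimes^{\gr}\TwoCat I$, and the marking relevant to $\FunLax{\TwoCat E\otimes^{\gr}\TwoCat I}{\TwoCat D}$ is the strong one for the whole Gray product. Under currying, a transformation is strong for $\TwoCat E\otimes^{\gr}\TwoCat I$ exactly when it is strong in the $\TwoCat E$-direction and, componentwise, a strong transformation in $\FunLax{\TwoCat I}{\TwoCat D}$.

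\textbf{The left-hand side.} Again by \cref{lem:2-functoriality-of-lax-tensor-oplax:1-functoriality-univ-prop},
$$\Map{\CatInfTwo}{\TwoCat E}{\FunPartOplaxLaxTensor{\dagger}{\TwoCat C^\otimes}{\FunLax{\TwoCat I}{\TwoCat D^\otimes}^\ptw}}\cong\MapMarked{\CAlgLax{\TwoCatInfTwo}}{\dagger}{\TwoCat C^\otimes}{\FunLax{\TwoCat E}{\FunLax{\TwoCat I}{\TwoCat D^\otimes}^\ptw}^\ptw},$$
where the marking on the target is the strong marking in the $\TwoCat E$-variable only. Restricting to the full sub-$2$-category $\TwoCat A$ imposes one further condition, checked objectwise on $\TwoCat E$: the underlying functor must send marked morphisms of $\TwoCat C$ to strong transformations of $\FunLax{\TwoCat I}{\TwoCat D}$.

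Because $\TwoCat A$ is full, mapping a test $\TwoCat E$ into $\TwoCat A$ is a condition only on the objects of $\TwoCat E$. Combined with strongness in the $\TwoCat E$-direction, this is exactly the combined marking condition found on the right-hand side. Both sides therefore cut out the same subanima of $\Map{\CAlgLax{\TwoCatInfTwo}}{\TwoCat C^\otimes}{\FunLax{\TwoCat E}{\FunLax{\TwoCat I}{\TwoCat D^\otimes}^\ptw}^\ptw}$. This works via the pullback description of marked mapping anima in \cref{rmk:marked:symm-monoidal-mapping-space-pullback}, since marking conditions are conditions on underlying unmarked data. All identifications are natural in $\TwoCat E$, so the Yoneda lemma gives the equivalence.

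\textbf{Main obstacle.} The delicate step is identifying the two marking conditions under currying. Concretely, we must show that the equivalence of \cref{lem:mapping-objects:tensor-hom-adjunction-internal} matches strong morphisms of $\FunLax{\TwoCat E\otimes^{\gr}\TwoCat I}{\TwoCat D}$ with morphisms that are strong in $\TwoCat E$ and whose components are strong in $\TwoCat I$. We would first try to check this on the generating data $\TwoCat E\in\{\Delta^0,\Delta^1\}$, using the explicit description in \cref{rmk:lax-nat-transf:explicit-description}. A $2$-morphism filling a naturality square of the Gray product decomposes into its $\TwoCat E$-component and its $\TwoCat I$-component, and it is invertible if and only if both components are, since Gray-product cells are generated by the two factors. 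Since being strong is a condition on $2$-morphisms, this check suffices.
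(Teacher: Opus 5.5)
Your proposal is correct and follows essentially the same route as the paper: both sides are identified, via \cref{lem:2-functoriality-of-lax-tensor-oplax:1-functoriality-univ-prop} and Gray currying, with the same subanima of lax symmetric monoidal functors $\TwoCat C^\otimes \to \FunLax{\TwoCat E \otimes^{\gr} \TwoCat I}{\TwoCat D^\otimes}^{\ptw}$. In both, the key step is that Gray-strongness corresponds to strongness in the $\TwoCat E$-direction together with componentwise strongness. The only difference is that the paper makes this key step rigorous via the factorization criterion of \cite[Corollary 2.7.12 and the dual of Corollary 2.8.3]{AGH24}, applied to maps $(\Delta^1 \times \TwoCat E) \otimes^{\gr} \TwoCat I \to \TwoCat D$ through $\Delta^1 \times (\TwoCat E \otimes^{\gr} \TwoCat I)$, rather than via your heuristic that Gray-product cells are generated by the two factors.
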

\begin{proof}
    For every test $2$-category $\TwoCat K$, we have by \cref{lem:2-functoriality-of-lax-tensor-oplax:1-functoriality-univ-prop} 
    a natural equivalence
    \begin{align*}
        &\Map{\CatInfTwo}{\TwoCat K}{\FunPartOplaxLaxTensor{\dagger}{\TwoCat C^\otimes}{\FunLax{\TwoCat I}{\TwoCat D^\otimes}^{\ptw}}} \\
        &\cong 
        \MapMarked{\CAlgLax{\TwoCatInfTwo}}{\dagger}{\TwoCat C^\otimes}{\FunLax{\TwoCat K}{\FunLax{\TwoCat I}{\TwoCat D^\otimes}^{\ptw}}^{\ptw}},
    \end{align*}
    where the target carries the strong marking, i.e.\ the marking given by $\TwoFun{\TwoCat K}{\FunLax{\TwoCat I}{\TwoCat D^\otimes}^{\ptw}}^{\ptw}$.
    From \Cref{lem:pointwise:internal-currying-pointwise-structure} we obtain 
    $$\FunLax{\TwoCat K}{\FunLax{\TwoCat I}{\TwoCat D^\otimes}^{\ptw}}^{\ptw} \cong \FunLax{\TwoCat K \otimes^\gr \TwoCat I}{\TwoCat D^\otimes}^{\ptw},$$
    natural in $\TwoCat D^\otimes$, $\TwoCat K$ and $\TwoCat I$.
    
    Thus, we obtain an equivalence 
    \begin{equation} \label{eq:oplax-lax-tensor:strong-not-so-strong}
        \Map{\CatInfTwo}{\TwoCat K}{\FunPartOplaxLaxTensor{\dagger}{\TwoCat C^\otimes}{\FunLax{\TwoCat I}{\TwoCat D^\otimes}^{\ptw}}} \cong 
        \MapMarked{\CAlgLax{\TwoCatInfTwo}}{\dagger}{\TwoCat C^\otimes}{\FunLax{\TwoCat K \otimes^\gr \TwoCat I}{\TwoCat D^\otimes}^{\ptw}},
    \end{equation}
    where now the target carries the \emph{transported marking via the above equivalence coming from the strong marking},
    we denote it by $\natural$.
    Note that the strong marking on $\FunLax{\TwoCat K \otimes^\gr \TwoCat I}{\TwoCat D^\otimes}^{\ptw}$ is contained in $\natural$.
    We claim that a functor $F \colon \TwoCat K \to \FunPartOplaxLaxTensor{\dagger}{\TwoCat C^\otimes}{\FunLax{\TwoCat I}{\TwoCat D^\otimes}^{\ptw}}$
    lies inside of $\TwoCat A$ if and only if under \eqref{eq:oplax-lax-tensor:strong-not-so-strong} it
    corresponds to a functor that sends a marked morphism in $\TwoCat C$ to the strong marking.

    Write $H \colon \TwoCat C^\otimes \to \FunLax{\TwoCat K}{\FunLax{\TwoCat I}{\TwoCat D^\otimes}^{\ptw}}^{\ptw}$ and $G \colon \TwoCat C^\otimes \to \FunLax{\TwoCat K \otimes^\gr \TwoCat I}{\TwoCat D^\otimes}^{\ptw}$ for the lax symmetric
    monoidal functors corresponding to $F$ before and after
    Gray currying, respectively.
    For a marked morphism $f$ of $\TwoCat C$, the marked
    universal property makes $H(f)$ strong.
    By Gray adjunction and
    \cite[Corollary 2.7.12]{AGH24}, it is therefore represented by
    a functor
    $$\beta_f \colon (\Delta^1 \times \TwoCat K) \otimes^\gr \TwoCat I \to \TwoCat D.$$
    Precomposing with the canonical comparison
    $\Delta^1 \otimes^\gr \TwoCat K \to \Delta^1 \times \TwoCat K$
    in the first two factors represents $G(f)$.

    By the dual form of \cite[Corollary 2.8.3]{AGH24} (using the second pushout of \cite[Proposition 2.8.1]{AGH24}),
    $\beta_f$ factors through
    $$p \colon (\Delta^1 \times \TwoCat K) \otimes^\gr \TwoCat I \to \Delta^1 \times (\TwoCat K \otimes^\gr \TwoCat I)$$
    if and only if, for every object $k \in \TwoCat K$,
    its restriction to $\Delta^1 \otimes^\gr \{k\} \otimes^\gr \TwoCat I$
    factors through $\Delta^1 \times \TwoCat I$.
    By \cite[Corollary 2.7.12]{AGH24}, this says precisely that $F(k)(f)$ is strong.

    Thus, if $F$ factors through $\TwoCat A$, then
    $\beta_f$ factors through $p$, so $G(f)$ is strong.
    Conversely, if $G(f)$ is strong, its restriction to
    $\{k\} \otimes^\gr \TwoCat I$ is strong, and this
    restriction is $F(k)(f)$.
    Since $\TwoCat A$ is full, this proves the claim.

    Finally, any strong transformation on
    $\TwoCat K \otimes^\gr \TwoCat I$ is represented by a functor
    $\Delta^1 \times (\TwoCat K \otimes^\gr \TwoCat I) \to \TwoCat D$.
    Precomposition with $p$ exhibits its uncurried transformation as strong in the $\TwoCat K$-direction.
    Hence the strong marking is contained in $\natural$.

    Hence, we obtain
    \begin{alignat*}{2}
        \Map{\CatInfTwo}{\TwoCat K}{\TwoCat A}
        &\cong \MapMarked{\CAlgLax{\TwoCatInfTwo}}{\dagger}{\TwoCat C^\otimes}{\FunLax{\TwoCat K \otimes^\gr \TwoCat I}{\TwoCat D^\otimes}^{\ptw}} &&\tag{1}\\
        &\cong \Map{\CatInfTwo}{\TwoCat K \otimes^\gr \TwoCat I}{\FunPartOplaxLaxTensor{\dagger}{\TwoCat C^\otimes}{\TwoCat D^\otimes}} &&\tag{2}\\
        &\cong \Map{\CatInfTwo}{\TwoCat K}{\FunLax{\TwoCat I}{\FunPartOplaxLaxTensor{\dagger}{\TwoCat C^\otimes}{\TwoCat D^\otimes}}}. &&\tag{3}
    \end{alignat*}
    Here the marked target carries the strong marking. Equivalence (1) was explained above.
    Equivalence (2) is just \cref{lem:2-functoriality-of-lax-tensor-oplax:1-functoriality-univ-prop},
    and equivalence (3) holds by definition.
    The lemma follows by Yoneda.
\end{proof}

These preparations allow us to prove the main theorem of this section:
\begin{thm} \label{lem:tensor-product:lax-sym-mon-lax-lim-existence}
    Let $\TwoCat{I}$ be a $2$-category and let $\TwoCat D^\otimes$ be a symmetric monoidal $2$-category that admits all lax limits of shape $\TwoCat{I}$.
    Then the $2$-category $\FunPartOplaxLaxTensor{\dagger}{\TwoCat C^\otimes}{\TwoCat D^\otimes}$ admits all lax limits of shape $\TwoCat{I}$,
    and the forgetful functor of \Cref{lem:2-functoriality-of-lax-tensor-oplax:forgetful-map}
    $$V \colon \FunPartOplaxLaxTensor{\dagger}{\TwoCat C^\otimes}{\TwoCat D^\otimes} \to \FunPartOpLax{\dagger}{\TwoCat C}{\TwoCat D}$$
    preserves these lax limits.
\end{thm}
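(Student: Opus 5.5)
The plan is to build the lax limit by applying $\FunPartOplaxLaxTensor{\dagger}{\TwoCat C^\otimes}{-}$ to the lax limit adjunction of $\TwoCat D^\otimes$. By \cref{lem:lax-lim:partial-lax-lim-lax-sym-mon}, applied with the flat marking on $\TwoCat I$, the adjunction $\mathrm{const} \dashv \laxlim_{\TwoCat I}$ upgrades to an adjunction in $\TwoCAlgLax{\TwoCatInfTwo}$ between $\TwoCat D^\otimes$ and $\FunLax{\TwoCat I}{\TwoCat D^\otimes}^\ptw$. Applying \cref{lem:2-functoriality-of-lax-tensor-oplax:pres-of-adjoints} (with $\delta = \mathrm{lax}$, $\epsilon = \mathrm{oplax}$) gives an adjunction
$$\FunPartOplaxLaxTensor{\dagger}{\TwoCat C^\otimes}{\TwoCat D^\otimes} \rightleftarrows \FunPartOplaxLaxTensor{\dagger}{\TwoCat C^\otimes}{\FunLax{\TwoCat I}{\TwoCat D^\otimes}^\ptw}.$$

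Next I would identify the right-hand side with lax diagrams in the left-hand side. \Cref{lem:oplax-lax-tensor:marked-currying} identifies the full sub-$2$-category $\TwoCat A$, consisting of functors sending marked morphisms to strong transformations, with $\FunLax{\TwoCat I}{\FunPartOplaxLaxTensor{\dagger}{\TwoCat C^\otimes}{\TwoCat D^\otimes}}$. The left adjoint $\FunPartOplaxLaxTensor{\dagger}{\TwoCat C^\otimes}{\mathrm{const}}$ lands in $\TwoCat A$, since constant diagrams of strong transformations are strong. Under this identification it should agree with the constant diagram functor of $\FunPartOplaxLaxTensor{\dagger}{\TwoCat C^\otimes}{\TwoCat D^\otimes}$; I expect this to follow by Yoneda from the naturality of the currying equivalences.

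Since $\TwoCat A$ is full, restricting the right adjoint to $\TwoCat A$ gives a right adjoint to $\mathrm{const}$. Its value on a diagram is obtained by postcomposing pointwise with $\laxlim_{\TwoCat I}$. This proves existence of all lax limits of shape $\TwoCat I$.

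For preservation by $V$, I would use the naturality of the forgetful map from \cref{lem:2-functoriality-of-lax-tensor-oplax:forgetful-map}. Both limits are given by postcomposition with $\laxlim_{\TwoCat I}$, at the monoidal level and at the underlying level. The underlying lax limit in $\FunPartOpLax{\dagger}{\TwoCat C}{\TwoCat D}$ is computed the same way, via the non-monoidal version of the same argument (or \cite[Proposition 5.4.2]{abellán2026freefibrationslaxcolimits}). The remaining check is that $V$ intertwines the two units and counits, which follows because the monoidal adjunction of \cref{lem:2-sym:adjunction-in-CAlgLax} restricts over $\langle 1\rangle$ to the underlying one.

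The main obstacle is the second step: showing that the transported left adjoint is genuinely the constant-diagram functor, compatibly with the equivalence of \cref{lem:oplax-lax-tensor:marked-currying}. Here the marked strong conditions must match. I would first try to check it on mapping anima with test objects $\TwoCat K$, reducing to the pointwise equivalence of \cref{lem:pointwise:internal-currying-pointwise-structure} together with the projection $\TwoCat K \otimes^\gr \TwoCat I \to \TwoCat K$.
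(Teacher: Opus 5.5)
Your proposal is correct and follows essentially the same route as the paper. You upgrade $\mathrm{const}\dashv\laxlim_{\TwoCat I}$ to an adjunction in $\TwoCAlgLax{\TwoCatInfTwo}$ via \cref{lem:lax-lim:partial-lax-lim-lax-sym-mon}, push it through \cref{lem:2-functoriality-of-lax-tensor-oplax:pres-of-adjoints}, restrict to the full sub-$2$-category $\TwoCat A$, and identify $\TwoCat A$ with lax $\TwoCat I$-diagrams by \cref{lem:oplax-lax-tensor:marked-currying}. The paper likewise only asserts that the transported left adjoint is the constant-diagram functor and that $V$ is compatible with every step. So the check you single out, naturality of the currying equivalence along $\TwoCat I \to \Delta^0$ together with compatibility of units and counits over $\langle 1\rangle$, is precisely the detail the paper leaves implicit.
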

\begin{proof}
    The proof proceeds similar to the one of \cite[Proposition 5.4.2]{abellán2026freefibrationslaxcolimits}.
    As $\TwoCat D$ admits all lax limits of shape $\TwoCat{I}$, there is an adjunction 
    $$\mathrm{const} \colon \TwoCat D \leftrightarrows \FunLax{\TwoCat I}{\TwoCat D} \noloc \laxlim_{\TwoCat I}.$$ 
    Moreover, $\mathrm{const}$ is symmetric monoidal and $\laxlim_{\TwoCat I}$ is equipped with the corresponding lax symmetric monoidal structure;
    and they form an adjunction in $\TwoCAlgLax{\TwoCatInfTwo}$: This is explained in \Cref{lem:lax-lim:partial-lax-lim-lax-sym-mon}.
    The functor $\FunPartOplaxLaxTensor{\dagger}{\TwoCat C^\otimes}{-}$ preserves this adjunction by \cref{lem:2-functoriality-of-lax-tensor-oplax:pres-of-adjoints}
    and thus yields an adjunction 
    \begin{equation*}
        \mathrm{const} \circ - \colon \FunPartOplaxLaxTensor{\dagger}{\TwoCat C^\otimes}{\TwoCat D^\otimes} \leftrightarrows \FunPartOplaxLaxTensor{\dagger}{\TwoCat C^\otimes}{\FunLax{\TwoCat I}{\TwoCat D^\otimes}^\ptw} \noloc \laxlim_{\TwoCat I} \circ -.
    \end{equation*}
    Let $\TwoCat A$ denote the full sub-$2$-category of the right hand side spanned by those functors which map marked morphisms to strong natural transformations. 
    The functor $\mathrm{const} \circ -$ clearly factors over $\TwoCat A$ and hence, the adjunction restricts to $\TwoCat A$. 
    But now, $\TwoCat A$ is equivalent to the category $\FunLax{\TwoCat I}{\FunPartOplaxLaxTensor{\dagger}{\TwoCat C^\otimes}{\TwoCat D^\otimes}}$ by \cref{lem:oplax-lax-tensor:marked-currying}. 
    In particular, we get an adjunction
    \begin{equation*}
        \FunPartOplaxLaxTensor{\dagger}{\TwoCat C^\otimes}{\TwoCat D^\otimes} \leftrightarrows \FunLax{\TwoCat I}{\FunPartOplaxLaxTensor{\dagger}{\TwoCat C^\otimes}{\TwoCat D^\otimes}},
    \end{equation*}
    where the left adjoint is the constant diagram functor. 
    In particular, all lax limits of shape $\TwoCat I$ exist in $\FunPartOplaxLaxTensor{\dagger}{\TwoCat C^\otimes}{\TwoCat D^\otimes}$. 
    As all the steps in this proof are compatible with the forgetful functor, by comparing with the proof of \cite[Proposition 5.4.2]{abellán2026freefibrationslaxcolimits} we see that these lax limits are actually computed after forgetting the symmetric monoidal structure.  
\end{proof}

\begin{lem} \label{lem:oplax-lax-tensor:coop}
    Let $\TwoCat C^\otimes$, $\TwoCat M^\otimes$ be two symmetric monoidal $2$-categories,
    and let $\epsilon \in \oplaxvarset$.
    There is an equivalence of $2$-categories
    \begin{equation*}
        \coop{(\FunEpsLaxTensor{\TwoCat{C}^\otimes}{\TwoCat{M}^\otimes})} \cong \FunEpsOplaxTensor{\coop{(\TwoCat{C}^\otimes)}}{\coop{(\TwoCat{M}^\otimes)}}.
    \end{equation*}
    This equivalence is natural in $\TwoCat C^\otimes$ as an object of $\CAlgLax{\TwoCatInfTwo}$,
    i.e.\ it gives an equivalence of functors 
    $$\coop{(\FunEpsLaxTensor{-}{\TwoCat{M}^\otimes})} \cong \FunEpsOplaxTensor{\coop{(-)}}{\coop{(\TwoCat{M}^\otimes)}} \colon \op{\CAlgLax{\TwoCatInfTwo}} \to \CatInfTwo.$$
    In particular, for a lax symmetric monoidal functor $F \colon \TwoCat{C}^\otimes \to \TwoCat{D}^\otimes$ there is a commutative diagram 
    \begin{center}
        \begin{tikzcd}
            \coop{(\FunEpsLaxTensor{\TwoCat{D}^\otimes}{\TwoCat{M}^\otimes})} \ar[r, "\simeq"] \ar[d, "-\circ F"'] & \FunEpsOplaxTensor{\coop{(\TwoCat{D}^\otimes)}}{\coop{(\TwoCat{M}^\otimes)}} \ar[d, "-\circ \coop{F}"] \\
            \coop{(\FunEpsLaxTensor{\TwoCat{C}^\otimes}{\TwoCat{M}^\otimes})} \ar[r, "\simeq"] & \FunEpsOplaxTensor{\coop{(\TwoCat{C}^\otimes)}}{\coop{(\TwoCat{M}^\otimes)}} \rlap{.}
        \end{tikzcd}
    \end{center}
\end{lem}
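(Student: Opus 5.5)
The plan is to check the equivalence on representable functors, i.e.\ via the universal property of \cref{lem:2-functoriality-of-lax-tensor-oplax:1-functoriality-univ-prop} for the flat marking, and then conclude with the Yoneda lemma. Fix a test $2$-category $\TwoCat E$. By that universal property,
$$\Map{\CatInfTwo}{\TwoCat E}{\coop{(\FunEpsLaxTensor{\TwoCat C^\otimes}{\TwoCat M^\otimes})}} \cong \Map{\CatInfTwo}{\coop{\TwoCat E}}{\FunEpsLaxTensor{\TwoCat C^\otimes}{\TwoCat M^\otimes}} \cong \Map{\CAlgLax{\TwoCatInfTwo}}{\TwoCat C^\otimes}{\FunNEps{\coop{\TwoCat E}}{\TwoCat M^\otimes}^\ptw}.$$
With the flat marking, the marked mapping anima reduces to the unmarked one, since every functor preserves equivalences. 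Next apply the equivalence $\coop{(-)} \colon \CAlgLax{\TwoCatInfTwo} \simeq \CAlgOpLax{\TwoCatInfTwo}$ of \cref{lem:2-sym:oplax-lax-CAlg-coop-equivalence}. This identifies the last anima with
$$\Map{\CAlgOpLax{\TwoCatInfTwo}}{\coop{(\TwoCat C^\otimes)}}{\coop{(\FunNEps{\coop{\TwoCat E}}{\TwoCat M^\otimes}^\ptw)}}.$$
By \cref{lem:2-sym:pointwise-coop-interchange-law}, the target is equivalent to $\FunNEps{\TwoCat E}{\coop{(\TwoCat M^\otimes)}}^\ptw$, using $\coop{(\coop{\TwoCat E})} = \TwoCat E$. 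Finally, the universal property again, now for $\delta = \mathrm{oplax}$, identifies this with $\Map{\CatInfTwo}{\TwoCat E}{\FunEpsOplaxTensor{\coop{(\TwoCat C^\otimes)}}{\coop{(\TwoCat M^\otimes)}}}$.

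Each step is natural in $\TwoCat E$, so the Yoneda lemma gives the equivalence of $2$-categories. For naturality in $\TwoCat C^\otimes$, note that each step is also natural in $\TwoCat C^\otimes \in \CAlgLax{\TwoCatInfTwo}$:
\begin{itemize}
\item the universal property is natural in the source;
\item $\coop{(-)}$ is a functor on $\CAlgLax{\TwoCatInfTwo}$;
\item the interchange law does not involve $\TwoCat C^\otimes$ at all.
\end{itemize}
Hence we obtain an equivalence of functors $\op{\CAlgLax{\TwoCatInfTwo}} \to \Fun{\op{\CatInfTwo}}{\AnL}$ between the composites with the Yoneda embedding. Full faithfulness of the Yoneda embedding lifts this to the claimed equivalence of functors into $\CatInfTwo$. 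The commutative square for a lax symmetric monoidal $F$ is just the naturality square evaluated at $F$. Precomposition with $F$ on the left side corresponds to precomposition with $\coop{F}$ on the right, because the middle identification is given by applying the functor $\coop{(-)}$ to mapping anima.

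The main obstacle is the variance bookkeeping. The universal property involves $\FunNEps{\TwoCat E}{-}$, with $\mathrm{op}(\epsilon)$ rather than $\epsilon$, and the first step passes from $\TwoCat E$ to $\coop{\TwoCat E}$. One must check that \cref{lem:2-sym:pointwise-coop-interchange-law} exchanges $\FunNEps{\coop{\TwoCat E}}{-}$ with $\FunNEps{\TwoCat E}{\coop{(-)}}$ without flipping the variance $\mathrm{op}(\epsilon)$. I expect this is fine: by \cref{lem:lax-nat-transf:coop-lax-oplax}, $\coop{(-)}$ preserves both $\FunLax{-}{-}$ and $\FunOpLax{-}{-}$, whereas $\op{(-)}$ or $\co{(-)}$ alone would swap lax and oplax. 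A secondary point is that $\coop{(-)}$ on mapping anima corresponds to $\iota_0$ of the $2$-categorical equivalence $\coop{(-)} \colon \TwoCatInfTwo \to \co{\TwoCatInfTwo}$. Since the $\co{}$ is invisible on cores, this causes no issue.
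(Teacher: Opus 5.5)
Your proposal is correct and follows the paper's proof essentially step for step. It uses the same Yoneda chain through the universal property (flat marking), the interchange law for $\coop{(-)}$ with pointwise structures, and the equivalence $\coop{(-)}\colon \CAlgLax{\TwoCatInfTwo} \simeq \CAlgOpLax{\TwoCatInfTwo}$; the only difference is that you apply the last of these before the interchange law rather than after, which changes nothing, and naturality in $\TwoCat C^\otimes$ together with the square for $F$ follows exactly as you describe.
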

\begin{proof}    
    Let $\TwoCat T$ be a $2$-category.
    We have the following chain of natural equivalences:
    \begin{alignat*}{2}
        &\Map{\CatInfTwo}{\TwoCat T}{\coop{(\FunEpsLaxTensor{\TwoCat{C}^\otimes}{\TwoCat{M}^\otimes})}} \\
        \cong &\Map{\CatInfTwo}{\coop{\TwoCat T}}{\FunEpsLaxTensor{\TwoCat{C}^\otimes}{\TwoCat{M}^\otimes}} &&\tag{1}\\
        \cong &\Map{\CAlgLax{\TwoCatInfTwo}}{\TwoCat C^\otimes}{\FunNEps{\coop{\TwoCat T}}{\TwoCat M^\otimes}^{\ptw}} &&\tag{2}\\
        \cong &\Map{\CAlgLax{\TwoCatInfTwo}}{\TwoCat C^\otimes}{\coop{(\FunNEps{\TwoCat T}{\coop{(\TwoCat M^\otimes)}}^{\ptw})}} &&\tag{3}\\
        \cong &\Map{\CAlgOpLax{\TwoCatInfTwo}}{\coop{(\TwoCat{C}^\otimes)}}{\FunNEps{\TwoCat T}{\coop{(\TwoCat{M}^\otimes)}}^{\ptw}} &&\tag{4}\\
        \cong &\Map{\CatInfTwo}{\TwoCat T}{\FunEpsOplaxTensor{\coop{(\TwoCat{C}^\otimes)}}{\coop{(\TwoCat{M}^\otimes)}}}. &&\tag{5}
    \end{alignat*}
    Here, (1) holds because $\coop{(-)} \colon \CatInfTwo \to \CatInfTwo$ is an autoequivalence.
    Equivalences (2) and (5) are both instances of \cref{lem:2-functoriality-of-lax-tensor-oplax:1-functoriality-univ-prop}.
    Equivalence (3) is provided by \cref{lem:2-sym:pointwise-coop-interchange-law}.
    Equivalence (4) uses that $\coop{(-)}$ upgrades to an equivalence $\CAlgLax{\TwoCatInfTwo} \to \CAlgOpLax{\TwoCatInfTwo}$ 
    (with inverse also given by $\coop{(-)}$), cf.\ \cref{lem:2-sym:oplax-lax-CAlg-coop-equivalence}.
    Hence, the required equivalence exists by the Yoneda lemma.

    Since all of the above equivalences were natural in $\TwoCat C^\otimes$, 
    it upgrades to a natural equivalence of functors.
\end{proof}

%% file: symmetric-monoidal/calg.tex
\subsection{Commutative algebras} \label{sec:sym-calg}
In the last subsection, we discuss the $2$-category of commutative algebras in a symmetric monoidal $2$-category,
as well as its basic functoriality.

\begin{defn}
    We define the \defemph{commutative algebra functor} 
    $$\defnot{\TwoCAlg{-}} \colon \CAlgLax{\TwoCatInfTwo} \to \CatInfTwo$$
    as the functor given by $\FunLaxTensor{\Delta^{0,\times}}{-}$ \footnote{Here, $\Delta^0$ carries the unique symmetric monoidal structure, which is also cartesian monoidal.} of \cref{def:2sym:lax-functor-2-category}.
\end{defn}

\begin{lem} \label{lem:CAlg:anima-CAlg-Mapping-anima}
    Let $\TwoCat C^\otimes, \TwoCat D^\otimes \in \CAlgLax{\TwoCatInfTwo}$.
    We have 
    $$\iota_0 \FunLaxTensor{\TwoCat C^\otimes}{\TwoCat D^\otimes} \cong \Map{\CAlgLax{\TwoCatInfTwo}}{\TwoCat C^\otimes}{\TwoCat D^\otimes},$$
    natural in $\TwoCat C^\otimes$ and $\TwoCat D^\otimes$.
\end{lem}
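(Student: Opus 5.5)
The plan is to compute the core through the universal property of \cref{lem:2-functoriality-of-lax-tensor-oplax:1-functoriality-univ-prop}, taken with the flat marking as in \cref{def:2sym:lax-functor-2-category} and with $\delta = \mathrm{lax}$, $\epsilon = \emptyset$. The key observation is that $\iota_0$ is right adjoint to $\mathrm{incl} \colon \AnL \to \CatInfTwo$ (the composite of the two adjunctions recalled in \cref{sec:recoll-2-cat}). Hence, by Yoneda, the claim amounts to producing, for every anima $S$, an equivalence
$$\Map{\CatInfTwo}{S}{\FunLaxTensor{\TwoCat C^\otimes}{\TwoCat D^\otimes}} \cong \Map{\AnL}{S}{\Map{\CAlgLax{\TwoCatInfTwo}}{\TwoCat C^\otimes}{\TwoCat D^\otimes}},$$
natural in $S$, $\TwoCat C^\otimes$ and $\TwoCat D^\otimes$.

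First, apply \cref{lem:2-functoriality-of-lax-tensor-oplax:1-functoriality-univ-prop} with $\TwoCat E = S$. This rewrites the left side as $\MapMarked{\CAlgLax{\TwoCatInfTwo}}{\flat}{\TwoCat C^\otimes}{\TwoFun{S}{\TwoCat D^\otimes}^{\ptw}}$, where the target carries the strong marking. Since $\epsilon = \emptyset$, the strong marking on $\TwoFun{S}{\TwoCat D^\otimes}^{\ptw}$ is everything. The pullback of \cref{rmk:marked:symm-monoidal-mapping-space-pullback}, combined with \cref{rmk:marked:mapping-space-pullback}, has a bottom horizontal map that is an equivalence: the bottom-right map $\Map{}{\flat}{\TwoCat D'} \to \Map{}{\flat}{\TwoCat D'}$ is the identity. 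So the marked mapping anima reduces to the unmarked one, $\Map{\CAlgLax{\TwoCatInfTwo}}{\TwoCat C^\otimes}{\TwoFun{S}{\TwoCat D^\otimes}^{\ptw}}$.

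Next, invoke the cotensor property \cref{lem:2-sym:category-indexed-cotensors} with $K = S$, in its maximal-subgroupoid form. This identifies the previous anima with $\Map{\CatInf}{S}{\Map{\TwoCAlgLax{\TwoCatInfTwo}}{\TwoCat C^\otimes}{\TwoCat D^\otimes}}$. Because $S$ is an anima, maps from $S$ into a category factor through its core, so this is
$$\Map{\AnL}{S}{\iota_0\Map{\TwoCAlgLax{\TwoCatInfTwo}}{\TwoCat C^\otimes}{\TwoCat D^\otimes}} = \Map{\AnL}{S}{\Map{\CAlgLax{\TwoCatInfTwo}}{\TwoCat C^\otimes}{\TwoCat D^\otimes}},$$
where the last step uses that the core of a mapping category of $\TwoCAlgLax{\TwoCatInfTwo}$ is the mapping anima of its underlying category $\iota_1$, as in the proof of \cref{lem:2-functoriality-of-lax-tensor-oplax:compatibility-on-cores}.

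Every step is natural in $S$, $\TwoCat C^\otimes$ and $\TwoCat D^\otimes$, so Yoneda in $\AnL$ gives the claim. The only point requiring care is the first step: one must check that the flat marking on the source together with the full strong marking on the target really makes the marking condition vacuous, and that this identification is natural. I expect that to be a short check via the two pullback squares cited.
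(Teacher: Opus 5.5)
Your argument is correct and rests on the same key input as the paper: the universal property \cref{lem:2-functoriality-of-lax-tensor-oplax:1-functoriality-univ-prop} with $\epsilon = \emptyset$, $\delta = \mathrm{lax}$ and $\dagger = \flat$. The only difference is the choice of test objects.

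The paper takes $\TwoCat E = \Delta^0$ directly. It writes $\iota_0 \FunLaxTensor{\TwoCat C^\otimes}{\TwoCat D^\otimes} \cong \Map{\CatInfTwo}{\Delta^0}{\FunLaxTensor{\TwoCat C^\otimes}{\TwoCat D^\otimes}}$, applies the universal property, and finishes with $\TwoFun{\Delta^0}{\TwoCat D^\otimes}^\ptw \cong \TwoCat D^\otimes$.

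You instead test against all anima $S$. This forces you to bring in the cotensor lemma \cref{lem:2-sym:category-indexed-cotensors} and a Yoneda argument in $\AnL$. That is sound, but unnecessary: your chain evaluated at $S = \Delta^0$ already is the desired equivalence of anima. Naturality in $\TwoCat C^\otimes$ and $\TwoCat D^\otimes$ then comes from the universal property alone.

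What your write-up does add is an explicit justification for passing from the marked mapping anima to the unmarked one. The paper uses this silently in step $(\star)$. As you observe, it is immediate, because for $\epsilon = \emptyset$ the strong marking on $\TwoFun{S}{\TwoCat D^\otimes}^\ptw$ marks everything.
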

\begin{proof}
    We have the following chain of natural equivalences:
    \begin{alignat*}{2}
        \iota_0 \FunLaxTensor{\TwoCat C^\otimes}{\TwoCat D^\otimes} 
        &\cong \Map{\CatInfTwo}{\Delta^0}{\FunLaxTensor{\TwoCat C^\otimes}{\TwoCat D^\otimes}} \\
        &\cong \Map{\CAlgLax{\TwoCatInfTwo}}{\TwoCat C^\otimes}{\TwoFun{\Delta^0}{\TwoCat D^\otimes}^\ptw} &&\tag{$\star$} \\
        &\cong \Map{\CAlgLax{\TwoCatInfTwo}}{\TwoCat C^\otimes}{\TwoCat D^\otimes}\rlap{.}
    \end{alignat*}
    Here, $(\star)$ holds by \cref{lem:2-functoriality-of-lax-tensor-oplax:1-functoriality-univ-prop}, 
    applied with $\epsilon = \emptyset$, $\delta = \mathrm{lax}$ and $\dagger = \flat$.
\end{proof}

\begin{defn}
    Let $\TwoCat{C}$ be a $2$-category admitting finite products. 
    We say that a functor $F \colon \Fin_* \to \TwoCat{C}$ \defemphindex{satisfies the Segal condition}{Segal condition} if for each $n \geq 0$ the maps $\rho_i \colon \langle n \rangle \to \langle 1 \rangle$ mapping $i$ to $1$ and everything else to $*$ 
    induce an equivalence $F(\langle n \rangle) \cong \prod_{i=1}^{n} F(\langle 1 \rangle)$.
    We denote by $\FunSegal{\Fin_*}{\TwoCat{C}} \subseteq \TwoFun{\Fin_*}{\TwoCat{C}}$ the full sub-$2$-category spanned by those functors satisfying the Segal condition.
\end{defn}

\begin{prop} \label{lem:CAlg:anima-CAlg-Fun-Segal}
    Let $\TwoCat C$ be a $2$-category with finite products, and equip it with the cartesian symmetric monoidal structure.
    We have
    $$\iota_0\TwoCAlg{\TwoCat C^\times} \cong \iota_0\FunSegal{\Fin_*}{\TwoCat C}.$$
\end{prop}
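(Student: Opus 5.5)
We reduce both sides to mapping anima in $\CAlgLax{\TwoCatInfTwo}$ and then use that lax symmetric monoidal functors into a cartesian monoidal target are the same as lax cartesian structures, i.e.\ Segal objects. By definition $\TwoCAlg{\TwoCat C^\times} = \FunLaxTensor{\Delta^{0,\times}}{\TwoCat C^\times}$. By \cref{lem:CAlg:anima-CAlg-Mapping-anima} we therefore get
$$\iota_0\TwoCAlg{\TwoCat C^\times} \cong \Map{\CAlgLax{\TwoCatInfTwo}}{\Delta^{0,\times}}{\TwoCat C^\times}.$$
It thus suffices to identify the anima of lax symmetric monoidal functors $\Delta^{0,\times} \to \TwoCat C^\times$ with the anima of Segal functors $\Fin_* \to \TwoCat C$.

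The unstraightening of $\Delta^{0,\times}$ is $\id{} \colon \Fin_* \to \Fin_*$. So a lax symmetric monoidal functor $\Delta^{0,\times} \to \TwoCat C^\times$ is a section $s \colon \Fin_* \to \TwoCat C^\times$ of the $2$-cocartesian fibration $\TwoCat C^\times \to \Fin_*$ that sends inert morphisms to $2$-cocartesian $1$-morphisms. Such sections are the $2$-categorical analogue of the $\TwoCat C^\times$-valued algebras of \cite[Proposition 2.4.2.5]{higheralgebra}. We use the analogue of that result: for a cartesian monoidal $2$-category there is a functor $\pi \colon \TwoCat C^\times \to \TwoCat C$ (a cartesian structure, as in \cite[Definition 2.4.1.1]{higheralgebra}) sending an object $(X_1,\dots,X_n)$ over $\nset$ to $\prod_i X_i$. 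Composition with $\pi$ identifies inert-preserving sections with functors $\Fin_* \to \TwoCat C$ satisfying the Segal condition.

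Concretely, the steps are as follows.
\begin{enumerate}
\item Construct $\pi$ from the explicit model of the cartesian structure in \cite[Example 2.2.5]{blans2025chainrule}. Alternatively, and preferably, work in $\SpanFin$-language: a symmetric monoidal $2$-category is a product-preserving functor $\SpanFin \to \TwoCatInfTwo$, and a lax symmetric monoidal functor out of the unit is a lax transformation from the constant functor. For the cartesian structure $\TwoCat C^\times$ is the right Kan extension along $\op{\Fin} \to \SpanFin$ of $S \mapsto \TwoCat C^S$, and the Segal condition on the $\Fin_*$-side is exactly the limit condition.
\item Show that composition with $\pi$ sends inert-preserving sections to Segal functors. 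The Segal maps $s(\nset) \to \prod_i s(\langle 1\rangle)$ are induced by the images of the $\rho_i$, which are cocartesian, hence projections in the product.
\item Show that the induced map of anima is an equivalence. Fiberwise, the data of a section over $\nset$ is an $n$-tuple in $\TwoCat C$, and the inert-cocartesian condition forces the section to be determined by its value on $\langle 1\rangle$ together with the active structure maps. Equivalently, $\pi$ is right Kan extended, so maps into $\TwoCat C^\times$ over $\Fin_*$ correspond to maps into $\TwoCat C$ satisfying the product condition.
\end{enumerate}
Only the core (anima) is claimed. This keeps us from having to compare $2$-morphisms, which would involve monoidal versus plain natural transformations.

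The main obstacle is step 3: establishing the $2$-categorical version of \cite[Proposition 2.4.1.7]{higheralgebra}, i.e.\ the universal property of $\TwoCat C^\times$ as a right Kan extension in $2$-categories. I would first try to deduce it from the $1$-categorical statement by working with mapping anima only. The key point is that $\Fin_*$ is a $1$-category, so maps out of $\Fin_*$ and sections over it only see the $1$-categorical cocartesian fibration $\iota_1$-data. One must check carefully that this reduction is compatible with the $2$-cocartesian lifts. If this fails, I would instead invoke the characterization of the cartesian structure in \cite[Example 2.2.5]{blans2025chainrule} directly.
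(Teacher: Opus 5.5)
Your first reduction, via \cref{lem:CAlg:anima-CAlg-Mapping-anima} to sections of $\TwoCat C^\times \to \Fin_*$ that preserve $2$-cocartesian lifts of inert maps, is exactly the paper's. The fallback in your last paragraph is in fact the whole rest of the paper's proof, so you need neither a $2$-categorical cartesian structure $\pi$ nor a $2$-categorical analogue of \cite[Proposition 2.4.1.7]{higheralgebra}.

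The paper argues as follows. Since $\Fin_*$ is a $1$-category, such sections factor through $\iota_1(\TwoCat C^\times) \cong (\iota_1\TwoCat C)^\times$: the fibers are $(\iota_1\TwoCat C)^n$, with the same cocartesian transport built from products, and these remain products in $\iota_1\TwoCat C$. Over an inert map, a $1$-morphism is $2$-cocartesian if and only if it is cocartesian in $\iota_1$, since in both cases this says its components are equivalences. Hence the anima is $\iota_0\CAlg{(\iota_1\TwoCat C)^\times}$, to which \cite[Proposition 2.4.2.5]{higheralgebra} applies verbatim. One transports back along $\Map{\CatInfTwo}{\Fin_*}{\TwoCat C} \cong \Map{\CatInf}{\Fin_*}{\iota_1\TwoCat C}$. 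The two Segal conditions coincide because $\iota_1$ preserves finite products and a morphism of $\TwoCat C$ is an equivalence if and only if it is one in $\iota_1\TwoCat C$.

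Your primary route would give more: a genuinely $2$-categorical universal property of $\TwoCat C^\times$, which you would need to compare the full $2$-categories including non-invertible $2$-morphisms. Since only cores are compared here, that extra work buys nothing.

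Drop the $\SpanFin$ description in your step~1, because it is false. $\TwoCat C^\times$ is \emph{not} the right Kan extension of $S \mapsto \TwoCat C^S$ along $\op{\Fin} \to \SpanFin$. At $S = \emptyset$ the relevant comma category is equivalent to $\op{\Fin}$, whose initial object is the singleton. So that Kan extension takes the value $\TwoCat C$ there, instead of $\TwoCat C^\emptyset \simeq \Delta^0$.
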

\begin{proof}
    By \cref{lem:CAlg:anima-CAlg-Mapping-anima}, we have
    $$\iota_0\TwoCAlg{\TwoCat C^\times} \cong \Map{\CAlgLax{\TwoCatInfTwo}}{\Delta^{0,\times}}{\TwoCat C^\times}.$$
    The total $2$-category presenting $\Delta^{0,\times}$ is $\Fin_*$.
    Thus, the right-hand side is the anima of sections of
    $$\TwoCat C^\times \longrightarrow \Fin_*$$
    preserving $2$-cocartesian $1$-morphisms over inert maps.
    Since $\Fin_*$ is a $1$-category, these sections factor through
    $$\iota_1(\TwoCat C^\times) \cong (\iota_1\TwoCat C)^\times.$$
    Under this identification, preservation of $2$-cocartesian
    $1$-morphisms over inert maps agrees with preservation of
    cocartesian morphisms over inert maps: in both cases, an arrow
    over an inert map is cocartesian precisely when its component
    maps are equivalences. Hence,
    $$\Map{\CAlgLax{\TwoCatInfTwo}}{\Delta^{0,\times}}{\TwoCat C^\times} \cong \iota_0\CAlg{(\iota_1\TwoCat C)^\times}.$$
    By \cite[Proposition 2.4.2.5]{higheralgebra}, we have
    $$\CAlg{(\iota_1\TwoCat C)^\times} \cong \FunSegal{\Fin_*}{\iota_1\TwoCat C}.$$
    Finally, the adjunction $\operatorname{incl} \dashv \iota_1$
    identifies functors from $\Fin_*$ to $\TwoCat C$ with functors from $\Fin_*$
    to $\iota_1\TwoCat C$ at the level of
    mapping anima. Since $\iota_1$ preserves finite products,
    and since a morphism in a $2$-category $\TwoCat C$ is an equivalence if and only if 
    it is an equivalence in $\iota_1 \TwoCat C$, the Segal conditions agree. Thus,
    $$\iota_0\FunSegal{\Fin_*}{\iota_1\TwoCat C} \cong \iota_0\FunSegal{\Fin_*}{\TwoCat C}.$$
    Combining these equivalences proves the claim.
\end{proof}

\begin{notation}
    We write $\defnot{\CAlgOpLax{\TwoCatInf}} \subseteq \CAlgOpLax{\TwoCatInfTwo}$ for the full sub-category 
    spanned by those symmetric monoidal $2$-categories such that the underlying category is a $1$-category.
\end{notation}

\begin{lem} \label{lem:tensor-product:CAlg-of-Arr-lax-is-oplax-sym-mon-functors}
    The anima $\iota_0\TwoCAlg{\FunLax{\Delta^1}{\TwoCatInf^\times}^\ptw}$ is equivalent to the anima of 
    oplax symmetric monoidal functors $\Map{\CatInfTwo}{\Delta^1}{\CAlgOpLax{\TwoCatInf}}$. 
\end{lem}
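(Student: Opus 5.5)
We prove this by a chain of identifications of anima. The first step applies \cref{lem:CAlg:anima-CAlg-Fun-Segal}. The $2$-category $\FunLax{\Delta^1}{\TwoCatInf}$ admits finite products, computed pointwise by \cref{lem:lax-arrows:commutes-with-limits}. By \cref{lem:pointwise:cartesian-pointwise}, the pointwise structure $\FunLax{\Delta^1}{\TwoCatInf^\times}^\ptw$ is the cartesian one. This gives
$$\iota_0\TwoCAlg{\FunLax{\Delta^1}{\TwoCatInf^\times}^\ptw} \cong \iota_0\FunSegal{\Fin_*}{\FunLax{\Delta^1}{\TwoCatInf}}.$$

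Next we uncurry. A functor $\Fin_* \to \FunLax{\Delta^1}{\TwoCatInf}$ corresponds, by the defining adjunction of $\FunLax{\Delta^1}{-}$, to a functor $\Fin_* \otimes^\gr \Delta^1 \to \TwoCatInf$. By the oplax/lax swap of the Gray tensor hom adjunctions, this is the same as a functor $\Delta^1 \to \FunOpLax{\Fin_*}{\TwoCatInf}$. Concretely, such a datum is a pair of functors $A_0, A_1 \colon \Fin_* \to \TwoCatInf$ together with an oplax natural transformation $A_0 \Rightarrow A_1$. The Segal condition on the $\Fin_*$-variable is detected pointwise, because products in $\FunLax{\Delta^1}{\TwoCatInf}$ are pointwise. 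It therefore says exactly that $A_0$ and $A_1$ are Segal objects. Under straightening, these are symmetric monoidal categories, i.e.\ cartesian fibrations over $\op{\Fin_*}$ satisfying the Segal condition. So the datum becomes an arrow in $\CAlgOpLax{\TwoCatInf}$ between them.

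The key step is to identify oplax natural transformations between functors $\Fin_* \to \TwoCatInf$ with functors between the associated cartesian unstraightenings over $\op{\Fin_*}$. The plan is to cite the lax/oplax version of straightening: oplax transformations correspond to arbitrary functors over the base that need not preserve cartesian morphisms. This is the analogue of \cite{haugseng2021laxtransformationsadjunctionsmonads} and \cite{AGH24}, used in its $(1,0)$-fibration form. We then check which such functors are allowed. Let $\tau \colon A_0 \Rightarrow A_1$ be the oplax transformation, and let $\rho$ be an inert map. The component of $\tau$ at $\rho$ is the $2$-cell filling the naturality square for $\rho$. The corresponding functor preserves cartesian lifts of $\op{\rho}$ exactly when this $2$-cell is invertible. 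Under the Segal decomposition, the inert transport maps are projections. The $2$-cells at inert maps are then forced to be equivalences by oplax naturality composed with the Segal identifications, exactly as in the proof of \cref{lem:2-sym:adjunction-in-CAlgLax}, where the mate at an inert map reduces to a triangle identity. Hence the resulting functor preserves cartesian morphisms over opposites of inert maps, i.e.\ it is oplax symmetric monoidal. Conversely, any oplax symmetric monoidal functor gives such an oplax transformation.

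The main obstacle is this last identification. We need to pin down the correct lax-versus-oplax straightening convention, matched with the Gray convention $\FunLax{\Delta^1}{-}$. We also need to verify that the inert $2$-cells are automatically invertible from the Segal conditions, rather than having to impose this. I would first try to deduce the whole statement from \cref{lem:2-sym:oplax-lax-CAlg-coop-equivalence} applied to the lax-fibration description of \cite[\S5]{cnossen2025universalityspan2categoriesconstruction}. I would fall back to the direct mate computation if the conventions do not line up. Naturality and the passage to $\iota_0$ are then routine. This gives the equivalence with $\Map{\CatInfTwo}{\Delta^1}{\CAlgOpLax{\TwoCatInf}}$.
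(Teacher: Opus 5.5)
Your outer steps agree with the paper: cartesianness of the pointwise structure together with \cref{lem:CAlg:anima-CAlg-Fun-Segal}, the uncurrying to a $\Delta^1$-family in $\FunOpLax{\Fin_*}{\TwoCatInf}$, and finally \cite[Corollary 3.5.10]{AGH24}. The middle identification, however, is wrong. The Segal condition on $H\colon \Fin_* \to \FunLax{\Delta^1}{\TwoCatInf}$ does \emph{not} just say that $A_0$ and $A_1$ are Segal. The Segal comparison $H(\nset) \to \prod_{i=1}^n H(\langle 1\rangle)$ is a $1$-morphism of $\FunLax{\Delta^1}{\TwoCatInf}$. It is a lax square whose $2$-cell $\eta_n$ is built from the oplax cells $\alpha_{\rho^n_i}$. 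An equivalence in $\FunLax{\Delta^1}{\TwoCatInf}$ must have invertible components \emph{and} an invertible $2$-cell, and computing products pointwise only controls the components. Hence $H$ is Segal if and only if $A_0$ and $A_1$ are Segal \emph{and} $\alpha$ is strong at every $\rho^n_i$. This extra strongness is the heart of the paper's argument. It is exactly what becomes preservation of cartesian lifts over opposites of inert maps, and your sentence ``so the datum becomes an arrow in $\CAlgOpLax{\TwoCatInf}$'' silently presupposes it. You also need to propagate strongness from the $\rho^n_i$ to every inert $f\colon \mset \to \nset$. Since $\rho^n_i f = \rho^m_j$ for a suitable $j$, the pasting $\alpha_{\rho^n_i f} \simeq (A_1(\rho^n_i)\alpha_f)\circ(\alpha_{\rho^n_i}A_0(f))$ shows that each $A_1(\rho^n_i)\alpha_f$ is invertible. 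Hence so is $\alpha_f$, because the $A_1(\rho^n_i)$ jointly detect equivalences by the Segal condition on $A_1$.

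Your fallback, that the inert cells are forced to be invertible by the Segal conditions on $A_0$ and $A_1$, does not work. The argument in \cref{lem:2-sym:adjunction-in-CAlgLax} is a mate computation with respect to an adjunction, closed by a triangle identity, and no adjunction is present here. The claim is in fact false. Take $A_0$ constant at $*$ and $A_1$ the Segal object of $\mathrm{Set}^\times$. By the straightening you plan to use, an oplax transformation $A_0 \Rightarrow A_1$ is a section of the cartesian unstraightening of $A_1$ over $\op{\Fin_*}$. This unwinds to a functor $X$ on the category whose objects are pairs $(n,i)$ with $1 \le i \le n$, and whose morphisms $(n,i) \to (m,j)$ are the maps $f\colon \mset \to \nset$ with $f(j) = i$. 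The cell at $\rho^n_i$ is the induced map $X(1,1) \to X(n,i)$. Since $(1,1)$ is terminal in this category, $X(1,1)$ is always a retract of $X(n,i)$, but nothing more is forced. For the functor corepresented by $(1,1)$ the map is $* \to X(2,1)$, with $X(2,1)$ of cardinality two. So both endpoints are Segal while $H$ is not, and the transformation is not oplax symmetric monoidal. The two claims you combine are therefore both false. The statement survives only because the invertibility of $\eta_n$, which you discarded, is precisely the missing condition.
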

\begin{proof}
    The pointwise structure $\FunLax{\Delta^1}{\TwoCatInf^\times}^\ptw$ is cartesian by \cref{lem:pointwise:cartesian-pointwise}.
    Hence, by \cref{lem:CAlg:anima-CAlg-Fun-Segal},
    $$\iota_0\TwoCAlg{\FunLax{\Delta^1}{\TwoCatInf^\times}^\ptw} \cong \iota_0\FunSegal{\Fin_*}{\FunLax{\Delta^1}{\TwoCatInf}}.$$ 
    Using that 
    \begin{equation*}    
        \Map{\CatInfTwo}{\Fin_*}{\FunLax{\Delta^1}{\TwoCatInf}} \cong \Map{\CatInfTwo}{\Delta^1}{\FunOpLax{\Fin_*}{\TwoCatInf}}
    \end{equation*}
    by definition, 
    $\iota_0\FunSegal{\Fin_*}{\FunLax{\Delta^1}{\TwoCatInf}} \subseteq \Map{\CatInfTwo}{\Delta^1}{\FunOpLax{\Fin_*}{\TwoCatInf}}$ can be seen as a subanima. 
    We claim that its image is given by those oplax natural transformations $\alpha \colon F \to G$, where $F$ and $G$ satisfy the Segal condition and such that 
    $\alpha$ is strong over inert morphisms. 

    Let $H \colon \Fin_* \to \FunLax{\Delta^1}{\TwoCatInf}$ be a functor and let $\alpha \colon F \to G$ be the corresponding oplax natural transformation under the above equivalence.
    Assume first that $H$ satisfies the Segal condition, i.e.\ the maps $\rho^n_i \colon \langle n \rangle \to \langle 1 \rangle$ mapping $i$ to $1$ and everything else to $*$ induce an equivalence $H(\langle n \rangle) \xrightarrow{\simeq} \prod_{i=1}^{n} H(\langle 1\rangle)$.
    Then also $F= \ev{0} H$ and $G=\ev{1}H$ satisfy the Segal condition because $\ev{0}, \ev{1} \colon \FunLax{\Delta^1}{\TwoCatInf} \to \TwoCatInf$ are product-preserving by \cite[Proposition 5.4.2]{abellán2026freefibrationslaxcolimits} 
    (note that products are partially oplax limits over diagrams with no nontrivial arrows).
    The functor $H$ satisfies the Segal condition if and only if for all objects $\langle n \rangle \in \Fin_*$, the map $H(\langle n \rangle) \to \prod_{i=1}^{n} H(\langle 1\rangle)$ induced by the $\rho_i^n$ is an equivalence in $\FunLax{\Delta^1}{\TwoCatInf}$. 
    We can write this map as a diagram of the form 
    \begin{center}
        \begin{tikzcd}[column sep=large]
            F(\langle n \rangle) \ar[r, "\alpha_{\langle n \rangle}"] \ar[d] & G(\langle n \rangle) \ar[d] \\
            \prod_{i=1}^{n} F(\langle 1 \rangle) \ar[r, "\prod_{i=1}^{n}\alpha_{\langle 1 \rangle}"'] \laxarrowur{\eta_n} & \prod_{i=1}^{n} G(\langle 1 \rangle) \rlap{.}
        \end{tikzcd}
    \end{center}
    As $F$ and $G$ satisfy the Segal condition, the two vertical maps in the diagram are equivalences. 
    Consequently, $H$ satisfies the Segal condition if and only if $F$ and $G$ do so and the maps $\eta_n$ are equivalences.
    The map $\eta_n$ is defined via the diagrams
    \begin{center}
        \begin{tikzcd}
            F(\langle n \rangle) \ar[r, "\alpha_{\langle n \rangle}"] \ar[d, "F(\rho_i^n)"'] & G(\langle n \rangle) \ar[d, "G(\rho_i^n)"] \\
            F(\langle 1 \rangle) \ar[r, "\alpha_{\langle 1 \rangle}"'] \laxarrowur{\alpha_{\rho_i^n}} & G(\langle 1 \rangle)
        \end{tikzcd}
    \end{center}
    coming from the oplax naturality of $\alpha$ applied to $\rho_i^n$ for all $i \in \{1, \dots, n\}$. 
    Hence, $\eta_n$ is an equivalence if and only if $\alpha$ is strong over the maps $\rho_i^n$. 

    We want to show that this is in fact equivalent to $\alpha$ being strong over all inert morphisms.
    So let $f \colon \langle m \rangle \to \langle n \rangle$ be an arbitrary inert morphism in $\Fin_*$. 
    We have to check that the map $$\alpha_{f} \colon \alpha_{\langle n \rangle} F(f) \to G(f) \alpha_{\langle m \rangle}$$ is an equivalence.
    As $G$ satisfies the Segal condition, the maps $\rho_i^n$ yield an equivalence $$G(\langle n \rangle) \cong \prod_{i=1}^n G(\langle 1 \rangle).$$
    Thus, it is enough to show that $G(\rho_i^n) \alpha_{f}$ is an equivalence for each $i$. 
    Consider the commutative diagram
    \begin{center}
        \begin{tikzcd}
            \alpha_{\langle 1 \rangle} F(\rho_i^n f) \ar[r, "\alpha_{\rho_i^n f}"] \ar[d, equal] & G(\rho_i^n f) \alpha_{\langle m \rangle} \ar[dd, equal] \\
             \alpha_{\langle 1 \rangle} F(\rho_i^n) F(f) \ar[d, "{\alpha_{\rho_i^n}F(f)}"'] & \\
             G(\rho_i^n) \alpha_{\langle n \rangle} F(f) \ar[r, "G(\rho_i^n) \alpha_f"'] & G(\rho_i^n) G(f) \alpha_{\langle m \rangle} \rlap{.}
        \end{tikzcd}
    \end{center}
    As $\alpha$ is strong over the maps $\rho_i^n$, the maps $\alpha_{\rho_i^n}$ and $\alpha_{\rho_i^n f}$ are equivalences (note that $\rho_i^n f = \rho_{j}^m$ for the unique $j \in \{1, \dots , m\}$ with $f(j)=i$). 
    In particular, the diagram shows that also $G(\rho_i^n) \alpha_{f}$ is an equivalence.
    
    Thus, $\eta_n$ is an equivalence for all $n$ if and only if $\alpha$ is strong over all inert morphisms.
    This finishes the proof of the claim about the image of 
    $$\iota_0\FunSegal{\Fin_*}{\FunLax{\Delta^1}{\TwoCatInf}} \subseteq \Map{\CatInfTwo}{\Delta^1}{\FunOpLax{\Fin_*}{\TwoCatInf}}.$$
    To complete the proof of the lemma, we apply \cite[Corollary 3.5.10]{AGH24} and see that $\alpha$ as above corresponds to a functor between the cartesian unstraightenings 
    which preserves $2$-cartesian $1$-morphisms lying over opposites of inert morphisms. 
    This is exactly an oplax symmetric monoidal functor by definition.
\end{proof}

%% file: 3FF/def-3FF.tex
We start this section by defining in \cref{sec:3FF-def} the main actor of this paper: a $3$-functor formalism,
and its variant, a $6\frac{1}{2}$-functor formalism. We explain how one recovers a symmetric monoidal structure 
from such an object, and discuss some immediate consequences.
\cref{sec:3FF-constr} is the counterpart to \cref{sec:constr-2FF}: Here, we explain how to construct a $3$-functor formalism,
essentially dualizing the construction from \cite{oldCLL}.
Finally, in \cref{sec:3ff-Cartier}, we explain how to extend the results of \cref{sec:Cart-2FF} to the setting of $3$-functor formalisms,
using the existence result about lax limits from \cref{sec:sym-oplax-lax}.

\subsection{Definition of \texorpdfstring{$3$}{3}-functor formalisms and consequences} \label{sec:3FF-def}

\begin{defn} \label{def:3FF:cartesian-Nagata}
    A \defemphindex{symmetric monoidal Nagata setup}{Nagata setup!symmetric monoidal} $(\Cat C^\otimes, I, P)$ is 
    a symmetric monoidal category $\Cat C^\otimes$, together with a Nagata setup 
    $(\Cat C, \all, I, P)$ on the underlying category $\Cat C$ (so in particular, now $E = \all$),
    such that moreover 
    \begin{enumerate}[label=(\arabic*)]
        \item $I \otimes I \subseteq I$,
        \item $P \otimes P \subseteq P$, 
        \item the functor $- \otimes - \colon \Cat C \times \Cat C \to \Cat C$ preserves pullbacks along
            morphisms in $I \times I$ and along morphisms in $P \times P$.
    \end{enumerate}
\end{defn}

Let us explain why we included $E = \all$ in the definition of a symmetric monoidal Nagata setup.
\begin{rmk} \label{rmk:3FF:symmetric-Nagata-E-eq-all}
    Let $(\Cat C, E, I, P)$ be a Nagata setup, where $\Cat C$ admits finite products and is equipped with the cartesian symmetric monoidal structure.
    We will describe later (\Cref{lem:tensor-product:DX-is-sym-mon}) how a lax symmetric monoidal functor $\Cat D^\otimes \colon \SpanStd^\otimes \to \TwoCatInf^\times$ induces a symmetric monoidal structure on $\Cat D^\otimes (X)$ for each object $X \in \Cat C$. 
    This requires us to equip each object $X \in \SpanStd$ with a canonical commutative algebra structure. 
    In particular, to define the unit we have to assume that the maps $X \to *$ belong to $E$ for each $X$. 
    But as $E$ is left cancellable by \cite[Lemma 3.21]{cnossen2025universalityspan2categoriesconstruction} this already implies $E = \all$.
\end{rmk}

As a special case, we will often use the following definition:
\begin{defn}
    A \defemphindex{cartesian Nagata setup}{Nagata setup!cartesian} $(\Cat C^\times, I, P)$ 
    is a symmetric monoidal Nagata setup where $\Cat C$ is equipped with the cartesian monoidal structure.
    In particular, we assume that $\Cat C$ admits all finite products.
\end{defn}

Cartesian Nagata setups exist in abundance:
\begin{lem} \label{lem:3FF:Nagata-is-cartesian}
    Let $(\Cat C, \all, I, P)$ be a Nagata setup, such that $\Cat C$ admits all finite products.
    Then $(\Cat C^\times, I, P)$ is a cartesian Nagata setup.
\end{lem}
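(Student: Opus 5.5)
We have to check the three conditions of \cref{def:3FF:cartesian-Nagata} for the cartesian product $\times \colon \Cat C \times \Cat C \to \Cat C$. The plan is to write $f \times g$ as a composite of base changes of $f$ and of $g$, so that everything reduces to stability of $I$ and $P$ under base change and composition.

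\textbf{Closure under products.} Let $f \colon X \to Y$ and $g \colon X' \to Y'$ lie in $I$. We factor
$$f \times g \colon X \times X' \xrightarrow{\id{X} \times g} X \times Y' \xrightarrow{f \times \id{Y'}} Y \times Y'.$$
The square with horizontal maps $f \times \id{Y'}$ and $f$ and vertical projections to the first factor is cartesian. Hence $f \times \id{Y'}$ is the base change of $f$ along the projection $Y \times Y' \to Y$. This pullback exists because $f \in I$ and $(\Cat C, I)$ is a geometric setup, and it therefore lies in $I$. Symmetrically, $\id{X} \times g$ is the base change of $g$ along $X \times Y' \to Y'$, so it also lies in $I$. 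Since $I$ is a wide subcategory, it is closed under composition, so $f \times g \in I$. The identical argument gives $P \times P \subseteq P$.

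\textbf{Preservation of pullbacks.} Suppose we are given two cartesian squares $X_1 \to Y_1 \leftarrow Z_1$ and $X_2 \to Y_2 \leftarrow Z_2$ whose pullbacks $W_i = X_i \times_{Y_i} Z_i$ are taken along morphisms in $I$ (or in $P$), so that these pullbacks exist. We must show that $W_1 \times W_2$ is the pullback of $X_1 \times X_2 \to Y_1 \times Y_2 \leftarrow Z_1 \times Z_2$. The argument is formal, via the Yoneda lemma: for any test object $T$, $\Map{\Cat C}{T}{-}$ sends products to products of anima. Pullbacks of anima commute with finite products, since limits commute with limits. Therefore $\Map{\Cat C}{T}{W_1 \times W_2}$ is identified with the iterated fiber product of mapping anima, naturally in $T$. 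Note that the pullback of the product diagram is guaranteed to exist in $\Cat C$ by this very identification: $W_1 \times W_2$ represents it.

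\textbf{Remaining data.} Finally, we record that $(\Cat C, \all, I, P)$ is a Nagata setup by hypothesis, and that $\all$ is trivially stable under base change whenever the relevant pullbacks exist. I do not expect a real obstacle here. The only subtle point is existence of pullbacks, since $\Cat C$ need not have all pullbacks. This is handled in the first step by base change along morphisms in $I$ or $P$, and in the second step by constructing the product pullback explicitly.
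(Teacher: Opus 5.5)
Your proof is correct and follows the paper's own argument: you factor $f \times g$ as $(f \times \id{Y'}) \circ (\id{X} \times g)$, use that each factor is a base change and that $I$ and $P$ are closed under base change and composition, and deduce the pullback condition from the fact that products commute with limits. One small remark is that $(\Cat C, \all)$ is assumed to be a geometric setup, so $\Cat C$ in fact has all pullbacks; your caution about their existence is therefore unnecessary, though harmless.
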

\begin{proof}
    We have to check the properties of \cref{def:3FF:cartesian-Nagata}.
    \begin{enumerate}[label=(\arabic*)]
        \item Let $j \colon U \to X$ and $j' \colon U' \to X'$ be morphisms in $I$. We claim that also $j \times j' \in I$.
            Indeed, $j \times j'$ is the composition of $\id{} \times j'$ and $j \times \id{}$, which are in turn pullbacks of $j$ and $j'$.
            Hence, the claim follows since $I$ is a subcategory stable under pullbacks.
        \item The case for $P$ has an analogous proof.
        \item That $- \times -$ commutes with pullbacks along $I \times I$ and $P \times P$ is clear since products commute with all limits. \qedhere
    \end{enumerate}
\end{proof}

\begin{constr} \label{constr:3FF:Span-symmetric-monoidal}
    Let $(\Cat C^\otimes, I, P)$ be a symmetric monoidal Nagata setup.
    It is explained in \cite[beginning of section 5]{cnossen2025universalityspan2categoriesconstruction} how $\Span{\Cat C}{\all}{\all}{P}{I}$ 
    can be equipped with an induced symmetric monoidal structure $\defnot{\Span{\Cat C}{\all}{\all}{P}{I}^\otimes} \colon \Fin_* \to \TwoCatInfTwo$ for which the canonical functor 
    $\op{\Cat C} \to \Span{\Cat C}{\all}{\all}{P}{I}$ is symmetric monoidal. 
    Postcomposing with the product-preserving functor $\coop{(-)}$ and using the definition of $\SpanStd$ (for $E = \all$) we also get an induced symmetric monoidal structure on $\Span{\Cat C}{\all}{\all}{I}{P}$. 
    Moreover, the canonical functor $h \colon \Cat C \to \Span{\Cat C}{\all}{\all}{I}{P}$ is symmetric monoidal.
\end{constr}
    
\begin{defn}
    Let $(\Cat C^\otimes, I, P)$ be a symmetric monoidal Nagata setup and let $\TwoCat{M}^\otimes$ be a symmetric monoidal $2$-category. 
    A \defemphindex{$3$-functor formalism (with values in $\TwoCat{M}^\otimes$)}{$3$-functor formalism} is a lax symmetric monoidal functor $\Cat D^\otimes \colon \Span{\Cat C}{\all}{\all}{I}{P}^\otimes \to \TwoCat{M}^\otimes$.
\end{defn}

\begin{rmk}
    Any $3$-functor formalism has an underlying $2$-functor formalism by forgetting the lax symmetric monoidal structure
    (i.e.\ by applying the forgetful functor $\CAlgLax{\TwoCatInfTwo} \to \TwoCatInfTwo$ of \cref{rmk:2sym:forgetf-functor}).
\end{rmk}

\begin{lem}[{\cite[Corollary 2.4.3.10]{higheralgebra}}] \label{lem:tensor-product:algebras-cocartesian-monoidal-structure}
    Let $\Cat C^\amalg$ be a category with finite coproducts, equipped with the cocartesian symmetric monoidal structure.
    Then the forgetful functor $\CAlg{\Cat C^\amalg} \xrightarrow{\simeq} \Cat C$ is an equivalence of categories. 
    Indeed, every object $X \in \Cat C$ carries a canonical commutative algebra structure given by the fold map $X \amalg X \to X$.
\end{lem}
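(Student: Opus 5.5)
The statement to prove is that, for a category $\Cat C$ with finite coproducts carrying the cocartesian symmetric monoidal structure, the forgetful functor $\CAlg{\Cat C^\amalg} \to \Cat C$ is an equivalence. The plan is to show that the cocartesian operad is universal among unital operads under $\Cat C$, and then specialise. Concretely, $\Cat C^\amalg \to \Fin_*$ admits a description in which the fiber over $\nset$ is $\Cat C^{\times n}$. A morphism $(X_1,\dots,X_n) \to (Y_1,\dots,Y_m)$ over $\alpha \colon \nset \to \mset$ is a family of maps $X_i \to Y_{\alpha(i)}$ for all $i$ with $\alpha(i) \neq *$.

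With this description, the first step identifies algebras. An algebra $A \colon \Fin_* \to \Cat C^\amalg$ is a section preserving inert cocartesian morphisms. Its value at $\nset$ is therefore $(A_1,\dots,A_n)$ with each $A_i \simeq A(\langle 1\rangle)$. Its value on an active morphism is a family of maps from $A(\langle 1\rangle)$ to itself.

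The second step shows these maps are forced to be identities up to contractible choice. One way is to prove that, for any $\infty$-operad $\mathcal O$, restriction gives an equivalence $\mathrm{Alg}_{\mathcal O}(\Cat C^\amalg) \simeq \Fun{\mathcal O_{\langle 1\rangle}}{\Cat C}$ whenever $\mathcal O$ is unital. For $\mathcal O = \Fin_*$ the underlying category $\mathcal O_{\langle 1\rangle}$ is a point, so we get $\CAlg{\Cat C^\amalg} \simeq \Cat C$.

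The key mechanism is a relative left Kan extension. The functor $\mathcal O_{\langle 1 \rangle} \to \mathcal O$ is extended to the subcategory of $\mathcal O^\otimes$ spanned by inert morphisms, and we check that this extension is an $\mathcal O$-algebra. The relevant slice categories contain the inclusions of single colours as initial objects, because $\mathcal O$ is unital and the mapping spaces from the empty tuple are contractible. The colimit defining the extension is therefore a coproduct, and this computes the multiplication map as the fold map $X \amalg X \to X$.

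The main obstacle is the verification that every left Kan extension along this inclusion exists and yields an algebra, i.e.\ the cofinality computation. I would follow Lurie's argument: first treat $\mathcal O = \mathrm{Triv}$ (no non-identity active morphisms), then reduce the unital case to it. Since the statement is exactly \cite[Corollary 2.4.3.10]{higheralgebra}, in the paper I would simply cite it rather than reproduce this computation.
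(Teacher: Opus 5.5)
Your bottom line (deduce the statement from the equivalence $\mathrm{Alg}_{\mathcal O}(\Cat C^\amalg)\simeq\Fun{\mathcal O}{\Cat C}$ for unital $\mathcal O^\otimes$, specialised to $\mathcal O^\otimes=\Fin_*$, i.e.\ cite \cite[Corollary 2.4.3.10]{higheralgebra}) is exactly what the paper does. The paper gives no argument beyond that citation. The mechanism you sketch for the unital case, however, would not work as written, for three reasons.
First, extending to the inert subcategory of $\mathcal O^\otimes$ only produces data every operad has for free. It says nothing about the active maps, which is where the content lies.
Second, $\mathrm{Triv}$ is not unital, and $\mathrm{Alg}_{\mathrm{Triv}}(\Cat C^\amalg)\simeq\Cat C$ holds formally, so it is not a meaningful base case.
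Third, ``the slices have initial objects, so the defining colimit is a coproduct'' is inconsistent. An initial object in the indexing category says nothing about a colimit, while a terminal one makes the colimit an evaluation, not a coproduct.

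What does work is the following. Let $\Gamma^*$ be the category of pairs $(\nset,i)$ with $1\le i\le n$, whose morphisms are maps $\alpha$ with $\alpha(i)=j$. By your own description of $\Cat C^\amalg$, an $\mathcal O$-algebra is a functor $F\colon\mathcal O^\otimes\times_{\Fin_*}\Gamma^*\to\Cat C$ inverting the maps $(X,i)\to(X_i,1)$ over the inert $\rho_i$. Left Kan extend along the full inclusion $\mathcal O\hookrightarrow\mathcal O^\otimes\times_{\Fin_*}\Gamma^*$, $Y\mapsto(Y,1)$. Unitality gives $\mathrm{Map}_{\mathcal O^\otimes}(Y,X)_{1\mapsto i}\simeq\mathrm{Map}_{\mathcal O}(Y,X_i)$ for maps lying over $1\mapsto i$. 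Hence the comma category over $(X,i)$ is $\mathcal O_{/X_i}$, with \emph{terminal} object the inclusion $u_i\colon X_i\to X$ of the $i$-th colour. The extension is therefore $F(X,i)\simeq F(X_i,1)$, and no colimits in $\Cat C$ are used. Since $\rho_i u_i\simeq\id{X_i}$, $F$ is left Kan extended from $\mathcal O$ if and only if it is an algebra, so restriction is an equivalence onto $\Fun{\mathcal O}{\Cat C}$. The fold map only appears afterwards. For $\mathcal O^\otimes=\Fin_*$, the active map $\langle 2\rangle\to\langle 1\rangle$ is sent to a morphism $(A,A)\to A$ of $\Cat C^\amalg$, i.e.\ a pair of maps $A\to A$. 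Each is the image of the composite $X_i\xrightarrow{u_i}X_1\oplus X_2\to Y$, which is the identity of the unique colour. The corresponding map out of the tensor product $A\amalg A$ is $(\id{A},\id{A})$.
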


\begin{cor} \label{lem:tensor-product:algebras-in-spans}
    Let $(\Cat C^\times, I, P)$ be a cartesian Nagata setup, so that we may equip $\op{\Cat C}$ with the cocartesian monoidal structure.
    Then the functor $\op{\Cat C} \to \Span{\Cat C}{\all}{\all}{I}{P}$ factors over $\CAlg{\Span{\Cat C}{\all}{\all}{I}{P}^\otimes}$, 
    equipping each object $X \in \Span{\Cat C}{\all}{\all}{I}{P}$ with the canonical commutative algebra structure given by the span $X \times X \xleftarrow{\Delta_X} X \xrightarrow{\id{X}} X$, 
    where $\Delta_X$ denotes the diagonal.
\end{cor}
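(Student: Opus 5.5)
We will deduce the corollary from \cref{lem:tensor-product:algebras-cocartesian-monoidal-structure}, applied to $\op{\Cat C}$, together with the fact that commutative algebras are preserved by strong symmetric monoidal functors. Since $\Cat C$ has finite products, $\op{\Cat C}$ has finite coproducts, and its cocartesian structure $(\op{\Cat C})^\amalg$ is the opposite of the cartesian structure on $\Cat C$. By \cref{lem:tensor-product:algebras-cocartesian-monoidal-structure}, the forgetful functor $\CAlg{(\op{\Cat C})^\amalg} \to \op{\Cat C}$ is an equivalence. Its inverse equips each $X$ with the fold map $X \amalg X \to X$ in $\op{\Cat C}$, which is the diagonal $\Delta_X \colon X \to X \times X$ in $\Cat C$, and with the unit $\emptyset \to X$ in $\op{\Cat C}$, which is the terminal map $X \to *$ in $\Cat C$.

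Next we use \cref{constr:3FF:Span-symmetric-monoidal}. The functor $\op{\Cat C} \to \Span{\Cat C}{\all}{\all}{P}{I}$ is strong symmetric monoidal for the cartesian-induced structure, and applying the product-preserving equivalence $\coop{(-)}$ turns this into a strong symmetric monoidal functor
\[
\op{\Cat C} = \coop{(\op{\Cat C})} \to \Span{\Cat C}{\all}{\all}{I}{P}^\otimes ,
\]
namely $[f \colon X \to Y] \mapsto [Y \xleftarrow{f} X \xrightarrow{\id{X}} X]$. A (strong, hence lax) symmetric monoidal functor $G$ induces by postcomposition a functor $\CAlg{G} \colon \CAlg{-} \to \CAlg{-}$ compatible with the forgetful functors. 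In the present setting this is the functoriality of $\TwoCAlg{-} = \FunLaxTensor{\Delta^{0,\times}}{-}$ in $\CAlgLax{\TwoCatInfTwo}$; since $\op{\Cat C}$ is a $1$-category, it suffices to pass to underlying categories. Composing the inverse equivalence $\op{\Cat C} \simeq \CAlg{(\op{\Cat C})^\amalg}$ with $\CAlg{G}$ gives the desired factorization
\[
\op{\Cat C} \to \CAlg{\Span{\Cat C}{\all}{\all}{I}{P}^\otimes} \to \Span{\Cat C}{\all}{\all}{I}{P} .
\]

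It remains to identify the multiplication. The image of the fold map under $G$ is obtained by applying $G$ to the morphism $\Delta_X$ of $\Cat C$, viewed in $\op{\Cat C}$ as a map $X \amalg X \to X$. Composing with the monoidal structure equivalence $G(X) \otimes G(X) \simeq G(X \amalg X)$, which is the identification $X \otimes X = X \times X$ in spans, yields the span $X \times X \xleftarrow{\Delta_X} X \xrightarrow{\id{X}} X$, as claimed. The one point needing care is that the symmetric monoidal structure on spans is the one whose tensor product on objects is $\times$, so that the monoidal comparison is the identity on $X \times X$. This follows from the description in \cite[§5]{cnossen2025universalityspan2categoriesconstruction}, whose structure is induced from the cartesian product on $\Cat C$, together with the fact that $\coop{(-)}$ does not change objects.
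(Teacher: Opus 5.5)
Your strategy matches the paper's: obtain a strong symmetric monoidal functor out of $(\op{\Cat C})^{\amalg}$, apply $\operatorname{CAlg}$, and use $\CAlg{(\op{\Cat C})^{\amalg}} \simeq \op{\Cat C}$ with the fold map being the diagonal. The last two steps are fine. The gap is in how you produce the symmetric monoidal functor.

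For a $1$-category, $\coop{(-)}$ acts as $\op{(-)}$, so $\coop{(\op{\Cat C})} \simeq \Cat C$, not $\op{\Cat C}$. Applying $\coop{(-)}$ to the symmetric monoidal inclusion of backward maps $\op{\Cat C} \to \Span{\Cat C}{\all}{\all}{P}{I}^\otimes$, $f \mapsto [Y \xleftarrow{f} X \xrightarrow{\id{X}} X]$ (in the Cnossen--Lenz--Linskens convention), therefore produces the covariant functor $h \colon \Cat C^\times \to \Span{\Cat C}{\all}{\all}{I}{P}^\otimes$, $f \mapsto [X \xleftarrow{\id{X}} X \xrightarrow{f} Y]$. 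That is exactly the last sentence of \cref{constr:3FF:Span-symmetric-monoidal}. It is not the functor $\op{\Cat C} \to \Span{\Cat C}{\all}{\all}{I}{P}$ of the statement. The latter is the $\coop{(-)}$-dual of the \emph{other} canonical functor, the inclusion $\Cat C \to \Span{\Cat C}{\all}{\all}{P}{I}$ of forward maps, and the construction you cite says nothing about its monoidality.

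You also cannot simply run your argument with $h$. It sends $\Delta_X$ to the comultiplication $[X \xleftarrow{\id{X}} X \xrightarrow{\Delta_X} X \times X]$, not to a multiplication. Moreover, $\CAlg{\Cat C^\times}$ consists of commutative monoids in $\Cat C$, not of arbitrary objects.

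The missing input is that the forward inclusion $\Cat C^\times \to \Span{\Cat C}{\all}{\all}{P}{I}^\otimes$ is symmetric monoidal. This is what the paper asserts when it says one can argue as in \cite[beginning of section 5]{cnossen2025universalityspan2categoriesconstruction}. The monoidal structure on spans is induced from $\times$ on $\Cat C$, and both canonical inclusions are compatible with it; here $E = \all$ is closed under products. Applying $\coop{(-)}$ and using $\coop{(\Cat C^\times)} \simeq (\op{\Cat C})^{\amalg}$ then gives the strong symmetric monoidal functor $(\op{\Cat C})^{\amalg} \to \Span{\Cat C}{\all}{\all}{I}{P}^\otimes$, $f \mapsto [Y \xleftarrow{f} X \xrightarrow{\id{X}} X]$. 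From there your argument goes through unchanged.
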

\begin{proof}
    One can argue as in \cite[beginning of section 5]{cnossen2025universalityspan2categoriesconstruction} that the functor $\op{\Cat C} \to \Span{\Cat C}{\all}{\all}{I}{P}$ is symmetric monoidal. 
    In particular, it induces a functor $\CAlg{\Cat C^{\mathrm{op}, \amalg}} \to \CAlg{\Span{\Cat C}{\all}{\all}{I}{P}^\otimes}$. 
    But $\CAlg{\Cat C^{\mathrm{op}, \amalg}} \cong \op{\Cat C}$ by \Cref{lem:tensor-product:algebras-cocartesian-monoidal-structure}.

    To get the explicit description of the algebra structure, note that a fold map in $\op{\Cat C}$ 
    is given by the diagonal in $\Cat C$.
\end{proof}

\begin{lem} \label{lem:tensor-product:DX-is-sym-mon}
    Let $(\Cat C^\times, I, P)$ be a cartesian Nagata setup.
    Let $\Cat D^\otimes \colon \Span{\Cat C}{\all}{\all}{I}{P}^\otimes \to \TwoCatInf^\times$ be a $3$-functor formalism with values in $\TwoCatInf^\times$ and let $X \in \Cat C$ be an object. 
    Then the category $\Cat D^\otimes (X)$ carries an induced symmetric monoidal structure whose multiplication is given by the composition 
    \begin{equation*}
        \defnot{- \shriektensor -} \colon \Cat D^\otimes (X) \times \Cat D^\otimes (X) \xrightarrow{\defnot{-\boxtimes-}} \Cat D^\otimes (X \times X) \xrightarrow{\Delta_X^!} \Cat D^\otimes (X).
    \end{equation*}
    Here, the first arrow, denoted $\defnot{-\boxtimes -}$, is given by the lax symmetric monoidal structure of $\Cat D^\otimes$ and $\Delta_X \colon X \to X \times X$ denotes the diagonal.
    We call this symmetric monoidal structure the \defemph{shriek tensor product}.
\end{lem}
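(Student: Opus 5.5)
The idea is to use that a lax symmetric monoidal functor sends commutative algebras to commutative algebras. Then we identify commutative algebras in $\TwoCatInf^\times$ with symmetric monoidal categories.

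First, by \cref{lem:tensor-product:algebras-in-spans}, the object $X$ carries a canonical commutative algebra structure in $\Span{\Cat C}{\all}{\all}{I}{P}^\otimes$, with multiplication the span $X \times X \xleftarrow{\Delta_X} X \xrightarrow{\id{X}} X$. Its unit is the span $* \leftarrow X \rightarrow X$. This requires $X \to *$ to lie in $E=\all$, which holds by the definition of a symmetric monoidal Nagata setup. It amounts to a point of $\iota_0\TwoCAlg{\Span{\Cat C}{\all}{\all}{I}{P}^\otimes}$. By \cref{lem:CAlg:anima-CAlg-Mapping-anima} this is a lax symmetric monoidal functor $\Delta^{0,\times} \to \Span{\Cat C}{\all}{\all}{I}{P}^\otimes$.

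Next, postcompose with $\Cat D^\otimes$. Since composites of lax symmetric monoidal functors are lax symmetric monoidal, we obtain a point of $\iota_0\TwoCAlg{\TwoCatInf^\times}$. By \cref{lem:CAlg:anima-CAlg-Fun-Segal} this is a Segal functor $\Fin_* \to \TwoCatInf$, and after passing through $\iota_1$ a commutative monoid in $\CatInf$. By \cite[Proposition 2.4.2.5]{higheralgebra}, this is exactly a symmetric monoidal category with underlying category $\Cat D^\otimes(X)$. The underlying object is recovered by evaluating at $\langle 1\rangle$, which is the underlying object of the algebra.

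Finally, identify the multiplication. The binary operation is obtained by applying the functor to the active map $\mu\colon\langle 2\rangle\to\langle 1\rangle$. Here $\Cat D^\otimes$ sends $\mu$ to the composite of its lax structure map $\Cat D(X)\times\Cat D(X) \to \Cat D(X\times X)$, which is $\boxtimes$ by definition, with $\Cat D$ applied to the multiplication span. That span is $\Cat D(X\times X\xleftarrow{\Delta_X}X\xrightarrow{\id{}}X)=\Delta_X^!$ by definition of $g^!$.

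The main point needing care is this last unwinding: how a lax symmetric monoidal functor acts on the multiplication of an algebra in terms of cocartesian lifts. It follows from the explicit description of the lax structure maps in the remark after \cref{def:2sym:CAlgDelta}: the image of a cocartesian lift of $\mu$ factors through a cocartesian lift, followed by the structure map $\boxtimes$.
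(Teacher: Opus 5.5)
Your proposal is correct and follows essentially the same route as the paper: equip $X$ with the commutative algebra structure from \cref{lem:tensor-product:algebras-in-spans}, push it forward along the lax symmetric monoidal functor $\Cat D^\otimes$ to a commutative algebra in $\TwoCatInf^\times$ (a symmetric monoidal category), and read off the tensor functor as $\boxtimes$ followed by $\Cat D(\mu_X)=\Delta_X^!$. Your detour through \cref{lem:CAlg:anima-CAlg-Mapping-anima} and \cref{lem:CAlg:anima-CAlg-Fun-Segal} simply makes explicit identifications that the paper leaves implicit when it applies $\TwoCAlg{-}$ and asserts that $\Cat D^\otimes(X)\in\CAlg{\TwoCatInf^\times}$ is a symmetric monoidal category.
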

\begin{proof}
    As $\Cat D^\otimes$ is a lax symmetric monoidal functor, it induces a functor 
    \begin{equation*}    
        \TwoCAlg{\Cat D^\otimes} \colon \TwoCAlg{\Span{\Cat C}{\all}{\all}{I}{P}^\otimes} \to \TwoCAlg{\TwoCatInf^\times} 
    \end{equation*}
    by applying the functor $\TwoCAlg{-} \colon \CAlgLax{\TwoCatInfTwo} \to \CatInfTwo$. 
    Each object $X \in \Span{\Cat C}{\all}{\all}{I}{P}$ carries by \Cref{lem:tensor-product:algebras-in-spans} a canonical commutative algebra structure whose multiplication is given by $$\mu_X \colon X \times X \xleftarrow{\Delta_X} X \xrightarrow{\id{X}} X.$$ 
    In particular, $\Cat D^\otimes (X) \in \CAlg{\TwoCatInf^\times}$ is a symmetric monoidal category whose tensor functor is given by 
    \begin{equation*}
        \Cat D^\otimes (X) \times \Cat D^\otimes (X) \to \Cat D^\otimes (X \times X) \xrightarrow{\Cat D^\otimes(\mu_X) = \Delta_X^!} \Cat D^\otimes (X). \qedhere
    \end{equation*}
\end{proof}

\begin{defn} \label{def:3ff:defn}
    Let $(\Cat C^\times, I, P)$ be a cartesian Nagata setup.
    A \defemphindex{$6\frac{1}{2}$-functor formalism with respect to some class of morphisms $T \subseteq \Cat C$}{$6\frac{1}{2}$-functor formalism} 
    is a $3$-functor formalism $\Cat D^\otimes \colon \Span{\Cat C}{\all}{\all}{I}{P}^\otimes \to \TwoCatInf^\times$ with values in $\TwoCatInf$
    such that the underlying $2$-functor formalism is a $4\frac{1}{2}$-formalism with respect to $T$ in the sense of \Cref{def:2ff:defn}, and moreover
    \begin{enumerate}[label=(\arabic*)]
        \item[\textbf{(FF5)}] \label{axiom:FF5} for every $X \in \Cat C$ the symmetric monoidal category $\Cat D^\otimes(X)$ is presentably symmetric monoidal\footnote{In other words, the tensor product preserves colimits in each variable. In particular, by the adjoint functor theorem, the symmetric monoidal structure is closed.}.
    \end{enumerate}
\end{defn}

We now collect some consequences of the existence of a $3$-functor formalism.

\begin{lem} \label{lem:tensor-product:upper-shriek-symmetric-monoidal}
    Let $(\Cat C^\times, I, P)$ be a cartesian Nagata setup.
    Let $\Cat D^\otimes \colon \Span{\Cat C}{\all}{\all}{I}{P}^\otimes \to \TwoCatInf^\times$ be a $3$-functor formalism. Let $f \colon X \to Y$ be a morphism in $\Cat C$. 
    Then the functor $f^! \colon \Cat D(Y) \to \Cat D(X)$ is symmetric monoidal with respect to the symmetric monoidal structures described in \Cref{lem:tensor-product:DX-is-sym-mon}. 
\end{lem}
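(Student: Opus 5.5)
We view $f^!$ as the image under $\TwoCAlg{\Cat D^\otimes}$ of a morphism of commutative algebras in $\Span{\Cat C}{\all}{\all}{I}{P}^\otimes$, exactly as in the proof of \cref{lem:tensor-product:DX-is-sym-mon}. By \cref{lem:tensor-product:algebras-in-spans}, the functor $\op{\Cat C} \to \Span{\Cat C}{\all}{\all}{I}{P}$, sending $f$ to the backward span $Y \xleftarrow{f} X \xrightarrow{\id{X}} X$, factors through $\CAlg{\Span{\Cat C}{\all}{\all}{I}{P}^\otimes}$. Indeed, it is the composite $\op{\Cat C} \cong \CAlg{\Cat C^{\mathrm{op},\amalg}} \to \CAlg{\Span{\Cat C}{\all}{\all}{I}{P}^\otimes}$, the second map being induced by the symmetric monoidal functor $\op{\Cat C} \to \Span{\Cat C}{\all}{\all}{I}{P}^\otimes$. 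Since every morphism $\op f$ of $\op{\Cat C}$ is a map of commutative algebras for the fold structure by \cref{lem:tensor-product:algebras-cocartesian-monoidal-structure}, the backward span attached to $f$ is therefore a $1$-morphism $X_Y \to X_X$ in $\CAlg{\Span{\Cat C}{\all}{\all}{I}{P}^\otimes}$, where $X_Y$ and $X_X$ denote $Y$ and $X$ with their diagonal algebra structures. In other words, we obtain a morphism in $\iota_1\TwoCAlg{\Span{\Cat C}{\all}{\all}{I}{P}^\otimes}$.

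Next we apply the functor $\TwoCAlg{\Cat D^\otimes} \colon \TwoCAlg{\Span{\Cat C}{\all}{\all}{I}{P}^\otimes} \to \TwoCAlg{\TwoCatInf^\times}$, which is obtained by functoriality of $\TwoCAlg{-} = \FunLaxTensor{\Delta^{0,\times}}{-}$ from \cref{constr:2-functoriality-of-lax-tensor-oplax:1-functoriality}. The morphism above is sent to a $1$-morphism $\Cat D^\otimes(Y) \to \Cat D^\otimes(X)$ in $\TwoCAlg{\TwoCatInf^\times}$ whose underlying functor is $\Cat D(\op f) = f^!$. This is compatible with the forgetful map of \cref{lem:2-functoriality-of-lax-tensor-oplax:forgetful-map}.

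It remains to identify $1$-morphisms of $\TwoCAlg{\TwoCatInf^\times}$ with symmetric monoidal functors; this is the step we expect to require the most care. By \cref{lem:CAlg:anima-CAlg-Fun-Segal}, with $\Delta^1$ in place of $\Delta^0$ via the universal property \cref{lem:2-functoriality-of-lax-tensor-oplax:1-functoriality-univ-prop} and the cotensor description \cref{lem:2-sym:category-indexed-cotensors}, a morphism in $\TwoCAlg{\TwoCatInf^\times}$ is a $\Delta^1$-family of Segal objects $\Fin_* \to \TwoCatInf$. That is, it is a functor $\Fin_* \to \TwoFun{\Delta^1}{\TwoCatInf}$ satisfying the Segal condition, i.e.\ a strong natural transformation of commutative monoids in $\CatInf$. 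Unstraightening identifies this with a map of cocartesian fibrations over $\Fin_*$ preserving all cocartesian edges, which is precisely a symmetric monoidal functor. The same identification, applied on objects, gives the symmetric monoidal structures of \cref{lem:tensor-product:DX-is-sym-mon}, so the structure on $f^!$ is with respect to the shriek tensor products, as claimed. The point to check is that we land in strong rather than lax or oplax transformations: the $1$-morphisms of $\FunLaxTensor{\Delta^{0,\times}}{-}$ are strong natural transformations because $\epsilon = \emptyset$. This contrasts with \cref{lem:tensor-product:CAlg-of-Arr-lax-is-oplax-sym-mon-functors}, where the lax arrow category produced oplax functors.
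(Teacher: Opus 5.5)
Your proposal is correct and follows the paper's proof. The paper likewise reduces, via the construction in \cref{lem:tensor-product:DX-is-sym-mon}, to showing that the backward span $Y \xleftarrow{f} X \xrightarrow{\id{X}} X$ is a $1$-morphism of commutative algebras in spans, and deduces this directly from \cref{lem:tensor-product:algebras-in-spans}; your extra step identifying $1$-morphisms of $\TwoCAlg{\TwoCatInf^\times}$ with strong symmetric monoidal functors (via the universal property with $\Delta^1$, \cref{lem:pointwise:cartesian-pointwise} and \cref{lem:CAlg:anima-CAlg-Fun-Segal}) correctly spells out what the paper leaves implicit.
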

\begin{proof}
    By the proof of \Cref{lem:tensor-product:DX-is-sym-mon}, it is enough to show that the span $Y \xleftarrow{f} X \xrightarrow{\id{X}} X$ is actually 
    a $1$-morphism in $\CAlg{\Span{\Cat C}{\all}{\all}{I}{P}^\otimes}$. 
    But this follows directly from \Cref{lem:tensor-product:algebras-in-spans}.
\end{proof}

\begin{lem} \label{lem:consequences:projection-formula}
    Let $(\Cat C^\times, I, P)$ be a cartesian Nagata setup.
    Let $\Cat D^\otimes \colon \Span{\Cat C}{\all}{\all}{I}{P}^\otimes \to \TwoCatInf^\times$ be a $3$-functor formalism. Let $f \colon X \to Y$ be a morphism in $\Cat C$. 
    Then the functor $f_* \colon \Cat D(X) \to \Cat D(Y)$ satisfies the projection formula: For two objects $M \in \Cat D(X)$, $N \in \Cat D(Y)$, there is an equivalence 
    $$f_*(M \shriektensor f^! N) \cong f_*(M) \shriektensor N.$$
\end{lem}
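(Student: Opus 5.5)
The plan is to reduce the projection formula to a statement about commutative algebras and modules in the span $2$-category, and then push it through $\Cat D^\otimes$.

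First rewrite both sides in terms of the external product and shriek pullbacks. By \cref{lem:tensor-product:DX-is-sym-mon},
$$f_*(M \shriektensor f^! N) = f_* \Delta_X^! (M \boxtimes f^! N) \cong f_* \Delta_X^! (\id{X} \times f)^! (M \boxtimes N),$$
where the last step uses that $\boxtimes$ is natural with respect to the $1$-morphism $(\id{X},f)^!$. This naturality comes from the lax symmetric monoidal structure of $\Cat D^\otimes$ applied to the tensor product of the spans $\id{X}$ and $[Y \xleftarrow{f} X \xrightarrow{\id{}} X]$ in $\SpanStd^\otimes$. Similarly,
$$f_*M \shriektensor N = \Delta_Y^! (f_* M \boxtimes N) \cong \Delta_Y^!(f \times \id{Y})_*(M \boxtimes N).$$

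Next compare these composites in $\Span{\Cat C}{\all}{\all}{I}{P}$, before applying $\Cat D$.

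The left side is $\Cat D$ of the composite span $X \times Y \leftarrow X \to Y$. Its left leg is $(\id{X}, f) = (\id{X} \times f)\circ \Delta_X$, which is the graph of $f$, and its right leg is $f$.

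The right side is $\Cat D$ of the composite of $[X\times Y \xleftarrow{\id{}} X\times Y \xrightarrow{f\times \id{}} Y\times Y]$ with $[Y \times Y \xleftarrow{\Delta_Y} Y \xrightarrow{\id{}} Y]$. Composition is computed by the pullback $(X \times Y)\times_{Y\times Y} Y$ of $f\times\id{}$ along $\Delta_Y$. This pullback is canonically $X$, with projection to $X \times Y$ given by $(\id{X},f)$ and projection to $Y$ given by $f$.

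So the two spans agree, and the composition equivalences of $\Cat D$ (as in the proof of \cref{lem:consequences:composition}) give the desired equivalence. Equivalently, this is the base change equivalence of \cref{lem:consequences:BC-for-E} for the cartesian square with vertical maps $(\id{},f)$ and $\Delta_Y$ and horizontal maps $f$ and $f\times\id{}$. In a $3$-functor formalism every morphism lies in $E=\all$, so that corollary applies directly.

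The main obstacle is checking that the identifications used for $\boxtimes$ are the ones supplied by the lax structure map $\Cat D(A)\times\Cat D(B)\to\Cat D(A\times B)$. Concretely, I need this map to be natural in \emph{spans} in each variable: it must commute with $g_*$ and with $g^!$ separately, and for $f_*\times\id{}$ versus $(f\times\id{})_*$. This holds because the lax structure is a natural transformation of functors out of $\SpanStd\times\SpanStd$, and the span $[f\times\id{}]$ is the tensor product of the spans $[f]$ and $\id{}$ in $\SpanStd^\otimes$. Once that is spelled out, the rest is the span computation above.
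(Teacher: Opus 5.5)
Your proposal is correct and follows the paper's proof: the paper also identifies both sides with $\Cat D$ applied to the graph span $X \times Y \xleftarrow{\Gamma_f} X \xrightarrow{f} Y$. It does so once directly, via $\Gamma_f = (\id{X}\times f)\circ\Delta_X$, and once as the composite of the forward span along $f \times \id{Y}$ with the backward span along $\Delta_Y$. Your explicit check that $\boxtimes$ is natural in spans in each variable makes precise what the paper uses silently, and your base-change reformulation (via \cref{lem:consequences:BC-for-E} with $E=\all$) is an equivalent restatement of the same span identity.
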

\begin{proof}
    Denote by $\Gamma_f \colon X \to X \times Y$ the graph of $f$, i.e.\ $\Gamma_f = (\id{X} \times f) \circ \Delta_X$, where $\Delta_X \colon X \to X \times X$ denotes the diagonal. 
    Consider the span $X \times Y \xleftarrow{\Gamma_f} X \xrightarrow{f} Y$. 
    Applying $\Cat D$ to it yields $f_* \Gamma_f^!$. 
    If we evaluate this at $M \boxtimes N \in \Cat D(X \times Y)$, we get 
    $$f_* \Gamma_f^!(M \boxtimes N) \cong f_* \Delta_X^! (\id{X} \times f)^!(M \boxtimes N) \cong f_*(M \shriektensor f^! N),$$
    where we used \Cref{lem:tensor-product:DX-is-sym-mon} in the second equivalence.
    On the other hand, the above span can equivalently be written as composition of the spans $X \times Y \xleftarrow{\id{X \times Y}} X \times Y \xrightarrow{f \times \id{Y}} Y \times Y$ 
    and $Y \times Y \xleftarrow{\Delta_Y} Y \xrightarrow{\id{Y}} Y$.
    Applying $\Cat D$ to this composition yields the functor $\Delta_Y^! (f \times \id{Y})_*$ and evaluating it at $M \boxtimes N$ gives 
    $$\Delta_Y^! (f \times \id{Y})_* (M \boxtimes N) \cong f_*(M) \shriektensor N,$$
    where we used \Cref{lem:tensor-product:DX-is-sym-mon} again.
\end{proof}

%% file: 3FF/construction-of-3FF.tex
\subsection{Construction of \texorpdfstring{$3$}{3}-functor formalisms} \label{sec:3FF-constr}

\begin{setting}
    In this section, fix a symmetric monoidal Nagata setup $(\Cat C^\otimes, I, P)$,
    and let $\TwoCat M^\otimes$ be a symmetric monoidal $2$-category.
\end{setting}

We will need the following definitions, which are all variations of \cite[Definition 5.13]{cnossen2025universalityspan2categoriesconstruction} and related notions,
and are the multiplicative versions of \Cref{def:cff:suitable-decomposition-all-BC}.
\begin{defn} \label{def:constr-3ff:all-BC}
    Let $\epsilon \in \{\,\mathrm{oplax},\, \emptyset\,\}$ and $\delta \in \{\,\mathrm{lax},\, \mathrm{oplax}\,\}$.
    \begin{enumerate}[label=(\arabic*)]
        \item Let $\Cat D_*^\otimes \colon \Cat C^\otimes \to \TwoCat{M}^\otimes$ be a $\delta$-symmetric monoidal functor. 
            We say that $\Cat D_*^\otimes$ is a \defemphindex{$\delta$-symmetric monoidal functor satisfying all Beck--Chevalley conditions with respect to $(I,P)$}{Beck--Chevalley condition!satisfies all} if the following conditions hold:
            \begin{enumerate}[label=(\alph*)]
                \item The underlying functor $\Cat D_* \colon \Cat C \to \TwoCat{M}$ satisfies all Beck--Chevalley conditions with respect to $(I,P)$.
                \item The natural transformation $\Cat D_*(-) \otimes \Cat D_*(-) \to \Cat D_*(- \otimes -)$, resp.\ $\Cat D_*(-\otimes -) \to \Cat D_*(-) \otimes \Cat D_*(-)$, 
                given by the $\delta$-symmetric monoidal structure satisfies all Beck--Chevalley conditions with respect to $(I \times I, P \times P)$.
            \end{enumerate}
        \item Let $F \colon \op{(\Cat C^\otimes)} \to \TwoCat{M}^\otimes$ be a $\delta$-symmetric monoidal functor.
            We say that $F$ is a \defemphindex{$\delta$-symmetric monoidal $(I,P)$-biadjointable functor}{biadjointable} if the following conditions hold:
            \begin{enumerate}[label=(\alph*)]
                \item The underlying functor $F \colon \op{\Cat C} \to \TwoCat{M}$ is $(I,P)$-biadjointable.
                \item The natural transformation $F(-) \otimes F(-) \to F(- \otimes -)$, respectively $F(- \otimes -) \to F(-) \otimes F(-)$, 
                given by the $\delta$-symmetric monoidal structure is $(I \times I, P \times P)$-biadjointable.
            \end{enumerate}
        \item We say that a symmetric monoidal $\epsilon$-natural transformation between $\delta$-symmetric monoidal functors satisfying all Beck--Chevalley conditions with respect to $(I,P)$ \defemphindex{satisfies all Beck--Chevalley conditions with respect to $(I,P)$}{Beck--Chevalley condition!satisfies all} if
            its underlying $\epsilon$-natural transformation satisfies all Beck--Chevalley conditions with respect to $(I,P)$.
        \item We say that a symmetric monoidal $\epsilon$-natural transformation between $\delta$-symmetric monoidal $(I,P)$-biadjointable functors is \defemphindex{$(I,P)$-biadjointable}{biadjointable} if its underlying $\epsilon$-natural transformation is $(I,P)$-biadjointable.
        \item We write $\defnot{\FunEpsDeltaTensorIPBC{\Cat C^\otimes}{\TwoCat{M}^\otimes}} \subseteq \FunEpsDeltaTensor{\Cat C^\otimes}{\TwoCat{M}^\otimes}$ for the $2$-full sub-$2$-category given by 
                $\delta$-symmetric monoidal functors satisfying all Beck--Chevalley conditions with respect to $(I,P)$, and by symmetric monoidal $\epsilon$-natural transformations satisfying all Beck--Chevalley conditions with respect to $(I,P)$.
        \item We write $\defnot{\FunEpsDeltaTensorBadj{\op{(\Cat C^\otimes)}}{\TwoCat{M}^\otimes}} \subseteq \FunEpsDeltaTensor{\op{(\Cat C^\otimes)}}{\TwoCat{M}^\otimes}$ for the $2$-full sub-$2$-category given 
                by $\delta$-symmetric monoidal $(I,P)$-biadjointable functors, and by $(I,P)$-biadjointable $\epsilon$-natural transformations between them.
    \end{enumerate}
\end{defn}

We can upgrade \Cref{lem:constr-2cat:allBC-is-IPbadj} to the symmetric monoidal setting. 
\begin{lem} \label{lem:tensor-product:lax-sym-mon-allBC-is-IPbadj}
    Let $F \colon \Cat C^\otimes \to \TwoCat{M}^\otimes$ be a lax symmetric monoidal functor.
    Then $F$ is a lax symmetric monoidal functor satisfying all Beck--Chevalley conditions with respect to $(I,P)$ if and only if 
    $\coop{F} \colon \coop{(\Cat C^\otimes)} \to \coop{(\TwoCat{M}^\otimes)}$ is an oplax symmetric monoidal $(I,P)$-biadjointable functor. 
    More generally, the functor $\coop{(-)} \colon \CatInfTwo \to \CatInfTwo$ induces an equivalence of $2$-categories 
    \begin{equation*}
        \FunLaxTensorIPBC{\Cat C^\otimes}{\TwoCat{M}^\otimes} \simeq \coop{(\FunOplaxTensorBadj{\coop{(\Cat C^\otimes)}}{\coop{(\TwoCat{M}^\otimes)}})}.
    \end{equation*}
\end{lem}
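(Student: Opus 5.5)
The plan is to reduce the statement to \Cref{lem:constr-2cat:allBC-is-IPbadj} together with the coop-duality \Cref{lem:oplax-lax-tensor:coop}. Since $\Cat C$ is a $1$-category, $\coop{(\Cat C^\otimes)}$ is just $\op{(\Cat C^\otimes)}$. Applying \Cref{lem:oplax-lax-tensor:coop} with $\epsilon = \emptyset$ to $\FunLaxTensor{\Cat C^\otimes}{\TwoCat M^\otimes}$ gives an equivalence
$$\coop{(\FunLaxTensor{\Cat C^\otimes}{\TwoCat{M}^\otimes})} \cong \FunOplaxTensor{\coop{(\Cat C^\otimes)}}{\coop{(\TwoCat{M}^\otimes)}}.$$
Both $\FunLaxTensorIPBC{\Cat C^\otimes}{\TwoCat M^\otimes}$ and $\FunOplaxTensorBadj{\coop{(\Cat C^\otimes)}}{\coop{(\TwoCat M^\otimes)}}$ are $2$-full sub-$2$-categories cut out by conditions on objects and $1$-morphisms. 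It therefore suffices to check that these conditions correspond under this equivalence; applying $\coop{(-)}$ once more then yields the displayed equivalence.

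On objects, recall that the equivalence sends a lax symmetric monoidal $F$ to $\coop{F}$ with its induced oplax structure, via \Cref{lem:2-sym:oplax-lax-CAlg-coop-equivalence}. Its underlying functor is the coop of the underlying functor of $F$, by compatibility with the forgetful functors of \cref{rmk:2sym:forgetf-functor}. Condition (a) of \cref{def:constr-3ff:all-BC} for $F$ is therefore equivalent to condition (a) for $\coop{F}$ by \Cref{lem:constr-2cat:allBC-is-IPbadj}.

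For condition (b), I would identify the coop of the lax structure transformation $\Cat D_*(-)\otimes\Cat D_*(-) \to \Cat D_*(-\otimes-)$, viewed as a natural transformation of functors $\Cat C\times\Cat C \to \TwoCat M$, with the oplax structure transformation $\coop{F}(-\otimes-) \to \coop{F}(-)\otimes\coop{F}(-)$ of functors $\op{\Cat C}\times\op{\Cat C} \to \coop{\TwoCat M}$. Here the direction flips because $\coop{(-)}$ reverses $1$-morphisms. Once this identification is in place, the second part of \Cref{lem:constr-2cat:allBC-is-IPbadj}, applied with $\Cat C\times\Cat C$ and the pair $(I\times I, P\times P)$, shows that one transformation satisfies all Beck--Chevalley conditions if and only if the other is biadjointable. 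The underlying adjoints and mates correspond by \Cref{lem:two-cat:op-adjunctions,lem:Beck--Chevalley:op-BC}.

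On $1$-morphisms, a symmetric monoidal natural transformation satisfies all Beck--Chevalley conditions precisely when its underlying transformation does, and likewise for biadjointability. By \Cref{lem:2-functoriality-of-lax-tensor-oplax:forgetful-map} and its compatibility with coop, the underlying transformation of the coop is the coop of the underlying transformation. So the correspondence on $1$-morphisms again reduces to \Cref{lem:constr-2cat:allBC-is-IPbadj}.

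I expect the main obstacle to be the identification of the structure transformations in condition (b). The equivalence of \Cref{lem:2-sym:oplax-lax-CAlg-coop-equivalence} is defined on fibrations over $\Fin_*$. One has to unwind how the $2$-cocartesian lift over $\mu\colon\langle2\rangle\to\langle1\rangle$ in $\TwoCat M^\otimes$ becomes a $2$-cartesian lift over $\op{\mu}$ in $\coop{(\TwoCat M^\otimes)}$, and check that the factorization defining the lax structure map of $F$ turns into the factorization defining the oplax structure map of $\coop{F}$. I would do this fiberwise over $\langle 2\rangle$, using the Segal equivalence to reduce to the binary tensor functor, and verify that naturality in $\Cat C\times\Cat C$ is preserved, again by \Cref{lem:two-cat:coop}.
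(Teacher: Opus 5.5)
Your proposal is correct and follows the paper's proof. Use the coop-equivalence of \Cref{lem:oplax-lax-tensor:coop} (with $\epsilon=\emptyset$) to reduce to checking that the conditions on objects and $1$-morphisms correspond, then conclude from \Cref{lem:constr-2cat:allBC-is-IPbadj}, applied to the underlying functor, to the structure transformation with $(I\times I,P\times P)$, and to underlying transformations; the paper simply asserts as an equality the identification of $\coop{(F(-)\otimes F(-)\to F(-\otimes-))}$ with the oplax structure map of $\coop{F}$ that you flag as the main obstacle, so your fiberwise unwinding over $\langle 2\rangle$ is extra care rather than a different route.
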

\begin{proof}
    Since $\coop{(-)}$ induces an equivalence $$\FunLaxTensor{\Cat C^\otimes}{\TwoCat{M}^\otimes} \simeq \coop{(\FunOplaxTensor{\coop{(\Cat C^\otimes)}}{\coop{(\TwoCat{M}^\otimes)}})}$$ by \cref{lem:oplax-lax-tensor:coop},
    it suffices to show that the imposed conditions on objects and morphisms correspond to each other under this equivalence. 
    This follows directly from \Cref{lem:constr-2cat:allBC-is-IPbadj}: The underlying functor of $F$ satisfies all Beck--Chevalley conditions with respect to $(I,P)$ 
    if and only if the underlying functor of $\coop{F}$ is $(I,P)$-biadjointable, 
    and the natural transformation $$F(-) \otimes F(-) \to F(- \otimes -)$$ satisfies all Beck--Chevalley conditions with respect to $(I \times I, P \times P)$ if and only if 
    $$\coop{(F(-) \otimes F(-) \to F(- \otimes -))} =(\coop{F}(- \otimes -) \to \coop{F}(-) \otimes \coop{F}(-))$$ is $(I \times I, P \times P)$-biadjointable.
    By the same argument we see that also the $1$-morphisms on both sides coincide upon applying $\coop{(-)}$.
\end{proof}

We explained in \cref{constr:3FF:Span-symmetric-monoidal} that the canonical functor $h \colon \Cat C^\otimes \to \Span{\Cat C}{\all}{\all}{I}{P}^\otimes$ is symmetric monoidal.
In particular, precomposition with $h$ induces functors
\[\FunLaxTensor{\Span{\Cat C}{\all}{\all}{I}{P}^\otimes}{\TwoCat{M}^\otimes} \to \FunLaxTensor{\Cat C^\otimes}{\TwoCat{M}^\otimes}\]
and 
\[\FunOplaxLaxTensor{\Span{\Cat C}{\all}{\all}{I}{P}^\otimes}{\TwoCat{M}^\otimes} \to \FunOplaxLaxTensor{\Cat C^\otimes}{\TwoCat{M}^\otimes}.\]
Our goal in this section is to prove the following version of \cite[Theorem 6.13]{cnossen2025universalityspan2categoriesconstruction}.

\begin{thm} \label{lem:tensor-product:lax-sym-mon-UP-span}
    The above functor induces an equivalence of $2$-categories  
    \begin{equation*}
        \FunOplaxLaxTensor{\Span{\Cat C}{\all}{\all}{I}{P}^\otimes}{\TwoCat{M}^\otimes} \xrightarrow{\simeq} \FunOplaxIPBCLaxTensor{\Cat C^\otimes}{\TwoCat{M}^\otimes}.
    \end{equation*}
\end{thm}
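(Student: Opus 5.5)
The plan is to deduce the statement from \cite[Theorem 6.13]{cnossen2025universalityspan2categoriesconstruction} by applying the duality $\coop{(-)}$. This mirrors how \cref{lem:constr-2cat:ext-and-unique} was deduced from \cite[Theorems 4.20 and 6.7]{cnossen2025universalityspan2categoriesconstruction}. Their theorem says the following. For a symmetric monoidal $2$-category $\TwoCat N^\otimes$ (here $\coop{(\TwoCat M^\otimes)}$), restriction along the symmetric monoidal functor $\op{(\Cat C^\otimes)} \to \Span{\Cat C}{\all}{\all}{P}{I}^\otimes$ gives a locally full inclusion
\[\FunOplaxOplaxTensor{\Span{\Cat C}{\all}{\all}{P}{I}^\otimes}{\TwoCat N^\otimes} \to \FunOplaxOplaxTensor{\op{(\Cat C^\otimes)}}{\TwoCat N^\otimes}.\]
Its essential image is $\FunOplaxBadjOplaxTensor{\op{(\Cat C^\otimes)}}{\TwoCat N^\otimes}$. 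One point needs checking first. Their theorem is stated for lax symmetric monoidal functors, so the version with oplax symmetric monoidal functors has to be obtained from their proof by dualizing the monoidal direction: replace cocartesian fibrations over $\Fin_*$ by cartesian fibrations over $\op{\Fin_*}$. I would note this as the input, in the same spirit as the dual uses of \loccit{} in the proof of \cref{constr:2-functoriality-of-lax-tensor-oplax:factor-Yoneda}.

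Next I set up the comparison square, as in the proof of \cref{lem:constr-2cat:ext-and-unique}. The top row is restriction along $h \colon \Cat C^\otimes \to \Span{\Cat C}{\all}{\all}{I}{P}^\otimes$:
\[\FunOplaxLaxTensor{\Span{\Cat C}{\all}{\all}{I}{P}^\otimes}{\TwoCat M^\otimes} \to \FunOplaxLaxTensor{\Cat C^\otimes}{\TwoCat M^\otimes}.\]
The bottom row is $\coop{(-)}$ applied to the restriction functor of Cnossen--Lenz--Linskens. The vertical equivalences come from \cref{lem:oplax-lax-tensor:coop} with $\epsilon=\mathrm{oplax}$. On the left I use $\coop{(\Span{\Cat C}{\all}{\all}{I}{P}^\otimes)} \simeq \Span{\Cat C}{\all}{\all}{P}{I}^\otimes$, which is how the monoidal structure was defined in \cref{constr:3FF:Span-symmetric-monoidal}. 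On the right I use $\coop{(\Cat C^\otimes)} = \op{(\Cat C^\otimes)}$. The square commutes by the naturality clause of \cref{lem:oplax-lax-tensor:coop} applied to the lax symmetric monoidal functor $h$, together with the fact that $\coop{h}$ is the canonical functor of \loccit{} (this was already used for the underlying functors in \cref{lem:constr-2cat:D_*-allBC}).

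With the square in place, the right vertical equivalence restricts by \cref{lem:tensor-product:lax-sym-mon-allBC-is-IPbadj} to an equivalence
\[\FunOplaxIPBCLaxTensor{\Cat C^\otimes}{\TwoCat M^\otimes} \simeq \coop{(\FunOplaxBadjOplaxTensor{\coop{(\Cat C^\otimes)}}{\coop{(\TwoCat M^\otimes)}})}.\]
That lemma was stated for strong natural transformations, so I would redo it for oplax ones. The same argument goes through, using the oplax part of \cref{lem:constr-2cat:allBC-is-IPbadj}. It follows that the top row is a locally full inclusion whose essential image is the BC sub-$2$-category. Since that sub-$2$-category is $2$-full by definition, the top row corestricts to the claimed equivalence.

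The main obstacle is making sure the oplax-monoidal dual of \cite[Theorem 6.13]{cnossen2025universalityspan2categoriesconstruction} really holds with matching conventions. Concretely, the notion of $(I\times I,P\times P)$-biadjointability of the oplax structure maps in \cref{def:constr-3ff:all-BC} has to be exactly what their proof needs after dualizing the monoidal direction. I would try to avoid re-proving it by another duality: $\op{(-)}$ on symmetric monoidal $2$-categories exchanges lax and oplax monoidal functors while keeping the span category fixed up to reversing $1$-morphisms. If that fails, I would trace through their proof, which reduces to the non-monoidal \cite[Theorem 6.7]{cnossen2025universalityspan2categoriesconstruction} fiberwise over $\Fin_*$, and check that only the direction of the structure maps changes.
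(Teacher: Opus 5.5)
Your comparison square is the one the paper uses, so the final deduction is fine. The gap is the input you flag as the main obstacle. \cite[Theorem 6.13]{cnossen2025universalityspan2categoriesconstruction} is only available for lax symmetric monoidal functors. Its oplax counterpart (the paper's \cref{lem:tensor-product:UP-span-oplax-oplax-sym-mon}) is not a formal dual of it, and neither of your two routes supplies it.

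The $\op{(-)}$ trick is circular. It does exchange lax and oplax monoidal functors, and $\op{\Span{\Cat C}{\all}{\all}{P}{I}} \simeq \Span{\Cat C}{\all}{\all}{P}{I}$ by swapping legs. But this identification carries the backward inclusion $\op{\Cat C} \to \Span{\Cat C}{\all}{\all}{P}{I}$ to the forward inclusion, and oplax transformations to lax ones. What comes out is a statement about lax symmetric monoidal functors into $\co{(\TwoCat M^\otimes)}$, restricted along forward spans. That is exactly the $\co{(-)}$-dual of the theorem you are proving, not the theorem of \loccit. More generally, $\op{(-)}$ and $\coop{(-)}$ exchange forward and backward spans but also swap lax and oplax monoidality, while $\co{(-)}$ preserves both. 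So no combination of dualities carries the (lax, forward) statement to the (lax, backward) one. This is the obstruction recorded in the remark after the statement.

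That remark also rules out flipping the monoidal direction inside the proof of \loccit, since the arguments of Section 5 there rely on laxness. A fiberwise use of \cite[Theorem 6.7]{cnossen2025universalityspan2categoriesconstruction} over $\Fin_*$ would only recover the underlying statement. The condition that needs a genuinely monoidal argument is the Beck--Chevalley condition on the structure maps $F(-\otimes-)\to F(-)\otimes F(-)$, and these live over non-inert maps such as $\langle 2\rangle \to \langle 1\rangle$.

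The missing idea is the $\mathrm{Rig}$-adjunction from the first version \cite{oldCLL} of Cnossen--Lenz--Linskens. Under it, a lax symmetric monoidal functor $F \colon \Cat C^\otimes \to \TwoCat N^\otimes$ corresponds to a cocartesian-morphism-preserving functor $\widetilde F \colon \Cat C^\otimes \to \mathrm{Rig}(\TwoCat N^\otimes)$ over $\SpanFin$. Such strong functors, unlike lax ones, are compatible with $\coop{(-)}$. The paper proceeds in three steps.
\begin{enumerate}
\item It proves that $F$ satisfies the monoidal Beck--Chevalley conditions of \cref{def:constr-3ff:all-BC} if and only if $\widetilde F$ satisfies them levelwise (\cref{lem:tensor-product:lax-sym-mon-allBC-via-Rig}).
\item It passes through $\coop{\mathrm{Rig}(\coop{(\TwoCat M^\otimes)})}$ and applies \cite[Proposition 4.24]{oldCLL}. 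This gives the oplax-monoidal statement for strong transformations (\cref{lem:tensor-product:oplax-sym-mon-CLL}).
\item It handles oplax transformations. By \cref{lem:2-functoriality-of-lax-tensor-oplax:1-functoriality-univ-prop}, functors $\TwoCat N \to \FunOplaxOplaxTensor{-}{\TwoCat M^\otimes}$ are oplax symmetric monoidal functors into $\FunLax{\TwoCat N}{\TwoCat M^\otimes}^{\ptw}$, so the strong case applies. The rest follows as in \cite[Theorem 6.13]{cnossen2025universalityspan2categoriesconstruction} (\cref{lem:tensor-product:UP-span-oplax-oplax-sym-mon}).
\end{enumerate}
Without an argument of this kind, your proposal only reduces the theorem to an equivalent statement that is still unproved.
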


\begin{rmk}
    If one tries to prove the above result in the same way as \Cref{lem:constr-2cat:ext-and-unique}, then the problem arises that the application of $\coop{(-)}$ in the proof 
    would turn the lax symmetric monoidal structure into an oplax symmetric monoidal one. 
    But \cite[Theorem 6.13]{cnossen2025universalityspan2categoriesconstruction} is stated only for lax symmetric monoidal functors and the arguments in Section 5 of \loccit rely on this. 
    Bastiaan Cnossen made us aware that in an older version of \cite{cnossen2025universalityspan2categoriesconstruction} the authors used a different approach to prove the equivalence 
    \begin{equation*}
        \FunLaxTensor{\Span{\Cat C}{\all}{\all}{P}{I}^\otimes}{\TwoCat{M}^\otimes} \xrightarrow{\simeq} \FunLaxTensorBadj{\op{(\Cat C^\otimes)}}{\TwoCat{M}^\otimes}
    \end{equation*}
    and that this approach might dualize to give the same statement for oplax symmetric monoidal functors.
    Once we established this equivalence we can use it to prove the versions of \cite[Theorem 6.13]{cnossen2025universalityspan2categoriesconstruction} that we need.
    
    %That's why we give a different argument here. 
    %However, once we have proven an analogue of \cite[Theorem 5.14]{cnossen2025universalityspan2categoriesconstruction} for the underlying groupoids, 
    %the proof of \Cref{lem:tensor-product:lax-sym-mon-UP-span} proceeds as the proof of \cite[Theorem 6.13]{cnossen2025universalityspan2categoriesconstruction}.
\end{rmk}

In the following we adopt the notation from \cite[Section 4.3]{oldCLL} (\cite{oldCLL} refers to version $1$ of \cite{cnossen2025universalityspan2categoriesconstruction}). 
In particular, we use the $3$-adjunction\footnote{The next proposition and theorem are the only places in the paper where we make (light) use of the theory of $(\infty,3)$-categories.} $(\TwoCatInfTwo)^{\mathrm{cc}}_{/ \SpanFin} \rightleftarrows (\TwoCatInfTwo)^{\op{\Fin}\mathrm{-cc}}_{/ \SpanFin} \noloc \mathrm{Rig}$ from the proof of \cite[Lemma 4.29]{oldCLL}, 
where the left adjoint is the inclusion and restricts to an inclusion $\CAlg{\TwoCatInfTwo} \hookrightarrow \CAlgLax{\TwoCatInfTwo}$.

For the next two results we will use the definition of symmetric monoidal ($2$-)categories and lax symmetric monoidal functors between them via $\SpanFin$ instead of $\Fin_*$ because 
the arguments of \cite{oldCLL} carry over better in this way.
In this approach, a symmetric monoidal $2$-category is given by a finite-product-preserving functor $\SpanFin \to \CatInfTwo$ and a lax symmetric monoidal functor is 
a functor between the $2$-cocartesian unstraightenings preserving $2$-cocartesian $1$-morphisms lying over backwards spans.
Note that by \cite[Proposition C.1]{bachmann2020normsmotivichomotopytheory} and \cite[Corollary 5.1.15]{Barkan_2025} the $\SpanFin$-approach and the $\Fin_*$-approach yield the same definitions of symmetric monoidal ($2$-)categories and lax symmetric monoidal functors.

\begin{prop} \label{lem:tensor-product:lax-sym-mon-allBC-via-Rig}
    A functor $F \colon \Cat C^\otimes \to \TwoCat{M}^\otimes$ is a lax symmetric monoidal functor satisfying all Beck--Chevalley conditions with respect to $(I,P)$ if and only if 
    the associated functor $\widetilde{F} \colon \Cat C^\otimes \to \mathrm{Rig}(\TwoCat{M}^\otimes)$ levelwise satisfies all Beck--Chevalley conditions with respect to $(I,P)$.

    Moreover, if $F, G \colon \Cat C^\otimes \to \TwoCat{M}^\otimes$ are lax symmetric monoidal functors satisfying all Beck--Chevalley conditions with respect to $(I,P)$,
    and $t \colon F \to G$ is a symmetric monoidal natural transformation, 
    then the associated natural transformation $\widetilde{t} \colon \widetilde{F} \to \widetilde{G}$ satisfies all Beck--Chevalley conditions with respect to $(I,P)$ 
    if and only if the underlying natural transformation $t_{[1]} \colon F([1]) \to G([1])$ does so.
\end{prop}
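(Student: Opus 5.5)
We first unwind the correspondence $F \leftrightarrow \widetilde F$ given by the adjunction $(\TwoCatInfTwo)^{\mathrm{cc}}_{/\SpanFin} \rightleftarrows (\TwoCatInfTwo)^{\op{\Fin}\mathrm{-cc}}_{/\SpanFin} \noloc \mathrm{Rig}$ from \cite[Section 4.3]{oldCLL}. Following the description there, $\mathrm{Rig}(\TwoCat M^\otimes)$ is the symmetric monoidal $2$-category whose value at $[n]$ is the $2$-category of diagrams $\TwoCat M^{\times n} \leftarrow \dots$ encoding an $n$-tuple of objects together with their tensor product and the lax structure maps. Concretely, the component $\widetilde F([n])$ of the associated strong functor records the tuple $(F(X_1), \dots, F(X_n))$, the object $F(X_1 \otimes \dots \otimes X_n)$, and the lax structure map $\bigotimes_i F(X_i) \to F(\bigotimes_i X_i)$ as a $1$-morphism in an arrow-type $2$-category. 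The plan is to identify, for each $n$, the condition ``$\widetilde F$ satisfies all Beck--Chevalley conditions at level $[n]$'' with a combination of the conditions in \cref{def:constr-3ff:all-BC}.

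The first step is to show that adjunctions and Beck--Chevalley squares in $\mathrm{Rig}(\TwoCat M^\otimes)([n])$ are detected componentwise. We would prove this in the same way that lax limits and arrow $2$-categories are handled in \cref{lem:2-sym:pointwise-structure-functoriality} and \cref{lem:tensor-product:CAlg-of-Arr-lax-is-oplax-sym-mon-functors}. A $1$-morphism of $\mathrm{Rig}(\TwoCat M^\otimes)([n])$ is a lax square. It admits a left (resp.\ right) adjoint precisely when its two edge components admit left (resp.\ right) adjoints and the square satisfies the corresponding left (resp.\ right) Beck--Chevalley condition. This is the standard fact about adjunctions in $\FunLax{\Delta^1}{-}$ or $\FunOpLax{\Delta^1}{-}$, cf.\ \cite{haugseng2021laxtransformationsadjunctionsmonads}. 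The same reasoning, one categorical level up, gives a description of the mates: a square in $\mathrm{Rig}$ satisfies left, right or double Beck--Chevalley precisely when each of its components does.

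Applying this with $n=1$ recovers condition (a), namely that $F$ itself satisfies all Beck--Chevalley conditions. For $n=2$, where we take morphisms of the form $(i,i')$ in $I\times I$ or $(p,p')$ in $P\times P$, we get condition (b) on the transformation $F(-)\otimes F(-)\to F(-\otimes-)$. Here we also use that $\otimes$ preserves pullbacks along $I\times I$ and $P\times P$, so that the relevant cartesian squares in $\Cat C^{\times 2}$ are sent to cartesian squares in $\Cat C$. For $n\ge 3$ the conditions follow from the cases $n=1,2$: the $n$-ary structure maps are composites of binary ones, and Beck--Chevalley conditions are stable under pasting. Conversely, the level-$[1]$ and level-$[2]$ conditions on $\widetilde F$ are exactly (a) and (b).

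For the statement about natural transformations, a symmetric monoidal transformation $t$ is determined by $t_{[1]}$, since $t_{[n]}$ is $t_{[1]}^{\times n}$ together with the compatibility $2$-cells, and those $2$-cells are invertible. The mates of $\widetilde t$ at level $[n]$ are therefore assembled from products of mates of $t_{[1]}$, so checking them reduces to level $[1]$. The main obstacle will be the componentwise detection of adjoints and mates in $\mathrm{Rig}(\TwoCat M^\otimes)([n])$. We would first try to realise $\mathrm{Rig}([n])$ as an iterated pullback of lax arrow $2$-categories and then invoke the lax-transformation adjunction criterion of \cite[Corollary 3.17]{haugseng2021laxtransformationsadjunctionsmonads} levelwise.
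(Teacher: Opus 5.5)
The endpoint of your reduction is right: everything comes down to the unary condition (a), the binary condition (b), and $t_{[1]}$. But you reduce inside a misdescribed object, and the claim that the level-$[1]$ and level-$[2]$ conditions are \emph{exactly} (a) and (b) is false. As the paper's proof makes explicit through $\psi_n$, a functor into $\mathrm{Rig}(\TwoCat M^\otimes)([n])$ is a lax symmetric monoidal map out of the representable $\Hom{\SpanFin}{[n]}{-}$. So $\widetilde F([n])(X_1,\dots,X_n)$ records $F(X_{g(1)}\otimes\cdots\otimes X_{g(m)})$ for \emph{every} map $g\colon[m]\to[n]$, together with all lax structure maps among these objects. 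This includes repeated variables (e.g.\ $F(X\otimes X)$ already at level $[1]$), omitted variables, and the unit $F(\mathbf{1})$ for $m=0$.

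Deducing (a) and (b) from the levelwise condition is fine. For the converse, however, you must show that every functor $\Cat C^n\to\TwoCat M$, $(X_i)\mapsto F(\bigotimes_j X_{g(j)})$, satisfies all Beck--Chevalley conditions with respect to $(I^n,P^n)$, and likewise every structure transformation. This needs two inputs. First, the reindex-and-tensor functors $\Cat C^n\to\Cat C$, diagonals included, must send $I^n,P^n$ into $I,P$ and preserve pullbacks along $I^n$ and $P^n$; that uses all three axioms of \cref{def:3FF:cartesian-Nagata}, not only pullback preservation for $n=2$. Second, it suffices to test the object $[1]$ and the forward maps $[0]\to[1]$, $[2]\to[1]$, which is \cite[Remark 4.32]{oldCLL}. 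Your pasting remark only addresses the second input. The same correction applies to $\widetilde t$: its components are $t$ evaluated at such tensor products, not $t_{[1]}^{\times n}$, so the reduction to $t_{[1]}$ uses $I\otimes I\subseteq I$ and $P\otimes P\subseteq P$.

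The step you flag as the main obstacle is also set up incorrectly. The $1$-morphisms in the image of $\widetilde F([n])$ are not lax squares. Their squares are naturality squares of the lax structure transformations, so they commute up to invertible $2$-cells, and that is why the symmetric criterion you want holds. For genuinely lax squares it fails. With the convention of \cref{rmk:lax-nat-transf:explicit-description}, a morphism of $\FunLax{\Delta^1}{\TwoCat M}$ with $2$-cell $u'f_0\Rightarrow f_1u$ has a left adjoint iff $f_0,f_1$ do and the square satisfies the left Beck--Chevalley condition. It has a right adjoint, however, only if that $2$-cell is itself invertible. Moreover, $\mathrm{Rig}(\TwoCat M^\otimes)([n])$ is indexed by all spans out of $[n]$, so it is not a finite iterated pullback of lax arrow $2$-categories. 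Finally, \cite[Corollary 3.17]{haugseng2021laxtransformationsadjunctionsmonads} is the wide-inclusion statement $\TwoFun{-}{-}\subseteq\FunLax{-}{-}$, not an adjunction criterion.

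The paper avoids computing adjoints in $\mathrm{Rig}$ altogether. By \cref{lem:constr-2cat:allBC-is-IPbadj} and the $3$-adjunction $\mathbb{B}\dashv(-)_{\mathrm{badj}}$ of \cite[Proposition 2.15]{oldCLL}, the level-$[n]$ condition is equivalent to $\widetilde F([n])$ extending along $\coop{h}\colon\Cat C^\otimes([n])\to\coop{\mathbb{B}^\otimes([n])}$. Equivalently, $\psi_n$ factors through the locally full $\TwoFun{\coop{\mathbb{B}^\otimes([n])}}{\TwoCat M^\otimes(-)}$, whose value at $[k]$ is $\TwoFunVarBC{\Cat C^\otimes([n])}{\TwoCat M^\otimes([k])}{I^n}{P^n}$. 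If you keep your direct route, you must prove the componentwise adjunction-and-mate criterion for $\mathrm{Rig}(\TwoCat M^\otimes)([n])$ itself, whose morphisms are strong transformations of sections. You cannot import it from lax arrow categories.
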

\begin{proof}
    We closely follow the proof of \cite[Proposition 4.31]{oldCLL}.
    By definition, the functor $\widetilde{F}$ levelwise satisfies all Beck--Chevalley conditions with respect to $(I,P)$ if and only if for each $n \geq 0$, 
    the functor $\widetilde{F}([n]) \colon \Cat C^\otimes([n]) \to \mathrm{Rig}(\TwoCat{M}^\otimes)([n])$ satisfies all Beck--Chevalley conditions with respect to $(I^n, P^n)$. 
    This is equivalent to $\coop{\widetilde{F}([n])}$ being $(I^n, P^n)$-biadjointable for each $n \geq 0$ by \Cref{lem:constr-2cat:allBC-is-IPbadj}.
    Using the $3$-adjunction $\mathbb{B} \colon \mathrm{BAdjData}_2 \rightleftarrows \TwoCatInfTwo \noloc (-)_{\mathrm{badj}}$ from \cite[Proposition 2.15]{oldCLL}, 
    this is equivalent to $\coop{\widetilde{F}([n])}$ extending over the functor $h \colon \coop{\Cat C^\otimes([n])} \to \mathbb{B}^\otimes([n])$, where 
    $$\mathbb{B}^\otimes \coloneqq \mathbb{B} \circ \coop{(\Cat C^\otimes)} \colon \SpanFin \to \TwoCatInfTwo.\footnote{Here, we use $\coop{I}$ and $\coop{P}$ to upgrade $\coop{(\Cat C^\otimes)} \colon \SpanFin \to \TwoCatInf$ to a functor with target $\mathrm{BAdjData}_2$ via the construction from \cite[Example 2.12]{oldCLL}.}$$
    Applying $\coop{(-)}$, this is equivalent to $\widetilde{F}([n])$ extending over $\coop{h} \colon \Cat C^\otimes([n]) \to \coop{\mathbb{B}^\otimes([n])}$. 
    Proceeding exactly as in the proof of \cite[Proposition 4.31]{oldCLL}, we see that this is equivalent to the statement that the lax symmetric monoidal functor 
    $\psi_n \colon \Hom{\SpanFin}{[n]}{-} \to \Fun{\Cat C^\otimes([n])}{\TwoCat{M}^\otimes(-)}$ factors over the $2$-full subcategory 
    \begin{equation*}
        \TwoFun{\coop{\mathbb{B}^\otimes([n])}}{\TwoCat{M}^\otimes(-)} \cong \coop{\tightunderline{\mathrm{Hom}}_{\mathrm{BAdjData}_2}(\coop{\Cat C^\otimes([n])},(\coop{\TwoCat{M}^\otimes(-)})_{\mathrm{badj}})},
    \end{equation*}
    where the right-hand side denotes the mapping $2$-category in the $3$-category $\mathrm{BAdjData}_2$.

    On $[k] \in \SpanFin$, the latter category is given by $\coop{\TwoFunBadj{\coop{\Cat C^\otimes([n])}}{\coop{\TwoCat{M}^\otimes([k])}}{I^n}{P^n}}$ (cf.\ \cite[Remark 2.14]{oldCLL}), which is in turn equivalent to 
    $\TwoFunVarBC{\Cat C^\otimes([n])}{\TwoCat{M}^\otimes([k])}{I^n}{P^n}$ by \Cref{lem:constr-2cat:allBC-is-IPbadj}.
    Following \cite[Remark 4.32]{oldCLL}, $\psi_n$ factors over $\TwoFunVarBC{\Cat C^\otimes([n])}{\TwoCat{M}^\otimes(-)}{I^n}{P^n}$ if and only if it does so on the object $[1] \in \SpanFin$ and on the forward maps $[0] \to [1]$ and $[2] \to [1]$. 

    Now we can analyze this condition similarly to the proof of \cite[Proposition 4.31]{oldCLL} (replacing $(I,P)$-biadjointability by the property of satisfying all Beck--Chevalley conditions with respect to $(I,P)$).
    This lets us conclude that the factorization of $\psi_n([1])$ for all $n$ is equivalent to the underlying functor of $F$ satisfying all Beck--Chevalley conditions with respect to $(I,P)$, 
    the factorization of $\psi_n([0] \to [1])$ for all $n$ imposes no condition on $F$, and
    the factorization of $\psi_n([2] \to [1])$ for all $n$ is equivalent to the natural transformation $F(-) \otimes F(-) \to F(- \otimes -)$ satisfying all Beck--Chevalley conditions with respect to $(I \times I,P \times P)$.
    
    As in the proof of \cite[Proposition 4.31]{oldCLL}, the argument for natural transformations is similar.
\end{proof}

\begin{thm} \label{lem:tensor-product:oplax-sym-mon-CLL}
    The restriction functor
    \begin{equation*}
        \FunOplaxTensor{\Span{\Cat C}{\all}{\all}{P}{I}^\otimes}{\TwoCat{M}^\otimes} \to \FunOplaxTensorBadj{\op{(\Cat C^\otimes)}}{\TwoCat{M}^\otimes}
    \end{equation*}
    is an equivalence of $2$-categories.
\end{thm}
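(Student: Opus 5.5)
The plan is to dualize the argument of the first version of Cnossen--Lenz--Linskens \cite{oldCLL} (their proof of the lax analogue, \cite[Theorem 4.29/4.33]{oldCLL}). I will work throughout in the $\SpanFin$-model of symmetric monoidal $2$-categories. In this model an oplax symmetric monoidal functor is a functor between $2$-cartesian unstraightenings that preserves $2$-cartesian $1$-morphisms over (opposites of) backward spans. The first step is to set up the oplax analogue of the $3$-adjunction used in the proof of \cite[Lemma 4.29]{oldCLL}. Concretely, I want a right adjoint $\mathrm{Rig}^{\mathrm{op}}$ to the inclusion $\CAlg{\TwoCatInfTwo} \hookrightarrow \CAlgOpLax{\TwoCatInfTwo}$. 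It should be built by the same Kan extension along the inclusion $\op{\Fin} \subseteq \SpanFin$, now using cartesian instead of cocartesian unstraightenings. The effect is that oplax symmetric monoidal functors $\TwoCat N^\otimes \to \TwoCat M^\otimes$ out of a symmetric monoidal source become strong symmetric monoidal functors $\TwoCat N^\otimes \to \mathrm{Rig}^{\mathrm{op}}(\TwoCat M^\otimes)$. This identification holds at the level of mapping $2$-categories with oplax (resp.\ strong) symmetric monoidal transformations.

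Next I apply this with $\TwoCat N^\otimes = \op{(\Cat C^\otimes)}$ and with $\TwoCat N^\otimes = \Span{\Cat C}{\all}{\all}{P}{I}^\otimes$, which reduces the statement to the strong symmetric monoidal case. In the strong case the symmetric monoidal universal property of spans follows levelwise from the non-monoidal \cite[Theorem 4.20 and 6.7]{cnossen2025universalityspan2categoriesconstruction}. Indeed, the span construction commutes with the products $\Cat C^\otimes([n]) \simeq \Cat C^n$ because $I\otimes I\subseteq I$, $P\otimes P\subseteq P$ and $\otimes$ preserves the relevant pullbacks. So strong functors from spans correspond to functors $\op{(\Cat C^\otimes)}\to \mathrm{Rig}^{\mathrm{op}}(\TwoCat M^\otimes)$ that are levelwise $(I^n,P^n)$-biadjointable. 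The remaining work is to identify this levelwise condition with the oplax monoidal biadjointability of \cref{def:constr-3ff:all-BC}. This is the exact mirror of \cref{lem:tensor-product:lax-sym-mon-allBC-via-Rig}. Using the bi-adjunction $3$-adjunction $\mathbb{B}\dashv(-)_{\mathrm{badj}}$ of \cite[Proposition 2.15]{oldCLL}, I rewrite the condition as a factorization of the functor $\psi_n\colon\Hom{\SpanFin}{[n]}{-}\to\TwoFun{\op{\Cat C^\otimes([n])}}{\TwoCat M^\otimes(-)}$ through the $2$-full subfunctor of biadjointable functors. By \cite[Remark 4.32]{oldCLL} it suffices to check this at the object $[1]$ and on the maps $[0]\to[1]$ and $[2]\to[1]$. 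These three checks give, respectively, biadjointability of the underlying functor, no condition, and $(I\times I,P\times P)$-biadjointability of the oplax structure map $F(-\otimes-)\to F(-)\otimes F(-)$. The same argument applied to transformations treats the morphisms, and this yields the equivalence on mapping $2$-categories.

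The main obstacle is the first step: making the oplax $\mathrm{Rig}$ construction precise and verifying that its adjunction is an adjunction at the level of mapping $2$-categories with oplax transformations. I would not rebuild it by hand. Instead I would try to transport it through $\coop{(-)}$: by \cref{lem:2-sym:oplax-lax-CAlg-coop-equivalence} and \cref{lem:oplax-lax-tensor:coop}, $\coop{(-)}$ exchanges lax and oplax monoidal structures. I would then check that $\coop{(-)}$ carries the lax $\mathrm{Rig}$ of \cite{oldCLL} to an oplax one, setting $\mathrm{Rig}^{\mathrm{op}}(\TwoCat M^\otimes)\coloneqq\coop{\mathrm{Rig}(\coop{(\TwoCat M^\otimes)})}$. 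With this definition the adjunction property follows formally. The remaining burden is to check that the levelwise fibers are still computed by $\TwoFun{\Cat C^\otimes([n])}{\TwoCat M^\otimes(-)}$, which is where the $\SpanFin$ versus $\Fin_*$ comparison of \cite{bachmann2020normsmotivichomotopytheory,Barkan_2025} has to be invoked with care.
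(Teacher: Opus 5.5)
Your proposal is essentially the paper's proof. The paper also uses $\coop{\mathrm{Rig}(\coop{(\TwoCat M^\otimes)})}$ as the oplax replacement and handles strong functors out of $\Span{\Cat C}{\all}{\all}{P}{I}^\otimes$ by \cite[Proposition 4.24]{oldCLL}; for the final step it transports \cref{lem:tensor-product:lax-sym-mon-allBC-via-Rig} through $\coop{(-)}$ via \cref{lem:constr-2cat:allBC-is-IPbadj,lem:tensor-product:lax-sym-mon-allBC-is-IPbadj} instead of redoing the $\psi_n$ analysis, and you should correct one point: $\mathrm{Rig}(-)$ is in general not symmetric monoidal, so it is not a right adjoint to $\CAlg{\TwoCatInfTwo}\hookrightarrow\CAlgOpLax{\TwoCatInfTwo}$, and the adjunction and mapping $2$-categories must be taken over $\SpanFin$, i.e.\ in $\TwoFun{\SpanFin}{\TwoCatInfTwo}$, as the paper remarks.
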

\begin{proof}
    Consider the following commutative diagram where the horizontal functors are given by restriction: 
    \begin{center}
        \begin{tikzcd}
            \FunOplaxTensor{\Span{\Cat C}{\all}{\all}{P}{I}^\otimes}{\TwoCat{M}^\otimes} \ar[r] \ar[d, "\simeq"'] & \FunOplaxTensor{\op{(\Cat C^\otimes)}}{\TwoCat{M}^\otimes} \ar[d, "\simeq"] \\
            \coop{(\FunLaxTensor{\coop{(\Span{\Cat C}{\all}{\all}{P}{I}^\otimes)}}{\coop{(\TwoCat{M}^\otimes)}})} \ar[r] \ar[d, "\simeq"'] & \coop{(\FunLaxTensor{\Cat C^\otimes}{\coop{(\TwoCat{M}^\otimes)}})} \ar[d, "\simeq"] \\
            \coop{\FunTensor{\coop{(\Span{\Cat C}{\all}{\all}{P}{I}^\otimes)}}{\mathrm{Rig}(\coop{(\TwoCat{M}^\otimes)})}} \ar[d, "\simeq"'] \ar[r] & \coop{\FunTensor{\Cat C^\otimes}{\mathrm{Rig}(\coop{(\TwoCat{M}^\otimes)})}} \ar[d, "\simeq"] \\
            \FunTensor{\Span{\Cat C}{\all}{\all}{P}{I}^\otimes}{\coop{\mathrm{Rig}(\coop{(\TwoCat{M}^\otimes)})}} \ar[r] & \FunTensor{\op{(\Cat C^\otimes)}}{\coop{\mathrm{Rig}(\coop{(\TwoCat{M}^\otimes)})}} \rlap{.}
        \end{tikzcd}
    \end{center}
    Note that $\FunTensor{-}{-}$ is an abuse of notation as the targets are not symmetric monoidal $2$-categories in general.
    Instead it denotes the mapping $2$-category in $\TwoFun{\SpanFin}{\TwoCatInfTwo}$, where the latter is regarded as a $3$-category.
    Here, the upper and lower square are induced by applying $\coop{(-)}$ and the square in the middle commutes by naturality of the adjunction equivalence for $\mathrm{Rig}$ (cf.\ \cite[Proof of Theorem 4.33]{oldCLL}). 
    By \cite[Proposition 4.24]{oldCLL} the bottom horizontal map is a locally full inclusion whose image is spanned by the levelwise $(I,P)$-biadjointable functors and natural transformations. 
    Thus, it remains to show that via the equivalences on the right, the condition of being levelwise $(I,P)$-biadjointable corresponds to being oplax symmetric monoidal $(I,P)$-biadjointable. 
    First note that a functor $F \colon \op{(\Cat C^\otimes)} \to \coop{\mathrm{Rig}(\coop{(\TwoCat{M}^\otimes)})}$ is levelwise $(I,P)$-biadjointable if and only if 
    the functor $\coop{F} \colon \Cat C^\otimes \to \mathrm{Rig}(\coop{(\TwoCat{M}^\otimes)})$ levelwise satisfies all Beck--Chevalley conditions with respect to $(I,P)$ by \Cref{lem:constr-2cat:allBC-is-IPbadj}, 
    and similarly for natural transformations. 
    Moreover, a functor $G \colon \op{(\Cat C^\otimes)} \to \TwoCat{M}^\otimes$ is oplax symmetric monoidal $(I,P)$-biadjointable if and only if 
    the functor $\coop{G} \colon \Cat C^\otimes \to \coop{(\TwoCat{M}^\otimes)}$ is a lax symmetric monoidal functor satisfying all Beck--Chevalley conditions with respect to $(I,P)$ by \Cref{lem:tensor-product:lax-sym-mon-allBC-is-IPbadj}, 
    and similarly for natural transformations. 
    Hence, the claim follows from \Cref{lem:tensor-product:lax-sym-mon-allBC-via-Rig}.
\end{proof}

\begin{prop} \label{lem:tensor-product:UP-span-oplax-oplax-sym-mon}
    The restriction functor 
    \begin{equation*}
        \FunOplaxOplaxTensor{\Span{\Cat C}{\all}{\all}{P}{I}^\otimes}{\TwoCat{M}^\otimes} \to \FunOplaxBadjOplaxTensor{\op{(\Cat C^\otimes)}}{\TwoCat{M}^\otimes}
    \end{equation*}
    is an equivalence of $2$-categories.
\end{prop}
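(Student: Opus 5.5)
We deduce this oplax-natural version from the strong version, \cref{lem:tensor-product:oplax-sym-mon-CLL}, by the same Yoneda argument that \cite[Theorem 6.13]{cnossen2025universalityspan2categoriesconstruction} uses to pass from strong to oplax transformations. By \cref{lem:2-functoriality-of-lax-tensor-oplax:1-functoriality-univ-prop} (with $\epsilon = \mathrm{oplax}$, $\delta = \mathrm{oplax}$, flat marking), a functor $\TwoCat E \to \FunOplaxOplaxTensor{\Span{\Cat C}{\all}{\all}{P}{I}^\otimes}{\TwoCat M^\otimes}$ amounts to an oplax symmetric monoidal functor
$$\Span{\Cat C}{\all}{\all}{P}{I}^\otimes \to \FunLax{\TwoCat E}{\TwoCat M^\otimes}^{\ptw}.$$
Equivalently, it is an object of $\FunOplaxTensor{\Span{\Cat C}{\all}{\all}{P}{I}^\otimes}{\FunLax{\TwoCat E}{\TwoCat M^\otimes}^\ptw}$, after passing to $\iota_0$ as in \cref{lem:CAlg:anima-CAlg-Mapping-anima}. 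The same holds with $\op{(\Cat C^\otimes)}$ in place of the span category. By naturality of these identifications in the source, the restriction functor in the statement is represented, on $\Map{\CatInfTwo}{\TwoCat E}{-}$, by restriction along $\op{(\Cat C^\otimes)} \to \Span{\Cat C}{\all}{\all}{P}{I}^\otimes$ at the level of cores, now with target $\FunLax{\TwoCat E}{\TwoCat M^\otimes}^\ptw$.

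Applying \cref{lem:tensor-product:oplax-sym-mon-CLL} with $\TwoCat M^\otimes$ replaced by $\FunLax{\TwoCat E}{\TwoCat M^\otimes}^\ptw$ and taking cores, this restriction is a monomorphism of anima. Its image consists of the oplax symmetric monoidal $(I,P)$-biadjointable functors $\op{(\Cat C^\otimes)} \to \FunLax{\TwoCat E}{\TwoCat M^\otimes}^\ptw$. What remains is to identify these, under the Yoneda correspondence, with the functors $\TwoCat E \to \FunOplaxBadjOplaxTensor{\op{(\Cat C^\otimes)}}{\TwoCat M^\otimes}$. Equivalently, $\TwoCat E$-families of biadjointable functors and biadjointable oplax transformations should match biadjointable functors into the pointwise lax functor category. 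For the underlying (non-monoidal) statement this is exactly the content of \cite[Lemma 6.11 and Theorem 6.7]{cnossen2025universalityspan2categoriesconstruction}. That lemma describes adjoints and Beck--Chevalley squares in $\FunLax{\TwoCat E}{\TwoCat M}$: a morphism is a left/right adjoint iff it is pointwise so and the lax naturality squares satisfy the corresponding mate condition. We will cite it, with variance adjusted via $\coop{(-)}$ using \cref{lem:lax-nat-transf:coop-lax-oplax} where necessary. For the monoidal part, recall that by \cref{def:constr-3ff:all-BC} a symmetric monoidal oplax transformation is biadjointable iff its underlying transformation is. Also, the oplax structure map $F(-\otimes-) \to F(-)\otimes F(-)$ into $\FunLax{\TwoCat E}{\TwoCat M^\otimes}^\ptw$ is computed pointwise in $\TwoCat E$, by \cref{lem:2-sym:pointwise-structure-functoriality}. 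So the $(I\times I,P\times P)$-biadjointability condition also reduces to the pointwise one together with the same mate condition on lax squares, and \cite[Lemma 6.11]{cnossen2025universalityspan2categoriesconstruction} applies again.

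Finally, the identifications are natural in $\TwoCat E$, so Yoneda yields that the restriction functor is an equivalence onto $\FunOplaxBadjOplaxTensor{\op{(\Cat C^\otimes)}}{\TwoCat M^\otimes}$. The main obstacle is the middle step: checking carefully that biadjointability in $\FunLax{\TwoCat E}{\TwoCat M^\otimes}^\ptw$ matches, under the Gray currying, biadjointability of the transformations. This includes confirming that the variance conventions (lax target versus oplax transformations) line up. If the direction comes out reversed, we would instead run the argument with $\FunOpLax{\TwoCat E}{-}$ or apply $\coop{(-)}$ via \cref{lem:oplax-lax-tensor:coop}.
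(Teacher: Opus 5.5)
Your proposal is correct and follows the paper's own proof essentially step for step. Yoneda plus \cref{lem:2-functoriality-of-lax-tensor-oplax:1-functoriality-univ-prop} (with the flat marking) reduces the statement to restriction along $h$ on $\Map{\CAlgOpLax{\TwoCatInfTwo}}{-}{\FunLax{\TwoCat E}{\TwoCat M^\otimes}^\ptw}$, and \cref{lem:tensor-product:oplax-sym-mon-CLL} with this target gives an inclusion of path components onto the oplax symmetric monoidal $(I,P)$-biadjointable functors. The final matching with $\TwoCat E$-families of biadjointable functors and biadjointable oplax transformations is exactly the step the paper delegates to the proof of \cite[Theorem 6.13]{cnossen2025universalityspan2categoriesconstruction}.

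A few small points. No variance hedging is needed, since $\mathrm{op}(\mathrm{oplax}) = \mathrm{lax}$ gives $\FunLax{\TwoCat E}{-}$ directly. The paper cites \cite[Lemma 6.11]{cnossen2025universalityspan2categoriesconstruction} for limit-preservation of $\FunNEps{-}{\TwoCat D^\otimes}^\ptw$, not for adjoints in lax functor $2$-categories. Your description of those adjoints should also be sharpened: left adjoints in $\FunLax{\TwoCat E}{\TwoCat M}$ are the pointwise left adjoints that are \emph{strong}, which is what forces strongness over $P$ on the transformation side. Right adjoints are the pointwise right adjoints whose mates are invertible.
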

\begin{proof}
    The proof is similar to the proof of \cite[Theorem 6.13]{cnossen2025universalityspan2categoriesconstruction}. 
    By Yoneda, it is enough to show that for each $2$-category $\TwoCat{N}$ the map
    \begin{equation*}
        \Map{\CatInfTwo}{\TwoCat{N}}{\FunOplaxOplaxTensor{\Span{\Cat C}{\all}{\all}{P}{I}^\otimes}{\TwoCat{M}^\otimes}} \to \Map{\CatInfTwo}{\TwoCat{N}}{\FunOplaxOplaxTensor{\op{(\Cat C^\otimes)}}{\TwoCat{M}^\otimes}}
    \end{equation*}
    induced by $h \colon \op{(\Cat C^\otimes)} \to \Span{\Cat C}{\all}{\all}{P}{I}^\otimes$ is an inclusion of path components with essential image those functors $F \colon \TwoCat{N} \to \FunOplaxOplaxTensor{\op{(\Cat C^\otimes)}}{\TwoCat{M}^\otimes}$ 
    such that for each object $X \in \TwoCat{N}$, $F(X)$ is an oplax symmetric monoidal $(I,P)$-biadjointable functor, and such that
    for each $1$-morphism $f$ of $\TwoCat{N}$, the oplax natural transformation $F(f)$ is $(I,P)$-biadjointable.
    By \Cref{lem:2-functoriality-of-lax-tensor-oplax:1-functoriality-univ-prop} (where we equip $\TwoCat C^\otimes$ with the flat marking $\flat$),
    the above map is equivalent to the map 
    \begin{center}
        \begin{tikzcd}
            \Map{\CAlgOpLax{\TwoCatInfTwo}}{\Span{\Cat C}{\all}{\all}{P}{I}^\otimes}{\FunLax{\TwoCat{N}}{\TwoCat{M}^\otimes}^\ptw} \ar[d, "{- \circ h}"] \\
            \Map{\CAlgOpLax{\TwoCatInfTwo}}{\op{(\Cat C^\otimes)}}{\FunLax{\TwoCat{N}}{\TwoCat{M}^\otimes}^\ptw}
        \end{tikzcd}
    \end{center}
    induced by $h$.
    By \Cref{lem:tensor-product:oplax-sym-mon-CLL}, the latter map is an inclusion of path components with essential image given by the oplax symmetric monoidal $(I,P)$-biadjointable functors $F \colon \op{(\Cat C^\otimes)} \to \FunLax{\TwoCat{N}}{\TwoCat{M}^\otimes}^\ptw$. 
    This means $F$ satisfies: 
    \begin{itemize}
        \item The underlying functor of $F$ is $(I,P)$-biadjointable, and 
        \item the natural transformation $F(- \otimes -) \to F(-) \otimes F(-)$ is $(I \times I, P \times P)$-biadjointable.
    \end{itemize}
    The only thing left to show is that the corresponding functor $\widetilde{F} \colon \TwoCat{N} \to \FunOplaxOplaxTensor{\op{(\Cat C^\otimes)}}{\TwoCat{M}^\otimes}$ satisfies the following: 
    \begin{itemize}
        \item For each object $X \in \TwoCat{N}$, $\widetilde{F}(X)$ is an oplax symmetric monoidal $(I,P)$-biadjointable functor, and 
        \item for each $1$-morphism $f$ of $\TwoCat{N}$, the oplax natural transformation $\widetilde{F}(f)$ is $(I,P)$-biadjointable.
    \end{itemize}
    The rest of the proof proceeds exactly as in \cite[Proof of Theorem 6.13]{cnossen2025universalityspan2categoriesconstruction}.
\end{proof}

\begin{proof}[Proof of \cref{lem:tensor-product:lax-sym-mon-UP-span}] \proofof{lem:tensor-product:lax-sym-mon-UP-span}
    Applying $\coop{(-)}$ yields a commutative diagram 
    \begin{center}
        \begin{tikzcd}
            \FunOplaxLaxTensor{\Span{\Cat C}{\all}{\all}{I}{P}^\otimes}{\TwoCat{M}^\otimes} \ar[r] \ar[d, "\simeq"'] & \FunOplaxIPBCLaxTensor{\Cat C^\otimes}{\TwoCat{M}^\otimes} \ar[d, "\simeq"] \\
            \coop{(\FunOplaxOplaxTensor{\Span{\Cat C}{\all}{\all}{P}{I}^\otimes}{\coop{(\TwoCat{M}^\otimes)}})} \ar[r] & \coop{(\FunOplaxBadjOplaxTensor{\op{(\Cat C^\otimes)}}{\coop{(\TwoCat{M}^\otimes)}})} \rlap{,}
        \end{tikzcd}
    \end{center}
    where the horizontal arrows are given by restriction.
    For the vertical equivalences we used \cref{lem:constr-2cat:allBC-is-IPbadj}, \Cref{lem:oplax-lax-tensor:coop} and the definition of $\SpanStd$ (for $E = \all$). Commutativity also follows from \Cref{lem:oplax-lax-tensor:coop}.  
    As the bottom arrow is an equivalence by \Cref{lem:tensor-product:UP-span-oplax-oplax-sym-mon}, it follows that also the top arrow is an equivalence.
\end{proof}

We end this section with the following lemma comparing \cref{lem:constr-2cat:ext-and-unique} with \cref{lem:tensor-product:lax-sym-mon-UP-span}.
\begin{lem} \label{lem:tensor-product:comparison-cosntruction-2ff-3ff}
    The square 
    \begin{center}
        \begin{tikzcd}
            \FunOplaxLaxTensor{\Span{\Cat C}{\all}{\all}{I}{P}^\otimes}{\TwoCat{M}^\otimes} \ar[d] \ar[r, "\simeq"] &\FunOplaxIPBCLaxTensor{\Cat C^\otimes}{\TwoCat{M}^\otimes} \ar[d] \\
            \FunOpLax{\Span{\Cat C}{E}{\all}{I}{P}}{\TwoCat{M}} \ar[r, "\simeq"]&\FunOpLaxIPBC{\Cat C}{\TwoCat{M}}
        \end{tikzcd}
    \end{center}
    commutes.
    Here, the horizontal equivalences are given by restriction via \cref{lem:constr-2cat:ext-and-unique,lem:tensor-product:lax-sym-mon-UP-span},
    and the vertical functors are induced by the forgetful functors from \cref{lem:2-functoriality-of-lax-tensor-oplax:forgetful-map}.
    In particular, if $\Cat D_*^\otimes \colon \Cat C^\otimes \to \TwoCat M^\otimes$ is a lax symmetric monoidal functor satisfying all Beck--Chevalley conditions,
    then the underlying functor of its extension to a $3$-functor formalism $\Cat D^\otimes$ is the extension to a $2$-functor formalism $\Cat D$ 
    of the underlying functor $\Cat D_*$.
\end{lem}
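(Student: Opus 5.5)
The plan is to reduce the commutativity of the square to the commutativity of the corresponding restriction square. That reduction works because the horizontal functors are defined as precomposition with the canonical functors $h \colon \Cat C^\otimes \to \Span{\Cat C}{\all}{\all}{I}{P}^\otimes$ and $\Cat C \to \SpanStd$.

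First I would record that the underlying functor of $h$ is the canonical functor $\Cat C \to \SpanStd$ (with $E = \all$). This holds by \cref{constr:3FF:Span-symmetric-monoidal}: the symmetric monoidal structure was built so that its underlying $2$-category is $\SpanStd$ and $h$ lies over the canonical functor.

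Next I would show that the forgetful functor of \cref{lem:2-functoriality-of-lax-tensor-oplax:forgetful-map} is natural in the first variable. Concretely, for a lax symmetric monoidal functor $h \colon \TwoCat A^\otimes \to \TwoCat B^\otimes$ the square formed by $-\circ h$ and $-\circ h_{\langle 1\rangle}$ commutes. This naturality is part of that lemma's statement, taken in the variable $(\TwoCat C^\otimes,\dagger) \in \CAlgDelta{\TwoCatInfTwo}^\dagger$ with flat markings. Unwinding it on Yoneda, it amounts to the forgetful functor $\CAlgLax{\TwoCatInfTwo}^\dagger \to \CatInfTwo^\dagger$ being a functor, so that it commutes with precomposition by $h$.

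With these two facts the outer square
\[
\FunOplaxLaxTensor{\Span{\Cat C}{\all}{\all}{I}{P}^\otimes}{\TwoCat M^\otimes} \to \FunOplaxLaxTensor{\Cat C^\otimes}{\TwoCat M^\otimes} \to \FunOpLax{\Cat C}{\TwoCat M}
\]
versus $\FunOplaxLaxTensor{\Span{\Cat C}{\all}{\all}{I}{P}^\otimes}{\TwoCat M^\otimes} \to \FunOpLax{\SpanStd}{\TwoCat M} \to \FunOpLax{\Cat C}{\TwoCat M}$ commutes.

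It then remains to check that the square restricts to the indicated $2$-full sub-$2$-categories. The left vertical functor must land in $\FunOpLax{\SpanStd}{\TwoCat M}$, which is automatic since that is the whole functor $2$-category. The right vertical functor must send $\FunOplaxIPBCLaxTensor{\Cat C^\otimes}{\TwoCat M^\otimes}$ into $\FunOpLaxIPBC{\Cat C}{\TwoCat M}$. This holds by \cref{def:constr-3ff:all-BC}: the Beck--Chevalley conditions there are imposed on the underlying functor and the underlying oplax transformation. Since the bottom horizontal functor is an equivalence onto $\FunOpLaxIPBC{\Cat C}{\TwoCat M}$ by \cref{lem:constr-2cat:ext-and-unique}, and the inclusions are locally full, commutativity of the restricted square follows from commutativity of the unrestricted square after composing with those inclusions.

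For the final claim, take $\Cat D^\otimes$ to be the preimage of $\Cat D^\otimes_*$ under the top equivalence. Going down and then right gives the underlying functor $\Cat D_*$, and by commutativity the image of $\Cat D^\otimes$ in $\FunOpLax{\SpanStd}{\TwoCat M}$ is the preimage of $\Cat D_*$ under the bottom equivalence. That preimage is exactly the extension $\Cat D$.

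The main subtlety I expect is the identification in the first step: the underlying $2$-category of the symmetric monoidal span $2$-category was obtained through $\coop{(-)}$ applied to the structure of \cite{cnossen2025universalityspan2categoriesconstruction}. So one must check that forgetting the monoidal structure commutes with $\coop{(-)}$. I would settle this with \cref{lem:2-sym:oplax-lax-CAlg-coop-equivalence}, whose equivalence is compatible with the forgetful functors by construction.
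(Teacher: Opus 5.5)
Your proposal is correct and follows essentially the same route as the paper. Naturality of the forgetful functor of \cref{lem:2-functoriality-of-lax-tensor-oplax:forgetful-map} in the source variable (along $h$) gives the commutative restriction square; since both horizontal restriction functors are locally full inclusions whose essential images are the Beck--Chevalley sub-$2$-categories by \cref{lem:constr-2cat:ext-and-unique,lem:tensor-product:lax-sym-mon-UP-span}, the stated square commutes, and your extra checks (that the right vertical functor preserves the Beck--Chevalley conditions, and the identification of the underlying functor of $h$) are exactly the points the paper leaves implicit.
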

\begin{proof}
    By the naturality of the forgetful functor from \cref{lem:2-functoriality-of-lax-tensor-oplax:forgetful-map}, we obtain a commutative square 
    \begin{center}
        \begin{tikzcd}
            \FunOplaxLaxTensor{\Span{\Cat C}{\all}{\all}{I}{P}^\otimes}{\TwoCat{M}^\otimes} \ar[d] \ar[r] &\FunOplaxLaxTensor{\Cat C^\otimes}{\TwoCat{M}^\otimes} \ar[d] \\
            \FunOpLax{\Span{\Cat C}{E}{\all}{I}{P}}{\TwoCat{M}} \ar[r]&\FunOpLax{\Cat C}{\TwoCat{M}}.
        \end{tikzcd}
    \end{center}
    Since by \cref{lem:constr-2cat:ext-and-unique,lem:tensor-product:lax-sym-mon-UP-span} both horizontal functors are locally full inclusions with essential images given by the right corners of the square of the statement,
    the claim follows.
\end{proof}

%% file: 3FF/frobenius-structures.tex
\subsection{Frobenius structures and \texorpdfstring{$3$}{3}-functor formalisms of Cartier modules} \label{sec:3ff-Cartier}
To upgrade the $2$-functor formalism of Cartier modules to 
a $3$-functor formalism, we need the following multiplicative version of a Frobenius structure:

\begin{defn}
    Let $\Cat C$ be a category with finite products, so that we can equip $\Cat C$ with the cartesian symmetric monoidal structure.
    Then the identity functor $\id{\Cat C}$ has the structure of a symmetric monoidal functor since it preserves products.
    A \defemphindex{symmetric monoidal Frobenius structure}{Frobenius structure!symmetric monoidal} on $\Cat C$ is a symmetric monoidal natural transformation 
    \begin{equation*}
        F \colon \id{\Cat C} \to \id{\Cat C}.
    \end{equation*}
\end{defn}

The data of a Frobenius structure and a symmetric monoidal Frobenius structure are the same:
\begin{lem}[{\cite[Corollary 2.4.1.8]{higheralgebra}}] \label{lem:tensor-product:Frob-structure-is-sym-mon}
    Let $\Cat C$ be a category with finite products.
    The forgetful functor $\mathrm{Fun}^{\otimes}(\Cat C^\times, \Cat C^\times) \to \mathrm{Fun}(\Cat C, \Cat C)$
    is fully faithful, with essential image spanned by those functors that preserve finite products.
    In particular, any Frobenius structure can be uniquely lifted to symmetric monoidal Frobenius structure.
\end{lem}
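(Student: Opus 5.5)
This follows directly from the cited result \cite[Corollary 2.4.1.8]{higheralgebra}. That result states that for categories $\Cat C$ and $\Cat D$ with finite products, the forgetful functor from symmetric monoidal functors $\Cat C^\times \to \Cat D^\times$ to $\Fun{\Cat C}{\Cat D}$ is fully faithful. Its essential image consists of the product-preserving functors. We apply it with $\Cat D = \Cat C$.

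The plan is to deduce the "in particular" from full faithfulness, in three steps.

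\textbf{Step 1.} Lift the identity. The identity $\id{\Cat C}$ preserves finite products, so it lies in the essential image. Hence it has a symmetric monoidal structure, unique up to contractible choice. This is the structure on $\id{\Cat C}$ referred to in the definition of a symmetric monoidal Frobenius structure.

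\textbf{Step 2.} Identify the endomorphism anima. Full faithfulness gives an equivalence of mapping anima
$$\Map{\mathrm{Fun}^{\otimes}(\Cat C^\times, \Cat C^\times)}{\id{\Cat C}}{\id{\Cat C}} \xrightarrow{\simeq} \Map{\Fun{\Cat C}{\Cat C}}{\id{\Cat C}}{\id{\Cat C}}.$$
Here the source is the anima of symmetric monoidal natural transformations, and the target is the anima of natural transformations.

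\textbf{Step 3.} Conclude. A symmetric monoidal Frobenius structure is a point of the source, and a Frobenius structure is a point of the target. The equivalence of Step 2 therefore shows that every Frobenius structure lifts to a symmetric monoidal one, and that the space of such lifts is contractible.

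There is no genuine obstacle here. The one thing to check is that the definition of symmetric monoidal natural transformation used in the paper matches Lurie's notion of morphisms in $\mathrm{Fun}^{\otimes}$, namely natural transformations of the underlying functors of $\infty$-operads over $\Fin_*$. By the remark after \cref{def:2sym:CAlgDelta}, the $2$-morphisms of $\TwoCAlg{\TwoCatInfTwo}$ are symmetric monoidal natural transformations. Restricted to $1$-categories, these are exactly transformations over $\Fin_*$ between the cocartesian fibrations, which is Lurie's definition.
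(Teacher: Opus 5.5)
Your proposal is correct and matches the paper, which gives no separate argument: it cites \cite[Corollary 2.4.1.8]{higheralgebra} and treats the ``in particular'' as immediate from full faithfulness on the endomorphism anima of $\id{\Cat C}$ with its canonical symmetric monoidal structure. Your Steps 1--3 spell out exactly that deduction, including the contractibility of the space of lifts.
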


In view of the above lemma, if $\Cat C$ is a category with finite products,
we will not distinguish between the notions of a Frobenius structure 
and a symmetric monoidal Frobenius structure on $\Cat C$.

\begin{setting}
    For the rest of this section, fix a category $\Cat C$ with finite products,
    and a cartesian Nagata setup $(\Cat C^\times, I, P)$.
    Let $F \colon \id{\Cat C} \to \id{\Cat C}$ be a (symmetric monoidal)\uses{lem:tensor-product:Frob-structure-is-sym-mon} Frobenius structure on $\Cat C$.
    Moreover, let $\Cat D^\otimes$ be a $3$-functor formalism on $\Cat C$,
    i.e.\ a lax symmetric monoidal functor $$\Cat D^\otimes \colon \Span{\Cat C}{\all}{\all}{I}{P}^\otimes \to \TwoCatInf^\times.$$
    Assume from \Cref{lem:tensor-product:extension-of-F_*} on that for every $j \in I$, the square $\Cat D_*(\square^F_j)$ satisfies the left Beck--Chevalley condition.
\end{setting}

\begin{lem} \label{lem:tensor-product:F_*-is-sym-mon}
    The natural transformation $F_* \colon \Cat D_* \to \Cat D_*$ from \Cref{def:constr-cart:def-of-F_*} is symmetric monoidal.
\end{lem}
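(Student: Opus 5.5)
The natural transformation $F_*$ is defined in \Cref{def:constr-cart:def-of-F_*} as the horizontal composite of $F \colon \id{\Cat C} \to \id{\Cat C}$ with $\Cat D_*$. The plan is to perform this same horizontal composition inside the $2$-category $\TwoCAlgLax{\TwoCatInfTwo}$ rather than in $\TwoCatInfTwo$.

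First we fix the monoidal enhancement of $\Cat D_*$. By \Cref{constr:3FF:Span-symmetric-monoidal}, the canonical functor $h \colon \Cat C^\times \to \Span{\Cat C}{\all}{\all}{I}{P}^\otimes$ is symmetric monoidal. Hence $\Cat D_*^\otimes \coloneqq \Cat D^\otimes \circ h$ is a lax symmetric monoidal functor $\Cat C^\times \to \TwoCatInf^\times$, and its underlying functor is $\Cat D_*$.

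Next we take the monoidal enhancement of $F$. By \Cref{lem:tensor-product:Frob-structure-is-sym-mon}, $F$ lifts uniquely to a symmetric monoidal natural transformation $F^\otimes \colon \id{\Cat C^\times} \to \id{\Cat C^\times}$, i.e.\ to a $2$-morphism in $\TwoCAlgLax{\TwoCatInfTwo}$. Such a $2$-morphism is a $2$-morphism over $\Fin_*$ between endofunctors of the cocartesian fibration $\Cat C^\times \to \Fin_*$. We whisker it with the $1$-morphism $\Cat D_*^\otimes$ of $\TwoCAlgLax{\TwoCatInfTwo}$. The result is a $2$-morphism $\Cat D_*^\otimes F^\otimes \colon \Cat D_*^\otimes \to \Cat D_*^\otimes$ in $\TwoCAlgLax{\TwoCatInfTwo}$, that is, a symmetric monoidal natural transformation of lax symmetric monoidal functors.

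Finally we identify the underlying transformation. The forgetful functor of \Cref{rmk:2sym:forgetf-functor} is given by pulling back along $\langle 1 \rangle \colon \Delta^0 \to \Fin_*$. This pullback is compatible with whiskering, since it is a $2$-functor $(\TwoCatInfTwo)_{/\Fin_*} \to \TwoCatInfTwo$. It therefore sends $\Cat D_*^\otimes F^\otimes$ to $\Cat D_* F = F_*$.

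This last identification is the only delicate point: we need that forgetting commutes with horizontal composition at the $2$-categorical level, not merely on the underlying $1$-categories. We handle it by noting that slicing followed by pulling back along $\langle 1 \rangle$ is $2$-functorial on $\TwoCatInfTwo$, and that $\TwoCAlgLax{\TwoCatInfTwo}$ is a $2$-full sub-$2$-category of the slice. Granting this, $F_*$ carries a symmetric monoidal structure, which proves the statement.
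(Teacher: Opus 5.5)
Your proposal is correct and matches the paper's proof, which likewise obtains the symmetric monoidal structure by whiskering $F$ with $\Cat D_*^\otimes$ inside $\TwoCAlgLax{\TwoCatInfTwo}$. Your extra steps, lifting $F$ via \cref{lem:tensor-product:Frob-structure-is-sym-mon} and checking that the forgetful $2$-functor commutes with whiskering, spell out what the paper leaves implicit.
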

\begin{proof}
    This holds because $F_* \colon \Cat D_* \to \Cat D_*$ is defined by whiskering $F \colon \id{\Cat C} \to \id{\Cat C}$ with $\Cat D_*$ inside the $2$-category $\TwoCAlgLax{\TwoCatInfTwo}$.
\end{proof}

\begin{lem} \label{lem:tensor-product:extension-of-F_*}
    Then the natural transformation $F_* \colon \Cat D_* \to \Cat D_*$ from \Cref{def:constr-cart:def-of-F_*} extends to a morphism 
    $F_* \colon \Cat D \to \Cat D$ in $\FunPartOplaxLaxTensor{(I,\all)}{\Span{\Cat C}{\all}{\all}{I}{P}^\otimes}{\TwoCatInf^\times}$, i.e.\ $F_*$ is 
    a partially oplax natural transformation between lax symmetric monoidal functors which is a 
    strong natural transformation between lax symmetric monoidal functors on the forward morphisms and on the backward morphisms lying in $I$.
\end{lem}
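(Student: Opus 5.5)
The plan is to mimic the proof of \cref{lem:BN:F-star-partially-oplax-nat-trans}, replacing \cref{lem:constr-2cat:ext-and-unique} by its multiplicative analogue \cref{lem:tensor-product:lax-sym-mon-UP-span}. First, by \cref{lem:tensor-product:F_*-is-sym-mon}, $F_* \colon \Cat D_* \to \Cat D_*$ is a symmetric monoidal natural transformation between lax symmetric monoidal functors $\Cat C^\times \to \TwoCatInf^\times$. By \cref{lem:2-functoriality-of-lax-tensor-oplax:explicit-morphisms}, applied with $\dagger = \flat$, $\epsilon = \mathrm{oplax}$ and $\delta = \mathrm{lax}$, it defines a $1$-morphism in $\FunOplaxLaxTensor{\Cat C^\times}{\TwoCatInf^\times}$. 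Here we regard a strong transformation as oplax and the symmetric monoidal $2$-cells as invertible.

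Next we check that this morphism lies in the $2$-full sub-$2$-category $\FunOplaxIPBCLaxTensor{\Cat C^\times}{\TwoCatInf^\times}$. On objects, $\Cat D_*^\otimes$ is a lax symmetric monoidal functor satisfying all Beck--Chevalley conditions. This follows from \cref{lem:tensor-product:lax-sym-mon-UP-span} applied to the object $\Cat D^\otimes$: its restriction along $h$ lands in the subcategory. On the morphism, \cref{def:constr-3ff:all-BC} asks only that the underlying oplax transformation satisfy all Beck--Chevalley conditions with respect to $(I,P)$. This is exactly what is verified in the proof of \cref{lem:BN:F-star-partially-oplax-nat-trans}: the transformation is strong, hence strong over $P$, and for $j \in I$ the square $\Cat D_*(\square^F_j)$ satisfies the left Beck--Chevalley condition by the standing assumption.

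Applying the inverse of the equivalence of \cref{lem:tensor-product:lax-sym-mon-UP-span} then yields a $1$-morphism $F_* \colon \Cat D^\otimes \to \Cat D^\otimes$ in $\FunOplaxLaxTensor{\Span{\Cat C}{\all}{\all}{I}{P}^\otimes}{\TwoCatInf^\times}$ restricting to $F_*$ along $h$. It remains to upgrade this to the partially oplax version for the marking $(I,\all)$. The natural route is to show that $\FunPartOplaxLaxTensor{(I,\all)}{-}{-}$ is the $2$-full sub-$2$-category of $\FunOplaxLaxTensor{-}{-}$ on those morphisms whose underlying oplax transformation is strong on marked $1$-morphisms. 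This should follow from the universal property \cref{lem:2-functoriality-of-lax-tensor-oplax:1-functoriality-univ-prop} together with the pullback square of \cref{rmk:marked:symm-monoidal-mapping-space-pullback}. That square reduces the marked condition to the underlying non-monoidal marked mapping anima, and the underlying marked condition is itself a condition on each $1$-morphism, by \cref{lem:marked:marked-mapping-space-UP} and the definition of partially oplax transformations.

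Granting this, it suffices to check strongness on the underlying oplax transformation. By \cref{lem:tensor-product:comparison-cosntruction-2ff-3ff}, that underlying transformation is the extension constructed in \cref{lem:BN:F-star-partially-oplax-nat-trans}, which is already shown there to be strong on $(I,\all)$. The main obstacle is this identification of the partially oplax monoidal $2$-category as a locally full subcategory cut out by an underlying condition. It requires checking carefully, via Yoneda, that the marked mapping anima for $\TwoCat E = \Delta^1$, and for higher test categories, is the corresponding union of path components; the rest is bookkeeping.
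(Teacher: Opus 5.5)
Your proposal is correct and follows the paper's proof. Both check, via \cref{lem:tensor-product:F_*-is-sym-mon}, \cref{lem:2-functoriality-of-lax-tensor-oplax:explicit-morphisms} and the Beck--Chevalley verification in the proof of \cref{lem:BN:F-star-partially-oplax-nat-trans}, that $F_*$ is a $1$-morphism of $\FunOplaxIPBCLaxTensor{\Cat C^\times}{\TwoCatInf^\times}$, lift it through the equivalence of \cref{lem:tensor-product:lax-sym-mon-UP-span}, and read off strongness on $(I,\all)$ from \cref{lem:BN:F-star-partially-oplax-nat-trans}.

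The two points you flag are exactly what the paper's last sentence uses tacitly: identifying the underlying oplax transformation via \cref{lem:tensor-product:comparison-cosntruction-2ff-3ff}, and the partially oplax $2$-category being the locally full sub-$2$-category cut out by the underlying condition. The second is milder than you fear. Since $\TwoFun{\TwoCat E}{\TwoCat D}\to\FunLax{\TwoCat E}{\TwoCat D}$ is a monomorphism, the pullbacks of \cref{rmk:marked:mapping-space-pullback,rmk:marked:symm-monoidal-mapping-space-pullback} exhibit the marked mapping anima as a union of path components, uniformly in every test $2$-category $\TwoCat E$.
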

\begin{proof}
    By \Cref{lem:tensor-product:lax-sym-mon-UP-span}, there is an equivalence of $2$-categories 
    \begin{equation*}
        \FunOplaxLaxTensor{\Span{\Cat C}{\all}{\all}{I}{P}^\otimes}{\TwoCatInf^\times} \xrightarrow{\simeq} \FunOplaxIPBCLaxTensor{\Cat C^\times}{\TwoCatInf^\times}.
    \end{equation*}
    Thus, we have to check that $F_* \colon \Cat D_* \to \Cat D_*$ is a $1$-morphism in the category on the right hand side. 
    By \Cref{lem:tensor-product:F_*-is-sym-mon}, $F_* \colon \Cat D_* \to \Cat D_*$ is a symmetric monoidal natural transformation. 
    Hence, it can be regarded as a morphism in $\FunOplaxLaxTensor{\Cat C^\times}{\TwoCatInf^\times}$ by \Cref{lem:2-functoriality-of-lax-tensor-oplax:explicit-morphisms}. 
    Moreover, we have already shown in the proof of \Cref{lem:BN:F-star-partially-oplax-nat-trans} that the underlying morphism of $F_*$ satisfies all Beck--Chevalley conditions with respect to $(I,P)$. 
    In particular, $F_* \colon \Cat D_* \to \Cat D_*$ can be lifted to an oplax natural transformation $F_* \colon \Cat D \to \Cat D$ between lax symmetric monoidal functors via the equivalence above. 
    By \Cref{lem:BN:F-star-partially-oplax-nat-trans}, the underlying oplax transformation is strong on the forward morphisms and on the backward morphisms lying in $I$. 
\end{proof}

\begin{constr} \label{def:tensor-product:Cart-D-tensor}
    Consider the partially oplax natural transformation $F_* \colon \Cat D^\otimes \to \Cat D^\otimes$ between lax symmetric monoidal functors from \Cref{lem:tensor-product:extension-of-F_*}. 
    Via \Cref{lem:BN:univ-property-2-cat}, this data defines a functor
    $$\Psi \colon \BN \to \FunPartOplaxLaxTensor{(I,\all)}{\Span{\Cat C}{\all}{\all}{I}{P}^\otimes}{\TwoCatInf^\times}.$$
    The target category admits all lax limits by \cref{lem:tensor-product:lax-sym-mon-lax-lim-existence,lem:lax-lim:partial-lax-lim-existence}.
    Hence, we can define the $3$-functor formalism 
    $$\defnot{\Cart(\Cat D^\otimes, F_*)} \coloneqq \laxlim(\Psi) \in \FunPartOplaxLaxTensor{(I,\all)}{\Span{\Cat C}{\all}{\all}{I}{P}^\otimes}{\TwoCatInf^\times}$$
    of \defemphindex{Cartier modules (with respect to $F$)}{Cartier modules}. 
\end{constr}
 
This $3$-functor formalism is an enhancement of the $2$-functor formalism constructed in \cref{def:BN:Cart-Frob-pCFF}.

\begin{lem} \label{lem:Cartier-3FF:underlying-2FF}
    Write
    \begin{equation*}
        V \colon \FunPartOplaxLaxTensor{(I,\all)}{\Span{\Cat C}{\all}{\all}{I}{P}^\otimes}{\TwoCatInf^\times} \to \FunPartOpLax{(I,\all)}{\Span{\Cat C}{\all}{\all}{I}{P}}{\TwoCatInf}    
    \end{equation*}
    for the functor forgetting the lax symmetric monoidal structures from \cref{lem:2-functoriality-of-lax-tensor-oplax:forgetful-map}, then 
    $$V(\Cart(\Cat D^\otimes, F_*)) \cong \Cart(\Cat D, F_*).$$
    In particular, all the results about $\Cart(\Cat D, F_*)$ we proved before still hold for (the underlying $2$-functor formalism of) 
    $\Cart(\Cat D^\otimes, F_*)$.
\end{lem}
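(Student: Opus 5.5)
The plan is to combine two facts: lax limits commute with the forgetful functor $V$, and $V$ carries the $\BN$-diagram $\Psi$ of \cref{def:tensor-product:Cart-D-tensor} to the $\BN$-diagram of \cref{def:BN:Cart-Frob-pCFF}. By \cref{lem:tensor-product:lax-sym-mon-lax-lim-existence}, applied with $\TwoCat C^\otimes = \Span{\Cat C}{\all}{\all}{I}{P}^\otimes$ with the marking $(I,\all)$ and $\TwoCat D^\otimes = \TwoCatInf^\times$, the forgetful functor $V$ preserves lax limits of shape $\BN$. The lax limits exist because $\TwoCatInf$ has all lax limits by \cref{lem:lax-lim:partial-lax-lim-existence}. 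Hence
$$V(\Cart(\Cat D^\otimes, F_*)) = V(\laxlim \Psi) \cong \laxlim(V \circ \Psi).$$

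It then remains to identify $V \circ \Psi$ with the functor $\BN \to \FunPartOpLax{(I,\all)}{\SpanStd}{\TwoCatInf}$ of \cref{def:BN:Cart-Frob-pCFF}; here $E = \all$. By \cref{lem:BN:univ-property-2-cat}, naturally in the target $2$-category, a functor out of $\BN$ is the same as an endomorphism of an object. Postcomposing with $V$ therefore sends $\Psi$ to the functor classified by the endomorphism $V(F_*)$ of $V(\Cat D^\otimes)$. So we must show two things: $V(\Cat D^\otimes) \cong \Cat D$ as $2$-functor formalisms, and $V(F_*)$ agrees with the partially oplax transformation $F_*$ of \cref{lem:BN:F-star-partially-oplax-nat-trans}. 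The first is the definition of the underlying $2$-functor formalism.

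For the second I will use \cref{lem:tensor-product:comparison-cosntruction-2ff-3ff}. The multiplicative $F_*$ of \cref{lem:tensor-product:extension-of-F_*} is the preimage, under the top horizontal equivalence there, of the symmetric monoidal transformation $F_* \colon \Cat D_* \to \Cat D_*$. The non-multiplicative one is the preimage, under the bottom equivalence, of the underlying transformation. Commutativity of the square gives that $V(F_*)$, viewed as an oplax transformation, is the extension from \cref{lem:BN:F-star-partially-oplax-nat-trans}. Since the marked category $\FunPartOpLax{(I,\all)}{-}{-}$ is a locally full sub-$2$-category of $\FunOpLax{-}{-}$, and both objects lie in it, this identification lifts to the partially oplax setting.

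The main obstacle is ensuring this identification of $1$-morphisms is compatible with the identification of their sources and targets, so that it really gives an equivalence of endomorphisms and hence of $\BN$-diagrams. This should follow because the square of \cref{lem:tensor-product:comparison-cosntruction-2ff-3ff} is a commuting square of functors, and so identifies objects and morphisms coherently. Applying $\laxlim$ then yields $V(\Cart(\Cat D^\otimes, F_*)) \cong \Cart(\Cat D, F_*)$.
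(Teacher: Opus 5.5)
Your proposal is correct and follows the paper's proof: $V$ preserves lax limits by \cref{lem:tensor-product:lax-sym-mon-lax-lim-existence}, and $V\circ\Psi$ is identified with the $\BN$-diagram of \cref{def:BN:Cart-Frob-pCFF}. The paper obtains the second identification simply by citing \cref{lem:tensor-product:extension-of-F_*}, whereas you spell it out using the commuting square of \cref{lem:tensor-product:comparison-cosntruction-2ff-3ff}, the naturality of \cref{lem:BN:univ-property-2-cat}, and the passage from fully to partially oplax transformations — the same argument that citation implicitly relies on.
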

\begin{proof}
    Consider the functor $\Psi \colon \BN \to \FunPartOplaxLaxTensor{(I,\all)}{\Span{\Cat C}{\all}{\all}{I}{P}^\otimes}{\TwoCatInf^\times}$ from \Cref{def:tensor-product:Cart-D-tensor}.
    By \cref{lem:tensor-product:extension-of-F_*}, $V \circ \Psi$ is equivalent to the functor induced 
    by the partially oplax natural transformation $F_* \colon \Cat D \to \Cat D$ from \Cref{lem:BN:F-star-partially-oplax-nat-trans}.
    In particular, $\laxlim(V \circ \Psi) \cong \Cart(\Cat D, F_*)$.
    Moreover, $\Cart(\Cat D^\otimes, F_*) = \laxlim \Psi$ by definition.
    Hence, the result holds because $V$ commutes with lax limits by \cref{lem:tensor-product:lax-sym-mon-lax-lim-existence}.
\end{proof}

We next give an explicit description of the symmetric monoidal structure on $\Cart(\Cat D(X), F_{*,X})$ that is induced by the lax symmetric monoidal structure of $\Cart(\Cat D^\otimes, F_*)$ 
as explained in \Cref{lem:tensor-product:DX-is-sym-mon}. 

\begin{defn} \label{def:tensor-product:monoidal-structure}
    The lax symmetric monoidal structure of $\Cart(\Cat D^\otimes, F_*)$ induces a symmetric monoidal structure on $\Cart(\Cat D(X), F_{*,X})$ for each object $X \in \Cat C$ via \Cref{lem:tensor-product:DX-is-sym-mon}. 
    We denote this symmetric monoidal structure by $\defnot{\shriektensor}$ and call it the \defemph{shriek tensor product}. 
\end{defn}

\begin{lem} \label{lem:tensor-product:F_*X-is-oplax-sym-mon}
    Consider the partially oplax natural transformation between lax symmetric monoidal functors $F_* \colon \Cat D \to \Cat D$ from \Cref{lem:tensor-product:extension-of-F_*}. 
    Let $X \in \Cat C$ be an object. 
    Then the functor $F_{*,X} \colon \Cat D(X) \to \Cat D(X)$ carries an oplax symmetric monoidal structure, 
    where $\Cat D(X)$ is equipped with the symmetric monoidal structure from \Cref{lem:tensor-product:DX-is-sym-mon}.
\end{lem}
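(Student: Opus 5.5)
We mimic the proof of \Cref{lem:tensor-product:DX-is-sym-mon}: there, $\Cat D(X)$ acquires its symmetric monoidal structure by applying $\TwoCAlg{-}$ to $\Cat D^\otimes$ and evaluating at the canonical commutative algebra $X$ of \Cref{lem:tensor-product:algebras-in-spans}. We do the same with the $1$-morphism $F_* \colon \Cat D^\otimes \to \Cat D^\otimes$ of $\FunPartOplaxLaxTensor{(I,\all)}{\Span{\Cat C}{\all}{\all}{I}{P}^\otimes}{\TwoCatInf^\times}$ from \Cref{lem:tensor-product:extension-of-F_*}. We forget the marking to view $F_*$ in $\FunOplaxLaxTensor{\Span{\Cat C}{\all}{\all}{I}{P}^\otimes}{\TwoCatInf^\times}$. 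By \Cref{lem:2-functoriality-of-lax-tensor-oplax:1-functoriality-univ-prop} with $\TwoCat E = \Delta^1$, it then corresponds to a lax symmetric monoidal functor
$$\widetilde{F}_* \colon \Span{\Cat C}{\all}{\all}{I}{P}^\otimes \to \FunLax{\Delta^1}{\TwoCatInf^\times}^\ptw .$$

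Next we apply the functor $\TwoCAlg{-} \colon \CAlgLax{\TwoCatInfTwo} \to \CatInfTwo$ to $\widetilde{F}_*$. We evaluate at the commutative algebra $X \in \TwoCAlg{\Span{\Cat C}{\all}{\all}{I}{P}^\otimes}$, whose multiplication is the span $X\times X \xleftarrow{\Delta_X} X \xrightarrow{\id{X}} X$. This gives a point of $\iota_0 \TwoCAlg{\FunLax{\Delta^1}{\TwoCatInf^\times}^\ptw}$. By \Cref{lem:tensor-product:CAlg-of-Arr-lax-is-oplax-sym-mon-functors}, such a point is exactly an oplax symmetric monoidal functor between symmetric monoidal categories.

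It remains to identify this functor. Composing $\widetilde{F}_*$ with the symmetric monoidal evaluations $\ev{0},\ev{1} \colon \FunLax{\Delta^1}{\TwoCatInf^\times}^\ptw \to \TwoCatInf^\times$ recovers $\Cat D^\otimes$, by \Cref{lem:2-functoriality-of-lax-tensor-oplax:explicit-morphisms}. So by naturality of $\TwoCAlg{-}$, both source and target are $\Cat D(X)$ with the structure of \Cref{lem:tensor-product:DX-is-sym-mon}. The underlying functor is the component $F_{*,X}$.

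The main obstacle is the bookkeeping in the last identification. We must check that the oplax direction produced by \Cref{lem:tensor-product:CAlg-of-Arr-lax-is-oplax-sym-mon-functors} matches $F_*$ being oplax natural and not lax, i.e.\ that the conventions of \Cref{lem:2-functoriality-of-lax-tensor-oplax:1-functoriality-univ-prop} (which uses $\FunNEps{-}{-}$) give $\FunLax{\Delta^1}{-}$ here. We would also verify, via the explicit descriptions in \Cref{lem:2-functoriality-of-lax-tensor-oplax:explicit-morphisms} and \Cref{lem:tensor-product:CAlg-of-Arr-lax-is-oplax-sym-mon-functors}, that the structure map is the expected composite
$$F_{*,X}(M \shriektensor N) \to \Delta_X^! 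F_{*,X\times X}(M\boxtimes N) \to \Delta_X^!(F_{*,X}M \boxtimes F_{*,X}N).$$
Its first arrow is the oplax naturality square of $F_*$ at $\Delta_X^!$, and its second is the symmetric monoidal structure of $F_*$ from \Cref{lem:tensor-product:F_*-is-sym-mon}.
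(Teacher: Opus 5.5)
Your proposal is correct and is essentially the paper's own proof. The paper likewise views $F_*$ as a $1$-morphism of $\FunOplaxLaxTensor{\Span{\Cat C}{\all}{\all}{I}{P}^\otimes}{\TwoCatInf^\times}$ and transposes it to a lax symmetric monoidal functor $\Span{\Cat C}{\all}{\all}{I}{P}^\otimes \to \FunLax{\Delta^1}{\TwoCatInf^\times}^\ptw$ (citing \cite[Definition 6.10]{cnossen2025universalityspan2categoriesconstruction}, which is the flat-marked case of \Cref{lem:2-functoriality-of-lax-tensor-oplax:1-functoriality-univ-prop}), applies commutative algebras at the canonical algebra $X$ of \Cref{lem:tensor-product:algebras-in-spans}, and concludes by \Cref{lem:tensor-product:CAlg-of-Arr-lax-is-oplax-sym-mon-functors}; your variance check comes out right since $\mathrm{op}(\mathrm{oplax})=\mathrm{lax}$, and your identification of source, target and structure map adds detail the paper leaves implicit.
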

\begin{proof}
    The partially oplax natural transformation between lax symmetric monoidal functors $F_* \colon \Cat D \to \Cat D$ can be seen as an object of 
    $\TwoFun{\Delta^1}{\FunOplaxLaxTensor{\Span{\Cat C}{\all}{\all}{I}{P}^\otimes}{\TwoCatInf^\times}}$. 
    By \cite[Definition 6.10]{cnossen2025universalityspan2categoriesconstruction}, this corresponds to a lax symmetric monoidal functor 
    $\Span{\Cat C}{\all}{\all}{I}{P}^\otimes \to \FunLax{\Delta^1}{\TwoCatInf^\times}$ sending an object $X \in \Span{\Cat C}{\all}{\all}{I}{P}$ to $F_{*,X} \colon \Cat D(X) \to \Cat D(X)$. 
    In particular, it induces a functor $$\CAlg{\Span{\Cat C}{\all}{\all}{I}{P}^\otimes} \to \CAlg{\FunLax{\Delta^1}{\TwoCatInf^\times}}.$$ 
    But as each object $X \in \Span{\Cat C}{\all}{\all}{I}{P}$ carries a canonical commutative algebra structure by \Cref{lem:tensor-product:algebras-in-spans}, 
    we see that $F_{*,X} \in \CAlg{\FunLax{\Delta^1}{\TwoCatInf^\times}}$. 
    Hence, $F_{*,X} \colon \Cat D(X) \to \Cat D(X)$ is an oplax symmetric monoidal functor by \Cref{lem:tensor-product:CAlg-of-Arr-lax-is-oplax-sym-mon-functors}.
\end{proof}

\begin{lem} \label{lem:tensor-product:U-is-sym-mon}
    The partially oplax natural transformation $U \colon \Cart(\Cat D^\otimes, F_*) \to \Cat D$ between lax symmetric monoidal functors is 
    a natural transformation between lax symmetric monoidal functors.
    In particular, for each object $X \in \Cat C$, the functor $U_X \colon \Cart(\Cat D(X), F_{*,X}) \to \Cat D(X)$ is symmetric monoidal.
\end{lem}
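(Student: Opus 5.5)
The morphism $U$ is, by construction, the component at the unique object $*$ of the lax limit cone. We have $\Cart(\Cat D^\otimes, F_*) = \laxlim(\Psi)$ in $\FunPartOplaxLaxTensor{(I,\all)}{\Span{\Cat C}{\all}{\all}{I}{P}^\otimes}{\TwoCatInf^\times}$, and $U$ is the evaluation of the counit $c \colon \diag(\laxlim \Psi) \to \Psi$ at $*$. So $U$ is a priori only a $1$-morphism in the partially oplax $2$-category. It is therefore an oplax natural transformation between lax symmetric monoidal functors, equipped with the coherent $2$-cells for the monoidal structure described in \cref{lem:2-functoriality-of-lax-tensor-oplax:explicit-morphisms}. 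Two things have to be shown about this morphism.
\begin{itemize}
\item The underlying oplax transformation is strong on all spans.
\item The $2$-cells comparing $U_X \otimes U_Y$ with $U_{X\otimes Y}$, i.e.\ the squares
\[ U(M)\boxtimes U(N) \to U(M\boxtimes N) \]
for the lax structure maps, are invertible.
\end{itemize}

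For the first point, I would apply the forgetful functor $V$ of \cref{lem:2-functoriality-of-lax-tensor-oplax:forgetful-map}. By \cref{lem:tensor-product:lax-sym-mon-lax-lim-existence}, $V$ preserves lax limits, and by \cref{lem:Cartier-3FF:underlying-2FF} we have $V(\Cart(\Cat D^\otimes,F_*)) \cong \Cart(\Cat D,F_*)$. Compatibility of $V$ with the limit cones then identifies $V(U)$ with the morphism $U$ of \cref{def:BN:Cart-Frob-pCFF}. \Cref{lem:constr-cart:CartD-compatibility-U-and-kappa} already shows that this $U$ is a strong natural transformation: for every span $f$, the square $U\circ\Cart(f) \simeq \Cat D(f)\circ U$ commutes.

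For the second point, I would compute the lax limit fiberwise over $\Fin_*$. The key input is the proof of \cref{lem:tensor-product:lax-sym-mon-lax-lim-existence}. There, $\laxlim_{\BN}$ upgrades to a lax symmetric monoidal right adjoint of $\diag$ via \cref{lem:lax-lim:partial-lax-lim-lax-sym-mon} and \cref{lem:2-sym:adjunction-in-CAlgLax}. That construction is built so that the lax structure of $\laxlim$ comes from the fiberwise right adjoints $\tilde G^{\times n}$ in the cartesian structure $\TwoCatInf^\times$. Since lax limits in $\TwoCatInf$ commute with finite products, these lax structure maps $\laxlim\Phi\times\laxlim\Phi' \to \laxlim(\Phi\times\Phi')$ are equivalences. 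Concretely, the projection $U$ from the lax limit commutes strictly with products. Consequently, the monoidality $2$-cells of the counit $c$ are obtained from units and counits of a fiberwise adjunction over $\Fin_*$, which is compatible with products. Evaluating at $*$, I expect these $2$-cells to reduce to the identity by the same triangle-identity argument as in the proof of \cref{lem:2-sym:adjunction-in-CAlgLax}.

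The main obstacle is exactly this bookkeeping: making precise that the monoidal $2$-cells of $c_\Psi$, evaluated at $*$, are invertible. As a fallback, I would reduce to generators and check pointwise. Via \cref{lem:2-functoriality-of-lax-tensor-oplax:1-functoriality-univ-prop}, the monoidal $2$-cell is the lax structure of a functor into $\FunLax{\Delta^1}{\TwoCatInf^\times}^\ptw$. On objects $X,Y$ it is the map
\[ \Cart(X)\times\Cart(Y) \to \Cart(X\times Y) \]
over $U$, so it suffices to check that $U$ intertwines these with $\boxtimes$. This in turn follows from the pointwise formula: by \cite[Proposition 5.4.2]{abellán2026freefibrationslaxcolimits}, restricted along the span $[2]\to[1]$ in the Segal presentation, $\laxlim$ of a product diagram in $\TwoCatInf$ is the product of lax limits.

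Once $U$ is known to be strong and monoidally strong, the final claim follows by applying $\TwoCAlg{-}$ as in \cref{lem:tensor-product:DX-is-sym-mon}. Since $U$ is a symmetric monoidal natural transformation between lax symmetric monoidal functors, it induces a natural transformation after taking commutative algebras. Evaluating at the canonical algebra structure on $X$ from \cref{lem:tensor-product:algebras-in-spans} exhibits $U_X$ as a morphism in $\CAlg{\TwoCatInf^\times}$, i.e.\ as a symmetric monoidal functor for $\shriektensor$.
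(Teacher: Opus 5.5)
Your first point (strongness over all spans, via $V$ and \cref{lem:constr-cart:CartD-compatibility-U-and-kappa}) is fine. The second point, however, is a genuine gap.

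You view $U$ as the $*$-component of the counit $\diag(\laxlim\Psi)\to\Psi$ in $\FunPartOplaxLaxTensor{(I,\all)}{\Span{\Cat C}{\all}{\all}{I}{P}^\otimes}{\TwoCatInf^\times}$. A $1$-morphism there comes with monoidality $2$-cells that are in general not invertible, and you never identify them. The ``triangle-identity argument'' of \cref{lem:2-sym:adjunction-in-CAlgLax} cannot do this: it only shows that the fiberwise right adjoint preserves cocartesian edges over inert maps, and it says nothing about counits. Your fallback only gives that $\laxlim_{\BN}$ commutes with finite products. To see that $U$ intertwines $\boxtimes$ on $\Cart(\Cat D^\otimes,F_*)$ with $\boxtimes$ on $\Cat D$, you would additionally need three things: a description of the lax structure of $\Cart(\Cat D^\otimes,F_*)$ in terms of that of the diagram, naturality of the cone along the lax structure maps of the diagram, and compatibility of the cone with the product identification. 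The sentence ``the projection $U$ commutes strictly with products'' is exactly what must be proved, and you assert it rather than derive it.

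The missing idea is to write $U$ as a whiskering. By \cref{lem:2-functoriality-of-lax-tensor-oplax:1-functoriality-univ-prop}, $\Psi$ corresponds to a lax symmetric monoidal functor $\widetilde\Psi\colon\Span{\Cat C}{\all}{\all}{I}{P}^\otimes\to\FunLax{\BN}{\TwoCatInf^\times}^\ptw$. The proof of \cref{lem:tensor-product:lax-sym-mon-lax-lim-existence} builds the lax limit by applying $\FunPartOplaxLaxTensor{(I,\all)}{\Span{\Cat C}{\all}{\all}{I}{P}^\otimes}{-}$ to the monoidal adjunction $\diag\dashv\laxlim_{\BN}$. Hence $\ev{*}\widetilde\Psi\simeq\Cat D$ and $\laxlim_{\BN}\widetilde\Psi\simeq\Cart(\Cat D^\otimes,F_*)$. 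Moreover, $U$ is the whiskering along $\widetilde\Psi$ of $\ev{*}c\colon\laxlim_{\BN}\Rightarrow\ev{*}$, where $c$ is the counit of $\diag\dashv\laxlim_{\BN}$ for $\TwoCatInf$.

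This $\ev{*}c$ is a genuine natural transformation between functors $\FunLax{\BN}{\TwoCatInf^\times}^\ptw\to\TwoCatInf^\times$, both of which preserve products for the cartesian structures (\cref{lem:pointwise:cartesian-pointwise}). It is symmetric monoidal: the square comparing $(\ev{*}c)_{\Phi_1\times\Phi_2}$ with $(\ev{*}c)_{\Phi_1}\times(\ev{*}c)_{\Phi_2}$ commutes, as one checks after projecting to the two factors using naturality of $\ev{*}c$ along the projections. Whiskering a symmetric monoidal natural transformation with a lax symmetric monoidal functor yields a symmetric monoidal natural transformation. So strongness over all spans and invertibility of the monoidal cells come out at once, and your separate argument via $V$ becomes unnecessary. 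Once this is in place, your last step, deducing that $U_X$ is symmetric monoidal by passing to commutative algebras, is fine.
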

\begin{proof}
    We first give an alternative description of $U \colon \Cart(\Cat D^\otimes, F_*) \to \Cat D$.
    The diagram $$\Psi \colon \BN \to \FunPartOplaxLaxTensor{(I,\all)}{\Span{\Cat C}{\all}{\all}{I}{P}^\otimes}{\TwoCatInf^\times}$$
    defining $\Cart(\Cat D^\otimes, F_*)$ 
    corresponds via \Cref{lem:2-functoriality-of-lax-tensor-oplax:1-functoriality-univ-prop} to 
    an object $$\widetilde{\Psi} \in \MapMarked{\CAlgLax{\TwoCatInfTwo}}{(I,\all)}{\Span{\Cat C}{\all}{\all}{I}{P}^\otimes}{\FunLax{\BN}{\TwoCatInf^\times}^\ptw}.$$ 
    By construction and the proof of \Cref{lem:tensor-product:lax-sym-mon-lax-lim-existence}, we have $\ev{*} \circ \widetilde{\Psi} \cong \Cat D$ and $\laxlim_{\BN} \circ \widetilde{\Psi} \cong \Cart(\Cat D^\otimes, F_*)$.
    Consider the counit $c$ of the adjunction $\diag \colon \TwoCatInf \rightleftarrows \FunLax{\BN}{\TwoCatInf} \noloc \laxlim_{\BN}$. 
    Whiskering it with the functor $\ev{*} \colon \FunLax{\BN}{\TwoCatInf} \to \TwoCatInf$ gives a natural transformation $\ev{*}c \colon \laxlim_{\BN} \to \ev{*}$ 
    of functors $\FunLax{\BN}{\TwoCatInf} \to \TwoCatInf$.
    We recover $U \colon \Cart(\Cat D^\otimes, F_*) \to \Cat D$ by whiskering the natural transformation $\ev{*}c$ with the functor $\widetilde{\Psi}$. 
    Thus, it is enough to show that $\ev{*}c$ is a symmetric monoidal natural transformation. 
    
    For this, first note that the functor $\ev{*} \colon \FunLax{\BN}{\TwoCatInf^\times}^\ptw \to \TwoCatInf^\times$ is symmetric monoidal by the definition of the pointwise symmetric monoidal structure. 
    As the pointwise symmetric monoidal structure on $\FunLax{\BN}{\TwoCatInf^\times}$ is the cartesian symmetric monoidal structure by \cref{lem:pointwise:cartesian-pointwise} and as lax limits commute with finite products, 
    we get that the functor $\laxlim_{\BN} \colon \FunLax{\BN}{\TwoCatInf^\times}^\ptw \to \TwoCatInf^\times$ also is symmetric monoidal. 
    Now we obtain for all $\Phi_1, \Phi_2 \in \FunLax{\BN}{\TwoCatInf}$ a commutative diagram 
    \begin{center}
        \begin{tikzcd}
            \laxlim_{\BN} (\Phi_1 \times \Phi_2) \ar[r, "\simeq"] \ar[d, "{(\ev{*}c)_{\Phi_1 \times \Phi_2}}"'] & \laxlim_{\BN}(\Phi_1) \times \laxlim_{\BN}(\Phi_2) \ar[d, "{(\ev{*}c)_{\Phi_1} \times (\ev{*}c)_{\Phi_2}}"] \\
            \ev{*}(\Phi_1 \times \Phi_2) \ar[r, "\simeq"] & \ev{*}(\Phi_1) \times \ev{*}(\Phi_2) \rlap{.}
        \end{tikzcd}
    \end{center}
    Indeed, this can be checked after projecting to the factors $\ev{*}(\Phi_1)$ and $\ev{*}(\Phi_2)$, respectively, where it amounts to the naturality of $\ev{*}c$ with respect to the projections of $\Phi_1 \times \Phi_2$.
    In particular, these diagrams show that $\ev{*}c$ is a symmetric monoidal natural transformation.
\end{proof}

\begin{notation}
    Let $X \in \Cat C$. We define the \defemphindex{dualizing object of $X$}{dualizing object}, in notation $\defnot{\omega_X} \in \Cat D(X)$ 
    as the tensor unit for the shriek tensor product. 
\end{notation}

\begin{rmk} \label{rmk:tensor-product:shirek-dualizing-object}
    Let $X \in \Cat C$.
    Since $F_X^! \colon \Cat D(X) \to \Cat D(X)$ is symmetric monoidal by \cref{lem:tensor-product:upper-shriek-symmetric-monoidal},
    we see that $\omega_X \cong F_X^! \omega_X$.
\end{rmk}

\begin{prop} \label{lem:tensor-product:shriektensor-on-Cart}
    We have the following description of the shriek tensor product:
    \begin{enumerate}[label=(\arabic*)]
        \item Let $X \in \Cat C$. Assume that $F_X \in P$. Then the tensor unit $\mathbf{1}_{X} \in \Cart(\Cat D(X), F_{*,X})$ is given by 
            $$U_X(\mathbf{1}_{X}) = \omega_X$$
            and
            $$\kappa_X(\mathbf{1}_{X}) \colon F_{*,X} \omega_X \cong F_{*,X} F_X^! \omega_X \xrightarrow{\mathrm{counit}} \omega_X.$$
            Here, the first equivalence is given by \cref{rmk:tensor-product:shirek-dualizing-object}, and the second morphism 
            is the counit of the adjunction $F_{*,X} \dashv F_X^!$ of \cref{lem:consequences:adjoints-of-upper-shriek}, where we use that $F_X \in P$. 
        \item For $X \in \Cat C$ and two objects $M, N \in \Cart(\Cat D(X), F_{*,X})$, their tensor product $M \shriektensor N$ is given by 
            \begin{equation*}
                U_X(M \shriektensor N) \cong U_X(M) \shriektensor U_X(N)
            \end{equation*}
            and
            \begin{equation*}
                \kappa_X(M \shriektensor N) \colon F_{*,X} (U_XM \shriektensor U_XN) \to F_{*,X}U_XM \shriektensor F_{*,X}U_XN \xrightarrow{\kappa_X(M) \shriektensor \kappa_X(N)} U_XM \shriektensor U_XN, 
            \end{equation*}
            where the first arrow is given by the oplax symmetric monoidal structure of $F_{*,X}$ from \Cref{lem:tensor-product:F_*X-is-oplax-sym-mon}.
        \item For every morphism $f \colon X \to Y$ in $\Cat C$, the functor $f^! \colon \Cart(\Cat D(Y), F_{*,Y}) \to \Cart(\Cat D(X), F_{*,X})$ is symmetric monoidal. 
    \end{enumerate}
\end{prop}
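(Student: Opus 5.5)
Part (3) is formal, so I will dispose of it first. \Cref{lem:tensor-product:upper-shriek-symmetric-monoidal} applies to any $3$-functor formalism on $\Span{\Cat C}{\all}{\all}{I}{P}^\otimes$ with values in $\TwoCatInf^\times$. In particular it applies to $\Cart(\Cat D^\otimes, F_*)$ from \cref{def:tensor-product:Cart-D-tensor}. By definition the shriek tensor product on $\Cart(\Cat D(X),F_{*,X})$ is the monoidal structure of \cref{lem:tensor-product:DX-is-sym-mon}, so $f^!$ is symmetric monoidal.

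For (2), I will use \cref{lem:tensor-product:U-is-sym-mon}, which says $U$ is a strong natural transformation of lax symmetric monoidal functors. It therefore induces a map of commutative algebras under $\TwoCAlg{-}$. Evaluated on the canonical algebra $X$ of \cref{lem:tensor-product:algebras-in-spans}, this gives that $U_X$ is symmetric monoidal for the shriek structures, which is the first formula. For the structure map $\kappa_X(M\shriektensor N)$, I will run the same argument one level up. The universal $2$-morphism $\kappa \colon F_*U\to U$ of the lax limit is a $2$-morphism in $\FunPartOplaxLaxTensor{(I,\all)}{\dots}{\TwoCatInf^\times}$. As in the proof of \cref{lem:tensor-product:F_*X-is-oplax-sym-mon}, it corresponds to a lax symmetric monoidal functor $\Span{\Cat C}{\all}{\all}{I}{P}^\otimes\to\FunLax{\Delta^1}{\ldots}^\ptw$ into a lax-arrow $2$-category, namely the lax square $F_{*,X}U_X\Rightarrow U_X$. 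Applying $\TwoCAlg{-}$ and \cref{lem:tensor-product:CAlg-of-Arr-lax-is-oplax-sym-mon-functors}, this lax transformation is compatible with the monoidal structures. Here $U_X$ is strong and $F_{*,X}$ carries the oplax structure of \cref{lem:tensor-product:F_*X-is-oplax-sym-mon}. Unwinding the compatibility square on $M\otimes N$ gives exactly that $\kappa_X(M\shriektensor N)$ is the composite of the oplax structure map of $F_{*,X}$ with $\kappa_X(M)\shriektensor\kappa_X(N)$.

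I expect this unwinding to be the main obstacle. One has to check that the $2$-cell in $\FunLax{\Delta^1}{\TwoCatInf}$ really factors this way, using the description of monoidal $2$-cells in \cref{lem:2-functoriality-of-lax-tensor-oplax:explicit-morphisms}. An alternative is to compute through \cref{lem:constr-cart:CartD-compatibility-U-and-kappa} applied to the span $\mu_X=(X\times X\xleftarrow{\Delta}X\to X)$, together with the analogous pointwise description of the external product $\boxtimes$. That route gives $\kappa(M\shriektensor N)$ as $F_{*,X}\Delta^!(UM\boxtimes UN)\to\Delta^!F_{*,X\times X}(UM\boxtimes UN)\to\Delta^!(F_*UM\boxtimes F_*UN)\to\Delta^!(UM\boxtimes UN)$. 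The first two arrows compose to the oplax structure map by the construction of \cref{lem:tensor-product:F_*X-is-oplax-sym-mon}.

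For (1), the unit $\mathbf 1_X$ is the image of the unit span $*\leftarrow X\to X$, that is, $\pi_X^!$ applied to the unit of $\Cart(*)$, where $\pi_X \colon X\to *$ is the terminal map. Since $U_X$ is symmetric monoidal, $U_X\mathbf 1_X\cong\omega_X$. For $\kappa_X(\mathbf 1_X)$ I will use the oplax unit map $F_{*,X}\omega_X\to\omega_X$ of $F_{*,X}$ from \cref{lem:tensor-product:F_*X-is-oplax-sym-mon}, composed with $\kappa$ of the unit of the lax limit evaluated at $*$. The latter is the identity on the terminal category, as $\Cart(*)$ is trivial. It remains to identify the oplax unit map as the counit. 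It comes from applying $\Cat D$ to the unit span together with the oplax naturality $2$-cell of $F_*$ at the backward span $\pi_X$. Writing $\pi_X=\pi_X\circ F_X$ gives $\pi_X^!\cong F_X^!\pi_X^!$. Since $F_X\in P$, \cref{lem:consequences:adjoints-of-upper-shriek} shows that the mate describing the oplax cell $F_{*,X}\pi_X^!\to\pi_X^!F_{*,*}=\pi_X^!$ is the counit of $F_{*,X}\dashv F_X^!$ applied to $\omega_X=\pi_X^!\mathbf 1$.
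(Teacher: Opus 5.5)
Parts (3) and (2) are fine. Part (3) is exactly the paper's argument, and your fallback route for (2) is the paper's proof. There, $\kappa$ is read via \cref{lem:2-functoriality-of-lax-tensor-oplax:1-functoriality-univ-prop} as a lax symmetric monoidal functor $\widetilde\kappa$ into $\FunLax{C_2}{\TwoCatInf^\times}^\ptw$, where $C_2$ is the walking $2$-cell. Functoriality along $\mu_X$ gives your first arrow; this is \cref{lem:constr-cart:CartD-compatibility-U-and-kappa}. The lax monoidal structure of $\widetilde\kappa$ is precisely your ``analogous description of $\boxtimes$''. So the iterated lax-arrow version of \cref{lem:tensor-product:CAlg-of-Arr-lax-is-oplax-sym-mon-functors} that you were worried about is never needed.

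The gap is in (1), at the step $\kappa_*(\mathbf 1_*)\simeq\id{\omega_*}$. Your justification, that $\Cart(*)$ is trivial, is false. We have $\Cart(*)\simeq\Cart(\Cat D(*),F_{*,*})$ with $F_{*,*}\simeq\id{}$, so it consists of objects of $\Cat D(*)$ equipped with an endomorphism; for $\IndCoh$ these are $\Fp$-modules with an endomorphism. Reading your claim instead as triviality of the lax limit of the constant unit diagram does not help either. The induced functor $\Delta^0\to\Cart(*)$, i.e.\ the lax unit, picks out a pair $(\omega_*,\theta)$. Here $\theta\colon F_{*,*}\omega_*\to\omega_*$ is the nullary monoidal $2$-cell of $F_*$ at $*$, and triviality of the source says nothing about $\theta$.

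What you actually need is the following.
(a) Since $\kappa$ is a $2$-morphism in $\FunPartOplaxLaxTensor{(I,\all)}{\Span{\Cat C}{\all}{\all}{I}{P}^\otimes}{\TwoCatInf^\times}$, it has a nullary monoidal compatibility. This identifies $\kappa_*(\mathbf 1_*)$ with $\theta$ under $U_*\mathbf 1_*\simeq\omega_*$.
(b) $\theta$ is the identity. Indeed, $F$ is a symmetric monoidal transformation of $\id{\Cat C}$ whose component at the terminal object is canonically the identity, and $F_*$ is obtained by whiskering $F$ with $\Cat D_*$.

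With (a) and (b) in place your argument goes through. \cref{lem:constr-cart:CartD-compatibility-U-and-kappa} for the backward span $\pi_X$ reduces $\kappa_X(\pi_X^!\mathbf 1_*)$ to the oplax naturality cell $F_{*,X}\pi_X^!\to\pi_X^!$ evaluated at $\omega_*$. Your mate computation then identifies this cell with the counit.

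The paper slices the same pasting differently. It applies the nullary compatibility of $\kappa$ directly at $X$, giving $\kappa_X(\mathbf 1_X)\simeq e_X$, the unit map of the oplax structure on $F_{*,X}$. It then uses (b) to identify $e_X$ with that oplax naturality cell. Fact (b), that the Frobenius has trivial nullary cell at the terminal object, is the ingredient that must replace your appeal to $\Cart(*)$.
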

\begin{proof}
    We first show (2).
    By \Cref{lem:tensor-product:U-is-sym-mon}, the functor $U_X \colon \Cart(\Cat D(X), F_{*,X}) \to \Cat D(X)$ is symmetric monoidal. 
    In particular, $U_X(M \shriektensor N) \cong U_XM \shriektensor U_XN$.

    We now prove the claim about $\kappa$.
    By definition, $\kappa \colon F_* U \to U$ is a $2$-morphism in the $2$-category $\FunPartOplaxLaxTensor{(I,\all)}{\Span{\Cat C}{\all}{\all}{I}{P}^\otimes}{\TwoCatInf^\times}$. 
    We denote the walking $2$-morphism by $C_2$. 
    By \Cref{lem:2-functoriality-of-lax-tensor-oplax:1-functoriality-univ-prop}, $\kappa$ is equivalently given by an object $\widetilde{\kappa}$ of $\MapMarked{\CAlgLax{\TwoCatInfTwo}}{(I,\all)}{\Span{\Cat C}{\all}{\all}{I}{P}^\otimes}{\FunLax{C_2}{\TwoCatInf^\times}^\ptw}$. 
    The functor $\widetilde{\kappa}$ maps an object $X \in \Span{\Cat C}{\all}{\all}{I}{P}$ to the natural transformation $\kappa_X \colon F_{*,X} U_X \to U_X$. 
    The definition of $\MapMarked{\CAlgLax{\TwoCatInfTwo}}{(I,\all)}{\Span{\Cat C}{\all}{\all}{I}{P}^\otimes}{\FunLax{C_2}{\TwoCatInf^\times}^\ptw}$ provides a commutative diagram 
    \begin{center}
        \begin{tikzcd}
            F_{*,X} U_X \Delta_X^! \boxtimes \ar[r] \ar[d, "\kappa_X"'] & \Delta_X^! F_{*,X\times X} U_{X \times X} \boxtimes \ar[r] \ar[d, "\kappa_{X\times X}"] & \Delta_X^! \boxtimes (F_{*,X} U_X \times F_{*,X} U_X) \ar[d, "\kappa_X \times \kappa_X"] \\
            U_X \Delta_X^! \boxtimes \ar[r, "\simeq"] & \Delta_X^! U_{X \times X} \boxtimes \ar[r, "\simeq"] & \Delta_X^! \boxtimes (U_X \times U_X) \rlap{,}
        \end{tikzcd}
    \end{center}
    where $\Delta_X \colon X \to X \times X$ denotes the diagonal and $\boxtimes$ denotes the functor $\Cat D(X) \times \Cat D(X) \to \Cat D(X \times X)$ given by the lax symmetric monoidal structure of $\Cat D$ (and similarly for $\Cart(\Cat D, F_*)$).  
    Indeed, functoriality of $\widetilde{\kappa}$ for the backwards span $X \times X \xleftarrow{\Delta_X} X \xrightarrow{\id{X}} X$ determines the left hand square, 
    in which the horizontal maps are given by (oplax) naturality of $F_*$ and $U$, respectively. 
    The lax symmetric monoidal structure of $\widetilde{\kappa}$ unwinds to the commutativity of the right hand square, 
    in which the horizontal maps are given by (oplax) symmetric monoidality of $F_*$ and $U$ for $\boxtimes$. 
    If we evaluate the composition of the left vertical map with the bottom horizontal maps at $(M, N) \in \Cart(\Cat D(X), F_{*,X}) \times \Cart(\Cat D(X), F_{*,X})$ we get exactly $\kappa_X(M \shriektensor N)$ 
    by \Cref{lem:tensor-product:DX-is-sym-mon}. 
    Evaluating the composition of the top horizontal maps with the right vertical map at $(M,N)$ yields the map 
    $$F_{*,X} U_X (M \shriektensor N) \to F_{*,X} U_X(M) \shriektensor F_{*,X}U_X(N) \xrightarrow{\kappa_X(M) \shriektensor \kappa_X(N)} U_X(M) \shriektensor U_X(N),$$ 
    where the first arrow is given by the (oplax) symmetric monoidal structure of $F_{*,X}$ and $U_X$, respectively.
    Hence, we conclude the proof of (2) by the commutativity of the above diagram.

    We now show (1). By \Cref{lem:tensor-product:U-is-sym-mon}, the functor $U_X \colon \Cart(\Cat D(X), F_{*,X}) \to \Cat D(X)$ is symmetric monoidal.
    In particular, $U_X(\mathbf{1}_{X}) = \omega_X$. 
    Let $$e_X \colon F_{*,X}\omega_X \longrightarrow \omega_X$$
    denote the unit map of the oplax symmetric monoidal structure from
    \Cref{lem:tensor-product:F_*X-is-oplax-sym-mon}. The nullary monoidal compatibility of $\kappa_X$ gives 
    $$\kappa_X(\mathbf{1}_{X}) \cong e_X,$$
    under the identification $U_X(\mathbf{1}_{X}) \cong \omega_X$.
    This is the nullary analogue of the compatibility used in the proof of (2) above.
    It remains to identify $e_X$. Write $a_X \colon X \to \top$ for the terminal morphism.
    Since $F_\top \colon \top \to \top$ is canonically the identity, the nullary monoidal compatibility of the Frobenius
    transformation at $\top$ is the identity. Consequently, by the construction of the induced
    oplax monoidal structure, $e_X$ is obtained by evaluating the oplax naturality map
    $$F_{*,X}a_X^! \longrightarrow a_X^!$$
    at $\omega_\top$, using $\omega_X \cong a_X^!\omega_\top$.
    Since $F_X \in P$, we have $F_{*,X} \dashv F_X^!$ by \Cref{lem:consequences:adjoints-of-upper-shriek}.
    The preceding oplax naturality map is the mate of the functoriality equivalence
    $$a_X^! \cong F_X^!a_X^!,$$
    induced by $a_XF_X=a_X$.
    Indeed, the corresponding cell between spans is the counit cell for $F_{*,X}\dashv F_X^!$, whiskered with the backward span associated to $a_X$.
    Thus this map is the composite
    $$F_{*,X}a_X^! \xrightarrow{\simeq}  F_{*,X}F_X^!a_X^! \xrightarrow{c\,a_X^!} a_X^!.$$
    Evaluating at $\omega_\top$ therefore gives
    $$\kappa_X(\mathbf{1}_{X}) \cong \left( F_{*,X}\omega_X \xrightarrow{\simeq} F_{*,X}F_X^!\omega_X \xrightarrow{c_{\omega_X}} \omega_X \right),$$
    as claimed.

    (3) follows directly from \Cref{lem:tensor-product:upper-shriek-symmetric-monoidal}.
\end{proof}

\begin{prop} \label{lem:tensor-product:CartD-4FF}
    Consider the $3$-functor formalism $\Cart(\Cat D, F_*)$ from \Cref{def:tensor-product:Cart-D-tensor}. 
    If $\Cat D^\otimes$ is a $6\frac{1}{2}$-functor formalism with respect to a class of morphisms $T \subseteq \Cat C$,
    then $\Cart(\Cat D^\otimes, F_*)$ is a $6\frac{1}{2}$-functor formalism with respect to $T$.
\end{prop}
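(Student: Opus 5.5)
We must verify that the underlying $2$-functor formalism is a $4\frac{1}{2}$-functor formalism with respect to $T$, and that axiom \textbf{(FF5)} holds. For the first part we use \cref{lem:Cartier-3FF:underlying-2FF}: the underlying $2$-functor formalism of $\Cart(\Cat D^\otimes, F_*)$ is $\Cart(\Cat D, F_*)$. The underlying $2$-functor formalism of $\Cat D^\otimes$ is a $4\frac{1}{2}$-functor formalism by assumption. Hence \cref{lem:constr-cart:CartD-coherent} shows that $\Cart(\Cat D, F_*)$ is a $4\frac{1}{2}$-functor formalism with respect to $T$, which settles \textbf{(FF1)}--\textbf{(FF4)}.

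It remains to show \textbf{(FF5)}: for each $X \in \Cat C$, the shriek tensor product on $\Cart(\Cat D(X), F_{*,X})$ preserves colimits in each variable. Fix $N \in \Cart(\Cat D(X), F_{*,X})$ and consider the functor $- \shriektensor N$. By \cref{lem:tensor-product:U-is-sym-mon}, $U_X$ is symmetric monoidal, so
$$U_X(- \shriektensor N) \cong U_X(-) \shriektensor U_X N.$$
The forgetful functor $U_X$ preserves colimits, since $F_{*,X}$ preserves colimits (cf.\ \cite[Corollary 2.7]{us24cart}). The functor $- \shriektensor U_X N$ on $\Cat D(X)$ preserves colimits because $\Cat D^\otimes$ satisfies \textbf{(FF5)}. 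Therefore $U_X \circ (- \shriektensor N)$ preserves colimits.

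To conclude that $- \shriektensor N$ itself preserves colimits, we use that $U_X$ is conservative and preserves colimits. Take a diagram $M_i$ and the canonical comparison map $\colim (M_i \shriektensor N) \to (\colim M_i) \shriektensor N$. After applying $U_X$, this map becomes the comparison map in $\Cat D(X)$, which is an equivalence. Conservativity of $U_X$ then shows the original comparison map is an equivalence. The other variable follows by symmetry. Presentability and stability of $\Cart(\Cat D(X),F_{*,X})$ were already established in \textbf{(FF1)}, so the tensor structure is presentably symmetric monoidal.

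The main obstacle is identifying the comparison maps correctly. One has to check that $U_X$ sends the Cartier-level colimit comparison map to the $\Cat D(X)$-level comparison map. This holds because $U_X$ is symmetric monoidal as a functor and preserves colimits, so its monoidal structure equivalences are natural in both variables. With that in hand, the argument is a formal consequence of conservativity.
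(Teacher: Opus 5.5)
Your proof is correct and follows the paper's argument. You get \textbf{(FF1)}--\textbf{(FF4)} from \cref{lem:Cartier-3FF:underlying-2FF} together with the $2$-functor-formalism result. You get \textbf{(FF5)} by checking colimit preservation after applying the conservative, colimit-preserving, symmetric monoidal functor $U_X$. The one step you leave implicit is that $F_{*,X}=\Cat D_*(F_X)$ preserves colimits because it is a left adjoint by \textbf{(FF2)}; this is what the cited \cite[Corollary 2.7]{us24cart} requires.
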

\begin{proof}
    It follows directly from \Cref{lem:Cartier-3FF:underlying-2FF} and \Cref{lem:constr-cart:main-thm} that the underlying $2$-functor formalism of $\Cart(\Cat D^\otimes, F_*)$ is a $4\frac{1}{2}$-functor formalism. 
    Thus, it remains to show that for every object $X \in \Cat C$ in the symmetric monoidal category $\Cart(\Cat D^\otimes, F_*)(X) \cong \Cart(\Cat D(X), F_{*,X})$ 
    the shriek tensor product is compatible with colimits in each variable.
    As the forgetful functor $U_X \colon \Cart(\Cat D(X), F_{*,X}) \to \Cat D(X)$ is conservative and colimit-preserving by \cite[Corollary 2.7(b), (f)]{us24cart} (using also that $F_{*, X} = \Cat D_*(F_X)$ preserves colimits 
    as it admits a right adjoint by axiom (FF2) of \Cref{def:2ff:defn}), this can be checked after applying $U_X$. 
    Now the claim follows as $U_X$ is symmetric monoidal by \Cref{lem:tensor-product:U-is-sym-mon} and $\Cat D(X)$ is presentably symmetric monoidal by assumption.
\end{proof}

For easier reference we summarize our results of this section.

\begin{thm} \label{lem:tensor-product:main-thm}
    Let $(\Cat C^\times, I, P)$ be a cartesian Nagata setup. Let $F \colon \id{\Cat C} \to \id{\Cat C}$ be a (symmetric monoidal) Frobenius structure on $\Cat C$.
    Let $\Cat D^\otimes \colon \Span{\Cat C}{\all}{\all}{I}{P}^\otimes \to \TwoCatInf^\times$ be a $3$-functor formalism.
    Assume that for every $j \in I$ the square $\Cat D_*(\square_j^F)$ satisfies the left Beck--Chevalley condition.
    Then there is a $3$-functor formalism $\Cart(\Cat D^\otimes, F_*)$ satisfying the following: 
    \begin{enumerate}[label=(\arabic*)]
        \item The underlying $2$-functor formalism of $\Cart(\Cat D^\otimes, F_*)$ is the $2$-functor formalism $\Cart(\Cat D, F_*)$ from \Cref{lem:constr-cart:main-thm}. Consequently, 
        \begin{enumerate}[label=(\alph*)]
            \item If $X \in \Cat C$ is an object, $\Cart(\Cat D, F_*)(X) \cong \Cart(\Cat D(X), F_{*,X})$ is the category of Cartier modules in the category $\Cat D(X)$ with respect to the functor $F_{*,X}$.
            \item There is a natural transformation $U \colon \Cart(\Cat D, F_*) \to \Cat D$ such that for each object $X \in \Cat C$ the functor $U_X$ is equivalent to the forgetful functor $\Cart(\Cat D(X), F_{*,X}) \to \Cat D(X)$. 
            In particular, for each morphism $f \colon X \to Y$ in $\SpanStd$ there is a commutative diagram 
            \begin{center}
                \begin{tikzcd}[column sep=huge]
                    \Cart(\Cat D(X), F_{*,X}) \ar[r, "{\Cart(\Cat D, F_*)(f)}"] \ar[d, "U_X"'] & \Cart(\Cat D(Y), F_{*,Y}) \ar[d, "U_Y"] \\
                    \Cat D(X) \ar[r, "{\Cat D(f)}"'] & \Cat D(Y) \rlap{.}
                \end{tikzcd}
            \end{center}
            \item  If $f \colon X \to Y$ is a morphism in $\SpanStd$, there is a commutative diagram 
            \begin{center}
                \begin{tikzcd}
                    F_{*,Y}U \Cart(\Cat D,F_*)(f) \ar[rr, "{\kappa \Cart(\Cat D,F_*)(f)}"] \ar[d, "\simeq"'] && U \Cart(\Cat D,F_*)(f) \ar[d, "\simeq"] \\
                    F_{*,Y} \Cat D(f)U \ar[r] & \Cat D(f)F_{*,X}U \ar[r, "{\Cat D(f)\kappa}"'] & \Cat D(f)U \rlap{,} 
                \end{tikzcd}
            \end{center}
            where the vertical equivalences are given by the previous property and the bottom left arrow is given by (partially) oplax naturality of $F_*$.
        \end{enumerate}
        \item Let $X \in \Cat C$. Assume that $F_X \in P$. Then the tensor unit $\mathbf{1}_X \in \Cart(\Cat D(X), F_{*,X})$ is given by 
            $$U(\mathbf{1}_{X}) = \omega_X$$
            and
            $$\kappa(\mathbf{1}_{X}) \colon F_{*,X} \omega_X \cong F_{*,X} F_X^! \omega_X \xrightarrow{\mathrm{counit}} \omega_X.$$
            Here, the first equivalence is given by \cref{rmk:tensor-product:shirek-dualizing-object}, and the second morphism 
            is the counit of the adjunction $F_{*,X} \dashv F_X^!$ of \cref{lem:consequences:adjoints-of-upper-shriek}, where we use that $F_X \in P$. 
        \item For $X \in \Cat C$ and two objects $M, N \in \Cart(\Cat D(X), F_{*,X})$, their tensor product $M \shriektensor N$ is given by 
        \begin{equation*}
            U(M \shriektensor N) \cong U(M) \shriektensor U(N)
        \end{equation*}
        and
        \begin{equation*}
            \kappa(M \shriektensor N) \colon F_{*,X} (UM \shriektensor UN) \to F_{*,X}UM \shriektensor F_{*,X}UN \xrightarrow{\kappa(M) \shriektensor \kappa(N)} UM \shriektensor UN, 
        \end{equation*}
        where the first arrow is given by the oplax symmetric monoidal structure of $F_{*,X}$ from \Cref{lem:tensor-product:F_*X-is-oplax-sym-mon}.
    \end{enumerate}
    Moreover, if $\Cat D$ is a $6\frac{1}{2}$-functor formalism with respect to a class of morphisms $T \subseteq \Cat C$,
    then $\Cart(\Cat D, F_*)$ also is a $6\frac{1}{2}$-functor formalism with respect to $T$ whose underlying $4\frac{1}{2}$-functor formalism is the one from \Cref{lem:constr-cart:main-thm}. 
\end{thm}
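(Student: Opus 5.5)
This theorem only assembles the results of this section, so the plan is to cite them in order. First, construct $\Cart(\Cat D^\otimes, F_*)$ as in \Cref{def:tensor-product:Cart-D-tensor}. Under the Beck--Chevalley hypothesis on the squares $\Cat D_*(\square_j^F)$ for $j \in I$, \Cref{lem:tensor-product:extension-of-F_*} extends $F_*$ to a morphism in $\FunPartOplaxLaxTensor{(I,\all)}{\Span{\Cat C}{\all}{\all}{I}{P}^\otimes}{\TwoCatInf^\times}$. Via \Cref{lem:BN:univ-property-2-cat} this gives a diagram $\Psi \colon \BN \to \FunPartOplaxLaxTensor{(I,\all)}{\Span{\Cat C}{\all}{\all}{I}{P}^\otimes}{\TwoCatInf^\times}$. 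Its lax limit exists by \cref{lem:tensor-product:lax-sym-mon-lax-lim-existence} combined with \cref{lem:lax-lim:partial-lax-lim-existence}.

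For (1), I apply \Cref{lem:Cartier-3FF:underlying-2FF}. It identifies $V(\Cart(\Cat D^\otimes, F_*))$ with $\Cart(\Cat D, F_*)$, because the forgetful functor $V$ preserves lax limits. Note that $V(\Psi)$ is the diagram built from the partially oplax transformation of \Cref{lem:BN:F-star-partially-oplax-nat-trans}; this uses the comparison \Cref{lem:tensor-product:comparison-cosntruction-2ff-3ff} between the two extension theorems. Sub-items (a)--(c) then follow verbatim from \Cref{lem:constr-cart:main-thm}, i.e.\ from \Cref{lem:constr-cart:CartD-on-objects,lem:constr-cart:CartD-compatibility-U-and-kappa}. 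The only point to check is that the Setting of \Cref{sec:Cart-2FF} applies to the underlying $2$-functor formalism with $E = \all$. This holds because a cartesian Nagata setup is in particular a Nagata setup $(\Cat C, \all, I, P)$.

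Items (2) and (3) are exactly parts (1) and (2) of \Cref{lem:tensor-product:shriektensor-on-Cart}. These in turn rest on two facts: $U$ is a natural transformation of lax symmetric monoidal functors (\Cref{lem:tensor-product:U-is-sym-mon}), and $F_{*,X}$ carries an oplax monoidal structure (\Cref{lem:tensor-product:F_*X-is-oplax-sym-mon}). The final assertion about $6\frac{1}{2}$-functor formalisms is \Cref{lem:tensor-product:CartD-4FF}. That $\Cart(\Cat D^\otimes, F_*)$ has the $4\frac{1}{2}$-functor formalism of \Cref{lem:constr-cart:main-thm} as its underlying formalism is again \Cref{lem:Cartier-3FF:underlying-2FF}.

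There is no new mathematics here. The step that needs the most care is the identification in (1): one must ensure that the $2$-functor formalism obtained by forgetting agrees with the one constructed directly in \Cref{sec:Cart-2FF}. This is precisely the content of \Cref{lem:Cartier-3FF:underlying-2FF}, and I would double-check there that the extension of $F_*$ is compatible under $V$ by invoking \Cref{lem:tensor-product:comparison-cosntruction-2ff-3ff}.
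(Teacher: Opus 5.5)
Your proposal is correct and follows the paper's proof essentially verbatim. The paper likewise constructs $\Cart(\Cat D^\otimes, F_*)$ via \Cref{lem:tensor-product:extension-of-F_*} and \Cref{def:tensor-product:Cart-D-tensor}, reads off (1) from \Cref{lem:Cartier-3FF:underlying-2FF}, (2)--(3) from \Cref{lem:tensor-product:shriektensor-on-Cart}, and the final claim from \Cref{lem:tensor-product:CartD-4FF}. Your extra remark is a sensible sharpening: the identification of $V\circ\Psi$ with the $2$-categorical diagram rests on \Cref{lem:tensor-product:comparison-cosntruction-2ff-3ff}, a step the paper states only briefly in the proof of \Cref{lem:Cartier-3FF:underlying-2FF}.
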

\begin{proof}
    By \Cref{lem:tensor-product:extension-of-F_*}, the natural transformation $F_* \colon \Cat D_* \to \Cat D_*$ extends to a partially oplax natural transformation $F_* \colon \Cat D \to \Cat D$ between lax symmetric monoidal functors. 
    Now we define $\Cart(\Cat D^\otimes, F_*)$ as in \Cref{def:tensor-product:Cart-D-tensor}. 
    It has the described properties by \Cref{lem:Cartier-3FF:underlying-2FF} and \Cref{lem:tensor-product:shriektensor-on-Cart}. 
    The claim about $6\frac{1}{2}$-functor formalisms was shown in \Cref{lem:tensor-product:CartD-4FF}.
\end{proof}

%% file: derived-schemes/animated-Fp-algebras.tex
We start this section by reviewing the theory of animated $\Fp$-algebras and derived $\Fp$-schemes 
in \Cref{sec:derived-schemes-animated-rings,sec:derived-schemes-derived-schemes}.
Then, in \cref{sec:derived-schemes-ind-coh} we review the theory of ind-coherent sheaves, as 
outlined in \cite{gaitsgory2012indcoherentsheaves,gaitsgory2019study}.
In \cref{sec:derived-schemes-2ff}, using the results of \Cref{sec:2ff}, we construct the 
$4\frac{1}{2}$-functor formalisms of ind-coherent sheaves and Cartier modules.
Then, in \cref{sec:derived-schemes-3ff}, using the results of \Cref{sec:3ff},
we lift these constructions to obtain the $6\frac{1}{2}$-functor formalisms of ind-coherent sheaves and Cartier modules.
Finally, we collect some consequences of the existence of these functor formalisms in \cref{sec:derived-schemes-consequences}.

\subsection{Animated \texorpdfstring{$\Fp$}{Fp}-algebras} \label{sec:derived-schemes-animated-rings}
Let $p$ be a prime.
In this section, we review the theory of animated $\Fp$-algebras.
We write $\defnot{\Ringp}$ for the (1,1)-category of discrete $\Fp$-algebras.

For a small category $\Cat C$ with finite coproducts, recall the \defemph{nonabelian derived category}
$$\defnot{\PSig{\Cat C}} \coloneqq \operatorname{Fun}^{\times}(\op{\Cat C},\An)$$
of finite-product-preserving presheaves on $\Cat C$, cf.
\cite[Definition 5.5.8.8]{highertopoi}.
The Yoneda embedding factors through $\PSig{\Cat C}$ and induces a fully faithful functor
$$\yo \colon \Cat C\hookrightarrow\PSig{\Cat C}$$
which preserves finite coproducts, cf.
\cite[Proposition 5.5.8.10(2)]{highertopoi}.
We will use the following universal property.

\begin{lem}[{\cite[Proposition 5.5.8.15]{highertopoi}}]\label{lem:animated-rings:univ-property}
    Let $\Cat D$ be an $\infty$-category admitting sifted colimits.
    Restriction along the Yoneda embedding induces an equivalence
    $$\defnot{\operatorname{Fun}^{\operatorname{sifted}} \bigl(\PSig{\Cat C},\Cat D\bigr)} \xrightarrow{\ \simeq\ }  \operatorname{Fun}(\Cat C,\Cat D),$$
    where the left-hand side is the full subcategory of functors preserving
    sifted colimits.
\end{lem}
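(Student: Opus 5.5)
This statement is cited from Lurie, so the plan is to reconstruct the standard argument behind \cite[Proposition 5.5.8.15]{highertopoi}. The first step is to identify $\PSig{\Cat C}$ with the free cocompletion of $\Cat C$ under sifted colimits. By \cite[Proposition 5.5.8.10]{highertopoi}, $\PSig{\Cat C}$ is the full subcategory of $\Fun{\op{\Cat C}}{\An}$ closed under sifted colimits, and every object is a sifted colimit of representables. Concretely, a product-preserving presheaf $G$ is the colimit of $\Cat C_{/G} \to \Cat C \xrightarrow{\yo} \PSig{\Cat C}$. Because $\Cat C$ has finite coproducts and $G$ preserves products, the comma category $\Cat C_{/G}$ has finite coproducts, and a category with finite coproducts is sifted. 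Moreover, the representables $\yo(c)$ are compact projective in $\PSig{\Cat C}$, meaning $\Map{}{\yo(c)}{-}$ commutes with sifted colimits; this holds because sifted colimits in $\PSig{\Cat C}$ are computed pointwise, since finite products commute with sifted colimits in $\An$.

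Given these facts, I would construct the inverse to restriction by left Kan extension along $\yo$. For $F \colon \Cat C \to \Cat D$, set $\widetilde F(G) = \colim_{\Cat C_{/G}} F$. This colimit exists in $\Cat D$ because the indexing category is sifted, and $\widetilde F \circ \yo \simeq F$ because $\yo$ is fully faithful. The Kan extension machinery \cite[§4.3.2]{highertopoi} then shows that left Kan extension is fully faithful onto those functors $\PSig{\Cat C} \to \Cat D$ that are left Kan extended from $\Cat C$. This left Kan extension exists, for the same siftedness reason, and it is left adjoint (partially) to restriction.

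It remains to identify the essential image with the functors that preserve sifted colimits. Suppose $H$ preserves sifted colimits. Since every $G$ is the sifted colimit $\colim_{\Cat C_{/G}} \yo$, the functor $H$ is left Kan extended from its restriction. Conversely, I must show that $\widetilde F$ preserves sifted colimits, and I expect this to be the main obstacle. My plan is to reduce to $\Cat D = \An$ via Yoneda, testing against $\Map{\Cat D}{\widetilde F(-)}{d}$ after passing to the ind-completion or a presheaf embedding that preserves sifted colimits. Alternatively, I would follow Lurie and use the compact-projective property together with \cite[Proposition 5.5.8.10(4)]{highertopoi}. That result characterizes $\PSig{\Cat C}$ as freely generated under filtered colimits and geometric realizations, so it suffices to check that $\widetilde F$ preserves these. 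For filtered colimits this follows from cofinality of slice categories. For geometric realizations I would use the simplicial resolution given by the bar construction, and this step is the technical heart of the argument.
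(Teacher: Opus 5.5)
Since the paper only cites \cite[Proposition 5.5.8.15]{highertopoi}, the comparison is with the standard argument. The first half of your reconstruction is sound. Every $G\in\PSig{\Cat C}$ is the colimit of $\Cat C_{/G}\to\Cat C\xrightarrow{\yo}\PSig{\Cat C}$ with $\Cat C_{/G}$ sifted. Hence the pointwise left Kan extension $\widetilde F$ exists, restriction is an equivalence from left Kan extended functors onto $\Fun{\Cat C}{\Cat D}$, and every sifted-colimit-preserving functor is left Kan extended from $\Cat C$.

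The gap is the converse, that $\widetilde F$ preserves sifted colimits. This is the real content of the lemma, and none of your routes establishes it.
\begin{itemize}
\item The embeddings $\Cat D\hookrightarrow\operatorname{Ind}(\Cat D)$ and $\Cat D\hookrightarrow\Fun{\op{\Cat D}}{\AnL}$ do not preserve the filtered, let alone sifted, colimits that already exist in $\Cat D$. (The first does so only if every object of $\Cat D$ is compact.) So they cannot be used to transport the problem.
\item Your second route rests on an unjustified reduction. \cite[Proposition 5.5.8.10]{highertopoi} says the \emph{objects} of $\PSig{\Cat C}$ are generated from representables under filtered colimits and geometric realizations. That does not imply a functor preserving those two kinds of colimits preserves all sifted colimits. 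For functors out of $\PSig{\Cat C}$, that implication is obtained by comparing with the left Kan extension, i.e.\ from the very claim at issue.
\item The geometric-realization step is left open anyway.
\end{itemize}

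The missing idea is a formula for maps out of $\widetilde F(G)$. For $d\in\Cat D$ put $\phi_d\coloneqq\Map{\Cat D}{F(-)}{d}\colon\op{\Cat C}\to\AnL$. Then, naturally in $G\in\PSig{\Cat C}$,
\[
    \Map{\Cat D}{\widetilde F(G)}{d}\simeq\lim_{(c,x)\in\op{(\Cat C_{/G})}}\phi_d(c)\simeq\Map{\Fun{\op{\Cat C}}{\AnL}}{G}{\phi_d}.
\]
The outer two terms are functors of $G$ that are right Kan extended from $\Cat C$, and both restrict to $\phi_d$ on $\Cat C$:
\begin{itemize}
\item the first because $\widetilde F(G)\simeq\colim_{\Cat C_{/G}}F$ and $\Map{\Cat D}{-}{d}$ turns colimits into limits;
\item the second because $G\simeq\colim_{\Cat C_{/G}}\yo$ also in $\Fun{\op{\Cat C}}{\AnL}$.
\end{itemize}
Sifted colimits in $\PSig{\Cat C}$ are computed pointwise, so they are colimits in $\Fun{\op{\Cat C}}{\AnL}$, and the right-hand side sends them to limits. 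Since the functors $\Map{\Cat D}{-}{d}$ jointly detect colimit cocones, $\widetilde F$ preserves all sifted colimits at once. No separate treatment of geometric realizations and no bar construction is needed.

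Alternatively, embed $\Cat D$ fully faithfully into a cocomplete category $\Cat D'$ by a functor preserving sifted colimits, as in \cite[Proposition 5.3.6.2]{highertopoi}. There the left Kan extension is the restriction to $\PSig{\Cat C}$ of the colimit-preserving extension $\Fun{\op{\Cat C}}{\An}\to\Cat D'$ of $F$, and fully faithful functors reflect colimits.

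Your cofinality idea also works without filteredness. For any sifted $K$ and $G\simeq\colim_{k\in K}G_k$, the functor $\colim_k\Cat C_{/G_k}\to\Cat C_{/G}$ (colimit formed in $\CatInf$) is final, which gives $\widetilde F(G)\simeq\colim_k\widetilde F(G_k)$ directly.
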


\begin{defn}
    We write $\defnot{\Polyp}\subseteq\Ringp$ for the full subcategory spanned by the
    finitely generated polynomial $\Fp$-algebras
    $$\Fp[x_1,\ldots,x_n]\qquad(n\geq 0).$$
    It has finite coproducts, given by tensor products over $\Fp$.
\end{defn}

\begin{defn}
    We define the category of \defemph{animated $\Fp$-algebras} by
    $$\defnot{\AnRingp}\coloneqq\PSig{\Polyp}.$$
\end{defn}

\begin{prop}[{\cite[Definition 25.1.1.1 and Remark 25.1.1.4]{sag}}] \label{lem:animated-rings:classical-ring-yoneda}
    There is a canonical equivalence
    $$\operatorname{Fun}^{\times}\bigl(\op{\Polyp},\operatorname{Set}\bigr)\xrightarrow{\ \simeq\ }\Ringp.$$
    Consequently, the inclusion
    $\operatorname{Set}\hookrightarrow\An$ induces a fully faithful functor
    $$\yo\colon\Ringp\hookrightarrow\AnRingp,$$
    whose essential image is the full subcategory of $0$-truncated animated
    $\Fp$-algebras. The Yoneda embedding
    $\yo\colon\Polyp\hookrightarrow\AnRingp$ factors through this functor.
\end{prop}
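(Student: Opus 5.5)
The plan is to assemble this from the universal property of $\PSig{-}$ (\cref{lem:animated-rings:univ-property}) together with the classical description of discrete $\Fp$-algebras as product-preserving functors on polynomial algebras. There are three steps.

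First I would establish the equivalence $\operatorname{Fun}^{\times}(\op{\Polyp},\operatorname{Set})\simeq\Ringp$. The functor sends a discrete $\Fp$-algebra $R$ to the restricted Yoneda functor $\Fp[x_1,\ldots,x_n]\mapsto\Hom{\Ringp}{\Fp[x_1,\ldots,x_n]}{R}\cong R^n$. This functor is product-preserving because coproducts in $\Polyp$ are tensor products and the Hom-functor sends them to products.

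For an inverse, take $A\colon\op{\Polyp}\to\operatorname{Set}$ preserving finite products and put $R=A(\Fp[x])$. Then $A(\Fp[x_1,\ldots,x_n])\cong R^n$. Every polynomial $f\in\Fp[x_1,\ldots,x_n]$ corresponds to a map $\Fp[x]\to\Fp[x_1,\ldots,x_n]$ sending $x\mapsto f$, and applying $A$ gives an $n$-ary operation $R^n\to R$. Functoriality of $A$ under substitution of polynomials says exactly that these operations satisfy all identities of commutative $\Fp$-algebras. This is the standard Lawvere-theory argument; $\Polyp$ is the Lawvere theory of commutative $\Fp$-algebras. The two constructions are mutually inverse, naturally in $R$ and in $A$.

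Next, postcomposition with $\operatorname{Set}\hookrightarrow\An$ preserves products, so it induces a functor $\operatorname{Fun}^{\times}(\op{\Polyp},\operatorname{Set})\to\operatorname{Fun}^{\times}(\op{\Polyp},\An)=\AnRingp$. This functor is fully faithful because $\operatorname{Set}\hookrightarrow\An$ is fully faithful and full faithfulness passes to functor categories. Its essential image consists of the functors all of whose values are discrete anima. Since truncation in $\PSig{\Polyp}$ is computed objectwise, because $\tau_{\le0}$ commutes with finite products of anima, these are exactly the $0$-truncated objects. Composing with the first step gives the functor $\yo\colon\Ringp\hookrightarrow\AnRingp$ and identifies its essential image as claimed.

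Finally, for $P\in\Polyp$ the representable functor $\Hom{}{-}{P}$ is set-valued and corresponds under the first step to $P$ itself, so $\yo|_{\Polyp}$ is the Yoneda embedding. I expect the main obstacle to be the first step, specifically checking carefully that product-preservation forces the algebra axioms and that the correspondence is an equivalence of $1$-categories. One could instead cite \cite[Remark 25.1.1.4]{sag} directly.
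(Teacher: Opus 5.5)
Your argument is correct, and it takes a different route from the paper only because the paper gives no proof here at all: it cites \cite[Definition 25.1.1.1 and Remark 25.1.1.4]{sag}, where the statement is part of the setup of animated rings.

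You make the content explicit by splitting it into two inputs:
\begin{enumerate}
\item the classical Lawvere-theory identification of commutative $\Fp$-algebras with finite-product-preserving set-valued functors on $\op{\Polyp}$;
\item the formal fact that the $0$-truncated objects of $\PSig{\Polyp}$ are exactly the objectwise discrete ones.
\end{enumerate}

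For the second input, a cleaner justification than yours is via Yoneda. Since $A(P)\simeq \Map{\AnRingp}{\yo P}{A}$, a $0$-truncated $A$ is objectwise discrete. Conversely, every object of $\PSig{\Polyp}$ is a colimit of representables, so all mapping anima into an objectwise discrete $A$ are limits of discrete anima.

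Your phrasing via objectwise truncation also works. It needs the extra remark that the objectwise $\tau_{\le 0}$ of a product-preserving presheaf is again product-preserving, and hence is also the truncation inside the full subcategory $\PSig{\Polyp}$. Your parenthetical about $\tau_{\le 0}$ commuting with finite products supplies exactly this.

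You announce that you will use the universal property of \cref{lem:animated-rings:univ-property}, but none of your steps actually uses it. This is harmless: your argument shows that Yoneda plus the Lawvere-theory computation suffice.

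The citation buys brevity. Your version buys a self-contained argument that makes visible why $\Ringp$ sits inside $\AnRingp$ exactly as the discrete objects.
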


We next define the Frobenius endomorphism of animated $\Fp$-algebras.

\begin{constr} \label{lem:animated-rings:frobenius}
    The universal property of
    \cref{lem:animated-rings:univ-property} identifies restriction along
    $y\colon\Polyp\to\AnRingp$ with an equivalence
    $$\operatorname{Fun}^{\operatorname{sifted}}(\AnRingp,\AnRingp)\xrightarrow{\ \simeq\ }\operatorname{Fun}(\Polyp,\AnRingp).$$
    In particular, natural transformations between sifted-colimit-preserving
    functors are uniquely determined by their restrictions to $\Polyp$.

    The \defemphindex{absolute Frobenius}{Frobenius} maps
    $$\defnot{\operatorname{F}_P}\colon P\longrightarrow P, \qquad a\longmapsto a^p,$$
    are natural in the polynomial $\Fp$-algebra $P$ and hence define a natural
    transformation
    $$\defnot{\operatorname{F}}\colon\id{\Polyp}\longrightarrow\id{\Polyp}.$$
    After applying the Yoneda embedding and extending uniquely under sifted
    colimits, we obtain a natural transformation
    $$\defnot{\operatorname{F}}\colon \id{\AnRingp}\longrightarrow\id{\AnRingp}.$$
    Its restriction to discrete $\Fp$-algebras is the usual absolute
    Frobenius. This defines a Frobenius structure on $\AnRingp$ in the sense
    of \cref{def:constr-cart:frob-structure}.
\end{constr}

\begin{constr}[{\cite[Construction 25.1.2.1 and Proposition 25.1.2.2]{sag}}] \label{constr:animated-ring:htpy-group}
    Sending a polynomial $\Fp$-algebra to its Eilenberg--Mac Lane
    $\mathbb E_\infty$-ring defines a functor
    $$\Polyp\longrightarrow\mathrm{CAlg}_{H\Fp}(\mathrm{Sp}_{\geq 0}).$$
    By \cref{lem:animated-rings:univ-property}, it admits an essentially unique
    sifted-colimit-preserving extension
    $$\defnot{H}\colon\AnRingp\longrightarrow\mathrm{CAlg}_{H\Fp}(\mathrm{Sp}_{\geq 0}),$$
    called the \defemphindex{underlying $\mathbb E_\infty$-ring functor}{underlying $\mathbb E_\infty$-ring}.
    The functor $H$ preserves limits and colimits and is conservative.

    For $A\in\AnRingp$ and $n\geq 0$, we define the \defemphindex{$n$-th homotopy group}{homotopy group} of $A$ as
    $$\defnot{\pi_n(A)}\coloneqq\pi_n\bigl(H(A)\bigr).$$
    Then $\pi_0(A)$ is naturally a discrete $\Fp$-algebra, and
    $$\defnot{\pi_*(A)}\coloneqq\bigoplus_{n\geq 0}\pi_n(A)$$
    is a graded-commutative $\pi_0(A)$-algebra. In particular,
    $\pi_n(A)$ is naturally a $\pi_0(A)$-module for every $n>0$;
    see \cite[\S25.1.3]{sag}.
\end{constr}

\begin{lem}[{\cite[\S25.1.3]{sag}}] \label{lem:animated-ring:whitehead} \uses{constr:animated-ring:htpy-group}
    Let $f\colon A\to B$ be a morphism of animated $\Fp$-algebras.
    Then $f$ is an equivalence if and only if
    $$\pi_n(f)\colon\pi_n(A)\longrightarrow\pi_n(B)$$
    is an isomorphism for every $n\geq 0$.
\end{lem}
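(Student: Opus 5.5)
The statement is the Whitehead-type criterion for animated $\Fp$-algebras: $f$ is an equivalence if and only if all $\pi_n(f)$ are isomorphisms. The plan is to reduce it entirely to the corresponding statement for connective spectra, using the properties of the underlying $\mathbb E_\infty$-ring functor $H$ recorded in \cref{constr:animated-ring:htpy-group}.

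The forward direction is formal. If $f$ is an equivalence, then so is $H(f)$, since any functor preserves equivalences. Hence $\pi_n(f) = \pi_n(H(f))$ is an isomorphism for every $n \geq 0$.

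For the converse, assume every $\pi_n(f)$ is an isomorphism; this means $\pi_n(H(f))$ is an isomorphism for all $n \geq 0$. We then pass to underlying spectra. The forgetful functor $\mathrm{CAlg}_{H\Fp}(\mathrm{Sp}_{\geq 0}) \to \mathrm{Sp}_{\geq 0}$ is conservative, since equivalences of algebras are detected on underlying objects (\cite[Lemma 3.2.2.6]{higheralgebra}), and it does not change homotopy groups. Consequently, $H(f)$ is a map of connective spectra inducing isomorphisms on $\pi_n$ for all $n \geq 0$. Because both spectra are connective, their negative homotopy groups vanish, so the map induces isomorphisms on $\pi_n$ for all $n \in \mathbb Z$. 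A map of spectra that is an isomorphism on all homotopy groups is an equivalence, since homotopy groups detect equivalences in $\mathrm{Sp}$ (for example via the $t$-structure, or as the stable version of Whitehead's theorem). Therefore $H(f)$ is an equivalence in $\mathrm{CAlg}_{H\Fp}(\mathrm{Sp}_{\geq 0})$.

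Finally, \cref{constr:animated-ring:htpy-group} states that $H$ is conservative, so $f$ itself is an equivalence.

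The only genuinely nontrivial input is the conservativity of $H$. It is cited from \cite[Proposition 25.1.2.2]{sag}, and it is what makes the argument work. If one wanted to justify it independently, the plan would be to observe that $H$ preserves sifted colimits and detects equivalences on the generators $\Fp[x_1,\ldots,x_n]$. Equivalently, one could use that the forgetful functor $\AnRingp \to \An$ is conservative and identify the underlying anima of $A$ with $\Omega^\infty H(A)$. Since the statement may assume all earlier results, I would simply invoke the construction.
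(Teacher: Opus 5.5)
Your proof is correct and follows the route the paper relies on. The paper gives no argument of its own beyond citing \cite[\S25.1.3]{sag}, and the statement is exactly the combination you use: the conservativity of $H$ recorded in \cref{constr:animated-ring:htpy-group}, the conservativity of $\mathrm{CAlg}_{H\Fp}(\mathrm{Sp}_{\geq 0}) \to \mathrm{Sp}_{\geq 0}$, and the Whitehead theorem for connective spectra.

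The only point to tighten is your optional aside, which your proof does not depend on. A sifted-colimit-preserving functor that detects equivalences between the polynomial generators need not be conservative. For example, $\Sigma^\infty_+\colon \An \to \mathrm{Sp}$ preserves colimits and detects equivalences between finite sets, but sends the map from a non-contractible acyclic space to a point to an equivalence. So only your second suggestion would give an independent proof that $H$ is conservative: identify $\Omega^\infty H(A)$ with the underlying anima $A(\Fp[x])$, and use that evaluation at $\Fp[x]$ is conservative on $\PSig{\Polyp}$.
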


%% file: derived-schemes/derived-schemes.tex
\subsection{Derived Schemes} \label{sec:derived-schemes-derived-schemes}
In this section, we review the theory of derived schemes.
Our main reference is Khan's thesis \cite{khan2016motivic}.

\begin{defn}
    Write $\defnot{\AffDSchp} \coloneqq \op{(\AnRingp)}$
    for the opposite of the category of animated $\Fp$-algebras.
    We call this category the \defemphindex{category of affine derived $\Fp$-schemes}{derived scheme!affine}.
\end{defn}

\begin{defn}
    Write $\defnot{\PreStkp} \coloneqq \Fun{\op{(\AffDSchp)}}{\An}$
    for the category of presheaves of anima on affine derived $\Fp$-schemes.
    We call this category the \defemphindex{category of prestacks over $\Fp$}{prestack}.

    Write $\yo \colon \AffDSchp \hookrightarrow \PreStkp$ for the Yoneda embedding.
\end{defn}

\begin{defn}[{\cite[Definitions 4.4.2 and 4.4.3]{khan2016motivic}}]
    Let $\varphi \colon A \to B$ be a morphism of animated $\Fp$-algebras.
    The induced morphism $\Spec{B} \to \Spec{A}$ is called
    \defemph{flat} if $\pi_0(\varphi)$ is flat and, for every $n \ge 0$, the canonical morphism
    \begin{equation*}
        \pi_n(A) \otimes_{\pi_0(A)} \pi_0(B) \longrightarrow \pi_n(B)
    \end{equation*}
    is an isomorphism. It is called an \defemph{open immersion} if it is flat and the induced
    morphism $\Spec{\pi_0(B)} \to \Spec{\pi_0(A)}$ is an open immersion of classical schemes.

    A family $(\Spec{B_\alpha} \to \Spec{A})_\alpha$ is a \defemphindex{Zariski covering family}{Zariski!cover}
    if every morphism in the family is an open immersion and the induced family of classical
    open immersions
    \begin{equation*}
        \bigl(\Spec{\pi_0(B_\alpha)} \longrightarrow \Spec{\pi_0(A)}\bigr)_\alpha
    \end{equation*}
    is jointly surjective. These covering families define a Grothendieck topology, the \defemphindex{Zariski topology}{Zariski!toplogy} on $\AffDSchp$.
\end{defn}

\begin{defn}[{\cite[Definition 4.4.4]{khan2016motivic}}]
    We write
    \begin{equation*}
        \defnot{\StkZar} \coloneqq \Shv{\zar}{\AffDSchp}{\An} \subseteq \PreStkp
    \end{equation*}
    for the $\infty$-topos of sheaves on affine derived $\Fp$-schemes
    with respect to the Zariski topology.
    An object of $\StkZar$ will be called a \defemphindex{derived (Zariski) stack}{Zariski!stack}.
\end{defn}

\begin{defn}[{\cite[Definition 4.5.2]{khan2016motivic}}]
    \label{defn:derived-schemes:open-immersion-stacks}
    Let $j \colon U \to X$ be a morphism in $\StkZar$.
    If $X$ is affine, we call $j$ an \defemph{open immersion} if it is a monomorphism and
    there exists an effective epimorphism $\sqcup_\alpha U_\alpha \to U$, with each $U_\alpha$
    affine, such that every composite $U_\alpha \to X$ is an open immersion of affine derived schemes.
    For arbitrary $X$, we call $j$ an open immersion if, for every affine derived scheme $T$
    and every morphism $T \to X$, the base change
    \begin{equation*}
        U \times_X T \longrightarrow T
    \end{equation*}
    is an open immersion in the preceding sense.
\end{defn}

\begin{defn}[{\cite[§4.5.3]{khan2016motivic}}]
    We say that $X \in \StkZar$ \defemphindex{admits an affine Zariski cover}{Zariski!cover}
    if there exists an effective epimorphism $\sqcup_\alpha U_\alpha \to X$
    in $\StkZar$, where every $U_\alpha$ is an affine derived scheme and every
    $U_\alpha \to X$ is an open immersion.
\end{defn}

\begin{defn}[{\cite[Definition 4.5.4]{khan2016motivic}}]
    Let $X \in \StkZar$.
    We say that $X$ is a \defemphindex{derived $\Fp$-scheme}{derived scheme} if
    $X$ admits an affine Zariski cover.
    Write $\defnot{\DSchp} \subseteq \StkZar$ for the full subcategory spanned
    by derived $\Fp$-schemes.
\end{defn}

\begin{constr}[{\cite[§4.5.6]{khan2016motivic}}] \label{constr:derived-schemes:classical}
    Consider the inclusion $\AffSchp \hookrightarrow \AffDSchp$ of classical affine
    $\Fp$-schemes into affine derived $\Fp$-schemes, given as the opposite of the functor from \cref{lem:animated-rings:classical-ring-yoneda}. Restriction along this inclusion sends
    a derived scheme to a classical scheme and defines a functor
    \begin{equation*}
        \textcolor{purple}{\classical{(-)}} \colon \DSchp \longrightarrow \Schp.
    \end{equation*}
    It is right adjoint to the canonical fully faithful inclusion of classical schemes:
    \begin{equation*}
        \defnot{\mathrm{incl}} \colon \Schp \rightleftarrows \DSchp \noloc \classical{(-)}.
    \end{equation*}
    On affine schemes, this adjunction is the canonical equivalence
    \begin{align*}
        \Map{\AffDSchp}{\Spec{R}}{\Spec{A}}
        &= \Map{\AnRingp}{A}{R} \\
        &\cong \Map{\Ringp}{\pi_0(A)}{R} \\
        &\cong \Map{\AffSchp}{\Spec{R}}{\Spec{\pi_0(A)}},
    \end{align*}
    for $R$ classical; the global adjunction follows by Zariski descent. We call
    $\classical{X}$ the \defemph{underlying classical scheme} of $X$. In particular,
    \begin{equation*}
        \classical{\Spec{A}} \cong \Spec{\pi_0(A)}.
    \end{equation*}
    See also \cite[§2.2]{toen2014derived}.
\end{constr}

\begin{defn}[{\cite[§2.4]{kelly2026pro}}]
    Let $A \to B$ be a morphism of animated rings. We say that $B$ is
    \defemphindex{almost of finite presentation over $A$}{almost of finite presentation} if, for every $n \ge 0$,
    the truncation $\tau_{\le n}B$ is compact in the $\infty$-category of
    $n$-truncated animated $A$-algebras.

    A morphism $f \colon X \to Y$ of derived schemes is called
    \defemphindex{locally almost of finite presentation}{almost of finite presentation!locally}, abbreviated \defemphnoindex{lafp}, if,
    for every pair of affine open subschemes
    \begin{equation*}
        U = \Spec{A} \subseteq Y,
        \qquad
        V = \Spec{B} \subseteq X
    \end{equation*}
    with $f(V) \subseteq U$, the induced morphism $A \to B$ is almost of finite presentation.
\end{defn}

We will be interested in the following properties of morphisms of derived $\Fp$-schemes:
\begin{defn} \label{defn:derived-schemes:properties-of-morphisms} \uses{constr:derived-schemes:classical}
    Let $f \colon X \to Y$ be a morphism of derived schemes.
    We say that $f$ is
    \begin{enumerate}[label=(\alph*)]
        \item \defemph{separated} if $\classical{f}$ is separated, cf.\ \cite[§6.1.1]{khan2016motivic},
        %\item \defemph{of finite type} if $\classical{f}$ is of finite type, cf.\ \cite[§6.1.1]{khan2016motivic},
        \item \defemphindex{locally almost of finite presentation}{almost of finite presentation!locally} if it is lafp in the sense above,
        \item \defemph{proper} if $\classical{f}$ is proper, cf.\ \cite[§6.1.1]{khan2016motivic},
        \item \defemph{flat} if it is Zariski-locally on source and target induced by a flat
        morphism of animated rings in the sense above,
        cf.\ \cite[Definition 4.4.2(ii) and Definition 5.3.3]{khan2016motivic},
        \item an \defemph{open immersion} if it is an open immersion in the sense of
        \cref{defn:derived-schemes:open-immersion-stacks}; equivalently, if it is flat and
        $\classical{f}$ is an open immersion,
        cf.\ \cite[Definition 4.4.2(iii) and Definition 4.5.2]{khan2016motivic},
        \item \defemphindex{étale}{etale@étale} if it is flat and $\classical{f}$ is étale,
        cf.\ \cite[Definitions 5.3.2 and 5.3.3]{khan2016motivic}, 
        \item a \defemph{classical isomorphism} if $\classical{f}$ is an isomorphism.
    \end{enumerate}
\end{defn}

\begin{lem} \label{lem:derived-schemes:properties}
    All the properties of morphisms listed in
    \cref{defn:derived-schemes:properties-of-morphisms} are stable under composition and base change.
\end{lem}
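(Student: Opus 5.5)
The plan is to check each property in \cref{defn:derived-schemes:properties-of-morphisms} separately, dividing them into two groups according to how they are defined.

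\emph{Properties defined on the classical truncation.} These are separated, proper, and classical isomorphism. Here the key input is that $\classical{(-)}$ is a right adjoint (\cref{constr:derived-schemes:classical}). Hence it preserves all limits that exist, in particular fiber products of derived schemes, so $\classical{(X \times_Y Z)} \cong \classical{X} \times_{\classical{Y}} \classical{Z}$. It is also a functor, so it preserves composition. Stability of these properties under composition and base change therefore reduces to the corresponding classical facts for separated morphisms, proper morphisms and isomorphisms of classical schemes.

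\emph{Flatness.} I would work Zariski-locally and reduce to affines $\Spec{A} \leftarrow \Spec{B}$ and $\Spec{A} \to \Spec{C}$. For base change, the derived pushout $B \otimes_A C$ has a Tor spectral sequence with $E^2$-page $\mathrm{Tor}^{\pi_*(A)}_{*}(\pi_*(B),\pi_*(C))$. The flatness condition $\pi_*(B) \cong \pi_*(A) \otimes_{\pi_0(A)} \pi_0(B)$, with $\pi_0(B)$ flat over $\pi_0(A)$, makes this collapse. The result is $\pi_n(B \otimes_A C) \cong \pi_n(C) \otimes_{\pi_0(C)} \pi_0(B \otimes_A C)$ with $\pi_0(B\otimes_A C) = \pi_0(B) \otimes_{\pi_0(A)} \pi_0(C)$, which is flat over $\pi_0(C)$. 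Composition is immediate from associativity of the base-change isomorphisms on $\pi_*$.

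\emph{Open immersions and étale morphisms.} These are defined as flat morphisms with an additional condition on $\classical{f}$. They then follow by combining the flat case with the classical case. This uses again that $\classical{(-)}$ commutes with fiber products, and that classical open immersions and étale maps are stable under composition and base change.

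\emph{Local almost finite presentation.} I would cite Lurie (SAG, \S4.2) or Kelly, where almost finite presentation is shown to be stable under composition and (derived) base change. Alternatively, one can check it truncation-by-truncation using compactness: $\tau_{\le n}(B\otimes_A C)$ is determined by $\tau_{\le n}B$ and $\tau_{\le n}C$, and base change $(-)\otimes_A C$ from $n$-truncated $A$-algebras to $n$-truncated $C$-algebras is left adjoint to a functor preserving filtered colimits, hence preserves compact objects. Composition of compact-object conditions follows similarly. The Zariski-local nature of the definition then passes to schemes.

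\emph{Main obstacle.} The only genuinely derived step is the Tor spectral sequence argument for flat base change. I expect to cite Khan (or Lurie, HA 7.2.2) for it rather than redo it, and everything else is formal.
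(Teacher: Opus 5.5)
Your proposal is correct and follows essentially the same route as the paper's proof. You handle separated, proper and classical isomorphisms via $\classical{(-)}$ preserving pullbacks, flatness via the homotopy-group formulas, open immersions and \'etale maps as flat maps with a classical condition, and lafp by citation to Kelly. The one place you go further is in making explicit the collapsing Tor spectral sequence for $\pi_*(B\otimes_A C)$, which the paper simply asserts ``follows immediately from the definition.''
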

\begin{proof}
    The functor $\classical{(-)}$ is a right adjoint and therefore preserves pullbacks; it also
    preserves compositions. The assertions for separated, proper and classical-isomorphism morphisms therefore follow from the corresponding classical assertions.

    For flat morphisms, the claim follows immediately from the definition; the formulas are preserved under tensor-product
    base change and under composition. The assertions for open immersions and étale morphisms then
    follow from their characterizations as flat morphisms satisfying the corresponding classical
    condition. Finally, morphisms locally almost of finite presentation are stable under composition and
    base change by \cite[§2.4]{kelly2026pro}.
\end{proof}

Flat morphisms have the following useful property:
\begin{lem} \label{lem:derived-scheme:cartesian-classical-flat}
    Consider a commutative square $\square$ of derived schemes as follows:
    \begin{center}
        \begin{tikzcd}
            W \commsymb \ar[r, "f'"] \ar[d, "g'"] & Z \ar[d, "g"] \\
            X \ar[r, "f"] & Y\rlap{.}
        \end{tikzcd}
    \end{center}
    Suppose that $f$ is flat. Then $\square$ is cartesian if and only if $f'$ is flat and
    $\classical{\square}$ is cartesian as a diagram of classical schemes.
\end{lem}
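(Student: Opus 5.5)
We reduce both directions to the affine case and then compare homotopy groups, using \cref{lem:animated-ring:whitehead} to detect equivalences. The reduction goes as follows. Cartesianness of $\square$ can be checked Zariski-locally on $Y$, then on $X$ and $Z$. Flatness of $f'$ is local by definition (\cref{defn:derived-schemes:properties-of-morphisms}). Since $\classical{(-)}$ commutes with restriction to opens, the classical cartesian condition is local in the same way. So we may assume $Y=\Spec{A}$, $X=\Spec{B}$, $Z=\Spec{C}$ and $W=\Spec{D}$ are affine, with $A\to B$ flat. The square then corresponds to a commutative square of animated rings, and we must decide when the induced map $\varphi\colon B\otimes_A C\to D$ is an equivalence.

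\emph{Forward direction.} Suppose $\square$ is cartesian, so $D\simeq B\otimes_A C$. Flatness of $f'$ means that $\pi_0(C)\to\pi_0(D)$ is flat and $\pi_n(C)\otimes_{\pi_0 C}\pi_0 D\cong\pi_n D$. Both facts follow from the Tor spectral sequence
\[
E^2_{s,t}=\mathrm{Tor}^{\pi_*A}_{s}(\pi_*B,\pi_*C)_t\Rightarrow\pi_{s+t}(B\otimes_A C).
\]
Flatness of $A\to B$ says $\pi_*B\cong\pi_0B\otimes_{\pi_0A}\pi_*A$ is flat over $\pi_*A$. Hence the spectral sequence collapses to
\[
\pi_*(B\otimes_AC)\cong \pi_*B\otimes_{\pi_*A}\pi_*C\cong \pi_0B\otimes_{\pi_0A}\pi_*C.
\]
In degree $0$ this gives $\pi_0 D\cong\pi_0B\otimes_{\pi_0A}\pi_0C$, which is the classical pushout, so $\classical{\square}$ is cartesian. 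Since $\pi_0B$ is flat over $\pi_0A$, its base change $\pi_0D$ is flat over $\pi_0C$. Finally, $\pi_nD\cong\pi_0B\otimes_{\pi_0A}\pi_nC\cong\pi_0D\otimes_{\pi_0C}\pi_nC$, which is the remaining flatness condition.

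\emph{Converse.} Suppose $f'$ is flat and $\classical{\square}$ is cartesian. Set $D'=B\otimes_AC$. By the forward computation, $\pi_*D'\cong\pi_0D'\otimes_{\pi_0C}\pi_*C$. The classical hypothesis says $\pi_0\varphi\colon\pi_0D'\to\pi_0D$ is an isomorphism. Flatness of $f'$ gives $\pi_*D\cong\pi_0D\otimes_{\pi_0C}\pi_*C$. The map $\pi_*\varphi$ is compatible with these identifications, since both are induced by the $C$-algebra structures and $\varphi$ is a $C$-algebra map. Therefore $\pi_n\varphi$ is the base change of $\pi_0\varphi$ along $\pi_0C\to\pi_nC$, hence an isomorphism for every $n$, and \cref{lem:animated-ring:whitehead} shows $\varphi$ is an equivalence.

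\emph{Main obstacle.} The hardest step is justifying the homotopy computation of $B\otimes_AC$ for flat $A\to B$. I would cite the Tor spectral sequence for $H(-)$ in $\mathrm{CAlg}_{H\Fp}$ (cf.\ \cite[Proposition 7.2.1.19]{higheralgebra}) and its degeneration under flatness, which is the standard fact that flat base change computes homotopy by tensoring. One must also check carefully that the identifications in the converse are natural in the square, so that $\pi_*\varphi$ really is a base change of $\pi_0\varphi$.
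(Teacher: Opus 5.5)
Your converse is the paper's argument. In both, the rings of $X\times_Y Z$ and of $W$ are flat over the ring of $Z$, so $\pi_n\varphi$ is $\pi_0\varphi$ tensored with $\pi_n$ of the ring of $Z$, and \cref{lem:animated-ring:whitehead} finishes.

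\textbf{Gap in the reduction.} In the converse direction you cannot simply assume $W=\Spec{D}$. After localizing to affines $Y_0=\Spec{A}$, $X_0=\Spec{B}$, $Z_0=\Spec{C}$ (with $X_0,Z_0$ over $Y_0$), the relevant piece of $W$ is the open subscheme $W_0=g'^{-1}(X_0)\cap f'^{-1}(Z_0)$. In the forward direction $W_0\simeq X_0\times_{Y_0}Z_0$ is affine. In the converse you only know $\classical{W_0}\cong\classical{X_0}\times_{\classical{Y_0}}\classical{Z_0}$. Until $W_0$ is shown to be affine, there is no ring $D$ and no map $\varphi\colon B\otimes_A C\to D$ to compare, so the ring-level argument cannot start.

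The missing input is that a derived scheme is affine as soon as its classical truncation is (cf.\ \cite[\S2.2]{toen2014derived}). Applied to $W_0$, whose truncation is affine by the classical cartesian hypothesis, this makes your reduction legitimate. The paper uses exactly this fact, after pulling back $W\to X\times_Y Z$ along affine covers of $Z$ and of $X\times_Y Z$.

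\textbf{Forward direction.} Here you take a different route. The paper argues formally:
\begin{itemize}
\item $f'$ is flat because flatness is stable under base change (\cref{lem:derived-schemes:properties});
\item $\classical{\square}$ is cartesian because $\classical{(-)}$ is a right adjoint.
\end{itemize}
You instead recompute both facts from the Tor spectral sequence. That is more work, but it supplies the actual content behind the one-line flat case of \cref{lem:derived-schemes:properties}: for $A\to B$ flat, $\pi_*(B\otimes_A C)\cong\pi_0B\otimes_{\pi_0A}\pi_*C$. The same computation also gives you flatness of $C\to B\otimes_A C$ in the converse, where the paper again just invokes base-change stability.
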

\begin{proof}
    If $\square$ is cartesian, then $f'$ is flat by base change and
    $\classical{\square}$ is cartesian because $\classical{(-)}$ preserves pullbacks.

    Conversely, suppose that $f'$ is flat and $\classical{\square}$ is cartesian. Write
    $W' \coloneqq X \times_Y Z$ and let $\varphi \colon W \to W'$ be the canonical morphism.
    Then $\classical{\varphi}$ is an isomorphism. The projection $W' \to Z$ is flat because it is
    the base change of the flat morphism $f$.

    Equivalences in an $\infty$-topos can be checked after pullback along an effective epimorphism
    \cite[Lemma 6.2.3.16]{highertopoi}. We may therefore pull back successively along affine
    Zariski covers of $Z$ and of $W'$ and assume that both $Z$ and $W'$ are affine. After the second
    pullback, the classical truncation of the resulting source is isomorphic to the affine scheme
    $\classical{W'}$; hence the source is affine as well. Here we use that a derived scheme is affine
    if and only if its classical truncation is affine; see \cite[§2.2]{toen2014derived}.

    We are thus reduced to a diagram of animated rings
    \begin{center}
        \begin{tikzcd}
            B && B' \ar[ll, "b"] \\
            & A \ar[ul, "c"] \ar[ur, "a"']\rlap{,}
        \end{tikzcd}
    \end{center}
    where $a$ and $c$ are flat, $c \simeq b a$, and $\pi_0(b)$ is an isomorphism.
    By definition, for every $n \ge 0$ we have
    that $\pi_n(b)$ is given by the composition
    \begin{equation*}
        \pi_n(B')
        \cong \pi_n(A) \otimes_{\pi_0(A)} \pi_0(B')
        \xrightarrow{\ \id{} \otimes \pi_0(b)\ }
        \pi_n(A) \otimes_{\pi_0(A)} \pi_0(B)
        \cong \pi_n(B).
    \end{equation*}
    Thus $b$ induces an isomorphism on every homotopy group and is therefore an equivalence by
    \cref{lem:animated-ring:whitehead}. Hence $\varphi$ is an equivalence, as required.
\end{proof}

Recall the Frobenius structure on animated $\Fp$-algebras from
\cref{lem:animated-rings:frobenius}. We can extend this to a Frobenius structure on prestacks,
and in particular obtain a Frobenius structure on derived $\Fp$-schemes.

\begin{lem} \label{lem:derived-schemes:frobenius-structure}
    There is a Frobenius structure
    $\defnot{F} \colon \id{\PreStkp} \to \id{\PreStkp}$
    such that, for every affine derived $\Fp$-scheme $X$,
    the morphism $F_{\yo X} \colon \yo X \to \yo X$ is induced by the Frobenius endomorphism
    of the corresponding animated $\Fp$-algebra. Moreover, $F$ restricts to a Frobenius structure
    \begin{equation*}
        \defnot{F} \colon \id{\DSchp} \longrightarrow \id{\DSchp}.
    \end{equation*}
\end{lem}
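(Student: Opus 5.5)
The plan is to obtain $F$ on $\PreStkp$ by left Kan extension of the Frobenius on affines, and then check that it preserves the full subcategory $\DSchp$.

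\emph{Step 1: the affine Frobenius.} By \cref{lem:animated-rings:frobenius}, the Frobenius on animated algebras is a natural transformation $\operatorname{F} \colon \id{\AnRingp} \to \id{\AnRingp}$. Passing to opposite categories gives a natural transformation $\id{\AffDSchp} \to \id{\AffDSchp}$. Whiskering with the Yoneda embedding yields a natural transformation $\yo \to \yo$ of functors $\AffDSchp \to \PreStkp$.

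\emph{Step 2: extension to prestacks.} Restriction along $\yo$ gives an equivalence between colimit-preserving endofunctors of $\PreStkp$ and functors $\AffDSchp \to \PreStkp$, via the universal property of presheaves \cite[Theorem 5.1.5.6]{highertopoi}. This equivalence is functorial, so it is an equivalence of categories and also identifies natural transformations. The transformation $\yo \to \yo$ from Step 1 therefore corresponds to a natural transformation $F \colon \id{\PreStkp} \to \id{\PreStkp}$, since $\id{\PreStkp}$ is the colimit-preserving extension of $\yo$. By construction its component at $\yo X$ is the morphism induced by the Frobenius of the corresponding animated algebra.

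Equivalently, $F$ can be described pointwise. For a prestack $\mathcal X$, the component $F_{\mathcal X}$ is precomposition with the Frobenius: it sends $\mathcal X(A) \to \mathcal X(A)$ via $\mathcal X(\operatorname{F}_A)$. This description also shows that $F$ commutes with Zariski sheafification.

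\emph{Step 3: restriction to $\DSchp$.} A natural transformation $\id{} \to \id{}$ restricts to any full subcategory, since its components $F_X \colon X \to X$ are endomorphisms of objects of that subcategory. So restricting $F$ to $\DSchp \subseteq \StkZar \subseteq \PreStkp$ automatically gives a Frobenius structure on $\DSchp$. The only thing needed is that $\DSchp$ is a full subcategory of $\PreStkp$, and the paper defines it as one. Its components agree with the affine Frobenius on affine derived schemes by Step 2.

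The main point to check is Step 2. One must verify that the equivalence of functor categories identifies the identity of $\PreStkp$ with the extension of $\yo$. One must also verify that the resulting components on representables are indeed the Frobenius maps, which is the unit identification of the Kan extension. I would cite the universal property and note that $\yo$ is fully faithful, so the left Kan extension restricts back to $\yo$ along $\yo$.
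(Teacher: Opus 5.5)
Your proof is correct and follows the paper's argument: you left Kan extend $\yo F^{\mathrm{aff}}\colon \yo \to \yo$ along the Yoneda embedding via \cite[Theorem 5.1.5.6]{highertopoi}, using that $\id{\PreStkp}$ is the colimit-preserving extension of $\yo$, and then restrict to the full subcategory $\DSchp \subseteq \PreStkp$. Your side remark that $F_{\mathcal X}$ is given by whiskering, with components $\mathcal X(\operatorname{F}_A)\colon \mathcal X(A)\to\mathcal X(A)$, is also correct, and it could even serve as the definition of $F$, bypassing the Kan extension entirely.
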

\begin{proof}
    Let $F^{\mathrm{aff}} \colon \id{\AffDSchp} \to \id{\AffDSchp}$ be the natural transformation
    induced by Frobenius on animated $\Fp$-algebras. Applying the Yoneda embedding gives a natural
    transformation
    \begin{equation*}
        \yo F^{\mathrm{aff}} \colon \yo \longrightarrow \yo.
    \end{equation*}
    Since $\PreStkp$ is the free cocompletion of $\AffDSchp$
    \cite[Theorem 5.1.5.6]{highertopoi}, and the identity functor of $\PreStkp$
    preserves colimits, left Kan extension along $\yo$ produces a natural transformation
    $$F \colon \id{\PreStkp} \to \id{\PreStkp}$$ with the required values on representables.
    Since $\DSchp \subseteq \PreStkp$ is a full subcategory, the component $F_X \colon X \to X$
    is a morphism of derived schemes whenever $X$ is a derived scheme.
\end{proof}

\begin{defn}
    Let $X \in \DSchp$ be a derived $\Fp$-scheme.
    We call the endomorphism $\defnot{F_X} \colon X \to X$
    from \cref{lem:derived-schemes:frobenius-structure}
    the \defemphindex{absolute Frobenius endomorphism}{Frobenius}.
\end{defn}

This definition is compatible with the classical definition of the Frobenius endomorphism:
\begin{lem} \label{lem:derived-schemes:Frob-structure-classical}
    Let $X \in \DSchp$ be a derived $\Fp$-scheme.
    Then $\classical{(F_X)} \colon \classical{X} \to \classical{X}$
    is the absolute Frobenius endomorphism of $\classical{X}$, i.e.\ the morphism that is
    the identity on the underlying topological space and is given by the $p$-th power map on
    the structure sheaf.
\end{lem}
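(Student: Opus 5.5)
The plan is to reduce to the affine case and then check everything on the level of rings. First I would fix what $\classical{(F_X)}$ means. By \cref{constr:derived-schemes:classical}, $\classical{(-)}$ is the right adjoint of $\mathrm{incl}$, and concretely it is restriction of a Zariski stack along $\AffSchp \hookrightarrow \AffDSchp$. So $\classical{(F_X)}$ is the natural endomorphism of $\classical{X}$ whose value on a classical test ring $R$ is
\[
F_X(R) \colon X(R) \to X(R).
\]

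The key step is to identify this map on points. By naturality of the Frobenius structure $F$ from \cref{lem:derived-schemes:frobenius-structure}, for every point $x \colon \yo \Spec{R} \to X$ we have $F_X \circ x \simeq x \circ F_{\yo\Spec{R}}$. By that same lemma, $F_{\yo\Spec{R}}$ is induced by the Frobenius of the animated ring $R$. By \cref{lem:animated-rings:frobenius}, for a discrete $R$ this is the classical $p$-th power map $r \mapsto r^p$. Hence $\classical{(F_X)}$ acts on $R$-points by precomposition with the absolute Frobenius of $\Spec{R}$, naturally in $R$.

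Next I would compare with the classical absolute Frobenius $\Phi$ of the classical scheme $Y = \classical{X}$, viewed as a functor of points on classical rings. The classical absolute Frobenius is itself a natural endomorphism of the identity on $\Schp$: for any morphism $y \colon \Spec{R} \to Y$ one has $\Phi_Y \circ y = y \circ \Phi_{\Spec{R}}$. This is the standard fact that the absolute Frobenius commutes with all morphisms of $\Fp$-schemes, which holds because it is the identity on spaces and $p$-th power on sections. So $\Phi_Y$ acts on $R$-points by precomposition with $\Phi_{\Spec{R}}$, which is again induced by $r \mapsto r^p$. The two natural endomorphisms of the functor $R \mapsto Y(R)$ therefore agree pointwise and naturally. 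By Yoneda for the fully faithful embedding $\Schp \hookrightarrow \Fun{\op{\AffSchp}}{\mathrm{Set}}$, we conclude $\classical{(F_X)} = \Phi_Y$. Since $\Phi_Y$ is by definition the identity on the topological space and the $p$-th power on $\Cat O_Y$, this gives the statement.

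The main obstacle is making the naturality step precise for points. The Frobenius $F$ on $\PreStkp$ was obtained by left Kan extension, and we need that the naturality squares for $x \colon \yo\Spec{R} \to X$ really express $F_X(R)$ as precomposition with $F_{\yo \Spec{R}}$. I would handle this directly via Yoneda: the value of $F_X$ on the $R$-point $x$ is $F_X \circ x$, which equals $x \circ F_{\yo\Spec{R}}$ by naturality, and that is precomposition. Everything here happens in the $1$-truncated set-valued setting for classical $R$, so no higher coherence issues arise.
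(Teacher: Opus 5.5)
Your proof is correct, but it takes a different route from the paper's.

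The paper argues affine-locally on rings. Since $\classical{(-)}$ is restriction to classical affines and $\pi_0(F_A) = F_{\pi_0(A)}$ for every animated $\Fp$-algebra $A$, the map $\classical{(F_X)}$ is the usual Frobenius on each classical affine open $\Spec{\pi_0(A)}$. Gluing these local pieces then gives the absolute Frobenius of $\classical{X}$.

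You never look at affine opens of $X$, nor at $F_A$ for non-discrete $A$. Instead you test both endomorphisms of $\classical{X}$ against all points $\yo\Spec{R} \to X$ with $R$ classical. Naturality of $F$ on $\PreStkp$, and naturality of the classical absolute Frobenius, show that both act by precomposition with the Frobenius of $\Spec{R}$. You then conclude by Yoneda on $\Schp$.

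The paper's version is shorter and directly exhibits the local description (identity on spaces, $p$-th power on sections). However, it leaves implicit that $F_X$ restricts to $F_U$ on affine opens $U \subseteq X$ and that the local descriptions glue. Your version makes this bookkeeping unnecessary. It needs only the values of $F$ on discrete rings and the standard fact that the absolute Frobenius commutes with all morphisms of $\Fp$-schemes.

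One point you use tacitly: $\Map{\PreStkp}{\yo\Spec{R}}{X}$ is discrete for classical $R$, because $\classical{X}$ is a classical scheme. This is what turns the naturality homotopies into equalities, and it is what justifies your remark that no higher coherences arise.
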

\begin{proof}
    The functor $\classical{(-)}$ is restriction from affine derived $\Fp$-schemes to classical
    affine $\Fp$-schemes. Under this restriction, the Frobenius natural transformation on animated
    $\Fp$-algebras becomes the ordinary Frobenius natural transformation on classical
    $\Fp$-algebras, since
    \begin{equation*}
        \pi_0(F_A) = F_{\pi_0(A)}
    \end{equation*}
    for every animated $\Fp$-algebra $A$. Consequently, the restriction of $F_X$ to the classical
    site is the morphism obtained by gluing the usual Frobenius morphisms on classical affine opens.
    This is precisely the absolute Frobenius of $\classical{X}$.
\end{proof}

\begin{defn}[{\cite[§0.6.8]{gaitsgory2012indcoherentsheaves}}]
    An animated $\Fp$-algebra $A \in \AnRingp$ is called \defemph{noetherian} if $\pi_0(A)$
    is a noetherian ring and $\pi_i(A)$ is a finitely generated $\pi_0(A)$-module for every
    $i \ge 0$.

    A derived $\Fp$-scheme $X \in \DSchp$ is called \defemph{noetherian} if it admits a finite
    Zariski cover by spectra of noetherian animated $\Fp$-algebras.

    We denote by $\textcolor{purple}{\DSchpnoeth} \subseteq \DSchp$ the full subcategory spanned by the noetherian
    derived $\Fp$-schemes.
\end{defn}

\begin{lem} \label{lem:derived-schemes:noetherian-fiber-product-aft}
    Let $X$, $Y$, and $Z$ be noetherian derived $\Fp$-schemes, and let
    $f \colon X \to Y$ and $g \colon Z \to Y$ be morphisms. If $f$ is locally almost of finite
    presentation, then the fiber product $X \times_Y Z$ is noetherian.
\end{lem}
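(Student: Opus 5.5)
The plan is to reduce to an affine statement and verify the two conditions in the definition of noetherian animated $\Fp$-algebra by hand. First, the noetherian property is Zariski local. Choose finite Zariski covers of $Y$ by spectra of noetherian animated algebras $\Spec{A_\beta}$. Refine the preimages of these in $X$ and $Z$ to finite covers by noetherian affines $\Spec{B_\alpha}$ and $\Spec{C_\gamma}$ mapping into some $\Spec{A_\beta}$; this is possible since open subschemes of noetherian derived schemes are noetherian, because localisations of noetherian animated rings remain noetherian. The products $\Spec{B_\alpha}\times_{\Spec{A_\beta}}\Spec{C_\gamma}$ then give a finite Zariski cover of $X\times_Y Z$, since open immersions are stable under base change by \cref{lem:derived-schemes:properties}. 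So it suffices to show that $B\otimes_A C$ is noetherian whenever $A\to B$ is almost of finite presentation and $A,B,C$ are noetherian.

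For $\pi_0$ we have $\pi_0(B\otimes_A C)\cong \pi_0(B)\otimes_{\pi_0(A)}\pi_0(C)$. Since $A\to B$ is almost of finite presentation, $\pi_0(A)\to\pi_0(B)$ is of finite type (indeed of finite presentation, by compactness of $\tau_{\le 0}B$). Hence $\pi_0(B\otimes_A C)$ is a finite type $\pi_0(C)$-algebra, and it is noetherian by Hilbert's basis theorem.

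For the higher homotopy groups, the key step is to show that $\pi_i(B\otimes_A C)$ is a finitely generated $\pi_0$-module. I would use that, for $A$ noetherian, $A\to B$ is almost of finite presentation if and only if $B$ is noetherian and $\pi_0(B)$ is of finite type over $\pi_0(A)$; this is the standard characterisation in \cite[§2.4]{kelly2026pro} and Lurie. Then I would factor $A\to B$ through a polynomial algebra $A[x_1,\dots,x_n]\to B$ that is surjective on $\pi_0$. The first factor $C\to C[x_1,\dots,x_n]$ is flat and preserves noetherianity. So we reduce to the case where $\pi_0(A)\to\pi_0(B)$ is surjective, with $B$ almost perfect as an $A$-module, since it has finitely generated homotopy over a noetherian base. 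Then $B\otimes_A C$ is an almost perfect, connective $C$-module. The Tor spectral sequence $\mathrm{Tor}^{\pi_*A}_{p}(\pi_*B,\pi_*C)_q\Rightarrow \pi_{p+q}(B\otimes_A C)$ has finitely generated terms in each total degree over the noetherian ring $\pi_0(C)$, using connectivity and resolutions by finite free modules. Hence $\pi_i(B\otimes_A C)$ is finitely generated over $\pi_0(C)$, and so also over $\pi_0(B\otimes_A C)$.

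The main obstacle I expect is this last finiteness argument: making the almost-perfectness and Tor spectral sequence step rigorous, rather than the bookkeeping of covers. Alternatively, I would simply cite Lurie's result that almost perfect modules over noetherian rings are exactly those with finitely generated homotopy, together with stability of almost perfectness under base change, to conclude directly.
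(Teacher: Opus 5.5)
Your argument is correct, and your reduction to the affine case matches the paper's. The difference is in how you handle the affine statement.

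The paper never computes homotopy groups. It notes that being almost of finite presentation is stable under base change, so $C \to B\otimes_A C$ is almost of finite presentation. It then applies the noetherian characterization (an algebra almost of finite presentation over a noetherian base is noetherian) to this base-changed map. You already invoke that characterization for $A\to B$. Applying it to $C \to B\otimes_A C$ instead would have made your whole second half unnecessary.

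Your route uses only one direction of the characterization, for $A \to B$: almost of finite presentation implies noetherian with $\pi_0$ of finite type. You then prove noetherianity of the base change directly:
\begin{itemize}
\item Hilbert's basis theorem handles $\pi_0$.
\item For the higher $\pi_i$, you reduce via $A[x_1,\dots,x_n]$ and $B\otimes_A C \simeq B\otimes_{A[x]} C[x]$ to the case where $B$ is almost perfect as an $A$-module.
\item You conclude with base change of almost perfect modules and Lurie's criterion that almost perfect connective modules over a noetherian ring have finitely generated homotopy.
\end{itemize}
This is longer, but it exposes the finiteness mechanism. It also needs only module-level finiteness results, not base-change stability of almost finite presentation for algebras. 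The paper's version is two lines but entirely black-boxed.

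One correction applies if you use the Tor spectral sequence rather than the citation. The graded ring $\pi_*A$ need not be noetherian; for example, take a trivial square-zero extension of $\Fp$ with $\pi_i = \Fp$ in every positive degree. So $\pi_*B$ need not have a resolution by \emph{finite} free $\pi_*A$-modules. You should use free graded modules with finitely many generators in each degree. These exist because everything is connective and degreewise finitely generated over the noetherian ring $\pi_0 A$, and they still give degreewise finitely generated $E^2$-terms in each total degree. Citing Lurie's almost-perfect criterion avoids this issue entirely.
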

\begin{proof}
    Choose a finite affine open cover $Y = \bigcup_i Y_i$, with $Y_i = \Spec{A_i}$ and each
    $A_i$ noetherian. The open derived subschemes $f^{-1}(Y_i) \subseteq X$ and
    $g^{-1}(Y_i) \subseteq Z$ are noetherian and therefore quasi-compact. Hence we may choose
    finite affine open covers
    \begin{equation*}
        f^{-1}(Y_i) = \bigcup_j U_{ij},
        \qquad
        g^{-1}(Y_i) = \bigcup_k V_{ik},
    \end{equation*}
    where $U_{ij} = \Spec{B_{ij}}$ and $V_{ik} = \Spec{C_{ik}}$ are noetherian affine derived schemes.
    The affine derived schemes
    \begin{equation*}
        U_{ij} \times_{Y_i} V_{ik}
        \cong
        \Spec{B_{ij} \otimes_{A_i} C_{ik}}
    \end{equation*}
    form a finite Zariski cover of $X \times_Y Z$.

    Since $f$ is locally almost of finite presentation, every morphism $A_i \to B_{ij}$ is almost
    of finite presentation. This property is stable under base change, so
    \begin{equation*}
        C_{ik} \longrightarrow B_{ij} \otimes_{A_i} C_{ik}
    \end{equation*}
    is almost of finite presentation. Since $C_{ik}$ is noetherian, the noetherian
    characterization of almost finitely presented algebras in \cite[§2.4]{kelly2026pro} implies that
    $B_{ij} \otimes_{A_i} C_{ik}$ is noetherian. Thus the displayed finite affine cover is a cover
    by noetherian affine derived schemes, and $X \times_Y Z$ is noetherian.
\end{proof}

\begin{lem} \label{lem:noetherian:lafp-of-finite-type}
    Let $f \colon X \to Y$ be a morphism of derived noetherian schemes.
    Then $f$ is lafp if and only if $\classical{f}$ is of finite type.
    
    In particular, if $f$ is an open immersion, étale or proper, then $f$ is lafp.
\end{lem}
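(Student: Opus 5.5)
The plan is to reduce to affine charts and then to a statement about animated rings, where the comparison is with classical finite type on $\pi_0$.

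\emph{Reduction to affines.} Being lafp is defined chart-wise. Being of finite type for $\classical{f}$ is Zariski-local on source and target, and $\classical{(\Spec{B})} = \Spec{\pi_0(B)}$ by \cref{constr:derived-schemes:classical}. So it suffices to show the following: for a morphism $A \to B$ of noetherian animated $\Fp$-algebras, $B$ is almost of finite presentation over $A$ if and only if $\pi_0(A) \to \pi_0(B)$ is of finite type. Affine opens of noetherian derived schemes are noetherian, because noetherianity of animated rings is Zariski local. We may also check both conditions on a cover by noetherian affines; for lafp this uses the locality statement from \cite[§2.4]{kelly2026pro}.

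\emph{Forward direction.} If $B$ is afp over $A$, then $\tau_{\le 0}B = \pi_0(B)$ is compact among $0$-truncated animated $A$-algebras. These are the same as discrete $\pi_0(A)$-algebras, by \cref{lem:animated-rings:classical-ring-yoneda} together with the adjunction $\pi_0 \dashv \mathrm{incl}$. Compact objects there are finitely presented $\pi_0(A)$-algebras, and in particular they are of finite type.

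\emph{Converse.} Suppose $\pi_0(B)$ is finite type over $\pi_0(A)$. Since $\pi_0(A)$ is noetherian, $\pi_0(B)$ is finitely presented over $\pi_0(A)$. We then invoke the standard criterion (\cite[§2.4]{kelly2026pro}, or Lurie's characterization in the noetherian setting): for noetherian $A$, a map $A \to B$ is afp if and only if $\pi_0(B)$ is finitely presented over $\pi_0(A)$ and each $\pi_i(B)$ is a finitely generated $\pi_0(B)$-module. The second condition holds because $B$ is noetherian. This step is the main obstacle if one wants a self-contained argument rather than a citation. The route would be to build $\tau_{\le n}B$ inductively by attaching finitely many cells: start with a polynomial algebra surjecting onto $\pi_0(B)$, then kill finitely generated homotopy groups, using that the relevant homotopy modules are finitely generated over the noetherian ring $\pi_0$.

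\emph{The ``in particular''.} It follows since open immersions, étale and proper morphisms have classical truncations that are locally of finite type. For an open immersion or étale map this is clear. For a proper map it holds by definition, since proper includes finite type. Open immersions of noetherian schemes are quasi-compact, so finite type holds in these cases as well.
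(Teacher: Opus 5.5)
Your proposal is essentially the paper's proof: you reduce to affine charts and apply the noetherian criterion for almost finite presentation from \cite[\S2.4]{kelly2026pro}, and your compact-object argument for the forward direction is a harmless self-contained addition. The one point to tighten is that ``of finite type'' is not local on the source, so you should say explicitly, as the paper does, that $\classical{f}$ is automatically quasi-compact because $\classical{X}$ is noetherian; this upgrades the chart-wise ``locally of finite type'' to ``of finite type''.
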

\begin{proof}
    Every morphism between noetherian schemes is quasi-compact.
    The equivalence therefore follows by applying the noetherian criterion of \cite[\S2.4]{kelly2026pro} affine-locally.
    The final assertion follows from the corresponding classical properties of open immersions, étale morphisms and proper morphisms.
\end{proof}

\begin{notation}
    Let $f \colon X \to Y$ be a morphism of derived noetherian schemes.
    In view of \cref{lem:noetherian:lafp-of-finite-type}, we say that 
    $f$ is \defemph{almost of finite type}, or \defemphnoindex{aft} if $f$ is locally almost of finite presentation,
    or, equivalently, $\classical{f}$ is of finite type.
\end{notation}

%% file: derived-schemes/cff-derived-schemes.tex
\subsection{Ind-coherent sheaves} \label{sec:derived-schemes-ind-coh}

In this section we define and collect some results about the category $\IndCoh(X)$ of ind-coherent sheaves 
on a noetherian derived scheme $X$. The main references are \cite{gaitsgory2019study,gaitsgory2012indcoherentsheaves}.

\begin{rmk}
    In \cite{gaitsgory2012indcoherentsheaves} and \cite{gaitsgory2019study}, they work over a field of characteristic $0$.
    All of their results used in this article work with the same proof in characteristic $p$.
\end{rmk}

\begin{defn}[{\cite[Chapter 3, \S\S1.1 and 1.5]{gaitsgory2019study}}]
    On derived affine schemes, \defemphindex{quasi-coherent pullback}{pullback!quasi-coherent} is the functor
    \begin{equation*}
        \defnot{\mathrm{QCoh}^*_{\AffDSchp}} \colon \op{({\AffDSchp})} \to \CatInf
    \end{equation*}
    which sends $\Spec{A}$ to the category $\mathrm{Mod}_A$ of $A$-modules (defined as the stable category of $HA$-module spectra $\mathrm{Mod}_{HA}(\mathrm{Sp})$) and a morphism
    $\Spec{B} \to \Spec{A}$ to the extension-of-scalars functor
    $B \otimes_A - \colon \mathrm{Mod}_A \to \mathrm{Mod}_B$.
    We define $\defnot{\QCoh^*}$ on derived prestacks, and in particular on derived schemes, as the right Kan extension of
    $\mathrm{QCoh}^*_{\AffDSchp}$ along the inclusion of derived affine schemes. Thus
    \begin{equation*}
        \defnot{\QCoh(X)} \cong \lim_{(\Spec{A} \to X)\in ((\AffDSchp)_{/X})^{\mathrm{op}}} \mathrm{Mod}_A.
    \end{equation*}

    The \defemph{standard t-structure} on $\QCoh(X)$ is characterized by the property that
    $M \in \QCoh(X)_{\ge 0}$ if and only if its pullback to every derived affine open
    $\Spec A \hookrightarrow X$ belongs to $\mathrm{Mod}_{A,\ge 0}$. On $\Spec A$ this is the standard
    t-structure on $A$-modules, whose heart is canonically equivalent to the category of
    $\pi_0(A)$-modules.

    Moreover, for any $f$ the functor $\mathrm{QCoh}^*(f)$ admits a right adjoint, which we denote by $\defnot{f_*}$.
\end{defn}

\begin{defn}[{\cite[Definition 1.1.2]{gaitsgory2012indcoherentsheaves}}]
    Let $X$ be a derived noetherian scheme.
    We define the category of \defemphindex{coherent sheaves}{sheaf!coherent} $\defnot{\mathrm{Coh}(X)} \subseteq \QCoh(X)$ as the full subcategory consisting of those quasi-coherent modules 
    $M$ that are bounded (in the standard t-structure), and such that $\pi_n(M) \in \QCoh(X)^\heartsuit \cong \QCoh(\classical{X})^\heartsuit$ is coherent 
    (in the sense of \cite[\href{https://stacks.math.columbia.edu/tag/01XY}{Section 01XY}]{stacks-project}).

    It is clear that the t-structure on $\QCoh(X)$ restricts to a t-structure on $\mathrm{Coh}(X)$.
\end{defn}

\begin{defn}[{\cite[Definition 1.1.3]{gaitsgory2012indcoherentsheaves}}]
    Let $X$ be a derived noetherian scheme.
    We define the category of \defemphindex{ind-coherent sheaves}{sheaf!ind-coherent} as $\defnot{\IndCoh(X)} \coloneqq \mathrm{Ind}(\mathrm{Coh}(X))$,
    the ind-completion of the category of coherent sheaves.

    In particular, there is a canonical comparison functor $\defnot{\Psi} \colon \IndCoh(X) \to \QCoh(X)$,
    given by ind-extending the inclusion $\mathrm{Coh}(X) \to \QCoh(X)$.
\end{defn}

\begin{rmk} \label{rmk:indcoh:classical-regular-qcoh}
    Let $X$ be a derived noetherian scheme. The canonical functor $\Psi \colon \IndCoh(X) \to \QCoh(X)$
    is an equivalence if and only if $X$ is classical and regular, see \cite[Lemma 1.1.6 and Proposition 1.6.4]{gaitsgory2012indcoherentsheaves}.
\end{rmk}

\begin{defn}
    Let $X$ be a derived noetherian scheme.
    The category $\IndCoh(X)$ carries a unique t-structure which is compatible with filtered colimits and right-complete,
    such that the inclusion $\mathrm{Coh}(X) \subseteq \IndCoh(X)$ is t-exact, 
    see e.g.\ \cite[Proposition 2.13]{antieau2016k}.
    We call this t-structure the \defemph{standard t-structure}.
\end{defn}

\begin{notation}
    If $\Cat D$ is a stable category with a t-structure $(\Cat D_{\ge 0}, \Cat D_{\le 0})$,
    we write $\Cat D^+ \coloneqq \bigcup_{n} \Cat D_{\le n}$ for the \defemphnoindex{bounded above} part of $\Cat D$.
\end{notation}

\begin{lem}[{\cite[Lemma 1.2.2 and Proposition 1.2.4]{gaitsgory2012indcoherentsheaves}}] \label{lem:ind-coh:bounded-above}
    Let $X$ be a derived noetherian scheme. The canonical functor $\Psi \colon \IndCoh(X) \to \QCoh(X)$ is t-exact,
    and restricts to an equivalence $$\IndCoh(X)^+ \xrightarrow{\simeq} \QCoh(X)^+.$$
\end{lem}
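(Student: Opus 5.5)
The plan is to reduce everything to the compact generators $\mathrm{Coh}(X)$ and to the corresponding statement for coherent objects inside $\QCoh(X)$.

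\textbf{Step 1: t-exactness of $\Psi$.} By definition $\Psi$ is the ind-extension of the inclusion $\mathrm{Coh}(X) \subseteq \QCoh(X)$, and this inclusion is t-exact. The standard t-structure on $\IndCoh(X)$ is compatible with filtered colimits, and $\IndCoh(X)_{\ge 0}$ is generated under filtered colimits by $\mathrm{Coh}(X)_{\ge 0}$. The subcategory $\QCoh(X)_{\ge 0}$ is closed under colimits, so $\Psi$ is right t-exact. For left t-exactness, write an object of $\IndCoh(X)_{\le 0}$ as a filtered colimit of truncations $\tau_{\le 0}$ of coherent objects; this is possible because truncation commutes with filtered colimits in $\IndCoh(X)$. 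The truncation of a coherent object is again coherent. Since $\QCoh(X)_{\le 0}$ is closed under filtered colimits (the t-structure on $\QCoh(X)$ is compatible with filtered colimits, a statement that can be checked affine-locally on modules), $\Psi$ is left t-exact.

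\textbf{Step 2: full faithfulness on $\IndCoh(X)^+$.} Take $M = \colim_i M_i$ with $M_i \in \mathrm{Coh}(X)$, and $N \in \IndCoh(X)_{\le n}$. We have
$$\Map{}{M}{N} = \lim_i \Map{}{M_i}{N}.$$
It therefore suffices to show $\Map{}{M_i}{N} \simeq \Map{}{\Psi M_i}{\Psi N}$, that is, that every coherent $C$ is "compact relative to bounded above objects" in $\QCoh(X)$: $\Map{}{C}{-}$ commutes with filtered colimits of objects uniformly bounded above.

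To prove this, use that $X$ is noetherian and reduce to affine $X = \Spec{A}$ by Zariski descent, since both sides are limits over a finite cover. Then $C$ admits a resolution by finite free $A$-modules, because $\pi_0 A$ is noetherian and the $\pi_i A$ are finitely generated. For $N$ bounded above by $n$, $\Map{}{C}{N}$ only depends on a finite skeleton of this resolution, which is a perfect module. Hence it commutes with filtered colimits of such $N$. Applying this to $N$ written as a filtered colimit of truncated coherent objects yields the identification.

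\textbf{Step 3: essential surjectivity onto $\QCoh(X)^+$.} It suffices to show that every $M \in \QCoh(X)_{\le n}$ is a filtered colimit of coherent objects lying in $\QCoh(X)_{\le n}$ (after truncating). Given this, $M \simeq \Psi(\colim M_i)$, and by Step 1 the object $\colim M_i$ lies in $\IndCoh(X)^+$. The claim reduces, via $\tau_{\ge -k}$ and Postnikov towers of bounded pieces, to the fact that $\QCoh(X)^\heartsuit = \QCoh(\classical X)^\heartsuit$ is the ind-category of coherent sheaves on a noetherian scheme. To pass from the heart to bounded objects, use extension closure combined with Step 2.

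The main obstacle is Step 2: making the almost-compactness of coherent objects precise and globalizing it from affines to a finite cover. Step 3's passage from bounded to bounded-above objects also requires care; I would handle it by showing that $M \simeq \colim_k \tau_{\ge -k} M$ is preserved under $\Psi$.
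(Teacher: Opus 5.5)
Your proposal is correct and follows essentially the argument of the cited source; the paper has no proof of its own and only cites Gaitsgory's Lemma~1.2.2 and Proposition~1.2.4. That argument has three parts, all of which you reproduce:
\begin{itemize}
\item t-exactness, because $\IndCoh(X)_{\ge 0}$ and $\IndCoh(X)_{\le 0}$ are generated under filtered colimits by coherent objects;
\item full faithfulness on $\IndCoh(X)^+$, because coherent objects are almost perfect and hence compact relative to $\QCoh(X)_{\le n}$;
\item essential surjectivity, because $\QCoh(X)_{\le n}$ is generated under filtered colimits by $\mathrm{Coh}(X)_{\le n}$.
\end{itemize}

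The two points you leave implicit are routine. In the descent step, intersections of affines need not be affine, so argue by Mayer--Vietoris induction over quasi-compact opens. In Step~3, check that a lift $G \in \IndCoh(X)^+$ of an object of $\QCoh(X)_{\le n}$ already lies in $\IndCoh(X)_{\le n}$: apply t-exactness and full faithfulness to $\tau_{\ge n+1} G$. This ensures the colimit of the lifted tower stays in $\IndCoh(X)^+$.
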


\begin{prop}[{\cite[Proposition 3.1.1]{gaitsgory2012indcoherentsheaves}}] \label{lem:ind-coh:incoherent-functoriality}
    Let $f \colon X \to Y$ be a morphism of derived noetherian schemes.
    Then there exists a colimit-preserving functor $\defnot{f_*} \colon \IndCoh(X) \to \IndCoh(Y)$
    such that the following diagram commutes:
    \begin{center}
        \begin{tikzcd}
            \IndCoh(X) \ar[r, "f_*"] \ar[d, "\Psi"] &\IndCoh(Y) \ar[d, "\Psi"] \\
            \QCoh(X) \ar[r, "f_*"] &\QCoh(Y) \rlap{.}
        \end{tikzcd}
    \end{center}
\end{prop}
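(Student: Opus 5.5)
The plan is to build $f_*$ from its restriction to compact objects and then ind-extend it. Since $\IndCoh(X)=\mathrm{Ind}(\mathrm{Coh}(X))$, a colimit-preserving functor out of $\IndCoh(X)$ is the same thing as a functor $\mathrm{Coh}(X)\to\IndCoh(Y)$. The classical quasi-coherent pushforward $f_*\colon\QCoh(X)\to\QCoh(Y)$ does not preserve coherence unless $f$ is proper. It does, however, preserve bounded-above objects: the question is Zariski-local on $Y$, and there $f_*$ is computed by a finite Čech complex over affines, because noetherian schemes are quasi-compact and, in the separated case, have affine intersections; in general one uses a finite hypercover of bounded length. Concretely, $f_*$ carries $\QCoh(X)_{\le n}$ into $\QCoh(Y)_{\le n}$, since $f_*$ is left t-exact with respect to homological indexing.

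First I would check that $f_*(\mathrm{Coh}(X))\subseteq\QCoh(Y)^+$. Coherent objects are bounded, so this follows from the preceding remark.

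Second, I would define the functor on compact objects as the composite
$$\mathrm{Coh}(X)\xrightarrow{f_*}\QCoh(Y)^+\xrightarrow{(\Psi|_{\IndCoh(Y)^+})^{-1}}\IndCoh(Y)^+\subseteq\IndCoh(Y).$$
The inverse exists because of \cref{lem:ind-coh:bounded-above}. I would then let $f_*\colon\IndCoh(X)\to\IndCoh(Y)$ be the unique colimit-preserving extension of this composite.

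Third, I would verify that the square commutes. Both composites $\Psi\circ f_*$ and $f_*\circ\Psi$ preserve colimits: $\Psi$ is an ind-extension, and quasi-coherent $f_*$ preserves colimits for quasi-compact quasi-separated $f$, which holds here because all the schemes are noetherian. By the universal property of $\mathrm{Ind}$, it therefore suffices to compare the two composites on $\mathrm{Coh}(X)$. There they agree by construction, since $\Psi$ restricted to the bounded-above part is inverse to the equivalence used in the second step, and $\Psi$ is the identity inclusion on $\mathrm{Coh}(X)$.

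The main obstacle is the colimit preservation of quasi-coherent $f_*$ for derived noetherian schemes, together with the bounded-cohomological-amplitude estimate. I would reduce both statements to the affine case by Zariski descent. There $f_*$ is restriction of scalars, which is exact and preserves colimits. Finite limits of stable categories then commute with filtered colimits, which gives colimit preservation globally, and the finite Čech limit shifts homological degrees by at most a fixed bounded amount, which gives the amplitude estimate.
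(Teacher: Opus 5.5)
Your proposal is correct and follows the construction behind the cited result \cite[Proposition 3.1.1]{gaitsgory2012indcoherentsheaves}: send $\mathrm{Coh}(X)\to\QCoh(Y)^+\simeq\IndCoh(Y)^+\subseteq\IndCoh(Y)$ using \cref{lem:ind-coh:bounded-above}, ind-extend, and check the square on $\mathrm{Coh}(X)$, which suffices because both composites are continuous. Two small precisions: the containment $f_*(\QCoh(X)_{\le n})\subseteq\QCoh(Y)_{\le n}$ is formal, since $f_*$ is right adjoint to the right t-exact $f^*$, so the \v{C}ech amplitude estimate is not needed there; and the ind-extension preserves all colimits because the functor on $\mathrm{Coh}(X)$ is exact, not merely because it is a functor.
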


%% file: derived-schemes/cff-cartier-modules-derived-schemes.tex
\subsection{The \texorpdfstring{$4\frac{1}{2}$}{Four-and-a-half}-functor formalism of Cartier modules} \label{sec:derived-schemes-2ff}
In this section we review the $4\frac{1}{2}$-functor formalism of ind-coherent sheaves 
and construct the $4\frac{1}{2}$-functor formalism of Cartier modules on derived noetherian $\Fp$-schemes.

\begin{defn} \label{def:derived-schemes:nagata-subcategories}
    We write 
    \begin{itemize}
        \item $\defnot{E}$ for the class of separated almost of finite type morphisms in $\DSchpnoeth$,
        \item $\defnot{I}$ for the class of open immersions in $\DSchpnoeth$, and
        \item $\defnot{P}$ for the class of proper morphisms in $\DSchpnoeth$.
    \end{itemize}
    They are all wide subcategories of $\DSchpnoeth$ by \cref{lem:derived-schemes:properties}.
    Moreover, $I \subseteq E$: by \cite[\S\S4.4.2 and 4.5.2]{khan2016motivic}, open immersions
    are monomorphisms and are locally given by flat morphisms of finite presentation. In particular,
    they are separated and almost of finite type. 
    Similarly, $P \subseteq E$: Indeed, every proper morphism is separated and classically of finite type (since this can be checked classically where it holds by definition).
\end{defn}

\begin{lem} \label{lem:derived-schemes:suitable-decomposition}
    The pair $(\DSchpnoeth, E)$ is a geometric setup, and
    the pair $(I, P)$ is a suitable decomposition of $E$.
\end{lem}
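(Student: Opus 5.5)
The plan is to check the two parts of the statement separately: first that $E$ is stable under base change in $\DSchpnoeth$ (including existence of the relevant pullbacks), and then the four conditions of \cref{def:cff:suitable-decomposition} for $(I,P)$.

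\emph{Geometric setup.} Let $f \colon X \to Y$ lie in $E$ and let $g \colon Z \to Y$ be arbitrary in $\DSchpnoeth$. The pullback $X \times_Y Z$ exists in $\DSchp$; since $f$ is lafp, \cref{lem:derived-schemes:noetherian-fiber-product-aft} shows it is noetherian, so it is also the pullback in the full subcategory $\DSchpnoeth$. The projection to $Z$ is separated and lafp by \cref{lem:derived-schemes:properties}, hence in $E$. Condition \textbf{(a)} follows the same way: $I, P \subseteq E$ by \cref{def:derived-schemes:nagata-subcategories}, so the pullbacks exist in $\DSchpnoeth$, and \cref{lem:derived-schemes:properties} gives stability of open immersions and proper maps.

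\emph{Left cancellability (d).} For $P$: if $f \in P$ and $fg \in P$, then $\classical{g}$ is proper by the classical cancellation property, because $\classical{f}$ is separated. Hence $g$ is proper, as properness is defined classically. The converse is stability under composition. For $I$: if $f$ and $fg$ are open immersions, write $g$ as the composite of its graph $\Gamma_g \colon Z \to Z \times_X Y$ with the projection $Z\times_X Y \to Y$. The projection is a base change of $fg$, hence an open immersion. The graph $\Gamma_g$ is a base change of the diagonal of $f$, and since $f$ is a monomorphism this diagonal is an equivalence. So $g$ is an open immersion.

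\emph{Truncatedness (c).} An open immersion is a monomorphism, i.e.\ $(-1)$-truncated, so every map in $P \cap I$ is truncated.

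\emph{Factorization (b).} This is the main obstacle: we need a derived Nagata compactification. Given $f \colon X \to Y$ in $E$, apply classical Nagata compactification (Conrad/Deligne, valid for separated finite-type maps of noetherian schemes) to $\classical{f}$, getting $\classical{X} \hookrightarrow \overline{X}_0 \to \classical{Y}$. This must then be lifted to a derived $\overline{X}$ proper over $Y$ containing $X$ as a derived open. A direct lift is awkward, so the approach is to cite the derived compactification result from Gaitsgory--Rozenblyum (used in \cite{gaitsgory2012indcoherentsheaves} to construct $f^!$ on $\mathrm{Sch}_{\mathrm{aft}}$), which says the category of compactifications is nonempty. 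The rough idea behind it: take a closed immersion of $\classical{X}$ into the classical compactification, and glue the derived structure of $X$ on the open part with a derived thickening of $\overline{X}_0$, via the pushout $\overline{X} = X \sqcup_{U} \overline{X}_0$ (or a square-zero extension argument done Postnikov-level by level). The noetherian and aft conditions on $\overline{X}$ are then checked classically using \cref{lem:noetherian:lafp-of-finite-type}, and the openness of $X \to \overline{X}$ is checked via \cref{lem:derived-scheme:cartesian-classical-flat}.
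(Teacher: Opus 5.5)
Your arguments for the geometric-setup part and for conditions \textbf{(a)}, \textbf{(c)} and \textbf{(d)} are correct and match the paper. Your graph factorization $g = \mathrm{pr}\circ\Gamma_g$ is just a repackaging of the paper's observation that the square with sides $\mathrm{id}_Z$, $g$, $f\circ g$, $f$ is cartesian because $f$ is a monomorphism. The genuine gap is \textbf{(b)}, which you correctly flag as the crux but do not close. The result you propose to cite does not apply. The Gaitsgory--Rozenblyum compactification \cite[Chapter 5, Proposition 2.1.6]{gaitsgory2019study} is for derived schemes almost of finite type over a field, whereas here $Y$ is an arbitrary noetherian derived $\Fp$-scheme (e.g.\ $\Spec{\Fp[[t]]}$). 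The arguments in \cite[\S6.3.1]{gaitsgory2012indcoherentsheaves} and \cite[Proposition 6.3.4]{khan2016motivic} do produce a compactification, but with no guarantee that $\overline{X}$ is noetherian. That guarantee is needed for the factorization to live in $\DSchpnoeth$ at all.

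Your fallback sketch does not repair this. Noetherianity of a derived scheme is not a classical condition: it requires every $\pi_i(\mathcal{O}_{\overline{X}})$ to be coherent over $\pi_0(\mathcal{O}_{\overline{X}})$. So it cannot be ``checked classically''. Nor can you invoke \cref{lem:noetherian:lafp-of-finite-type}, since it presupposes that source and target are already noetherian. If the ``derived thickening of $\overline{X}_0$'' is meant to restrict to $X$ on the open part, its existence is exactly what has to be proved. If instead you glue $X$ to the classical $\overline{X}_0$ along $\classical{X}$, the result is wrong. Take $\classical{X} = \mathbb{A}^1 \subset \mathbb{P}^1 = \overline{X}_0$ with $\pi_1(\mathcal{O}_X) \cong \mathcal{O}_{\mathbb{A}^1}$. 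The glued object then has $\pi_1 \cong j_*\mathcal{O}_{\mathbb{A}^1}$ for $j \colon \mathbb{A}^1 \hookrightarrow \mathbb{P}^1$, which is not coherent at $\infty$. A Postnikov-by-Postnikov construction would have to extend each $\pi_i(\mathcal{O}_X)$ coherently over the boundary, together with the $k$-invariants; that is precisely the content of a derived Nagata theorem.

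The missing input is a compactification over a general derived base in which $p$ is almost of finite presentation. The paper takes it from \cite[Theorem 8.15]{scholze2025six}, where $p$ is proper in the sense of \cite[Definition 8.9]{scholze2025six} and in particular almost of finite presentation. It then deduces that $\overline{X}$ is noetherian from the noetherianity of $Y$, via the characterization of almost finitely presented algebras over noetherian bases in \cite[\S2.4]{kelly2026pro}, exactly as in \cref{lem:derived-schemes:noetherian-fiber-product-aft}.
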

\begin{proof}
    For the first statement, it suffices to show that if $f \colon X \to Y$ is in $E$, and $g \colon Z \to Y$ is an arbitrary morphism,
    with $X, Y$ and $Z$ noetherian, then $X \times_Y Z$ is again noetherian, and the projection $X \times_Y Z \to Z$ is in $E$.
    The first claim is \cref{lem:derived-schemes:noetherian-fiber-product-aft}, and the second claim was proven in \cref{lem:derived-schemes:properties}.

    We check the axioms of a suitable decomposition from \cref{def:cff:suitable-decomposition}.
    \begin{enumerate}[label=\textbf{(\alph*)}]
        \item Both $I$ and $P$ are stable under base change by \cref{lem:derived-schemes:properties}.
        \item Let $f \colon X \to Y$ lie in $E$.
            %Since $X$ and $Y$ are noetherian and $\classical{f}$ is of finite type \todoref, 
            %the noetherian characterization in \cite[\S2.4]{kelly2026pro} implies that $f$ is almost of finite presentation.
            By \cite[Theorem 8.15]{scholze2025six}, there is a factorization
            $$X \xrightarrow{j} \overline{X} \xrightarrow{p} Y$$
            with $j$ an open immersion and $p$ proper in the sense of
            \cite[Definition 8.9]{scholze2025six}. In particular, $p$ is almost of finite presentation.
            Since $Y$ is noetherian, the same noetherian characterization implies that $\overline{X}$ is noetherian.
            Thus this factorization lies in $\DSchpnoeth$, with $j \in I$ and $p \in P$. \footnote{Note that \cite[Proposition 6.3.4]{khan2016motivic} is not enough: The factorization there is not guaranteed to be noetherian again.
            Similarly, the argument in \cite[\S~6.3.1]{gaitsgory2012indcoherentsheaves} does not prove that the compactification 
            is noetherian. 
            The argument in \cite[Chapter 5, Proposition 2.1.6]{gaitsgory2019study} only applies to derived schemes that are almost of finite type 
            over a field, so it is also not applicable in our situation.}
        \item Let $f \in I \cap P$. We have to show that $f$ is $n$-truncated
            for some $n \ge -2$. In fact, every open immersion is a monomorphism and hence
            $(-1)$-truncated; see \cite[\S\S2.2.3, 4.4.2, and 4.5.2]{khan2016motivic}.
        \item Let $f \colon Y \to X$ and $g \colon Z \to Y$ be composable morphisms.

            Suppose first that $f \in P$. We have to show that $g \in P$ if and only if
            $f \circ g \in P$. If $g$ is proper, then so is $f \circ g$ by
            \cref{lem:derived-schemes:properties}. Conversely, suppose that $f \circ g$ is proper.
            Then $\classical{f} \circ \classical{g}$ is proper and $\classical{f}$ is separated,
            so \cite[\href{https://stacks.math.columbia.edu/tag/01W6}{Lemma 01W6}]{stacks-project}
            implies that $\classical{g}$ is proper. Hence $g$ is proper.

            Suppose now that $f \in I$. We have to show that $g \in I$ if and only if
            $f \circ g \in I$. One direction follows from stability under composition.
            Conversely, suppose that $f \circ g$ is an open immersion. Since $f$ is a monomorphism,
            the commutative square
            \begin{center}
                \begin{tikzcd}
                    Z \ar[r, "g"] \ar[d, "{\id{Z}}"'] & Y \ar[d, "f"] \eqsymb &Z\ar[r, "g"] \ar[d, "\id{Z}"] &Y \ar[d, "\id{Y}"] \ar[r, "\id{Y}"] &Y \ar[d, "f"]\\
                    Z \ar[r, "f \circ g"'] & X                                &Z\ar[r, "g"] &Y \ar[r, "f"] &X
                \end{tikzcd}
            \end{center}
            is cartesian (by the pasting law for pullbacks). Thus $g$ is the base change of the open immersion $f \circ g$ along $f$,
            and is therefore an open immersion by \cref{lem:derived-schemes:properties}.
            \qedhere
    \end{enumerate}
\end{proof}

Our starting point for the functor formalism of ind-coherent sheaves is the following:
\begin{prop}[{\cite[Proposition 3.2.4]{gaitsgory2012indcoherentsheaves}}] \label{lem:derived-schemes:qcoh-covariant}
    There is a functor 
    \begin{equation*}
        \defnot{\IndCoh_*} \colon \DSchpnoeth\to \TwoCatInf,
    \end{equation*}
    such that $\IndCoh_*(X)$
    is the category of (derived) ind-coherent modules on $X$
    for every derived scheme $X$.
    Moreover, $\IndCoh_*(f)$ is given by the pushforward functor $f_* \colon \IndCoh(X)\to \IndCoh(Y)$ 
    of \cref{lem:ind-coh:incoherent-functoriality}.
\end{prop}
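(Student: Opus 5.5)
This statement is quoted from Gaitsgory \cite[Proposition 3.2.4]{gaitsgory2012indcoherentsheaves}, so the plan is to reproduce the construction there, now over $\Fp$, and check that nothing depends on characteristic $0$. The task is to promote the pointwise pushforwards $f_* \colon \IndCoh(X) \to \IndCoh(Y)$ of \cref{lem:ind-coh:incoherent-functoriality} to a coherent functor $\DSchpnoeth \to \TwoCatInf$, i.e.\ to supply all higher associativity data.

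Since $\QCoh_*$ is already a coherent functor (the right adjoints of the functor $\QCoh^*$, obtained by passing to right adjoints in $\TwoCatInf$ via straightening/unstraightening), the strategy is to lift it along $\Psi$ using t-structure boundedness.

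\textbf{Step 1.} Restrict to coherent objects. For $f$ a morphism of noetherian derived schemes, $f_*$ on $\QCoh$ is left t-exact up to a shift, i.e.\ bounded cohomological amplitude on the quasi-compact source. Hence $f_*$ sends $\mathrm{Coh}(X)$ into $\QCoh(Y)^+$, and these subcategories are stable under composition of pushforwards. Consider the functor
$$X \mapsto \QCoh(X)^+, \qquad f \mapsto f_*,$$
obtained as a subfunctor of $\QCoh_*$. This is legitimate because fullness of the subcategories makes the restriction automatic and coherent.

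\textbf{Step 2.} Use \cref{lem:ind-coh:bounded-above}, which says that $\Psi$ identifies $\IndCoh(X)^+$ with $\QCoh(X)^+$ naturally, to obtain a coherent functor $X \mapsto \IndCoh(X)^+$. Then restrict to $\mathrm{Coh}(X) \subseteq \IndCoh(X)^+$, viewed as a functor from $\mathrm{Coh}(X)$ into $\IndCoh(Y)^+ \subseteq \IndCoh(Y)$.

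\textbf{Step 3.} Ind-extend: left Kan extension along $\mathrm{Coh}(X) \hookrightarrow \mathrm{Ind}(\mathrm{Coh}(X))$ is functorial, since $\mathrm{Fun}^{L}(\mathrm{Ind}(\Cat A), \Cat B) \simeq \mathrm{Fun}(\Cat A, \Cat B)$ naturally. The cleanest route is Gaitsgory's: form the category of pairs $(\Cat A, \Cat B)$ of a small category and a cocomplete target receiving $\Cat A$, and apply $\mathrm{Ind}$ to the functor $X \mapsto (\mathrm{Coh}(X) \to \IndCoh(X)^+)$. Concretely, one produces $\IndCoh_*$ as a functor to $\mathrm{Pr}^L$ with values $\IndCoh(X)$. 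By construction it agrees with the colimit-preserving $f_*$ of \cref{lem:ind-coh:incoherent-functoriality}, and composing with $\mathrm{Pr}^L \to \TwoCatInf$ gives the statement.

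The \textbf{main obstacle} is Step 3's coherence: $f_*$ does not preserve $\mathrm{Coh}$ unless $f$ is proper. So one cannot simply apply the functor $\mathrm{Ind}$ to a functor $X \mapsto \mathrm{Coh}(X)$. The target must instead be taken as $\IndCoh(Y)^+$ or $\IndCoh(Y)$, and one must verify that the ind-extension of $\mathrm{Coh}(X) \to \IndCoh(Y)$ composes correctly. This holds because the ind-extended $g_*$ preserves filtered colimits and restricts on $\IndCoh^+$ to the bounded-above pushforward.

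Finally, the characteristic-free check is that the amplitude bound and \cref{lem:ind-coh:bounded-above} use only noetherianity.
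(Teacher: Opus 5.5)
Your outline (restrict $\QCoh_*$ to bounded parts, transport along $\Psi$ via \cref{lem:ind-coh:bounded-above}, then pass back to all of $\IndCoh$) has the right shape. It is essentially the argument behind the result of \cite{gaitsgory2012indcoherentsheaves}, which the paper simply cites. The gap is in Step~3, and that step is where all of the content lies. The pointwise content of the statement is already \cref{lem:ind-coh:incoherent-functoriality}, so what this proposition adds is exactly the higher coherence of the pushforwards, and your proposal does not supply it.

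The ``category of pairs'' is not well defined as written. First, $\IndCoh(X)^+$ is not cocomplete. Second, since $f_*$ does not preserve $\mathrm{Coh}$, a morphism of pairs can only be a functor $G\colon \mathcal B\to\mathcal B'$ between the large members. Ind-extending $G|_{\mathcal A}$ lands in $\mathrm{Ind}(\mathcal A')$ only if you also use $\mathcal B'\subseteq\mathrm{Ind}(\mathcal A')$. The resulting assignment sends a composite $H\circ G$ to the composite of the extensions only if $H$ agrees on $\mathcal B'$ with the ind-extension of $H|_{\mathcal A'}$. Your closing sentence checks this agreement object by object. That yields $\widetilde{g_*}\circ\widetilde{f_*}\simeq\widetilde{(gf)_*}$ in the homotopy category, but not the associativity data of a functor $\DSchpnoeth\to\TwoCatInf$.

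The missing idea is a statement that upgrades this pointwise agreement to a coherent one: passing to bounded parts is fully faithful. Define the two $\infty$-categories as follows.
\begin{itemize}
\item $\mathcal K$: objects are stable categories $\mathbf C=\mathrm{Ind}(\mathcal A)$ with a t-structure compatible with filtered colimits, where $\mathcal A\subseteq\mathbf C^+$ is closed under truncations. Morphisms are colimit-preserving functors sending $\mathbf C_{\le n}$ into $\mathbf D_{\le n+c}$ for a fixed $c$.
\item $\mathcal K^+$: objects are stable categories with t-structure. Morphisms are exact functors with the same boundedness that preserve filtered colimits of diagrams lying in a fixed $\mathbf C_{\le n}$.
\end{itemize}
Then $\mathbf C\mapsto\mathbf C^+$ is a fully faithful functor $\mathcal K\to\mathcal K^+$. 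Indeed, truncations commute with filtered colimits, so every $M\in\mathbf C_{\le n}$ is a filtered colimit of objects of $\mathcal A\cap\mathbf C_{\le n}$. Hence a morphism of $\mathcal K^+$ out of $\mathbf C^+$ agrees there with the ind-extension of its restriction to $\mathcal A$, and conversely.

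Now $f_*$ on $\QCoh$ is left t-exact up to a shift and preserves filtered colimits. Moreover, $\Psi$ identifies $\IndCoh(X)^+$ with $\QCoh(X)^+$ compatibly with t-structures and with such colimits. So your functor $X\mapsto\QCoh(X)^+$ factors through $\mathcal K^+$ and takes values in the essential image of $\mathcal K\to\mathcal K^+$. It therefore lifts, essentially uniquely, to a functor $\DSchpnoeth\to\mathcal K$ with values $\IndCoh(X)$. Its value on $f$ is the unique continuous extension of the transported $f_*$ on $\IndCoh(X)^+$, i.e.\ the functor of \cref{lem:ind-coh:incoherent-functoriality}. Forgetting down to $\TwoCatInf$ gives the statement.

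With this lemma, your Steps~1 and~2 suffice and the pairs construction can be dropped. Without it, Step~3 only re-proves the pointwise statement.
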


This functor satisfies the Beck--Chevalley identities needed below. It also admits pullback functors
for the class of eventually coconnective morphisms, which we recall next.
\begin{defn}[{\cite[Definition 3.5.2]{gaitsgory2012indcoherentsheaves}}]
    Let $f \colon X \to Y$ be a morphism of derived noetherian $\Fp$-schemes.
    We say that $f$ is \defemph{eventually coconnective} if the functor $f^* \colon \QCoh(Y) \to \QCoh(X)$ 
    sends $\mathrm{Coh}(Y)$ to $\QCoh(X)^+$.
\end{defn}

\begin{lem}[{\cite[Section 4.5.1]{gaitsgory2012indcoherentsheaves}}] \label{lem:derived-schemes:flat-eventually-coconnective}
    Let $f \colon X \to Y$ be a flat morphism of derived noetherian $\Fp$-schemes.
    Then $f$ is eventually coconnective.
    In particular, open immersions and \'{e}tale morphisms
    are eventually coconnective.
\end{lem}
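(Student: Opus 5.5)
The strategy is to reduce to the affine case and then use the explicit description of flatness for animated rings. The statement is cited from \cite[Section 4.5.1]{gaitsgory2012indcoherentsheaves}. Recall that $f^* \colon \QCoh(Y) \to \QCoh(X)$ is right t-exact for any $f$. Eventual coconnectivity asks that $f^*$ send $\mathrm{Coh}(Y)$ into $\QCoh(X)^+$, i.e.\ into objects bounded above in homological indexing. We will show that for flat $f$ the functor $f^*$ is t-exact. Since coherent objects are bounded by definition, this already gives the claim.

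First, we reduce to the affine case. Boundedness in $\QCoh(X)$ is detected on a Zariski cover: $M \in \QCoh(X)_{\le n}$ if and only if its restriction to every affine open lies in $\mathrm{Mod}_{A,\le n}$. Restriction to opens commutes with $f^*$ by functoriality of $\QCoh^*$. By \cref{defn:derived-schemes:properties-of-morphisms}, flatness is Zariski-local on source and target. Since $Y$ is noetherian we may choose a finite affine cover, so it suffices to treat a flat morphism $\varphi \colon A \to B$ of animated $\Fp$-algebras and an $A$-module $M$ with $\pi_i(M) = 0$ for $i > n$.

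For the affine case, flatness gives $\pi_*(B) \cong \pi_*(A) \otimes_{\pi_0(A)} \pi_0(B)$. The Tor spectral sequence
\[
E^2_{s,t} = \mathrm{Tor}_s^{\pi_*(A)}\bigl(\pi_*(M), \pi_*(B)\bigr)_t \Rightarrow \pi_{s+t}(B \otimes_A M)
\]
then degenerates: $\pi_*(B)$ is flat over $\pi_*(A)$, being a base change of the flat $\pi_0(A)$-module $\pi_0(B)$. Hence
\[
\pi_k(B \otimes_A M) \cong \pi_k(M) \otimes_{\pi_0(A)} \pi_0(B),
\]
which vanishes for $k > n$. This is the standard fact that flat base change is t-exact \cite{khan2016motivic}; I would cite it rather than reprove it. Thus $f^*$ maps $\QCoh(Y)_{\le n}$ into $\QCoh(X)_{\le n}$, and in particular sends $\mathrm{Coh}(Y)$ into $\QCoh(X)^+$.

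For the final sentence, open immersions and étale morphisms are flat by \cref{defn:derived-schemes:properties-of-morphisms}. The only mildly delicate point is the degeneration of the Tor spectral sequence, equivalently the t-exactness of flat pullback, and that is handled by the literature reference.
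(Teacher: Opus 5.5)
Your proof is correct. The paper gives no argument of its own beyond citing Gaitsgory's Section 4.5.1, and the standard reasoning behind that citation is the one you give: flat pullback is t-exact, $\pi_k(B\otimes_A M)\cong\pi_k(M)\otimes_{\pi_0(A)}\pi_0(B)$, and coherent objects are bounded, so they land in $\QCoh(X)^+$.

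One small simplification: you do not need a finite cover of $Y$. A coherent object already lies in $\QCoh(Y)_{\le n}$ for a single global $n$. Locality of the t-structure then lets you check $f^*M\in\QCoh(X)_{\le n}$ on any affine cover of $X$ by opens mapping flatly into affine opens of $Y$.
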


\begin{prop} \label{lem:derived-schemes:IndCoh-coherent}
    The functor $\IndCoh_*$ of \cref{lem:derived-schemes:qcoh-covariant} extends to a $4\frac{1}{2}$-functor formalism 
    \begin{equation*}
        \defnot{\IndCoh} \colon \Span{\DSchpnoeth}{E}{\all}{I}{P} \to \TwoCatInf
    \end{equation*}
    with respect to the class of eventually coconnective morphism $f$.
\end{prop}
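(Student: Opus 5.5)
The plan is to apply \cref{lem:constr-2cat:ext-and-unique} to the functor $\IndCoh_*$ of \cref{lem:derived-schemes:qcoh-covariant}, with the Nagata setup $(\DSchpnoeth, E, I, P)$ of \cref{lem:derived-schemes:suitable-decomposition}. This gives the extension to $\Span{\DSchpnoeth}{E}{\all}{I}{P}$ once we know that $\IndCoh_*$ lies in $\TwoFunBC{\DSchpnoeth}{\TwoCatInf}$. Afterwards we check the axioms (FF1)--(FF4) of \cref{def:2ff:defn}, with $T$ the class of eventually coconnective morphisms.

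\textbf{Step 1: Beck--Chevalley conditions.} We verify \ref{axiom:all-BC:I-left-adjoint}--\ref{axiom:all-BC:I-P-double-BC} using the results of \cite{gaitsgory2012indcoherentsheaves}.
\begin{itemize}
\item For an open immersion $j$, the functor $j_*$ has a left adjoint $j^{*}$, namely the restriction \cite[\S4.1]{gaitsgory2012indcoherentsheaves}. The left Beck--Chevalley condition for cartesian squares with $j\in I$ is the base change for open immersions, i.e.\ flat base change for $\IndCoh$ \cite[Proposition 4.4.2]{gaitsgory2012indcoherentsheaves}. It can be reduced to the corresponding statement on $\mathrm{Coh}$, and hence to $\QCoh^+$ via \cref{lem:ind-coh:bounded-above}.
\item For proper $p$, the functor $p_*$ preserves $\mathrm{Coh}$, so its right adjoint $p^!$ is continuous.
\item The right Beck--Chevalley condition for cartesian squares with a proper leg is proper base change \cite[Proposition 3.4.2, \S5]{gaitsgory2012indcoherentsheaves}.
\item The double Beck--Chevalley condition for an open/proper square is the compatibility $j'^*p^!\simeq p'^!j^*$ \cite[Lemma 5.1.x]{gaitsgory2012indcoherentsheaves}. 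It follows by passing to mates from the two previous conditions together with the fact that $j^*j_*\simeq\id{}$.
\end{itemize}
One must make sure that the derived fiber products stay in $\DSchpnoeth$. This holds by \cref{lem:derived-schemes:noetherian-fiber-product-aft}, since open immersions and proper maps are aft by \cref{lem:noetherian:lafp-of-finite-type}.

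\textbf{Step 2: axioms.}
\begin{itemize}
\item (FF1): $\IndCoh(X)=\mathrm{Ind}(\mathrm{Coh}(X))$ is compactly generated and stable.
\item (FF2): $f_*$ preserves colimits by \cref{lem:ind-coh:incoherent-functoriality}, hence has a right adjoint $f^\times$ by the adjoint functor theorem.
\item (FF3): for $f\in E$, we factor $f=pj$ and use \cref{lem:consequences:compactification} to write $f^!\cong j^*p^!$. This is a composite of colimit-preserving functors, so it has a right adjoint.
\item (FF4): for eventually coconnective $f$, the existence of the left adjoint $f^{\IndCoh,*}$ of $f_*$ is \cite[Proposition 3.5.4]{gaitsgory2012indcoherentsheaves}. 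Under the identification of \cref{lem:ind-coh:bounded-above}, $f^*$ maps $\mathrm{Coh}(Y)$ into $\IndCoh(X)^+$, and one ind-extends.
\end{itemize}

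\textbf{Main obstacle.} The hardest step is the coherence and the transfer of the cited results. Gaitsgory proves base change over characteristic $0$ fields and sometimes only for aft schemes, whereas here the schemes are arbitrary noetherian derived $\Fp$-schemes. For each Beck--Chevalley condition I would check that the proof in \cite{gaitsgory2012indcoherentsheaves} uses only noetherianity and the t-structure comparison \cref{lem:ind-coh:bounded-above}. I would also check that the maps identified there really are the mates $\mathrm{Ex}^{\mathrm L}$ and $\mathrm{Ex}^{\mathrm R}$ of \cref{def:BC:left-BC}, rather than equivalences produced abstractly. This matters especially for the double Beck--Chevalley condition.
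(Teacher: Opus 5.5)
Your overall route is the paper's: you check \textbf{(BC1)}--\textbf{(BC5)} for $\IndCoh_*$ using \cite{gaitsgory2012indcoherentsheaves}, extend via \cref{lem:constr-2cat:ext-and-unique}, and verify \textbf{(FF1)}--\textbf{(FF4)}, handling \textbf{(FF3)} by a compactification and continuity of $p^!$. The step that fails as written is your justification of the double Beck--Chevalley condition \textbf{(BC5)}.

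For the cartesian square with $j\in I$ and $f\in P$, the two conditions you invoke are not independent. Passing to right adjoints turns $\mathrm{Ex}^{\mathrm{L}}\colon j^*f_*\to f'_*j'^*$ into $\mathrm{Ex}^{\mathrm{R}}\colon j'_*f'^!\to f^!j_*$ of the transposed square. So open base change and proper base change for this square are one and the same condition. Combining it with $j'^*j'_*\simeq\id{}$, the description $\mathrm{Ex}'^{\mathrm{L,R}}$ of the double transformation reduces to the map $j'^*f^!\to j'^*f^!j_*j^*$ induced by the unit of $j^*\dashv j_*$. Hence \textbf{(BC5)} is equivalent to $j'^*f^!$ annihilating $\ker(j^*)$. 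In other words, $f^!$ must send objects supported on $X\setminus U$ to objects supported on $X'\setminus U'$, and this is not a formal consequence of what you list.

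Here is a toy counterexample. Take the recollement $\Fun{\Delta^1}{\mathrm{Sp}}$ with $j^*=\ev{1}$, $j_*V=(V\xrightarrow{=}V)$ and $f'=\id{}$. Let $f_*$ be the colimit-preserving functor sending the generating arrow $(0\to\mathbb S)\to(\mathbb S\xrightarrow{=}\mathbb S)$ to the arrow $(\mathbb S\xrightarrow{0}\mathbb S)\to(\mathbb S\xrightarrow{=}\mathbb S)$ that is the identity in degree $1$. Then $f_*j_*\simeq j_*$ and $j^*f_*\simeq j^*$ via the mate, yet $\ev{1}f^!(W\to 0)\simeq W\neq 0$.

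You therefore need genuinely geometric input here. The paper simply cites \cite[Proposition 7.1.6]{gaitsgory2012indcoherentsheaves}: morphisms in $E$ are almost of finite type, and open immersions have bounded Tor-amplitude.

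Alternatively, you can argue directly.
\begin{itemize}
\item The kernel of $j^*$ is generated under colimits and shifts by objects $i_*K$. Here $i\colon Z\to X$ is a closed immersion (a thickening) with underlying set $X\setminus U$, and $K\in\mathrm{Coh}(Z)$.
\item Proper base change for the square formed by $f$ and $i$ gives $f^!i_*K\simeq i'_*f_Z^!K$.
\item $j'^*i'_*\simeq 0$, because $U'\times_{X'}Z'=\emptyset$.
\item Since $f^!$ and $j'^*$ are continuous, $j'^*f^!$ kills $\ker(j^*)$, as required.
\end{itemize}
The remaining steps of your proposal are fine once the placeholder references (such as ``Lemma 5.1.x'') are replaced by actual ones.
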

\begin{proof}
    Note first that $(\DSchpnoeth, E, I, P)$ is a Nagata setup by \cref{lem:derived-schemes:suitable-decomposition}.
    We check that $\IndCoh_*$ satisfies all Beck--Chevalley conditions.
    It then extends to a $2$-functor formalism by \cref{lem:constr-2cat:ext-and-unique}.
    \begin{enumerate}[label=Proof of \textbf{(BC\arabic{enumi})}:]
        \item This is a special case of \cite[Lemma 3.5.8]{gaitsgory2012indcoherentsheaves},
            as open immersions are eventually coconnective by \cref{lem:derived-schemes:flat-eventually-coconnective}.
        \item This is a special case of \cite[Lemma 3.6.9]{gaitsgory2012indcoherentsheaves}, using that 
            eventually coconnective morphisms almost of finite type are of finite Tor-amplitude by
            \cite[Lemma 3.6.3]{gaitsgory2012indcoherentsheaves}.
        \item This is exactly \cite[Corollary 3.3.6 and \S3.3.7]{gaitsgory2012indcoherentsheaves}.
        \item This is exactly \cite[Proposition 3.4.2]{gaitsgory2012indcoherentsheaves}.
        \item This is \cite[Proposition 7.1.6]{gaitsgory2012indcoherentsheaves}: the morphisms in $E$
            are almost of finite type, while open immersions have bounded Tor-amplitude.
    \end{enumerate}
    We now show that this $2$-functor formalism is a $4\frac{1}{2}$-functor formalism with respect to the class of eventually coconnective morphisms in the sense of \cref{def:2ff:defn}:
    \begin{enumerate}[label=Proof of \textbf{(FF\arabic{enumi})}:]
        \item By definition, $\IndCoh(X) \coloneqq \mathrm{Ind}(\mathrm{Coh}(X))$ is stable presentable.
        \item Let $f$ be a morphism in $\DSchpnoeth$. By the adjoint functor theorem,
            it suffices to show that $f_*$ preserves colimits,
            which is shown in \cite[Proposition 3.1.1]{gaitsgory2012indcoherentsheaves}.
        \item Let $f \in E$. Choose a compactification $f = pj$, with $p \in P$ and $j \in I$.
            Then $j^!$ has a right adjoint given by $j_*$, cf.\ \Cref{lem:consequences:adjoints-of-upper-shriek}.
            Hence, it suffices to show that $p^!$ has a right adjoint, or, equivalently by the adjoint functor theorem,
            that it preserves colimits. For this, see \cite[\S3.3.7]{gaitsgory2012indcoherentsheaves}.
        \item Let $f$ be eventually coconnective. Then $f_*$ has a left adjoint by \cite[Lemma 3.5.8]{gaitsgory2012indcoherentsheaves}.
    \end{enumerate}
\end{proof}

Recall the Frobenius structure $F \colon \id{\DSchp} \to \id{\DSchp}$ from \cref{lem:derived-schemes:frobenius-structure},
which restricts to a Frobenius structure $F \colon \id{\DSchpnoeth} \to \id{\DSchpnoeth}$. The following proposition,
a derived lift of a classical observation by Böckle--Blickle \cite{blickle2013cartiercrystals}, is the key geometric compatibility 
between Frobenius and étale pullback that makes our construction work.

\begin{prop} \label{lem:applications-constr-CartD:i^*-commutes-with-F_*}
    Let $f \colon X \to Y$ be a morphism of derived noetherian $\Fp$-schemes, 
    and write $\square^F_f$ for the naturality square
    \begin{center}
        \begin{tikzcd}
            X \ar[r, "F_X"] \ar[d, "f"'] & X \ar[d, "f"] \\
            Y \ar[r, "F_Y"'] & Y \rlap{.}
        \end{tikzcd}
    \end{center}
    If $f$ is étale, then $\square^F_f$ is cartesian, and $\IndCoh_*(\square^F_f)$ satisfies the left Beck--Chevalley condition.
\end{prop}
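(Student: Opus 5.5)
The proof splits into two parts: first that $\square^F_f$ is cartesian, then that its image under $\IndCoh_*$ satisfies the left Beck--Chevalley condition.

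\emph{Cartesianness.} Since $f$ is étale it is flat, so \cref{lem:derived-scheme:cartesian-classical-flat} applies to $\square^F_f$ with the flat edge $f$ taken as the horizontal arrow, after transposing the square. It reduces the claim to two facts. First, the parallel edge, again $f$, is flat, which holds by assumption. Second, the classical square $\classical{(\square^F_f)}$ is cartesian. By \cref{lem:derived-schemes:Frob-structure-classical}, $\classical{(\square^F_f)}$ is the naturality square of the classical absolute Frobenius for the étale morphism $\classical{f}$. That this square is cartesian is the classical fact that relative Frobenius of an étale morphism is an isomorphism, i.e.\ $\classical{X} \to \classical{X} \times_{\classical{Y}, F} \classical{Y}$ is an isomorphism (SGA 5, XV, or the Stacks project). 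Hence $\square^F_f$ is cartesian in $\DSchp$. Since all corners are noetherian, it is also cartesian in $\DSchpnoeth$.

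\emph{Beck--Chevalley.} I write the square so that its vertical edges are $f$ and $F_X$, $F_Y$ are horizontal. The functors $f_*$ have left adjoints $f^*$, since étale maps are eventually coconnective by \cref{lem:derived-schemes:flat-eventually-coconnective}. The mate to be checked is
\[
f^* F_{*,Y} \to F_{*,X} f^*.
\]
This is precisely base change for IndCoh along an eventually coconnective (here étale) morphism in a cartesian square. I therefore invoke \cite[Lemma 3.6.9]{gaitsgory2012indcoherentsheaves}, exactly as in the proof of (BC2) in \cref{lem:derived-schemes:IndCoh-coherent}. That lemma requires the vertical map to be eventually coconnective and almost of finite type, hence of finite Tor-amplitude by \cite[Lemma 3.6.3]{gaitsgory2012indcoherentsheaves}; the horizontal map $F_Y$ may be arbitrary.

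The étale map $f$ is lafp by \cref{lem:noetherian:lafp-of-finite-type}, so these hypotheses hold. The only point to check is that the mate produced by the Beck--Chevalley formalism is the map Gaitsgory proves invertible. Both are the canonical mate built from units and counits of $f^* \dashv f_*$ and the commutativity $F_{Y,*} f_* \simeq f_* F_{X,*}$, so they agree.

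\emph{Expected obstacle.} The main difficulty is the cartesianness step. The Beck--Chevalley step reduces directly to the cited IndCoh base change lemma once the square is known to be cartesian. The derived cartesianness, however, rests on \cref{lem:derived-scheme:cartesian-classical-flat}, which in turn needs flatness of the parallel edge. In this square that edge is again $f$, so the hypothesis is satisfied tautologically. What remains is the classical étale statement, which I cite rather than reprove.
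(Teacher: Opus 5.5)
Your proposal is correct and follows the paper's proof essentially verbatim. Cartesianness comes from \cref{lem:derived-scheme:cartesian-classical-flat} together with the classical étale statement; the paper cites \cite[Lemma 2.2.1]{blickle2013cartiercrystals}, which is the same fact as your relative-Frobenius citation. The Beck--Chevalley condition then follows from \cite[Lemma 3.6.9]{gaitsgory2012indcoherentsheaves}, using the finite Tor-amplitude supplied by \cite[Lemma 3.6.3]{gaitsgory2012indcoherentsheaves}.
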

\begin{proof}
    To see that the square is cartesian,
    note that since $f$ is étale, 
    it is flat and moreover $\classical{f}$ is étale.
    Moreover, $\classical{(F_X)}$ and $\classical{(F_Y)}$
    are just the Frobenius endomorphisms
    on $\classical{X}$ and $\classical{Y}$, cf.\ \cref{lem:derived-schemes:Frob-structure-classical}.
    In particular, we see that $\classical{(\square^F_f)}$ is cartesian 
    by \cite[Lemma 2.2.1]{blickle2013cartiercrystals} (note that 
    in the reference they write $\sigma$ for the absolute Frobenius). 
    Hence, since $f$ is flat, we conclude from \cref{lem:derived-scheme:cartesian-classical-flat} 
    that also $\square^F_f$ is cartesian.

    Note that $f_*$ has a left adjoint $f^*$ 
    since étale morphisms are eventually coconnective by \cref{lem:derived-schemes:flat-eventually-coconnective},
    and $\IndCoh$ is a $4\frac{1}{2}$-functor formalism with respect to eventually coconnective 
    morphisms, cf.\ \cref{lem:derived-schemes:IndCoh-coherent}.
    That $\IndCoh_*(\square^F_f)$ satisfies the left Beck--Chevalley condition 
    follows now from \cite[Lemma 3.6.9]{gaitsgory2012indcoherentsheaves}
    (using that $f$ has finite Tor-amplitude by \cite[Lemma 3.6.3]{gaitsgory2012indcoherentsheaves}),
    since we have already seen above that $\square^F_f$ is cartesian.
\end{proof}

\begin{thm} \label{lem:cartier-modules:main-thm-2ff}
    There exists a $4\frac{1}{2}$-functor formalism 
    \begin{equation*}
        \defnot{\Cart} \colon \Span{\DSchpnoeth}{E}{\all}{I}{P} \to \TwoCatInf
    \end{equation*}
    with respect to the class of eventually coconnective morphisms.
    Moreover the following holds:
    \begin{enumerate}[label=(\arabic*)]
        \item If $X \in \DSchpnoeth$, then $\Cart(X) \cong \Cart(\IndCoh(X), \IndCoh_*(F_X))$ is the category of Cartier-modules 
            in the category of ind-coherent modules on $X$ with respect to the Frobenius endomorphism on $X$, cf.\ \cite[Definition 2.4]{us24cart}.
        \item There is a natural transformation 
            $U \colon \Cart \to \IndCoh$,
            such that for each $X \in \DSchpnoeth$ the functor $U_X$ is equivalent to the forgetful functor
            \begin{equation*}
                \Cart(\IndCoh(X), \IndCoh_*(F_X)) \to \IndCoh(X).
            \end{equation*}
            In particular, for each morphism $(f\colon X \to Y) \in \Span{\DSchpnoeth}{E}{\all}{I}{P}$ there is a commutative diagram
            \begin{center}
                \begin{tikzcd}
                    \Cart(\IndCoh(X), \IndCoh_*(F_X)) \ar[r, "\Cart(f)"] \ar[d, "U_X"'] & \Cart(\IndCoh(Y), \IndCoh_*(F_Y)) \ar[d, "U_Y"] \\
                    \IndCoh(X) \ar[r, "\IndCoh(f)"'] & \IndCoh(Y) \rlap{.}
                \end{tikzcd}
            \end{center}
        \item Let $(f \colon X \to Y) \in \Span{\DSchpnoeth}{E}{\all}{I}{P}$ be a morphism.
            Then the structure morphism $\kappa(\Cart(f)(M))$ for a Cartier module $M \in \Cart(\IndCoh(X), \IndCoh_*(F_X))$ is given by 
            \begin{equation*}
                F_{*,Y} U \Cart(f)(M) \cong F_{*,Y} \IndCoh(f) U(M) \to \IndCoh(f) F_{*,X} U(M) \xrightarrow{\kappa(M)} \IndCoh(f) U(M) \, ,
            \end{equation*}
            where the equivalence holds by the previous part and the map $F_{*,Y} \IndCoh(f) \to \IndCoh(f) F_{*,X}$ is given by the partially oplax naturality of $F_*$ (in particular, it is an equivalence if $f \in (I,\all)$).
    \end{enumerate}
\end{thm}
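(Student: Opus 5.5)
The plan is to apply \cref{lem:constr-cart:main-thm} to the Nagata setup $(\DSchpnoeth, E, I, P)$ of \cref{lem:derived-schemes:suitable-decomposition}. The input $2$-functor formalism is $\Cat D = \IndCoh$ from \cref{lem:derived-schemes:IndCoh-coherent}, which is a $4\frac{1}{2}$-functor formalism with respect to the class $T$ of eventually coconnective morphisms. The Frobenius structure is the restriction $F \colon \id{\DSchpnoeth} \to \id{\DSchpnoeth}$ of the structure from \cref{lem:derived-schemes:frobenius-structure}. We then define $\Cart \coloneqq \Cart(\IndCoh, F_*)$.

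The only hypothesis of \cref{lem:constr-cart:main-thm} still to be verified is that, for every $j \in I$, the square $\IndCoh_*(\square^F_j)$ satisfies the left Beck--Chevalley condition. An open immersion is flat and its classical truncation is an open immersion, hence étale. So $j$ is étale in the sense of \cref{defn:derived-schemes:properties-of-morphisms}, and \cref{lem:applications-constr-CartD:i^*-commutes-with-F_*} supplies exactly this condition. The restriction of $F$ to noetherian schemes is well-defined because $\DSchpnoeth \subseteq \DSchp$ is a full subcategory containing the source and target of each $F_X$.

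Once the hypothesis holds, the conclusions follow directly from \cref{lem:constr-cart:main-thm}:
\begin{itemize}
\item Part (1) is \cref{lem:constr-cart:main-thm}(1), using $F_{*,X} = \IndCoh_*(F_X)$.
\item Part (2) is \cref{lem:constr-cart:main-thm}(2).
\item Part (3) is \cref{lem:constr-cart:main-thm}(3), evaluated at an object $M$.
\item The $4\frac{1}{2}$-functor formalism property with respect to eventually coconnective morphisms is the final clause of \cref{lem:constr-cart:main-thm}.
\end{itemize}

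For the parenthetical remark in (3), recall from \cref{lem:BN:F-star-partially-oplax-nat-trans} that $F_*$ is partially oplax with respect to the marking $(I,\all)$. Its oplax structure maps are therefore invertible on spans whose backward leg lies in $I$.

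The main obstacle is only bookkeeping. Everything substantive has already been checked: the Beck--Chevalley conditions for $\IndCoh_*$ are in \cref{lem:derived-schemes:IndCoh-coherent}, and the Frobenius/étale compatibility is in \cref{lem:applications-constr-CartD:i^*-commutes-with-F_*}. What remains is to confirm that open immersions in $\DSchpnoeth$ are étale, which is immediate from the definitions recalled above.
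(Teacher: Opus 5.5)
Your proposal is correct and is the paper's proof: it applies \cref{lem:constr-cart:main-thm} to the $4\frac{1}{2}$-functor formalism $\IndCoh$ of \cref{lem:derived-schemes:IndCoh-coherent}, with the Beck--Chevalley hypothesis for open immersions supplied by \cref{lem:applications-constr-CartD:i^*-commutes-with-F_*}. Your remark that open immersions are étale (flat with classical truncation an open immersion) is the step the paper leaves implicit when invoking that proposition.
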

\begin{proof}
    Recall $\IndCoh$, the $4\frac{1}{2}$-functor formalism with respect to the class of eventually coconnective morphisms of \cref{lem:derived-schemes:IndCoh-coherent}.
    For any open immersion $f$, the square $\IndCoh_*(\square^F_f)$ satisfies the left Beck--Chevalley condition by \Cref{lem:applications-constr-CartD:i^*-commutes-with-F_*}.
    Hence, the theorem is an application of \cref{lem:constr-cart:main-thm}.
\end{proof}

We end this section by comparing Cartier modules in ind-coherent sheaves to Cartier modules in the derived category of abelian $\mathcal O_X$-modules on a classical noetherian $\Fp$-scheme.
\begin{prop} \label{lem:derived-schemes:bdd-above-cartier-modules}
    Let $X$ be a classical noetherian $\Fp$-scheme.
    Then there is a canonical t-exact equivalence
    $$\Cart(\IndCoh(X), \IndCoh_*(F_X))^+ \cong \Cat D^+(\Cart(\QCoh^\heartsuit(X),F_{*,X})).$$
    Here, $\QCoh^\heartsuit(X)$ denotes the abelian category of
    quasi-coherent $\mathcal O_X$-modules. On the left, we equip
    the category of Cartier modules with the induced t-structure
    of \cite[Proposition 3.3]{us24cart}, arising from the
    standard t-structure on $\IndCoh(X)$.
\end{prop}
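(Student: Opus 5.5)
The plan is to factor the equivalence as
$$\Cart(\IndCoh(X), F_*)^+ \xrightarrow{\ \Psi\ } \Cart(\QCoh(X), F_*)^+ \xrightarrow{\ \simeq\ } \Cat D^+(\Cart(\QCoh^\heartsuit(X),F_{*,X})).$$
The first functor is induced by the comparison functor $\Psi \colon \IndCoh(X) \to \QCoh(X)$, and the second comes from \cite[Theorem 5.1]{us24cart}.

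\textbf{Step 1: $\Psi$ induces a functor on Cartier modules.} By \cref{lem:ind-coh:incoherent-functoriality} applied to $F_X$, there is an equivalence $\Psi F_{*,X} \simeq F_{*,X}\Psi$. Since $\Cart(-,-)$ is a lax equalizer (\cref{lem:lax-lim:LEq-as-partially-lax-lim}), it is functorial in pairs $(\Cat D, G)$ equipped with a commuting equivalence. Hence $\Psi$ induces a functor
$$\Cart(\Psi) \colon \Cart(\IndCoh(X), F_{*,X}) \to \Cart(\QCoh(X), F_{*,X})$$
that commutes with the forgetful functors $U$.

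\textbf{Step 2: restriction to bounded above parts.} The t-structures on both Cartier categories are created by $U$ \cite[Proposition 3.3]{us24cart}. This uses that $F_{*,X}$ is t-exact: $F_X$ is affine and is the identity on underlying spaces, so it is exact on quasi-coherent modules. The same then holds on $\IndCoh^+$ via \cref{lem:ind-coh:bounded-above}. Since $\Psi$ is t-exact, $\Cart(\Psi)$ is t-exact. Moreover, $F_{*,X}$ preserves the bounded above parts, so each Cartier category restricted to $+$ is the lax equalizer of $F_{*,X}$ on $\IndCoh(X)^+$, respectively on $\QCoh(X)^+$. Lax equalizers send equivalences of pairs to equivalences, so \cref{lem:ind-coh:bounded-above} gives
$$\Cart(\IndCoh(X),F_*)^+ \simeq \Cart(\QCoh(X),F_*)^+.$$

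\textbf{Step 3: identification with a derived category.} It remains to show $\Cart(\QCoh(X),F_*)^+ \simeq \Cat D^+(\Cart(\QCoh^\heartsuit(X),F_*))$. This requires two inputs:
\begin{itemize}
\item the classical identification $\QCoh(X) \simeq \Cat D(\QCoh^\heartsuit(X))$ on bounded above parts, which holds for noetherian (in particular quasi-compact and separated-enough) $X$, as for the left-bounded derived category of a Grothendieck abelian category in the noetherian case;
\item \cite[Theorem 5.1]{us24cart}, which identifies the derived category of Cartier objects in an abelian category with an exact functor with Cartier objects in its derived category.
\end{itemize}
Because $F_{*,X}$ is exact on $\QCoh^\heartsuit(X)$, its derived functor agrees with $F_{*,X}$ on $\QCoh(X)$, so the hypotheses of that theorem hold. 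The resulting equivalence is t-exact because on hearts both sides are $\Cart(\QCoh^\heartsuit(X),F_*)$.

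\textbf{Main obstacle.} I expect the hardest point to be matching the exact boundedness and hypotheses of \cite[Theorem 5.1]{us24cart}, namely enough injectives or the relevant completeness, with the bounded above setting. I would first try to pass through $\Cat D^+$ by noting that the lax equalizer construction commutes with taking $\bigcup_n(-)_{\le n}$, which reduces the claim to the unbounded statement restricted to bounded above objects.
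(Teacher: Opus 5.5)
Your proposal is correct and follows essentially the same route as the paper: restrict to bounded-above parts via \cite[Proposition 3.3]{us24cart}, transport along $\Psi$ using \cref{lem:ind-coh:bounded-above} and \cref{lem:ind-coh:incoherent-functoriality}, and conclude with \cite[Theorem 5.1]{us24cart} together with the noetherian comparison of $\QCoh(X)^+$ with $\Cat D^+(\QCoh^\heartsuit(X))$, for which the paper cites Stacks Project Tag 09T4. The only cosmetic difference is the order of steps: you first map $\Cart(\IndCoh(X),F_*)$ to $\Cart(\QCoh(X),F_*)$ and then restrict, whereas the paper first identifies $\Cart(\IndCoh(X),F_*)^+$ with $\Cart(\IndCoh(X)^+,F_*)$ and then applies $\Psi$ on bounded-above parts.
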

\begin{proof}
    We have the following chain of equivalences:
    \begin{align*}
        \Cart(\IndCoh(X), \IndCoh_*(F_X))^+
        &\cong \Cart(\IndCoh(X)^+, \IndCoh_*(F_X)^+) \\
        &\cong \Cart(\QCoh(X)^+, \QCoh_*(F_X)^+) \\
        &\cong \Cat D^+(\Cart(\QCoh^\heartsuit(X), F_{*,X})).
    \end{align*}
    Here, the first equivalence holds by \cite[Proposition 3.3]{us24cart}, 
    the second equivalence holds by \cref{lem:ind-coh:bounded-above,lem:ind-coh:incoherent-functoriality},
    and the last equivalence holds by \cite[Theorem 5.1]{us24cart} and \cite[\href{https://stacks.math.columbia.edu/tag/09T4}{Proposition 09T4}]{stacks-project} 
    \footnote{note that in our convention, $\QCoh(X)$ is right Kan extended from affine schemes, and hence equivalent to $\Cat D_{\mathrm{qc}}(X)$,
    instead of $\Cat D(\QCoh^\heartsuit(X))$ in general. The stacks project reference shows that for classical noetherian schemes, this distinction does not matter.}.
\end{proof}

%% file: derived-schemes/3ff-cart.tex
\subsection{The \texorpdfstring{$6\frac{1}{2}$}{six-and-a-half}-functor formalism of Cartier modules} \label{sec:derived-schemes-3ff}
In this section we discuss multiplicative enhancements of the $4\frac{1}{2}$-functor formalisms of \cref{lem:derived-schemes:IndCoh-coherent,lem:cartier-modules:main-thm-2ff}.
The first problem that we encounter is that $\DSchpnoeth$ is \emph{not} closed under products in all derived $\Fp$-schemes:
For example, the ring $\overline{\mathbb{F}}_p \otimes_{\Fp} \overline{\mathbb{F}}_p$ is not noetherian.
Therefore, and for reasons discussed in \cref{rmk:3FF:symmetric-Nagata-E-eq-all}, we have to restrict our geometry to derived schemes separated and almost of finite type over $\Fp$.

\begin{rmk}
    In \cite{gaitsgory2019study} they circumvent this problem by working over a base.
    We cannot do this in our later application, as the Frobenius is not the identity on general $\Fp$-schemes, 
    and thus will in general not define a Frobenius structure 
    on $(\DSchpnoeth)_{/S}$. Thus, we obtain our multiplicative results only for Cartier modules 
    on separated and almost of finite type derived $\Fp$-schemes.
\end{rmk}

\begin{notation}
    We write $\textcolor{purple}{\DSchsaft} \subseteq \DSchpnoeth$ for the full subcategory 
    spanned by derived $\Fp$-schemes that are separated and almost of finite type over $\Fp$.

    Furthermore, we write $\defnot{I}, \defnot{P} \subseteq \DSchsaft$ for the wide subcategories 
    spanned by open immersions and proper morphisms, respectively. 
\end{notation}

\begin{lem} \label{lem:derived-schemes:DSaft-Nagata}
    The triple $((\DSchsaft)^\times, I, P)$ is a cartesian Nagata setup.
\end{lem}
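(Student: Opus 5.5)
The plan is to reduce to \cref{lem:3FF:Nagata-is-cartesian}. That result says it suffices to show two things: that $\DSchsaft$ admits all finite products, and that $(\DSchsaft, \all, I, P)$ is a Nagata setup.

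For finite products, the terminal object is $\Spec{\Fp}$, which lies in $\DSchsaft$. For binary products, take $X, Y \in \DSchsaft$ and form $X \times_{\Spec{\Fp}} Y$ in $\DSchp$. By \cref{lem:derived-schemes:noetherian-fiber-product-aft}, applied with the aft structure map $X \to \Spec{\Fp}$, this product is noetherian. It is almost of finite type over $\Fp$ because $X \times Y \to Y$ is a base change of an aft morphism and $Y \to \Spec{\Fp}$ is aft; then use stability under composition and base change (\cref{lem:derived-schemes:properties}). Separatedness follows the same way. Since $\DSchsaft$ is a full subcategory, this object is the product there.

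Next, the geometric setup with $E = \all$. Every morphism $f \colon X \to Y$ in $\DSchsaft$ is separated and aft, since both $X$ and $Y$ are separated and aft over $\Fp$. At the classical level this is the cancellation argument: if $\classical{g}\circ\classical{f}$ is of finite type (resp.\ separated), then so is $\classical{f}$; see the Stacks project. By \cref{lem:noetherian:lafp-of-finite-type}, being aft is equivalent to $\classical{f}$ being of finite type. So the class $E$ from \cref{def:derived-schemes:nagata-subcategories}, restricted to $\DSchsaft$, is all morphisms.

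For stability under base change, take a pullback $X \times_Y Z$ in $\DSchp$. It is noetherian by \cref{lem:derived-schemes:noetherian-fiber-product-aft}. It is separated and aft over $\Fp$ because it maps to $Z$ by a base change of $f \in E$, followed by $Z \to \Spec{\Fp}$. Hence pullbacks in $\DSchsaft$ exist and agree with those in $\DSchpnoeth$.

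Finally, for the suitable decomposition, all axioms except existence of factorizations transfer directly from \cref{lem:derived-schemes:suitable-decomposition}, because pullbacks agree. For factorizations, take the compactification $X \xrightarrow{j} \overline{X} \xrightarrow{p} Y$ constructed in $\DSchpnoeth$. The scheme $\overline{X}$ is separated and aft over $Y$ (since $p$ is proper), hence separated and aft over $\Fp$, so $\overline{X}$ lies in $\DSchsaft$. The main point needing care is the cancellation property ensuring that every morphism in $\DSchsaft$ lies in $E$; this is the step I would check first, and it is classical.
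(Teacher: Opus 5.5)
Your proposal is correct and follows the same route as the paper: reduce to \cref{lem:3FF:Nagata-is-cartesian}, obtain finite products from \cref{lem:derived-schemes:noetherian-fiber-product-aft,lem:derived-schemes:properties}, use cancellation to see that every morphism of $\DSchsaft$ is separated and aft, and transfer the suitable-decomposition axioms from \cref{lem:derived-schemes:suitable-decomposition}. You actually spell out more than the paper's ``analogous'' does, notably that the compactification $\overline{X}$ lies in $\DSchsaft$ and that pullbacks in $\DSchsaft$ agree with those in $\DSchpnoeth$.
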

\begin{proof}
    By \cref{lem:derived-schemes:noetherian-fiber-product-aft,lem:derived-schemes:properties}, $\DSchsaft$ has all finite products.
    That $(\DSchsaft, \all, I, P)$ is a Nagata setup is analogous to \cref{lem:derived-schemes:suitable-decomposition}.
    Note that any map in $\DSchsaft$ is itself separated and almost of finite type by right-cancellability and the assumption 
    that the structure morphisms have this property.
    We conclude via \cref{lem:3FF:Nagata-is-cartesian}.
\end{proof}

\begin{prop} \label{lem:derived-schemes:IndCoh-tensor}
    There is a lax symmetric monoidal functor 
    \begin{equation*}
        \defnot{\IndCoh_*^\otimes} \colon (\DSchsaft)^\times \to \TwoCatInf^\times,
    \end{equation*}
    such that the underlying functor is the (restriction of the) functor $\IndCoh_*$ from \cref{lem:derived-schemes:qcoh-covariant}.
\end{prop}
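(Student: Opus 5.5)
The plan is to obtain $\IndCoh_*^\otimes$ from the external tensor product $\boxtimes$ of ind-coherent sheaves, and then invoke the identification of lax symmetric monoidal functors out of a cartesian monoidal category with lax cartesian structures. By \cite[Corollary 2.4.1.8]{higheralgebra} and its lax analogue (lax symmetric monoidal functors $\Cat C^\times \to \Cat D^\times$ correspond to lax cartesian structures, i.e.\ functors $\op{\Fin_*}$-indexed, cf.\ \cite[Proposition 2.4.1.7]{higheralgebra}), a lax symmetric monoidal functor $(\DSchsaft)^\times \to \TwoCatInf^\times$ is the same datum as a functor $\Cat C^\times \to \TwoCatInf$ over $\Fin_*$ where $\Cat C^\times$ is the cartesian operad; concretely, we need coherent functors $\IndCoh(X_1)\times\dots\times\IndCoh(X_n) \to \IndCoh(X_1\times\dots\times X_n)$ natural in pushforward. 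First I would write down, following \cite[Chapter 4, \S6.3 / Chapter 5, \S4]{gaitsgory2019study} (over the base $\Fp$, which is legitimate here since all objects are saft over $\Fp$), the external product $\boxtimes \colon \IndCoh(X)\otimes\IndCoh(Y) \to \IndCoh(X\times Y)$, defined on compact objects by $\mathrm{Coh}(X)\times\mathrm{Coh}(Y)\to\mathrm{Coh}(X\times Y)$, $(M,N)\mapsto \pr_1^*M\otimes \pr_2^*N$ (coherence uses that $X\times Y\to Y$ is flat-base-changed from $X\to\Spec{\Fp}$, which is flat since $\Fp$ is a field), and ind-extended.

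Second, I would get the coherent multiplicative structure for free by realizing $\IndCoh_*$ on $\DSchsaft$ as the restriction of Gaitsgory--Rozenblyum's symmetric monoidal functor $\IndCoh_{\mathrm{Sch_{aft}}} \colon (\mathrm{Sch}_{\mathrm{aft}})^\times \to \mathrm{DGCat}_{\mathrm{cont}}^{\otimes}$ (\cite[Chapter 4, Proposition 6.3.4]{gaitsgory2019study}), which is symmetric monoidal for the Lurie tensor product of presentable stable categories. Composing with the lax symmetric monoidal functor $\mathrm{Pr}^{L,\otimes}_{\mathrm{st}} \to \TwoCatInf^\times$ (the forgetful functor is lax monoidal since $\mathcal A\times\mathcal B\to\mathcal A\otimes\mathcal B$ is the universal bilinear map, \cite[\S4.8.1]{higheralgebra}) yields a lax symmetric monoidal functor to $\TwoCatInf^\times$, and composing with the (strong) inclusion of $\DSchsaft$ into aft schemes gives the desired $\IndCoh_*^\otimes$; the underlying functor is then $\IndCoh_*$ by construction, after checking that the pushforward in loc.\ cit.\ agrees with that of \cref{lem:derived-schemes:qcoh-covariant} (both are ind-extensions of $f_*$ on $\mathrm{Coh}$ for proper $f$ and determined by \cref{lem:ind-coh:incoherent-functoriality} in general).

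The main obstacle is the characteristic: \cite{gaitsgory2019study} works over a field of characteristic $0$ and with DG categories. I would address this as in the remark at the start of \cref{sec:derived-schemes-ind-coh}: the relevant constructions (ind-extension, the Künneth equivalence $\IndCoh(X)\otimes\IndCoh(Y)\simeq\IndCoh(X\times Y)$ over a perfect field, and compatibility of $\boxtimes$ with $f_*$) only use that the base is a perfect field, so they go through verbatim over $\Fp$, replacing $\mathrm{DGCat}_{\mathrm{cont}}$ by $\mathrm{Mod}_{\mathrm{Mod}_{\Fp}}(\mathrm{Pr}^L_{\mathrm{st}})$. If the full symmetric monoidal coherence is hard to import, the fallback is to construct it directly: $\mathrm{Coh}$ with $\boxtimes$ is a functor from $\DSchsaft$ restricted to proper maps into small symmetric monoidal categories, ind-extend, and then extend along all maps using that $f_*$ for arbitrary $f$ is determined by \cref{lem:ind-coh:incoherent-functoriality} and the t-exact equivalence on bounded-above parts (\cref{lem:ind-coh:bounded-above}), where the compatibility $f_*\boxtimes g_*\simeq (f\times g)_*\boxtimes$ reduces to the quasi-coherent Künneth formula.
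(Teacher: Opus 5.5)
Your proposal is correct and follows essentially the same route as the paper. The paper's proof simply takes Gaitsgory--Rozenblyum's symmetric monoidal structure on $\IndCoh$ for the external product \cite[Chapter 4, Proposition 6.3.7]{gaitsgory2019study}, restricts it to $\DSchsaft$, and composes with the lax symmetric monoidal forgetful functor $\Pr^{L,\otimes}_{\Fp} \to \CatInf^\times$; your explicit description of $\boxtimes$ and your fallback construction are not needed.
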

\begin{proof}
    Use \cite[Chapter 4, Proposition 6.3.7]{gaitsgory2019study}, and compose with the lax symmetric monoidal 
    forgetful functor $\Pr^{L,\otimes}_{\Fp} \to \CatInf^\times$\footnote{In the reference,
    they write $\mathrm{DGCat_{cont}}$ for the category of $\Fp$-linear stable presentable categories equipped with the $\Fp$-linear Lurie tensor product,
    whereas we use the notation $\Pr^{L,\otimes}_{\Fp}$.}.
\end{proof}

\begin{prop} \label{lem:derived-schemes:IndCohSymMonAllBC}
    The lax symmetric monoidal functor 
    $$\IndCoh_*^\otimes \colon (\DSchsaft)^\times \to \TwoCatInf^\times$$
    from \cref{lem:derived-schemes:IndCoh-tensor} is a lax symmetric monoidal functor satisfying all Beck--Chevalley conditions 
    in the sense of \cref{def:constr-3ff:all-BC} (1).
\end{prop}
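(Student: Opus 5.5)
By \cref{def:constr-3ff:all-BC} (1), there are two things to check. First, the underlying functor must satisfy all Beck--Chevalley conditions with respect to $(I,P)$. Second, the lax structure transformation $\boxtimes \colon \IndCoh_*(-) \times \IndCoh_*(-) \to \IndCoh_*(- \times -)$ must satisfy all Beck--Chevalley conditions with respect to $(I \times I, P \times P)$.

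\textbf{Underlying functor.} By \cref{lem:derived-schemes:IndCoh-tensor}, the underlying functor of $\IndCoh_*^\otimes$ is the restriction of $\IndCoh_*$ to $\DSchsaft$. The conditions (BC1)--(BC5) were verified for $\IndCoh_*$ on all of $\DSchpnoeth$ in the proof of \cref{lem:derived-schemes:IndCoh-coherent}. Each of them concerns a single morphism or a cartesian square. Since $\DSchsaft \subseteq \DSchpnoeth$ is full and closed under fiber products (\cref{lem:derived-schemes:DSaft-Nagata}), cartesian squares in $\DSchsaft$ remain cartesian in $\DSchpnoeth$. So the restriction inherits all of these conditions.

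\textbf{Structure transformation.} We must show that for $j=(j_1,j_2) \in I\times I$ and $p = (p_1,p_2) \in P \times P$ the naturality squares
\[
\begin{tikzcd}
\IndCoh(X_1)\times \IndCoh(X_2) \ar[r,"\boxtimes"] \ar[d,"f_{1,*}\times f_{2,*}"'] & \IndCoh(X_1\times X_2) \ar[d,"(f_1\times f_2)_*"]\\
\IndCoh(Y_1)\times \IndCoh(Y_2) \ar[r,"\boxtimes"'] & \IndCoh(Y_1\times Y_2)
\end{tikzcd}
\]
satisfy the left Beck--Chevalley condition for $f=j$ and the right one for $f=p$. Concretely, the mates are:
\begin{itemize}
\item for $f = j$, the map $(j_1\times j_2)^*(M_1\boxtimes M_2) \to j_1^*M_1 \boxtimes j_2^*M_2$;
\item for $f = p$, the map $p_1^!N_1 \boxtimes p_2^!N_2 \to (p_1\times p_2)^!(N_1\boxtimes N_2)$.
\end{itemize}

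Factor $f_1 \times f_2 = (f_1\times \id{})\circ(\id{}\times f_2)$. Since mates compose, it suffices to treat $f_1\times\id{Y_2}$ (and symmetrically). We then reduce to the compatibility of $\boxtimes$ with $!$-pullback and with open restriction, namely $(f\times\id{})^!(M\boxtimes N)\simeq f^!M\boxtimes N$ for $f$ proper or an open immersion. These are part of the symmetric monoidal structure of $\IndCoh^!$ on $\mathrm{Sch}_{\mathrm{aft}}$ in \cite[Chapter 4, \S6 and Chapter 5]{gaitsgory2019study}, namely the statement that $\IndCoh$ as a functor out of correspondences is symmetric monoidal. One would cite that the exterior product is compatible with $!$-pullbacks, and identify the mate with that structural equivalence.

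\textbf{Main obstacle.} The main obstacle is this identification: checking that the abstractly defined mate agrees with the equivalence supplied by Gaitsgory--Rozenblyum. For open immersions one can alternatively argue directly. Since $j^* = j^!$ and everything is Zariski local, one can reduce to affines, where $\IndCoh(X_1\times X_2)\simeq \IndCoh(X_1)\otimes\IndCoh(X_2)$ by \cite[Chapter 4, Proposition 6.3.7]{gaitsgory2019study}. Under this identification, $\boxtimes$ becomes the universal functor, and $(j_1\times\id{})^* = j_1^*\otimes \id{}$. The mate is then visibly an equivalence.

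For proper $p_1$, the same tensor decomposition gives $(p_1\times\id{})_* = p_{1,*}\otimes\id{}$. Its right adjoint $p_1^!\otimes \id{}$ is continuous by \cite[\S3.3.7]{gaitsgory2012indcoherentsheaves}. In $\Pr^L_{\Fp}$, tensoring an adjunction whose right adjoint is continuous again yields an adjunction, with right adjoint $p_1^!\otimes\id{}$. This exhibits the mate as an equivalence, and I would take this route.
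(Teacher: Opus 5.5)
Your chosen route is the paper's argument. You identify $\boxtimes$ with the universal bilinear functor via the Künneth equivalence $\IndCoh(X_1)\otimes\IndCoh(X_2)\simeq\IndCoh(X_1\times X_2)$, tensor the continuous adjunctions $j_i^*\dashv j_{i,*}$ and $p_{i,*}\dashv p_i^!$ in $\Pr^L_{\Fp}$, and conclude that the mates of the tensored square are invertible because its horizontal functors are equivalences. Since the Künneth equivalence holds globally on $\DSchsaft$ and both factors can be tensored at once, your passage to affines and your reduction to $f_1\times\id{}$ are unnecessary.

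The one step you leave implicit, and which the paper states, is that precomposing with the universal functor $\IndCoh(Y_1)\times\IndCoh(Y_2)\to\IndCoh(Y_1)\otimes\IndCoh(Y_2)$ recovers the original mates. This holds because that functor is natural with respect to continuous functors and natural transformations, and hence compatible with the units and counits defining the mates.
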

\begin{proof}
    The underlying functor satisfies all Beck--Chevalley conditions by the proof of \cref{lem:derived-schemes:IndCoh-coherent}.
    By \cref{def:constr-3ff:all-BC}, it remains to check the external-product transformation.
    For morphisms $f_i\colon X_i\to Y_i$, consider the naturality square $\square$:
    $$\begin{tikzcd}[column sep=large] \IndCoh(X_1)\times\IndCoh(X_2) \ar[r,"\boxtimes"] \ar[d,"f_{1,*}\times f_{2,*}"'] & \IndCoh(X_1\times X_2) \ar[d,"(f_1\times f_2)_*"] \\ \IndCoh(Y_1)\times\IndCoh(Y_2) \ar[r,"\boxtimes"'] & \IndCoh(Y_1\times Y_2). \end{tikzcd}$$
    We must show that the mates
    $$\mathrm{Ex}^{\mathrm{L}}(\square)\colon (j_1\times j_2)^*\circ\boxtimes \longrightarrow \boxtimes\circ(j_1^*\times j_2^*)$$
    for open immersions $j_i$, and
    $$\mathrm{Ex}^{\mathrm{R}}(\square)\colon \boxtimes\circ(p_1^!\times p_2^!) \longrightarrow (p_1\times p_2)^!\circ\boxtimes$$
    for proper morphisms $p_i$, are equivalences.

    Replace the Cartesian products of categories in $\square$ by their presentable $\Fp$-linear tensor products.
    The resulting horizontal functors are equivalences by \cite[Proposition 4.6.2]{gaitsgory2012indcoherentsheaves}.
    Moreover, the adjunctions $j_i^*\dashv j_{i,*}$ and $p_{i,*}\dashv p_i^!$ consist of colimit-preserving functors by \cite[Lemma 3.5.8, Corollary 3.3.6 and \S3.3.7]{gaitsgory2012indcoherentsheaves}, and hence can be tensored.
    Thus the corresponding mates for this new square are equivalences, since its horizontal functors are equivalences.
    Precomposing with
    $$\IndCoh(Y_1)\times\IndCoh(Y_2)\longrightarrow\IndCoh(Y_1)\otimes_{\Fp}\IndCoh(Y_2)$$
    recovers precisely $\mathrm{Ex}^{\mathrm{L}}(\square)$ and $\mathrm{Ex}^{\mathrm{R}}(\square)$: the canonical bilinear functors are natural with respect to continuous functors and natural transformations, hence compatible with the units and counits defining these mates.
    This proves the claim.
\end{proof}

\begin{prop} \label{lem:derived-schemes:IndCoh-3FF}
    There is a $6\frac{1}{2}$-functor formalism 
    \begin{equation*}
        \defnot{\IndCoh^\otimes} \colon \Span{\DSchsaft}{\all}{\all}{I}{P}^\otimes \to \TwoCatInf^\times
    \end{equation*}
    with respect to the class of eventually coconnective morphism $f$.
    Moreover, the underlying $4\frac{1}{2}$-functor formalism $\IndCoh \colon \Span{\DSchsaft}{\all}{\all}{I}{P} \to \TwoCatInf$
    is the restriction of the $4\frac{1}{2}$-functor formalism from \cref{lem:derived-schemes:IndCoh-coherent} 
    to the $2$-category $\Span{\DSchsaft}{\all}{\all}{I}{P}$.
\end{prop}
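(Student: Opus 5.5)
The plan is to apply \cref{lem:tensor-product:lax-sym-mon-UP-span} with $\Cat C^\otimes = (\DSchsaft)^\times$ and $\TwoCat M^\otimes = \TwoCatInf^\times$. This is legitimate because $((\DSchsaft)^\times, I, P)$ is a cartesian Nagata setup by \cref{lem:derived-schemes:DSaft-Nagata}.

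By \cref{lem:derived-schemes:IndCohSymMonAllBC}, $\IndCoh_*^\otimes$ is an object of $\FunOplaxIPBCLaxTensor{(\DSchsaft)^\times}{\TwoCatInf^\times}$. The equivalence of \cref{lem:tensor-product:lax-sym-mon-UP-span} then produces a lax symmetric monoidal functor $\IndCoh^\otimes \colon \Span{\DSchsaft}{\all}{\all}{I}{P}^\otimes \to \TwoCatInf^\times$ whose restriction along $h$ is $\IndCoh_*^\otimes$.

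Next we identify the underlying $2$-functor formalism. By \cref{lem:tensor-product:comparison-cosntruction-2ff-3ff}, the underlying functor of $\IndCoh^\otimes$ is the unique extension, via \cref{lem:constr-2cat:ext-and-unique}, of the underlying functor $\IndCoh_*|_{\DSchsaft}$.

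We then check that this agrees with the restriction of the formalism of \cref{lem:derived-schemes:IndCoh-coherent}. On $\DSchsaft$ every morphism is separated and almost of finite type, so the inclusion $\DSchsaft \subseteq \DSchpnoeth$ induces a functor $\Span{\DSchsaft}{\all}{\all}{I}{P} \to \Span{\DSchpnoeth}{E}{\all}{I}{P}$. Restricting the old formalism along it gives a $2$-functor formalism whose restriction to $\DSchsaft$ is $\IndCoh_*|_{\DSchsaft}$. One caveat: the inclusion must preserve the pullbacks used in span composition. It does, since fiber products in $\DSchsaft$ are computed in derived schemes, and by \cref{lem:derived-schemes:noetherian-fiber-product-aft} they are also fiber products in $\DSchpnoeth$. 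Uniqueness in \cref{lem:constr-2cat:ext-and-unique} then identifies the two $2$-functor formalisms.

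It remains to verify the $6\frac{1}{2}$ axioms of \cref{def:3ff:defn}.

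Axioms (FF1)--(FF4) concern only the underlying $2$-functor formalism, so they are inherited from \cref{lem:derived-schemes:IndCoh-coherent}. This uses that adjoints are computed objectwise and so are unaffected by restricting the geometry.

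For (FF5) we must show that the shriek tensor product of \cref{lem:tensor-product:DX-is-sym-mon}, namely $\Delta_X^! \circ \boxtimes$, preserves colimits in each variable.
\begin{itemize}
\item The external product $\boxtimes \colon \IndCoh(X)\times\IndCoh(X)\to\IndCoh(X\times X)$ factors through the Lurie tensor product $\IndCoh(X)\otimes_{\Fp}\IndCoh(X)$. The map into the Lurie tensor product preserves colimits separately in each variable, and the comparison to $\IndCoh(X\times X)$ is an equivalence by \cite[Proposition 4.6.2]{gaitsgory2012indcoherentsheaves}, as used in \cref{lem:derived-schemes:IndCohSymMonAllBC}.
\item The functor $\Delta_X^!$ preserves colimits because it is a left adjoint by (FF3).
\end{itemize}

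The main obstacle, modest as it is, lies in the identification of the underlying $2$-functor formalism. One must check that the span $2$-category functor induced by the inclusion is well defined, which is the pullback issue above. One must also check that the comparison of \cref{lem:tensor-product:comparison-cosntruction-2ff-3ff} is compatible with this restriction. Both follow from naturality of the restriction equivalences in the Nagata setup, which we take as routine.
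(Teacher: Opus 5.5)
Your proposal is correct and follows the paper's proof. You extend $\IndCoh_*^\otimes$ via \cref{lem:tensor-product:lax-sym-mon-UP-span} using \cref{lem:derived-schemes:IndCohSymMonAllBC}, identify the underlying $2$-functor formalism via \cref{lem:tensor-product:comparison-cosntruction-2ff-3ff}, inherit (FF1)--(FF4) from \cref{lem:derived-schemes:IndCoh-coherent}, and get (FF5) from $\Delta_X^!$ being a left adjoint and $\boxtimes$ preserving colimits separately in each variable. Your extra uniqueness argument for the restriction from $\DSchpnoeth$ to $\DSchsaft$ fills in a step the paper leaves implicit. Your Lurie-tensor-product factorization of $\boxtimes$ replaces the paper's direct citation of \cite[Chapter 4, Proposition 6.3.7]{gaitsgory2019study}. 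For that step you only need $\IndCoh(X)\otimes_{\Fp}\IndCoh(X)\to\IndCoh(X\times X)$ to preserve colimits, not to be an equivalence.
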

\begin{proof}
    First note that $((\DSchsaft)^\times, I, P)$ is a cartesian Nagata setup by \cref{lem:derived-schemes:DSaft-Nagata}.
    We want to apply \cref{lem:tensor-product:lax-sym-mon-UP-span} to the lax symmetric monoidal functor $\IndCoh_*^\otimes \colon (\DSchsaft)^\times \to \TwoCatInf^\times$ 
    of \cref{lem:derived-schemes:IndCoh-tensor}. For this, we have to check that $\IndCoh_*^\otimes$ is a lax symmetric monoidal functor satisfying all Beck--Chevalley conditions.
    This was shown in \cref{lem:derived-schemes:IndCohSymMonAllBC}. Hence, we obtain a $3$-functor formalism 
    $$\IndCoh^\otimes \colon \Span{\DSchsaft}{\all}{\all}{I}{P}^\otimes \to \TwoCatInf^\times.$$
    Its underlying $2$-functor formalism is the $2$-functor formalism from \cref{lem:derived-schemes:IndCoh-coherent} by 
    \cref{lem:tensor-product:comparison-cosntruction-2ff-3ff}

    To see that the constructed $3$-functor formalism is a $6\frac{1}{2}$-functor formalism,
    note first that the underlying $2$-functor formalism is a $4\frac{1}{2}$-functor formalism with respect to 
    eventually coconnective morphisms, again by \cref{lem:derived-schemes:IndCoh-coherent}. 
    Hence, it suffices to prove (FF5) of \cref{def:3ff:defn}, i.e.\ that the tensor product preserves colimits in each variable.
    By \cref{lem:tensor-product:DX-is-sym-mon}, the tensor functor is given by the composition 
    $$\IndCoh(X) \times \IndCoh(X) \to \IndCoh(X \times X) \xrightarrow{\Delta_X^!} \IndCoh(X),$$
    where the first functor is induced by the lax monoidal structure of $\IndCoh$.
    Note that the second functor preserves colimits, since it is a left adjoint by (FF3) of \cref{def:2ff:defn}, which we have already shown.
    It was shown in \cite[Chapter 4, Proposition 6.3.7]{gaitsgory2019study} that the first functor preserves colimits separately in each variable, proving (FF5).
\end{proof}

\begin{notation}
    By \cref{lem:tensor-product:DX-is-sym-mon}, for any $X \in \DSchsaft$ we get a symmetric monoidal
    structure on $\IndCoh(X)$, called the \defemph{shriek tensor product}.
    We denote its tensor functor by $\defnot{\shriektensor}$,
    and write $$\defnot{\omega_X} \in \IndCoh(X)$$ for the tensor unit and call it the \defemphindex{dualizing object of $X$}{dualizing object}.
\end{notation}
\begin{rmk}
    We explain why the tensor unit is called the dualizing object.
    Write $a_X \colon X \to \Spec{\Fp}$ for the terminal morphism.
    By \cref{lem:tensor-product:upper-shriek-symmetric-monoidal},
    we know that $$\omega_X \cong a_X^! \omega_{\Fp} \cong a_X^! \Fp.$$
    For the last equivalence we argue as follows: Under the canonical equivalence
    $\IndCoh(\Spec{\Fp}) \simeq \operatorname{Mod}_{\Fp}$ from \cref{rmk:indcoh:classical-regular-qcoh},
    the external product identifies with the usual tensor product
    by \cite[Chapter 4, Lemma 6.3.2]{gaitsgory2019study}.
    Since the diagonal of $\Spec{\Fp}$ over $\Fp$ is the identity,
    the shriek tensor product also identifies with the usual tensor
    product. In particular, $\omega_{\Fp} \cong \Fp$.
\end{rmk}

\begin{thm} \label{lem:cartier-modules:main-thm-3ff}
    There exists a $6\frac{1}{2}$-functor formalism 
    $$\defnot{\Cart^\otimes} \colon \Span{\DSchsaft}{\all}{\all}{I}{P}^\otimes \to \TwoCatInf^\times$$
    with respect to the class of eventually coconnective morphisms.
    Moreover, the following holds:
    \begin{enumerate}[label=(\arabic*)]
        \item The underlying $4\frac{1}{2}$-functor formalism $\Cart \colon \Span{\DSchsaft}{\all}{\all}{I}{P} \to \TwoCatInf$
            is the restriction of the $4\frac{1}{2}$-functor formalism from \cref{lem:cartier-modules:main-thm-2ff} 
            to the $2$-category $\Span{\DSchsaft}{\all}{\all}{I}{P}$.
        \item Let $X \in \DSchsaft$. Then the tensor unit $\mathbf{1}_{\Cart(X)}$ is given by 
            $$U(\mathbf{1}_{\Cart(X)}) = \omega_X$$
            and
            $$\kappa(\mathbf{1}_{\Cart(X)}) \colon F_{*,X} \omega_X \cong F_{*,X} F_X^! \omega_X \xrightarrow{\mathrm{counit}} \omega_X.$$
            Here, the first equivalence is given by \cref{rmk:tensor-product:shirek-dualizing-object}, and the second morphism 
            is the counit of the adjunction $F_{*,X} \dashv F_X^!$ of \cref{lem:consequences:adjoints-of-upper-shriek}, where we use that $F_X$ is proper. 
        \item For $X \in \DSchsaft$ and Cartier modules $M, N \in \Cart(\IndCoh(X), \IndCoh_*(F_X))$ their shriek tensor product $M \shriektensor N$ is given by 
            \begin{equation*}
                U(M \shriektensor N) \cong U(M) \shriektensor U(N)
            \end{equation*}
            and
            \begin{equation*}
                \kappa(M \shriektensor N) \colon F_{*,X} (UM \shriektensor UN) \to F_{*,X}UM \shriektensor F_{*,X}UN \xrightarrow{\kappa(M) \shriektensor \kappa(N)} UM \shriektensor UN, 
            \end{equation*}
            where the first arrow is given by the oplax symmetric monoidal structure of $F_{*,X}$.
    \end{enumerate}
\end{thm}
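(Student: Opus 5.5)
The plan is to obtain the theorem as a direct application of \cref{lem:tensor-product:main-thm} to the $6\frac{1}{2}$-functor formalism $\IndCoh^\otimes$ of \cref{lem:derived-schemes:IndCoh-3FF}. The geometric input is the cartesian Nagata setup $((\DSchsaft)^\times, I, P)$ of \cref{lem:derived-schemes:DSaft-Nagata}. For the Frobenius, we restrict the Frobenius structure $F$ of \cref{lem:derived-schemes:frobenius-structure} to $\DSchsaft$. This is possible because $\DSchsaft$ is a full subcategory, so the components $F_X$ are morphisms of $\DSchsaft$. Since $\DSchsaft$ has finite products, \cref{lem:tensor-product:Frob-structure-is-sym-mon} then upgrades $F$ uniquely to a symmetric monoidal Frobenius structure.

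The hypothesis of \cref{lem:tensor-product:main-thm} requires that $\IndCoh_*(\square^F_j)$ satisfy the left Beck--Chevalley condition for every open immersion $j$. Open immersions are étale, so this is exactly \cref{lem:applications-constr-CartD:i^*-commutes-with-F_*}. The Beck--Chevalley condition there is computed in the underlying functor $\IndCoh_*$, which agrees with the underlying functor of $\IndCoh^\otimes$ by \cref{lem:derived-schemes:IndCoh-3FF} and \cref{lem:tensor-product:comparison-cosntruction-2ff-3ff}. Applying the theorem produces $\Cart^\otimes \coloneqq \Cart(\IndCoh^\otimes, F_*)$, which is a $6\frac{1}{2}$-functor formalism with respect to eventually coconnective morphisms.

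Claim (1) follows by combining two identifications. By \cref{lem:tensor-product:main-thm}(1), the underlying $2$-functor formalism of $\Cart^\otimes$ is $\Cart(\IndCoh|_{\DSchsaft}, F_*)$. It remains to see that this is the restriction of the formalism $\Cart$ of \cref{lem:cartier-modules:main-thm-2ff}. This is the step needing the most care. We argue that the lax limit construction of \cref{def:BN:Cart-Frob-pCFF} is compatible with restriction along the marked inclusion $\Span{\DSchsaft}{\all}{\all}{I}{P} \hookrightarrow \Span{\DSchpnoeth}{E}{\all}{I}{P}$. First, the partially oplax extension of $F_*$ restricts correctly. This holds by the uniqueness in \cref{lem:constr-2cat:ext-and-unique}, since restricting $\IndCoh_*$ and $F_*$ to $\DSchsaft$ commutes with extension. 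Second, restriction along a marked functor preserves lax limits, because they are computed pointwise by \cite[Proposition 5.4.2]{abellán2026freefibrationslaxcolimits}.

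Claims (2) and (3) are parts (2) and (3) of \cref{lem:tensor-product:main-thm}. For (2) we must check that $F_X \in P$ for every $X \in \DSchsaft$. By definition, properness is a condition on $\classical{(F_X)}$. That morphism is the classical absolute Frobenius by \cref{lem:derived-schemes:Frob-structure-classical}, and it is finite, hence proper, because $\classical{X}$ is of finite type over the perfect field $\Fp$. A perhaps cleaner route: $F_X$ is separated and of finite type since it is a morphism in $\DSchsaft$, and it is universally closed since it is a universal homeomorphism.

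I expect the compatibility with restriction in (1) to be the only nonformal point. Everything else is bookkeeping with results already established.
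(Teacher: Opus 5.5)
Your proposal is correct and is essentially the paper's proof: apply \cref{lem:tensor-product:main-thm} to $\IndCoh^\otimes$, take the Beck--Chevalley hypothesis for open immersions from \cref{lem:applications-constr-CartD:i^*-commutes-with-F_*}, and check properness of $F_X$ on classical schemes (the paper cites the Stacks Project for finiteness of Frobenius on schemes of finite type over $\Fp$). The paper leaves the restriction compatibility in (1) implicit, and your argument for it is fine; the input actually needed is that restriction along a marked functor preserves lax limits, as invoked in the proof of \cref{lem:constr-cart:CartD-compatibility-U-and-kappa}, and not merely that lax limits are computed objectwise.
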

\begin{proof}
    Recall $\IndCoh^\otimes$, the $6\frac{1}{2}$-functor formalism with respect to the class of eventually coconnective morphisms of \cref{lem:derived-schemes:IndCoh-3FF}.
    For any open immersion $f$, the square $\IndCoh_*(\square^F_f)$ satisfies the left Beck--Chevalley condition by \Cref{lem:applications-constr-CartD:i^*-commutes-with-F_*}.
    Moreover, for any $X \in \DSchsaft$ the morphism $F_X$ is proper: Indeed, this can be checked on classical schemes, where it holds by \cite[\href{https://stacks.math.columbia.edu/tag/0CCD}{Tag 0CCD}]{stacks-project} as $\classical{X}$ is of finite type over $\Fp$.
    Hence, the theorem is an application of \cref{lem:tensor-product:main-thm}.
\end{proof}

%% file: derived-schemes/consequences.tex
\subsection{Applications} \label{sec:derived-schemes-consequences}

\begin{prop} [Existence of upper shriek functors] \label{lem:derived-schemes-consequences:adjoints-shriek}
    Let $f \colon X \to Y$ be a morphism between (derived) noetherian $\Fp$-schemes,
    which is separated and almost of finite type.
    Then there exists a functor 
    \begin{equation*}
        f^! \colon \Cart(Y) \to \Cart(X).
    \end{equation*}
    If $f$ is proper, then $f^!$ is right adjoint to $f_* \colon \Cart(X) \to \Cart(Y)$.
    If $f$ is an open immersion, then $f^!$ is left adjoint to $f_*$.
\end{prop}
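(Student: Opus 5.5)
This is a direct consequence of \cref{lem:cartier-modules:main-thm-2ff} combined with the formal consequences of having a $2$-functor formalism from \cref{sec:2FF-consequences}. We use the $4\frac{1}{2}$-functor formalism
$$\Cart \colon \Span{\DSchpnoeth}{E}{\all}{I}{P} \to \TwoCatInf$$
constructed there, where $E$ is the class of separated almost of finite type morphisms from \cref{def:derived-schemes:nagata-subcategories}. Since $f \in E$, the backward span $Y \xleftarrow{f} X \xrightarrow{\id{X}} X$ is a $1$-morphism in this span $2$-category. We define
$$f^! \coloneqq \Cart^!(f) = \Cart(Y \xleftarrow{f} X \xrightarrow{\id{X}} X).$$
This gives the functor $\Cart(Y) \to \Cart(X)$. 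By part (2) of \cref{lem:cartier-modules:main-thm-2ff}, it lifts $f^!$ on $\IndCoh$ along $U$.

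For the adjunctions, we apply \cref{lem:consequences:adjoints-of-upper-shriek} to the $2$-functor formalism $\Cart$. Its hypotheses are a Nagata setup $(\DSchpnoeth, E, I, P)$, which holds by \cref{lem:derived-schemes:suitable-decomposition}, and a $2$-functor formalism on the associated span category, which is exactly what we have. The lemma then gives the following.
\begin{itemize}
  \item If $f \in P$, i.e.\ $f$ is proper, then $f^!$ is right adjoint to $f_* = \Cart_*(f)$.
  \item If $f \in I$, i.e.\ $f$ is an open immersion, then $f^!$ is left adjoint to $f_*$.
\end{itemize}
Here $f_*$ is the pushforward on Cartier modules, $\Cart(X \xleftarrow{\id{X}} X \xrightarrow{f} Y)$.

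The argument behind that lemma is the only substantive point. In the span $2$-category, the backward and forward spans attached to $f$ are adjoint, with unit and counit built from the diagonal $X \to X\times_Y X$. We need this diagonal to lie in $I$, respectively $P$, and this follows from base-change stability and left cancellability. Any $2$-functor then carries this adjunction to an adjunction of functors between the categories of Cartier modules.

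We expect no real obstacle, since all the substantive work was done in constructing $\Cart$. The one thing to check is that the unit and counit $2$-cells live in $\SpanStd$. That verification was already carried out in \cref{lem:consequences:adjoints-of-upper-shriek}, so the proof reduces to a citation.
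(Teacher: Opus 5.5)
Your proposal is correct and is exactly the paper's argument: the paper deduces the statement by combining the existence of the $2$-functor formalism $\Cart$ from \cref{lem:cartier-modules:main-thm-2ff} with \cref{lem:consequences:adjoints-of-upper-shriek}. One small imprecision: only one of the two adjunction cells uses the diagonal $X \to X \times_Y X$, while the other uses $f$ itself, which lies in $I$ resp.\ $P$ by hypothesis. This does not affect the argument, since you cite the lemma rather than reprove it.
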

\begin{proof}
    This is clear from the existence of the $2$-functor formalism, cf.\ \cref{lem:cartier-modules:main-thm-2ff},
    and \cref{lem:consequences:adjoints-of-upper-shriek}.
\end{proof}

We can spell out more concretely, what the upper shriek functor does:
\begin{prop}[Upper Shriek and Compactifications] \label{lem:derived-schemes-consequences:compactification}
    Let $f \colon X \to Y$ be a morphism between (derived) noetherian $\Fp$-schemes,
    which is separated and almost of finite type.
    Choose any compactification of $f$,
    i.e.\ a factorization of $f$ into 
    \begin{equation*}
        X \xhookrightarrow{j} \overline{X} \xrightarrow{p} Y,
    \end{equation*}
    such that $j$ is an open immersion, $p$ is proper and $\overline{X}$ is noetherian.
    Then there is a canonical equivalence 
    \begin{equation*}
        f^! \cong j^* p^\times,
    \end{equation*}
    where $j^*$ is a left adjoint to $j_*$,
    and $p^\times$ is a right adjoint to $p_*$.
\end{prop}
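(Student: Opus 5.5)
The plan is to reduce the statement to the abstract compactification result \cref{lem:consequences:compactification}, applied to the $4\frac{1}{2}$-functor formalism $\Cart$ of \cref{lem:cartier-modules:main-thm-2ff}. That formalism is a $2$-functor formalism on the Nagata setup $(\DSchpnoeth, E, I, P)$ of \cref{lem:derived-schemes:suitable-decomposition}. Here $E$ consists of the separated aft morphisms, $I$ of the open immersions and $P$ of the proper morphisms.

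First I check that the given factorization $X \xrightarrow{j} \overline{X} \xrightarrow{p} Y$ is a compactification in the abstract sense. The hypothesis that $\overline{X}$ is noetherian guarantees that $j$ and $p$ are morphisms in $\DSchpnoeth$, so $j \in I$ and $p \in P$. The class $E$ contains $I$ and $P$ by \cref{def:derived-schemes:nagata-subcategories}, so $j, p \in E$. This is the only point where the noetherian hypothesis on $\overline{X}$ enters.

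Next I apply \cref{lem:consequences:composition} to get $f^! \cong j^! p^!$, where $f^!$ is defined as $\Cart$ evaluated on the backward span. Then \cref{lem:consequences:adjoints-of-upper-shriek} identifies $j^!$ as a left adjoint of $j_*$ and $p^!$ as a right adjoint of $p_*$ inside $\Cart$. By uniqueness of adjoints, $j^! \cong j^*$ and $p^! \cong p^\times$, which gives the canonical equivalence $f^! \cong j^* p^\times$. This is precisely the content of \cref{lem:consequences:compactification}, so the proof amounts to invoking it.

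I expect no real obstacle. The one point needing care is that $j^*$ and $p^\times$ here denote adjoints on Cartier modules, not on $\IndCoh$. These adjoints exist by the lemma just cited, independently of (FF2). One could add, via the commutative square in \cref{lem:cartier-modules:main-thm-2ff}(2), that $U$ carries this equivalence to the corresponding one for $\IndCoh$, since both arise from the same $2$-morphisms in the span $2$-category. However, the statement does not require this.
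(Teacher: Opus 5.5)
Your proposal is correct and matches the paper's proof, which simply invokes \cref{lem:consequences:compactification} for the $2$-functor formalism $\Cart$ of \cref{lem:cartier-modules:main-thm-2ff}. Your explicit check that the noetherian hypothesis on $\overline{X}$ makes $j \in I$ and $p \in P$ morphisms of $\DSchpnoeth$ is exactly the step the paper leaves implicit.
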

\begin{proof}
    This is a special case of \cref{lem:consequences:compactification}, using the existence of the $2$-functor formalism,
    cf.\ \cref{lem:cartier-modules:main-thm-2ff}.
\end{proof}

\begin{prop} [Base change] \label{lem:derived-schemes-consequences:BC}
    Let
    \begin{center}
        \begin{tikzcd}
            Y' \ar[r, "g'"] \ar[d, "f'"'] \cartsymb &X' \ar[d, "f"] \\
            Y \ar[r, "g"'] & X,
        \end{tikzcd}
    \end{center}
    be a cartesian diagram of derived noetherian $\Fp$-schemes, where $f$ (and thus $f'$) is separated and almost of finite type.
    Then there is a canonical equivalence of functors
    \begin{equation*}
        f^! g_* \cong g'_* f'^! \colon \Cart(Y) \to \Cart(X'). 
    \end{equation*}
\end{prop}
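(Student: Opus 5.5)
This follows by specializing the abstract base change result \cref{lem:consequences:BC-for-E} to the $2$-functor formalism $\Cart$ of \cref{lem:cartier-modules:main-thm-2ff}. The plan has three steps.

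\textbf{Step 1: the setup applies.} By \cref{lem:derived-schemes:suitable-decomposition}, $(\DSchpnoeth, E, I, P)$ is a Nagata setup, with $E$ the separated almost of finite type morphisms. By \cref{lem:cartier-modules:main-thm-2ff}, $\Cart$ is a $2$-functor formalism on $\Span{\DSchpnoeth}{E}{\all}{I}{P}$. The pullback $Y' = Y \times_X X'$ exists in $\DSchpnoeth$ because $f$ lies in $E$ (\cref{lem:derived-schemes:noetherian-fiber-product-aft}). Since $E$ is stable under base change, $f'$ again lies in $E$, so both $f^!$ and $f'^!$ are defined.

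\textbf{Step 2: reduce to Beck--Chevalley squares.} I would then rerun the argument of \cref{lem:consequences:BC-for-E}.
\begin{itemize}
\item Choose a compactification $f = p \circ j$ with $j \in I$ and $p \in P$; it exists by part (b) of \cref{lem:derived-schemes:suitable-decomposition}.
\item Factor the cartesian square into two cartesian squares, the upper along $j$ and the lower along $p$.
\item By \cref{lem:consequences:BC-for-I}, the image under $\Cart_*$ of the upper square satisfies the left Beck--Chevalley condition.
\item By \cref{lem:consequences:BC-for-P}, the image of the lower square satisfies the right Beck--Chevalley condition.
\end{itemize}
Combining these with $f^! \cong j^* p^\times$ and $f'^! \cong j'^* p'^\times$ from \cref{lem:consequences:compactification} gives the chain
\[
f^! g_* \cong j^* p^\times g_* \cong j^* g''_* p'^\times \cong g'_* j'^* p'^\times \cong g'_* f'^!,
\]
where $g''$ denotes the middle horizontal map of the factored square.

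\textbf{Step 3: canonicity.} The composite equivalence is canonical because it is the image under $\Cart$ of the evident equivalence of spans in $\Span{\DSchpnoeth}{E}{\all}{I}{P}$. Indeed, both $f^! g_*$ and $g'_* f'^!$ are $\Cart$ applied to the composite span $Y \xleftarrow{f'} Y' \xrightarrow{g'} X'$. This is just the composition law in the span $2$-category, and it shows the equivalence does not depend on the chosen compactification.

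There is no real obstacle here. The only point needing care is that the compactification must stay inside the noetherian world, so that every object of the factored square lies in $\DSchpnoeth$. This is exactly what \cref{lem:derived-schemes:suitable-decomposition} provides via \cite[Theorem 8.15]{scholze2025six}, and it is where the earlier work was spent.
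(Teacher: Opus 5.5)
Your Steps 1 and 2 are exactly the paper's proof. The paper obtains the proposition by applying \cref{lem:consequences:BC-for-E} to the $2$-functor formalism of \cref{lem:cartier-modules:main-thm-2ff}, and your compactification chain is verbatim the proof of that corollary.

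The observation in Step 3 is also correct, and on its own it is already a complete, compactification-free proof. Since $\Cart$ is a $2$-functor and spans compose by pullback, both $f^! g_*$ and $g'_* f'^!$ are identified with $\Cart(Y \xleftarrow{f'} Y' \xrightarrow{g'} X')$, just as in part (a) of \cref{lem:consequences:composition}.

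What you do not justify is that the Step 2 chain \emph{coincides} with this composition-law equivalence, and hence that it is independent of the compactification. Proving that would require identifying each mate with a span-level $2$-cell, as in the proof of \cref{lem:consequences:BC-for-I}, and checking that their pasting is the canonical equivalence of spans.

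This identification is not needed for the statement, so your argument stands. What the paper's route buys is an explicit description of the map through the units and counits of $j^* \dashv j_*$ and $p_* \dashv p^\times$.
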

\begin{proof}
    This is a special case of \cref{lem:consequences:BC-for-E}, using the existence of the $2$-functor formalism, cf.\ \cref{lem:cartier-modules:main-thm-2ff}.
\end{proof}

\begin{rmk}
    See the proof of \cref{lem:consequences:BC-for-E} and the related \cref{lem:consequences:BC-for-P,lem:consequences:BC-for-I} 
    for the exact construction of the canonical map in \cref{lem:derived-schemes-consequences:BC}.
\end{rmk}

As a special case, we have the following base change result which does \emph{not} involve derived schemes:
\begin{cor} \label{lem:consequences:Tor-unital-basechange}
    Let
    \begin{center}
        \begin{tikzcd}
            &X' \ar[d, "f"] \\
            Y \ar[r, "g"'] & X,
        \end{tikzcd}
    \end{center}
    be a span of classical noetherian $\Fp$-schemes, where $f$ is separated and almost of finite type.
    Assume moreover that $f$ and $g$ are \defemph{Tor independent}, i.e.\ 
    for all compatible affine charts $\Spec{A} \subseteq X$, $\Spec{B} \subseteq Y$ and $\Spec{C} \subseteq X'$,
    we have
    $$\mathrm{Tor}^i_{A}(B, C) = 0$$
    for all $i > 0$.
    Then the derived pullback $X' \times_X Y$ is a classical scheme (in particular, it is the classical fiber product), and there is an equivalence of functors 
    \begin{equation*}
        f^! g_* \cong g'_* f'^! \colon \Cart(Y) \to \Cart(X'),
    \end{equation*}
    where $f' \colon X' \times_X Y \to Y$ and $g' \colon X' \times_X Y \to X'$ are the projections.
\end{cor}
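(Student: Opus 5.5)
The plan is to reduce the statement to \cref{lem:derived-schemes-consequences:BC}. The only real work is to show that under Tor independence the derived fiber product $Y' \coloneqq X' \times_X Y$ of classical noetherian schemes is classical. Its classical truncation is then the ordinary fiber product.

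\textbf{Step 1: reduce to affines.} Since $\classical{(-)}$ preserves pullbacks, we have $\classical{(Y')} \cong X' \times^{\mathrm{cl}}_X Y$. By \cref{lem:derived-schemes:noetherian-fiber-product-aft}, $Y'$ is noetherian because $f$ is aft. Classicality of $Y'$ means that $\pi_n(\Cat O_{Y'}) = 0$ for $n > 0$, and this is Zariski local. As in the proof of \cref{lem:derived-schemes:noetherian-fiber-product-aft}, we choose finite affine covers $\Spec{A_i}$ of $X$, $\Spec{C_{ij}}$ of $f^{-1}(\Spec{A_i})$ and $\Spec{B_{ik}}$ of $g^{-1}(\Spec{A_i})$. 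The affines $\Spec{C_{ij} \otimes_{A_i} B_{ik}}$, with the derived tensor product, form a Zariski cover of $Y'$. These charts are compatible in the sense of the hypothesis.

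\textbf{Step 2: the affine computation.} For discrete $A, B, C$ we use that the underlying $\mathbb{E}_\infty$-ring functor $H$ preserves colimits (\cref{constr:animated-ring:htpy-group}). This gives $\pi_n(C \otimes_A B) \cong \mathrm{Tor}^A_n(C,B)$, which vanishes for $n>0$ by hypothesis. Hence each chart is the spectrum of the discrete ring $C \otimes_A B$, so $Y'$ is $0$-truncated and therefore classical, with $Y' \simeq \classical{(Y')} = X' \times^{\mathrm{cl}}_X Y$. The one point needing care is that classicality of a derived scheme can be detected on an affine Zariski cover. This follows because the inclusion of classical schemes is fully faithful (\cref{constr:derived-schemes:classical}) and the counit $\classical{(Y')} \to Y'$ can be checked to be an equivalence locally, as in the proof of \cref{lem:derived-scheme:cartesian-classical-flat} via \cite[Lemma 6.2.3.16]{highertopoi}.

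\textbf{Step 3: conclude.} We regard $X, X', Y$ as objects of $\DSchpnoeth$. For classical noetherian schemes, separated of finite type agrees with separated aft by \cref{lem:noetherian:lafp-of-finite-type}. The square formed by $Y'$ with $f', g'$ is cartesian in derived schemes by construction. Therefore \cref{lem:derived-schemes-consequences:BC} applies and gives $f^! g_* \cong g'_* f'^!$ on $\Cart(Y) \to \Cart(X')$, where $Y'$ has now been identified with the classical fiber product.

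I expect the main obstacle to be the bookkeeping in Step 1. We must check that the Tor hypothesis on compatible charts suffices to cover $Y'$, that is, that the derived tensor products over compatible charts really are the local rings of the derived fiber product. The Tor computation itself is standard.
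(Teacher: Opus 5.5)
Your proposal is correct and follows the paper's route: you show that Tor independence makes the derived fiber product classical, hence equal to the ordinary fiber product, and then apply \cref{lem:derived-schemes-consequences:BC}. The only difference is that the paper cites a reference for the classicality step, whereas you verify it directly via the affine-local computation $\pi_n(C \otimes_A B) \cong \mathrm{Tor}^A_n(C,B)$.
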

\begin{proof}
    The vanishing of Tor groups is exactly the condition that the derived fiber product is equivalent to the classical fiber product (see e.g.\ \cite[\S~2.5.1, immediately after equation~(2.5.1)]{FengHarrisDerived}).
    Hence, this follows immediately from \cref{lem:derived-schemes-consequences:BC}.
\end{proof}

\begin{prop}[Open base change] \label{lem:derived-schemes-consequences:open-basechange}
    Let $\square$ be a cartesian diagram of noetherian derived $\Fp$-schemes of the form
    \begin{center}
        \begin{tikzcd}
            Y' \ar[r, "g'"] \ar[d, "f'"'] \cartsymb &X' \ar[d, "f"] \\
            Y \ar[r, "g"'] & X,
        \end{tikzcd}
    \end{center}
    and suppose that $f \in I$ and $g \in P$.
    Then $\Cart_*(\square)$ satisfies the double Beck--Chevalley condition.
    In particular, there is a canonical equivalence $$f'^*g^\times \cong g'^\times f^*.$$
\end{prop}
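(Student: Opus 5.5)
This is an instance of \cref{lem:consequences:contravariant-exchange}, applied to the $4\frac{1}{2}$-functor formalism $\Cart$ of \cref{lem:cartier-modules:main-thm-2ff}. Recall that $(\DSchpnoeth, E, I, P)$ is a Nagata setup by \cref{lem:derived-schemes:suitable-decomposition}. The class $I$ of open immersions and the class $P$ of proper morphisms are the ones used to build the span $2$-category $\Span{\DSchpnoeth}{E}{\all}{I}{P}$. So a cartesian square with $f \in I$ and $g \in P$ is exactly the kind of square treated in \cref{lem:consequences:contravariant-exchange}.

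The first step is to identify the adjoints entering the double Beck--Chevalley transformation for $\Cart_*(\square)$:
\begin{itemize}
\item By \cref{lem:consequences:adjoints-of-upper-shriek}, $f_*$ and $f'_*$ have left adjoints $f^! \cong f^*$ and $f'^! \cong f'^*$. Here $f'$ is an open immersion by base change (\cref{lem:derived-schemes:properties}).
\item By the same lemma, $g_*$ and $g'_*$ have right adjoints $g^! \cong g^\times$ and $g'^! \cong g'^\times$, since $g$ and $g'$ are proper.
\item Moreover, $\Cart_*(\square)$ satisfies the left Beck--Chevalley condition by \cref{lem:consequences:BC-for-I}, so the double Beck--Chevalley transformation is defined.
\end{itemize}

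The second step is to invoke \cref{lem:consequences:contravariant-exchange} itself. Its proof works in the span $2$-category and mirrors \cref{lem:consequences:BC-for-I,lem:consequences:BC-for-P}. One writes the double mate as an explicit composite of unit and counit $2$-morphisms of spans, using the unit and counit spans from the proof of \cref{lem:consequences:adjoints-of-upper-shriek}. After simplification this composite is a span $2$-morphism whose middle map is the comparison $Y' \to Y \times_X X'$ (or a composite of diagonals and that comparison). This map is an equivalence because $\square$ is cartesian. A $2$-functor sends invertible $2$-morphisms to invertible ones and preserves units, counits and hence mates. Therefore $\Cart_*(\square)$ satisfies the double Beck--Chevalley condition.

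The final claim is then just the statement that the transformation $\mathrm{Ex}^{L,R}(\square) \colon f'^* g^\times \to g'^\times f^*$ is an equivalence. If one wants the paper's cited lemma to be fully justified, the main obstacle is carrying out that span computation. There one must check carefully that all intermediate $2$-morphisms have the correct legs in $I$ and $P$. This uses that diagonals of open immersions lie in $I$ and diagonals of proper maps lie in $P$, by left cancellability as in \cref{lem:consequences:adjoints-of-upper-shriek}. One must also check that the composite of spans collapses to the cartesian comparison map.
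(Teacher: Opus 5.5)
Your proposal is correct and follows the paper's proof, which simply applies \cref{lem:consequences:contravariant-exchange} to the $2$-functor formalism $\Cart$ of \cref{lem:cartier-modules:main-thm-2ff}. Your identification of the adjoints $f^*, f'^*, g^\times, g'^\times$ and of the left Beck--Chevalley condition via \cref{lem:consequences:BC-for-I} is accurate, but it is already built into the cited lemma, so the span computation you sketch is not needed here.
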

\begin{proof}
    This is a special case of \cref{lem:consequences:contravariant-exchange}, using the existence of the $2$-functor formalism, cf.\ \cref{lem:cartier-modules:main-thm-2ff}.
\end{proof}

\begin{prop} \label{lem:derived-schemes-consequences:symm-mon-structure}
    Let $X$ be a derived noetherian $\Fp$-scheme that is separated and almost of finite type over $\Fp$.
    Then $\Cart(X)$ carries a symmetric monoidal structure such that $U_X \colon \Cart(X) \to \IndCoh(X)$ 
    is symmetric monoidal.
\end{prop}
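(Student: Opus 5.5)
This is essentially a repackaging of results already established, and I would prove it by citing them directly. Let $X \in \DSchsaft$. By \cref{lem:cartier-modules:main-thm-3ff} we have the $6\frac{1}{2}$-functor formalism
$$\Cart^\otimes \colon \Span{\DSchsaft}{\all}{\all}{I}{P}^\otimes \to \TwoCatInf^\times.$$
Part (1) of that theorem says that its underlying $4\frac{1}{2}$-functor formalism is the restriction of $\Cart$ from \cref{lem:cartier-modules:main-thm-2ff}. So $\Cart^\otimes(X) \cong \Cart(X) = \Cart(\IndCoh(X), \IndCoh_*(F_X))$ as categories. Applying \cref{lem:tensor-product:DX-is-sym-mon} to $\Cart^\otimes$, using the cartesian Nagata setup of \cref{lem:derived-schemes:DSaft-Nagata}, equips $\Cart(X)$ with the shriek tensor product. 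This gives the symmetric monoidal structure.

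For the monoidality of $U_X$, the construction in \cref{def:tensor-product:Cart-D-tensor} takes $\Cart^\otimes = \Cart(\IndCoh^\otimes, F_*)$ with $\IndCoh^\otimes$ from \cref{lem:derived-schemes:IndCoh-3FF}. Then \cref{lem:tensor-product:U-is-sym-mon} shows that $U \colon \Cart(\IndCoh^\otimes, F_*) \to \IndCoh^\otimes$ is a natural transformation of lax symmetric monoidal functors. It also shows that for each $X$ the component $U_X$ is symmetric monoidal for the structures of \cref{lem:tensor-product:DX-is-sym-mon} on both sides. On the target that structure is by definition the shriek tensor product on $\IndCoh(X)$.

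It remains to identify this $U_X$ with the forgetful functor $\Cart(X) \to \IndCoh(X)$ appearing in \cref{lem:cartier-modules:main-thm-2ff}(2). By \cref{lem:Cartier-3FF:underlying-2FF}, forgetting the monoidal structure recovers $\Cart(\IndCoh, F_*)$. Under this identification the projection $U$ of the lax limit goes to the corresponding projection, because the forgetful functor $V$ preserves lax limits and their limit cones by \cref{lem:tensor-product:lax-sym-mon-lax-lim-existence}. \cref{lem:tensor-product:main-thm}(1)(b) then identifies $U_X$ with the forgetful functor.

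The only point needing care is that the identification of $U$ is compatible with passing to underlying formalisms. It is covered by \cref{lem:tensor-product:lax-sym-mon-lax-lim-existence}, which gives compatibility of the counit, and not just of the limit objects, with $V$.
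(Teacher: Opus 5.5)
Your proposal is correct and follows the paper's proof: existence comes from \cref{lem:tensor-product:DX-is-sym-mon} applied to the $3$-functor formalism of \cref{lem:cartier-modules:main-thm-3ff}, and monoidality of $U_X$ from \cref{lem:tensor-product:U-is-sym-mon}. Your extra step, identifying $U_X$ with the forgetful functor via \cref{lem:Cartier-3FF:underlying-2FF} and \cref{lem:tensor-product:lax-sym-mon-lax-lim-existence}, is a harmless check that the paper leaves implicit.
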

\begin{proof}
    The existence is a special case of \cref{lem:tensor-product:DX-is-sym-mon}, using the existence of the $3$-functor formalism, cf.\ \cref{lem:cartier-modules:main-thm-3ff}.
    That $U$ is symmetric monoidal was shown in \cref{lem:tensor-product:U-is-sym-mon}.
\end{proof}

\begin{prop} \label{lem:consequences:shriek-symmetric-monoidal}
    Let $f \colon X \to Y$ be a morphism of (derived) noetherian $\Fp$-schemes that are separated and almost of finite type over $\Fp$.
    Then the functor $f^! \colon \Cart(Y) \to \Cart(X)$ is symmetric monoidal with respect to the symmetric monoidal structure of \cref{lem:derived-schemes-consequences:symm-mon-structure}. 
\end{prop}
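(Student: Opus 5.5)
The plan is to reduce the statement directly to the abstract result \cref{lem:tensor-product:upper-shriek-symmetric-monoidal}, applied to the $3$-functor formalism $\Cart^\otimes$ of \cref{lem:cartier-modules:main-thm-3ff}. First, the hypotheses place us inside $\DSchsaft$: both $X$ and $Y$ are separated and almost of finite type over $\Fp$, so $f$ is a morphism of $\DSchsaft$. Moreover, $((\DSchsaft)^\times, I, P)$ is a cartesian Nagata setup by \cref{lem:derived-schemes:DSaft-Nagata}. Here $E = \all$ by the right-cancellation remark in that proof, so $f$ is automatically separated and almost of finite type.

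Next, \cref{lem:cartier-modules:main-thm-3ff} provides the lax symmetric monoidal functor
\[
\Cart^\otimes \colon \Span{\DSchsaft}{\all}{\all}{I}{P}^\otimes \to \TwoCatInf^\times .
\]
By \cref{lem:tensor-product:DX-is-sym-mon}, this functor induces on each $\Cart(X)$ exactly the symmetric monoidal structure used in \cref{lem:derived-schemes-consequences:symm-mon-structure}. That proposition was itself obtained from \cref{lem:tensor-product:DX-is-sym-mon} applied to $\Cart^\otimes$, so the two structures agree by definition.

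Then apply \cref{lem:tensor-product:upper-shriek-symmetric-monoidal} to $\Cart^\otimes$ and $f$. The mechanism is as follows. The backward span $Y \xleftarrow{f} X \xrightarrow{\id{X}} X$ is the image of $f$ under the symmetric monoidal functor $\op{\Cat C} \to \Span{\Cat C}{\all}{\all}{I}{P}$. That functor factors through commutative algebras by \cref{lem:tensor-product:algebras-in-spans}, so the span is a morphism of commutative algebras. Applying $\TwoCAlg{\Cart^\otimes}$ then yields a symmetric monoidal functor $\Cart(Y)\to\Cart(X)$ whose underlying functor is $\Cart^\otimes(Y \xleftarrow{f} X \to X) = f^!$.

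Finally, one must check that this $f^!$ is the exceptional pullback of \cref{lem:derived-schemes-consequences:adjoints-shriek}, which is built from the $4\frac12$-functor formalism on $\DSchpnoeth$. This is exactly part (1) of \cref{lem:cartier-modules:main-thm-3ff}: the underlying $2$-functor formalism of $\Cart^\otimes$ is the restriction of $\Cart$ to $\Span{\DSchsaft}{\all}{\all}{I}{P}$. I expect no real obstacle; the only point needing care is this compatibility of the two constructions of $f^!$, and that is already recorded in the cited theorem.
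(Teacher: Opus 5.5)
Your proposal is correct and follows the paper's route exactly: the paper's proof applies \cref{lem:tensor-product:upper-shriek-symmetric-monoidal} to the $3$-functor formalism $\Cart^\otimes$ of \cref{lem:cartier-modules:main-thm-3ff}, which is what you do. Your additional remarks, namely that $E=\all$ in $\DSchsaft$ and that part (1) of that theorem identifies this $f^!$ with the one from the $4\frac{1}{2}$-functor formalism, only make explicit what the paper's one-line argument leaves implicit.
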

\begin{proof}
    This is a special case of \cref{lem:tensor-product:upper-shriek-symmetric-monoidal}, using the existence of the $3$-functor formalism, cf.\ \cref{lem:cartier-modules:main-thm-3ff}.
\end{proof}

\begin{lem} \label{lem:consequences:shriek-star-projection-formula}
    Let $f \colon X \to Y$ be a morphism of (derived) noetherian $\Fp$-schemes that are separated and almost of finite type over $\Fp$.
    Then the functor $f_* \colon \Cart(X) \to \Cart(Y)$ satisfies the projection formula: For two Cartier modules $M \in \Cart(X)$, $N \in \Cart(Y)$, there is an equivalence 
    $$f_*(M \shriektensor f^! N) \cong f_*(M) \shriektensor N.$$
\end{lem}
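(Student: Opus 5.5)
The plan is to deduce the statement directly from the general projection formula \cref{lem:consequences:projection-formula}. That lemma holds for an arbitrary $3$-functor formalism on a cartesian Nagata setup with values in $\TwoCatInf^\times$. We apply it to the $6\frac{1}{2}$-functor formalism $\Cart^\otimes \colon \Span{\DSchsaft}{\all}{\all}{I}{P}^\otimes \to \TwoCatInf^\times$ of \cref{lem:cartier-modules:main-thm-3ff}, on the cartesian Nagata setup $((\DSchsaft)^\times, I, P)$ of \cref{lem:derived-schemes:DSaft-Nagata}.

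First, we make sure the functors in the statement are those of the $3$-functor formalism. By \cref{lem:cartier-modules:main-thm-3ff}(1), the underlying $2$-functor formalism of $\Cart^\otimes$ is the restriction of the $4\frac{1}{2}$-functor formalism of \cref{lem:cartier-modules:main-thm-2ff}. Hence $f_*$ and $f^!$ on $\Cart$ agree with $\Cart^\otimes$ applied to the spans $X \xleftarrow{\id{X}} X \xrightarrow{f} Y$ and $Y \xleftarrow{f} X \xrightarrow{\id{X}} X$. Since every morphism in $\DSchsaft$ is separated and almost of finite type (as noted in the proof of \cref{lem:derived-schemes:DSaft-Nagata}), $f^!$ is defined for every $f$ in the statement. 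Likewise, by \cref{def:tensor-product:monoidal-structure} and \cref{lem:derived-schemes-consequences:symm-mon-structure}, the shriek tensor product on $\Cart(X)$ is the one of \cref{lem:tensor-product:DX-is-sym-mon}, namely $\Delta_X^!(- \boxtimes -)$.

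Second, we invoke \cref{lem:consequences:projection-formula} with $\Cat D^\otimes = \Cart^\otimes$. Its proof factors the graph span $X \times Y \xleftarrow{\Gamma_f} X \xrightarrow{f} Y$ in two ways and evaluates both sides at $M \boxtimes N$. This produces
$$f_*(M \shriektensor f^! N) \cong f_* \Gamma_f^!(M \boxtimes N) \cong \Delta_Y^!(f \times \id{Y})_*(M \boxtimes N) \cong f_*(M) \shriektensor N.$$
The step requiring the most care is the identification $(\id{X} \times f)^!(M \boxtimes N) \cong M \boxtimes f^! N$, and dually $(f \times \id{Y})_*(M \boxtimes N) \cong f_* M \boxtimes N$. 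Both come from the naturality of the lax monoidal structure maps $\boxtimes$ of $\Cart^\otimes$ along the morphisms $\id{X} \times f$ of spans, which lax symmetric monoidality provides. Once these are in place, nothing further specific to Cartier modules is needed.
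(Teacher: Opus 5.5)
Your proposal is correct and follows the paper's proof: the paper also obtains this lemma directly as a special case of \cref{lem:consequences:projection-formula}, applied to the $3$-functor formalism $\Cart^\otimes$ of \cref{lem:cartier-modules:main-thm-3ff}. Your identification of the functors and your unpacking of the graph-span argument, including the naturality of $\boxtimes$ along $\id{X} \times f$ and $f \times \id{Y}$, simply make explicit what that general lemma's proof uses implicitly.
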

\begin{proof}
    This is a special case of \cref{lem:consequences:projection-formula}, using the existence of the $3$-functor formalism, cf.\ \cref{lem:cartier-modules:main-thm-3ff}.
\end{proof}

%% file: appendix/mapping-objects.tex
\section{Internal mapping objects}
In this short appendix, we discuss left and right mapping objects in $\mathbb{E}_1$-monoidal categories.

Let $\Cat C$ be a category with a monoidal structure $\Cat C^\otimes$.
We write $- \otimes - \colon \Cat C \times \Cat C \to \Cat C$ 
for the tensor functor.

\begin{defn}
    We say that $\Cat C$ is \defemphindex{left closed}{closed!left},
    if there is a functor 
    \begin{equation*}
        \defnot{\iHomL{-}{-}} \colon \op{\Cat C} \times \Cat C \to \Cat C,
    \end{equation*}
    such that for every $X, Y, Z \in \Cat C$, there is an equivalence
    \begin{equation*}
        \Map{\Cat C}{X \otimes Y}{Z} \cong \Map{\Cat C}{Y}{\iHomL{X}{Z}},
    \end{equation*}
    natural in $X$, $Y$ and $Z$.
    Similarly, we say that $\Cat C$ is \defemphindex{right closed}{closed!right},
    if there is a functor 
    \begin{equation*}
        \defnot{\iHomR{-}{-}} \colon \op{\Cat C} \times \Cat C \to \Cat C,
    \end{equation*}
    such that for every $X, Y, Z \in \Cat C$, there is an equivalence
    \begin{equation*}
        \Map{\Cat C}{Y \otimes X}{Z} \cong \Map{\Cat C}{Y}{\iHomR{X}{Z}},
    \end{equation*}
    natural in $X$, $Y$ and $Z$.
\end{defn}

In order to be left (resp.\ right) closed, it is enough that $X \otimes -$ (resp.\ $- \otimes X$)
has a right adjoint, i.e.\ the functoriality of $\iHomL{-}{-}$ (resp.\ of $\iHomR{-}{-}$) in the first variable is automatic.
\begin{lem}[{\cite[Remark 4.2.1.31 and Example 4.2.1.32]{higheralgebra}}] \label{lem:mapping-objects:ihom-existence}
    Suppose that for every $X \in \Cat C$, the functor $X \otimes -$
    has a right adjoint.
    Then $\Cat C$ is left closed.

    Similarly, suppose that for every $X \in \Cat C$, the functor $- \otimes X$
    has a right adjoint.
    Then $\Cat C$ is right closed.
\end{lem}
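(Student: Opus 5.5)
The plan is to construct $\iHomL{-}{-}$ directly from the pointwise right adjoints and then check naturality via the Yoneda lemma. Functoriality in the first variable comes for free from the parametrized adjoint functor theorem.

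First, curry the tensor functor $-\otimes- \colon \Cat C \times \Cat C \to \Cat C$ to a functor
$$\Cat C \to \Fun{\Cat C}{\Cat C}, \qquad X \mapsto X \otimes -.$$
By assumption, each value $X \otimes -$ is a left adjoint. So this functor factors through the subcategory $\operatorname{Fun}^{L}(\Cat C,\Cat C)$ of left adjoint functors. Passing to right adjoints gives an equivalence
$$\operatorname{Fun}^{L}(\Cat C,\Cat C) \simeq \op{\operatorname{Fun}^{R}(\Cat C,\Cat C)},$$
cf.\ \cite[Proposition 5.2.6.2]{highertopoi} and \cite[Corollary 5.2.2.5]{highertopoi}. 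This yields a functor
$$\op{\Cat C} \to \operatorname{Fun}^{R}(\Cat C,\Cat C) \subseteq \Fun{\Cat C}{\Cat C},$$
and uncurrying it gives the candidate $\iHomL{-}{-} \colon \op{\Cat C}\times \Cat C \to \Cat C$.

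Next, the required equivalence $\Map{\Cat C}{X\otimes Y}{Z}\cong \Map{\Cat C}{Y}{\iHomL{X}{Z}}$ should hold naturally in all three variables. I would get it from the fact that the equivalence between left and right adjoints is implemented by the adjunction data. Concretely, both sides are the functors $\op{\Cat C}\times\op{\Cat C}\times\Cat C\to \An$ classified by the same correspondence, namely the bifibration associated to the adjoint pair. Equivalently, naturality in $X$ follows from naturality of the mate correspondence, which is built into the statement of \cite[Proposition 5.2.6.2]{highertopoi}. Alternatively, one can use the parametrized adjoint criterion \cite[Proposition 4.7.4.13 / Remark 4.2.1.31]{higheralgebra} directly. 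Under that criterion, $-\otimes-$, viewed as a map of cocartesian fibrations over $\Cat C$, admits a fiberwise right adjoint, and the mate conditions are automatic because they are the identity.

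The only subtle point is the coherence in $X$. This is exactly what the cited results handle, so no real obstacle remains.

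Finally, the right closed case follows by applying the left closed case to the reversed monoidal structure $\Cat C^{\otimes,\mathrm{rev}}$. In $\Cat C^{\otimes,\mathrm{rev}}$ the roles of $X\otimes-$ and $-\otimes X$ are exchanged, so left closedness there is right closedness of $\Cat C$.
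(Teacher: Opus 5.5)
Your main argument is correct, but it is packaged differently from the source the paper relies on. The paper gives no proof of its own and defers to \cite[Remark 4.2.1.31 and Example 4.2.1.32]{higheralgebra}, where the functor comes from a representability argument. The assignment $(X,Z)\mapsto \Map{\Cat C}{X\otimes -}{Z}$ is a functor $\op{\Cat C}\times\Cat C\to\Fun{\op{\Cat C}}{\An}$. By hypothesis it lands in representable presheaves, so it factors essentially uniquely through the Yoneda embedding. The factorization is $\iHomL{-}{-}$, and naturality of $\Map{\Cat C}{X\otimes Y}{Z}\cong\Map{\Cat C}{Y}{\iHomL{X}{Z}}$ in all three variables holds by construction. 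This is the same pattern the paper uses in \cref{constr:2-functoriality-of-lax-tensor-oplax:factor-Yoneda}.

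You instead curry to $\Cat C\to\operatorname{Fun}^{L}(\Cat C,\Cat C)$ and transport along $\operatorname{Fun}^{L}(\Cat C,\Cat C)\simeq\op{\operatorname{Fun}^{R}(\Cat C,\Cat C)}$. This makes functoriality in $X$ immediate. However, the three-variable naturality is not contained in the bare statement of \cite[Proposition 5.2.6.2]{highertopoi}, contrary to what you write. It comes from the construction: $\op{\operatorname{Fun}^{L}(\Cat C,\Cat D)}$ and $\operatorname{Fun}^{R}(\Cat D,\Cat C)$ both embed fully faithfully into $\Fun{\op{\Cat C}\times\Cat D}{\An}$, via $F\mapsto\Map{\Cat D}{F(-)}{-}$ and $G\mapsto\Map{\Cat C}{-}{G(-)}$, with the same essential image, and the equivalence is the induced one. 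Once you say this, uncurrying the resulting commutative triangle gives the desired equivalence of functors $\op{\Cat C}\times\op{\Cat C}\times\Cat C\to\An$. Unwound, your route is the representability argument applied to the family $X\mapsto X\otimes -$ rather than to the trifunctor directly.

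You should drop the ``alternative'' via a relative adjunction over $\Cat C$. The functor $(X,Y)\mapsto(X,X\otimes Y)$ does not carry the cocartesian edges $(f,\id{Y})$ of the projection $\Cat C\times\Cat C\to\Cat C$ to cocartesian edges. Moreover, a relative right adjoint over $\Cat C$ would have to vary covariantly in $X$, whereas $\iHomL{X}{-}$ varies contravariantly. Concretely, the relative universal property would force precomposition with $f\otimes\id{Y'}$ to be an equivalence, which fails in general. The reduction of the right closed case to $\Cat C^{\otimes,\mathrm{rev}}$ is fine. In fact only the binary tensor functor enters, so it suffices to rerun the argument for $(X,Y)\mapsto Y\otimes X$.
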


\begin{lem} \label{lem:mapping-objects:ihom-preserves-limits-both-variables}
    Suppose that $\Cat C$ is left and right closed.
    Then the functors $\iHomL{-}{-}$ and $\iHomR{-}{-}$
    preserve (existing) limits separately in each variable. \footnote{Since both functors are contravariant in the first variable, this means that they send colimits in $\Cat C$ to limits in $\Cat C$ in the first variable.}
\end{lem}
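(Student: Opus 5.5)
We derive the statement from the tensor--hom adjunctions and the fact that right adjoints preserve limits, via the Yoneda lemma. We treat $\iHomL{-}{-}$; the case of $\iHomR{-}{-}$ is identical after replacing $X \otimes -$ by $- \otimes X$.

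\emph{Second variable.} Fix $X$. By \cref{lem:mapping-objects:ihom-existence} and the defining equivalence, $\iHomL{X}{-}$ is right adjoint to $X \otimes -$. Right adjoints preserve all existing limits, so this variable is done. Concretely, for a diagram $Z_\bullet$ with limit $Z$ and any test object $Y$ we have
\[
\Map{\Cat C}{Y}{\iHomL{X}{Z}} \cong \Map{\Cat C}{X\otimes Y}{\lim Z_\bullet} \cong \lim \Map{\Cat C}{X\otimes Y}{Z_\bullet} \cong \lim \Map{\Cat C}{Y}{\iHomL{X}{Z_\bullet}},
\]
which exhibits $\iHomL{X}{Z}$ as the limit.

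\emph{First variable.} Fix $Z$, and let $X = \colim X_\bullet$ be an existing colimit in $\Cat C$. We must show that $\iHomL{X}{Z} \cong \lim \iHomL{X_\bullet}{Z}$. For any $Y$ we have
\[
\Map{\Cat C}{Y}{\iHomL{X}{Z}} \cong \Map{\Cat C}{X \otimes Y}{Z}.
\]
The key input is that $- \otimes Y$ preserves the colimit $\colim X_\bullet$. This holds because the \emph{right} closedness makes $- \otimes Y$ a left adjoint, with right adjoint $\iHomR{Y}{-}$. Hence
\[
\Map{\Cat C}{X \otimes Y}{Z} \cong \lim \Map{\Cat C}{X_\bullet \otimes Y}{Z} \cong \lim \Map{\Cat C}{Y}{\iHomL{X_\bullet}{Z}}.
\]
By Yoneda, and because mapping anima detect limits, $\iHomL{X}{Z}$ is the limit of $\iHomL{X_\bullet}{Z}$. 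Symmetrically, for $\iHomR{-}{Z}$ one uses that $Y \otimes -$ is a left adjoint by left closedness.

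\emph{Main obstacle.} The only delicate point is naturality. The identifications above must be natural in $X$, that is, compatible with the diagram maps $X_\bullet \to X$, so that the induced comparison map $\iHomL{X}{Z} \to \lim \iHomL{X_\bullet}{Z}$ is the canonical one and is an equivalence. This follows because the defining equivalence is natural in all three variables, so the whole chain is a chain of natural equivalences of functors of $Y$.
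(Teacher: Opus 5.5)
Your proof is correct and follows the paper's argument: the second variable is handled by the internal hom being a right adjoint, and the first variable by a Yoneda chain through the tensor--hom adjunction, using that tensoring with a fixed object on the other side is a left adjoint by the \emph{other} closedness. The only difference is that the paper writes out the case of $\iHomR{-}{-}$ (using left closedness to commute the tensor with the colimit) and calls $\iHomL{-}{-}$ analogous, while you write out the mirror case.
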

\begin{proof}
    We prove the results only for $\iHomR{-}{-}$,
    the proof for $\iHomL{-}{-}$ is analogous.
    Fix an $X \in \Cat C$.
    First note that since $\iHomR{X}{-}$ is right adjoint to 
    $- \otimes X$, it preserves limits.

    For the other claim, let $\Cat I$ be a category, and $Y_{\bullet} \colon \Cat I \to \Cat C$ be a functor such that its colimit exists. 
    For every $Z \in \Cat C$, we have the following chain of natural equivalences:
    \begin{align*}
        \Map{\Cat C}{Z}{\iHomR{\colim_{i \in \Cat I} Y_i}{X}} 
        &\cong \Map{\Cat C}{Z \otimes (\colim_{i \in \Cat I} Y_i)}{X} \\
        &\cong \Map{\Cat C}{\colim_{i \in \Cat I} (Z \otimes Y_i)}{X} \\
        &\cong \lim_{i \in \op{\Cat I}} \Map{\Cat C}{Z \otimes Y_i}{X} \\
        &\cong \lim_{i \in \op{\Cat I}} \Map{\Cat C}{Z}{\iHomR{Y_i}{X}}.
    \end{align*}
    Here, the first and fourth equivalences hold by adjunction,
    the second equivalence holds because the tensor product commutes with colimits in the second variable (as $\Cat C$ is left closed\footnote{This is the reason we have to assume that $\Cat C$ is left and right closed at the same time}),
    and the third equivalence holds because the mapping space functor
    sends colimits in the first variable to limits of anima.
    This shows that $\iHomR{\colim_{i \in \Cat I} Y_i}{X}$ satisfies the universal property of the limit of $\iHomR{Y_\bullet}{X}$.
    In particular, this limit exists and hence $\iHomR{-}{X}$ carries the colimit of $Y_\bullet$ to a limit.
\end{proof}

\begin{lem} \label{lem:mapping-objects:ihom-map-of-lax-functor}
    Let $\Cat C^\boxtimes$ and $\Cat D^\otimes$ be two right closed (resp.\ left closed) monoidal categories,
    and $\Phi \colon \Cat C \to \Cat D$ be a lax monoidal functor.
    Then there is a canonical natural transformation 
    \begin{equation*}
        \Phi \iHomRCat{\Cat C}{-}{-} \to \iHomRCat{\Cat D}{\Phi(-)}{\Phi(-)}. 
    \end{equation*}
    (resp. a natural transformation where we replace $\iHomR{-}{-}$ with $\iHomL{-}{-}$).
\end{lem}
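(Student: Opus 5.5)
The natural transformation will be built pointwise from units and counits of the internal hom adjunctions, via a Yoneda argument. I treat the right closed case; the left closed case is the same with the factors of the tensor product swapped. Fix $X, Z \in \Cat C$. The desired morphism $\Phi \iHomRCat{\Cat C}{X}{Z} \to \iHomRCat{\Cat D}{\Phi X}{\Phi Z}$ corresponds, under the adjunction $- \otimes \Phi X \dashv \iHomRCat{\Cat D}{\Phi X}{-}$, to a morphism
$$\Phi\bigl(\iHomRCat{\Cat C}{X}{Z}\bigr) \otimes \Phi(X) \longrightarrow \Phi\bigl(\iHomRCat{\Cat C}{X}{Z} \boxtimes X\bigr) \xrightarrow{\Phi(\mathrm{ev})} \Phi(Z).$$
The first arrow is the lax monoidal structure map of $\Phi$. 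The second is $\Phi$ applied to the evaluation (counit) $\mathrm{ev} \colon \iHomRCat{\Cat C}{X}{Z} \boxtimes X \to Z$. So on objects the transformation is the adjoint transpose of $\Phi(\mathrm{ev}) \circ \mu$.

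To make this coherent in $X$ and $Z$, I work with functors and not with objects. Both $\iHomRCat{\Cat C}{-}{-}$ and $\iHomRCat{\Cat D}{-}{-}$ are obtained from \cref{lem:mapping-objects:ihom-existence} as parametrized right adjoints. Concretely, $\iHomRCat{\Cat D}{-}{-}$ is characterized by an equivalence of functors $\op{\Cat D} \times \op{\Cat D} \times \Cat D \to \AnL$:
$$\Map{\Cat D}{Y \otimes X'}{Z'} \cong \Map{\Cat D}{Y}{\iHomRCat{\Cat D}{X'}{Z'}}.$$
A natural transformation $\Phi \iHomRCat{\Cat C}{-}{-} \to \iHomRCat{\Cat D}{\Phi-}{\Phi-}$ of functors $\op{\Cat C} \times \Cat C \to \Cat D$ is therefore equivalent, by the Yoneda lemma, to a natural transformation of functors $\op{\Cat D} \times \op{\Cat C} \times \Cat C \to \AnL$:
$$\Map{\Cat D}{Y}{\Phi\iHomRCat{\Cat C}{X}{Z}} \longrightarrow \Map{\Cat D}{Y \otimes \Phi X}{\Phi Z}.$$
I construct this as the composite
$$\Map{\Cat D}{Y}{\Phi\iHomRCat{\Cat C}{X}{Z}} \xrightarrow{- \otimes \Phi X} \Map{\Cat D}{Y \otimes \Phi X}{\Phi(\iHomRCat{\Cat C}{X}{Z}) \otimes \Phi X} \to \Map{\Cat D}{Y \otimes \Phi X}{\Phi Z}.$$
Here the second map is postcomposition with the natural family $\Phi(\mathrm{ev}) \circ \mu$.

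The main obstacle is producing this last family coherently, as a natural transformation of functors $\op{\Cat C} \times \Cat C \to \Cat D$:
$$\Phi(\iHomRCat{\Cat C}{-}{-}) \otimes \Phi(-) \to \Phi(-).$$
This family is dinatural in the contravariant variable $X$. I handle this by precomposing with the twisted arrow category, or by working directly with the pointwise counit $\mathrm{ev}$, which is natural in the appropriate sense by construction of the parametrized adjoint. Concretely, $\mathrm{ev}$ is the image of the identity under the adjunction equivalence, natural in $(X,Z)$ after restriction along the diagonal in the $X$-variables. The lax structure $\mu \colon \Phi(-)\otimes\Phi(-) \to \Phi(- \boxtimes -)$ is a natural transformation of functors $\Cat C \times \Cat C \to \Cat D$, whiskered with $\iHomRCat{\Cat C}{-}{-} \times \id{}$. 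Pasting these two, and applying $\Phi$ to $\mathrm{ev}$, gives the required family. The composite above is then natural in all variables, and Yoneda yields the transformation.
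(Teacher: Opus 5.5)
Your proposal is correct and takes essentially the same route as the paper. The component at $(X,Z)$ is the adjoint transpose of $\Phi(\mathrm{ev})\circ\mu \colon \Phi(\iHomRCat{\Cat C}{X}{Z})\otimes\Phi(X)\to\Phi(Z)$, and coherence in the contravariant variable comes from the dinaturality of the counit of $-\boxtimes X \dashv \iHomRCat{\Cat C}{X}{-}$. Your Yoneda reformulation through mapping anima just unpacks the paper's ``by adjunction'' step, and both arguments handle the dinaturality bookkeeping at the same level of detail.
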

\begin{proof}
    By adjunction, it is enough to construct for every $X \in \Cat C$
    a natural transformation
    $$\Phi \iHomRCat{\Cat C}{X}{-} \otimes \Phi(X) \to \Phi,$$
    dinatural in $X$.
    This natural transformation is given by 
    $$\Phi \iHomRCat{\Cat C}{X}{-} \otimes \Phi(X) \to \Phi(\iHomRCat{\Cat C}{X}{-} \boxtimes X) \xrightarrow{\Phi(\mathrm{counit})} \Phi,$$
    using laxness of $\Phi$.
    Since by definition the adjunction datum of $- \boxtimes X \dashv \iHomRCat{\Cat C}{X}{-}$ is itself 
    dinatural in $X$, we see that this natural transformation is dinatural in $X$.
    The proof for $\iHomL{X}{-}$ is analogous.
\end{proof}

\begin{lem} \label{lem:mapping-objects:tensor-hom-adjunction-internal}
    Let $\Cat C^\otimes$ be a left closed, resp.\ right closed monoidal category.
    For all $X, Y, Z \in \Cat C$, there is an equivalence (natural in $X$, $Y$ and $Z$)
    $$\iHomL{X}{\iHomL{Y}{Z}} \cong \iHomL{Y \otimes X}{Z},$$
    resp.\ an equivalence (natural in $X$, $Y$ and $Z$)
    $$\iHomR{X}{\iHomR{Y}{Z}} \cong \iHomR{X \otimes Y}{Z}.$$
\end{lem}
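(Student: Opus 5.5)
The plan is to prove both equivalences by the Yoneda lemma, treating the left closed case in detail; the right closed case is formally identical. Fix $X, Y, Z \in \Cat C$ and let $W \in \Cat C$ be a test object. The defining adjunction of $\iHomL{-}{-}$ gives natural equivalences
\begin{align*}
    \Map{\Cat C}{W}{\iHomL{X}{\iHomL{Y}{Z}}}
    &\cong \Map{\Cat C}{X \otimes W}{\iHomL{Y}{Z}} \\
    &\cong \Map{\Cat C}{Y \otimes (X \otimes W)}{Z} \\
    &\cong \Map{\Cat C}{(Y \otimes X) \otimes W}{Z} \\
    &\cong \Map{\Cat C}{W}{\iHomL{Y \otimes X}{Z}}.
\end{align*}
The first, second and fourth steps are instances of the adjunction $A \otimes - \dashv \iHomL{A}{-}$. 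The third uses the associator of the monoidal structure $\Cat C^\otimes$.

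Each step is natural in $W$, so the Yoneda lemma produces the equivalence $\iHomL{X}{\iHomL{Y}{Z}} \cong \iHomL{Y \otimes X}{Z}$. To get naturality in $X$, $Y$ and $Z$ as well, the pointwise argument is not enough. I will instead run it on the level of functors $\op{\Cat C} \times \op{\Cat C} \times \op{\Cat C} \times \Cat C \to \An$, namely
$$(W, X, Y, Z) \mapsto \Map{\Cat C}{W}{\iHomL{X}{\iHomL{Y}{Z}}}.$$
Then I apply the Yoneda embedding of $\Fun{\op{\Cat C} \times \op{\Cat C} \times \Cat C}{\Cat C}$ into presheaf-valued functors, which is fully faithful.

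The main obstacle is coherence. We need the adjunction equivalence $\Map{\Cat C}{A \otimes B}{C} \cong \Map{\Cat C}{B}{\iHomL{A}{C}}$ as a natural equivalence of functors in all three variables. We also need the associator as a natural equivalence of trifunctors $\Cat C^3 \to \Cat C$. The first is exactly what \cref{lem:mapping-objects:ihom-existence} provides, following \cite[Remark 4.2.1.31]{higheralgebra}: the internal hom is built as a functor $\op{\Cat C} \times \Cat C \to \Cat C$ together with a natural equivalence of bifunctor-valued mapping spaces. The second is part of the datum of $\Cat C^\otimes$, restricted to the arity-$3$ operations. With these inputs, every step above is the whiskering of a natural equivalence of functors, so the composite is natural in $(W, X, Y, Z)$ and Yoneda concludes.

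For the right closed case, the same chain works with the adjunction $- \otimes A \dashv \iHomR{A}{-}$:
$$\Map{\Cat C}{W}{\iHomR{X}{\iHomR{Y}{Z}}} \cong \Map{\Cat C}{(W \otimes X) \otimes Y}{Z} \cong \Map{\Cat C}{W \otimes (X \otimes Y)}{Z} \cong \Map{\Cat C}{W}{\iHomR{X \otimes Y}{Z}}.$$
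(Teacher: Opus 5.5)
Your proposal is correct and follows essentially the same route as the paper: a Yoneda argument in a test object, applied to the chain of adjunction equivalences. The paper leaves the associator implicit in the middle step, whereas you make it explicit. Your extra discussion of naturality in $X$, $Y$, $Z$ only spells out what the paper summarizes as the chain being ``natural in $T$, $X$, $Y$ and $Z$''.
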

\begin{proof}
    Let $T \in \Cat C$ be an additional test object.
    We have 
    \begin{align*}
        \Map{\Cat C}{T}{\iHomL{X}{\iHomL{Y}{Z}}} 
        &\cong \Map{\Cat C}{X \otimes T}{\iHomL{Y}{Z}} \\
        &\cong \Map{\Cat C}{Y \otimes X \otimes T}{Z} \\
        &\cong \Map{\Cat C}{T}{\iHomL{Y \otimes X}{Z}}.
    \end{align*}
    Here, all three equivalences hold by adjunction and associativity of the tensor product.
    This equivalence is natural in $T$, $X$, $Y$ and $Z$, hence the claim follows from the Yoneda lemma.
    The proof for $\iHomR{-}{-}$ is analogous, using the chain of equivalences 
    \begin{align*}
        \Map{\Cat C}{T}{\iHomR{X}{\iHomR{Y}{Z}}} 
        &\cong \Map{\Cat C}{T \otimes X}{\iHomR{Y}{Z}} \\
        &\cong \Map{\Cat C}{T \otimes X \otimes Y}{Z} \\
        &\cong \Map{\Cat C}{T}{\iHomR{X \otimes Y}{Z}}. \qedhere
    \end{align*}
\end{proof}